\def\@tocline#1#2#3#4#5#6#7{\relax
  \ifnum #1>\c@tocdepth % then omit
  \else
    \par \addpenalty\@secpenalty\addvspace{#2}%
    \begingroup \hyphenpenalty\@M
    \@ifempty{#4}{%
      \@tempdima\csname r@tocindent\number#1\endcsname\relax
    }{%
      \@tempdima#4\relax
    }%
    \parindent\z@ \leftskip#3\relax \advance\leftskip\@tempdima\relax
    \rightskip\@pnumwidth plus4em \parfillskip-\@pnumwidth
    #5\leavevmode\hskip-\@tempdima
      \ifcase #1
       \or\or \hskip 1em \or \hskip 2em \else \hskip 3em \fi%
      #6\nobreak\relax
    \hfill\hbox to\@pnumwidth{\@tocpagenum{#7}}\par% <---- \dotfill -> \hfill
    \nobreak
    \endgroup
  \fi}
\numberwithin{equation}{theorem}
\DeclareRobustCommand{\bigplus}{\pmb{+}}
\numberwithin{equation}{theorem}
\def\todo#1{\textcolor{red}%
{\footnotesize\newline{\color{red}\fbox{\parbox{\textwidth-15pt}{\textbf{todo: } #1}}}\newline}}
\newcommand{\perf}{\textnormal{perf}}
\newcommand{\perfd}{\textnormal{perfd}}
\newcommand{\relperfd}{\textnormal{rel, perfd}}
\newcommand{\ratperfd}{\textnormal{rat, perfd}}
\newcommand{\Ainfty}{A_{\infty}}
\renewcommand{\m}{\mathfrak{m}}
\newcommand\reallywidehat[1]{%
\savestack{\tmpbox}{\stretchto{%
  \scaleto{%
    \scalerel*[\widthof{\ensuremath{#1}}]{\kern.1pt\mathchar"0362\kern.1pt}%
    {\rule{0ex}{\textheight}}%WIDTH-LIMITED CIRCUMFLEX
  }{\textheight}% 
}{2.4ex}}%
\stackon[-6.9pt]{#1}{\tmpbox}%
}
\begin{document}

\title{Perfectoid signature, perfectoid Hilbert-Kunz multiplicity, and an application to local fundamental groups}

\author{Hanlin Cai}
\address{Department of Mathematics, University of Utah, Salt Lake City, UT 84112, USA}
\email{cai@math.utah.edu}
\thanks{Cai was supported in part by NSF FRG Grant \#1952522.}

\author{Seungsu Lee}
\address{Department of Mathematics, University of Utah, Salt Lake City, UT 84112, USA}
\email{slee@math.utah.edu}
\thanks{Lee was supported in part by NSF Grant  \#2101800.}

\author{Linquan Ma}
\address{Department of Mathematics, Purdue University, West Lafayette, IN 47907, USA}
\email{ma326@purdue.edu}
\thanks{Ma was supported in part by NSF Grant \#2302430, and was supported by \#1901672, NSF FRG Grant DMS \#1952366, and a fellowship from the Sloan Foundation when writing this article.
%NSF Grant DMS \#190167, NSF FRG Grant DMS \#1952366, and a fellowship from the Sloan Foundation.
}

\author{Karl Schwede}
\address{Department of Mathematics, University of Utah, Salt Lake City, UT 84112, USA}
\email{schwede@math.utah.edu}
\thanks{Schwede was supported in part by NSF Grants \#1801849 and \#2101800, NSF FRG Grant \#1952522 and a
fellowship from the Simons Foundation.
}

\author{Kevin Tucker}
\address{Department of Mathematics, University of Illinois at Chicago, Chicago, IL 60607, USA}
\email{kftucker@uic.edu}
\thanks{Tucker was supported in part by NSF Grant \#2200716.}
\begin{abstract}
  We define a (perfectoid) mixed characteristic version of $F$-signature and Hilbert-Kunz multiplicity by utilizing the perfectoidization functor of Bhatt-Scholze and Faltings' normalized length (also developed in the work of Gabber-Ramero).  We show that these definitions coincide with the classical theory in equal characteristic $p > 0$.  We prove that a ring is regular if and only if either its perfectoid signature or perfectoid Hilbert-Kunz multiplicity is 1 and we show that perfectoid Hilbert-Kunz multiplicity characterizes BCM closure and extended plus closure of $\m$-primary ideals.  We demonstrate that perfectoid signature detects BCM-regularity and transforms similarly to $F$-signature or normalized volume under quasi-\'etale maps. As a consequence, we prove that BCM-regular rings have finite local \'etale fundamental group and also finite torsion part of their divisor class groups.  Finally, we also define a mixed characteristic version of relative rational signature, and show it characterizes BCM-rational singularities.
\end{abstract}
\maketitle
\renewcommand{\baselinestretch}{0.25}\normalsize
\tableofcontents
\renewcommand{\baselinestretch}{1.0}\normalsize
\section{Introduction}

    In \cite{KunzCharacterizationsOfRegularLocalRings}, Kunz showed that a Noetherian ring of characteristic $p > 0$ is regular if and only if its Frobenius endomorphism is flat.  Building directly upon that work, Kunz studied length of the quotient $R/\m^{[p^e]}$ as $e$ varies for $R$ a Noetherian ring of characteristic $p > 0$, \cite{KunzOnNoetherianRingsOfCharP}.  %If $R$ is reduced with perfect residue field, those lengths are the same as the $R$-module length of $R^{1/p^e}/\m R^{1/p^e}$, in other words it measures the minimal number of generators of $R^{1/p^e}$ as an $R$-module as $e$-varies.  
    In \cite{MonskyHKFunction}, Monsky showed that the limit 
    \[
        \lim_{e \to \infty}{\length_R(R/\m^{[p^e]}) \over p^{ed}}
    \]
    exists and coined the limit as \emph{Hilbert-Kunz multiplicity of $R$}, denoted $e_{\HK}(R)$.  It is not difficult to see that the Hilbert-Kunz multiplicity is at least $1$.  Furthermore, $e_{\HK}(R) = 1$ if and only if $R$ is regular \cite{WatanabeYoshidaHKMultAndInequality}.  Roughly speaking, Hilbert-Kunz multiplicity is a larger number when the singularities of $R$ are ``more severe''.  
    
    This mysterious number, which can be irrational \cite{BrennerIrrationalHK} and is expected to sometimes be transcendental \cite{MonskyTranscendenceOfSomeHKMults}, has been extensively studied in characteristic $p > 0$ commutative algebra, see for instance
    \cite{HanMonskySomeSurprisingHKFunctions,
    HunekeYaoMinimalHKMultiplicity,
    HanesNotesOnHKFunction,
    WatanabeYoshidaMinimalRelative,
    HunekeMcDermottHkForNormal,
    MonskyTeixeiraPFractals1, 
    BlickleEnescuOnRingsWithSmallHKMult,
    TrivediHKMultAndReduction,
    MonskyRationalityOfSomeHKMultsCounterexample, 
    VraciuANewVersionOfTightClosure,
    AberbachEnescuLowerBoundsForHK, 
    AberbachEnescuNewEstimatesOfHKMult,
    CelibkasDaoHunekeBoundsOnHK,
    SmirnovUpperSemiContinuityOfHK,
    EpsteinYaoSomeExtensionsOfHKMult,    
    NunezBetancourtHKMultAndThresholds,    
    TrivediWatanabeHilbertKunzDensityFunctions}.  
    It is connected deeply with tight closure theory \cite{HochsterHunekeTC1,WatanabeYoshidaMinimalRelative,BrennerMonsky} and has played an important role in recent advances on Lech's conjecture and Lech's inequality \cite{MaLechsConjectureDimensionThree,HunekeMaQuySmirnovAsymptoticLech}.

    In a similar direction, building upon work of Smith and Van den Bergh \cite{SmithVanDenBerghSimplicityOfDiff}, Huneke and Leuschke coined the term $F$-signature \cite{HunekeLeuschkeTwoTheoremsAboutMaximal}.  There, assuming that $(R, \m)$ is a Noetherian local domain of characteristic $p > 0$ with perfect residue field (for simplicity), they defined $a_e(R)$ to be the maximal number of simultaneous free-summands of $R^{1/p^e}$ as an $R$-module (the \emph{free-rank of $R^{1/p^e}$}).  The $F$-signature of $R$ is then defined to be:
    \[
        s(R) := \lim_{e \to \infty} {a_e(R) \over p^{ed}}
    \]
    although it was not until \cite{TuckerFsigExists} that the limit was shown to exist in general (it is expected also to be sometimes irrational).  Notice that the $F$-signature is at most $1$ (with equality if and only if $R$ is regular \cite{WatanabeYoshidaHKMultAndInequality}, and see \cite{HunekeLeuschkeTwoTheoremsAboutMaximal,HunekeYaoMinimalHKMultiplicity}) and it was shown in \cite{AberbachLeuschke} that $R$ is strongly $F$-regular if and only if $s(R) > 0$.  This time, more severe singularities have \emph{smaller} $F$-signature.
    
    Another way to interpret the numbers $a_e(R)$ is to observe that if 
    \[
        I_e = \{x \in R \;|\; \phi(x^{1/p^e}) \in \m \text{ for all } \phi \in \Hom_R(R^{1/p^e}, R) \}
    \]
    then we have $a_e(R) = \length_R(R/I_e)$ so that 
    \begin{equation}
        \label{eq.SignatureViaIeColength}\tag{*}
        s(R) = \lim_{e \to \infty} {\length_R(R/I_e) \over p^{ed}}.
    \end{equation}
    This latter description will be important for us shortly.  Again, $F$-signature has been extensively studied within commutative algebra, see for instance
    \cite{
    SinghFSignatureOfAffineSemigroup,  
    AberbachExistenceOfTheFSignature,   
    AberbachEnescuWhenDoesTheFSignatureExist,
    YaoObservationsAboutTheFSignature,   
    EnescuYaoTheLowerSemicont,      
    VonKorffFSigOfAffineToric,  
    BlickleSchwedeTuckerFSigPairs1,  
    HashimotoNakajimaGeneralizedFSignature,
    SannaiDualFSignature,  
    HernandezJeffriesLocalOkunkovBodies,
    PolstraTuckerCombinedApproach,  
    DeStefaniPolstraYaoGlobalizing,  
    CarvajalRojasStablerFiniteMapsSplittingPrimes,
    BrennerJeffriesNunezBetancourtQuantifyingSingularities,  
    PolstraATheoremAboutMCM,  
    MartinTheNumberOfTorsionDivisors}.  One notable application of $F$-signature has been towards the \'etale fundamental group, a problem we now discuss the history of.

    In \cite[Question 26]{KollarNewExamples}, and partially inspired by \cite{GurjarZhangPi1OfTheSmoothPointsOfLogDelPezzo,FujikiKobayashiLuOnFundamentalGroupOfCertainOpenNormalSurfaces,KeelMcKernanRationalCurvesSurfaces} see also \cite{ZhangTheFundamentalGroupOfTheSmoothPartOfALogFanoVariety}, Koll\'ar conjectured that for any log terminal (klt) singularity $x \in X$ over $\bC$, the fundamental group of the link of the singularity\footnote{Intersect a small Euclidean sphere with the singularity.} is finite.  While this was recently solved by Braun in \cite{BraunTheLocalFundamentalGroupOfAKLT}, earlier in \cite{XuEtaleFundamentalGroup}, C.~Xu showed that the \'etale fundamental group of the punctured spectrum of a KLT singularity is finite by using boundedness results of \cite{HaconMcKernanXuACCForLCThresholds}, also see \cite{GrebKebekusPeternellEtaleFundamentalGroupsKLT,TianXuFinitenessOfFundamentalGroups}.  In \cite{CarvajalRojasSchwedeTuckerFundamentalGroup}, inspired by Xu's result, the authors proved that the \'etale fundamental group of the regular locus of a strongly $F$-regular singularity is finite, see also \cite{BhattCarvajalRojasGrafSchwedeTuckerFunGroups,StibitzEtaleCoversAndLocalFunGroups,BrennerJeffriesNunezBetancourtQuantifyingSingularities,SmirnovJeffriesTransformationRuleForNaturalMult,BhattGabberOlssonFinitenessOfFundamental}.  There, the key point was the following facts: for a Noetherian complete local ring $(R, \m)$ with algebraically closed residue field, we have
    \begin{enumerate}
        \item $s(R) \leq 1$;
        \item If $R$ is strongly $F$-regular, then $s(R) > 0$ (see \cite{AberbachLeuschke});
        \item If $R \subseteq S$ is quasi-\'etale (i.e., \'etale in codimension 1) and $R$ is strongly $F$-regular, then $s(S) = [K(S) : K(R)] \cdot s(R)$  (see \cite{VonKorffToricVarieties,CarvajalRojasSchwedeTuckerFundamentalGroup}).    
    \end{enumerate}
    These three facts together easily give an explicit bound on the size of the fundamental group of the regular locus $U$ of $\Spec R$.
    \[
        |\pi_1^{\text{\'et}}(U)| \leq 1/s(R).
    \]
    Normalized volume, which can be thought as a characteristic zero invariant closely related to $F$-signature, see \cite{LiLiuXuAGuidedTourNormalizedVolume}, also provides an explicit upper bound on the size of the fundamental group of the regular locus (which, since it is finite, equals the \'etale fundamental group).  Indeed, \cite{LiuXuKStabilityOfCubicThreefolds} established that the normalized volume is bounded above by $d^d$ where $d = \dim R$, the analog of (a) above.  The normalized volume variant of (b), for klt singularities, is positive by \cite[Corollary 3.4]{LiMinimizingNormalizedVolumes} \cf \cite{BlumExistenceOfValuationsWithSmallestNormalized}.  Finally, \cite{XuZhuangUniquenessOfTheMinimizerOfTheNormalizedVolume} proved that normalized volume satisfies a transformation rule analogous to (c) above.  Note differential signature and a similar strategy can also be used to prove finiteness of local \'etale fundamental groups for certain classes of singularities including summands of regular rings in characteristic zero (or strongly $F$-regular rings in characteristic $p > 0$), see \cite{SmirnovJeffriesTransformationRuleForNaturalMult,BrennerJeffriesNunezBetancourtQuantifyingSingularities}.  Such summands of regular rings include many klt singularities \cite{SchoutensLogTerminalSingularitiesAndVanishingTheoremsViaNonstandard,ZhuangDirectSummandsOfKLT}, also see \cite[Remark 8.4]{MalloryBignessOfTheTangentBundle}.

    The goal of this paper is to extend these results, and many basic facts about Hilbert-Kunz multiplicity and $F$-signature, into mixed characteristic.  The first obstruction one sees to mimicking the characteristic $p > 0$ approach, of course, is that there is no Frobenius map.  However, in mixed characteristic, there is an analog of the perfection map 
    \[ 
        R \to \varinjlim_e R^{1/p^e} =: R_{\perf}
    \] 
    from positive characteristic.  Indeed, suppose $R$ is a complete local Noetherian domain of mixed characteristic $(0, p> 0)$, that is $R$ has characteristic zero, while the residue field has characteristic $p > 0$.  Further suppose that $R$ has perfect residue field and that  % (that is $R$ has mixed characteristic $(0,p>0)$) 
    \[
        A = W(k)\llbracket x_2, \dots, x_d\rrbracket \subseteq R
    \]
    is a Noether-Cohen normalization (in other words, $R$ is a finite $A$-module).  Set $\Ainfty$ to be the $p$-adic completion of the integral extension $A[p^{1/p^{\infty}}, x_2^{1/p^{\infty}}, \dots, x_d^{1/p^{\infty}}]$. Then $\Ainfty$ is a perfectoid ring.  Then Bhatt-Scholze \cite{BhattScholzepPrismaticCohomology} defines the \emph{perfectoidization} of $R \otimes_A \Ainfty$:
    \[
        R^{{\Ainfty}}_{\perfd} := \big(R \otimes_A \Ainfty \big)_{\perfd}.    
    \]
    This ring is, roughly speaking, the smallest perfectoid ring that $R \otimes_A \Ainfty$ maps to.

    There is another obstruction however.  The ring $R^{{\Ainfty}}_{\perfd}$ is non-Noetherian and although, if we want to mimic \autoref{eq.SignatureViaIeColength}, we can define the ideal 
    \[
        I_{\infty} = \big\{ z \in R^{{\Ainfty}}_{\perfd} \;|\; R \xrightarrow{1 \mapsto z} R^{{\Ainfty}}_{\perfd} \text{ does not split} \big\},
    \]
    the quotient $R^{{\Ainfty}}_{\perfd}/I_{\infty}$ has infinite length (even in equal characteristic $p > 0$).  In particular, there is no obvious limit to take.  To solve this issue we use normalized length $\lambda_{\infty}$ in the sense of Faltings \cite{faltings2002almost}, see also \cite{GabberRameroFoundationsAlmostRingTheory}.

    We define \emph{perfectoid signature of $R$} with respect to the choice of regular system of parameters of $A$, $\underline{x} = (x_1 = p, x_2, \dots, x_d)$ to be
    \[
        s^{\underline{x}}_{\perfd}(R) := \lambda_{\infty}(R^{{\Ainfty}}_{\perfd} / I_{\infty}).
    \]    
    This definition is in analogy with our alternate definition of $F$-signature from \autoref{eq.SignatureViaIeColength}.  Note in equal characteristic $p > 0$, $R^{{\Ainfty}}_{\perfd}$ coincides with $R_{\perf} = \bigcup_{e > 0} R^{1/p^e}$ and $I_{\infty} = \bigcup_{e > 0} I_e^{1/p^e}$, see \autoref{prop.I_inftyeEqualstoUnion}.  Still in characteristic $p > 0$, we then show in \autoref{thm.SignatureHKviaNormalizedLength} that  
    \[
        s(R) = s^{\underline{x}}_{\perfd}(R).
    \]
    In particular, our perfectoid signature generalizes the $F$-signature from characteristic $p > 0$.

%    Continue to suppose that $(R, \m)$ is a complete Noetherian local domain with perfect residue field of characteristic $p > 0$, and we fix a Noether-Cohen normalization $W(k)\llbracket x_2, \dots, x_d \rrbracket = A \subseteq R$.

    We prove the following results about perfectoid signature.  All of our results, or rather their analogs, were previously known in positive characteristic.  While our methods in most cases could be adapted to the equal characteristic $p > 0$ case using perfection instead of perfectoidization, this would not be an efficient way to prove these results.

    \begin{theoremA*}[{\autoref{prop.sFboundby1}, \autoref{cor.PerfdSignature1IsRegular}, \cf \cite{HunekeLeuschkeTwoTheoremsAboutMaximal,WatanabeYoshidaHKMultAndInequality,HunekeYaoMinimalHKMultiplicity}}]
        Suppose $R$ is a Noetherian complete local domain of mixed characteristic $(0, p> 0)$ with perfect residue field.  We then have that $0 \leq s_{\perfd}^{\underline{x}}(R) \leq 1$.  Furthermore $s_{\perfd}^{\underline{x}}(R) = 1$ (for one, or equivalently, all choices of $\underline{x}$) if and only if $R$ is regular.
    \end{theoremA*}

    We next recall the notion of BCM-regular rings.  We say that $R$ is \emph{weakly BCM-regular} if for every perfectoid big Cohen-Macaulay $R^+$-algebra $B$ the map $R \to B$ is split (equivalently the map is pure, since $R$ is complete).  If $R$ is $\bQ$-Gorenstein, we call $R$ \emph{BCM-regular}.  More generally, if $\Delta \geq 0$ is a $\bQ$-divisor such that $K_R + \Delta$ is $\bQ$-Cartier, we say that $(R, \Delta)$ is \emph{BCM-regular} if $R \to R^+(f^* \Delta) \otimes_{R^+} B$ is split for every perfectoid big Cohen-Macaulay $R^+$-algebra $B$ (here $f : \Spec R^+ \to \Spec R$ is the canonical map).  We refer to \cite{MaSchwedeSingularitiesMixedCharBCM} and \autoref{sec.BCM-regularrityPerfdSignature} for more discussion, see also \cite{PerezRG}.  Note that perfectoid big Cohen-Macaulay algebras exist in mixed characteristic by \cite{AndreWeaklyFunctorialBigCM}; indeed even $\widehat{R^+}$ is itself a perfectoid big Cohen-Macaulay algebra by \cite{BhattAbsoluteIntegralClosure} (this latter fact will not be used in any crucial way in this paper).

    \begin{theoremB*}[{\autoref{prop.dH-BCMRegularImpliesPerfdPositive}, \autoref{prop.SignaturePositiveImpliesPerturbationBCMRegular}, \cf \cite{AberbachLeuschke,HunekeLeuschkeTwoTheoremsAboutMaximal}}]
        Suppose that $R$ is a Noetherian complete local domain of mixed characteristic $(0, p> 0)$ with perfect residue field and that $(R, \Delta)$ is a BCM-regular pair (for instance if $R$ is $\bQ$-Gorenstein and BCM-regular).  Then $s_{\perfd}^{\underline{x}}(R) > 0$.  
        
        Conversely if $s_{\perfd}^{\underline{x}}(R) > 0$ then for every $0 \neq g \in R$, there exists $e > 0$, such that for any perfectoid big Cohen-Macaulay $R^+$-algebra $B$, the map $R \xrightarrow{1 \mapsto g^{1/p^e}} B$ splits.  In particular, $R$ is weakly BCM-regular.
    \end{theoremB*}

    We expect that if $s_{\perfd}(R) > 0$ then there exists a $\bQ$-divisor $\Delta \geq 0$ such that $(R, \Delta)$ is BCM-regular, but we do not know how to show this.  For additional discussion and other intermediate notions of (strong) BCM-regularity, and their comparison with the condition $s^{\underline{x}}_{\perfd}(R) > 0$, see \autoref{subsec.StrongBCMRegularity}.

    We also obtain the following transformation rule for perfectoid signature under quasi-\'etale maps.

    \begin{theoremC*}[{\autoref{thm.TransformationRule}, \cf \cite{HunekeLeuschkeTwoTheoremsAboutMaximal,VonKorffToricVarieties,CarvajalRojasSchwedeTuckerFundamentalGroup}}]
        Suppose $(R, \m) \subseteq (S, \frn)$ is a split\footnote{Note that $s_{\perfd}^{\underline{x}} > 0$ implies $R\to R^+$ is pure, and thus $R \to S$ splits automatically.} quasi-\'etale finite extension of Noetherian complete local domains of mixed characteristic $(0, p> 0)$ with algebraically closed residue field.  Then 
        \[
            s_{\perfd}^{\underline{x}}(S) = [K(S) : K(R)] \cdot s_{\perfd}^{\underline{x}}(R).
        \]
    \end{theoremC*}
    Furthermore, if the regular system of parameters of the Noether-Cohen normalization $A \subseteq R$ is chosen with more care, then the condition that $R \subseteq S$ is quasi-\'etale above can be weakened to simply requiring that:
    \begin{enumerate}
        \item $R \subseteq S$ is finite and split, 
        \item there exists $\phi \in \Hom_R(S, R)$ generating the $\Hom$-set as an $S$-module, and
        \item $\phi(\frn) \subseteq \m$.
    \end{enumerate}
    In the quasi-\'etale case we take $\phi$ to be the trace map.  This improvement gives us a transformation rule for perfectoid signature even for cyclic covers of index divisible by $p > 0$, \cf \cite{CarvajalRojasFiniteTorsors} for even stronger results in characteristic $p > 0$.

    As a consequence of Theorems A, B, and C we obtain the following result.
    \begin{theoremD*}[{\autoref{thm.FundamentalGroupApplication}, \autoref{thm.DivisorClassGroupApplication}, \cf \cite{CarvajalRojasSchwedeTuckerFundamentalGroup, CarvajalRojasFiniteTorsors}}]
        Suppose that $R$ is a Noetherian complete local domain of mixed characteristic $(0,p>0)$ with algebraically closed residue field.  If $U \subseteq \Spec R$ is the nonsingular locus, then  
        \[
            |\pi_1^{\text{\'et}}(U)| \leq {1 \over s_{\perfd}^{\underline{x}}(R)}
        \]
        where we interpret $1/0$ as infinity in the case that $s_{\perfd}^{\underline{x}}(R) = 0$ (for instance, if $R$ is not weakly BCM-regular).  
        Furthermore, 
        \[
            |\mathrm{Cl}(R)_{\mathrm{tors}}| \leq {1 \over s_{\perfd}^{\underline{x}}(R)}
        \]
        where $\mathrm{Cl}(R)_{\mathrm{tors}}$ is the torsion part of the divisor class group of $R$.
        In particular, if $(R, \Delta)$ is BCM-regular for some $\Delta \geq 0$, then $\pi_1^{\text{\'et}}(U)$ and $\mathrm{Cl}(R)_{\mathrm{tors}}$ are finite. 
 Finally $|\pi_1^{\text{\'et}}(U)|$ is not divisible by $p > 0$.
    \end{theoremD*}

    Using the work of Stibitz \cite{StibitzEtaleCoversAndLocalFunGroups} we can use this to recover a mixed characteristic version of the main result of \cite{GrebKebekusPeternellEtaleFundamentalGroupsKLT} (\cf \cite{BhattGabberOlssonFinitenessOfFundamental,BhattCarvajalRojasGrafSchwedeTuckerFunGroups}), see \autoref{thm.GrebKebekusPeternellVersion}.  
    
    As another corollary of this result, by taking cones, we obtain finiteness of the \'etale fundamental group of big open sets in log Fano pairs $(Y, \Delta)$ that are globally $\bigplus$-regular, in the sense of \cite{BMPSTWW-MMP} \cf \cite{TakamatsuYoshikawaMMP}, and remain so after small perturbations of $\Delta$, see \autoref{thm.FinitenessFunGroupsForBigOpenInLogFano}.

    We also introduce a mixed characteristic version of Hilbert-Kunz multiplicity.  We show that, if $(R, \m)$ is a complete Noetherian local domain of characteristic $p > 0$ and $I \subseteq R$ is $\m$-primary then
    \[
        e_{\HK}(I) = \lambda_{\infty}\big(R_{\perf} / I R_{\perf}\big)
    \]
    in \autoref{thm.SignatureHKviaNormalizedLength}.  Hence for a complete local domain $(R, \m)$ with perfect residue field of characteristic $p>0$ and Noether-Cohen normalization $W(k)\llbracket x_2, \dots, x_d \rrbracket = A \subseteq R$ as before, we \emph{define} the \emph{perfectoid Hilbert-Kunz multiplicity of an $\m$-primary ideal $I \subseteq R$} to be:
    \[
        e^{\underline{x}}_{\perfd}(I) := \lambda_{\infty} \big(R^{\Ainfty}_{\perfd}/ I R^{\Ainfty}_{\perfd}\big).
    \] 
    In the case that $I = \m$, we write $e^{\underline{x}}_{\perfd}(R)$ for $e^{\underline{x}}_{\perfd}(\m)$.

    We obtain the following result characterizing regularity in mixed characteristic, \cf \cite{WatanabeYoshidaHKMultAndInequality} and \cite{HunekeYaoMinimalHKMultiplicity} in characteristic $p>0$.

    \begin{theoremE*}[{\autoref{prop.eFHKgreaterthan1}, \autoref{prop.eFHKequals1regular}, \cf \cite{KunzOnNoetherianRingsOfCharP,WatanabeYoshidaMinimalRelative}}]
    Suppose $(R, \m)$ is a Noetherian complete local domain of mixed characteristic $(0, p> 0)$ with perfect residue field.  Then $e_{\perfd}^{\underline{x}}(R) \geq 1$ and we have equality (for one, or equivalently, all choices of $\underline{x}$) if and only if $R$ is regular.   
\end{theoremE*}

    We can also use perfectoid Hilbert-Kunz multiplicity to characterize BCM closure and extended plus closure of ideals in analogy with the fact that Hilbert-Kunz multiplicity characterizes tight closure \cite[Theorem 8.17]{HochsterHunekeTC1}, see also \cite[Section 5]{HunekeHilbertKunzMultiplicityAndFSignature}.

\begin{theoremF*}[{\autoref{prop.HKVsBCMClosureVsepfClosure}, \cf \cite{HochsterHunekeTC1}}]
    Suppose $(R, \m)$ is a Noetherian complete local domain of mixed characteristic $(0, p> 0)$ with perfect residue field. Then for every $\underline{x}$ and every pair of $\m$-primary ideals $I\subseteq J$, the following are equivalent: 
    \begin{enumerate}
        \item $e^{\underline{x}}_{\perfd}(I) = e^{\underline{x}}_{\perfd}(J)$.
        \item There exists a perfectoid big Cohen-Macaulay $R^+$-algebra $B$ such that $IB=JB$.
        \item $I^{epf}=J^{epf}$ where $(-)^{epf}$ denotes (full) extended plus closure.
    \end{enumerate}
    In particular, $R$ is weakly BCM-regular if and only if every $\m$-primary ideal $I$ is extended plus closed (\ie, $I^{epf}=I$).
    \end{theoremF*}
    
    Several other common facts about Hilbert-Kunz multiplicity also generalize to our mixed characteristic setting.  For instance, we show perfectoid Hilbert-Kunz multiplicity is bounded above by the Hilbert Samuel multiplicity $e_{\perfd}^{\underline{x}}(I) \leq e(I)$,  with equality when $I$ is generated by a system of parameters of $R$, see \autoref{cor.perfdHKparameterideal}.

    Finally, in analogy with the recent work on $F$-rational and relative $F$-rational signature found in \cite{HochsterYaoFratSignature,SmirnovTuckerRelativeFSignature} \cf \cite{SannaiDualFSignature}, we define the perfectoid rational and relative rational signature respectively as
    \begin{equation*}
        \begin{array}{crclc}
            \;\;\;\;\; & s^{\underline{x}}_{\ratperfd}(R) & = & {\displaystyle\inf_{\substack{\underline{y} \\ z \in ((\underline{y}):_R \m) \setminus (\underline{y})} } e^{\underline{x}}_{\perfd}(\underline{y}) - e^{\underline{x}}_{\perfd}(\underline{y},z)} & \text {and}\\\\
            \;\;\;\;\; &  s^{\underline{x}}_{\relperfd}(R)& = &{\displaystyle \inf_{\substack{\underline{y} \\ \underline{y}\subsetneq J \subseteq ((\underline{y}):_R \m)}} \frac{e^{\underline{x}}_{\perfd}(\underline{y}) - e^{\underline{x}}_{\perfd}(J)}{\length_R(J/(\underline{y}))}}
        \end{array}
    \end{equation*}
    We show they both characterize %(perfectoid\footnote{We drop the \emph{perfectoid} moniker in what follows, since all big Cohen-Macaulay algebras we work with are assumed to be perfectoid.}) 
    BCM-rationality in the sense of \cite{MaSchwedeSingularitiesMixedCharBCM}.  Recall a Noetherian complete local ring $(R,\m)$ of dimension $d$ and residue characteristic $p>0$ is BCM-rational if and only if $R$ is Cohen-Macaulay and for every perfectoid big Cohen-Macaulay algebra $B$, we have that $H^d_{\m}(R) \to H^d_{\m}(B)$ is injective.
    
    \begin{theoremG*}[{\autoref{thm.sratpositiveandbcmregular}, \cf \cite{SmirnovTuckerRelativeFSignature}}]
        Suppose $R$ is a Noetherian complete local domain of mixed characteristic $(0,p>0)$ with perfect residue field.  The following are equivalent:
        \begin{enumerate}
            \item $R$ is BCM-rational; 
            \item $s^{\underline{x}}_{\ratperfd}(R) > 0$; 
            \item $s^{\underline{x}}_{\relperfd}(R) > 0$. 
        \end{enumerate}
    \end{theoremG*}
    
    Finally, some open questions on perfectoid signature and further directions for research are described in \autoref{sec.FurtherQuestions}.

\fakesubsection{Acknowledgements} The authors would like to thank Bhargav Bhatt for numerous valuable conversations and a great deal of encouragement.  We would also like to thank Neil Epstein, Anurag Singh and Ilya Smirnov for valuable conversations.  We thank Bhargav Bhatt, Javier Carvajal-Rojas, Luis Nu\~nez Betancourt, and Shravan Patankar for valuable comments on a previous draft.  Finally, we thank the anonymous referee who provided a great deal of valuable feedback and pointed out many typos.

%\todo{{Hilbert-Kunz multiplicity history, $F$-signature history, normalized length interpretation for both\\
%Mixed characteristic definition.  \\
%BCM regularity when $>0$.\\
%Transformation rule and application to fundamental groups}}
\section{Preliminaries}

All rings considered are commutative with unity.  We frequently leave the Noetherian category however.  Note, for a complete Noetherian local domain $(R, \m)$ of mixed characteristic $(0, p > 0)$ and residue field $k=R/\m$, we use the term \emph{Noether-Cohen normalization} to describe a subring $V\llbracket x_1, \dots, x_d \rrbracket =: A \subseteq R$ with $R$ a finite $A$-module and $V$ a complete and unramified mixed characteristic DVR (typically we can take $V = W(k)$ since our residue field $k$ is usually perfect, see \cite[Theorem 84]{MatsumuraCommutativeAlgebra} or \cite[Corollary 22]{Hochster.StructureOfCompleteLocal}).  These exist by the Cohen structure theorem, \cf \cite[\href{https://stacks.math.columbia.edu/tag/0323}{Tag 0323}]{stacks-project}, \cite{Hochster.StructureOfCompleteLocal}, and \cite[Section 37]{MatsumuraCommutativeAlgebra}.

\subsection{Pure maps, split maps, and trace}    
\label{subsec:PureSplitMap}
We first recall that a map of $R$-modules $N\to M$ is pure if after tensoring with any $R$-module $K$, the map $N\otimes_RK\to M\otimes_RK$ is injective. Clearly, a split map is pure and a pure map is injective.

     Suppose $(R,\m)$ is a Noetherian local ring and $M$ is an $R$-module, not necessarily finitely generated. We collect the following basic facts.
    \begin{enumerate}
        \item A map $R\to M$ is pure if and only if the induced map $E\to E\otimes_RM$ is injective, where $E:=E_R(k)$ denotes
the injective hull of the residue field $k=R/\m$, see \cite[Lemma (2.1) (e)]{HochsterHunekeApplicationsofBigCM}.
\item If $R$ is complete, then a map $R\to M$ is pure if and only if it is split, see \cite[Lemma 1.2]{FedderFPureRat}.
    \end{enumerate}

We next recall that a Noetherian local ring $(R,\m)$ is approximately Gorenstein if there exists a decreasing
sequence of $\m$-primary ideals $I_1\supseteq I_2\supseteq \cdots \supseteq I_t \supseteq \cdots $ such that every $R/I_t$
is a Gorenstein ring and that for every $n>0$, there exists $t$ such that $I_t\subseteq \m^n$. Evidently, a Gorenstein local ring is approximately Gorenstein, since we may take $I_t=(x_1^t, \dots, x_d^t)$ for any system of parameters $x_1,\dots,x_d$ of $R$. But the approximately Gorenstein condition is much weaker: in fact, every Noetherian complete reduced ring is approximately Gorenstein, see \cite[Theorem 1.6]{HochsterCyclicPurity}. 

In this paper,  we will need the following well-known fact about splitting criteria for approximately Gorenstein rings, and we will typically apply this when $(R,\m)$ is a complete local domain. 
    
    \begin{lemma}
    \label{lem.SplittingApproxGorenstein}
        Let $(R,\m)$ be a Noetherian local ring that is approximately Gorenstein and let $M$ be an $R$-module. Consider the submodule: 
        \[ N := \{x \in M \;|\; R \xrightarrow{1 \mapsto x} M \text{ is not pure} \}. \]
        Then we have $$N= \bigcup_t (I_tM :_M u_t),$$
        where $u_t$ is a socle representative of $R/I_t$.
    \end{lemma}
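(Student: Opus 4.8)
The plan is to translate purity of the map $R \xrightarrow{1\mapsto x} M$ into a condition on the injective hull $E := E_R(k)$. By the purity criterion recalled above, this map is pure if and only if $E = E\otimes_R R \to E\otimes_R M$, $e\mapsto e\otimes x$, is injective. Since $E$ is an essential extension of its socle $\operatorname{soc}(E)\cong k$, and $\operatorname{soc}(E)$ is simple, every nonzero submodule of $E$ contains $\operatorname{soc}(E)$; so if $\epsilon$ denotes a fixed generator of $\operatorname{soc}(E)$, the displayed map is injective exactly when $\epsilon\otimes x\neq 0$ in $E\otimes_R M$. Equivalently, $N$ is precisely the kernel of the $R$-linear map $M\to E\otimes_R M$, $m\mapsto\epsilon\otimes m$ (which, incidentally, is why $N$ is a submodule).

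Next I would write $E$ as the directed union $E = \bigcup_t(0:_E I_t)$: this holds because $E$ is $\m$-power torsion and, by hypothesis, the $\m$-primary ideals $I_t$ are cofinal among the powers of $\m$; directedness is clear since $I_1\supseteq I_2\supseteq\cdots$. As $-\otimes_R M$ commutes with directed colimits, $E\otimes_R M = \varinjlim_t\big((0:_E I_t)\otimes_R M\big)$, and the maps $M\to (0:_E I_t)\otimes_R M$, $m\mapsto\epsilon\otimes m$, are compatible as $t$ grows. Hence $N = \ker\big(M\to\varinjlim_t (0:_E I_t)\otimes_R M\big) = \bigcup_t\ker\big(M\to (0:_E I_t)\otimes_R M\big)$, using that the kernel of a map into a directed colimit is the union of the kernels of the compatible maps into the individual terms.

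It remains to identify each term of this union. By Matlis duality over the Artinian quotient $R/I_t$, one has $(0:_E I_t) = \Hom_R(R/I_t, E)\cong E_{R/I_t}(k)$, and since $R/I_t$ is Artinian Gorenstein local this is isomorphic to $R/I_t$ itself. Under such an $R$-module isomorphism $(0:_E I_t)\cong R/I_t$, the socle generator $\epsilon$ is carried to a generator of $\operatorname{soc}(R/I_t)$, hence to a unit multiple of the class of the socle representative $u_t$ (and membership of $u_t x$ in $I_tM$ does not depend on which socle representative is chosen). Tensoring with $M$ yields $(0:_E I_t)\otimes_R M\cong M/I_tM$ sending $\epsilon\otimes m$ to $\overline{u_t m}$, so $\ker\big(M\to (0:_E I_t)\otimes_R M\big) = (I_tM:_M u_t)$. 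Combining the three steps gives $N = \bigcup_t(I_tM:_M u_t)$.

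The one genuinely delicate point is this last step: pinning down the isomorphism $(0:_E I_t)\cong R/I_t$ and checking that it matches $\epsilon$ with the class of $u_t$ up to a unit, together with verifying that the structure maps $M\to (0:_E I_t)\otimes_R M$ are compatible so that the passage ``kernel of the colimit map $=$ union of the kernels'' is legitimate. Everything else is a direct combination of the purity criterion with the fact that $E$ is an essential extension of its simple socle.
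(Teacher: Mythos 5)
Your proof is correct and follows essentially the same route as the paper's: both identify $E_R(k)$ with the filtered union $\bigcup_t(0:_E I_t)\cong\varinjlim_t R/I_t$, use the purity criterion via $E$, and identify the socle generator of $(0:_E I_t)\cong R/I_t$ with (a unit multiple of) $u_t$. The only cosmetic difference is that you phrase the argument as "kernel of the map into the colimit equals the union of kernels into the terms," while the paper phrases it as "purity holds iff each $R/I_t\to M/I_tM$ is injective iff $\overline{u_t}\neq 0$ in $M/I_tM$ for all $t$" — these are two bookkeepings of the same computation, and your extra care about the compatibility of the structure maps and the identification of $\epsilon$ with $\bar u_t$ is exactly the point flagged (and handled the same way) in \autoref{rmk.ApproxGorensteinChain}.
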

    \begin{proof}
Let $E=E_R(k)$ denote the injective hull of the residue field $k=R/\m$. The point is that we have $$E=\bigcup_t \Ann_E{I_t}\cong \varinjlim_{t} E_{R/I_t} \cong \varinjlim_{t} R/I_t.$$ 
Since $R\to M$ is pure if and only if $E\to M\otimes_RE$ is injective, it follows that $R\to M$ is pure if and only if $R/I_t \to M/I_tM$ is injective for all $t$, which happens if and only if the image of $u_t$ in $M/I_tM$ is nonzero for all $t$. Thus $x\in N$ if and only if $u_tx\in I_tM$ for some $t$, i.e, $x\in \bigcup_t (I_tM :_M u_t)$. 
    \end{proof}
    
    \begin{remark}
    \label{rmk.ApproxGorensteinChain}
    In \autoref{lem.SplittingApproxGorenstein}, the submodules $(I_tM :_M u_t)$ of $M$ actually form an ascending chain. To see this, note that we have $R$-linear maps $R/I_t\cong \Ann_EI_t\subseteq \Ann_EI_{t+1}\cong R/I_{t+1}$. Suppose it sends $1\in R/I_t$ to $r_t\in R/I_{t+1}$, then we must have $r_tI_t\subseteq I_{t+1}$, and up to units $u_{t+1}=r_tu_t$. Thus if $u_tx\in I_tM$ for some $x\in M$, then $u_{t+1}x=r_tu_tx\in r_tI_tM\subseteq I_{t+1}M$.
    \end{remark}

We also record the following lemma, which is well-known to experts, but we do not know a good reference.

\begin{lemma}
    \label{lem.IinfinityForFlat}
    Suppose $(R, \m)$ is a Noetherian local ring and $M$ is a flat $R$-module.  Consider the submodule
    \[
        N := \{x \in M \;|\; R \xrightarrow{1 \mapsto x} M \text{ is not pure.} \}
    \]
    Then $N = \m M$.  
\end{lemma}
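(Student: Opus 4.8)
The plan is to establish $\m M = N$ by proving the two inclusions, with essentially all the work in $N \subseteq \m M$.

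The inclusion $\m M \subseteq N$ is formal: if $x \in \m M$, then tensoring $R \xrightarrow{1 \mapsto x} M$ with $k = R/\m$ yields the map $k \to M/\m M$ carrying $1$ to the class of $x$, which is zero; since $k \neq 0$ this map is not injective, so $R \xrightarrow{1 \mapsto x} M$ fails to be pure and $x \in N$.

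For $N \subseteq \m M$ I would prove the contrapositive: if $x \in M \setminus \m M$, then $R \xrightarrow{1 \mapsto x} M$ is pure. The tempting strategy is to form the sequence $0 \to R \xrightarrow{1 \mapsto x} M \to M/Rx \to 0$ and conclude from flatness of $M/Rx$ that it is universally exact, but this is circular: flatness of $M/Rx$ unwinds, via the long exact $\Tor$ sequence, to precisely the statement that $Q \to M \otimes_R Q$ is injective for every $Q$, which is what we want. The way around this is to invoke Lazard's theorem: write $M = \varinjlim_i F_i$ as a filtered colimit of finite free $R$-modules, with structure maps $\phi_i \colon F_i \to M$ and transition maps $\phi_{ij}$, and choose $i_0$ and $y \in F_{i_0}$ with $\phi_{i_0}(y) = x$. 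Then $y \notin \m F_{i_0}$ (otherwise $x \in \m M$), and for the same reason $\phi_{i_0 j}(y) \notin \m F_j$ for every $j \geq i_0$: were it in $\m F_j$, the image of $x$ in $M/\m M$ would be zero. Over the local ring $R$, an element of a finite free module not lying in $\m F_j$ extends to a basis, so $R \xrightarrow{1 \mapsto \phi_{i_0 j}(y)} F_j$ is split injective with finite free cokernel. Passing to the filtered colimit over the cofinal subsystem $\{j \geq i_0\}$ shows that $R \xrightarrow{1 \mapsto x} M$ is injective and that $M/Rx \cong \varinjlim_j F_j / R\phi_{i_0 j}(y)$ is a filtered colimit of free modules, hence flat. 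Since the cokernel of $0 \to R \xrightarrow{1 \mapsto x} M \to M/Rx \to 0$ is now known to be flat, tensoring with an arbitrary $R$-module $Q$ kills $\Tor_1$ and leaves $Q = R \otimes_R Q \hookrightarrow M \otimes_R Q$ exact; that is, $R \xrightarrow{1 \mapsto x} M$ is pure, so $x \notin N$.

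The main obstacle is spotting that the naive argument is circular and realizing that Lazard's structure theorem for flat modules is the extra ingredient that breaks it; once $M$ is exhibited as a filtered colimit of finite free modules, the remaining steps are routine colimit bookkeeping. One could instead reduce to the case $R$ Artinian, using the criterion that $R \to M$ is pure iff $E_R(k) \to E_R(k) \otimes_R M$ is injective together with $E_R(k) = \varinjlim_t \Ann_{E_R(k)}(\m^t)$ and flatness of $M$ to descend to the quotients $R/\m^t$, over which flat modules are free; but this requires the same flavor of structural input about flat modules.
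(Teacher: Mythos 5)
Your proof is correct, and it takes a genuinely different route from the paper's.

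The paper's argument stays within the injective-hull picture it has already set up in \autoref{subsec:PureSplitMap}: it uses the criterion that $R \to M$ is pure iff $E := E_R(k) \to E \otimes_R M$ is injective, tensors the inclusion $k \hookrightarrow E$ with the \emph{flat} module $M$ to get an injection $k \otimes M \hookrightarrow E \otimes M$, and then observes that the map $E \to E \otimes M$ ($1 \mapsto x$) restricted to the socle $k$ is injective (this is just $x \notin \m M$); since $k$ is essential in $E$, the map is injective on all of $E$. No structure theorem for flat modules is invoked.

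Your argument instead invokes Lazard's theorem to write $M = \varinjlim_i F_i$ and deduce that $R \xrightarrow{1 \mapsto x} M$ is injective with flat cokernel, from which purity follows by the long exact $\Tor$ sequence. This is more work but proves something slightly stronger (the cokernel $M/Rx$ is flat, not merely that the map is pure). Your closing remark that one could ``instead reduce \dots using the criterion that $R \to M$ is pure iff $E_R(k) \to E_R(k) \otimes_R M$ is injective'' is close to what the paper does; the paper just does not need to pass through the Artinian quotients, because flatness already gives the injection $k \otimes M \hookrightarrow E \otimes M$ directly and the essentiality of the socle in $E$ finishes the job. Both routes are fine; the paper's is shorter and reuses machinery already in play, while yours is more self-contained and makes the flatness of the cokernel explicit. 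One small point worth flagging in your write-up: the observation that the naive ``$M/Rx$ flat'' argument is circular is only circular if you try to \emph{assume} flatness of $M/Rx$; what you actually do is \emph{derive} flatness via Lazard, so there is no circularity — you might phrase that a bit more carefully so the reader does not think you are still worried about it.
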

\begin{proof}
First of all, it is clear that $\m M\subseteq N$ since if $x\in \m M$, then the map $R\xrightarrow{1 \mapsto x} M$ induces the zero map $R/\m \to M/\m M$ and thus is not pure.
    %We first claim that $\m M \subseteq N$.  For any $z \in \m$ and $x \in M$, it suffices to show that the map $R \xrightarrow{1 \mapsto zx} xR$ is not pure.  But since $xR$ is finitely generated, this map is pure if and only if it is split, and this map is certainly not split by $R$-linearity.  Indeed, this containment does not require that $N$ is flat.

    For the reverse containment, suppose that $x \in M \setminus \m M$ and consider the map $R \xrightarrow{1 \mapsto x} M$.  Tensoring with $k = R/\m$ we see the induced map $k \xrightarrow{1 \mapsto x} k \otimes M = M/\m M$ is nonzero and hence injective.  Next, since $M$ is flat, if $E := E_R(k)$ is the injective hull, we see that $k \otimes M \to E \otimes M$ is injective.  We then have the following commutative diagram:
    \[
        \xymatrix{
            k \ar@{^{(}->}[d]_{1 \mapsto x} \ar[r]^{\subseteq} & E \ar[d]^{1 \mapsto x}\\
            k \otimes M \ar@{^{(}->}[r] & E \otimes M.
        }
    \]  
    Since the image of $k$ in $E$ is the socle, it follows that the right vertical map $E \xrightarrow{1 \mapsto x} E \otimes M$ injects on the socle, and so it itself injects as well.  Thus $R \xrightarrow{1 \mapsto x} M$ is pure, as desired.
\end{proof}

A common way to induce splittings is via trace.  Recall that if $A \to B$ is a map of rings and $B$ is a finite free $A$-module, then for each $b \in B$, we can view the multiplication-by-$b$ map as an $A$-linear transformation on $B$ and so associate to $b$ the trace of the corresponding matrix.  We denote this value by $\Tr(b)$, \cite[\href{https://stacks.math.columbia.edu/tag/0BSY}{Tag 0BSY}]{stacks-project}. 

Now if $R\to S$ is a finite extension of Noetherian normal domains with fraction fields $K$ and $L$ respectively, then $\Tr_{L/K}(-)$ sends $S$ to $R$ and thus induces a map $\Tr\in \Hom_{R}(S, R)$ (see \cite[Pages 33-35]{HochsterFoundations}).
We collect some well known facts about trace for future use.  

\begin{comment}
\begin{lemma}
    Suppose $R \subseteq S$ is an integral extension of rings such that after inverting some $g \in R$, we have that $S[1/g]$ is a free $R[1/g]$-module.  We thus obtain an $R$-linear map 
    $\Tr : S[1/g] \to R[1/g]$ as above.  
     
     If $R$ is integrally closed in $R[1/g]$, then $\Tr(S) \subseteq R$.  As a consequence, if $[K(S) : K(R)]$ is a unit in $R$, then ${1 \over [K(S) : K(R)]} \Tr$ splits the extension $R \to S$.
\end{lemma}
\begin{proof}
\todo{{Find a reference}}  
\end{proof}
\end{comment}

\begin{lemma}
\label{lem.TraceForNormalDomains}
    Suppose that $R \subseteq S$ is a finite extension of normal Noetherian domains.  
    
    \begin{enumerate}
        \item $\Tr \in \Hom_R(S, R) \cong \omega_{S/R}$, viewed as a section, induces the (effective) ramification divisor $\Ram$ of $\Spec (S) \to \Spec (R)$.  
    In particular, if $R \subseteq S$ is \'etale in codimension one, then $\Tr$ generates $\Hom_R(S, R)$ as an $S$-module. 

    \item $\Tr : S \to R$ extends to $\omega_{S/R} \cong S(\Ram) \to R$.  By twisting by $\omega_R$ and reflexifying, the resulting map also coincides with the Grothendieck dual to the inclusion $R \subseteq S$ on canonical modules, $\omega_S \to \omega_R$, up to a unit of $S$.
    \item If $I \subseteq R$ is a radical ideal, then $\Tr(\sqrt{IS}) \subseteq I$.
    \end{enumerate}
\end{lemma}
\begin{proof}
    For the first statement, see for instance \cite[Proposition 4.8]{SchwedeTuckerTestIdealFiniteMaps} as well as \cite{MoriyaTheorieDerivationen,SchejaStorchUberSpurfunktionen,deSmitTheDifferentAndDifferentials}.

    For the second statement, for any map $\phi \in \Hom_R(S, R)$, there is an induced extension $\phi : S(D_{\phi}) \to R$ where $D_{\phi}$ is the effective divisor associated to $\phi$ and furthermore $\phi$ generates $\Hom_R(S(D_{\phi}), R)$ as an $S$-module (one can check this in codimension 1 where $R$ becomes a DVR).  Hence the  obtained twisted $\omega_S \cong (S(\Ram) \otimes \omega_R)^{**} \to \omega_R$ also generates the corresponding $\Hom$-set.  But the Grothendieck dual map $\omega_S = \Hom_R(S, \omega_R) \xrightarrow{\text{evaluation at $1$}} \omega_R$ also generates $\Hom_R(\omega_S, \omega_R)$ (again, this can be verified explicitly in codimension 1) and so the claim follows as $\Hom_R(\omega_S, \omega_R) \cong S$ as $S$ is ($\mathrm{S}_2$).

    For the third statement, 
see \cite[Lemma 2.10]{CarvajalRojasSchwedeTuckerFundamentalGroup} or \cite[Lemma 9]{Speyer.FrobeniusSplitSubvarsPullBackInAlmostAll}.
\end{proof}

\subsection{Normalized length}
\label{section:normlength}

In this section we recall the notion of normalized length, introduced by Faltings, and some results of \cite{faltings2002almost,ShimomotoFrobeniusActionLocalCohomology,GabberRameroFoundationsAlmostRingTheory}.

In what follows, $(A,\m_A,k)$ will be a complete regular local ring of dimension $d$ with perfect residue field $k$ of characteristic $p>0$.  We will only consider $A$ that is of one of the following two forms.  
\begin{itemize}
    \item{} If $\mathrm{char} A = p > 0$, the \emph{equicharacteristic $p > 0$ case}, then we will set $A = \Lambda\llbracket x_1,\dots,x_d\rrbracket$ where $\Lambda = k$.  Notice that $x_1, \dots, x_d$ is a regular system of parameters.
    \item{} If $\mathrm{char} A = 0$, the \emph{unramified mixed characteristic case}, then we define $A = \Lambda\llbracket x_2, \dots, x_d \rrbracket$ where $\Lambda = W(k)$.  In this case we set $x_1 = p$ and notice that $x_1, \dots, x_d$ form a regular system of parameters.
\end{itemize}

In either case we define 
\[
    A_n:=A[x_1^{1/{p^n}},x_2^{1/{p^n}},\dots,x_d^{1/{p^n}}]. 
\] 
We observe that $(A_n\mid n\in\mathbb{N})$ forms a filtered system, and so, by taking the filtered colimit, we define 
\[ 
    \Ainfty^{nc}:=\varinjlim_{n} A_n.
\]  
Here $(-)^{nc}$ indicates that it is not necessarily $p$-complete.  Note that in the equicharacteristic $p > 0$ case we have that $\Ainfty^{nc} = A_{\perf}$.  However, in mixed characteristic, $\Ainfty^{nc}$ \emph{depends} on our choice of regular system of parameters ${\underline{x}}= p, x_2, \dots, x_d$.

We denote by $\Ainfty^{nc}\hyphen\text{Mod}_{\m_A}$ the full subcategory of $\Ainfty^{nc}\hyphen\Mod$ consisting of all the $\Ainfty^{nc}$-modules $M$ which are $\m_A$-power torsion, i.e. annihilated by $\m_A^N$ for some $N\in \mathbb{N}$.

\begin{definition}[Faltings' normalized length]
    \label{def.NormalizedLength}
    We define the normalized length in three steps.  First for finitely presented modules, then finitely generated modules, and finally in the general case.
    Suppose first that $M\in \Ainfty^{nc}\hyphen\text{Mod}_{\m_A}$ is finitely presented and so we can write $M\simeq M_n\otimes_{A_n}\Ainfty^{nc}$ for some finitely presented $A_n$-module $M_n$, which is also $\m_A$-power torsion, thus of finite length. We then define
    \[
        \lambda^{**}_\infty(M)\coloneqq ({1}/ p^{nd})\length_A(M_n).
    \]
    Next, assume that $M\in \Ainfty^{nc}\hyphen\text{Mod}_{\m_A}$ is finitely generated, we define
    \[
        \lambda^*_\infty(M)\coloneqq\inf\{\lambda^{**}_\infty(N)\mid N\twoheadrightarrow M,\ N\ \text{ is finitely presented} \}.
    \]
    In general, if $M$ is any object in $\Ainfty^{nc}\hyphen\text{Mod}_{\m_A}$, then we set
    \[ 
        \lambda_\infty(M)\coloneqq\sup\{\lambda^{*}_\infty(N)\mid N\subset M,\ N\ \text{is finitely generated} \}.
    \]
    We call $\lambda_\infty(M)$ the \emph{normalized length of $M$}.
\end{definition}

\begin{remark}
    Normalized length can be defined in more general settings. For more details, see \cite[Remark 14.5.69]{GabberRameroFoundationsAlmostRingTheory} and \cite[Appendix 2]{faltings2002almost}.
\end{remark}

%\todo{{\bf Karl:}  Actually, maybe we shouldn't include all the proofs.  They are written down in Kazuma's paper in our generality right, of course one should check it carefully...  What do you think?}

\begin{proposition}[\cf {\cite[Lemma 14.5.74]{GabberRameroFoundationsAlmostRingTheory}, \cite[Lemma 2.2]{ShimomotoFrobeniusActionLocalCohomology}}]
\label{prop.NormalizedLengthexists}
    $\lambda_\infty$ is a well-defined function from $\Ainfty^{nc}\hyphen\Mod_{\m_A}$ to $\mathbb{R}\cup\{\infty \}$ and agrees with $\lambda_{\infty}^*$ and $\lambda_{\infty}^{**}$ on finitely generated and finitely presented modules respectively.
\end{proposition}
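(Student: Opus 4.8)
The plan is to verify that the three-step definition is internally consistent by descending every module, map, and exact sequence to a finite level $A_n$ — where the ring is Noetherian — and tracking how $\length_A$ transforms under the base changes $A\hookrightarrow A_n\hookrightarrow A_m$. First I would record the structure of the tower: each $A_n = A[T_1,\dots,T_d]/(T_i^{p^n}-x_i)$ is a Noetherian local ring, module-finite and \emph{free} over $A$ of rank $p^{nd}$ (a monomial basis in the $T_i$ with exponents $<p^n$), so $A_n\hookrightarrow A_m$ is free of rank $p^{(m-n)d}$ and $A^{nc}_{\infty,0}=\varinjlim_n A_n$ is flat over every $A_n$, being a filtered colimit of finite free modules. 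Two consequences I will use repeatedly: for a finite-length $A_n$-module $Q$ one has $\length_A(Q\otimes_{A_n}A_m)=p^{(m-n)d}\length_A(Q)$ because $Q\otimes_{A_n}A_m\cong Q^{\oplus p^{(m-n)d}}$ as $A$-modules; and a short exact sequence of $A_n$-modules remains exact after $-\otimes_{A_n}A^{nc}_{\infty,0}$.

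For well-definedness of $\lambda^{**}_\infty$: given a finitely presented $\m_A$-power torsion module $M$, standard limit arguments for filtered colimits of rings produce a finitely presented $A_n$-module $M_n$ with $M\cong M_n\otimes_{A_n}A^{nc}_{\infty,0}$, and after enlarging $n$ (using that $\m_A$ is finitely generated) we may assume $\m_A^cM_n=0$, so $M_n$ has finite length over $A$. If $M'_{n'}$ is another such descent, passing to a common level and using that isomorphisms of finitely presented modules descend along $\varinjlim_m A_m$ yields $M_n\otimes_{A_n}A_m\cong M'_{n'}\otimes_{A_{n'}}A_m$ for $m\gg0$; the length-scaling formula then forces $p^{-nd}\length_A(M_n)=p^{-n'd}\length_A(M'_{n'})$. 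The same descent, applied now to a short exact sequence (whose kernel at level $A_n$ is finitely generated, hence finitely presented, since $A_n$ is Noetherian) together with exactness under $-\otimes_{A_n}A^{nc}_{\infty,0}$ and additivity of length, shows $\lambda^{**}_\infty$ is additive on short exact sequences of finitely presented $\m_A$-power torsion modules — in particular monotone under subobjects and quotients among them.

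Next, $\lambda^*_\infty$ agrees with $\lambda^{**}_\infty$ on finitely presented modules: ``$\le$'' uses the identity surjection, and for any finitely presented $P\twoheadrightarrow M$ one descends the surjection to a finite level and applies additivity to get $\lambda^{**}_\infty(P)\ge\lambda^{**}_\infty(M)$. (Also, any finitely generated $M$ with $\m_A^cM=0$ admits the finitely presented surjection $(A^{nc}_{\infty,0}/\m_A^c)^{\oplus r}\twoheadrightarrow M$, so the infimum defining $\lambda^*_\infty$ is over a nonempty set and lies in $\mathbb{R}_{\ge0}$.) Finally, $\lambda_\infty$ agrees with $\lambda^*_\infty$ on finitely generated modules once one knows $\lambda^*_\infty(N)\le\lambda^*_\infty(M)$ for $N\subseteq M$ finitely generated (the defining supremum for $\lambda_\infty(M)$ includes $N=M$). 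To see this, fix $\epsilon>0$, pick a finitely presented $P\twoheadrightarrow M$ with $\lambda^{**}_\infty(P)\le\lambda^*_\infty(M)+\epsilon$, lift finitely many generators of $N$ into $P$, and let $\tilde N\subseteq P$ be the submodule they generate, so that $\tilde N\twoheadrightarrow N$. The key point is that $\tilde N$ is again finitely presented: its chosen generators lie in $P_m:=P_n\otimes_{A_n}A_m$ for some finite $m$, so $\tilde N=\tilde N_m\otimes_{A_m}A^{nc}_{\infty,0}$ with $\tilde N_m\subseteq P_m$ finitely presented over the Noetherian ring $A_m$, and flatness keeps $\tilde N_m\otimes_{A_m}A^{nc}_{\infty,0}$ a submodule of $P$. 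Hence $\lambda^*_\infty(N)\le\lambda^{**}_\infty(\tilde N)=p^{-md}\length_A(\tilde N_m)\le p^{-md}\length_A(P_m)=\lambda^{**}_\infty(P)\le\lambda^*_\infty(M)+\epsilon$, and letting $\epsilon\to0$ finishes.

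I expect the main obstacle to be the descent bookkeeping: arranging that a finitely presented module, a chosen morphism, exactness, and surjectivity are all simultaneously realized at one finite level $A_m$, and especially the observation that a finitely generated submodule of a finitely presented $\m_A$-power torsion $A^{nc}_{\infty,0}$-module stays finitely presented — this is precisely what rescues monotonicity of $\lambda^*_\infty$ even though $A^{nc}_{\infty,0}$ is non-Noetherian. Everything else is length arithmetic transported along the finite free tower $A\subseteq A_n\subseteq A_m$.
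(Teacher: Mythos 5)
Your proof is correct and follows essentially the same descent-to-a-Noetherian-level strategy as the paper, exploiting that each $A_m$ is Noetherian and free of rank $p^{md}$ over $A$, and that $A^{nc}_{\infty,0}$ is flat over $A_m$. The only difference is cosmetic: for the finitely generated case the paper routes the key inequality $\lambda_\infty^*(N)\le\lambda_\infty^*(M)$ through the limit formula of \autoref{lem.Normalizedlengthfglimit} applied to nested generating sets $\Sigma'\subset\Sigma$, whereas you establish it directly by an $\epsilon$-approximation with a finitely presented surjection $P\twoheadrightarrow M$, the point being the same observation in both cases --- that a finitely generated submodule of a finitely presented $\m_A$-power-torsion $A^{nc}_{\infty,0}$-module descends to a finitely presented submodule at some finite level $A_m$ and hence is itself finitely presented.
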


\begin{proof}
    Suppose first that $M$ is finitely presented.  We must check that different presentations of $M$ give the same normalized length. Suppose $M\simeq M_n\otimes_{A_n} \Ainfty^{nc}\simeq M_m\otimes_{A_m} \Ainfty^{nc}$. Then there exists some finite $q \ge m,n$ such that $A_q\otimes_{A_m}M_m\simeq A_q\otimes_{A_n}M_n $ and the assertion follows from the computation on finite levels since $A_q$ is a free $A_n$-module of rank $p^{(n-q)d}$.  Furthermore, if $M$ is finitely presented then $\lambda_\infty^{**}(M)=\lambda_\infty^{*}(M)$ since $M$ surjects onto itself. 

    The more difficult step is to show that when $M$ is \emph{finitely generated} then $\lambda_\infty^*(M)=\lambda_\infty(M)$. This is in \cite[Lemma 14.5.74]{GabberRameroFoundationsAlmostRingTheory} or \cite[Lemma 2.2]{ShimomotoFrobeniusActionLocalCohomology}, but we reproduce the proof here for the convenience of the reader. By definition, $\lambda_\infty(M)\ge \lambda_\infty^*(M)$. Hence it suffices to show for any $M'\subset M$ finitely generated submodule, we have $\lambda_\infty^*(M')\le\lambda_\infty^*(M)$. We choose finite set of generators $\Sigma'\subset M'$ and $\Sigma\subset M$ with $\Sigma'\subset \Sigma$. We clearly have $\length_A(\Sigma' A_n)\le \length_A(\Sigma A_n)$ for $n\gg 0$, hence the proof is complete by \autoref{lem.Normalizedlengthfglimit} below.
\end{proof}

\begin{lemma}[\cf {\cite[Appendix 2]{faltings2002almost}, \cite[Lemma 2.3]{ShimomotoFrobeniusActionLocalCohomology}}]
    \label{lem.Normalizedlengthfglimit}
    Let $M\in \Ainfty^{nc}\hyphen\Mod_{\m_A}$ be finitely generated, $\Sigma$ any finite set of generators of $M$.  For each $n \gg 0$ let $M_n=\Sigma A_n \subseteq M$ be the $A_n$-submodule of $M$ generated by $\Sigma$.  We then have that 
    \[ 
        \lambda^*_\infty(M)=\lim_{n\to \infty}\lambda_\infty^*(M_n\otimes_{A_n}\Ainfty^{nc})=\lim_{n\to \infty}(1/p^{nd})\length_A(M_n).
    \]
\end{lemma}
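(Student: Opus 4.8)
The plan is to show that the non-increasing sequence $\ell_n := (1/p^{nd})\length_A(M_n)$ converges, and then to squeeze both $\lambda^*_\infty(M)$ and $\lambda^*_\infty(M_n\otimes_{A_n}A^{nc}_{\infty,0})$ against its limit. Throughout I would use freely that $A_q$ is a free $A_n$-module of rank $p^{(q-n)d}$ for $q\ge n$ --- so $\length_A(P\otimes_{A_n}A_q)=p^{(q-n)d}\length_A(P)$ for any $A_n$-module $P$ --- that $A^{nc}_{\infty,0}=\bigcup_n A_n$ is free over each $A_n$ with $A_n$ a split direct summand, and hence that $M=\bigcup_n M_n$ with each $M_n$ finitely presented over the Noetherian ring $A_n$ and $\m_A$-power torsion, thus of finite length over $A$.

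First I would settle convergence and the middle equality. Since $M_{n+1}=\Sigma A_{n+1}$ is a quotient of $M_n\otimes_{A_n}A_{n+1}\cong M_n^{\oplus p^d}$, we get $\length_A(M_{n+1})\le p^d\length_A(M_n)$, i.e.\ $\ell_{n+1}\le\ell_n$; as $\ell_n\ge 0$, the limit $L:=\lim_n\ell_n=\inf_n\ell_n$ exists. Moreover $M_n\otimes_{A_n}A^{nc}_{\infty,0}$ is finitely presented and $\m_A$-power torsion, so by \autoref{prop.NormalizedLengthexists} its normalized length agrees with $\lambda^{**}_\infty(M_n\otimes_{A_n}A^{nc}_{\infty,0})=(1/p^{nd})\length_A(M_n)=\ell_n$, which is the middle quantity in the statement.

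Next come the two inequalities. For $\lambda^*_\infty(M)\le L$: since $\Sigma$ generates $M$ over $A^{nc}_{\infty,0}$ the natural map $M_n\otimes_{A_n}A^{nc}_{\infty,0}\twoheadrightarrow M$ is surjective, so by definition $\lambda^*_\infty(M)\le\ell_n$ for every $n$, and letting $n\to\infty$ gives $\lambda^*_\infty(M)\le L$. For $L\le\lambda^*_\infty(M)$: fix any finitely presented $N$ with a surjection $\phi\colon N\twoheadrightarrow M$, write $N\cong N_m\otimes_{A_m}A^{nc}_{\infty,0}$ with $N_m$ of finite length over $A_m$, and for $q\ge m$ let $N_q\subseteq N$ be the image of $N_m\otimes_{A_m}A_q$, so that $N=\bigcup_q N_q$ and $\length_A(N_q)\le p^{(q-m)d}\length_A(N_m)$. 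Picking $\phi$-preimages of the finitely many elements of $\Sigma$, these lie in some $N_{q_0}$, so for $q\ge q_0$ we have $\Sigma\subseteq\phi(N_q)$; since $\phi(N_q)$ is an $A_q$-submodule of $M$ it then contains $M_q=\Sigma A_q$. Hence $\length_A(M_q)\le\length_A(\phi(N_q))\le\length_A(N_q)\le p^{(q-m)d}\length_A(N_m)$, i.e.\ $\ell_q\le(1/p^{md})\length_A(N_m)=\lambda^{**}_\infty(N)$ for all $q\ge q_0$; letting $q\to\infty$ gives $L\le\lambda^{**}_\infty(N)$. Taking the infimum over all such $N$ gives $L\le\lambda^*_\infty(M)$, so all three quantities agree.

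The one step I expect to need genuine care is this last inequality --- specifically the synchronization of the two exhaustive filtrations $\{N_q\}$ of $N$ and $\{M_n\}$ of $M$ through $\phi$: one must arrange that for large $q$ the image $\phi(N_q)$ contains $M_q$ itself, not merely $M_n$ for some uncontrolled index $n$, so that the length estimate emerges with exactly the normalizing factor $p^{qd}$ and therefore survives passage to the limit. Everything else is routine bookkeeping with free ring extensions and composition series.
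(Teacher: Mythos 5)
Your proof is correct and follows essentially the same approach as the paper: establish that the sequence $\ell_n = (1/p^{nd})\length_A(M_n)$ is non-increasing so the limit exists, identify $\lambda^*_\infty(M_n\otimes_{A_n}A^{nc}_{\infty,0})$ with $\ell_n$ via the finitely presented case, and then squeeze $\lambda^*_\infty(M)$ against the limit by showing that any finitely presented surjection $N\twoheadrightarrow M$ eventually dominates $M_q$ at level $q$. The paper is terser here (it defers to Shimomoto and Faltings and gives only a cofinality sketch), so your careful synchronization of the filtrations $\{N_q\}$ and $\{M_q\}$ through $\phi$ is exactly the right way to fill in what the paper leaves implicit.
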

\begin{proof}
Note that $M_{n+m}$ is a quotient of $M_n\otimes_{A_n}A_{n+m}$ and the sequence $\lambda_\infty^*(M_n\otimes_{A_n}\Ainfty^{nc})$ is decreasing, so the limit makes sense. The first equality is done in \cite[Lemma 2.3]{ShimomotoFrobeniusActionLocalCohomology} and a more general proof of the second equality is done by \cite[Appendix 2, Claim]{faltings2002almost}. However, in our setup, since $M_n$ is finitely generated over $A_n$, it is finitely presented, so using the previous consequence of $\lambda_\infty^*(-)=\lambda_\infty^{**}(-)$ for finitely presented modules we see that the terms in the limit actually coincide.

We now sketch the first equality. It suffices to show that $\{ M_n\otimes_{A_n}\Ainfty^{nc}\}$ is cofinal with the images of the set of finitely presented modules $N$ surjecting onto $M$. But this is clear since for every finitely presented $N\simeq N_m\otimes_{A_m}\Ainfty^{nc}$, we may choose $n$ large so that $M_n$ contains the image of $N_m$.
\end{proof}

\begin{corollary}
\label{cor.Normalizedlengthfglimit}
Let $M\in \Ainfty^{nc}\hyphen\Mod_{\m_A}$ be finitely generated. Assume $\{ N_i\mid i\in I\}$ is a filtered system of finitely generated modules in $\Ainfty^{nc}\hyphen\Mod_{\m_A}$ with surjective transition maps such that $\colim_{i\in I}N_i\simeq M$. Then we have
$$\lambda_\infty^*(M)=\inf_{i\in I}\lambda_\infty^*(N_i).$$
\end{corollary}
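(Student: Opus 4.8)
The plan is to prove the two inequalities $\lambda^{*}_{\infty}(M)\le\inf_{i\in I}\lambda^{*}_{\infty}(N_i)$ and $\lambda^{*}_{\infty}(M)\ge\inf_{i\in I}\lambda^{*}_{\infty}(N_i)$ separately. For the first, I would begin by noting that since $\{N_i\}$ is filtered with surjective transition maps, each canonical map $N_i\to M$ is surjective: any element of $M$ is hit from some $N_j$, and after choosing an upper bound $k$ of $i$ and $j$ the surjective transition map $N_i\to N_k$ lets us lift it to $N_i$. Granting this, for any finitely presented $Q\twoheadrightarrow N_i$ the composite $Q\twoheadrightarrow N_i\twoheadrightarrow M$ gives $\lambda^{**}_{\infty}(Q)\ge\lambda^{*}_{\infty}(M)$; taking the infimum over such $Q$ yields $\lambda^{*}_{\infty}(N_i)\ge\lambda^{*}_{\infty}(M)$ for each $i$, hence $\inf_i\lambda^{*}_{\infty}(N_i)\ge\lambda^{*}_{\infty}(M)$.

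For the reverse inequality, fix $\epsilon>0$ and (by definition of $\lambda^{*}_{\infty}(M)$) choose a finitely presented, $\m_A$-power torsion module $P$ with a surjection $P\twoheadrightarrow M$ and $\lambda^{**}_{\infty}(P)<\lambda^{*}_{\infty}(M)+\epsilon$. Since $P$ is finitely presented over $A^{nc}_{\infty,0}$, the surjection $P\to M=\colim_i N_i$ factors through some term, say $P\xrightarrow{g}N_i\xrightarrow{\pi_i}M$ for a suitable $i\in I$. Put $P':=\im(g)\subseteq N_i$; then $P\twoheadrightarrow P'$, the map $\pi_i$ restricts to a surjection $P'\twoheadrightarrow M$, and for every $j\ge i$ the image $P'_j$ of $P'$ in $N_j$ still surjects onto $M$.

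The key step is to show that $P'_j=N_j$ once $j$ is large enough. Consider $Q_j:=N_j/P'_j$ for $j\ge i$; these form a filtered system of \emph{finitely generated} $A^{nc}_{\infty,0}$-modules with surjective transition maps. Since filtered colimits are exact and each $P'_j\to M$ is onto, one gets $\colim_{j\ge i}Q_j=M/M=0$. As $Q_i$ is finitely generated and each of its finitely many generators dies in this filtered colimit, there is $j\ge i$ for which the transition map $Q_i\to Q_j$ is zero; because the transition map $N_i\to N_j$ is surjective, this forces $Q_j=0$, \ie $P'_j=N_j$. Then $P\twoheadrightarrow P'\twoheadrightarrow P'_j=N_j$ with $P$ finitely presented, so $\lambda^{*}_{\infty}(N_j)\le\lambda^{**}_{\infty}(P)<\lambda^{*}_{\infty}(M)+\epsilon$; hence $\inf_j\lambda^{*}_{\infty}(N_j)\le\lambda^{*}_{\infty}(M)+\epsilon$, and letting $\epsilon\to0$ finishes the argument.

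I expect the stabilization $P'_j=N_j$ to be the main obstacle. It is the only place where the hypothesis of surjective transition maps is essential---without it one merely learns that the submodules $P'_j$ are cofinally large in the $N_j$, not equal to them---and it requires observing that $Q_i=N_i/P'_i$ is genuinely finitely generated (being a quotient of $N_i$), so that the vanishing-in-a-filtered-colimit argument can be run on a finite generating set rather than needing finite presentation. The remainder is bookkeeping with the universal property of colimits and the definitions of $\lambda^{**}_{\infty}$ and $\lambda^{*}_{\infty}$, together with the elementary observation that a surjection between modules decreases $\lambda^{*}_{\infty}$ and that $\lambda^{**}_{\infty}$ dominates $\lambda^{*}_{\infty}$ of any quotient.
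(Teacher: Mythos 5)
Your proof is correct, and it proceeds along a genuinely different (though related) route from the paper's. The paper fixes one surjection $(A^{nc}_{\infty,0})^{\oplus n}\twoheadrightarrow N_{i_0}$ and writes every $N_i$ as $F/K_i$ where $F:=(A^{nc}_{\infty,0})^{\oplus n}$; it then observes that the finitely generated submodules $L\subseteq K:=\bigcup_i K_i$ are cofinal with the $K_i$ (because a finitely generated $L\subseteq K$ lands in some $K_i$ by filteredness), invokes the cofinality argument already established in the proof of the first equality of \autoref{lem.Normalizedlengthfglimit} to get $\lambda^*_\infty(M)=\inf_L\lambda^*_\infty(F/L)$, and sandwiches $\lambda^*_\infty(M)\le\lambda^*_\infty(N_i)\le\lambda^*_\infty(F/L)$ whenever $L\subseteq K_i$. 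You instead prove the two inequalities directly, and the engine of your reverse inequality is a stabilization step: a finitely presented $P\twoheadrightarrow M$ factors through some $N_i$, and because $Q_i=N_i/P'_i$ is finitely generated and $\colim_j Q_j=0$, surjectivity of the transition maps forces $P'_j=N_j$ for large $j$. Both arguments ultimately rest on the same combinatorial fact that finite data stabilizes in a filtered colimit; the paper packages it as cofinality of free presentations (and thereby reuses Lemma 2.6), while yours is self-contained and avoids choosing a single free presentation dominating the whole system.

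A few small remarks worth recording. Your claim that each $N_i\to M$ is surjective is correct and uses the filteredness as you state; similarly, the claim that $Q_i\to Q_j=0$ forces $Q_j=0$ is exactly right because the induced map $Q_i\to Q_j$ is itself surjective (a diagram chase using surjectivity of $N_i\to N_j$). The one place where you are implicitly appealing to the ``finite data stabilizes'' principle twice is that when finitely many generators of $Q_i$ each die in some $Q_{j_\alpha}$, one should take a common upper bound $j$ of $i$ and all the $j_\alpha$ (and, if $I$ is a general filtered category rather than a poset, also coequalize the resulting parallel arrows) so that $Q_i\to Q_j$ is honestly zero; this is standard and worth a sentence, but is not a gap. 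Finally, note that your argument really only requires $P$ finitely presented and $Q_i$ finitely generated, matching the two-tiered structure of the definitions of $\lambda^{**}_\infty$ and $\lambda^*_\infty$; you identify this correctly as the ``obstacle.''
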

\begin{proof}
Assume without loss of generality that $i_0$ is the smallest index in $I$. We fix a surjection $(\Ainfty^{nc})^{\oplus n}\twoheadrightarrow N_{i_0}$. For each $i\in I$, let $K_i$ be the kernel of the induced surjection to $N_i$. Now consider the filtered system of $\{ L_j\mid j\in J\}$ containing all finitely generated $\Ainfty^{nc}$-submodules $L_j$ of $(\Ainfty^{nc})^{\oplus n}$ such that $L_j\subset K_i$ for some $i\in I$. Then we have $\colim_{j\in J}(\Ainfty^{nc})^{\oplus n}/L_j \simeq M $. Since $(\Ainfty^{nc})^{\oplus n}/L_j$ is finitely presented, it is base changed from some finitely generated module $M_{k_j}$ over $A_{k_j}$ for some $k_j$, then by essentially the same argument as used in the proof of 
 \autoref{lem.Normalizedlengthfglimit}, we have $$\lambda_\infty^*(M)=\inf_{j\in J}\lambda^*_\infty(M_{k_j}\otimes_{A_{k_j}}A_\infty^{nc})=\inf_{j\in J} \lambda_\infty^* ((\Ainfty^{nc})^{\oplus n}/L_j).$$ 
Clearly we have $\lambda_\infty^*(M)\le \lambda_\infty^*(N_i)\le \lambda_\infty^*((\Ainfty^{nc})^{\oplus n}/L_j)$ which completes the proof.
\end{proof}

\begin{proposition}
    \label{prop.NormalizedLengthProperties}
    Continuing to use the notation from above, we have
    \begin{enumerate}
        \item 
        \label{prop.NormalizedLengthProperties.a}
        Suppose $0\to M'\to M\to M''\to 0$ is a short exact sequence in $\Ainfty^{nc}\hyphen\Mod_{\m_A}$. Then we have 
            \[ 
                \lambda_\infty(M)=\lambda_\infty(M')+\lambda_\infty(M'').
            \]
            
        \item Let $(M_i\mid i\in I)$ be a filtered system of objects in $\Ainfty^{nc}\hyphen\Mod_{\m_A}$ and suppose that either
        \begin{enumerate}
            \item all transition maps are injective, or
            \label{prop.NormalizedLengthProperties.b}
            \item all transition maps are surjective and
            \label{prop.NormalizedLengthProperties.c}
            $\lambda_\infty(M_i)<\infty$ for every $i$.
        \end{enumerate}
        Then we have
            \[ 
                \lambda_\infty(\colim_{i\in I} M_i)=\lim_{i\in I}\lambda_\infty(M_i).
            \]
    \end{enumerate}
\end{proposition}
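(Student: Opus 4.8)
The plan is to establish the additivity statement (1) first, working up from finitely presented to finitely generated to arbitrary modules, and then to deduce the two colimit statements in (2) from it. The injective-transition case of (2) is essentially formal and I would dispatch it at the outset: if all transition maps are injective then $\colim_i M_i = \bigcup_i M_i$, and every finitely generated submodule of the union lies in some $M_{i_0}$ (finitely many generators, filteredness), so its $\lambda^*_\infty$ is at most $\lambda_\infty(M_{i_0})$; conversely $M_i \subseteq \colim_i M_i$ forces $\lambda_\infty(M_i) \le \lambda_\infty(\colim_i M_i)$ since $\lambda_\infty$ is visibly monotone under inclusions. As $\lambda_\infty(M_i)$ is then a non-decreasing net, $\lambda_\infty(\colim_i M_i) = \sup_i \lambda_\infty(M_i) = \lim_i \lambda_\infty(M_i)$. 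I would record this first because it is used repeatedly below.

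For additivity when $M', M, M''$ are all finitely presented I would descend the short exact sequence to a single finite level: a map of finitely presented $A^{nc}_{\infty,0}$-modules presented over level $n$ is given by a matrix with entries in some $A_q$, so after enlarging $n$ to $q$ the map $M \to M''$, and hence the whole sequence, descends to a short exact sequence of finitely generated $A_q$-modules (its kernel is automatically finitely generated since $A_q$ is Noetherian, and flatness of $A^{nc}_{\infty,0}$ over $A_q$ identifies its base change with $M'$); then ordinary additivity of $\length_A$ at level $q$, divided by $p^{qd}$, gives the claim. For finitely generated $M$ — so $M''$ is finitely generated, though $M'$ need not be — I would fix a finite generating set $\Sigma$ of $M$, set $M_n := \Sigma A_n \subseteq M$, let $M''_n \subseteq M''$ be its image (generated by a generating set of $M''$) and $M'_n := M_n \cap M' = \ker(M_n \to M''_n)$, so that $\length_A(M_n) = \length_A(M'_n) + \length_A(M''_n)$ at each level. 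Dividing by $p^{nd}$ and passing to the limit, \autoref{lem.Normalizedlengthfglimit} together with \autoref{prop.NormalizedLengthexists} identifies the limits of the outer terms with $\lambda_\infty(M)$ and $\lambda_\infty(M'')$; the content is then the identity $\lim_n (1/p^{nd})\length_A(M_n \cap M') = \lambda_\infty(M')$.

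Granting that identity, the general case of (1) is formal: write $M = \bigcup_\alpha N_\alpha$ over its finitely generated submodules, set $N'_\alpha := N_\alpha \cap M'$ and let $N''_\alpha$ be the image of $N_\alpha$ in $M''$; filtered colimits commute with these intersections and images, so $\colim_\alpha N'_\alpha = M'$ and $\colim_\alpha N''_\alpha = M''$, and applying the injective-transition case of (2) to all three non-decreasing systems, together with the finitely generated additivity $\lambda_\infty(N_\alpha) = \lambda_\infty(N'_\alpha) + \lambda_\infty(N''_\alpha)$ and the fact that $\sup$ is additive on non-decreasing nets over a directed set, yields $\lambda_\infty(M) = \lambda_\infty(M') + \lambda_\infty(M'')$. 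The surjective-transition case of (2) then follows from (1): with $M := \colim_i M_i$, each $M_i \to M$ is surjective, and $K_i := \ker(M_i \to M) = \bigcup_{j \ge i}\ker(M_i \to M_j)$; additivity gives $\lambda_\infty(\ker(M_i \to M_j)) = \lambda_\infty(M_i) - \lambda_\infty(M_j)$, and the injective-transition case of (2) applied to this increasing union gives $\lambda_\infty(K_i) = \lambda_\infty(M_i) - \lim_j \lambda_\infty(M_j)$; substituting into $\lambda_\infty(M_i) = \lambda_\infty(K_i) + \lambda_\infty(M)$ and cancelling $\lambda_\infty(M_i)$ — legitimate precisely because it is finite, which is the role of that hypothesis — gives $\lambda_\infty(M) = \lim_j \lambda_\infty(M_j)$.

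The main obstacle is the identity $\lim_n (1/p^{nd})\length_A(M_n \cap M') = \lambda_\infty(M')$ used above. One inequality is easy: writing $R_n := A^{nc}_{\infty,0}\cdot(M_n \cap M') \subseteq M'$, the $R_n$ increase with union $M'$, so the injective-transition case of (2) gives $\lambda_\infty(M') = \sup_n \lambda_\infty(R_n)$, and since $R_n$ is a quotient of the finitely presented module $(M_n \cap M')\otimes_{A_n}A^{nc}_{\infty,0}$ one gets $\lambda_\infty(R_n) \le (1/p^{nd})\length_A(M_n \cap M')$, hence "$\le$". The reverse requires showing that the surjections $(M_n \cap M')\otimes_{A_n}A^{nc}_{\infty,0}\twoheadrightarrow R_n$ lose asymptotically no normalized length, which I would prove by a flatness argument analyzing the kernel as $n \to \infty$, following the corresponding computations in \cite[\S14.5]{GabberRameroFoundationsAlmostRingTheory} and \cite[\S2]{ShimomotoFrobeniusActionLocalCohomology} (the key point being that $A^{nc}_{\infty,0}$ is faithfully flat over every $A_n$). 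This is the step where I expect to have to be careful.
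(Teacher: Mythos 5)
Your treatment of the injective-colimit case \autoref{prop.NormalizedLengthProperties.b} is the same as the paper's, and for additivity \autoref{prop.NormalizedLengthProperties.a} you and the paper are in essentially the same position: the paper simply cites \cite[Appendix 2]{faltings2002almost}, \cite[Proposition 2.8]{ShimomotoFrobeniusActionLocalCohomology}, and \cite[Theorem 14.5.75]{GabberRameroFoundationsAlmostRingTheory}, while your reduction chain (finitely presented $\to$ finitely generated $\to$ general) is sound as far as it goes and correctly isolates the remaining technical content in the identity $\lim_n p^{-nd}\length_A(M_n\cap M')=\lambda_\infty(M')$, which is precisely what those references establish. So on \autoref{prop.NormalizedLengthProperties.a} you are not offering a complete proof, but neither does the paper, and your framing is a faithful account of where the work actually lies.

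The one place you take a genuinely different route is the surjective-colimit case \autoref{prop.NormalizedLengthProperties.c}. The paper argues by $\epsilon$-approximation: choose a finitely generated $N_{i_0}\subseteq M_{i_0}$ with $\lambda_\infty(M_{i_0})-\lambda_\infty(N_{i_0})<\epsilon$, push forward to a surjective system $(N_i)$, note by additivity that $\lambda_\infty(M_i)-\lambda_\infty(N_i)<\epsilon$ for all $i$, and then invoke \autoref{cor.Normalizedlengthfglimit} to compute $\lambda_\infty(\colim N_i)=\inf_i\lambda_\infty(N_i)$ and let $\epsilon\to 0$. You instead run a kernel argument: additivity on $0\to\ker(M_i\to M_j)\to M_i\to M_j\to 0$, combined with \autoref{prop.NormalizedLengthProperties.b} applied to the increasing union $\ker(M_i\to M)=\bigcup_{j\ge i}\ker(M_i\to M_j)$, gives $\lambda_\infty(\ker(M_i\to M))=\lambda_\infty(M_i)-\lim_j\lambda_\infty(M_j)$; one more application of additivity to $0\to\ker(M_i\to M)\to M_i\to M\to 0$ and cancellation of the finite quantity $\lambda_\infty(M_i)$ then reads off the answer. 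This is shorter, bypasses \autoref{cor.Normalizedlengthfglimit} entirely, and makes the role of the finiteness hypothesis completely transparent (it licenses a single subtraction). The paper's argument, conversely, keeps all the real computation at the level of finitely generated modules and isolates the colimit manipulation into the auxiliary Corollary, which is a bit more bookkeeping but arguably more in the spirit of the stepwise definition of $\lambda_\infty$. Both arguments are correct; I verified that your kernel bookkeeping goes through, including the identification $\ker(M_i\to M)=\bigcup_{j\ge i}\ker(M_i\to M_j)$ and the monotone-net limit interchange.
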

%\autoref{prop.NormalizedLengthProperties}
\begin{proof}
For \autoref{prop.NormalizedLengthProperties.a}, 
%{\it (a)}
we refer the reader to \cite[Appendix 2, Lemma]{faltings2002almost}. One can also find proofs in \cite[Proposition 2.8]{ShimomotoFrobeniusActionLocalCohomology} or \cite[Theorem 14.5.75]{GabberRameroFoundationsAlmostRingTheory}.

    For \autoref{prop.NormalizedLengthProperties.b} and \autoref{prop.NormalizedLengthProperties.c}, we follow the strategy as in \cite[Proposition 14.5.12(iii)(b)]{GabberRameroFoundationsAlmostRingTheory} and  \cite[Proposition 14.5.75(i)(b)]{GabberRameroFoundationsAlmostRingTheory}. Assume the situation of \autoref{prop.NormalizedLengthProperties.b}. Let $M = \colim_{i\in I} M_i$.  Certainly $\lambda_{\infty}(M) \geq \lambda_{\infty}(M_i)$ for each $M_i$ and so we have $\leq$.  Now, for any finitely generated submodule $N \subseteq M$, we have that $N \subseteq M_i$ for some $i$ and so the reverse inequality follows.

    Now assume the situation of \autoref{prop.NormalizedLengthProperties.c}. Again we let $M=\colim_{i\in I} M_i$. Clearly we have the obvious inequality
    $$ \lim_{i\in I}\lambda_\infty(M_i)\ge \lambda_\infty(M)$$
    As for the other direction, fix $\epsilon>0$. Assume without loss of generality that $i_0$ is the smallest index in $I$. Since $\lambda_\infty(M_{i_0})<\infty$, we may find some finitely generated submodule $N_{i_0}\subset M_{i_0}$ such that $\lambda_\infty(M_{i_0})-\lambda_\infty(N_{i_0})<\epsilon$. Let $N_i\subset M_i$ be the image of $N_{i_0}$ and $N=\colim_{i\in I }N_i$. Note that $(N_i\mid i\in I)$ is also a filtered system and $M_i/N_i$ is a quotient of $M_{i_0}/N_{i_0}$ for each $i\in I$. Then \autoref{prop.NormalizedLengthProperties.a} gives us that $\lambda_\infty(M_i)-\lambda_\infty(N_i)<\epsilon$ for every $i\in I$. Then by \autoref{cor.Normalizedlengthfglimit} (after identifying $\lambda_\infty(-)$ with $\lambda_\infty^*(-)$ for $N$), we have $\lambda_\infty(N)=\inf_{i\in I}\lambda_\infty(N_i)$, hence we have 
    $$\lambda_\infty(M)\ge \lambda_\infty(N)=\inf_{i\in I}\lambda_\infty(N_i)\ge \inf_{i\in I}\lambda_\infty(M_i)-\epsilon.$$
    This completes the proof.
\end{proof}

\begin{comment}
\todo{{\bf Karl:}  I assume we are removing the rest of this section and including it in another paper perhaps, there is nothing written yet?\\
Hanlin: I tried to write it just because we used this latter. But it seems the general thing doesn't work.}
%
\begin{definition}
\label{def.normalizedEuler}
Let $\Ainfty$ be the $p$-adic completion of $\Ainfty^{nc}$. For any finitely generated $\Ainfty$-module $M$ and any $\Ainfty$-module $N$ of finite length, we define the normalized Euler character to be:
$$\chi_\infty(M,N)\coloneqq \sum (-1)^i\lambda_\infty(\Tor_i^{\Ainfty}(M,N))$$ 
\end{definition}

\begin{proposition}
$\chi_\infty$ is well-defined\dots
\end{proposition}

\begin{proof}
reduce to the finite level and take filtration and reduce to $N=k$
\end{proof}

\begin{proposition}
\label{prop.NormalizedEuleradditivity}
Additivity 
\end{proposition}
\end{comment}

\subsection{Derived completion and flatness}

In this subsection, we briefly review the theory of derived completion for the convenience of the reader. Our treatment of derived completion follows from \cite{bhatt2013pro}, \cite{BhattMorrowScholzeIHES}, and \cite[\href{https://stacks.math.columbia.edu/tag/091N}{Tag 091N}]{stacks-project}. As for $I$-complete flatness, the standard reference is \cite[Section 4.1]{bhatt2019topological}. Throughout this section, $R$ is a commutative ring with unit and $I$ is a finitely generated ideal.

\begin{definition}
    A complex $K\in D(R)$ is called derived $I$-complete if for every $f\in I$, the derived limit 
    $$\mathbf{R}\lim (\cdots\xrightarrow{f} K\xrightarrow{f} K\xrightarrow{f} K)$$
    vanishes. A module $M$ is called derived $I$-complete if $M[0]$ is derived $I$-complete. We denote $D_{\comp}(R)$ the full subcategory of derived $I$-complete objects of $D(R)$.
\end{definition}

It can be deduced from the definition that to check derived $I$-completeness it suffices to check derived $f_i$-completeness for a generating set $\{f_1,\dots,f_n\}$ of $I$, see \cite[\href{https://stacks.math.columbia.edu/tag/091Q}{Tag 091Q}]{stacks-project}. Hence we can usually use induction and assume $I=(f)$ is principal (in fact, we will mostly work only with derived $p$-completion in this article). We start with the following description of derived completion of principal ideals.

\begin{proposition}[{\cite[Lemma 3.4.8, Lemma 3.4.9]{bhatt2013pro}}] Suppose $I=(f)$ is a principal ideal of $R$. 
    Then the forgetful functor $D_{\comp}(R)\to D(R)$ admits an idempotent left adjoint given by 
    $$K\mapsto \widehat{K}\coloneqq\mathbf{R}\lim K\otimes^L_{\mathbb
Z[x]} \mathbb{Z}[x]/(x^k)$$
where on the right and side, $K$ is viewed as an object in  $D(\mathbb{Z}[x])$ under the natural map $\mathbb{Z}[x]\to R$ sending $x \mapsto f$.  
\end{proposition}

%\todo{{\bf Karl:}  Do we actually use the above?  I don't think we should just add things because they are true here.  I don't know if the full version of the proposition helps the ``average algebraic geometer".  Also, maybe we should avoid ``fib( ...)'' below, as that's a less common terminology in the non-derived world.  Actually, maybe here we should just state the $Hom(R[1/f], K) = 0$ criterion.  Do we even need the $n$-generated description below?  (Are we ever doing a derived completion besides $p$-adic, we have some classic $m$-adic completion in a few cases in the applications section?}

 Hence for any $K\in D(R)$, we call $\widehat{K}$ the derived $f$-completion of $K$. In particular, $K$ is derived $f$-complete if and only if the natural map $K\to \widehat{K}$ is an equivalence. Using induction on generators, one can obtain a general formula for derived completion, see \cite[\href{https://stacks.math.columbia.edu/tag/091V}{Tag 091V}]{stacks-project}.  For $K_1,K_2\in D(R)$, we also define 
$$K_1\widehat{\otimes}^L_RK_2\coloneqq \widehat{K_1{\otimes}^L_RK_2} \in D_{\comp}(R).$$
This gives a symmetric monoidal structure on $D_{\comp}(R)$.%a symmetric monoidal $\infty$-category. 

The following basic facts about derived completion will be used throughout the article without further comment.

%Following the previous proposition, it is not hard to deduce the following properties, which will be used throughout this article without mentioning in the context. 

\begin{enumerate}[(1)]
\item(Derived Nakayama) Let $K\in D(R)$ be derived $I$-complete. Then $K\otimes^L_R R/I \simeq 0$ if and only if $K\simeq 0$, see \cite[\href{https://stacks.math.columbia.edu/tag/0G1U}{Tag 0G1U}]{stacks-project}.
\item $K\in D(R)$ is derived $I$-complete if and only if $H^i(K)$ is derived $I$-complete for all $i$, see \cite[\href{https://stacks.math.columbia.edu/tag/091U}{Tag 091U}]{stacks-project}.
\item An $R$-module $M$ is classically $I$-complete if and only if it is derived $I$-complete and $I$-adically separated, see \cite[\href{https://stacks.math.columbia.edu/tag/091T}{Tag 091T}]{stacks-project}.
\item Derived $I$-complete modules form a weak Serre subcategory of the category of all modules \cite[\href{https://stacks.math.columbia.edu/tag/091U}{Tag 091U}]{stacks-project}, i.e., it is a strictly full subcategory that is closed under kernels, cokernerls and extensions (\cite[\href{https://stacks.math.columbia.edu/tag/0754}{Tag 0754}]{stacks-project}).
\end{enumerate}

As mentioned above, we will mostly use derived completion in the case where $I=(f)$ is principal. Due to the following proposition, most of the examples that we encounter in the context will be classically $I$-complete. Recall that every pro-system can be viewed as an object in the pro-category, and two pro-systems are pro-isomorphic to each other if they are isomorphic in the pro-category, see \cite[\href{https://stacks.math.columbia.edu/tag/05PT}{Tag 05PT}]{stacks-project} for more details on pro-categories.

\begin{proposition}[{\cite[III, Lemma 2.4]{BhattPrismaticLectureNotes}}]
\label{prop.bounded torsion is classical complete}
    If an $R$-module $M$ has bounded $f^\infty$-torsion, i.e., $M[f^n]= M[f^{n+1}]= \cdots$ for some $n$, then the the pro-systems $\{M/f^n\}_n$ and $\{M\otimes^L_R R/f^n\}_n$ are pro-isomorphic. As a consequence, the derived $f$-completion of $M$ is discrete (meaning it has cohomology in a single degree) and given by its classical $f$-completion.
\end{proposition}

%\todo{{\bf Karl:}  Maybe we should define pro-isomorphic or give a reference.  This term may not be known by AG people.  I also added "discrete".}

We next record the definition of $I$-complete flatness.

\begin{definition}
    We say that $K\in D(R)$ is $I$-completely (faithfully) flat if $K{\otimes}^L_RR/I$ is discrete and its degree zero cohomology is a (faithfully) flat $R/I$-module. 
\end{definition}

For more properties of $I$-complete flatness and more generalities on $I$-complete Tor amplitude, we refer the readers to \cite[Section 4.1]{bhatt2019topological}. Here we only record a simple lemma that will be used later.

\begin{lemma}
\label{lem.completed tensor product of p-completely flat algebra}
    Let $R\to S$ be a $p$-completely flat morphism of derived $p$-complete rings. Suppose $M$ is an $R$-module with bounded $p^\infty$-torsion. Then we have the following identification $M\widehat\otimes^L_R S \simeq (M\otimes_R S)^{\wedge c}$ where the former term is the derived $p$-completion of $M\otimes_RS$ and the latter term is the classical $p$-completion of $M\otimes_RS$.
\end{lemma}
\begin{proof}
    By definition, $M\widehat\otimes^L_R S$ is the derived limit of the pro-system $\{M\otimes^L_R  S \otimes^L_\mathbb{Z} \mathbb{Z}/p^n \}_n$. We have the following pro-isomorphisms 
    $$\{M\otimes^L_R  S \otimes^L_\mathbb{Z} \mathbb{Z}/p^n \}_n \simeq \{M/p^n\otimes^L_R  S  \}_n \simeq \{M/p^n\otimes^L_{R/p^n} R/p^n\otimes^L_R  S  \}_n$$
    where the first pro-isomorphism is obtain from \autoref{prop.bounded torsion is classical complete} and the assumption that $M$ has bounded $p^\infty$-torsion. However, the third term is pro-isomorphic to $\{M/p^n\otimes_{R/p^n} S/p^n\}_n$ by the assumption that $S$ is $p$-completely flat over $R$. Since the pro-system $\{M/p^n\otimes_{R/p^n} S/p^n\}_n$ clearly computes the classical $p$-completion of $M\otimes_R S$, the result follows.
\end{proof}

\begin{convention}
    From now on, unless otherwise specified, we will use $\widehat{(-)}$ to denote the functor of derived $p$-completion, we will use $-\widehat{\otimes}^L_R-$ to denote the derived $p$-completion of the tensor product. 
\end{convention}
\subsection{Perfectoid rings and perfectoidization}
\label{subsec:PefectoidAlgebras}
In this section, we collect some facts about perfectoid rings that will be needed for this article. We refer the readers to \cite[Section 3]{BhattMorrowScholzeIHES} and \cite{BhattScholzepPrismaticCohomology} for more details on perfectoid rings and the purity theorems. We start with the definition of perfectoid rings following \cite{BhattIyengarMaRegularRingsPerfectoid} (which is equivalent to the definition in \cite{BhattMorrowScholzeIHES}). 
We fix a prime number $p$.

\begin{definition}
A ring $S$ is \emph{perfectoid} if it satisfies the following:
\begin{itemize}
    \item $S$ is $p$-adically complete.
    \item There exists an element $\varpi\in S$ so that $p=u\varpi^p$ for some unit $u\in S$.
    \item The Frobenius map on $S/p$ is surjective.
    \item The kernel of the Fontaine's map $\theta$: $W(S^\flat)\to S$ is principal,\footnote{We refer to \cite[Section 3]{BhattMorrowScholzeIHES} for detailed definition of $\theta$: essentially, this is the unique map lifting the natural surjection $S^\flat\to S/p$.} where $S^\flat=\varprojlim_F S/p$.
\end{itemize}
\end{definition}

%We will freely and frequently use the language of perfectoid rings as in \cite[Section 3]{BhattMorrowScholzeIHES}. Fix a prime number $p$. A ring $S$ is {\it perfectoid} if it is $\pi$-adically complete for some element $\pi\in S$ such that $\pi^p$ divides $p$, the Frobenius on $S/pS$ is surjective, and the kernel of Fontaine's map $\theta$: $W(S^\flat)\to S$ is principal. 
%\footnote{We refer to \cite[Section 3]{BhattMorrowScholzeIHES} for detailed definition of $\theta$: essentially, this is the unique map lifting the natural surjection $S^\flat\to S/p$.} 

In our context, it is often the case that either $p=0$ in $S$ or $p$ is a nonzerodivisor in $S$. Therefore we point out that if $S$ has characteristic $p$, then a perfectoid ring is the same as a perfect ring, see \cite[Example 3.15]{BhattMorrowScholzeIHES}. On the other hand, if $S$ is $p$-torsion free and $p\in S$ admits a $p$-power root $\varpi=p^{1/p}$, then $S$ is perfectoid if and only if it is $p$-adically complete and the Frobenius map $S/\varpi\to S/p$ is bijective, see \cite[Lemma 3.10]{BhattMorrowScholzeIHES}. This is compatible with the definition given in  \cite[2.2]{AndreWeaklyFunctorialBigCM} or \cite[Definition 2.2]{MaSchwedeSingularitiesMixedCharBCM}. %Perfectoid rings are always reduced.

We list two main examples of perfectoid rings that will be relevant in this paper, see \cite[Example 3.8]{BhattIyengarMaRegularRingsPerfectoid} for more details.
\begin{example}
\label{example:PerfectoidRings}
\begin{enumerate}[(1)]
    \item $\Ainfty$, the $p$-adic completion of the ring $\Ainfty^{nc}$ in \autoref{section:normlength}, is perfectoid. 
    \item Let $(R,\m)$ be a Noetherian complete local domain of residue characteristic $p>0$. Then $\widehat{R^+}$, the $p$-adic completion of the absolute integral closure of $R$, is perfectoid.
\end{enumerate}
\end{example}

We next note that if $I$ is an ideal of a perfectoid ring $R$, then $R/I$ need not be $p$-adically separated. We define $I^{-} := \bigcap_n (I+p^n)$, which is the closure of $I$ in the $p$-adic topology. Then the $p$-adic completion of $R/I$ is isomorphic to $R/I^{-}$.  Finally, suppose $\{f_i\}_{i=1}^n$ is a sequence of elements in $R$, each of which admits a compatible system of $p$-power roots $\{f_i^{1/p^e}\}$, and set $I=(f_1^{1/p^\infty},\dots,f_n^{1/p^\infty})$, the ideal generated by $f_i^{1/p^e}$ for all $i$ and $e$.  In this case, one checks directly that $R/I^-$ is perfectoid (for example see \cite[Example 6.2.11]{BhattLectureNotesPerfectoidSpaces}).

%Throughout the rest of this section we will frequently use derived completion and especially derived $p$-completion. Concretely for any Abelian group $M$, the derived $p$-completion of $M$ can be defined as $\myR\varprojlim_{n}M\otimes^L_{\Z}\Z/p^n$.  We refer the reader to \cite[\href{https://stacks.math.columbia.edu/tag/091N}{Tag 091N}]{stacks-project} and \cite[Lecture III]{BhattPrismaticLectureNotes} for more general definitions and basic properties of derived completion. Here we just point out that whenever $p$ is a nonzerodivisor (or more generally if we have bounded $p^\infty$-torsion), then derived $p$-completion agrees with the usual $p$-adic completion. 

If $R$ is a perfectoid ring and $S$ is a derived $p$-complete $R$-algebra, then \cite{BhattScholzepPrismaticCohomology} defined the {\it perfectoidization} $S_{\perfd}$ of $S$ using (derived) prismatic cohomology. In general $S_{\perfd}$ only lives in $D^{\geq0}(R)$, but it is an honest perfectoid ring in all the cases that we consider. We will not give the precise definition here but we point out the following results that will be relevant to us later. %These facts will be used without further ref
\begin{enumerate}[(1)]
  \item[(0)] Perfectoid rings are always reduced \cite[Paragraph after (2.1.3.1)]{CesnaviciusScholzePurityflatCohomology}. 
  \item In characteristic $p>0$, $S_{\perfd}$ is the usual perfection $\varinjlim_eF^e_*S$ \cite[Example 8.3]{BhattScholzepPrismaticCohomology}.
  \item If $S$ is a derived $p$-complete quotient of $R$ (e.g., $S=R/J$ for a finitely generated ideal $J\subseteq R$), then $S_{\perfd}$ is a perfectoid ring and is a quotient of $S$ \cite[Theorem 7.4]{BhattScholzepPrismaticCohomology}.
  \item If $R\to S$ is the $p$-adic completion of an integral map, then $S_{\perfd}$ is a perfectoid ring \cite[Theorem 10.11]{BhattScholzepPrismaticCohomology}.
  %finitely presented finite extension such that it is \'{e}tale outside $V(J)$ for some finitely generated ideal $J$, then $S_{\perfd}$ is a perfectoid ring \cite[Theorem 10.8]{BhattScholzepPrismaticCohomology}.
  \item $S_{\perfd}$ can be characterized as the derived limit of $R'$ over all maps from $S$ to perfectoid rings $R'$, and it does not depend on the choice of $R$ \cite[Proposition 8.5]{BhattScholzepPrismaticCohomology}. In particular, if $S_{\perfd}$ is a perfectoid ring then $S\to S_{\perfd}$ is the universal map to a perfectoid ring.
\end{enumerate}

As a consequence of $(0)$, we see that a perfectoid ring $R$ has bounded $g^\infty$-torsion for any $g\in R$. In fact, this is true for any reduced ring $R$: for if $g^nx=0$ for some $x\in R$ and some $n\in \mathbb{N}$, then $(gx)^{n}=0$ and thus $gx=0$ by reducedness, so any $g^\infty$-torsion is $g$-torsion. 

As a consequence of $(2)$, for any finitely generated ideal $J\subseteq R$, we can define an ideal $J_{\perfd}=\ker(R\to (R/J)_{\perfd})$. It turns out that we have a well-behaved almost mathematics theory with respect to $J_{\perfd}$ for {\it derived $p$-complete} $R$-modules, see \cite[Section 10]{BhattScholzepPrismaticCohomology} (the essential point that lurks behind is Andr\'{e}'s flatness lemma, see \cite[Theorem 7.12 and Theorem 7.4]{BhattScholzepPrismaticCohomology}). Moreover, for each $n$, after modulo $p^n$, the pair $(R/p^n, J_{\perfd}R/p^n)$ defines a \emph{basic setup} for almost mathematics as in \cite[2.1.1]{GabberRameroAlmostringtheory}. 

We will frequently work in the above setup when $J=(g)$ is a principal ideal. Even in this case, describing $(g)_\perfd$ explicitly is not easy (to the best of the authors' knowledge). On the other hand, we record following well-known fact that describes $(g)_\perfd$ when $g$ admits a compatible system of $p$-power roots.

\begin{lemma}
\label{lem.gPerfd=AllpPowerRootsofg}
    Suppose $R \to S$ is a map of perfectoid rings. Suppose $g \in R$ is such that the image of $g$ has a compatible system $\{ g^{1/p^e} \}$ of $p$-power roots in $S$.  Then 
$$((g)_\perfd S)^- = (gS)_\perfd  = (g^{1/p^\infty})^-,$$
where $(g^{1/p^\infty})$ denotes the ideal in $S$ generated by $g^{1/p^e}$ for all $e>0$. 

In particularly, if we have two compatible systems $\{ g^{1/p^e}\}$ and $\{ \widetilde{g}^{1/p^e}\}$ of $p$-power roots of $g$ in $S$, then $(g^{1/p^\infty})^-=(\widetilde{g}^{1/p^\infty})^-$.
\end{lemma}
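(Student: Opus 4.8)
The plan is to prove the two equalities $(gS)_{\perfd} = (g^{1/p^\infty})^-$ first, by elementary manipulations with perfectoid rings and the universal property of perfectoidization, and then deduce the remaining equality $((g)_{\perfd}S)^- = (gS)_{\perfd}$ from a base change property of perfectoidization. Set $K := (gS)_{\perfd} = \ker\!\big(S \to (S/gS)_{\perfd}\big)$. By fact (2) above, \cite[Theorem 7.4]{BhattScholzepPrismaticCohomology}, the ring $(S/gS)_{\perfd} = S/K$ is perfectoid, hence reduced and $p$-adically separated; in particular $K$ is closed in the $p$-adic topology on $S$.

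The inclusion $(g^{1/p^\infty})^- \subseteq K$ is immediate: the image of $g$ in the reduced ring $S/K$ is $0$, so for each $e$ the image of $g^{1/p^e}$ is a $p^e$-th root of $0$, hence also $0$; thus $g^{1/p^e} \in K$ for all $e$, and since $K$ is $p$-adically closed, $(g^{1/p^\infty})^- \subseteq K$. For the reverse inclusion I would use that $S/(g^{1/p^\infty})^-$ is perfectoid (this is recorded just before the lemma). Since $gS \subseteq (g^{1/p^\infty}) \subseteq (g^{1/p^\infty})^-$, the surjection $S/gS \twoheadrightarrow S/(g^{1/p^\infty})^-$ is a map to a perfectoid ring, so by the universal property of perfectoidization (fact (4), \cite[Proposition 8.5]{BhattScholzepPrismaticCohomology}) it factors through $S/gS \to (S/gS)_{\perfd} = S/K$; hence $K \subseteq (g^{1/p^\infty})^-$, and $K = (g^{1/p^\infty})^-$.

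For $((g)_{\perfd}S)^- = K$: if $x \in (g)_{\perfd} = \ker(R \to (R/g)_{\perfd})$, then the composite $R \to R/g \to S/gS \to (S/gS)_{\perfd}$ is a map from $R/g$ to a perfectoid ring and so factors through $(R/g)_{\perfd}$, hence kills $x$; therefore the image of $x$ in $S$ lies in $\ker(S \to (S/gS)_{\perfd}) = K$. Thus $(g)_{\perfd}S \subseteq K$, and taking $p$-adic closures (and using that $K$ is closed) gives $((g)_{\perfd}S)^- \subseteq K$. Conversely, since $g \in (g)_{\perfd}$ there is a surjection $S/gS \twoheadrightarrow S/((g)_{\perfd}S)^-$, and the target is the $p$-adic completion of $S \otimes_R (R/g)_{\perfd}$, that is, the $p$-completed base change of the perfectoid $R$-algebra $(R/g)_{\perfd}$ along the map of perfectoid rings $R \to S$, which is again perfectoid. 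Applying the universal property once more, this surjection factors through $(S/gS)_{\perfd} = S/K$, so $K \subseteq ((g)_{\perfd}S)^-$, and equality holds. The ``in particular'' clause is then automatic: $K = (gS)_{\perfd}$ is defined without reference to any choice of compatible system of $p$-power roots of $g$, so $(g^{1/p^\infty})^- = K = (\widetilde{g}^{1/p^\infty})^-$ whenever $\{g^{1/p^e}\}$ and $\{\widetilde{g}^{1/p^e}\}$ are two such systems.

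The step I expect to be the main obstacle is the assertion that $S/((g)_{\perfd}S)^-$ is perfectoid, i.e.\ that $p$-completed base change of a perfectoid algebra along a map of perfectoid rings stays perfectoid (equivalently, that $J \mapsto J_{\perfd}$ is compatible with base change along maps of perfectoid rings). Some care is needed in matching the classical completed tensor product $\big(S \otimes_R (R/g)_{\perfd}\big)^{\wedge}_p$ with the derived/pushout construction for which the perfectoid base change statement is cleanest, so the smoothest route is to invoke directly that the category of perfectoid rings admits pushouts, computed on $\pi_0$ by $p$-completed tensor products, \cf \cite{BhattScholzepPrismaticCohomology}. Everything else is formal, using only the universal property of perfectoidization, reducedness and $p$-adic completeness of perfectoid rings, and the already-recorded fact that $S/I^-$ is perfectoid when $I$ is generated by a compatible system of $p$-power roots.
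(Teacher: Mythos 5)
Your proof of $(gS)_\perfd = (g^{1/p^\infty})^-$ is correct and essentially identical to the paper's: one containment from reducedness of $(S/gS)_\perfd$, the other from the universal property applied to the perfectoid quotient $S/(g^{1/p^\infty})^-$. Your easy containment $(g)_\perfd S \subseteq (gS)_\perfd$ (push $x \in (g)_\perfd$ through $R \to R/g \to S/gS \to (S/gS)_\perfd$ and factor via $(R/g)_\perfd$) is also correct and clean.

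The gap is exactly where you flagged it, but I would classify it as genuine rather than cosmetic. To get $(gS)_\perfd \subseteq ((g)_\perfd S)^-$ you need $S/((g)_\perfd S)^-$ — the classical $p$-completion of $S \otimes_R (R/g)_\perfd$ — to be perfectoid, and you justify this by asserting that pushouts in perfectoid rings are ``computed on $\pi_0$ by $p$-completed tensor products.'' This is not one of the facts (1)--(4) recorded in the preliminaries, and it does not follow immediately from \cite[Proposition 8.5 or 8.13]{BhattScholzepPrismaticCohomology}: Proposition 8.13, which is the base-change result that makes this kind of identification work, is stated under a $p$-complete flatness hypothesis on $R \to S$ that you do not have. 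Beyond the citation issue, two nontrivial things must be checked before your assertion can be used: that the derived $p$-completed derived tensor product $(R/g)_\perfd \widehat{\otimes}^L_R S$ is concentrated in degree zero, and that it agrees with the classical $p$-completion of the classical tensor product. Neither is automatic. The paper's proof circumvents all of this by first invoking Andr\'e's flatness lemma \cite[Theorem 7.14]{BhattScholzepPrismaticCohomology} to factor $R \to S$ through a $p$-completely faithfully flat perfectoid $R$-algebra $\widetilde{R}$ containing the $p$-power roots of $g$, reducing to $S = \widetilde{R}$; in that setting $p$-complete flatness makes the derived and classical tensor products agree, and Proposition 8.13 then gives $(R/(g)_\perfd)\widehat{\otimes}^L_R S = ((R/g)\widehat{\otimes}^L_R S)_\perfd = (S/gS)_\perfd$ exactly as needed. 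If you want to keep your route, you should either prove the pushout claim (the derived-vs-classical comparison plus degree-zero concentration) or, more economically, insert the Andr\'e flatness reduction before the step in question.
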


\begin{proof}
The second equality follows directly from the universal property of perfectoidization: if $S/gS$ maps to a perfectoid ring $S'$, then as $S'$ is reduced and $p$-adically complete, this map automatically factors through $S/(g^{1/p^\infty})^-$, which is already a perfectoid ring. Thus we have $S/(g^{1/p^\infty})^-=(S/gS)_\perfd= S/(gS)_\perfd$. 

To prove the first equality, we note that by Andr\'e's flatness lemma \cite[Theorem 7.14]{BhattScholzepPrismaticCohomology}, $R\to S$ factors through a $p$-completely faithfully flat perfectoid $R$-algebra $\widetilde{R}$ such that $g$ admits a compatible system of $p$-power roots $\{g^{1/p^e}\}$ in $\widetilde{R}$.\footnote{More precisely, we are applying \cite[VIII, Theorem 3.1]{BhattPrismaticLectureNotes} here, i.e., we are only formally adjoining all $p$-power roots of $g$ to $R$ and not adjoining the roots of all monic polynomials over $R$. More specifically, $\tilde{R}=(R[x^{1/p^\infty}]/(x-g))_\perfd$. Clearly, by mapping $\{x^{1/p^e}\}$ to the compatible system of $p$-power roots of $g$ in $S$ we get a map from $R[x^{1/p^\infty}]/(x-g) \to S$. Since $S$ is perfectoid, by the universal property of perfectoidization, we obtain a map $\tilde{R}\to S$.}  If we can prove the first equality for $\widetilde{R}$, then by the already established second equality we have $((g)_{\perfd}\widetilde{R})^-=(g^{1/p^\infty})^-$ in $\widetilde{R}$. In particular, $((g)_{\perfd}\widetilde{R})^-$ and thus $((g)_{\perfd}S)^-$ contains $(g^{1/p^\infty})$. Since it is closed in the $p$-adic topology, it contains $(g^{1/p^\infty})^-=(gS)_{\perfd}$. As the other direction $((g)_\perfd S)^-\subseteq (gS)_{\perfd}$ is obvious, we have $((g)_\perfd S)^-= (gS)_{\perfd}$ as wanted. Therefore, by replacing $S$ by $\widetilde{R}$, we may assume that $S$ is $p$-completely faithfully flat over $R$ in order to prove the first equality. In this case, we have
$${(S/(g)_\perfd S)}^{\wedge c}\cong (R/(g)_\perfd)\widehat{\otimes}^L_RS\cong \left((R/(g))\widehat{\otimes}^L_RS\right)_\perfd\cong (S/gS)_\perfd\cong S/(gS)_\perfd,$$
where the first term is the classical $p$-completion of $S/(g)_\perfd$. The first equality is obtained by applying \autoref{lem.completed tensor product of p-completely flat algebra} to $M=R/(g)_\perfd$ (since $R/(g)_\perfd$ is perfectoid it has bounded $p^\infty$-torsion), the second equality follows from \cite[Proposition 8.13]{BhattScholzepPrismaticCohomology} and the third equality uses the fact that perfectoidization only depends on $\pi_0$ of an animated ring by \cite[Proposition 8.5]{BhattScholzepPrismaticCohomology}. Since the classical $p$-adic completion of $S/(g)_\perfd S$ is $S/((g)_\perfd S)^-$, we have $((g)_\perfd S)^-=(gS)_\perfd$ as wanted.
\end{proof}

\begin{caution}
We caution the reader that, throughout the rest of this article, whenever we use almost mathematics with respect to $(g)_\perfd$ or $J_\perfd$ (e.g., whenever we say a module $M$ over a perfectoid ring $R$ is $g$-almost zero or $J$-almost zero), the module $M$ involved is tacitly assumed to be derived $p$-complete: these include all $p^n$-torsion $R$-modules or more generally, all (classically) $p$-adically complete $R$-modules.  
\end{caution}
\begin{comment}

\todo{Hanlin: Another thing we probably want to do in the prelim or somewhere in the beginning is to mention that some completions below e.g. \autoref{lem.ComparisonAlmostKernel} and \autoref{prop.gisogeny} are all derived complete. Maybe we can make a convention that all the completions are derived complete but in the simple cases e.g. $\Ainfty$ etc are classic completions.} 

\end{comment}

Our work crucially relies on the almost purity theorem of Bhatt-Scholze \cite{BhattScholzepPrismaticCohomology}. For earlier versions of the almost purity theorem, see  \cite{faltings2002almost,ScholzePerfectoidspaces,KedlayaLiuRelativepadicHodgefoundations,AndrePerfectoidAbhyankarLemma}. 

\begin{theorem}[{\cite[Theorem 10.9]{BhattScholzepPrismaticCohomology}}]
\label{thm.BhattScholzeAlmostPurity}
Let $R$ be a perfectoid ring, $J\subseteq R$ a finitely generated ideal. Let $S$ be a finitely presented finite $R$-algebra such that $\Spec(S)\to\Spec(R)$ is finite \etale outside $V(J)$. Then $S_\perfd$ is a perfectoid ring, and the map $S\to S_\perfd$ is an isomorphism away from $V(J)$. Moreover, for every $n>0$, the map $R/p^n\to S_\perfd/p^n$ is $J$-almost finite \'etale, that is, $S_\perfd/p^n$ is $J$-almost finite projective and $J$-almost unramified over $R/p^n$. 

In addition, if $S$ admits a $G$-action for some finite group $G$ such that $\Spec(S)\to \Spec(R)$ is a $G$-Galois cover outside $V(J)$, then $R\to S_\perfd$ is a $J$-almost $G$-Galois cover. That is, the maps $R\to \mathbf{R}\Gamma(G, S_\perfd)$ and $S_\perfd\widehat{\otimes}^L_RS_\perfd\to \prod_G S_\perfd$ are $J$-almost isomorphisms.
\end{theorem}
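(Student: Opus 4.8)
This is the Bhatt--Scholze almost purity theorem, and in practice we simply invoke \cite[Theorem 10.9]{BhattScholzepPrismaticCohomology}; here I only outline how I would organize a proof. I would first dispose of the claim that $S_{\perfd}$ is perfectoid: $S$ is finite, hence integral, over the $p$-adically complete ring $R$, so its $p$-adic completion---which has the same perfectoidization---is the $p$-adic completion of an integral $R$-algebra, and fact~(3) of \autoref{subsec:PefectoidAlgebras} applies. The remaining content is that $S\to S_{\perfd}$ is an isomorphism away from $V(J)$, that $R/p^n\to S_{\perfd}/p^n$ is $J$-almost finite \'etale, and the Galois refinement.

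My plan would be to reduce to the case of a \emph{trivial} finite \'etale cover. Using Andr\'e's flatness lemma---and, to kill the \'etale fundamental group away from $V(J)$, a further cover in the $v$-topology---one passes to a $p$-complete faithfully flat perfectoid $R$-algebra $\widetilde R$ over which a chosen finite generating set of $J$ acquires compatible systems of $p$-power roots and over which $\big(S\otimes_R\widetilde R\big)[1/J]$ becomes a finite product of copies of $\widetilde R[1/J]$. Two facts legitimize this reduction: perfectoidization commutes with $p$-complete faithfully flat base change (\cite[Proposition 8.13]{BhattScholzepPrismaticCohomology}, as already exploited in the proof of \autoref{lem.gPerfd=AllpPowerRootsofg}), so it suffices to prove all assertions after base change to $\widetilde R$; and the properties ``$J$-almost finite projective'' and ``$J$-almost unramified'' satisfy almost faithfully flat descent, so these too may be checked after base change.

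Over $\widetilde R$ I would work modulo $p^n$, and ultimately modulo $p$, where perfectoidization is just perfection (fact~(1)). There the problem becomes the characteristic-$p$, Frobenius-driven form of almost purity---that the perfection of a finite algebra which is \'etale away from $V(J)$ is $J$-almost finite \'etale---which is classical; see \cite{faltings2002almost,ScholzePerfectoidspaces,KedlayaLiuRelativepadicHodgefoundations}. In the trivialized situation it actually shows the perfected cover is $J$-almost a finite product of copies of $\widetilde R/p$, and one lifts this back to mixed characteristic through the tilting correspondence together with deformation along $\widetilde R/p^n\to\widetilde R/p$. The ``isomorphism away from $V(J)$'' statement then comes for free, since away from $V(J)$ the $J$-almost structure is honest and $S[1/J]$, already finite \'etale over $R[1/J]$, coincides with its own perfectoidization there. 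The Galois statement runs the same way: the $G$-action on $S_{\perfd}$ is induced by functoriality of $(-)_{\perfd}$, and that $R\to\mathbf{R}\Gamma(G,S_{\perfd})$ and $S_{\perfd}\widehat{\otimes}^L_R S_{\perfd}\to\prod_G S_{\perfd}$ are $J$-almost isomorphisms can be verified after base change to $\widetilde R$, where $S\otimes_R\widetilde R$ is $J$-almost $\prod_G\widetilde R$ and both maps reduce to explicit computations.

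I expect the real difficulty to lie not in any single step but in the descent bookkeeping: one must know that perfectoidization is compatible with the $p$-complete flat and $v$-covers in play, that Andr\'e's lemma can be arranged to trivialize a prescribed finite \'etale cover off $V(J)$, and that the pertinent almost-finiteness conditions descend. By contrast, the characteristic-$p$ core---that \'etale morphisms do not see Frobenius---is comparatively soft.
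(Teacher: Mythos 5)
The paper does not prove this theorem: it is stated with the citation \cite[Theorem 10.9]{BhattScholzepPrismaticCohomology} and used as a black box, exactly as you suggest doing in your opening sentence. So there is no in-paper proof to compare your outline against; the paper's ``proof'' is the citation. The only place the paper engages with the argument at all is in \autoref{rmk.BhattSchozeAlmostPurityAIC}, which extracts, without re-proving, one structural consequence of Bhatt--Scholze's argument in the absolutely integrally closed, constant-rank case, namely that $S'_{\perfd}$ becomes $f_i$-almost a finite product of copies of $R$ after passing to a $\Sigma_r$-Galois cover.

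As for the sketch itself, it captures several of the genuine ingredients of Bhatt--Scholze's proof (André's flatness lemma to pass to a base where generators of $J$ acquire $p$-power roots; reduction to the absolutely integrally closed case; reduction to constant rank; a characteristic-$p$ Frobenius argument at the bottom), but it elides the step the paper's \autoref{rmk.BhattSchozeAlmostPurityAIC} actually highlights: rather than ``killing the \'etale fundamental group by a $v$-cover,'' Bhatt--Scholze pass to a $\Sigma_r$-Galois closure $S'$ of $S$ (via \cite[Lemma 1.9.2]{AndrePerfectoidAbhyankarLemma}) and then descend by taking $\Sigma_{r-1}$-invariants. This matters for the paper, since \autoref{rmk.BhattSchozeAlmostPurityAIC} and the subsequent \autoref{prop.gisogeny} and \autoref{prop.BhattScholzeAlmostPuritypcompletefaithfullyflat} lean on exactly that Galois-descent structure. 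You should also be cautious about the line ``the $J$-almost-finite-\'etale conditions satisfy almost faithfully flat descent'': this requires an argument (Gabber--Ramero develop the descent formalism carefully), and in Bhatt--Scholze the compatibility of perfectoidization with $p$-complete faithfully flat base change is supplied by \cite[Proposition 8.13]{BhattScholzepPrismaticCohomology}, which is the precise tool the paper itself uses repeatedly. Finally, the claim that the result ``lifts back through the tilting correspondence together with deformation'' compresses a nontrivial portion of the argument; the actual proof runs through prismatic/derived machinery rather than a bare deformation step. None of this constitutes a gap in what the paper is doing, since the paper only cites the theorem; it is a gap in your sketch relative to a complete proof.
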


We refer the readers to \cite{GabberRameroAlmostringtheory} (and also \cite[Section 10]{BhattScholzepPrismaticCohomology}) for basic language of almost mathematics, including the definitions of almost finitely generated, almost projective and almost unramified (we will recall the definition of almost projective in \autoref{sec.TransformationRule}).

To prove what we need in the sequel, we record the following remark which follows from Bhatt-Scholze's proof of \autoref{thm.BhattScholzeAlmostPurity}.

\begin{remark}
\label{rmk.BhattSchozeAlmostPurityAIC}
With notation as in \autoref{thm.BhattScholzeAlmostPurity}, and assume additionally that $R$ is absolutely integrally closed and $S$ has constant rank $r$ outside $V(J)$. Then by \cite[Lemma 1.9.2]{AndrePerfectoidAbhyankarLemma}, we can find $R\to S\to S'$ such that $\Spec(S')\to \Spec(R)$ is $\Sigma_r$-Galois outside $V(J)$ and $\Spec(S')\to\Spec(S)$ is $\Sigma_{r-1}$-Galois outside $V(J)$ (here $\Sigma_r$ is the symmetric group on $r$ elements). It then follows as in the proof of \cite[Theorem 10.9]{BhattScholzepPrismaticCohomology} that we can find generators $f_1,\dots,f_n$ of $J$ such that for each $i$, we have an $f_i$-almost isomorphism $S'_\perfd \xrightarrow{\phi_i}\prod_{\Sigma_r}R$. Therefore applying $\mathbf{R}\Gamma(\Sigma_{r-1}, -)$, we obtain $f_i$-almost isomorphisms
$$S_\perfd \cong \mathbf{R}\Gamma(\Sigma_{r-1}, S'_\perfd) \cong \prod_{\Sigma_r/\Sigma_{r-1}}R.$$
Note that if $r=0$, the above should be interpreted as saying that $S_\perfd$ is $f_i$-almost zero.
\end{remark}

We now prove \autoref{prop.gisogeny} and \autoref{prop.BhattScholzeAlmostPuritypcompletefaithfullyflat}, which are addendums to \autoref{thm.BhattScholzeAlmostPurity}. These results are well-known to experts, and more general results will appear in forthcoming work \cite{BhattLuriepadicRHmodp}. We give detailed arguments here for the sake of completeness. In our applications, we will only need these propositions in the case when $R$ and $S$ are $p$-torsion free, but we state and prove them in slightly more general setup.

\begin{proposition}
\label{prop.gisogeny}
With notation as in \autoref{thm.BhattScholzeAlmostPurity}, and assume $S$ has bounded $p^\infty$-torsion, then the map $\phi$: $S\to S_\perfd$ is a $J$-isogeny, i.e., for every $g\in J$, there exists $N$ such that the map $\cone(\phi)\xrightarrow{\cdot g^N}\cone(\phi)$ induced by multiplication by $g^N$ is the zero map in $D(R)$.
\end{proposition}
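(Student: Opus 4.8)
The plan is as follows. Write $C \coloneqq \cone(\phi) \in D(R)$; since $S$ and $S_\perfd$ are honest rings concentrated in cohomological degree $0$, the complex $C$ has cohomology only in degrees $-1$ and $0$, with $H^{-1}(C) = \ker\phi$ and $H^0(C) = \coker\phi$. Fix generators $f_1, \dots, f_n$ of $J$. It suffices to produce, for each $i$, an integer $N_i > 0$ with $f_i^{N_i}\cdot\mathrm{id}_C = 0$ in $D(R)$: given $g = \sum_i a_i f_i \in J$ and $N = n\cdot\max_i N_i$, expanding $g^N$ as an $R$-linear combination of monomials $\prod_i f_i^{\beta_i}$ with $\sum_i\beta_i = N$ shows each monomial is divisible by some $f_j^{N_j}$, so multiplication by $g^N$ on $C$ is a finite sum of scalar endomorphisms each factoring through $f_j^{N_j}\cdot\mathrm{id}_C = 0$, and hence vanishes in $D(R)$.

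First I would record two easy observations. By \autoref{thm.BhattScholzeAlmostPurity}, $\phi$ is an isomorphism over $\Spec R\smallsetminus V(J) = \bigcup_i\Spec R[1/f_i]$, so $C\otimes^L_R R[1/f_i] = 0$ for each $i$; equivalently $\ker\phi$ and $\coker\phi$ are $f_i$-power torsion. Also $C$ is derived $p$-complete, since $S$ is finite over the $p$-complete ring $R$ and $S_\perfd$ is perfectoid. Neither observation alone suffices: the definition of a $J$-isogeny requires an honest null map of complexes, whereas so far the $f_i$-torsion is only element-wise on the cohomology, with no uniform exponent.

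The substance of the proof is to extract such a uniform exponent from the finiteness built into \autoref{thm.BhattScholzeAlmostPurity}. I would reduce modulo $p^n$: in the cases we need, $S$ and $S_\perfd$ are $p$-torsion free, so $C\otimes^L_R R/p^n \simeq \cone(S/p^nS \to S_\perfd/p^nS_\perfd)$; and since $C = \myR\varprojlim_n\big(C\otimes^L_R R/p^n\big)$ it is enough to annihilate $C\otimes^L_R R/p^n$ by a power of $f_i$ whose exponent does not depend on $n$ (the $\myR\varprojlim$ then yields $f_i^{N_i}\cdot\mathrm{id}_C = 0$, after perhaps enlarging the exponent to absorb a $\varprojlim^1$ term). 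Now $S = \sum_j R s_j$ is module-finite over $R$, so $\phi(S) = \sum_j R\phi(s_j) \subseteq S_\perfd$ is a finitely generated $R$-submodule, $\coker\phi = S_\perfd/\phi(S)$, and $\ker\phi\subseteq S$. On the other hand, \autoref{thm.BhattScholzeAlmostPurity} gives that $R/p^n\to S_\perfd/p^n$ is $J$-almost finite \'etale, so $S_\perfd/p^n$ is $J$-almost finitely presented over $R/p^n$; crucially, the almost-finiteness data come from a single construction (André's lemma applied once, as in the proof of \cite[Theorem 10.9]{BhattScholzepPrismaticCohomology}) and are thus uniform in $n$. When $R$ is absolutely integrally closed, which is the case occurring in our applications, this is made completely explicit by \autoref{rmk.BhattSchozeAlmostPurityAIC}, which presents $S_\perfd$, up to an $f_i$-almost isomorphism, as a fixed finite free $R$-module $\prod_{\Sigma_r/\Sigma_{r-1}}R$ independent of $n$; the general case follows by a faithfully flat base change (André's flatness lemma, \cf \autoref{lem.gPerfd=AllpPowerRootsofg}). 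With $S_\perfd$ so described, $\coker\phi$ is, $f_i$-almost, a quotient of a fixed finitely generated $R$-module, and being $f_i$-power torsion it is annihilated by a genuine power $f_i^{N_i}$ -- here one uses that $\sqrt{J_\perfd} = \sqrt J$, so the annihilator of a genuine finitely generated module containing $J_\perfd$ contains a power of $f_i$ -- while a parallel, easier estimate bounds $\ker\phi$. Absorbing the $f_i$-almost discrepancy into the exponent produces an $N_i$ valid for all $n$, as required.

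The main obstacle is precisely this last passage: almost purity delivers only \emph{almost} statements, and does so one $p^n$ at a time, whereas a $J$-isogeny cone must be killed \emph{on the nose} by a power of $f_i$ in $D(R)$. Reconciling the two -- turning ``$f_i$-almost zero on each $C\otimes^L_R R/p^n$'' into ``genuinely killed by a fixed power of $f_i$ on $C$'' -- is what forces us to track that the almost-finiteness in \autoref{thm.BhattScholzeAlmostPurity} comes from a construction uniform in $n$, to use the honest module-finiteness of $S$ over $R$ (not merely almost-finiteness), and to keep careful account of the interaction between almost mathematics, $p$-adic completeness, and support along $V(J)$.
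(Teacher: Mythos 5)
Your overall skeleton — restrict to a generator $f$ of $J$, note $C=\cone(\phi)$ is supported on $V(J)$ and derived $p$-complete, reduce via Andr\'e's flatness lemma and \autoref{rmk.BhattSchozeAlmostPurityAIC} to the case where $S_\perfd$ is replaced by a finite free $R$-module $\prod_G R$ — does match the paper's strategy. But the proposal has a genuine gap in the concluding estimate, and you misjudged where the difficulty lies.

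You must kill both $\coker\phi$ and $\ker\phi$ by a fixed power of $f$, and you assert that $\coker\phi$ is the subtle one while ``a parallel, easier estimate bounds $\ker\phi$.'' This is backwards. Once $S_\perfd$ is replaced by $\prod_G R$, the module $\coker\varphi$ is a quotient of the finite free $R$-module $\prod_G R$, hence finitely generated over $R$; being $f$-power torsion element-wise, a single power of $f$ kills it (take the maximum exponent over finitely many generators). No mod-$p^n$ reduction and no assertion about $\sqrt{J_\perfd}$ is needed here. The problematic term is $\ker\phi$: $R$ is perfectoid and in particular not Noetherian, so the submodule $\ker\phi\subseteq S$ need \emph{not} be finitely generated over $R$ even though $S$ is. Element-wise $f$-power torsion therefore does not yield a uniform annihilating exponent, and nothing in your proposal supplies one. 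The paper closes this with a specific device: since $S[1/f]\cong(\prod_G R)[1/f]$ and $\prod_G R$ is finitely presented over $R$, there is a map $\varphi'\colon\prod_G R\to S$ inducing an isomorphism after inverting $f$; then $\coker\varphi'$ is finitely generated and killed by $f^{N'}$, so $f^{N'}\cdot\ker\varphi\subseteq\im\varphi'\cap\ker\varphi=\varphi'(\ker(\varphi\circ\varphi'))$, and finally $\ker(\varphi\circ\varphi')$ is killed by $f$ itself because $\prod_G R$ is reduced and the kernel is $f^\infty$-torsion. Without this argument, or an equivalent one, the proof does not go through.

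Two smaller points. Your mod-$p^n$ route is treated loosely: even granting a uniform annihilation of the cohomology of $C$, you still need a device like \cite[Lemma~3.2]{BhattDerivedDirectSummand} (a two-term complex whose cohomology is killed by $f^N$ is itself killed by $f^{2N}$ in $D(R)$) to conclude that $f^M\cdot\mathrm{id}_C=0$ in the derived category; ``absorbing a $\varprojlim^1$ term'' is not this step. And the assertion that the almost-finiteness data furnished by \autoref{thm.BhattScholzeAlmostPurity} are ``uniform in $n$'' is an intuition rather than a proof; the paper sidesteps the issue entirely by working once and for all with the genuine finite model $\prod_G R$, not with almost-finite presentations mod $p^n$.
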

\begin{proof}
%We first note that, since $J$ is finitely generated, it is enough to prove the statement for each generator of $J$. 
We can work with each element of $J$ to assume $J=(g)$ is principal. Set $C:= \cone(\phi)$. We first prove that, to show multiplication by a power of $g$ induces the zero map on $C$ in $D(R)$, it is enough to show this after a $p$-completely faithfully flat base change. Suppose $R\to \widetilde{R}$ is $p$-completely faithfully flat and that multiplication by $g^N$ is zero on $C\widehat{\otimes}_R^L\widetilde{R}$. Then we know that 
$$(\Z/p^n)\otimes_{\Z}^L C{\otimes}_R^L\widetilde{R} \xrightarrow{\cdot g^N} (\Z/p^n)\otimes_{\Z}^L C{\otimes}_R^L\widetilde{R}$$
is the zero map. Since $\widehat{R}$ is $p$-completely faithfully flat over $R$, $\widetilde{R}\otimes^L_R R/p^n$ is discrete and faithfully flat over $R/p^n$. Now $\mathbb{Z}/p^n\otimes_\mathbb{Z}^L C$ being $p^n$-torsion implies that 
$$H^i(\mathbb{Z}/p^n\otimes^L_\mathbb{Z}C\otimes^L_R\widetilde{R})\simeq H^i(\mathbb{Z}/p^n\otimes^L_\mathbb{Z}C)\otimes_{R/p^n}(R/p^n \otimes^L_R\widetilde{R}).$$ 
%Now since $(\Z/p^n)\otimes_{\Z}^L C\in D^b(R)$ whose cohomologies are $p^n$-torsion and $\widetilde{R}$ is $p$-completely faithfully flat, 
It follows that $g^N$ annihilates $H^i((\Z/p^n)\otimes_{\Z}^L C)$ for each $i$ by faithfully flat descent. Note that we have a short exact sequence by \cite[\href{https://stacks.math.columbia.edu/tag/0CQE}{Tag 0CQE}]{stacks-project}: 
$$0\to \mathbf{R}^1\varprojlim_n H^{i-1}(C\otimes^L_\Z (\Z/p^n)) \to H^i(\myR\varprojlim_n C{\otimes}_{\Z}^L(\Z/p^n)) \to \varprojlim_nH^i(C{\otimes}_{\Z}^L(\Z/p^n))\to 0.$$
It follows that $g^{2N}$ annihilates $H^i(\myR\varprojlim_n C{\otimes}_{\Z}^L(\Z/p^n))\cong H^i(C)$ for all $i$, since $C$ is derived $p$-complete. By \cite[Lemma 3.2]{BhattDerivedDirectSummand}, since $C$ only has two cohomology groups, multiplication by $g^{4N}$ induces the zero map on $C$ as desired. %$(\Z/p^n)\otimes_{\Z}^L C$ for each $n$. Hence after taking derived limit, we see that multiplication by $g^N$ induces the zero map on $C\cong \mathbf{R}\varprojlim_n C{\otimes}_{\Z}^L(\Z/p^n)$ as desired. 

By Andr\'e's flatness lemma \cite[Theorem 7.14]{BhattScholzepPrismaticCohomology}, we have a $p$-completely faithfully flat map $R\to \widetilde{R}$ of perfectoid rings such that $\widetilde{R}$ is absolutely integrally closed. So after derived $p$-complete base change to $\widetilde{R}$ we have 
$$S\widehat{\otimes}^L_R\widetilde{R} \to S_\perfd\widehat{\otimes}^L_R\widetilde{R} \to C\widehat{\otimes}^L_R\widetilde{R} \xrightarrow{+1}.$$
Next we note that, since $S$ has bounded $p^\infty$-torsion by assumption, %$S\widehat{\otimes}^L_R\widetilde{R}$ is a ring (i.e., concentrated in degree 0). 
by \autoref{lem.completed tensor product of p-completely flat algebra}, we have $S\widehat{\otimes}^L_R\widetilde{R}\simeq (S\otimes_R \widetilde{R})^{{\wedge c}}$ where the latter is the classical $p$-completion of $S\otimes_R \widetilde{R}$.
% it is isomorphic to 
% $$\mathbf{R}\varprojlim_n \widetilde{R}\otimes_R^L (S\otimes^L_{\Z}\Z/p^n)\cong \mathbf{R}\varprojlim_n (\widetilde{R} \otimes_RS) \otimes^L_{\Z}\Z/p^n\cong \widetilde{R}\otimes_RS$$
% where the first isomorphism follows from as $\widetilde{R}$ is $p$-completely flat over $R$, and the second isomorphism follows since $\widetilde{R}\otimes_RS$ is finite and finitely presented over $\widetilde{R}$ and thus derived $p$-complete. 
Moreover, it follows from \cite[Proposition 8.13]{BhattScholzepPrismaticCohomology} that $$S_\perfd\widehat{\otimes}^L_R\widetilde{R} \cong (S\widehat{\otimes}^L_R\widetilde{R})_\perfd\cong \left((S\otimes_R \widetilde{R})^{{\wedge c}}\right)_{\perfd} \cong (S\otimes_R\widetilde{R})_\perfd,$$
where the last isomorphism follows from the fact that $(\widetilde{R}\otimes_RS)_\perfd$, being a perfectoid ring by \autoref{thm.BhattScholzeAlmostPurity}, factors through the classical $p$-completion of $S\otimes_R\widetilde{R}$. Also note that, since $S$ is finitely presented as an $R$-module, $S\otimes_R\widetilde{R}$ is a quotient of a finite free $\widetilde{R}$-module $F$ by a finitely generated submodule $G$ of $F$, and thus it surjects onto its classical $p$-completion (the classical $p$-completion is $F/G^-$, where $G^-$ denotes the closure of $G$ under the $p$-adic topology on $F$). Therefore we have a commutative diagram of derived $p$-complete rings
\[
\xymatrix{
(S\otimes_R \widetilde{R})^{{\wedge c}}\cong S\widehat{\otimes}^L_R\widetilde{R}  \ar[r]^-\alpha & S_\perfd\widehat{\otimes}^L_R\widetilde{R} \\
S\otimes_R\widetilde{R} \ar[r]^-\beta \ar@{->>}[u] & (S\otimes_R\widetilde{R})_\perfd \ar@{=}[u]
}
\]
An easy diagram chasing then shows that $\coker(\beta) \cong \coker(\alpha)$ and $\ker(\beta)\twoheadrightarrow \ker(\alpha)$. It follows that if we can show that $\cone(\beta)$ is annihilated by a power of $g$ in $D(\widetilde{R})$, then so is $\cone(\alpha)\cong C\widehat{\otimes}^L_R\widetilde{R}$ (using \cite[Lemma 3.2]{BhattDirectsummandandDerivedvariant}). Thus we can replace $R$ by $\widetilde{R}$ and $S$ by $S\otimes_R\widetilde{R}$ to assume $R$ is absolutely integrally closed. 

Now if $S$ has constant rank over $R$ outside $V(J)=V(g)$, then %as we can work with each generator of $J$, 
by \autoref{rmk.BhattSchozeAlmostPurityAIC} we can find $f_1,\dots,f_n$ so that $J=(g)=(gf_1,\dots, gf_n)$ and that there exists a $(gf_i)$-almost isomorphism $S_\perfd\xrightarrow{\phi_i} \prod_GR$\footnote{We caution the readers that, even we are in the case that $J=(g)$ is principal and that $S$ has constant rank outside $V(g)$, it is not clear that there is a single map $S_\perfd\xrightarrow{\phi} \prod_GR$ that is a $g$-almost isomorphism, i.e., we still need to write $(g)=(gf_1,\dots,gf_n)$ and work with each $gf_i$, see \cite[Proof of Theorem 10.9, footnote 18]{BhattScholzepPrismaticCohomology}.} for a finite group $G$. By \cite[Lemma 3.2]{BhattDirectsummandandDerivedvariant}, it is enough to show that the kernel and cokernel of $\phi$ are annihilated by a power of $g$. Thus we can work with each $gf_i$ individually. Therefore, after redefining $g$ as $gf_i$, we may replace $S_\perfd$ by $\prod_GR$ and work with $S\xrightarrow{\varphi}\prod_GR$. The conclusion in this case now follows from the next claim.
\begin{claim}
There exists $N$ such that multiplication by $g^N$ induces the zero map on $\cone(\varphi)$ in $D(R)$.
\end{claim}
\begin{proof}
By \cite[Lemma 3.2]{BhattDerivedDirectSummand}, it is enough to show that there exists $N$ such that $\ker(\varphi)$ and $\coker(\varphi)$
are annihilated by $g^{N}$. Since $\varphi[1/g]$ is an isomorphism, this is clear for $\coker(\varphi)$ (as it is finitely generated over $R$ and vanishes after inverting $g$). 

As for $\ker(\varphi)$, note that since $S[1/g]\cong (\prod_GR)[1/g]$ and $\prod_GR$ is clearly finitely presented over $R$, there exists a $R$-module morphism $\varphi'$: $\prod_GR\to S$ such that $\varphi'[1/g]$ is an isomorphism. It follows that $\coker(\varphi')$ is annihilated by $g^{N'}$ for some $N'$ as $S$ and thus $\coker(\varphi')$ is finitely generated as an $R$-module. Thus we have 
$$g^{N'}\cdot \ker(\varphi) \subseteq \im(\varphi')\cap\ker(\varphi)=\im(\varphi'|_{\ker(\varphi\circ\varphi')}).$$
But it is clear that $\ker(\varphi\circ\varphi')$ is annihilated by $g$ (in fact, it is annihilated by $(g)_\perfd$) because $\prod_GR$ is reduced and $\ker(\varphi\circ\varphi')$ is $g^{\infty}$-torsion. Thus setting $N=N'+1$ we see that $g^N\cdot\ker(\varphi)=0$. 
\end{proof}
Finally, in the general case, by \cite[last paragraph of proof of Theorem 10.9]{BhattScholzepPrismaticCohomology}, we have a $J$-almost isomorphism $R\to \prod_{i=1}^t(R_i)_\perfd$ such that $(R_i)_\perfd\to S_i:= (R_i)_\perfd\otimes S$ has constant rank and $S_\perfd$ is $J$-almost isomorphic to $\prod_{i=1}^t(S_i)_\perfd$. Therefore the conclusion follows from the constant rank case by working with each $S_i\to (S_i)_\perfd$.
\end{proof}

\begin{proposition}
\label{prop.BhattScholzeAlmostPuritypcompletefaithfullyflat}
With notation as in \autoref{thm.BhattScholzeAlmostPurity}, we have:
\begin{enumerate}
    \item $R\to S_\perfd$ is $p$-completely $J$-almost flat. That is, for every $n$ and every $p^n$-torsion $R$-module $M$, $M\otimes_R^LS_\perfd$ is $J$-almost concentrated in degree 0, i.e., $\Tor_{i}^{R}(M, S_\perfd)$ is $J$-almost zero for all $i>0$.
  \item If, in addition, $S$ has constant rank $r>0$ over $R$ outside $V(J)$, then $R\to S_\perfd$ is $p$-completely $J$-almost faithful in the following sense: for every $n$ and every $p^n$-torsion $R$-module $M$, if $M\otimes_RS_\perfd$ is $J$-almost zero, then $M$ is $J$-almost zero.
\end{enumerate}
As a consequence, if $S$ has constant rank $r>0$ over $R$ outside $V(J)$, then $R\to S_\perfd$ is $p$-completely $J$-almost faithfully flat and hence $p$-completely $J$-almost pure, i.e., for every $n$ and every $p^n$-torsion $R$-module $M$, we have $\ker(M\to M\otimes_RS_\perfd)$ is $J$-almost zero.
\end{proposition}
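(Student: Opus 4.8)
The plan is to adapt the reduction used in the proof of \autoref{prop.gisogeny}: pass to the situation where $R$ is absolutely integrally closed and $S$ has constant rank, because there \autoref{rmk.BhattSchozeAlmostPurityAIC} describes $S_\perfd$ explicitly — for each generator $f_i$ of $J$ it is $f_i$-almost isomorphic to a finite free $R$-module $\prod_G R$ — and every assertion becomes immediate.

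For part (1), I would first settle the $p$-torsion free case, which is all that the applications require. There $S_\perfd$ is a $p$-torsion free perfectoid ring, so $S_\perfd\otimes_R^L(R/p^n)\simeq S_\perfd/p^n$ is discrete and, by \autoref{thm.BhattScholzeAlmostPurity}, it is $J$-almost finite projective, hence $J$-almost flat, over $R/p^n$. For a $p^n$-torsion $R$-module $M$ (which is thus an $R/p^n$-module), the change-of-rings identity $M\otimes_R^LS_\perfd\simeq M\otimes_{R/p^n}^L(S_\perfd/p^n)$ shows $M\otimes_R^LS_\perfd$ is $J$-almost concentrated in degree $0$, i.e.\ $\Tor_i^R(M,S_\perfd)$ is $J$-almost zero for $i>0$. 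For the general case I would mimic the argument of \autoref{prop.gisogeny}: check $f_i$-almost vanishing one generator at a time; reduce to $R$ absolutely integrally closed using Andr\'e's flatness lemma \cite[Theorem 7.14]{BhattScholzepPrismaticCohomology}, the identification $S_\perfd\widehat\otimes_R^L\widetilde R\simeq(\widetilde R\otimes_R S)_\perfd$, and $p$-complete faithfully flat descent (legitimate since the relevant $\Tor$ modules are $p^n$-torsion); then invoke the decomposition $R\to\prod_{j=1}^t(R_j)_\perfd$ into constant-rank pieces from the proof of \autoref{thm.BhattScholzeAlmostPurity} together with \autoref{rmk.BhattSchozeAlmostPurityAIC} to see that $S_\perfd$ is, $f_i$-almost, a finite projective $\prod_j(R_j)_\perfd$-module (with the rank-zero pieces contributing nothing). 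Since $\prod_j(R_j)_\perfd$ is $f_i$-almost flat over $R$ and finite projective modules are flat, $S_\perfd$ is $f_i$-almost flat over $R$, which gives the claim.

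For part (2), I would again reduce to $R$ absolutely integrally closed by $p$-complete faithfully flat descent, now using that such base change not only preserves but also detects $J$-almost vanishing of $M\otimes_RS_\perfd$ — this is where one needs the $p^n$-torsion hypothesis and the compatibility of the almost structures coming from the basic setup $(R/p^n, J_\perfd R/p^n)$ of \cite[2.1.1]{GabberRameroAlmostringtheory}. With $S$ of constant rank $r>0$, \autoref{rmk.BhattSchozeAlmostPurityAIC} gives for each generator $f_i$ of $J$ an $f_i$-almost isomorphism $S_\perfd\simeq\prod_{\Sigma_r/\Sigma_{r-1}}R$ with $r$ factors, so $M\otimes_RS_\perfd$ is $f_i$-almost isomorphic to $M^{\oplus r}$; if the former is $f_i$-almost zero then $M$ is $f_i$-almost zero, being a direct summand. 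Ranging over the generators of $J$ yields the faithfulness statement.

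Finally, for the ``as a consequence'' clause (so $r>0$), I would take a $p^n$-torsion module $M$ and set $K:=\ker(M\to M\otimes_RS_\perfd)$. Part (1) makes $-\otimes_RS_\perfd$ a $J$-almost exact functor on $p^n$-torsion modules, so $K\otimes_RS_\perfd$ is $J$-almost isomorphic to $\ker\bigl(M\otimes_RS_\perfd\to M\otimes_RS_\perfd\otimes_RS_\perfd\bigr)$; but this last map is split injective, a splitting being induced by the multiplication $S_\perfd\otimes_RS_\perfd\to S_\perfd$, so its kernel vanishes. Hence $K\otimes_RS_\perfd$ is $J$-almost zero, and part (2) forces $K$ to be $J$-almost zero, as desired. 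I expect the main obstacle to be the bookkeeping in these two base-change reductions — checking that $p$-completely faithfully flat maps preserve $J$-almost vanishing (routine) and, for part (2) and the consequence, that they reflect it, where the derived $p$-completeness hypotheses and the basic-setup formalism genuinely enter; once one is in the constant-rank situation over an absolutely integrally closed base, $S_\perfd$ is locally almost-finite-free over $R$ and the statements are trivial.
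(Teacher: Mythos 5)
Your proposal is correct and follows essentially the same route as the paper's own proof: reduce to the absolutely integrally closed case via Andr\'e's flatness lemma together with the identification $S_\perfd\widehat{\otimes}^L_R\widetilde R\simeq(\widetilde R\otimes_R S)_\perfd$, use \autoref{rmk.BhattSchozeAlmostPurityAIC} to get an $f_i$-almost isomorphism $S_\perfd\simeq\prod R$ in the constant-rank case, decompose into constant-rank pieces via the $J$-almost isomorphism $R\to\prod_j(R_j)_\perfd$ for the general case of (a), and for the consequence compare $K\otimes_R S_\perfd$ with the kernel of the split-injective map $M\otimes_RS_\perfd\to M\otimes_RS_\perfd\otimes_RS_\perfd$ and invoke (b). Your warm-up paragraph about the $p$-torsion-free case (via $J$-almost finite projectivity of $S_\perfd/p^n$ over $R/p^n$ and change of rings) is a reasonable addendum not present in the paper, and your explicit remark that the faithfully-flat base change in (b) must also reflect $J$-almost vanishing correctly identifies the one step the paper leaves implicit.
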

\begin{proof}
The argument essentially follows from the same strategy as in the proof of \autoref{prop.gisogeny} (i.e., reducing to the absolutely integrally closed case, using \autoref{rmk.BhattSchozeAlmostPurityAIC} in the constant rank case and then deducing the general case), so we provide fewer details.

To prove (a) and (b), note that by Andr\'e's flatness lemma (see \cite[Theorem 7.14]{BhattScholzepPrismaticCohomology}), we can perform a $p$-completely faithfully flat base change to assume $R$ is absolutely integrally closed. Now if $S$ has constant rank $r$ over $R$ outside $V(J)$, then by \autoref{rmk.BhattSchozeAlmostPurityAIC} there exists $f_1,\dots,f_n$ that generates $J$ such that $S$ is $f_i$-almost isomorphic to a product of $R$ as an $R$-module. Therefore both (a) and (b) follows since being $J$-almost zero is equivalent to being $f_i$-almost zero for each $i$ (note that for (b) we need $r>0$ to avoid the case that $S_\perfd$ is $J$-almost zero). For the general case of part (a), note that by \cite[last paragraph of proof of Theorem 10.9]{BhattScholzepPrismaticCohomology}, we have a $J$-almost isomorphism $R\to \prod_{i=1}^t(R_i)_\perfd$ such that $R_i\to S_i:= R_i\otimes S$ has constant rank. Thus we reduce to the constant rank case. 

To see the last conclusion, we set $K := \ker(M\to M\otimes_RS_\perfd)$. By part (a), after tensoring with $S_\perfd$, we have a $J$-almost exact sequence: 
$$0\to K\otimes_R S_\perfd \to M\otimes_RS_\perfd \to M\otimes_R S_\perfd\otimes_R S_\perfd.$$
Since the last map above is split (induced by the multiplication map), it follows that $K\otimes_R S_\perfd$ is $J$-almost zero and hence $K$ is $J$-almost zero by part (b). 
\end{proof}

\subsection{Big Cohen-Macaulay algebras}
\label{subsec:BigCohenMacaulayAlgebras}
Let $(R,\m)$ be a Noetherian local ring. An $R$-algebra $B$, not necessarily finitely generated over $R$, is called big Cohen-Macaulay (resp. balanced big Cohen-Macaulay) if some (resp. every) system of parameters $x_1,\dots,x_d$ of $R$ is a regular sequence on $B$ and $B/\m B\neq 0$. Historically, there are other variants of big Cohen-Macaulay algebras, and we refer to \cite[Section 2.2]{BMPSTWW-MMP} for a quick summary. For our purpose in this article (see \autoref{sec.BCM-regularrityPerfdSignature}), we can often ignore these distinctions thanks to the following basic fact: if $B$ is a (perfectoid) big Cohen-Macaulay $R$-algebra, then the $\m$-adic completion of $B$ is (perfectoid) balanced big Cohen-Macaulay, see \cite[Corollary 8.5.3]{BrunsHerzog} and \cite[Proposition 2.2.1]{AndreWeaklyFunctorialBigCM}.

Big Cohen-Macaulay algebras exist in the following weakly functorial sense: suppose $(R,\m)\to (S,\mathfrak{n})$ is a map of Noetherian complete local domains, then there exists a commutative diagram: 
\[\xymatrix{
R \ar[d]\ar[r] & S \ar[d]\\
R^+ \ar[r]\ar[d] & S^+ \ar[d]\\
B \ar[r] & C
}\]
such that $B$, $C$ are balanced big Cohen-Macaulay algebras over $R$ and $S$ respectively, see \cite{HochsterHunekeInfiniteIntegralExtensionsAndBigCM,HochsterHunekeApplicationsofBigCM,AndreWeaklyFunctorialBigCM,MurayamaSymbolicTestIdeal}. Furthermore, we have 
\begin{itemize}
    \item If $R/\m$ has characteristic $p>0$, then $B$ and $C$ can be taken to be perfectoid rings \cite{AndreWeaklyFunctorialBigCM}, and in fact, one can take $B$ and $C$ to be the $p$-adic completions of $R^+$ and $S^+$ respectively \cite{HochsterHunekeInfiniteIntegralExtensionsAndBigCM,BhattAbsoluteIntegralClosure}.
    \item If $R/\m=S/\mathfrak{n}$ has characteristic $p>0$, then for any given perfectoid big Cohen-Macaulay $R^+$-algebra $B$, one can find a perfectoid $S^+$-algebra $C$ making the above diagram commutes \cite{AndreWeaklyFunctorialBigCM,MaSchwedeTuckerWaldronWitaszekAdjoint} .
\end{itemize}

We recall and modify a result of Gabber to our context. 

\begin{lemma}[{\cite{GabberMSRINotes}, \cite[Section 17.5]{GabberRameroFoundationsAlmostRingTheory}}]
\label{thm.GabbersTrickVVersion}
Let $(R,\m)$ be a Noetherian complete local domain of residue characteristic $p>0$ and let $B$ be a perfectoid balanced big Cohen-Macaulay $R^+$-algebra. Suppose $v$ is a $\bR$-valuation on $\widehat{R^+}$ that is positive on $\m R^+$, and suppose that $\{c_j\}_{j\geq 0}$ is a sequence of elements of $\widehat{R^+}$ such that $v(c_j)\to 0$ as $j\to\infty$. 

Then if we set $B'':= {W^{-1}\prod_{j \geq 0} B}$ and $B':=\widehat{B''}$, where $W$ is the multiplicative set generated by ${\bf c} := (c_0, c_1, \dots)$, then $B'$ is a perfectoid balanced big Cohen-Macaulay $B$-algebra (and hence $R^+$-algebra) where $B \to B'$ is the diagonal map.  
\end{lemma}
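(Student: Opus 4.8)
The plan is to verify three things about $B'$ in turn: that it is a $B$-algebra, that it is perfectoid, and that it is a balanced big Cohen-Macaulay $R^{+}$-algebra, the last being the only substantive point. The $B$-algebra structure is immediate, since the diagonal $B\to\prod_{j\geq 0}B$ composes with the localization and completion maps to give $B\to B''\to B'$. For perfectoidness, first note that an arbitrary product of perfectoid rings is perfectoid: with the diagonal pseudouniformizer, the three conditions ($\pi$-adic completeness, surjectivity of Frobenius modulo $p$, and principality of $\ker\theta$) all hold componentwise, using that $(-)^{\flat}$ and Witt vectors commute with products. So $S:=\prod_{j\geq 0}B$ is perfectoid. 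Writing $g:=\mathbf c\in S$, we have $B''=S[g^{-1}]$ and $B'=\widehat{B''}$ ($p$-adic completion), hence $B'/p=B''/p=(S/p)[\bar g^{-1}]$; since Frobenius is surjective on $S/p$ it is surjective on this localization, and the injectivity of $\Phi\colon B'/p^{1/p}\to B'/p$ is checked by the same kind of direct computation at the level of $S$, reduced to the (known) corresponding statement for the perfectoid ring $B$ and then pushed to the localization and its $p$-completion. In the $p$-torsion-free situation, which is all we need and to which one reduces as elsewhere in this section, these facts together with $p$-adic completeness of $B'$ give that $B'$ is perfectoid by \cite[Lemma 3.10]{BhattMorrowScholzeIHES}.

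For the big Cohen-Macaulay property, fix a system of parameters $x_1=p,x_2,\dots,x_d$ of $R$ lying in $\m$, and set $\mathfrak q=(x_1,\dots,x_d)$. Since finitely generated ideals commute with products, $(x_1,\dots,x_i)S=\prod_j(x_1,\dots,x_i)B$ for each $i$, so $x_1,\dots,x_d$ is a regular sequence on $S$ with $S/\mathfrak q S=\prod_j(B/\mathfrak q B)\neq 0$; it remains a regular sequence after localizing at $g$ (assuming $B''\neq 0$) and after the subsequent $p$-completion (using $p$-torsion-freeness). Because $p=x_1\in\mathfrak q$, the quotient $B''/\mathfrak q B''$ is $p$-torsion, hence already $p$-complete, so $B'/\mathfrak q B'=B''/\mathfrak q B''=\bigl(\prod_j B/\mathfrak q B\bigr)[\bar g^{-1}]$, and likewise with $\m$ in place of $\mathfrak q$; moreover $\m^{N}\subseteq\mathfrak q\subseteq\m$ and $B'/\mathfrak q B'$ is $\m$-power torsion, so $\m B'=B'$ forces $\mathfrak q B'=B'$ by Nakayama for $\m$-power-torsion modules. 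Thus the whole statement reduces to showing $\mathfrak q B''\neq B''$, equivalently that there is no $n\geq 1$ with $c_j^{n}\in\mathfrak q B$ for every $j$ (this also yields $B''\neq 0$).

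This is where the valuation enters. Suppose for contradiction that $c_j^{n}\in\mathfrak q B$ for all $j$, for some fixed $n\geq 1$; then $c_j^{mn}\in\mathfrak q^{m}B$ for every $m\geq 1$. The heart of the matter — and the only place the hypotheses on $v$ are used — is the estimate that, $B$ being a big Cohen-Macaulay $R^{+}$-algebra, the contraction $(\mathfrak q^{m}B)\cap\widehat{R^{+}}$ is not much larger than $\mathfrak q^{m}\widehat{R^{+}}$: every nonzero element of it has $v$-value at least $m\delta$, where $\delta:=\min_i v(x_i)>0$ by positivity of $v$ on $\m R^{+}$ (intuitively, passing to a big Cohen-Macaulay algebra cannot create elements of $\widehat{R^{+}}$ that lie in a power of the parameter ideal yet have arbitrarily small positive valuation). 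Granting this, $mn\cdot v(c_j)=v(c_j^{mn})\geq m\delta$, so $v(c_j)\geq\delta/n$ for all $j$, contradicting $v(c_j)\to 0$. Hence $\mathfrak q B''\neq B''$, which by the reductions above proves that $B'$ is a balanced big Cohen-Macaulay $R^{+}$-algebra.

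I expect the main obstacle to be exactly this last estimate on $(\mathfrak q^{m}B)\cap\widehat{R^{+}}$: making it precise in the non-Noetherian, mixed-characteristic setting — together with the routine but delicate bookkeeping that localization and $p$-completion preserve the regular sequence and do not enlarge $\m\cdot$ — is the technical core, and is what is carried out in \cite{GabberMSRINotes} and \cite[Section 17.5]{GabberRameroFoundationsAlmostRingTheory}.
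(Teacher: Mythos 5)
Your proposal correctly identifies the overall structure of the argument --- product, localize, $p$-complete; verify perfectoidness componentwise; check the regular-sequence condition and reduce the whole weight of the proof to the statement $\mathfrak{q} B'' \neq B''$, i.e.\ to a valuation estimate on elements of $\widehat{R^+}$ that land in a parameter ideal extended to $B$. This is indeed the route the paper takes. However, the estimate you flag with ``granting this'' --- that every nonzero element of $(\mathfrak{q}^m B)\cap\widehat{R^+}$ has $v$-value at least $m\delta$ --- \emph{is} the content of the lemma, and you explicitly defer its proof to the references. That is the genuine gap: you have not explained how to bring Noetherian tools to bear on a contraction from the huge non-Noetherian algebra $B$ down to the (also non-Noetherian) ring $\widehat{R^+}$, nor how to deal with the fact that the elements $c_j$ live in $\widehat{R^+}$ rather than $R^+$.

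The paper closes this in two short steps, both of which are missing from your sketch. First, it replaces $\widehat{R^+}$ by $R^+$: write $c_j^m = d_j + p^{N_j}h_j$ with $d_j\in R^+$, $h_j\in\widehat{R^+}$ and $N_j\gg 0$; taking $N_j$ large makes $v(d_j)=v(c_j^m)$, and since $p\in\m$ the term $p^{N_j}h_j$ already lies in $\m B$, so $d_j\in\m B$ as well. Second, it imports Noetherianness: $d_j$ lies in some module-finite complete local domain extension $S_j$ of $R$ inside $R^+$. Because $B$ is big Cohen-Macaulay over $S_j$ it is a solid $S_j$-algebra by Hochster, so $\m B\cap S_j$ is contained in the solid closure of $\m S_j$, which is in turn contained in the integral closure of $\m S_j$; since integral closure is computed valuatively, $v(d_j)\ge v(\m S_j)\ge v(\m R^+)>0$, and this uniform positive lower bound contradicts $v(d_j)=v(c_j^m)=m\,v(c_j)\to 0$. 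Two further remarks: the paper does not need the power-raising to $\mathfrak{q}^m$ in your sketch --- a single contraction from $\m B$ already produces a positive lower bound that the sequence $v(c_j)\to 0$ violates; and the paper bypasses your Nakayama step by invoking \cite[Theorem~2.8]{BMPSTWW-MMP}, which reduces the balanced big Cohen-Macaulay check to $\m B'\neq B'$ together with regularity of a single parameter sequence beginning with $p$.
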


\begin{proof}
        The idea of the proof is contained in \cite[Page 3]{GabberMSRINotes} but we record it for the convenience of the reader.         The fact that $B'$ is perfectoid follows from the fact that an arbitrary product of perfectoid rings is perfectoid \cite[Example 3.8 (8)]{BhattIyengarMaRegularRingsPerfectoid} and a $p$-completed localization of a perfectoid ring is perfectoid \cite[Example 3.8 (7)]{BhattIyengarMaRegularRingsPerfectoid}.

To see $B'$ is balanced big Cohen-Macaulay, by \cite[Theorem 2.8]{BMPSTWW-MMP}, it is enough to show the following:\footnote{We only carry out the case when $R$ has mixed characteristic, the case $R$ has equal characteristic $p$ is easier as the $p$-adic completion is doing nothing and so $B'=B''$ in this case.}
\begin{enumerate}[(a)]
    \item $p, x_2,\dots,x_d$ is a regular sequence on $B'$ for every system of parameters starting with $p$.%$x_2,\dots,x_d$ on $R/p$.
    \item $B'/\m B'\neq 0$.
\end{enumerate}

We explain part (a).  First note that $p ,x_2,\dots, x_d$ is a regular sequence on $B''$: it is a regular sequence on $B$ by assumption and thus it is a regular sequence after mapping to $\prod_{j\ge 0} B$ via the diagonal, and thus remains a regular sequence after localization. Next, since $p$ is a nonzerodivisor on $B''$, it remains a nonzerodivisor on $B'=\widehat{B''}$. Now, modulo $p$, we have $B'/p=B''/p$ and so (a) follows.

Now we prove part (b). 
Suppose $1 \in \m B'$, then $1\in \m B''$ too since $p\in \m$ so that $B'/\m \cong B''/\m$. In particular, there exists $m > 0$ such that ${\bf c}^m \in \m (\prod_{j\geq 0}B)$.  Meaning that $c_j^m \in \m B$ for all $j$.  
        
        \begin{claim}
            There exists a sequence of elements $d_j \in R^+$ such that $v(d_j) = v(c_j^m)$ and such that $d_j \in \m B$.
        \end{claim}
        \begin{proof}
            We may write $c_j^m = d_j + p^{N_j} h_j$ for some $h_j \in \widehat{R^+}$, $N_j \gg 0$, and $d_j \in R^+$.  Since $N_j \gg 0$ we have $v(d_j) = v(c_j^m)$.  Since $c_j \in\m B$ we see the same is true for $d_j$.
        \end{proof}

        Now, each $d_j \in S_j \subseteq R^+$ where $S_j$ is a module-finite domain extension of $R$, in particular $S_j$ is a Noetherian complete local domain.  Thus $d_j \in \m B \cap S_j$.  But since $B$ is big Cohen-Macaulay over $S_j$, it is a solid $S_j$-algebra by \cite[Corollary 2.4]{HochsterSolidClosure} and so $\m B \cap S_j$ is contained in the solid closure of $\m S_j$, which is contained in the integral closure of $\m S_j$ by \cite[Theorem 5.10]{HochsterSolidClosure}, also see \cite[Proof of Theorem 4.5]{MaSchwedeSingularitiesMixedCharBCM}.
        In particular, we have $v(d_j)\geq v(\m S_j) \geq v(\m R^+)$ since integral closure is computed valuatively.  However, this implies $$v(\m R^+) \leq \displaystyle\lim_{j \to \infty} v(d_j) =  \lim_{j \to \infty} v(c_j^m)= 0,$$ which is a contradiction as $v(\m R^+)>0$ since $\m R^+$ is finitely generated (and $v$ is positive on $\m R^+$). 
\end{proof}

    We will typically use the previous result in the following form.

    \begin{corollary}
    \label{cor.GabberTrick}
        With notation as in \autoref{thm.GabbersTrickVVersion}, suppose that $J \subseteq R$ is an ideal and $z \in B$ is such that the image of $c_j z$ in $B$ is contained in $JB$ for all $j \geq 0$.  Then the image of $z$ in $B'$ is contained in $JB'$ (where $B\to B'$ is the diagonal map). 

        As a consequence, if $d = \dim R$ and $\eta \in H^d_{\mathfrak{m}}(B)$ is such that $c_j \eta = 0$ for all $j \geq 0$ then $\eta \mapsto 0 \in H^d_{\m}(B')$.
    \end{corollary}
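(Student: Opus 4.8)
The plan is to derive both assertions from the description of $B'$ as the ($p$-completed) localization of the product $\prod_{j\geq 0}B$ at the multiplicative set $W$ generated by $\mathbf{c}$, with $B\to B'$ the resulting diagonal map. For the first statement I would work inside $\prod_{j\geq 0}B$: since $R$ is Noetherian, write $J=(a_1,\dots,a_r)$, and for each $j$ use $c_jz\in JB$ to write $c_jz=\sum_i a_ib_{ij}$ with $b_{ij}\in B$. Reading these equalities simultaneously over all $j$ shows that $\mathbf{c}\cdot(z,z,z,\dots)=\sum_i a_i(b_{i0},b_{i1},b_{i2},\dots)$ lies in $J\cdot\prod_{j\geq 0}B$. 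As $\mathbf{c}$ is a unit in $B''=W^{-1}\prod_{j\geq 0}B$, dividing by $\mathbf{c}$ shows the diagonal image of $z$ already lies in $JB''$, and pushing forward along the ring map $B''\to B'=\widehat{B''}$ gives the claim. The only point worth noting is that finite generation of $J$ is exactly what allows the ``multiply by $\mathbf{c}$, then divide by $\mathbf{c}$'' maneuver to be carried out inside the infinite product.

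For the consequence, the idea is to reduce the vanishing of the image of $\eta$ in $H^d_{\m}(B')$ to the ideal-membership statement just established, applied to an ideal of the form $J=(x_1^n,\dots,x_d^n)\subseteq R$ for a fixed system of parameters $x_1,\dots,x_d$ of $R$. First I would invoke that $B$ is balanced big Cohen-Macaulay, so $x_1,\dots,x_d$ is a regular sequence on $B$ and $H^d_{\m}(B)=\varinjlim_t B/(x_1^t,\dots,x_d^t)B$ with \emph{injective} transition maps; in particular each $B/(x_1^n,\dots,x_d^n)B$ injects into $H^d_{\m}(B)$, and likewise for $B'$, which is balanced big Cohen-Macaulay by \autoref{thm.GabbersTrickVVersion}. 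Writing $\eta$ as the image of a class $\overline{w}\in B/(x_1^n,\dots,x_d^n)B$, the hypothesis $c_j\eta=0$ combined with this injectivity upgrades to the uniform membership $c_jw\in(x_1^n,\dots,x_d^n)B$ for all $j$, with $n$ independent of $j$. This is precisely the input for the first part of the corollary, with $z=w$ and $J=(x_1^n,\dots,x_d^n)$: it yields that the image of $w$ in $B'$ lies in $(x_1^n,\dots,x_d^n)B'$. By naturality of the colimit presentation of $H^d_{\m}$, the image of $\eta$ in $H^d_{\m}(B')$ is the class of $\overline{w}$ in $B'/(x_1^n,\dots,x_d^n)B'$, which is therefore zero.

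I expect the first part to be immediate once finite generation of $J$ is observed. In the second part the one step that deserves care is the passage from ``$c_j\eta=0$'' to the \emph{uniform} membership $c_jw\in(x_1^n,\dots,x_d^n)B$: a priori $c_j\eta=0$ only says $(x_1\cdots x_d)^{N_j}c_jw\in(x_1^{n+N_j},\dots,x_d^{n+N_j})B$ for some $N_j$ that could depend on $j$, and it is the injectivity of the transition maps in the colimit computing $H^d_{\m}(B)$ --- i.e.\ the balanced big Cohen-Macaulay property of $B$ --- that removes this dependence and makes the first part applicable. The remainder is routine bookkeeping with local cohomology.
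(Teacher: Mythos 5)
Your proof is correct and takes essentially the same route as the paper's: pass to $B''$ and divide by the unit $\mathbf{c}$ for the first part, then for the second part use the balanced big Cohen-Macaulay property of $B$ (via injectivity of $B/(x_1^t,\dots,x_d^t)B \hookrightarrow H^d_\m(B)$) to upgrade $c_j\eta=0$ to the uniform membership $c_jz\in(x_1^t,\dots,x_d^t)B$ before invoking the first part. Your explicit remark that finite generation of $J$ (automatic since $R$ is Noetherian) is what makes $\mathbf{c}(z,z,\dots)\in J\cdot\prod_{j}B$ legitimate is a correct observation that the paper leaves implicit.
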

    \begin{proof}
        It is enough to show that the image of $z$ is contained in $JB''$. By hypothesis, we have ${\bf c}z \in JB''$, but since ${\bf c}$ is a unit in $B''$, we have that the image of $z$ in $B''$ is contained in $JB''$.  This proves the first statement.  

        For the second part, note that we can represent $\eta$ as a class $\big[ {z \over x_1^t \cdots x_d^t } \big]$, where $x_1,\dots,x_d$ is a system of parameters of $R$, $z \in B$, and $t > 0$.  Since $B$ is balanced big Cohen-Macaulay, and since $c_j \eta = 0$, we see that $c_j z \in (x_1^t, \dots, x_d^t)B$ for every $j \geq 0$.  By the first statement, the image of $z$ is contained in $(x_1^t, \dots, x_d^t)B'$ proving that $\eta \mapsto 0 \in H^d_{\m}(B')$ as desired.
    \end{proof}
\section{\texorpdfstring{$F$}{F}-signature and Hilbert-Kunz multiplicity via perfection in positive characteristic}
\label{section:positivecharacterstic}

In this section, we describe the $F$-signature and Hilbert-Kunz multiplicity in terms of normalized length and perfection. Intuitively, since both invariants are defined as limits involving iterated Frobenius twists, it is natural to try to encapsulate all Frobenius twists uniformly using perfection. However, the perfection is a non-Noetherian object, making the usual notion of length inapplicable. To address this, we use normalized length, which allows us to recover the original invariants, as shown in \autoref{thm.SignatureHKviaNormalizedLength}.

Suppose that $(R,\m)$ is a Noetherian complete local domain of equal characteristic $p>0$ with perfect residue field $k=R/\m$. By the Cohen-Gabber theorem \cite{IllusieLaszloOrgogozoGabber}, we may choose a coefficient field
$k \subseteq R$ and a system of parameters $x_1, \ldots, x_d$ so that $R$ is module finite and \emph{generically
separable} over the regular subring $A = k[[x_1, . . . , x_d]]$; see \cite{KuranoShimomotoElementaryProof} for an elementary proof. Set $A_e = A^{1/p^e}$ for $e \geq 0$ and let $\lambda_\infty(\blank)$ be the normalized length discussed in \autoref{section:normlength}, note that here we have $\Ainfty^{un}=A_\perf = \bigcup_e A_e$. Under these assumptions, there exists an element $g \in A$ such that
\begin{equation}
    \label{eq.gRootsContainedInRA_e}
    g \cdot R^{1/p^e} \subseteq R[A_e] = R \otimes_A A_e \subseteq R^{1/p^e}
\end{equation}
for all $e \geq 0$; see \cite[Lemma 2.3]{PolstraTuckerCombinedApproach} for further detail. 
In particular, it follows that the natural maps
\begin{equation*}
    R^{1/p^e} \otimes_{A_e} A_\perf \to R_\perf
\end{equation*}
are inclusions for each $e \geq 0$ with cokernels annihilated by $g^{1/p^e}$. This shows explicitly that, while $R_\perf$ is not finitely generated over $A_\perf$, it is $g$-almost finitely presented.

We first recall the definition of Frobenius non-splitting ideal $I_e$. % $R^{1/p^e}$ that consists of elements that are not free generators of $R^{1/p^e}$.

\begin{definition} With notation as above, we define $I_e$ to be
\begin{align*}
\label{def.I_e}
{I}_e &=\{x\in R\mid R\to R^{1/p^e}\text{ such that }1\mapsto x^{1/p^e} \text{ is not pure}\}\\
&=\{x\in R\mid \phi(x^{1/p^e})\in \m \text{ for any }\phi\in\Hom(R^{1/p^e},R) \}\\
&= \left\{ x\in R \mid x^{1/p^e}\otimes \eta =0 \text{ in } R^{1/p^e} \otimes_R E_R(k) \text{ where } \eta \text{ generates the socle of } E_R(k) \right\}.
\end{align*}
See \autoref{subsec:PureSplitMap} for explanations why these are equivalent definitions.
\end{definition}

\begin{proposition}
\label{prop.IeProperties}
     With notation as above, for $I_e\subseteq R$ and $f \geq 0$, we have the following:
    \begin{enumerate}
        \item $\m^{[p^e]}\subseteq I_e$ \label{prop.IeProperties.a}
        \item $I_e^{[p^f]} \subseteq I_{e+f}$ \label{prop.IeProperties.b}
        
        \item for any $\varphi \in \Hom_R(R^{1/p^f}, R)$, $\varphi(I_{e+f}^{1/p^f}) \subseteq I_e$.\label{prop.IeProperties.c}
    \end{enumerate}
\end{proposition}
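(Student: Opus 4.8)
The plan is to prove all three containments directly from the homomorphism characterization of $I_e$ recalled above, namely that $x \in I_e$ if and only if $\phi(x^{1/p^e}) \in \m$ for every $\phi \in \Hom_R(R^{1/p^e}, R)$, using only the tower of ring inclusions $R \subseteq R^{1/p^e} \subseteq R^{1/p^{e+f}}$ together with one elementary observation: because Frobenius is additive, every $R$-linear map $\varphi \colon R^{1/p^f} \to R$ gives rise to an $R^{1/p^e}$-linear map $\varphi^{1/p^e} \colon R^{1/p^{e+f}} = (R^{1/p^f})^{1/p^e} \to R^{1/p^e}$ characterized by $\varphi^{1/p^e}(u^{1/p^e}) = \varphi(u)^{1/p^e}$ for $u \in R^{1/p^f}$, where well-definedness and $R^{1/p^e}$-linearity follow from uniqueness of $p^e$-th roots in the domain $R^{1/p^{e+f}}$ and the identity $u^{1/p^e} + v^{1/p^e} = (u+v)^{1/p^e}$.

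For \textbf{(a)}, I would write a generic element of $\m^{[p^e]}$ as $x = \sum_i a_i m_i^{p^e}$ with $a_i \in R$ and $m_i \in \m$; then the unique $p^e$-th root is $x^{1/p^e} = \sum_i a_i^{1/p^e} m_i \in \m R^{1/p^e}$, so any $\phi \in \Hom_R(R^{1/p^e}, R)$ sends it to $\sum_i m_i \phi(a_i^{1/p^e}) \in \m$, whence $x \in I_e$. (One in fact gets the equality $\m^{[p^e]} = \{x \in R : x^{1/p^e} \in \m R^{1/p^e}\}$, but only the stated inclusion is needed.) For \textbf{(b)}, since $I_{e+f}$ is an ideal it suffices to show that each generator $x^{p^f}$ of $I_e^{[p^f]}$, with $x \in I_e$, lies in $I_{e+f}$; noting that $(x^{p^f})^{1/p^{e+f}} = x^{1/p^e}$ inside $R^{1/p^{e+f}}$, one just restricts an arbitrary $\psi \in \Hom_R(R^{1/p^{e+f}}, R)$ to the subring $R^{1/p^e}$, obtaining an element of $\Hom_R(R^{1/p^e}, R)$, and applies $x \in I_e$ to conclude $\psi(x^{1/p^e}) \in \m$.

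For \textbf{(c)}, the set $I_{e+f}^{1/p^f} = \{y^{1/p^f} : y \in I_{e+f}\}$ is an $R$-submodule of $R^{1/p^f}$, so $\varphi(I_{e+f}^{1/p^f})$ is an ideal of $R$ and it is enough to prove $z := \varphi(y^{1/p^f}) \in I_e$ for each $y \in I_{e+f}$. Using the map $\varphi^{1/p^e}$ from the first paragraph we have $\varphi^{1/p^e}(y^{1/p^{e+f}}) = \varphi^{1/p^e}((y^{1/p^f})^{1/p^e}) = \varphi(y^{1/p^f})^{1/p^e} = z^{1/p^e}$; hence for any $\psi \in \Hom_R(R^{1/p^e}, R)$ the composite $\psi \circ \varphi^{1/p^e}$ lies in $\Hom_R(R^{1/p^{e+f}}, R)$, so $\psi(z^{1/p^e}) = (\psi \circ \varphi^{1/p^e})(y^{1/p^{e+f}}) \in \m$ because $y \in I_{e+f}$, and therefore $z \in I_e$.

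All of this is elementary, and I do not anticipate a real obstacle; the only point requiring genuine care is the bookkeeping with $p$-power roots — the identifications $(x^{p^f})^{1/p^{e+f}} = x^{1/p^e}$ and $R^{1/p^{e+f}} = (R^{1/p^f})^{1/p^e}$, and the well-definedness and linearity of $\varphi \mapsto \varphi^{1/p^e}$ — all of which rest on additivity of Frobenius and uniqueness of roots in a domain. One could instead package (b) and (c) uniformly via functoriality of $e \mapsto \Hom_R(R^{1/p^e}, R)$ along the inclusions $R^{1/p^e} \hookrightarrow R^{1/p^{e+f}}$, but treating the parts separately seems cleanest.
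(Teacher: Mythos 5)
Your argument is correct, and all three parts check out: (a) follows from the identity $\bigl(\sum_i a_i^{1/p^e} m_i\bigr)^{p^e}=\sum_i a_i m_i^{p^e}$ (using Frobenius additivity and uniqueness of $p^e$-th roots in the domain $R^{1/p^e}$); (b) from restricting an arbitrary $\psi\in\Hom_R(R^{1/p^{e+f}},R)$ to the subring $R^{1/p^e}$; and (c) from the Frobenius twist $\varphi\mapsto\varphi^{1/p^e}$, which you correctly verify is $R^{1/p^e}$-linear and satisfies $\varphi^{1/p^e}\bigl((y^{1/p^f})^{1/p^e}\bigr)=\varphi(y^{1/p^f})^{1/p^e}$, so that $\psi\circ\varphi^{1/p^e}\in\Hom_R(R^{1/p^{e+f}},R)$ tests membership in $I_{e+f}$. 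The paper itself does not prove this statement; it simply cites \cite[Lemma 4.4]{TuckerFsigExists} for (a), (b) and \cite[Lemma 3.4]{BlickleSchwedeTuckerFSigPairs1} for (c), and your direct verification is precisely the standard argument those references carry out, with the Frobenius-twist construction $\varphi\mapsto\varphi^{1/p^e}$ being the usual device for moving $p^{-e}$-linear splittings between levels of the Frobenius tower.
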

\begin{proof}
    One can find the proofs for (a) and (b) in \cite[Lemma 4.4]{TuckerFsigExists}. (c) follows from the definition, or see \cite[Lemma 3.4]{BlickleSchwedeTuckerFSigPairs1}.
\end{proof}

Inspired by the definition of $I_e$, it is natural to define the non-splitting ideal of $R_\perf $ as follows:

\begin{definition}
\label{def.IinftyPosChar}
 With notation as above, we define $I_\infty$ to be
\begin{align*}
I_{\infty} &=\{x\in R_{\perf}\mid R\to R_{\perf}\text{ such that }1\mapsto x \text{ is not pure}\}\\
&=\{x\in R_\perf\mid \phi(x)\in \m \text{ for any }\phi\in\Hom(R_\perf,R) \}\\
&= \left\{ x\in R_{\perf} \mid x\otimes \eta =0 \text{ in } R_\perf \otimes_k E_R(k) \text{ where } \eta \in E_R(k) \text{ generates the socle}  \right\}.
\end{align*}
Again, we refer to \autoref{subsec:PureSplitMap} for explanations why these are equivalent definitions.
\end{definition}

\begin{proposition}
\label{prop.I_inftyeEqualstoUnion}
    With notation as above, we have
    \[
        I_{\infty}=\bigcup_e I_e^{1/{p^e}}=\bigcup_e I_e^{1/{p^e}}R_{\perf}.
    \]
\end{proposition}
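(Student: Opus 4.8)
The plan is to deduce all three equalities from \autoref{lem.SplittingApproxGorenstein}. Since $(R,\m)$ is a complete local domain it is reduced, hence approximately Gorenstein, so fix a decreasing sequence of $\m$-primary ideals $\mathfrak{a}_1\supseteq\mathfrak{a}_2\supseteq\cdots$, cofinal with the powers of $\m$, with each $R/\mathfrak{a}_t$ Gorenstein, and let $u_t$ be a socle representative of $R/\mathfrak{a}_t$. Applying \autoref{lem.SplittingApproxGorenstein} with $M=R^{1/p^e}$ and comparing with the first description in \autoref{def.I_e} (noting that $(-)^{1/p^e}$ is an isomorphism $R\xrightarrow{\sim}R^{1/p^e}$ of $R$-modules onto its image) gives
\[
    I_e^{1/p^e}=\bigcup_t\big(\mathfrak{a}_tR^{1/p^e}:_{R^{1/p^e}}u_t\big),
\]
while applying it with $M=R_{\perf}$ and comparing with \autoref{def.IinftyPosChar} gives
\[
    I_\infty=\bigcup_t\big(\mathfrak{a}_tR_{\perf}:_{R_{\perf}}u_t\big).
\]

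Next I would prove, for each fixed $t$, that
\[
    \big(\mathfrak{a}_tR_{\perf}:_{R_{\perf}}u_t\big)=\bigcup_e\big(\mathfrak{a}_tR^{1/p^e}:_{R^{1/p^e}}u_t\big).
\]
The inclusion $\supseteq$ is immediate from $\mathfrak{a}_tR^{1/p^e}\subseteq\mathfrak{a}_tR_{\perf}$. For $\subseteq$, suppose $z$ lies in the left-hand side. Since $R_{\perf}=\varinjlim_eR^{1/p^e}$ is a filtered union, $z\in R^{1/p^{e_0}}$ for some $e_0$; writing $u_tz=\sum_i a_iw_i$ with the $a_i$ ranging over a fixed \emph{finite} generating set of $\mathfrak{a}_t$ and each $w_i\in R_{\perf}$, we may choose $e\ge e_0$ so large that $z$ and all the $w_i$ lie in $R^{1/p^e}$; then $u_tz\in\mathfrak{a}_tR^{1/p^e}$, so $z$ lies in the right-hand side. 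Substituting this into the two displays above and interchanging the order of the two unions yields
\[
    I_\infty=\bigcup_e\bigcup_t\big(\mathfrak{a}_tR^{1/p^e}:_{R^{1/p^e}}u_t\big)=\bigcup_eI_e^{1/p^e}.
\]

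For the last equality $\bigcup_eI_e^{1/p^e}=\bigcup_eI_e^{1/p^e}R_{\perf}$, the cleanest route is to note that $I_\infty$ is an ideal of $R_{\perf}$: by \autoref{rmk.ApproxGorensteinChain} the colon submodules $(\mathfrak{a}_tR_{\perf}:_{R_{\perf}}u_t)$ form an ascending chain, so $I_\infty$ is an ascending union of ideals. Hence $I_\infty=\bigcup_eI_e^{1/p^e}$ contains the ideal $I_e^{1/p^e}R_{\perf}$ that each $I_e^{1/p^e}$ generates, so $I_\infty\supseteq\bigcup_eI_e^{1/p^e}R_{\perf}\supseteq\bigcup_eI_e^{1/p^e}=I_\infty$, forcing equality throughout. (Alternatively one may bypass the ideal structure and argue directly from \autoref{prop.IeProperties}(b): $I_e^{[p^f]}\subseteq I_{e+f}$ gives $I_e^{1/p^e}R^{1/p^{e+f}}\subseteq I_{e+f}^{1/p^{e+f}}$, and any element of $I_e^{1/p^e}R_{\perf}$ already lies in $I_e^{1/p^e}R^{1/p^{e+f}}$ for $f\gg0$.)

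There is no serious obstacle here; the only step needing a little care is the filtered-colimit argument in the second paragraph, where finite generation of the $\mathfrak{a}_t$ — i.e.\ the Noetherian hypothesis on $R$ — is exactly what allows a membership in $\mathfrak{a}_tR_{\perf}$ to be detected already at some finite level $R^{1/p^e}$.
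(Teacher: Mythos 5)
Your proof is correct, but it takes a genuinely different route from the paper's. The paper first disposes of the non-$F$-pure case (where everything collapses to $R_\perf$), then assumes $R$ is $F$-pure; it proves the second equality $\bigcup_e I_e^{1/p^e} = \bigcup_e I_e^{1/p^e}R_\perf$ using \autoref{prop.IeProperties}\autoref{prop.IeProperties.b} (your ``alternatively'' remark), and proves the first equality by observing that $F$-purity makes $R^{1/p^e} \to R_\perf$ pure, so purity of $R \xrightarrow{1\mapsto x^{1/p^e}} R_\perf$ can be tested at the finite level. You instead uniformly deploy the approximately Gorenstein criterion (\autoref{lem.SplittingApproxGorenstein}) to express $I_e^{1/p^e}$ and $I_\infty$ as unions of colon ideals, then use the filtered-colimit structure of $R_\perf$ together with the Noetherian hypothesis to commute the two unions. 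What your approach buys: no case split on $F$-purity is needed, and it anticipates the perspective the paper later adopts for the mixed characteristic analog $I_\infty^R$ (compare \autoref{lem.CharacterizationIinftyviaAppGorenstein}), making the two settings look parallel. What the paper's approach buys: it is self-contained using only the elementary Frobenius-bracket containments of \autoref{prop.IeProperties} and doesn't invoke the approximate Gorenstein machinery here. One small quibble with your write-up: the parenthetical ``$(-)^{1/p^e}$ is an isomorphism $R\xrightarrow{\sim}R^{1/p^e}$ of $R$-modules'' is misleading, since the Frobenius is a ring isomorphism but is only $p^e$-linear, not $R$-linear; the translation $I_e^{1/p^e}=\bigcup_t(\mathfrak{a}_tR^{1/p^e}:_{R^{1/p^e}}u_t)$ is nonetheless correct simply because $x\in I_e$ if and only if $x^{1/p^e}$ lies in the set $N$ of \autoref{lem.SplittingApproxGorenstein} applied to $M=R^{1/p^e}$. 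Also note that $I_\infty$ being an ideal of $R_\perf$ follows immediately from the $\Hom$ description in \autoref{def.IinftyPosChar} (pre-compose $\phi\in\Hom_R(R_\perf,R)$ with multiplication by $r\in R_\perf$), without appealing to \autoref{rmk.ApproxGorensteinChain}, though your route is equally valid.
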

\begin{proof}
    Note first that if $R$ is not $F$-pure, then we have $I_e = R$ for all $e > 0$ and so $I_{\infty} = R_\perf$, from which both equalities above are clear. 

    We assume that $R$ is $F$-pure.
    We first prove that the second equality holds. 
    It is clear that $\bigcup_e I_e^{1/{p^e}}\subseteq \bigcup_e I_e^{1/{p^e}}R_\perf$. As for the other direction, note that by \autoref{prop.IeProperties} \autoref{prop.IeProperties.b}, we have $I_e^{1/{p^e}}R^{1/{p^{e+f}}}=(I_e^{[p^f]})^{1/{p^{e+f}}}\subseteq (I_{e+f})^{1/{p^{e+f}}}$. This proves the second equality. 
    
    For the first equality, since $R^{1/{p^e}}\to R_\perf$ is pure, we have $R\to R_\perf$ sending $1\to x^{1/{p^e}}$ is pure if and only if $R\to R^{1/{p^e}}$ sending $1\to x^{1/{p^e}}$ is pure. This completes the proof.
\end{proof}

\begin{definition}\cite{KunzOnNoetherianRingsOfCharP,MonskyHKFunction,SmithVanDenBerghSimplicityOfDiff,HunekeLeuschkeTwoTheoremsAboutMaximal}
     With notation as above, we define the \emph{$F$-signature} $s(R)$ to be
    \[
        s(R) := \lim_{e\to\infty} \frac{\length_R\left(R/I_e\right)}{p^{ed}}.
    \]
    Similarly, we define the \emph{Hilbert-Kunz multiplicity} of an $\m$-primary ideal $J$ to be 
    \[
        e_{\HK}(J, R) := \lim_{e\to\infty}\frac{\length_R\left(R/J^{[p^e]}\right)}{p^{ed}}.
    \]
    In the case that $J = \m$, we simply write $e_{\HK}(R)=e_{\HK}(\m, R)$.
\end{definition}

It is well-known that the above limits exist \cite{MonskyHKFunction,TuckerFsigExists,PolstraTuckerCombinedApproach}.  Our goal in this section is to identify $F$-signature and Hilbert-Kunz multiplicity as the normalized length of certain $A_{\perf}$-modules. Towards this end, we present a useful lemma:

\begin{comment}
\begin{lemma}\label{lem.IContainmentProperty} 
\[I_e^{[p^f]}\subseteq I_{e+f}\subseteq (I_e^{[p^f]}:g^{p^f})\]
\end{lemma}
\begin{proof}
    The first inequality is found in \autoref{prop.IeProperties}.  
    For the second, for an element $x\in I_{e+f}$, we must show that $gx^{1/p^f}\in I_{e} R^{1/p^f}$. Note that $gx^{1/p^f}\in gR^{1/p^f} \subseteq R\otimes_A A^{1/p^f}\subseteq R^{1/p^f}$ by \autoref{eq.gRootsContainedInRA_e}. Thus, it suffices to show $gx^{1/p^f}\in I_{e} (R\otimes_{A}A^{1/p^f})$. First of all, we observe that $(R\otimes_{A}A^{1/p^f})$ is a free $R$-module. Let $\varphi:R\otimes_A A^{1/p^f} = R^{\oplus m} \to R$ be a projection. Consider the following diagram:
    \[
    \xymatrix{
    R^{\oplus m} \ar[r]^{\varphi} & R \\
    R^{1/p^f}\ar[u]^{\cdot g}\ar[ru]& 
    }
    \]
    
    Let $\Phi(\bullet) = \varphi ( g \cdot \bullet) \in \Hom_R(R^{1/p^f},R)$. Now, since $x\in I_{e+f}$, $\Phi(x^{1/p^f}) = \varphi(gx^{1/p^f}) \in I_{e}$ by \autoref{prop.IeProperties} \autoref{prop.IeProperties.c}. Since $\varphi$ is arbitrary, it proves $gx^{1/p^f}\in I_e R^{1/p^f}$.

\end{proof}
\end{comment}

\begin{lemma}
\label{lem.AlmostFinitelyPresented}
    With notation as above, we have 
        \[I_{\infty} \cap (A_\perf \otimes_{A_e}R^{1/p^e})\subseteq (I_e : g)^{1/p^e} (A_{\perf} \otimes_{A_e} R^{1/p^e}) = 
        A_{\perf} \otimes_{A_e} \left( I_e : g \right)^{1/p^e}.\]
    
    %$S_e = R^{1/p^e}[A_\perf] = A_{\perf} \otimes_{A_{e}} R^{1/p^e}$ and $J_e = (I_e : g)^{1/p^e}S_e$. Then, $I_{\infty} \cap S_e \subseteq J_e$.
\end{lemma}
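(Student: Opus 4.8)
\emph{Proof plan.}
I would first dispose of the equality on the right, which is purely formal: $A_{\perf}=\bigcup_f A_{e+f}$ is a filtered colimit of finite free $A_e$-modules, hence flat over $A_e$, so $R^{1/p^e}\to A_{\perf}\otimes_{A_e}R^{1/p^e}$ is flat and for any ideal $J\subseteq R^{1/p^e}$ the natural map $A_{\perf}\otimes_{A_e}J\to A_{\perf}\otimes_{A_e}R^{1/p^e}$ is injective with image $J\cdot(A_{\perf}\otimes_{A_e}R^{1/p^e})$; applying this with $J=(I_e:g)^{1/p^e}$ gives the identity. For the containment I would note first that if $R$ is not $F$-pure then $I_e=R$, so $(I_e:g)^{1/p^e}=R^{1/p^e}$ and the containment is trivially an equality; hence we may assume $R$ is $F$-pure, in which case (exactly as in the proof of \autoref{prop.I_inftyeEqualstoUnion}) the map $R^{1/p^{e+f}}\to R_{\perf}$ is pure for every $f\geq 0$.

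Next, given $y\in I_{\infty}\cap(A_{\perf}\otimes_{A_e}R^{1/p^e})$, I would use $A_{\perf}\otimes_{A_e}R^{1/p^e}=\varinjlim_f(A_{e+f}\otimes_{A_e}R^{1/p^e})$ to get $y\in A_{e+f}\otimes_{A_e}R^{1/p^e}$ for some $f\geq 0$, and then, via the natural inclusion $A_{e+f}\otimes_{A_e}R^{1/p^e}\hookrightarrow R^{1/p^{e+f}}$ (the $p^e$-th root of \autoref{eq.gRootsContainedInRA_e} with $e$ replaced by $f$), write $y=w^{1/p^{e+f}}$ for a unique $w\in R$. Because $R^{1/p^{e+f}}\to R_{\perf}$ is pure, the map $R\xrightarrow{1\mapsto y}R_{\perf}$ is pure if and only if $R\xrightarrow{1\mapsto w^{1/p^{e+f}}}R^{1/p^{e+f}}$ is pure; since the former fails ($y\in I_{\infty}$), we conclude $w\in I_{e+f}$.

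Finally I would expand $y$ in coordinates and invoke \autoref{prop.IeProperties} \autoref{prop.IeProperties.c}. Choosing an $A_e$-basis $\theta_1,\dots,\theta_m$ of $A_{e+f}$ and writing $y=\sum_j c_j\theta_j$ with uniquely determined $c_j\in R^{1/p^e}$, raising to the $p^e$-th power identifies the inclusion $A_{e+f}\otimes_{A_e}R^{1/p^e}\hookrightarrow R^{1/p^{e+f}}$ with $R\otimes_A A^{1/p^f}\hookrightarrow R^{1/p^f}$, carrying $y$ to $w^{1/p^f}$ and $\theta_j$ to the $A$-basis element $\theta_j^{p^e}$ of $A^{1/p^f}$; in particular $w^{1/p^f}\in R\otimes_A A^{1/p^f}$, and if $\pi_j\colon R\otimes_A A^{1/p^f}\to R$ denotes the $j$-th coordinate for the basis $\{\theta_j^{p^e}\}$ then $\pi_j(w^{1/p^f})=c_j^{p^e}$. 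The composite $\psi_j\colon R^{1/p^f}\xrightarrow{\cdot g}R\otimes_A A^{1/p^f}\xrightarrow{\pi_j}R$ is $R$-linear (the first arrow lands in $R\otimes_A A^{1/p^f}$ by \autoref{eq.gRootsContainedInRA_e}), so \autoref{prop.IeProperties} \autoref{prop.IeProperties.c} applied to $w\in I_{e+f}$ yields $g\,c_j^{p^e}=\pi_j(g\,w^{1/p^f})=\psi_j(w^{1/p^f})\in I_e$ for each $j$, i.e.\ $c_j\in(I_e:g)^{1/p^e}$; hence $y=\sum_j c_j\theta_j\in(I_e:g)^{1/p^e}(A_{e+f}\otimes_{A_e}R^{1/p^e})\subseteq(I_e:g)^{1/p^e}(A_{\perf}\otimes_{A_e}R^{1/p^e})$. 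The only point requiring genuine care will be the bookkeeping of $p$-power roots---in particular confirming that $y^{p^e}=w^{1/p^f}$ really lies in $R\otimes_A A^{1/p^f}$, so that the $\pi_j$ are defined on it, and that $\psi_j$ takes values in $R$---but both facts follow immediately from the two containments in \autoref{eq.gRootsContainedInRA_e}, so I do not expect any serious obstacle.
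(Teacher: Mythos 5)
Your proof is correct and is essentially the same approach as the paper's: multiply by the appropriate root of $g$ to land inside the free $R^{1/p^e}$-submodule $A_{\perf}\otimes_{A_e}R^{1/p^e}$, then conclude coordinate by coordinate that each entry lies in the relevant non-splitting ideal. The only cosmetic differences are that you Frobenius-twist down to a finite level $e+f$ and route the key step through \autoref{prop.IeProperties} \autoref{prop.IeProperties.c} after first deducing $w\in I_{e+f}$ (whence the harmless $F$-pure reduction at the start), whereas the paper builds $\Phi(-)=\varphi(g^{1/p^e}\cdot-)\in\Hom_{R^{1/p^e}}(R_\perf,R^{1/p^e})$ directly on all of $R_\perf$ and invokes the $\Hom_R(R_\perf,R)$-characterization of $I_\infty$, avoiding $I_{e+f}$ and the $F$-pure case split entirely.
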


\begin{proof}
    First of all, the equality follows from flatness of $A_\perf$ over $A^{1/p^e}$. Now we show the inclusion. Since $A$ is regular, $R^{1/p^e}\otimes_{A_e}A_\perf$ is a free $R^{1/p^e}$-module. Let $\varphi:R^{1/p^e}\otimes_{A_e} A_\perf \to R^{1/p^e}$ be an $R^{1/p^e}$-module map that is a projection onto a summand.
     Then, using \autoref{eq.gRootsContainedInRA_e}, we have the following diagram:
    \[
    \xymatrix{
    R^{1/p^e}\otimes_{A_e}A_\perf \ar[r]^-{\varphi} & R^{1/p^e} \\
    R_\perf \ar[u]^{\cdot g^{1/p^e}}\ar[ru]_{\Phi} & 
    }
    \]
    Here we define $\Phi(-) = \varphi ( g^{1/p^e} \cdot -) \in \Hom_{R^{1/p^e}}(R_\perf,R^{1/p^e})$. Now, for $x\in I_{\infty}\cap(A_\perf\otimes_{A_e}R^{1/p^e})$, $\Phi(x) = \varphi(g^{1/p^e}x) \in I_{e}^{1/p^e}$ because $x \in I_\infty$ can only be sent into $\m$ under any $R$-linear map from $R_\perf$ to $R$, and $I_e^{1/p^e}$ is the set of elements in $R^{1/p^e}$ that are sent into $\m$ (under any $R$-linear map from $R^{1/p^e}$ to $R$). Since $\varphi$ is arbitrary and $A_\perf$ is free over $A_e$, this proves that $g^{1/p^e}x\in I_e^{1/p^e}(A_\perf\otimes_{A_e}R^{1/p^e})$, and thus $$x\in I_e^{1/p^e}(A_\perf \otimes_{A_e}R^{1/p^e}):_{(A_\perf \otimes_{A_e}R^{1/p^e})}g^{1/p^e} = (I_e:g)^{1/p^e}(A_\perf \otimes_{A_e}R^{1/p^e})$$
    where the equality holds since $A_\perf \otimes_{A_e}R^{1/p^e}$ is flat over $R^{1/p^e}$. 
\end{proof}

\begin{theorem}
    \label{thm.SignatureHKviaNormalizedLength}
    With notation as above, we have
    \[
        s(R)=\lambda_\infty\left( R_\perf/I_{\infty} \right) \qquad \mbox{and} \qquad e_{\HK}(R,J) = \lambda_\infty(R_\perf / J R_\perf).
    \]
\end{theorem}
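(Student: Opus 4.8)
With the notation of this section, write $T_e := R^{1/p^e}\otimes_{A_e}A_\perf$ for each $e\geq 0$. By \eqref{eq.gRootsContainedInRA_e} (and the discussion following it) we view $T_e$ as an $A_\perf$-submodule of $R_\perf$ with $g^{1/p^e}R_\perf\subseteq T_e$; the $T_e$ form an increasing chain whose union is $R_\perf$. The plan is to reduce each identity to a limit over $e$ of normalized lengths of the honest finitely presented $A_\perf$-modules $T_e/(\text{ideal})T_e$, and to squeeze that limit between two sequences of ordinary colengths over $R$. Concretely, $R_\perf/I_\infty = \varinjlim_e T_e/(I_\infty\cap T_e)$ and $R_\perf/JR_\perf = \varinjlim_e T_e/(JR_\perf\cap T_e)$ are filtered colimits with injective transition maps in $A_\perf\hyphen\Mod_{\m_A}$ (note $\m_A\subseteq\m\subseteq I_\infty$, using $\m^{[p^e]}\subseteq I_e$ and \autoref{prop.I_inftyeEqualstoUnion}, and $\m_A^N\subseteq J$ for $N\gg 0$), so \autoref{prop.NormalizedLengthProperties}\autoref{prop.NormalizedLengthProperties.b} gives
\[
\lambda_\infty(R_\perf/I_\infty) = \lim_{e}\lambda_\infty\big(T_e/(I_\infty\cap T_e)\big),\qquad \lambda_\infty(R_\perf/JR_\perf) = \lim_{e}\lambda_\infty\big(T_e/(JR_\perf\cap T_e)\big).
\]

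Next I would sandwich at each finite level, using that normalized length is monotone under passing to quotients and submodules (a consequence of additivity, \autoref{prop.NormalizedLengthProperties}\autoref{prop.NormalizedLengthProperties.a}). For the signature, the containment $I_e^{1/p^e}\subseteq I_\infty$ (from \autoref{prop.I_inftyeEqualstoUnion}) together with \autoref{lem.AlmostFinitelyPresented} yields $I_e^{1/p^e}T_e\subseteq I_\infty\cap T_e\subseteq (I_e:g)^{1/p^e}T_e$, hence
\[
\lambda_\infty\big(T_e/(I_e:g)^{1/p^e}T_e\big)\ \leq\ \lambda_\infty\big(T_e/(I_\infty\cap T_e)\big)\ \leq\ \lambda_\infty\big(T_e/I_e^{1/p^e}T_e\big).
\]
For Hilbert--Kunz, multiplying by $g^{1/p^e}$ and using $g^{1/p^e}R_\perf\subseteq T_e$ shows $JT_e\subseteq JR_\perf\cap T_e\subseteq (JT_e:_{T_e}g^{1/p^e})$, hence
\[
\lambda_\infty\big(T_e/(JT_e:_{T_e}g^{1/p^e})\big)\ \leq\ \lambda_\infty\big(T_e/(JR_\perf\cap T_e)\big)\ \leq\ \lambda_\infty\big(T_e/JT_e\big).
\]

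Now I would evaluate the four outer terms. Each is the normalized length of $(R^{1/p^e}/\fra)\otimes_{A_e}A_\perf$ for an ideal $\fra\subseteq R^{1/p^e}$ with $R^{1/p^e}/\fra$ of finite length over $A_e$; since $A_e$ is precisely the ring of \autoref{section:normlength}, \autoref{prop.NormalizedLengthexists} identifies this with $\tfrac{1}{p^{ed}}\length_A(R^{1/p^e}/\fra)$. Because $A\hookrightarrow A_e$ is residually trivial, $\length_A(R^{1/p^e}/\fra)=\length_{A_e}(R^{1/p^e}/\fra)$, and transporting along the ring isomorphism $\Phi_e\colon R^{1/p^e}\xrightarrow{\ y\mapsto y^{p^e}\ }R$ --- which restricts to an isomorphism $A_e\xrightarrow{\ \sim\ }A$, identifies the $A_e$-module structure with the standard $A\hookrightarrow R$ one, and sends $g^{1/p^e}\mapsto g$, $I_e^{1/p^e}R^{1/p^e}\mapsto I_e$, $(I_e:g)^{1/p^e}R^{1/p^e}\mapsto (I_e:g)$, $JR^{1/p^e}\mapsto J^{[p^e]}$, and (using flatness of $T_e$ over $R^{1/p^e}$, so $(JT_e:_{T_e}g^{1/p^e})=(JR^{1/p^e}:g^{1/p^e})T_e$) $(JR^{1/p^e}:g^{1/p^e})\mapsto (J^{[p^e]}:g)$ --- one obtains
\[
\lambda_\infty\big(T_e/I_e^{1/p^e}T_e\big)=\tfrac{\length_R(R/I_e)}{p^{ed}},\qquad \lambda_\infty\big(T_e/(I_e:g)^{1/p^e}T_e\big)=\tfrac{\length_R(R/(I_e:g))}{p^{ed}},
\]
\[
\lambda_\infty\big(T_e/JT_e\big)=\tfrac{\length_R(R/J^{[p^e]})}{p^{ed}},\qquad \lambda_\infty\big(T_e/(JT_e:_{T_e}g^{1/p^e})\big)=\tfrac{\length_R(R/(J^{[p^e]}:g))}{p^{ed}}.
\]

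Finally I would pass to the limit. By definition $\tfrac{1}{p^{ed}}\length_R(R/I_e)\to s(R)$ and $\tfrac{1}{p^{ed}}\length_R(R/J^{[p^e]})\to e_{\HK}(R,J)$, so it remains to check the lower sandwich terms have the same limits, equivalently that $\length_R\big((I_e:g)/I_e\big)$ and $\length_R\big((J^{[p^e]}:g)/J^{[p^e]}\big)$ are $o(p^{ed})$. Tensoring the short exact sequence $0\to R\xrightarrow{\ \cdot g\ }R\to R/gR\to 0$ with $R/I_e$ (resp.\ $R/J^{[p^e]}$) identifies these colengths with $\length_R\big(R/(gR+I_e)\big)$ and $\length_R\big(R/(gR+J^{[p^e]})\big)$; since $\m^{[p^e]}\subseteq I_e$ and $\m^{[p^{e+t}]}\subseteq J^{[p^e]}$ for a fixed $t$ with $p^t\geq N$, each of these is at most $\length_{R/gR}\big((R/gR)/\m^{[p^{e+t}]}(R/gR)\big)$, which is $O(p^{e(d-1)})$ because $g$ is a nonzero element of the domain $R$ and so $\dim R/gR=d-1$. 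The squeeze from the previous paragraphs then gives $\lambda_\infty(R_\perf/I_\infty)=s(R)$ and $\lambda_\infty(R_\perf/JR_\perf)=e_{\HK}(R,J)$. I expect this last step to be the main point: the inclusions $T_e\subsetneq R_\perf$ discard exactly the information recorded by these error colengths, and it is the dimension drop $\dim R/gR=d-1$ --- i.e.\ the $g$-almost finite presentation of $R_\perf$ over $A_\perf$ noted after \eqref{eq.gRootsContainedInRA_e} --- that makes it vanish after dividing by $p^{ed}$; the Frobenius-twist identifications of the third paragraph are routine but must be set up carefully since $R^{1/p^e}$ carries two module structures at once.
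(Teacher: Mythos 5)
Your proof is correct and follows the same backbone as the paper's: the same filtered system $T_e = R^{1/p^e}\otimes_{A_e}A_\perf$, the same sandwiching of $I_\infty\cap T_e$ (resp.\ $JR_\perf\cap T_e$) between $I_e^{1/p^e}T_e$ and $(I_e:g)^{1/p^e}T_e$ (resp.\ $JT_e$ and $(JT_e:g^{1/p^e})$) using \autoref{prop.I_inftyeEqualstoUnion} and \autoref{lem.AlmostFinitelyPresented}, and the same identification of each level's normalized length as $p^{-ed}\length_R(R/\mathfrak{a})$. Where you diverge is in closing the gap between the two sides of the sandwich: the paper cites the quantitative Polstra--Tucker bounds ($\length_R(R/I_e)/p^{ed}\leq s(R)+C/p^e$ and $\length_R(R/(I_e:g))/p^{ed}\geq s(R)-C'/p^e$), whereas you give a self-contained argument that $\length_R((I_e:g)/I_e)=\length_R(R/(gR+I_e))$ (via the four-term exact sequence from tensoring $0\to R\xrightarrow{g}R\to R/gR\to 0$ with $R/I_e$) and then bound this by the Hilbert--Kunz function of the lower-dimensional ring $R/gR$, giving an $O(p^{e(d-1)})=o(p^{ed})$ error. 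This is a genuine, if modest, simplification: your version needs only the existence of the two limits $s(R)$ and $e_{\HK}$ plus the elementary dimension-drop for $R/gR$, rather than the finer $O(1/p^e)$ convergence rates; on the other hand the paper's citations also package the precise rate of convergence, which you do not need here but which is useful elsewhere. One small caveat worth flagging: your identification $\length_R\big((I_e:g)/I_e\big)=\length_R\big(R/(gR+I_e)\big)$ and the final $O(p^{e(d-1)})$ estimate both use that $g$ is a nonzerodivisor; that is fine here because the section's standing hypothesis (unlike the bare theorem statement) has $R$ a complete local domain and $0\neq g\in A\subseteq R$.
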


\begin{proof}
    We first prove that the $F$-signature of $R$ is equal to the normalized length of $R_\perf / I_{\infty}$. Without loss of generality, we may assume $R$ is $F$-pure. Consider the finitely generated $A_\perf$-modules $A_\perf\otimes_{A_e}R^{1/p^e}$ and observe that $\bigcup_e  A_\perf\otimes_{A_e}R^{1/p^e} = R_\perf$. Note that we have inclusions $A_\perf\otimes_{A_e}R^{1/p^e}\subseteq R_\perf$ by generic separateness and so 
    \[
        (A_\perf\otimes_{A_e}R^{1/p^e}\mid e\in \mathbb{N})
    \] 
    forms a system cofinal with the system of all finitely generated $A_\perf$-submodules of $R_\perf$. Hence, by \autoref{prop.NormalizedLengthProperties} \autoref{prop.NormalizedLengthProperties.b},
    \[ 
        \lambda_\infty \left( R_\perf/I_{\infty}\right) = \sup_e \lambda_\infty \left(\frac{A_\perf \otimes_{A_e}R^{1/p^e}}{I_{\infty}\cap (A_\perf \otimes_{A_e}R^{1/p^e}) }\right).
    \] 
    Note, if $N \subseteq N'$ are finitely generated submodules of $R_\perf/I_{\infty}$, then $\lambda_{\infty}(N) \leq \lambda_{\infty}(N')$ and so it suffices to show that
    \[
        s(R) = \lim_{e \to \infty} \lambda_\infty \left(\frac{A_\perf \otimes_{A_e}R^{1/p^e}}{I_{\infty}\cap (A_\perf \otimes_{A_e}R^{1/p^e}) }\right).
    \]
    
    Now, by \autoref{prop.I_inftyeEqualstoUnion}, 
    \[ 
        I_{\infty} \cap ( A_\perf \otimes_{A_e}R^{1/p^e})=\left(\bigcup_f I_f^{1/p^f} \right)\cap ( A_\perf \otimes_{A_e}R^{1/p^e}) \supseteq A_\perf \otimes_{A_e} (I_e)^{1/p^e}.
    \] 
    Hence, there exists a constant $C > 0$ so that  
    \[ 
        \begin{split}
        \lambda_\infty \left(\frac{A_\perf \otimes_{A_e}R^{1/p^e}}{I_{\infty}\cap (A_\perf \otimes_{A_e}R^{1/p^e}) }\right) & 
        \leq \lambda_{\infty}\left( A_\perf \otimes_{A_e} {R^{1/p^e} \over I_e^{1/p^e} } \right)\\ 
        & = (1/{p^{ed}})\cdot\length_A\left(R/I_e\right)\\
        & \leq s(R) + \frac{C}{p^{e}} 
        \end{split}
    \]
    for every $e > 0$, where the last inequality follows from \cite[
    Theorem 4.3]{PolstraTuckerCombinedApproach}.\footnote{As pointed out by the referee, without the last inequality above, simply taking the supremum over $e>0$ gives $\lambda_\infty(R_\perf/I_\infty) \leq s(R)$. However, using this last inequality and following through the rest of the proof,
    the stronger statement follows that there exists a constant $C$ so that 
    \begin{equation*}
        \left| \frac{1}{p^{ed}}\length_A(R/I_e) - \lambda_\infty \left(\frac{A_\perf \otimes_{A_e}R^{1/p^e}}{I_{\infty}\cap (A_\perf \otimes_{A_e}R^{1/p^e}) }\right) \right| \leq \frac{C}{p^e}
    \end{equation*}
    which may be of independent interest.} This proves $\lambda_\infty(R_\perf/I_{\infty}) \leq s(R)$. 
    
    Conversely, we will show $s(R) \leq \lambda_\infty(R_\perf/I_{\infty})$. %Again by \cite[Lemma 2.3]{PolstraTuckerCombinedApproach} there is some $g\in A$ such that $gR^{1/p^e}\subseteq R\otimes_A A_e$ for all $e\geq 0$. 
    By \autoref{lem.AlmostFinitelyPresented} above, we have that 
    \[
        I_{\infty} \cap (A_\perf \otimes_{A_e}R^{1/p^e})\subseteq (I_e : g)^{1/p^e} (A_{\perf} \otimes_{A_e} R^{1/p^e}) = 
        A_{\perf} \otimes_{A_e} \left( I_e : g \right)^{1/p^e}
        %\left(\left(A_\perf\otimes_{A_e} I_e^{1/p^e} \right) :_{A_\perf \otimes_{A_e} R^{1/p^e}} g^{1/p^e}\right).
    \] 
    Therefore, there exists a constant $C' > 0$ so that
    \[ 
        \begin{split}
%            \lambda_\infty \left(\frac{A_\perf \otimes_{A_e}R^{1/p^e}}{I_e R_\perf \cap (A_\perf \otimes_{A_e}R^{1/p^e}) }\right)
            \lambda_\infty \left(\frac{A_\perf \otimes_{A_e}R^{1/p^e}}{I_{\infty} \cap (A_\perf \otimes_{A_e}R^{1/p^e}) }\right) 
            &\geq \lambda_{\infty} \left(A_\perf \otimes_{A_e}  {R^{1/p^e} \over (I_e : g)^{1/p^e} }\right) \\
            &= (1/{p^{ed}})\cdot\length_A\left(\frac{R}{(I_e:g)}\right)\\
            &\ge s(R) - \frac{C'}{p^{e}}
        \end{split}
    \]
    for every $e > 0$, where the last inequality follows from \cite[Lemma 4.13]{PolstraTuckerCombinedApproach}.
    This proves the other inequality and hence we have $\lambda_\infty(R_\perf/I_{\infty}) = s(R)$.
    
    Now we prove that Hilbert-Kunz multiplicity $e_{\HK}(J, R)$ is equal to the normalized length of $R_\perf/J R_\perf$. As before, we have $(A_\perf \otimes_{A_e} R^{1/p^e}) /J R_\perf \cap (A_\perf \otimes_{A_e} R^{1/p^e}) \subseteq R_\perf / J R_\perf$. Hence we have
    
        \begin{align*}
        & \lambda_\infty \left( R_\perf/J R_\perf\right) = \sup_e \lambda_\infty \left(\frac{A_\perf \otimes_{A_e}R^{1/p^e}}{J R_\perf\cap (A_\perf \otimes_{A_e}R^{1/p^e}) }\right)\\ 
        =  & \lim_{e \to \infty} \lambda_\infty \left(\frac{A_\perf \otimes_{A_e}R^{1/p^e}}{J R_\perf\cap (A_\perf \otimes_{A_e}R^{1/p^e}) }\right).
        \end{align*}
    
    Since $ J R_\perf \cap ( A_\perf \otimes_{A_e}R^{1/p^e}) \supseteq J R^{1/p^e} \otimes_{A_e}A_\perf $, we know that there exists a constant $C > 0$ so that
    \[ 
        \begin{split}
            \lambda_\infty \left(\frac{A_\perf \otimes_{A_e}R^{1/p^e}}{J R_\perf \cap (A_\perf \otimes_{A_e}R^{1/p^e}) }\right) & \leq (1/{p^{ed}})\cdot\length_A\left(R/J^{[p^e]}\right)\\
        & \leq e_{\HK}(J, R) + \frac{C}{p^{e}} 
        \end{split}
    \]
    for every $e > 0$.
    Hence we obtain the inequality $\lambda_\infty(R_\perf/J R_\perf)\le e_{\HK}(J, R)$. 
    
   Conversely, since $g^{1/p^e}R_\perf \subseteq A_\perf \otimes_{A_e}R^{1/p^e}$, we have
    \[ 
        J R_\perf \cap (A_\perf \otimes_{A_e}R^{1/p^e})\subseteq \left(A_\perf\otimes_{A_e} J R^{1/p^e} \right) :_{(A_\perf \otimes_{A_e}R^{1/p^e})} g^{1/p^e}= A_\perf \otimes_{A_e} \left( J: g\right)^{1/p^e}
    \]
    where the equality holds since $A_\perf \otimes_{A_e}R^{1/p^e}$ is flat over $R^{1/p^e}$. It follows that there exists a constant $C' > 0$ so that
    \[ 
        \begin{split}
            \lambda_\infty \left(\frac{A_\perf \otimes_{A_e}R^{1/p^e}}{J R_\perf \cap (A_\perf \otimes_{A_e}R^{1/p^e}) }\right) &\ge (1/{p^{ed}})\cdot\length_A\left(\frac{R}{(J^{[p^e]}:g)}\right)\\
            &\ge e_{\HK}(J, R) - \frac{C'}{p^{e}} 
        \end{split}
    \]
    for each $e > 0$ by \cite[Lemma 4.13]{PolstraTuckerCombinedApproach}. This implies $\lambda_\infty(R_\perf/J R_\perf)\ge e_{\HK}(J, R)$.
\end{proof}

\begin{comment}
    We also point out that, since we may interpret $F$-signature in terms of minimal relative Hilbert-Kunz multiplicity of \cite{WatanabeYoshidaMinimalRelative}, we have the following corollary.
    
    %\todo{{\bf Karl:}  Should we divide by the length $J/I$ below?  I realize it's not necessary, but it sort of feels more correct to divide by it? }
    
    \begin{corollary}[\cf {\cite[Corollary 6.5]{PolstraTuckerCombinedApproach}}]
    \label{cor.SignatureVSHK}
        In the situation of \autoref{thm.SignatureHKviaNormalizedLength}
        \[
            s(R) = \lambda_\infty\left( R_\perf/I_{\infty}\right) = \inf_{I\subsetneq J} \lambda_\infty \left(JR_\perf/IR_\perf \right)
        \]
    \end{corollary}
    \begin{proof}
        We have
        \begin{align*}
            s(R)&=\inf_{I\subsetneq J}e_{\HK}(I)-e_{\HK}(J)\\
                &=\inf_{I\subsetneq J}\lambda_\infty(R_\perf/IR_\perf)-\lambda_\infty(R_\perf/JR_\perf)\\
                &=\inf_{I\subsetneq J}\lambda_\infty(JR_\perf/IR_\perf)
        \end{align*}
        The first equality is by \cite[Corollary 6.5]{PolstraTuckerCombinedApproach} and the last equality is by \autoref{prop.NormalizedLengthProperties}.
    \end{proof}
\end{comment}
%\input{BCMregularity}

\section{Perfectoid signature and perfectoid Hilbert-Kunz multiplicity}
\label{section:mixedchardef}

In this section, we define the $F$-signature and Hilbert-Kunz multiplicity in mixed characteristic. We call them perfectoid signature and perfectoid Hilbert-Kunz multiplicity, and we establish their key properties.
\begin{itemize}
    \item The perfectoid signature is at most 1, while the perfectoid Hilbert-Kunz multiplicity is at least 1. These bounds are established in \autoref{prop.sFboundby1} and \autoref{prop.eFHKgreaterthan1}, respectively.
    \item A Noetherian complete local domain of mixed characteristic $(0,p>0)$ is regular if and only if its perfectoid signature is 1, which is also equivalent to its perfectoid Hilbert-Kunz multiplicity being 1. This is established in \autoref{thm.CharacterizationOfRegularLocalRings}.
\end{itemize}

We briefly outline the key ideas. To define a mixed characteristic version of the $F$-signature and Hilbert-Kunz multiplicity, we follow the approach from the previous section, where these invariants were expressed in terms of the normalized lengths of certain quotients of the perfection of the ring. In mixed characteristic, perfection is replaced by the more general notion of perfectoidization (see \autoref{subsec:PefectoidAlgebras}). Once these invariants are defined, we carefully analyze normalized length in this setting, using \autoref{prop.gisogeny} to control the perfectoidization and establish the desired bounds on the perfectoid signature and perfectoid Hilbert-Kunz multiplicity. To characterize regularity, we rely on the results of \cite{BhattIyengarMaRegularRingsPerfectoid}, which show that regularity is captured by (almost) flat maps to perfectoid rings.

Unless otherwise specified, we work under the following setup throughout this section. 
\begin{notation}
\label{not.setup}
    Let $(R,\m)$ be a Noetherian complete local domain of dimension $d$ and of mixed characteristic $(0,p)$ with perfect residue field $k=R/\m$. Let $\underline{x}:=p, x_2, \dots, x_d$ be a system of parameters of $R$. Then there exists $(A,\m_A):=W(k)[[x_2,\dots,x_d]] \subseteq R$ a module-finite extension by Cohen's structure theorem, note $A$ and $R$ have the same residue field.    
    Let $\Ainfty^{nc}$ be as in \autoref{section:normlength} and $\Ainfty$ be the $p$-adic completion of $\Ainfty^{nc}$. Then $\Ainfty$ is perfectoid (see \autoref{example:PerfectoidRings}). We set 
    $$R^{\Ainfty}_{\perfd}:=(R\otimes_A \Ainfty)_\perfd$$ 
    to be the perfectoidization of $R\otimes_A \Ainfty$. This is a perfectoid ring by \cite[Theorem 10.11]{BhattScholzepPrismaticCohomology} (since $R\otimes_A \Ainfty$ is finite and finitely presented over $\Ainfty$). Moreover, by the universal property of the perfectoidization functor, we have $$\Ainfty\to R^{\Ainfty}_{\perfd} \to \widehat{R^+}.$$
We further let $0\neq g\in A$ be such that $A[1/{g}]\to R[1/{g}]$ is finite \'etale and let $(g)_\perfd$ be the kernel of the canonical map $\Ainfty \to (\Ainfty/g)_\perfd$, see \autoref{subsec:PefectoidAlgebras}. We setup almost mathematics with respect to $(g)_\perfd$ as in \cite[Section 10]{BhattScholzepPrismaticCohomology}.  In particular, for each $n$, $(\Ainfty/p^n,(g)_{\perfd}\Ainfty/p^n)$ produces a classical almost mathematics setup in the sense of \cite[2.1.1]{GabberRameroAlmostringtheory}. In practice, most of our modules will be $p$-power-torsion (and frequently even $p$-torsion), so we may use the classical theory of almost mathematics.  
\end{notation}

\begin{caution}
\label{caut.independence}
    It is worth noting that $\Ainfty$, and hence $R^{\Ainfty}_{\perfd}$, depends on the choice of a regular system of parameters $\underline{x} = p, x_2, \dots, x_d \in A$. Also, the map $\Ainfty \to \widehat{R^+}$ depends on the choice of a compatible system of $p$-power roots of $x_i$ inside $R^+$. 
\end{caution}

We summarize some properties of $R^{\Ainfty}_{\perfd}$ that we will be used throughout, which are essentially consequences of \autoref{thm.BhattScholzeAlmostPurity} and \autoref{prop.BhattScholzeAlmostPuritypcompletefaithfullyflat}.

\begin{theorem}
\label{thm.RperfdAlmostFaithfullyFlatOverA}
With notation as in \autoref{not.setup}, we have %$R^{\Ainfty}_{\perfd}$ is $g$-almost faithfully flat over $A$. Specifically, this means: 
\begin{enumerate}
    \item $R^{\Ainfty}_{\perfd}$ is $g$-almost flat over $A$, i.e., $(g)_{\perfd}\cdot \Tor^A_{i}(R^{\Ainfty}_{\perfd}, M)=0$ for all $A$-modules $M$ and all $i>0$.
    \item $R^{\Ainfty}_{\perfd}$ is $g$-almost faithful over $A$ in the following sense: if $M\neq 0$, then $(g)_{\perfd}\cdot (R^{\Ainfty}_{\perfd}\otimes_A M)\neq 0$.
\end{enumerate}
\end{theorem}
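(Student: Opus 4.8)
The plan is to bootstrap everything from \autoref{thm.BhattScholzeAlmostPurity} and \autoref{prop.BhattScholzeAlmostPuritypcompletefaithfullyflat}, applied to the perfectoid ring $A_{\infty,0}$ (\autoref{example:PerfectoidRings}) and the finite, finitely presented $A_{\infty,0}$-algebra $S:=R\otimes_A A_{\infty,0}$: indeed $S$ is finite \'etale over $A_{\infty,0}$ away from $V(gA_{\infty,0})$, and there it has constant rank $r=[\Frac R:\Frac A]>0$, since $S[1/g]$ is the base change of the locally free $A[1/g]$-module $R[1/g]$ along the flat map $A[1/g]\to A_{\infty,0}[1/g]$, so $S_{\perfd}=R^{A_{\infty,0}}_{\perfd}$ is covered by those results. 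The first reduction is that $A\to A_{\infty,0}$ is faithfully flat: $A_{\infty,0}$ is the $p$-adic completion of the free $A$-algebra $A^{nc}_{\infty,0}$, completion preserves flatness over the Noetherian ring $A$, and $A_{\infty,0}/\m_A A_{\infty,0}=A^{nc}_{\infty,0}/\m_A A^{nc}_{\infty,0}\neq 0$. Hence for every $A$-module $M$ one has $\Tor_i^A(R^{A_{\infty,0}}_{\perfd},M)\cong \Tor_i^{A_{\infty,0}}(R^{A_{\infty,0}}_{\perfd},M\otimes_A A_{\infty,0})$ and $R^{A_{\infty,0}}_{\perfd}\otimes_A M\cong R^{A_{\infty,0}}_{\perfd}\otimes_{A_{\infty,0}}(M\otimes_A A_{\infty,0})$, so (1) and (2) become the assertions that $A_{\infty,0}\to R^{A_{\infty,0}}_{\perfd}$ is $(g)_{\perfd}$-almost flat and $(g)_{\perfd}$-almost faithful over the perfectoid base.

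For (1): \autoref{prop.BhattScholzeAlmostPuritypcompletefaithfullyflat}(a) already gives $(g)_{\perfd}\Tor_i^{A_{\infty,0}}(R^{A_{\infty,0}}_{\perfd},M')=0$ for $p$-power-torsion $M'$ and $i\geq 1$. To promote this to arbitrary $M$, I would reduce first to $M$ finitely generated (Tor and ``$(g)_{\perfd}$-almost zero'' commute with filtered colimits) and then, via a prime filtration, to $M=A/\mathfrak{p}$. If $p\in\mathfrak{p}$ this is the $p$-torsion case. If $p\notin\mathfrak{p}$, tensor $0\to A/\mathfrak{p}\xrightarrow{p}A/\mathfrak{p}\to A/(\mathfrak{p}+p)\to 0$ with $R^{A_{\infty,0}}_{\perfd}$: for $i\geq 1$ the neighbouring terms $\Tor_i^A$, $\Tor_{i+1}^A$ of $A/(\mathfrak{p}+p)$ are $(g)_{\perfd}$-almost zero by the $p$-torsion case, so multiplication by $p$ on $T:=\Tor_i^A(A/\mathfrak{p},R^{A_{\infty,0}}_{\perfd})$ has $(g)_{\perfd}$-almost-zero kernel and cokernel. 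Iterating (using that $(g)_{\perfd}$ is idempotent in the ambient almost setup), $T/p^nT$ is $(g)_{\perfd}$-almost zero for every $n$, i.e. $(g)_{\perfd}T\subseteq\bigcap_n p^nT$. Since $R^{A_{\infty,0}}_{\perfd}$ is $p$-torsion-free and $p$-adically complete, $T$ is the cohomology of a bounded complex of $p$-torsion-free $p$-complete modules, hence derived $p$-complete; granting that such a $T$ is moreover $p$-adically separated, $\bigcap_n p^nT=0$ and $(g)_{\perfd}T=0$. (Alternatively one can recast all of (1) as an almost version of the local flatness criterion over the $p$-complete Noetherian ring $A$, using that $R^{A_{\infty,0}}_{\perfd}/p^n$ is $(g)_{\perfd}$-almost finite projective, in particular almost flat, over $A_{\infty,0}/p^n$ by \autoref{thm.BhattScholzeAlmostPurity}, together with $\Tor_1^A(A/p,R^{A_{\infty,0}}_{\perfd})=0$.) The main obstacle is exactly this separatedness step: because $R^{A_{\infty,0}}_{\perfd}$ is non-Noetherian one cannot simply invoke Artin--Rees, and some care is needed to rule out a nonzero $p$-divisible part in these Tor modules.

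For (2): reduce to $M$ finitely generated — for general $M\neq 0$ take a nonzero cyclic submodule $N_0\subseteq M$ and use (1) and the long exact sequence for $0\to N_0\to M\to M/N_0\to 0$ to see that $R^{A_{\infty,0}}_{\perfd}\otimes_A N_0\to R^{A_{\infty,0}}_{\perfd}\otimes_A M$ has $(g)_{\perfd}$-almost-zero kernel — and then, by a prime filtration, to $M=A/\mathfrak{p}$, which surjects onto $k=A/\m_A$. Since a surjection of $A_{\infty,0}$-modules $N\twoheadrightarrow N'$ with $(g)_{\perfd}N'\neq 0$ forces $(g)_{\perfd}N\neq 0$, it suffices to prove $(g)_{\perfd}\bigl(R^{A_{\infty,0}}_{\perfd}/\m_A R^{A_{\infty,0}}_{\perfd}\bigr)\neq 0$, and in fact that $(g)_{\perfd}^{\,n}$ times this module is nonzero for every $n$ (which, fed back through (1), also settles the non-finitely-generated case). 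Both are seen by mapping to $\widehat{R^+}$: first $R^{A_{\infty,0}}_{\perfd}/\m_A R^{A_{\infty,0}}_{\perfd}\neq 0$ because $1\notin\m_A\widehat{R^+}$, as $\widehat{R^+}/\m_A\widehat{R^+}=R^+/\m_A R^+\neq 0$ by lying over; second $(g)_{\perfd}\widehat{R^+}\not\subseteq\m_A\widehat{R^+}$, because by \autoref{lem.gPerfd=AllpPowerRootsofg} the ideal $(g)_{\perfd}\widehat{R^+}$ is $p$-adically dense in $(g^{1/p^\infty})^-\widehat{R^+}\ni g^{1/p^e}$, while $\m_A\widehat{R^+}$ is $p$-adically closed and, for an $\bR$-valuation $v$ positive on the finitely generated ideal $\m_A$, consists of elements of value $\geq v(\m_A)>0$; choosing $e$ with $n\,v(g)/p^e<v(\m_A)$ puts $g^{n/p^e}$ in $\overline{(g)_{\perfd}^{\,n}\widehat{R^+}}\setminus\m_A\widehat{R^+}$, in the spirit of \autoref{thm.GabbersTrickVVersion}. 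Pulling this back, $(g)_{\perfd}^{\,n}R^{A_{\infty,0}}_{\perfd}\not\subseteq\m_A R^{A_{\infty,0}}_{\perfd}$, which is what we wanted.
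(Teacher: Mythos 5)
Your overall architecture matches the paper's: first establish the claim for $p$-power-torsion modules via \autoref{prop.BhattScholzeAlmostPuritypcompletefaithfullyflat}, then promote to general $M$. The preliminary flat base change isomorphism $\Tor_i^A(R^{A_{\infty,0}}_{\perfd},M)\cong \Tor_i^{A_{\infty,0}}(R^{A_{\infty,0}}_{\perfd},M\otimes_A A_{\infty,0})$ is valid (indeed $A_{\infty,0}$ is honestly flat over $A$) but unnecessary — the paper just works with $\Tor$ over $A$ throughout.

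The gap you flag in part (1) is real, and it is exactly where the argument turns. Having shown that $T:=\Tor_i^A(A/\mathfrak{p},R^{A_{\infty,0}}_{\perfd})$ is derived $p$-complete and that $T/p^nT$ is $(g)_{\perfd}$-almost zero for every $n$, you want to conclude $(g)_{\perfd}T=0$. But derived $p$-completeness does \emph{not} imply $p$-adic separatedness: the kernel of $T\to\varprojlim_n T/p^nT$ is $\myR^1\varprojlim_n T[p^n]$, a possibly nonzero $p$-divisible module, and your chain $(g)_{\perfd}T\subseteq\bigcap_n p^nT$ lands there rather than in $0$. What is true — and what replaces the separatedness — is the derived-Nakayama-in-the-almost-setting statement that a \emph{derived $p$-complete} module all of whose $p^n$-reductions are $(g)_{\perfd}$-almost zero is itself $(g)_{\perfd}$-almost zero; this is precisely what the paper invokes ([BhattScholze, Proposition 10.2]). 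Concretely, the paper avoids chasing $\bigcap_n p^nT$ altogether: it identifies the pro-systems $\{M/p^nM\}$ and $\{M\otimes_A^L A/p^n\}$ (bounded $p^\infty$-torsion of a finitely generated module over Noetherian $A$), writes the Milnor exact sequence
\begin{equation*}
0\to \myR^1\varprojlim_n H^{i-1}\bigl(R^{A_{\infty,0}}_{\perfd}\otimes_A^L M/p^nM\bigr)\to H^i\bigl(R^{A_{\infty,0}}_{\perfd}\otimes^L_AM\bigr)\to \varprojlim_n H^i\bigl(R^{A_{\infty,0}}_{\perfd}\otimes_A^L M/p^nM\bigr)\to 0,
\end{equation*}
and observes that for $i<0$ both outer terms are derived $p$-complete and $(g)_{\perfd}$-almost zero by the $p$-power-torsion case, hence so is the middle term. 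Your sketched alternative via an almost local flatness criterion is morally the same thing and would also need this derived-completeness ingredient to be made rigorous.

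Part (2) is correct; the reductions work, and the key nonvanishing $(g)_{\perfd}\cdot\bigl(R^{A_{\infty,0}}_{\perfd}/\m_A R^{A_{\infty,0}}_{\perfd}\bigr)\neq 0$ is established via a valuation on $\widehat{R^+}$ in the style of \autoref{thm.GabbersTrickVVersion}: $\m_A\widehat{R^+}$ is $p$-adically closed (it contains $p\widehat{R^+}$, so it is open, hence closed), its elements have $v$-value $\geq v(\m_A)>0$, while $g^{1/p^e}\in (g)_{\perfd}\widehat{R^+}^{\,-}$ has $v$-value $v(g)/p^e\to 0$. This is a genuinely different route from the paper, which instead notes that $g^{1/p^e}\in\m_A R^+$ for all $e$ would force $g\in\bigcap_e\bigl(\m_A^{p^e}R^+\cap A\bigr)=\bigcap_e\m_A^{p^e}=0$, with the intersection computed by the direct summand theorem (splitting of $A\to R^+$). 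The paper's version is shorter and uses no valuation, but yours also works and is consistent with the Gabber-style machinery used elsewhere. You also prove the stronger statement that $(g)_{\perfd}^n$ does not annihilate; the paper avoids needing powers by noting that $R^{A_{\infty,0}}_{\perfd}/\m_AR^{A_{\infty,0}}_{\perfd}$ is $p$-torsion, so $(g)_{\perfd}^2$-annihilation already implies $(g)_{\perfd}$-annihilation.
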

\begin{proof}
We first prove part (a). By \autoref{prop.BhattScholzeAlmostPuritypcompletefaithfullyflat} applied to $R\otimes_A \Ainfty\to R^{\Ainfty}_{\perfd}$, we see that $R^{\Ainfty}_{\perfd}$ is $p$-completely $g$-almost flat over $\Ainfty$. Since $\Ainfty$ is $p$-completely flat over $A$, we see that for every $n$ and every $p^n$-torsion $A$-module $M$, $(g)_{\perfd}\cdot \Tor^A_{i}(R^{\Ainfty}_{\perfd}, M)=0$ for all $i>0$. 

For the general case, it is enough to show that $R^{\Ainfty}_{\perfd} \otimes_A^L M$ is $g$-almost concentrated in degree zero for any finitely generated $A$-module $M$. Since $A$ is Noetherian and $M$ is finitely generated, we know that $M$ has bounded $p^\infty$-torsion, i.e., there exists $c$ such that $M[p^{\infty}]\cong M[p^c]$. It follows that the pro-systems $\{M/p^nM\}_n$ and $\{M\otimes^L_AA/p^n\}_n$ are pro-isomorphic. Thus we have:
$$R^{\Ainfty}_{\perfd}\otimes^L_AM\cong \mathbf{R}\varprojlim_n(R^{\Ainfty}_{\perfd}\otimes^L_AM\otimes_A^LA/p^n)\cong \mathbf{R}\varprojlim_n(R^{\Ainfty}_{\perfd}\otimes^L_AM/p^nM)$$
where the first isomorphism follows because $R^{\Ainfty}_{\perfd}\otimes^L_AM$ is derived $p$-complete: one can take a finite free resolution of $M$ over $A$ as $A$ is regular, and after tensoring with $R^{\Ainfty}_{\perfd}$ we see that $R^{\Ainfty}_{\perfd}\otimes^L_AM$ is a finite complex such that each term is derived $p$-complete. Now consider the short exact sequence by \cite[\href{https://stacks.math.columbia.edu/tag/0CQE}{Tag 0CQE}]{stacks-project}: 
$$0\to {\myR}^1\varprojlim_n H^{i-1}(R^{\Ainfty}_{\perfd}\otimes_A^L M/p^nM)\to H^i(R^{\Ainfty}_{\perfd}\otimes^L_AM)\to \varprojlim_n H^i(R^{\Ainfty}_{\perfd}\otimes_A^L M/p^nM)\to 0.$$
For every $i<0$, the first and third terms are both derived $p$-complete by \cite[\href{https://stacks.math.columbia.edu/tag/0ARD}{Tag 0ARD}]{stacks-project} and \cite[\href{https://stacks.math.columbia.edu/tag/091U}{Tag 091U}]{stacks-project}, and they are both $g$-almost zero by the $p^n$-torsion case we already established. Hence by \cite[Proposition 10.2]{BhattScholzepPrismaticCohomology}, the middle term is $g$-almost zero for every $i<0$. This completes the proof of part (a).

Now we prove part $(b)$. We pick $0\neq \eta\in M$ and consider $A\eta\cong A/I \hookrightarrow M$. By part $(a)$ we know that the kernel of $R^{\Ainfty}_{\perfd} \otimes_A A/I \to R^{\Ainfty}_{\perfd}\otimes_A M$ is annihilated by $(g)_{\perfd}$, thus if $(g)_{\perfd}\cdot (R^{\Ainfty}_{\perfd}\otimes_A M)=0$, then $R^{\Ainfty}_{\perfd} \otimes_A A/I$ is annihilated by $(g)_{\perfd}^2$. It follows that $R^{\Ainfty}_{\perfd} \otimes_A A/\m_A$ is annihilated by $(g)_{\perfd}^2$ and hence annihilated by $(g)_{\perfd}$ since $R^{\Ainfty}_{\perfd} \otimes A/\m_A$ is $p$-torsion. Thus it is enough to show that $R^{\Ainfty}_{\perfd}/\m_A R^{\Ainfty}_{\perfd}$ is not $g$-almost zero. But if $R^{\Ainfty}_{\perfd}/\m_A R^{\Ainfty}_{\perfd}$ is $g$-almost zero, then $(g)_{\perfd}\subseteq \m_A R^{\Ainfty}_{\perfd}$. Since $R^{\Ainfty}_{\perfd}$ maps to $\widehat{R^+}$, it follows that $(g)_{\perfd}\widehat{R^+} \subseteq \m_A \widehat{R^+}$ and thus $(g^{1/p^\infty})\subseteq \m_A\widehat{R^+}$ by \autoref{lem.gPerfd=AllpPowerRootsofg} and the fact that $p\in\m_A$. But if $g^{1/p^e}\in \m_A R^+$ for all $e$, then we have $$g\in \bigcap_e (\m_A^{p^e} R^+ \cap A)= \bigcap_e \m_A^{p^e}=0,$$ where the first equality follows from the direct summand theorem \cite{AndreDirectsummandconjecture} that $A\to R^+$ is split.\footnote{One does not need the full strength of the direct summand theorem here, only that $R^+$ is $\fram$-adically separated. See \cite[Lemma 4.2]{ShimomotoFCoherent} for a simple proof, or \cite{HochsterBigCohen-Macaulayalgebrasindimensionthree} for the original statement.} Thus $g=0$ which is a contradiction. 
\end{proof}

A direct consequence of \autoref{thm.RperfdAlmostFaithfullyFlatOverA} is the following:

\begin{corollary}
\label{cor.RperfdAlmostCMKoszulandLocal}
With notation as in \autoref{not.setup}, we have $(g)_{\perfd}\cdot H_\m^i(R^{\Ainfty}_{\perfd})=0$ for all $i<d$ and consequently for every system of parameters $\underline{y}$ of the local\footnote{Note that $A_n\otimes_A R$ is a local ring since it is $p$-adically complete and after modulo $p$ it is local (as it is a purely inseparable extension of $R/p$).} ring $A_n\otimes_A R$ we have $(g)_{\perfd}H_i(\underline{y}; R^{\Ainfty}_{\perfd})=0$ for all $i>0$. 
\end{corollary}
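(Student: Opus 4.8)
The plan is to prove the local cohomology statement first and then bootstrap it to Koszul homology of an arbitrary system of parameters by a spectral sequence argument. For the first statement I would argue as follows. Since $R$ is module-finite over $A$, the ideal $\m_A R=(\underline{x})R$ is $\m$-primary, so for the $R$-module $M:=R^{A_{\infty,0}}_{\perfd}$ one has $H^i_{\m}(M)=H^i_{(\underline{x})}(M)=\varinjlim_t H^i\bigl(\mathbf{K}^{\bullet}(x_1^t,\dots,x_d^t;M)\bigr)$, where $\mathbf{K}^{\bullet}$ is the Koszul cochain complex. By self-duality of the Koszul complex this equals $\varinjlim_t H_{d-i}\bigl(\mathbf{K}_{\bullet}(x_1^t,\dots,x_d^t;M)\bigr)$, and because $A$ is regular the elements $x_1^t,\dots,x_d^t$ form a regular sequence, so $\mathbf{K}_{\bullet}(x_1^t,\dots,x_d^t;A)$ resolves $A/(x_1^t,\dots,x_d^t)$ and hence $H_{d-i}\bigl(\mathbf{K}_{\bullet}(x_1^t,\dots,x_d^t;M)\bigr)=\Tor^A_{d-i}\bigl(A/(x_1^t,\dots,x_d^t),M\bigr)$. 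For $i<d$ we have $d-i>0$, so by \autoref{thm.RperfdAlmostFaithfullyFlatOverA}(a) each such $\Tor$ is annihilated by $(g)_{\perfd}$; passing to the colimit over $t$ gives $(g)_{\perfd}\cdot H^i_{\m}(R^{A_{\infty,0}}_{\perfd})=0$ for all $i<d$.

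For the consequence I would set $S:=A_n\otimes_A R$, a complete local ring of dimension $d$ with maximal ideal $\mathfrak{n}$, over which $M$ is an algebra via $S=A_n\otimes_A R\to A_{\infty,0}\otimes_A R\to R^{A_{\infty,0}}_{\perfd}$. Since $\m_R S$ is $\mathfrak{n}$-primary, $H^q_{\mathfrak{n}}(M)=H^q_{\m_R}(M)$, which is killed by $(g)_{\perfd}$ for $q<d$ by the first part. Fix a system of parameters $\underline{y}=y_1,\dots,y_d$ of $S$ and compare the two spectral sequences converging to $\mathbf{R}\Gamma_{\mathfrak{n}}\bigl(\mathbf{K}_{\bullet}(\underline{y};M)\bigr)$. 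Because every Koszul homology module $H_j(\underline{y};M)$ is annihilated by $(\underline{y})$ and hence is $\mathfrak{n}$-power torsion, the spectral sequence that takes Koszul homology first collapses and identifies the target with $H_{\bullet}(\underline{y};M)$; the other one, using that $\mathbf{K}_{\bullet}(\underline{y};S)$ is a bounded complex of finite free $S$-modules, has $E_2$-terms $H_a\bigl(\underline{y};H^b_{\mathfrak{n}}(M)\bigr)$ with $0\le a,b\le d$, and $H_a(\underline{y};H^b_{\mathfrak{n}}(M))$ contributes to $H_{a-b}(\underline{y};M)$. Hence for $i\ge 1$ the module $H_i(\underline{y};M)$ receives contributions only from pairs with $b=a-i\le d-i<d$; for such $b$ the module $H^b_{\mathfrak{n}}(M)$ is killed by $(g)_{\perfd}$, so $H_a(\underline{y};H^b_{\mathfrak{n}}(M))$ is too (being a subquotient of a finite direct sum of copies of it), and so is the corresponding $E_{\infty}$-term. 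Thus $H_i(\underline{y};M)$ admits a finite filtration with $(g)_{\perfd}$-almost zero subquotients; as all modules in sight are $p$-power torsion and $(g)_{\perfd}$ is idempotent modulo each power of $p$ (the basic setup recalled in \autoref{not.setup}), this forces $(g)_{\perfd}\cdot H_i(\underline{y};R^{A_{\infty,0}}_{\perfd})=0$ for all $i>0$.

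The primary-decomposition reductions and the Koszul self-duality are routine; the step I expect to require the most care is the transfer from the intrinsic vanishing of $H^q_{\mathfrak{n}}(M)$ for $q<d$ to the vanishing of $H_i(\underline{y};M)$ for an \emph{arbitrary} system of parameters $\underline{y}$. The observation that makes the spectral sequence argument go through is precisely that a contribution of $H^b_{\mathfrak{n}}(M)$ to Koszul homology in degree $i\ge 1$ forces $b\le d-i<d$, so the one local cohomology module that need not be $(g)_{\perfd}$-almost zero, namely $H^d_{\mathfrak{n}}(M)$, never intervenes.
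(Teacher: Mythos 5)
Your proof is correct and takes essentially the same approach as the paper: for the local cohomology statement, expressing $H^i_\m(R^{A_{\infty,0}}_{\perfd})$ as a colimit of $\Tor^A_{d-i}$ modules and invoking \autoref{thm.RperfdAlmostFaithfullyFlatOverA}(a); for the Koszul statement, using $\Kos(\underline{y};M)\cong\Kos(\underline{y};\mathbf{R}\Gamma_\m M)$ and the resulting spectral sequence, together with the observation that when $i\geq 1$ only the local cohomology modules $H^b$ with $b<d$ can contribute, and finally using that $(g)_\perfd$ is idempotent modulo powers of $p$ to pass from ``annihilated by a power of $(g)_\perfd$'' to ``annihilated by $(g)_\perfd$.'' The only differences are cosmetic: you spell out the Koszul self-duality and the regular-sequence identification of Koszul homology with $\Tor$, and you phrase the second step as a comparison of two spectral sequences converging to $\mathbf{R}\Gamma_\frn(\Kos_\bullet(\underline{y};M))$ rather than writing the hypercohomology spectral sequence directly, but the degree bookkeeping ($a=i+b\leq d$ forcing $b\leq d-i<d$) is exactly the paper's.
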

\begin{proof}
First note that we have 
$$H_\m^i(R^{\Ainfty}_{\perfd}) \cong \varinjlim_n H_{d-i}(p^n, x_2^n,\cdots, x_d^n; R^{\Ainfty}_{\perfd}) \cong \varinjlim_n\Tor_{d-i}^A(A/(p^n, x_2^n,\cdots, x_d^n), R^{\Ainfty}_{\perfd}),$$ \autoref{thm.RperfdAlmostFaithfullyFlatOverA} immediately implies that $(g)_{\perfd}\cdot H_\m^i(R^{\Ainfty}_{\perfd})=0$ when $i<d$. 

To see the second conclusion, we note that $\Kos(\underline{y}; R^{\Ainfty}_{\perfd})\cong \Kos(\underline{y}; \mathbf{R}\Gamma_\m R^{\Ainfty}_{\perfd})$. It follows that we have a spectral sequence:
$$H_{i+j}(\underline{y}; H_\m^j(R^{\Ainfty}_{\perfd})) \Rightarrow H_i(\underline{y}; R^{\Ainfty}_{\perfd}).$$
Note that if $i>0$, then $i+j>d$ when $j=d$ and thus $H_{i+j}(\underline{y}; -)$ vanish, while if $j<d$ then $H_\m^j(R^{\Ainfty}_{\perfd})$ is annihilated by $(g)_{\perfd}$ as already established. Thus when $i>0$, all the $E_2$-contributions of $H_i(\underline{y}; R^{\Ainfty}_{\perfd})$ are annihilated by $(g)_{\perfd}$. It follows that $H_i(\underline{y}; R^{\Ainfty}_{\perfd})$ is annihilated by a power of $(g)_{\perfd}$. But since $H_i(\underline{y}; R^{\Ainfty}_{\perfd})$ is $p$-power torsion so it is annihilated by $(g)_{\perfd}$.
\end{proof}

\begin{remark}
\label{rmk.AlmostInjectivityLocalCohomology}
With notation as in \autoref{not.setup}, if $\underline{y}=y_1,\dots,y_d$ is a system of parameters of $R$, then for every $d$-tuple $(a_1,\dots,a_d)$, we have a long exact sequence of Koszul homology (see \cite[Theorem 3.4 (i)]{BhattHochsterMaLimCM}):
\begin{align*}
\cdots \to & H_1(y_1^{a_1}, \dots, y_i, \dots, y_d^{a_d}; R^{\Ainfty}_{\perfd}) \to H_0(y_1^{a_1}, \dots, y_i^{a_i}, \dots, y_d^{a_d}; R^{\Ainfty}_{\perfd}) \xrightarrow{\cdot y_i} \\ & H_0(y_1^{a_1}, \dots, y_i^{a_i+1}, \dots, y_d^{a_d}; R^{\Ainfty}_{\perfd})\to H_0(y_1^{a_1}, \dots, y_i, \dots, y_d^{a_d}; R^{\Ainfty}_{\perfd})\to 0.
\end{align*}
In particular, we have
$$R^{\Ainfty}_{\perfd}/(y_1^{a_1},\dots, y_i^{a_i},\dots, y_d^{a_d}) \xrightarrow{\cdot y_i} R^{\Ainfty}_{\perfd}/(y_1^{a_1},\dots,y_i^{a_i+1},\dots, y_d^{a_d})$$
is $g$-almost injective since its kernel is a quotient of $H_1(y_1^{a_1}, \dots, y_i, \dots, y_d^{a_d}; R^{\Ainfty}_{\perfd})$, which is $g$-almost zero by \autoref{cor.RperfdAlmostCMKoszulandLocal}. Thus, by induction, we know that 
$$R^{\Ainfty}_{\perfd}/(y_1,\dots, y_d) \xrightarrow{\cdot (y_1\cdots y_d)^{t-1}} R^{\Ainfty}_{\perfd}/(y_1^{t},\dots, y_d^{t})$$
is $g$-almost injective for all $t$. It follows after passing to a direct limit that the natural map 
$$R^{\Ainfty}_{\perfd}/(y_1,\dots, y_d) \to H_\m^d(R^{\Ainfty}_{\perfd})$$
is $g$-almost injective.
\end{remark}

Inspired by the characteristic $p>0$ definition, we introduce the ideal $I_{\infty}$.
\begin{definition} With notation as in \autoref{not.setup}, we define 
    \label{def.Iinfty}
    \begin{align*}
    I^R_{\infty} &=\{x\in R^{\Ainfty}_{\perfd}\mid R\to R^{\Ainfty}_{\perfd}\text{ such that }1\mapsto x \text{ is not pure}\}\\
    &=\{x\in R^{\Ainfty}_{\perfd}\mid \phi(x)\in \m_R \text{ for any }\phi\in\Hom_R(R^{\Ainfty}_{\perfd},R) \}\\
    &= \left\{ x\in R^{\Ainfty}_{\perfd} \mid x\otimes \eta =0 \text{ in } R^{\Ainfty}_{\perfd} \otimes_R E_R(k) \text{ where } \eta \in E_R(k) \text{ generates the socle}  \right\}.
    \end{align*}
    We will typically write $I_{\infty}$ instead of $I^R_{\infty}$ when no confusion can occur. As mentioned in \autoref{caut.independence}, this notion depends on $A_\infty$, hence on the system of parameters we choose.
\end{definition}

We make the first observation on $I^R_{\infty}$.
\begin{lemma}
\label{lem.CharacterizationIinftyviaAppGorenstein}
With notation as in \autoref{not.setup}, we have 
$$I^R_{\infty}=\bigcup_t (I_tR^{\Ainfty}_{\perfd}: u_t)$$ where $I_1\supseteq I_2\supseteq \cdots \supseteq I_t \supseteq \cdots $ is a descending chain of $\m$-primary ideals cofinal with the powers of $\m$ such that $R/I_t$ is Gorenstein and $u_t$ generates the socle of $R/I_t$.

Moreover, if $R$ is Gorenstein, then for every system of parameters $\underline{y}=y_1,\dots,y_d$ of $R$, we have $I^R_{\infty} \overset{a}{\simeq} (y_1,\dots, y_d){R^{\Ainfty}_{\perfd}} : u$, that is, $$I^R_{\infty} \big/ \left((y_1,\dots, y_d){R^{\Ainfty}_{\perfd}} : u\right)$$ is $g$-almost zero where $u$ generates the socle of $R/(y_1,\dots,y_d)$. 
\end{lemma}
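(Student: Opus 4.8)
The plan is to get the first equality directly from \autoref{lem.SplittingApproxGorenstein} and then extract the Gorenstein statement by running that equality with a cleverly chosen chain $I_t$ and invoking \autoref{rmk.AlmostInjectivityLocalCohomology}. For the first equality: since $(R,\m)$ is a complete local domain it is reduced, hence approximately Gorenstein by \cite[Theorem 1.6]{HochsterCyclicPurity}, so a descending chain $I_1\supseteq I_2\supseteq\cdots$ of $\m$-primary ideals cofinal with the powers of $\m$ and with each $R/I_t$ Gorenstein exists; let $u_t$ be a socle representative of $R/I_t$. I would apply \autoref{lem.SplittingApproxGorenstein} to the $R$-module $M:=R^{A_{\infty,0}}_{\perfd}$: the submodule $N=\{x\in M\mid R\xrightarrow{1\mapsto x}M\text{ is not pure}\}$ from that lemma is exactly $I^R_\infty$ by the first description in \autoref{def.Iinfty}, and since $M$ is an $R$-algebra, $I_tM:_Mu_t$ is the ideal quotient $I_tR^{A_{\infty,0}}_{\perfd}:u_t$ computed inside $R^{A_{\infty,0}}_{\perfd}$. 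This gives $I^R_\infty=\bigcup_t(I_tR^{A_{\infty,0}}_{\perfd}:u_t)$. (One should also check that the ``not pure'' condition in \autoref{def.Iinfty} matches the hypothesis of \autoref{lem.SplittingApproxGorenstein} verbatim, which it does.)

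For the Gorenstein case I would fix a system of parameters $\underline{y}=y_1,\dots,y_d$ of $R$ and run the first part with the explicit chain $I_t:=(y_1^t,\dots,y_d^t)$, which is cofinal with the powers of $\m$ and has each $R/I_t$ Gorenstein. The key step is the identification of the socle representative at level $t$: if $u$ is a socle representative of $R/(\underline y)$, then $u_t:=(y_1\cdots y_d)^{t-1}u$ is one of $R/I_t$. Indeed $\m u\subseteq(\underline y)$ gives $\m u_t\subseteq (y_1\cdots y_d)^{t-1}(\underline y)\subseteq I_t$, and $u_t\notin I_t$ because $R$ is Cohen--Macaulay: the transition maps $R/(y_1^s,\dots,y_d^s)\xrightarrow{\,\cdot\,y_1\cdots y_d\,}R/(y_1^{s+1},\dots,y_d^{s+1})$ of the direct system computing $H^d_\m(R)$ are injective, hence so is their composite, which is multiplication by $(y_1\cdots y_d)^{t-1}$ from $R/(\underline y)$ to $R/I_t$ and carries the nonzero class of $u$ to that of $u_t$. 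So $I^R_\infty=\bigcup_t(I_tR^{A_{\infty,0}}_{\perfd}:u_t)$ with this choice of $u_t$.

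Now I would compare the right-hand side with $L:=(y_1,\dots,y_d)R^{A_{\infty,0}}_{\perfd}:u$. The inclusion $L\subseteq I^R_\infty$ is honest: if $zu\in(\underline y)R^{A_{\infty,0}}_{\perfd}$ then $zu_t=z(y_1\cdots y_d)^{t-1}u\in(y_1\cdots y_d)^{t-1}(\underline y)R^{A_{\infty,0}}_{\perfd}\subseteq I_tR^{A_{\infty,0}}_{\perfd}$ for every $t$, so $z\in I_tR^{A_{\infty,0}}_{\perfd}:u_t\subseteq I^R_\infty$. For the reverse inclusion up to a $g$-almost error: given $z\in I^R_\infty$, pick $t$ with $zu_t\in I_tR^{A_{\infty,0}}_{\perfd}$, so the class of $(zu)(y_1\cdots y_d)^{t-1}$ vanishes in $R^{A_{\infty,0}}_{\perfd}/(y_1^t,\dots,y_d^t)$; by \autoref{rmk.AlmostInjectivityLocalCohomology} the map $R^{A_{\infty,0}}_{\perfd}/(\underline y)\xrightarrow{\,\cdot\,(y_1\cdots y_d)^{t-1}\,}R^{A_{\infty,0}}_{\perfd}/(y_1^t,\dots,y_d^t)$ is $g$-almost injective, hence $(g)_\perfd\cdot zu\subseteq(\underline y)R^{A_{\infty,0}}_{\perfd}$, i.e.\ $(g)_\perfd\cdot z\subseteq L$. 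Combining the two inclusions (and using $L\subseteq I^R_\infty$), $I^R_\infty/L$ is $g$-almost zero, which is precisely the asserted $I^R_\infty\overset{a}{\simeq}(y_1,\dots,y_d)R^{A_{\infty,0}}_{\perfd}:u$.

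The hard part will be the $g$-almost reverse inclusion $I^R_\infty\subseteq L$ up to $g$-almost: this is where the almost Cohen--Macaulay behaviour of $R^{A_{\infty,0}}_{\perfd}$ --- packaged in \autoref{cor.RperfdAlmostCMKoszulandLocal} and used through \autoref{rmk.AlmostInjectivityLocalCohomology} --- actually enters, and one must be sure the $g$-almost error accumulated while descending through the various colon maps is still controlled by a single factor of $(g)_\perfd$, which is exactly what \autoref{rmk.AlmostInjectivityLocalCohomology} delivers. By contrast the identification $u_t=(y_1\cdots y_d)^{t-1}u$ is standard commutative algebra and the rest is bookkeeping.
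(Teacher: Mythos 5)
Your proof is correct and follows essentially the same route as the paper: the first equality via \autoref{lem.SplittingApproxGorenstein} applied to $M=R^{A_{\infty,0}}_{\perfd}$, and the Gorenstein statement via the chain $I_t=(y_1^t,\dots,y_d^t)$, $u_t=(y_1\cdots y_d)^{t-1}u$, together with the $g$-almost injectivity of the transition maps from \autoref{rmk.AlmostInjectivityLocalCohomology}. You spell out a few steps the paper leaves implicit (verifying that $u_t$ is a socle representative and the honest inclusion $(\underline y)R^{A_{\infty,0}}_{\perfd}:u\subseteq I^R_\infty$), but the argument is the same.
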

\begin{proof}
The first conclusion follows directly from \autoref{lem.SplittingApproxGorenstein}. If $R$ is Gorenstein and $\underline{y}$ is a system of parameters, then we can take $I_t=(y_1^t,\dots, y_d^t)$ and $u_t=(y_1\cdots y_d)^{t-1}u$. By \autoref{rmk.AlmostInjectivityLocalCohomology},  we know that 
$$R^{\Ainfty}_{\perfd}/(y_1,\dots, y_d) \xrightarrow{\cdot (y_1\cdots y_d)^{t-1}} R^{\Ainfty}_{\perfd}/(y_1^{t},\dots, y_d^{t})$$
is $g$-almost injective for all $t$. It follows that $\left(I_tR^{\Ainfty}_{\perfd}:u_t\right) \big/ \left((y_1,\dots,y_d)R^{\Ainfty}_{\perfd}:u\right)$ is $g$-almost zero for all $t$. Thus so is $I^R_{\infty} \big/ \left((y_1,\dots, y_d){R^{\Ainfty}_{\perfd}} : u\right)$.
\end{proof}

\begin{definition}[Perfectoid signature and perfectoid Hilbert-Kunz multiplicity]
\label{def.PerfdeHKandPerfdSignature}
    With notation as in \autoref{not.setup}, we define the \emph{perfectoid signature of $R$} to be the following:
    \[ 
        s^{\underline{x}}_{\perfd}(R)\coloneqq\lambda_\infty(R^{\Ainfty}_{\perfd}/I_{\infty}).
    \]
    Suppose $J$ is $\m$-primary, we define the \emph{perfectoid Hilbert-Kunz multiplicity of $J$ in $R$} to be:
    \[
        e^{\underline{x}}_{\perfd}(J; R)\coloneqq\lambda_\infty(R^{\Ainfty}_{\perfd}/J R^{\Ainfty}_{\perfd}).
    \]
    In the case that $J = \m$, we simply write $e^{\underline{x}}_{\perfd}(R) := e^{\underline{x}}_{\perfd}(\m; R)$.
    Note that, for both definitions, we include the choice of a regular system of parameters ${\underline{x}} \subseteq A$ in the notation.
\end{definition}

\begin{example}
    If $R=A$ (in which case $R$ is unramified regular), then $R^{\Ainfty}_{\perfd}=\Ainfty$ and $I_{\infty}=\m_A\Ainfty$ (for example by \autoref{lem.IinfinityForFlat}). Thus we have 
    $$
      s^{\underline{x}}_{\perfd}(R) = e^{\underline{x}}_{\perfd}(R) =  \lambda_\infty(\Ainfty/\m_A A_\infty)=1.
    $$
We will see later in this section that $s^{\underline{x}}_{\perfd}(R) = e^{\underline{x}}_{\perfd}(R) = 1$ for every choice of $\underline{x}$ as long as $R$ is regular (possibly ramified), and in fact, the perfectoid signature or the perfectoid Hilbert-Kunz multiplicity equals to $1$ characterize $R$ being regular.
\end{example}
    
When proving facts about Hilbert-Kunz multiplicity (and also $F$-signature), a common observation is that $\length(R/(\m^{[p^e]} : g)) =o( p^{e \dim R})$ for any nonzerodivisor $g \in R$.  We will frequently need similar results in mixed characteristic.  In fact, this can also be interpreted as saying we can define normalized length for almost modules.
% by the following proposition.

\begin{proposition}
\label{prop.AlmostNormalizedLength}
    With notation as in \autoref{not.setup}, suppose that $M,N\in \Ainfty\hyphen\Mod_{\m_A}$ such that $M^a\simeq N^a$ with respect to $(g)_\perfd$\footnote{Here $(g)_\perfd$ actually is $(g)_{\perfd}\Ainfty/p^n$ for some $n$ since $M,N$ are both $\m_A$-power torsion, and so $p^n$-torsion for some $n$.}, then we have $\lambda_\infty(M)=\lambda_\infty(N)$.

\end{proposition}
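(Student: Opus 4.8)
The plan is to reduce the statement to the assertion that every $g$-almost zero module $Q$ in $A_{\infty,0}\hyphen\Mod_{\m_A}$ satisfies $\lambda_\infty(Q)=0$, and then to prove that assertion via the finite-level computation of normalized length in \autoref{lem.Normalizedlengthfglimit}. For the reduction I would use the classical almost-module theory for the setup $(A_{\infty,0}/p^n,(g)_\perfd A_{\infty,0}/p^n)$ of \autoref{not.setup}: choosing $n$ with $p^nM=p^nN=0$ and setting $\mathfrak a:=(g)_\perfd A_{\infty,0}/p^n$, which is flat over $A_{\infty,0}/p^n$, the multiplication map $\mathfrak a\otimes_{A_{\infty,0}}M\to M$ is injective with $g$-almost zero cokernel $M/\mathfrak aM$, while the given almost isomorphism $M^a\simeq N^a$ is represented by an honest $A_{\infty,0}$-linear map $\psi\colon\mathfrak a\otimes_{A_{\infty,0}}M\to N$ whose kernel $K$ and cokernel $C$ are $g$-almost zero. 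As $K$, $C$, $M/\mathfrak aM$ and $\im\psi$ are subquotients of $M$, $N$ or $\mathfrak a\otimes_{A_{\infty,0}}M$, they lie in $A_{\infty,0}\hyphen\Mod_{\m_A}$; granting the assertion, additivity of $\lambda_\infty$ (\autoref{prop.NormalizedLengthProperties}\autoref{prop.NormalizedLengthProperties.a}) applied to $0\to\mathfrak a\otimes_{A_{\infty,0}}M\to M\to M/\mathfrak aM\to 0$, $0\to K\to\mathfrak a\otimes_{A_{\infty,0}}M\to\im\psi\to 0$, and $0\to\im\psi\to N\to C\to 0$ then yields $\lambda_\infty(M)=\lambda_\infty(\mathfrak a\otimes_{A_{\infty,0}}M)=\lambda_\infty(N)$.

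To prove the assertion, write $\lambda_\infty(Q)=\sup_N\lambda_\infty^*(N)$ over finitely generated submodules $N\subseteq Q$, each again $g$-almost zero, so that we may take $Q$ finitely generated with $p^aQ=0$ and $\m_A^bQ=0$; using $0\to pQ\to Q\to Q/pQ\to 0$, additivity, and the supremum reduction for $pQ$, an induction on $a$ reduces us to the case $pQ=0$. Fix a finite generating set $\Sigma$ of $Q$, put $Q_n:=\Sigma A_n\subseteq Q$, and recall from \autoref{lem.Normalizedlengthfglimit} that $\lambda_\infty^*(Q)=\lim_n p^{-nd}\length_A(Q_n)$; thus it suffices to show $\length_A(Q_n)=o(p^{nd})$. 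The key input is that $(g)_\perfd$ contains, for each $n$, an element congruent modulo $p$ to some $\hat g_n\in A_n$ with $\hat g_n^{p^n}\equiv g\pmod p$: when $p\nmid g$, writing $\overline{g}=\sum_\alpha\overline{c}_\alpha\prod_ix_i^{\alpha_i}$ in $A/p=k[[x_2,\dots,x_d]]$ and using that $k$ is perfect, one takes $\hat g_n:=\sum_\alpha[\overline{c}_\alpha^{1/p^n}]\prod_i(x_i^{1/p^n})^{\alpha_i}\in A_n$ (Teichm\"uller lifts), which satisfies $\hat g_n^{p^n}\equiv g\pmod p$ because Frobenius is additive modulo $p$; since $(A_{\infty,0}/g)_\perfd$ is a reduced perfectoid quotient of $A_{\infty,0}$, any such $\hat g_n$ lies in $(g)_\perfd+pA_{\infty,0}$. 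As $pQ=0$, multiplication by an element of $pA_{\infty,0}$ annihilates $Q$, hence $\hat g_nQ_n=0$, so $Q_n$ is a quotient of $\#\Sigma$ copies of $A_n/(p,\hat g_n,\m_A^b)A_n$. Finally $A_n/p\cong k[[x_2^{1/p^n},\dots,x_d^{1/p^n}]][p^{1/p^n}]/\big((p^{1/p^n})^{p^n}\big)$ is free of rank $p^n$ over the $(d-1)$-dimensional domain $k[[x_2^{1/p^n},\dots,x_d^{1/p^n}]]$, on which $\hat g_n\bmod p=\overline{g}^{1/p^n}$ is a nonzerodivisor, so cutting by $\hat g_n$ and $\m_A^b$ leaves a module of $k$-length $O(p^{n(d-2)})$ (a multiplicity bound whose constant depends on $\overline{g}$ and $b$ but not $n$); hence $\length_A(A_n/(p,\hat g_n,\m_A^b)A_n)=O(p^{n(d-1)})$ and $\length_A(Q_n)=O(p^{n(d-1)})=o(p^{nd})$, so $\lambda_\infty^*(Q)=0$.

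The step I expect to be the main obstacle is this last length estimate: confirming that the explicit near-root $\hat g_n$ really lies in $(g)_\perfd$ modulo $p$, and the multiplicity bound $O(p^{n(d-1)})$. The sketch moreover assumes $p\nmid g$; when $p\mid g$ (which is forced in examples such as $A=\bZ_p\subseteq R=\bZ_p[p^{1/2}]$) one has $\overline{g}=0$ and must argue differently---using the genuine root $p^{1/p^n}\in A_n\cap(g)_\perfd$ if $g$ is a unit times a power of $p$, and for general $g=p^jg'$ with $p\nmid g'$ first passing to the $p$-completely faithfully flat perfectoid $A_{\infty,0}$-algebra $\widetilde A$ obtained by adjoining $p$-power roots of $g$, over which $(g)_\perfd\widetilde A=(g^{1/p^\infty})^-$ by \autoref{lem.gPerfd=AllpPowerRootsofg}, and then descending the estimate.
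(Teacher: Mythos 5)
Your reduction to the assertion ``$g$-almost zero modules in $A_{\infty,0}\hyphen\Mod_{\m_A}$ have normalized length zero'' is sound and more explicit than the paper's one-line invocation of additivity, and the devissage to a finitely generated $Q$ with $pQ=0$ is fine. The gap is in the key claim that $\hat g_n\in(g)_\perfd+pA_{\infty,0}$. Your justification (``$(A_{\infty,0}/g)_\perfd$ is reduced'') conflates reducedness of $\bar A:=A_{\infty,0}/(g)_\perfd$ with a property of $\bar A/p$. What you actually know is $\hat g_n^{p^n}\in p\bar A$; reducedness of $\bar A$ says nothing about whether $p\bar A$ is a radical ideal, and in fact it typically is not (already in $\bar A=\Z_p[p^{1/p^\infty}]^\wedge$ one has $(p^{1/p})^p\in p\bar A$ with $p^{1/p}\notin p\bar A$). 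The correct statement, which the paper derives by chasing the commutative square of $n$-th iterated Frobenius isomorphisms $A_{\infty,0}/p^{1/p^n}\xrightarrow{\sim}A_{\infty,0}/p$ and $A_{\infty,0}/((g)_\perfd,p^{1/p^n})\xrightarrow{\sim}A_{\infty,0}/((g)_\perfd,p)$, is only $\hat g_n\in(g)_\perfd+p^{1/p^n}A_{\infty,0}$. This is strictly weaker and breaks your estimate: for $Q$ killed by $p$ and $(g)_\perfd$, it gives only $\hat g_n^{p^n}Q=0$, and since $\hat g_n^{p^n}\equiv g\pmod p$ the resulting bound on $\length_A(Q_n)$ is $O(p^{nd})$, not $o(p^{nd})$ --- no conclusion.

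The paper avoids this by reducing further, via a cyclic-module filtration, to the single module $A_{\infty,0}/(\m_A+(g)_\perfd)$ and then filtering $A_n/(p,x_2,\dots,x_d,\ldots)$ by powers of $p^{1/p^n}$; this trades a factor of $p^{-nd}$ for $p^{-n(d-1)}$ at the cost of a length $p^n$ filtration, after which the containment $\hat g_n\in(g)_\perfd+p^{1/p^n}A_{\infty,0}$ actually kills the relevant quotient and the final bound is a Hilbert--Kunz function of the $(d-2)$-dimensional ring $A/(p,g)$, yielding $O(p^{-n})$. If you want to salvage your structure, after reducing to $pQ=0$ you should observe that $Q$ is a quotient of finitely many copies of $A_{\infty,0}/(\m_A^b+(g)_\perfd)$, filter $\m_A^b$ down to $\m_A$ by additivity, and then run the paper's $p^{1/p^n}$-filtration estimate on $A_{\infty,0}/(\m_A+(g)_\perfd)$. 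Finally, your treatment of the $p\mid g$ case is only sketched; the paper handles it cleanly by writing $g=p^cg'$ with $p\nmid g'$ and using the short exact sequence $0\to(g')_\perfd/(g)_\perfd\to A_{\infty,0}/(g)_\perfd\to A_{\infty,0}/(g')_\perfd\to 0$ together with the already-established $g=p$ and $p\nmid g$ cases, which is simpler than the faithfully flat descent you propose.
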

\begin{proof}
    Using additivity (\autoref{prop.NormalizedLengthProperties} (a)), we may reduce to the case such that $M^a=0$, in which case we need to show that $\lambda_\infty(M)=0$. By definition, we may reduce further to the case where $M$ is finitely generated. By taking a filtration of $M$ and using additivity again, it suffices to show the case where $M$ is cyclic, i.e. $M\simeq \Ainfty/I$ for some ideal $I$. Since $M$ is $\m_A$-power-torsion and $g$-almost zero, $\m_A^N+(g)_\perfd\subseteq I$ and therefore $\lambda_\infty(\Ainfty/(\m_A^N+(g)_\perfd))\ge \lambda_\infty(\Ainfty/I)$. Hence, it suffices to show the case where $M=\Ainfty/(\m_A^N+(g)_\perfd)$. By filtering $M$ by $\m_A^k+(g)_\perfd$ so that each quotient is killed by $\m_A+(g)_\perfd$, we reduce to the case that $M = \Ainfty/(\m_A + (g)_\perfd)$. When $g=p$, then $(g)_\perfd=(p^{1/p^\infty})$ (see \autoref{lem.gPerfd=AllpPowerRootsofg}), and this is done by \cite[Appendix 2]{faltings2002almost} (see also \cite[Lemma 1.1]{MonskyHKFunction} for the characteristic $p>0$ analog). 

We next observe that, since $\Ainfty$ and $\Ainfty/(g)_\perfd$ are perfectoid, for every $n>0$ we have a commutative diagram:
\[\xymatrix{
\Ainfty/p^{1/p^n}  \ar[r]^{\sim} \ar@{->>}[d]  & \Ainfty/p \ar@{->>}[d]\\
\Ainfty/((g)_\perfd, p^{1/p^n})  \ar[r]^{\sim} & \Ainfty/((g)_\perfd, p) 
}\]
where the horizontal maps are isomorphisms induced via the $n$th-iterated Frobenius and the vertical maps are the natural surjections. In particular, chasing this diagram we see that  
     \begin{equation}
        \label{eq.RootsOfGInGPerfdModp}
        \bar{g}^{1/p^n}\in (p^{1/p^n},(g)_\perfd)/p^{1/p^n}
    \end{equation} 

Now if $p$ does not divide $g$,  then as $\left(A_n/((\m_A+(g)_\perfd)\cap A_n\right)\otimes \Ainfty$ surjects onto $\Ainfty/(\m_A+(g)_\perfd)$, we have 
    \begin{align*}
        \lambda_\infty(A_\infty/(\m_A+(g)_\perfd))&\le ({1/p^{nd}})\length_A\big(A_n/((\m_A+(g)_\perfd)\cap A_n)\big)\\
        &\le ({1/p^{n(d-1)}})\length_A\big(A_n/((p^{1/p^n},x_2,\dots,x_d)+(g)_\perfd)\cap A_n\big)\\
        &\le ({1/p^{n(d-1)}})\length_A(A_n/((p^{1/p^n},x_2,\dots,x_d)+(\bar{g}^{1/p^n})))\\
        &\le C/p^{n}
    \end{align*}
    for all $n>0$. Here the second inequality is by taking a filtration; the third inequality is \autoref{eq.RootsOfGInGPerfdModp}; the final inequality is because the penultimate line is computing the Hilbert-Kunz multiplicity of the characteristic $p > 0$ ring $A_n/(p^{1/n}, \overline{g}^{1/p^n}) \cong A/(p, g)$, a ring of dimension $n-2$ (as $p$ does not divide $g$). Let $n\to\infty$ we see that $\lambda_\infty(A_\infty/(\m_A+(g)_\perfd))=0$.

    Finally, in the general case, we can write $g=p^cg'$ for some $g'\in A$ such that $p$ does not divide $g'$. We have the following short exact sequence:
    \[
        0\to (g')_\perfd/(g)_\perfd\to \Ainfty/(g)_\perfd\to \Ainfty/(g')_\perfd\to 0.
    \]
    Tensoring with $\Ainfty/\m_A A_\infty$ over $A_\infty$, we have
    $$((g')_\perfd/(g)_\perfd)\otimes_{A_\infty} \Ainfty/\m_A A_\infty\to \Ainfty/(\m_A,(g)_\perfd)\to \Ainfty/(\m_A,(g')_\perfd)\to 0.$$
    Note that the first term is $p$-almost zero and the third term is $g'$-almost zero, hence both of them have normalized length zero by the already established cases. It follows from \autoref{prop.NormalizedLengthProperties} (a) that
    $$\lambda_\infty(\Ainfty/(\m_A,(g)_\perfd)) =0$$
    %$$\lambda_\infty(\Ainfty/(\m_A,(g)_\perfd))\le \lambda_\infty((g')_\perfd/(g)_\perfd\otimes \Ainfty/\m_A)+\lambda_\infty(\Ainfty/(\m_A,(g')_\perfd))=0$$
    and the proposition is proven.
\end{proof}

The converse of the proposition is not true in general, but instead we have the following weaker result \autoref{lem.valmostzero}. Before we state the lemma, we recall the definition of being $v$-almost zero.
\begin{definition}
    \label{def.standardvaluationonAinftyzero}
    Let $v_0$ be the $\m_A$-adic valuation on $A$, and let $v$ be the $\bZ[1/p]$-valued extension of $v_0$ to $\Ainfty$ where for $f \in \Ainfty$, we define $v(f)$ to be the smallest degree of a monomial in $p, x_2, \dots, x_d$ that appears in $f$ (notice that this may be rational, since we have all the $p$-power roots of $p, x_2, \dots, x_d$).
    Then we say a $\Ainfty$-module $M$ is \emph{$v$-almost zero} if for any $m\in M$ there is a sequence $\{f_i\} \subseteq \Ainfty$ such that $f_im=0$ and $v(f_i)\to 0$.
\end{definition}

\begin{remark}
\label{rmk.valuationextendstoRplus}
Note that $v_0$ defined above extends to an $\bR$-valuation on the $\m_A$-adic completion $\widehat{A^+}^{_{\m_A}}$ (and in particular, it extends to an $\bR$-valuation on the $p$-adic completion $\widehat{A^+}$ as this is a subring of $\widehat{A^+}^{_{\m_A}}$, see \cite[Proposition 1.6]{HeitmannR+domain}). We believe this is well-known to experts and there are several ways to see it, for example see \cite[Lemma 1.3]{HeitmannR+domain}. We also include a short explanation here. We note that $v_0$ is determined by a map $(A, \m_A)\to (V, \m_V)$ where $(V,\m_V)$ is the corresponding valuation ring of $v_0$ inside the fraction field of $A$ (and $V$ is a DVR). It follows that we have a commutative diagram 
\[\xymatrix{
A \ar[r]\ar[d] & V\ar[d] \\
\widehat{A^+}^{_{\m_A}} \ar[r] & \widehat{V^+}^{_{\m_V}}
}
\]
induced by completion. Since $\widehat{V^+}^{_{\m_V}}$ is a valuation ring, it determines a valuation on $\widehat{A^+}^{_{\m_A}}$ (that is positive on the unique maximal ideal of $\widehat{A^+}^{_{\m_A}}$) that extends $v_0$. 
\end{remark}

We record the following result of Shimomoto for the convenience of the reader.

\begin{lemma}[{\cite[Proposition 2.15]{ShimomotoFrobeniusActionLocalCohomology}}]
    \label{lem.valmostzero}
    If $M\in \Ainfty\hyphen\Mod_{\m_A}$ is such that $\lambda_\infty(M)=0$, then $M$ is $v$-almost zero.
\end{lemma}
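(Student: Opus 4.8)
The plan is to reduce to a cyclic module $A_{\infty,0}/I$ and then, from a finite-level presentation, extract an element of $I$ of small valuation by a colength count for powers of the maximal ideal in a regular local ring. More precisely, it suffices to show that each $m\in M$ is annihilated by elements of $A_{\infty,0}$ of arbitrarily small $v$-valuation; if $m=0$ there is nothing to do, so assume $m\ne0$. Since $\lambda_\infty$ is monotone on submodules — immediate from the $\sup$ in \autoref{def.NormalizedLength} — the cyclic submodule $A_{\infty,0}\cdot m\cong A_{\infty,0}/I$, where $I:=\Ann_{A_{\infty,0}}(m)\supseteq\m_A^N$ for some $N$, still has normalized length $0$. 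So I may assume $M=A_{\infty,0}/I$ with $I\subsetneq A_{\infty,0}$ containing a power of $\m_A$, and I must produce $f_i\in I$ with $v(f_i)\to0$.

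Next I would pass to a finite level. Write $I$ as the filtered union of the finitely generated ideals $J$ with $\m_A^N\subseteq J\subseteq I$, so that $A_{\infty,0}/I=\varinjlim_J A_{\infty,0}/J$ is a filtered colimit with surjective transition maps of finitely presented modules in $A_{\infty,0}\hyphen\Mod_{\m_A}$. By \autoref{cor.Normalizedlengthfglimit}, together with $\lambda_\infty=\lambda_\infty^{*}$ for finitely generated and $\lambda_\infty^{*}=\lambda_\infty^{**}$ for finitely presented modules (\autoref{prop.NormalizedLengthexists}), this gives $\inf_J\lambda_\infty^{**}(A_{\infty,0}/J)=\lambda_\infty(A_{\infty,0}/I)=0$. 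Each such $A_{\infty,0}/J$ descends to a finitely presented module $A_n/J_n$ over some $A_n$, with $J_n\subseteq A_n$ finitely generated, so that $\delta_J:=\lambda_\infty^{**}(A_{\infty,0}/J)=p^{-nd}\length_{A}(A_n/J_n)=p^{-nd}\length_{A_n}(A_n/J_n)$ (the last equality since $A\subseteq A_n$ is an isomorphism on residue fields).

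Then I would run the colength count. Let $\mu_n$ be the least $\m_{A_n}$-adic order of a nonzero element of $J_n$, so that $J_n\subseteq\m_{A_n}^{\mu_n}$. In the $d$-dimensional regular local ring $A_n$ this gives
\[
  p^{nd}\delta_J=\length_{A_n}(A_n/J_n)\ \ge\ \length_{A_n}\!\big(A_n/\m_{A_n}^{\mu_n}\big)=\binom{\mu_n+d-1}{d}\ \ge\ \frac{\mu_n^{\,d}}{d!},
\]
hence $\mu_n\le(d!)^{1/d}p^{n}\delta_J^{1/d}$. Picking $f\in J_n\subseteq I$ with $\ord_{\m_{A_n}}(f)=\mu_n$ and using that $v|_{A_n}=p^{-n}\ord_{\m_{A_n}}$ (\autoref{def.standardvaluationonAinftyzero}), we obtain $v(f)\le(d!)^{1/d}\delta_J^{1/d}$. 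Since $\inf_J\delta_J=0$, choosing a sequence of $J$'s with $\delta_J\to0$ yields $f_i\in I$ with $v(f_i)\to0$, so $M$ is $v$-almost zero.

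There is no deep ingredient; the lemma essentially repackages the definition of $\lambda_\infty$. The step most prone to error is the bookkeeping in the second paragraph — verifying that the quotients $A_{\infty,0}/J$ lie in $A_{\infty,0}\hyphen\Mod_{\m_A}$ and are finitely presented, and that their descent to a finite level is compatible with the defining formula for $\lambda_\infty^{**}$ — while the colength estimate itself is entirely elementary and, conveniently, yields a bound on $v(f)$ with no error term.
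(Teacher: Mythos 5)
Your proof is correct, and the underlying idea — reduce to a cyclic module, descend to a finite level $A_n$, and trade a colength bound for a valuation bound — is the same as the paper's. But you organize the reduction differently, and the comparison is worth noting. The paper argues by contradiction: assuming $v(b)\ge\epsilon$ for all $b\in I$, it directly considers the $A_n$-submodule $A_n/(I\cap A_n)\subseteq A_{\infty,0}/I$ (which is the filtration appearing in \autoref{lem.Normalizedlengthfglimit}), compares it to $A_n/I_n$ where $I_n=\{x\in A_n: v(x)\ge\epsilon\}$, and uses a monomial count to bound $\length(A_n/I_n)$ below, contradicting $\lambda_\infty=0$. You instead approximate $I$ from inside by finitely generated ideals $J$, invoke the colimit statement \autoref{cor.Normalizedlengthfglimit} to get $\inf_J\lambda_\infty^{**}(A_{\infty,0}/J)=0$, descend each $J$ to $J_n\subseteq A_n$, and read off an element of small valuation from the colength bound $\length_{A_n}(A_n/J_n)\ge\binom{\mu_n+d-1}{d}\ge\mu_n^d/d!$. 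What your route buys is a cleaner and fully explicit inequality $v(f)\le(d!)^{1/d}\,\delta_J^{1/d}$, whereas the paper's monomial count as written gives a bound of the form $\epsilon^d/d!$ (the line ``$\length_A(A_n/I_n)\ge p^{nd}\epsilon$'' in the paper is stated a bit loosely — for $d\ge 2$ the count yields something closer to $p^{nd}\epsilon^d/d!$ — though either version suffices for the contradiction). The cost of your route is one extra reduction step: you must verify that a finitely generated ideal $J\subseteq A_{\infty,0}$ containing $\m_A^N$ descends to some $A_n$ (this is fine because everything is killed by $p^N$, so one works in $A_{\infty,0}/p^N = A_{\infty,0}^{nc}/p^N = \bigcup_m A_m/p^N$), and that \autoref{cor.Normalizedlengthfglimit} applies to the filtered system $\{A_{\infty,0}/J\}$. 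Both of these check out, so the argument is sound.

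One small bookkeeping point you flagged yourself and should be explicit about: the identity $v|_{A_n}=p^{-n}\ord_{\m_{A_n}}$ holds because $\m_{A_n}$ is generated by $p^{1/p^n},x_2^{1/p^n},\dots,x_d^{1/p^n}$ and a monomial in these of $\m_{A_n}$-adic degree $m$ has $v$-value $m/p^n$; this is what makes the colength-to-valuation translation tick. You also correctly use $\length_A=\length_{A_n}$, which requires that $A\to A_n$ has trivial residue extension — true here because $k$ is perfect.
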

\begin{proof}
    It suffices to prove the lemma in the case where $M=\Ainfty/I$. Suppose for a contradiction that $M$ is not $v$-almost zero. Then there exists some $\epsilon>0$ such that 
    \[ 
        \inf\{v(b)\mid b\in \Ann(\Ainfty/I) = I \}\ge \epsilon.
    \]
    Consider $I_n=\{x\in A_n\mid v(x)\ge \epsilon \}$.  Since $I \cap A_n \subseteq I_n$, we see that $A_n/I\cap A_n$ surjects onto $A_n/I_n$.  By counting monomials we have that $\length_A(A_n/I_n)\ge p^{nd}\epsilon^d$ hence $\length_A( A_n/I\cap A_n)\ge p^{nd}\epsilon^d$. Then by \autoref{lem.Normalizedlengthfglimit}, we have $\lambda_\infty(A_\infty/I_\infty)\ge \epsilon^d$ which is a contradiction.
\end{proof}

The following is our key computation.

\begin{proposition}
\label{prop.Normalizedlengthequalsmultiplicity}
    With notation as in \autoref{not.setup}, let $\underline{y}=y_1,\dots,y_d$ be a system of parameters of $A_n\otimes_A R$.
    Then we have
    \[
 \lambda_\infty(R^{\Ainfty}_{\perfd}/(\underline{y})R^{\Ainfty}_{\perfd}) = \frac{1}{p^{nd}}e(\underline{y}, A_n\otimes_A R),
    \]
where $e(\underline{y}, A_n\otimes_A R)$ is the Hilbert-Samuel multiplicity of $\underline{y}$. In particular, $$\lambda_\infty(R^{\Ainfty}_{\perfd}/\m_AR^{\Ainfty}_{\perfd})=e(\m_A, R)=\rank_A(R).$$
\end{proposition}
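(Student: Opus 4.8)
The plan is to express both sides as the Euler characteristic, for the normalized length $\lambda_\infty$, of the Koszul complex of $\underline{y}$. Write $S:=A_n\otimes_A R$ and $U:=R\otimes_A A_{\infty,0}=S\otimes_{A_n}A_{\infty,0}$, and recall the canonical map $U\to R^{A_{\infty,0}}_{\perfd}=(U)_\perfd$. Since $\underline{y}$ is a system of parameters of the $d$-dimensional Noetherian local ring $S$, \autoref{cor.RperfdAlmostCMKoszulandLocal} gives $(g)_\perfd H_i(\underline{y};R^{A_{\infty,0}}_{\perfd})=0$ for every $i>0$; moreover $H_i(\underline{y};R^{A_{\infty,0}}_{\perfd})$ is $\m_A$-power torsion, being killed by $(\underline{y})$, which contains a power of $\m_A$ because it is $\m_S$-primary. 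By \autoref{prop.AlmostNormalizedLength} (taking the second module to be $0$), $\lambda_\infty(H_i(\underline{y};R^{A_{\infty,0}}_{\perfd}))=0$ for all $i>0$, hence
\[
\lambda_\infty\!\left(R^{A_{\infty,0}}_{\perfd}/(\underline{y})R^{A_{\infty,0}}_{\perfd}\right)=\sum_i(-1)^i\lambda_\infty\!\left(H_i(\underline{y};R^{A_{\infty,0}}_{\perfd})\right),
\]
i.e.\ the left-hand side is the $\lambda_\infty$-Euler characteristic of the Koszul complex of $\underline{y}$ on $R^{A_{\infty,0}}_{\perfd}$.

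Next I would compute the corresponding Euler characteristic over $U$. Since $A_{\infty,0}$ is flat over $A_n$ and $H_i(\underline{y};S)$ has finite length over $S$ and over $A_n$, we get $H_i(\underline{y};U)=H_i(\underline{y};S)\otimes_{A_n}A_{\infty,0}$, a finitely presented $\m_A$-power torsion $A_{\infty,0}$-module; reading its normalized length off at the level $A_n$ and using that $A$, $A_n$ and $S$ all have residue field $k$,
\[
\lambda_\infty(H_i(\underline{y};U))=\tfrac{1}{p^{nd}}\length_A(H_i(\underline{y};S))=\tfrac{1}{p^{nd}}\length_S(H_i(\underline{y};S)).
\]
Summing with signs and invoking the classical formula $e(\underline{y},S)=\sum_i(-1)^i\length_S(H_i(\underline{y};S))$ for a system of parameters of a $d$-dimensional local ring, this gives $\sum_i(-1)^i\lambda_\infty(H_i(\underline{y};U))=\tfrac{1}{p^{nd}}e(\underline{y},S)$. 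It therefore remains to show that the $\lambda_\infty$-Euler characteristics of the Koszul complex of $\underline{y}$ on $R^{A_{\infty,0}}_{\perfd}$ and on $U$ coincide.

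For the comparison, $\Spec U\to\Spec A_{\infty,0}$ is finite \'etale away from $V(g)$, so by \autoref{thm.BhattScholzeAlmostPurity} and \autoref{prop.gisogeny} the map $U\to R^{A_{\infty,0}}_{\perfd}$ is a $(g)$-isogeny; its cone $C$ is killed by a power of $g$ in $D(A_{\infty,0})$, and since $R^{A_{\infty,0}}_{\perfd}$ is $(g)_\perfd$-almost finitely presented over $A_{\infty,0}$ (\autoref{thm.BhattScholzeAlmostPurity}), $C$ has $(g)_\perfd$-almost finitely generated cohomology. Tensoring the triangle $U\to R^{A_{\infty,0}}_{\perfd}\to C\to U[1]$ with the Koszul complex of $\underline{y}$ and using additivity of the Euler characteristic (in normalized length) along the resulting long exact sequence---legitimate because all the cohomology modules are $\m_A$-power torsion of finite normalized length, those coming from $C$ because $C$ is $(g)_\perfd$-almost finitely generated and killed by a power of $g$---one reduces to proving $\sum_i(-1)^i\lambda_\infty(H_i(\underline{y};C))=0$, where $H_i(\underline{y};C)$ denotes the homology of the Koszul complex of $\underline{y}$ on $C$. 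This is a ``normalized Serre vanishing'': the cohomology of $C$ is supported on $V(g)$, of dimension $\le d-1$, while $\underline{y}$ consists of $d$ elements. I would prove it by first replacing the cohomology of $C$ by genuine finitely generated $A_{\infty,0}$-submodules it is $(g)_\perfd$-almost isomorphic to (which does not change $\lambda_\infty$, by \autoref{prop.AlmostNormalizedLength}), then writing each such module as a filtered union of finitely generated modules over the Noetherian rings $T_m:=R\otimes_A A_m$ (each killed by $g$, hence of dimension $\le d-1$), computing $\lambda_\infty$ of the relevant Koszul homology as a limit over $m$ of normalized lengths via \autoref{prop.NormalizedLengthProperties}, and invoking the ordinary Serre vanishing theorem over each $T_m/gT_m$. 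The main obstacle is exactly this vanishing: the ambient rings ($A_{\infty,0}$, $U$, and their quotients modulo $g$) are non-Noetherian and $C$ need not have finitely generated cohomology over a Noetherian ring, so descent to the finite levels $A_m$, together with careful bookkeeping of almost-mathematics error terms, is unavoidable; everything else in the argument is formal, given \autoref{cor.RperfdAlmostCMKoszulandLocal}, \autoref{prop.AlmostNormalizedLength}, \autoref{prop.NormalizedLengthProperties} and the almost purity theorem.

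Finally, the last assertion follows from the main identity: take $\underline{y}=p,x_2,\dots,x_d$, which is a system of parameters of $A$, hence of $A_n$ and of $S=A_n\otimes_A R$, with $(\underline{y})S=\m_A S$; then the identity gives $\lambda_\infty(R^{A_{\infty,0}}_{\perfd}/\m_A R^{A_{\infty,0}}_{\perfd})=\tfrac{1}{p^{nd}}e(\m_A S,S)$. By the associativity formula for multiplicities and flat base change along $A\to A_n$, $e(\m_A S,S)=\rank_{A_n}(S)\cdot e(\m_A A_n,A_n)=\rank_A(R)\cdot p^{nd}$, where $e(\m_A A_n,A_n)=p^{nd}e(\m_A,A)=p^{nd}$ since $A_n$ is free of rank $p^{nd}$ over the regular local ring $A$, and $\rank_{A_n}(S)=\rank_A(R)$. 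Likewise $e(\m_A,R)=\rank_A(R)\cdot e(\m_A,A)=\rank_A(R)$ because $R$ is a torsion-free finite module over the domain $A$. Hence $\lambda_\infty(R^{A_{\infty,0}}_{\perfd}/\m_A R^{A_{\infty,0}}_{\perfd})=\rank_A(R)=e(\m_A,R)$, as desired.
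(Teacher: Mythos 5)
Your reduction to $\sum_i(-1)^i\lambda_\infty(H_i(\underline{y};C))=0$ is exactly the reduction the paper makes, but the ``normalized Serre vanishing'' you sketch for it has a concrete gap, and the paper handles this step by a different, more elementary mechanism. You propose to compute $\lambda_\infty(H_i(\underline{y};C'))$, for $C'$ a finitely generated almost-model of $C$, as a limit over finite levels $T_m=R\otimes_A A_m$ via \autoref{prop.NormalizedLengthProperties}; but that proposition requires the filtered system to have injective transition maps, or surjective ones with finite normalized lengths, whereas the maps $H_i(\underline{y};C'_m)\to H_i(\underline{y};C'_{m'})$ on Koszul homology induced by $C'_m\hookrightarrow C'_{m'}$ are in general neither injective nor surjective (the Koszul complex is not exact). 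Separately, your appeal to \autoref{thm.BhattScholzeAlmostPurity} for ``$C$ has $(g)_\perfd$-almost finitely generated cohomology'' is not covered by the theorem as stated, which gives almost finite projectivity only for $R^{A_{\infty,0}}_\perfd/p^n$ over $A_{\infty,0}/p^n$, while $C$ is not a priori a $p^n$-torsion module. Together these mean you have not justified the finiteness of $\lambda_\infty(H_0(\underline{y};C))$ --- which you need before the alternating sum is even meaningful. From the long exact Koszul sequence of $0\to U\to R^{A_{\infty,0}}_\perfd\to C\to 0$ alone this finiteness is not derivable, since $H_0(\underline{y};C)$ is a quotient of $H_0(\underline{y};R^{A_{\infty,0}}_\perfd)$, which is exactly what you are trying to compute.

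The paper avoids all of this with a change-of-variables trick. After proving (the ``Claim'' in its proof) that $\lambda_\infty(R^{A_{\infty,0}}_\perfd/(y_1,\dots,y_{d-1},y_d^D))=D\cdot\lambda_\infty(R^{A_{\infty,0}}_\perfd/(\underline{y}))$, it replaces $\underline{y}$ by $(y_1,\dots,y_{d-1},y_d^D)$ so that $y_d\in(y_1,\dots,y_{d-1},g^N)$, where $g^NC=0$. With this normalization, every multiplication-by-$y_d$ map in the Koszul long exact sequence relating $(y_1,\dots,y_{d-1})$ and $\underline{y}$ on $C$ is zero, so that long exact sequence breaks into short exact sequences $0\to H_i(y_1,\dots,y_{d-1};C)\to H_i(\underline{y};C)\to H_{i-1}(y_1,\dots,y_{d-1};C)\to 0$; this simultaneously yields $\lambda_\infty(H_i(\underline{y};C))<\infty$ for every $i$ (including $i=0$) and makes $\sum_i(-1)^i\lambda_\infty(H_i(\underline{y};C))=0$ a telescoping identity --- no Serre-type vanishing and no descent to finite levels is needed. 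Your ``in particular'' computation at the end is correct, though the paper obtains it more directly by taking $n=0$ and $\underline{y}=(p,x_2,\dots,x_d)$ in the main identity.
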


\begin{proof}
    First of all, since $\Ainfty\otimes_A R$ is $g$-torsion free, we have a short exact sequence 
    \[
        0\to \Ainfty\otimes_A R\to R^{\Ainfty}_{\perfd} \to C\to 0
    \]
    such that $g^NC=0$ for some $N$ by \autoref{prop.gisogeny} (note that $A_\infty \otimes_AR$ is $p$-torsion free since $p$ is a nonzerodivisor on $R$ and $A_\infty$ if faithfully flat over $A$, so we can apply \autoref{prop.gisogeny}). 
    
Next note that we can pick $y_1',\dots, y_d' \in A_n\otimes_AR$ such that $(y_1',\dots, y_d')=(y_1,\dots,y_d)$ and such that $y_1',\dots, y_{d-1}', g^N$ is a system of parameters of $A_n\otimes_AR$ by prime avoidance. Therefore replacing $\underline{y}$ by $\underline{y}'$ if necessary, we may and we will assume that $y_1,\dots,y_{d-1}, g^N$ is a system of parameters of $A_n\otimes_AR$. Note that there exists $D>0$ such that $y_d^D\in (y_1,\dots, y_{d-1}, g^N)$. We claim the following:
\begin{claim}
$\lambda_\infty(R^{\Ainfty}_{\perfd}/(y_1,\dots,y_{d-1}, y_d^D)R^{\Ainfty}_{\perfd}) = D\cdot \lambda_\infty(R^{\Ainfty}_{\perfd}/(\underline{y})R^{\Ainfty}_{\perfd})$.
\end{claim}
\begin{proof}[Proof of Claim]
We use induction on $D$, $D=1$ is obvious. For $D>1$ we have 
%\footnotesize
\[
    \xymatrix@R=6pt@C=10pt{
    R^{\Ainfty}_{\perfd}/(y_1,\dots,y_{d-1}, y_d^{D-1})R^{\Ainfty}_{\perfd} \ar[r] & R^{\Ainfty}_{\perfd}/(y_1,\dots,y_{d-1}, y_d^D)R^{\Ainfty}_{\perfd} \ar[r] & R^{\Ainfty}_{\perfd}/(\underline{y})R^{\Ainfty}_{\perfd} \ar[r] & 0.
    }
\]
where $H_1(\underline{y}, R^{\Ainfty}_{\perfd})$ surjects onto the kernel.  
%\normalsize
Since $H_1(\underline{y}, R^{\Ainfty}_{\perfd})$ is $g$-almost zero by \autoref{cor.RperfdAlmostCMKoszulandLocal}, we know that $\lambda_{\infty}(H_1(\underline{y}, R^{\Ainfty}_{\perfd}))=0$ by \autoref{prop.AlmostNormalizedLength}. Thus we are done by induction and \autoref{prop.NormalizedLengthProperties}. 
\end{proof}
By the above Claim and the fact that $e((y_1,\dots, y_{d-1}, y_d^D), A_n\otimes_AR)=D\cdot e(\underline{y}, A_n\otimes_AR)$, if we can prove the proposition for $y_1,\dots, y_{d-1}, y_d^D$, that is, if we can show 
$$\lambda_\infty(R^{\Ainfty}_{\perfd}/(y_1,\dots,y_{d-1}, y_d^D)R^{\Ainfty}_{\perfd}) = \frac{1}{p^{nd}}e((y_1,\dots, y_{d-1}, y_d^D),A_n\otimes_A R),$$
then it follows that 
$$\lambda_\infty(R^{\Ainfty}_{\perfd}/(\underline{y})R^{\Ainfty}_{\perfd}) = \frac{1}{p^{nd}}e(\underline{y},A_n\otimes_A R).$$
Thus replacing $\underline{y}$ by $y_1,\dots,y_{d-1}, y_d^D$, we may assume that $y_d\in(y_1,\dots,y_{d-1}, g^N)$. 
    
We next consider the long exact sequence of the induced Koszul homology: 
\begin{align}
\label{LES 1 in proof of key proposition}
 \cdots & \to H_2(\underline{y}, A_\infty\otimes_A R) \to H_2(\underline{y}, R^{\Ainfty}_{\perfd}) \to H_2(\underline{y}, C) \notag \\
  & \to H_1(\underline{y}, A_\infty\otimes_A R) \to H_1(\underline{y}, R^{\Ainfty}_{\perfd}) \to H_1(\underline{y}, C)   \\
 & \to  H_0(\underline{y}, A_\infty\otimes_A R) \to H_0(\underline{y}, R^{\Ainfty}_{\perfd}) \to H_0(\underline{y}, C) \to 0  \notag
\end{align}
Since $H_i(\underline{y},R^{\Ainfty}_{\perfd}))$ is $g$-almost zero for all $i\geq 1$ by \autoref{cor.RperfdAlmostCMKoszulandLocal} and thus has zero normalized length by \autoref{prop.AlmostNormalizedLength}, from the above long exact sequence and \autoref{prop.NormalizedLengthProperties} we know that 
\begin{equation}
\label{equation 1 in proof of key proposition}
\lambda_\infty(H_{i+1}(\underline{y}, C))=\lambda_\infty(H_i(\underline{y}, \Ainfty\otimes_{A}R))= \lambda_\infty(H_i(\underline{y}, A_n\otimes_AR) \otimes_{A_n}\Ainfty)<\infty
\end{equation}
for all $i\geq 1$, and that 
\begin{equation}
\label{equation 2 in proof of key proposition}
\lambda_\infty(H_1(\underline{y}, C)) \leq \lambda_\infty(H_0(\underline{y}, \Ainfty\otimes_AR)) <\infty,
\end{equation}
and that 
\begin{equation}
\label{equation 3 in proof of key proposition}
\lambda_\infty(H_0(\underline{y}, C))+ \lambda_\infty(H_0(\underline{y}, \Ainfty\otimes_AR)) = \lambda_\infty(H_0(\underline{y}, R^{\Ainfty}_{\perfd})))+ \lambda_\infty(H_1(\underline{y}, C)).
\end{equation}

We next consider another long exact sequence of Koszul homology (see \cite[Corollary 1.6.13 (a)]{BrunsHerzog} or \cite[Theorem 3.4 (c)]{BhattHochsterMaLimCM})
\begin{align}
\label{LES 2 in proof of key proposition}
 \cdots &  \to   H_2(y_1,\dots, y_{d-1}; C) \xrightarrow{\cdot y_d} H_2(y_1,\dots, y_{d-1}; C) \to H_2(\underline{y}, C)  \notag \\
 &  \to   H_1(y_1,\dots, y_{d-1}; C) \xrightarrow{\cdot y_d} H_1(y_1,\dots, y_{d-1}; C) \to H_1(\underline{y}, C)  \\
   & \to H_0(y_1,\dots, y_{d-1}; C) \xrightarrow{\cdot y_d} H_0(y_1,\dots, y_{d-1}; C) \to H_0(\underline{y}, C) \to 0.\notag
\end{align}
Since $y_d\in(y_1,\dots, y_{d-1}, g^N)$ and $g^NC=0$, we know that all the multiplication by $y_d$ maps are the zero maps. So the above long exact sequence breaks into short exact sequences, and combining with \autoref{equation 1 in proof of key proposition} and \autoref{equation 2 in proof of key proposition} it follows that 
$$\lambda_\infty(H_i(y_1,\dots, y_{d-1}; C))<\infty$$
for all $i\geq 0$ and that 
$$\lambda_\infty(H_0(\underline{y}, C))=\lambda_\infty(H_0(y_1,\dots, y_{d-1}; C)) <\infty.$$
But then by \autoref{equation 3 in proof of key proposition} we find that 
$\lambda_\infty(H_0(\underline{y}, R^{\Ainfty}_{\perfd})))<\infty,$
since we have shown all the other terms in \autoref{equation 3 in proof of key proposition} have finite normalized length. 

At this point, we have proved that all terms in \autoref{LES 1 in proof of key proposition} and \autoref{LES 2 in proof of key proposition} have finite normalized length. Taking the alternating sum of the normalized length in \autoref{LES 2 in proof of key proposition} (here we are using \autoref{prop.NormalizedLengthProperties}), we find that 
$$\sum_{i=0}^d (-1)^i\lambda_\infty(H_i(\underline{y}, C)) =0.$$
Finally, taking the alternating sum of the normalized length in \autoref{LES 1 in proof of key proposition} (and again using  \autoref{prop.NormalizedLengthProperties}), we have that 
\begin{align*}
    \lambda_\infty(R^{\Ainfty}_{\perfd}/(\underline{y})R^{\Ainfty}_{\perfd})& = \lambda_\infty(H_0(\underline{y}, R^{\Ainfty}_{\perfd})) = \sum_{i=0}^d (-1)^i\lambda_\infty(H_i(\underline{y}, R^{\Ainfty}_{\perfd})) \\
    & = \sum_{i=0}^d (-1)^i\lambda_\infty(H_i(\underline{y}, \Ainfty\otimes_A R)) + \sum_{i=0}^d (-1)^i\lambda_\infty(H_i(\underline{y}, C))  \\
    & = \sum_{i=0}^d (-1)^i\lambda_\infty(H_i(\underline{y}, \Ainfty\otimes_A R)) + 0 \\ 
    & = \sum_{i=0}^d (-1)^i\lambda_\infty(H_i(\underline{y}, A_{n}\otimes_A R) \otimes_{A_n}\Ainfty)\\
    & = \sum_{i=0}^d (-1)^i\frac{\length(H_i(\underline{y}, A_{n}\otimes_A R))}{p^{nd}} =\frac{1}{p^{nd}}e(\underline{y}, A_n\otimes_AR),
\end{align*}
where the last equality follows from \cite[Theorem 4.7.6]{BrunsHerzog}. This completes the proof of the first statement. The second statement follows by applying the first statement to the ideal $\m_AR\subseteq R= A_0\otimes_AR$ (which is generated by a system of parameters). 
\end{proof}

\begin{proposition}
\label{prop.Normalizedlengthequalsrank}
    With notation as in \autoref{not.setup}, suppose $J\subseteq \Ainfty$ is a finitely generated ideal that contains a power of $\m_A$. Then we have 
$$\lambda_\infty(R^{\Ainfty}_{\perfd}/JR^{\Ainfty}_{\perfd})=(\rank_A R)\cdot \lambda_\infty(\Ainfty/J\Ainfty).$$
\end{proposition}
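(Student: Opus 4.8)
The plan is to reduce to the case in which $J$ is extended from a finite level and then run a d\'evissage. First I would note that since $J \supseteq \m_A^N A_{\infty,0}$ for some $N$, the quotient $A_{\infty,0}/J$ is $\m_A$-power torsion; moreover, because $\m_A^N$ is finitely generated and $A_{\infty,0}/\m_A^N A_{\infty,0}$ is $p$-power torsion, one has $A_{\infty,0}/\m_A^N A_{\infty,0} = A^{nc}_{\infty,0}/\m_A^N A^{nc}_{\infty,0} = \varinjlim_m A_m/\m_A^N A_m$. Consequently the finitely generated submodule $J/\m_A^N A_{\infty,0}$ is already defined at some finite level, so there is an $\m_{A_n}$-primary ideal $\widetilde{J} \subseteq A_n$ with $J = \widetilde{J}A_{\infty,0}$ (if $\widetilde{J} = A_n$ then $J = A_{\infty,0}$ and both sides of the asserted equality vanish). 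For such $\widetilde{J}$, \autoref{def.NormalizedLength} gives directly that $\lambda_\infty(A_{\infty,0}/JA_{\infty,0}) = \frac{1}{p^{nd}}\length_{A_n}(A_n/\widetilde{J})$, using that $A$ and $A_n$ have the same residue field.

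The heart of the matter is the d\'evissage identity
\[
\lambda_\infty\!\left(R^{A_{\infty,0}}_{\perfd}/\mathfrak{q}R^{A_{\infty,0}}_{\perfd}\right) = \length_{A_n}(A_n/\mathfrak{q})\cdot \lambda_\infty\!\left(R^{A_{\infty,0}}_{\perfd}/\m_{A_n}R^{A_{\infty,0}}_{\perfd}\right)
\]
for every $\m_{A_n}$-primary ideal $\mathfrak{q}\subseteq A_n$. To prove it I would take a composition series $0 = N_0\subsetneq N_1\subsetneq\cdots\subsetneq N_\ell = A_n/\mathfrak{q}$ of $A_n$-modules with $N_i/N_{i-1}\cong A_n/\m_{A_n}$ and apply $R^{A_{\infty,0}}_{\perfd}\otimes_{A_n}-$. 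Since $R^{A_{\infty,0}}_{\perfd}$ is $g$-almost flat over $A_{\infty,0}$ by \autoref{prop.BhattScholzeAlmostPuritypcompletefaithfullyflat} and $A_{\infty,0}$ is flat over $A_n$, the groups $\Tor^{A_n}_i(R^{A_{\infty,0}}_{\perfd}, N_i/N_{i-1})$ with $i>0$ are $g$-almost zero and hence, being $\m_A$-power torsion, have normalized length $0$ by \autoref{prop.AlmostNormalizedLength}. Feeding the short exact sequences $0\to N_{i-1}\to N_i\to N_i/N_{i-1}\to 0$ into the resulting long exact Tor sequences and invoking additivity of $\lambda_\infty$ (\autoref{prop.NormalizedLengthProperties.a}) — the image in $R^{A_{\infty,0}}_{\perfd}\otimes_{A_n}N_{i-1}$ of the relevant $\Tor_1$ has normalized length $0$ — gives $\lambda_\infty(R^{A_{\infty,0}}_{\perfd}\otimes_{A_n}N_i) = \lambda_\infty(R^{A_{\infty,0}}_{\perfd}\otimes_{A_n}N_{i-1}) + \lambda_\infty(R^{A_{\infty,0}}_{\perfd}/\m_{A_n}R^{A_{\infty,0}}_{\perfd})$; summing over $i$ yields the identity. (Every module here is $\m_A$-power torsion since $\m_A A_n\subseteq\m_{A_n}$, so $\lambda_\infty$ is defined throughout.)

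It then remains to assemble the pieces. Applying the d\'evissage with $\mathfrak{q} = \m_A A_n$, where $\length_{A_n}(A_n/\m_A A_n) = p^{nd}$, together with $\lambda_\infty(R^{A_{\infty,0}}_{\perfd}/\m_A R^{A_{\infty,0}}_{\perfd}) = \rank_A R$ from \autoref{prop.Normalizedlengthequalsmultiplicity}, one gets $\lambda_\infty(R^{A_{\infty,0}}_{\perfd}/\m_{A_n}R^{A_{\infty,0}}_{\perfd}) = (\rank_A R)/p^{nd}$ (in particular this is finite). Feeding $\mathfrak{q} = \widetilde{J}$ back into the d\'evissage gives
\[
\lambda_\infty\!\left(R^{A_{\infty,0}}_{\perfd}/JR^{A_{\infty,0}}_{\perfd}\right) = \length_{A_n}(A_n/\widetilde{J})\cdot\frac{\rank_A R}{p^{nd}} = (\rank_A R)\cdot\lambda_\infty(A_{\infty,0}/JA_{\infty,0}),
\]
which is the assertion. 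The step I expect to require the most care is the d\'evissage itself: what makes it work is not merely that $R^{A_{\infty,0}}_{\perfd}\otimes_{A_n}-$ is ``almost exact'' in a vague sense, but that the error terms are honestly $g$-almost zero, so that \autoref{prop.AlmostNormalizedLength} forces their normalized length to vanish on the nose and additivity of $\lambda_\infty$ applies without correction; everything else is bookkeeping around \autoref{prop.Normalizedlengthequalsmultiplicity}.
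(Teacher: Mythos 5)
Your proof is correct and follows essentially the same route as the paper's: reduce $J$ to an $\m_{A_n}$-primary ideal $\widetilde J$ at a finite level, observe that $g$-almost flatness of $R^{A_{\infty,0}}_{\perfd}$ over $A_n$ plus \autoref{prop.AlmostNormalizedLength} makes $\lambda_\infty(R^{A_{\infty,0}}_{\perfd}\otimes_{A_n}-)$ additive on short exact sequences of finite-length $A_n$-modules, then d\'evissage down to $A_n/\m_{A_n}$ and invoke \autoref{prop.Normalizedlengthequalsmultiplicity}. The only difference is cosmetic: you compute $\lambda_\infty(R^{A_{\infty,0}}_{\perfd}/\m_{A_n}R^{A_{\infty,0}}_{\perfd})=(\rank_A R)/p^{nd}$ by a second d\'evissage (with $\mathfrak{q}=\m_A A_n$) reducing to the $n=0$ case, whereas the paper applies \autoref{prop.Normalizedlengthequalsmultiplicity} directly to $A_n\otimes_A R$ with the system of parameters $p^{1/p^n},x_2^{1/p^n},\dots,x_d^{1/p^n}$, identifying $e(\m_{A_n},A_n\otimes_A R)=\rank_A R$. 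Both routes are one line once the d\'evissage is established, so there is no substantive difference.
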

\begin{proof}
Since $J$ is finitely generated and contains a power of $\m_A$, we may assume without loss of generality that $J\subseteq A_n$ is a $\m_{A_n}$-primary ideal of $A_n$ for some $n\gg0$. Note that both functions $\lambda_\infty(R^{\Ainfty}_{\perfd}\otimes_{A_n} -)$ and $(\rank_A R)\cdot \lambda_\infty(\Ainfty\otimes_{A_n} -)$ are additive on short exact sequences of finite length $A_n$-modules. This is obvious for $(\rank_A R)\cdot \lambda_\infty(\Ainfty\otimes_{A_n} -)$ since $A_\infty$ is flat over $A_n$. For $\lambda_\infty(R^{\Ainfty}_{\perfd}\otimes_{A_n} -)$, this follows from the fact that $R^{\Ainfty}_{\perfd}$ is $g$-almost flat over $A_n$ by \autoref{thm.RperfdAlmostFaithfullyFlatOverA} (in that theorem almost flatness was stated over $A$ but the same argument works over $A_n$), and that $g$-almost zero modules have zero normalized length by \autoref{prop.AlmostNormalizedLength}. Thus, by taking a finite filtration of $A_n/J$ by $A_n/\m_{A_n}$, it is enough to prove the proposition when $J=\m_{A_n}$, in which case it follows from \autoref{prop.Normalizedlengthequalsmultiplicity} applied to $\m_{A_n}$ and that \[\frac{1}{p^{nd}}e(\m_{A_n}, A_n\otimes_AR)=\frac{1}{p^{nd}}\rank_A(R)=\rank_A(R)\cdot \lambda_\infty(\Ainfty/\m_{A_n}A_\infty).\qedhere\]
\end{proof}

Before we state the next consequence of \autoref{prop.Normalizedlengthequalsmultiplicity}, we recall that in an arbitrary commutative ring $R$, an ideal $J\subseteq I$ is called a \emph{reduction} of $I$ if there exists an integer $n$ so that $I^{n+1}=JI^n$ (see \cite[Definition 1.2.1]{HunekeSwansonIntegralClosure}). When $I$ is finitely generated (e.g., when $R$ is Noetherian), this is equivalent to saying that $I$ is integral over $J$, see \cite[Corollary 1.2.5]{HunekeSwansonIntegralClosure}. In particular, in a Noetherian local ring $(R,\m)$, if $I$ is $\m$-primary and $J$ is a reduction of $I$, then $e(I, R)=e(J, R)$. A reduction $J\subseteq I$ is called \emph{minimal} if no ideal strictly contained in $J$ is a reduction of $I$. If $(R,\m)$ is a Noetherian local ring with infinite residue field and $I$ is an $\m$-primary ideal, then every minimal reduction of $I$ is generated by a system of parameters, see \cite[Proposition 8.3.7 and Corollary 8.3.9]{HunekeSwansonIntegralClosure}. We need the following lemma to handle the case of finite residue field.

\begin{lemma}
\label{lem.finiteresidue}
Let $(R,\m)$ be a complete local domain of mixed characteristic $(0,p>0)$ with a finite residue field $k=R/\m$ and let $I\subseteq R$ be an $\m$-primary ideal. Then there exists a finite field extension $k\to k'$ so that $R':= R\otimes_{W(k)}W(k')$ is a complete local domain and $IR'$ has a minimal reduction generated by a system of parameters.
\end{lemma}
\begin{proof}
Let $R\to S$ be the normalization of $R$. Note that $(S,\m_S)$ is a normal local domain. Let $k_S=S/\m_S$ be the residue field of $S$. We first claim that if $k\to k'$ is any finite field extension such that $[k':k]$ is relative prime to $[k_S:k]$, then $R'$ is a domain. To see this, we consider the commutative diagram
\[\xymatrix{
R \ar@{^{(}->}[d]\ar[r] & R'= R\otimes_{W(k)}W(k')\ar@{^{(}->}[d] \\
S \ar[r] & S':= S\otimes_{W(k)}W(k')
}.
\]
We know that $S'$ is normal since $W(k)\to W(k')$ is finite \'{e}tale (as $k$ is a perfect field). But after modulo $\m_S$, we have $S'/\m_S S'\cong k_S\otimes_kk'$ which is a field since $[k':k]$ is relative prime to $[k_S:k]$. It follows that $\m_SS'$ is the unique maximal ideal of $S'$ and thus $S'$ is normal and local, in particular a domain. Thus $R'$ is a domain as it injects into $S'$.  

Next we note that by \cite[Proposition 8.2.4]{HunekeSwansonIntegralClosure} (applied to $n=1$), an $\m$-primary ideal $J\subseteq I$ is a reduction of $I$ if and only if the fiber cone 
$$\mathcal{F}_I(R):=R/\m \oplus I/\m I \oplus I^2/\m I^2 \cdots $$
is module-finite over its subalgebra generated by $(J+\m I)/\m I$. In particular, a system of parameters $z_1,\dots,z_d$ generates a minimal reduction of $I$ if and only if their images in $I/\m I$ form a homogeneous system of parameters in the standard graded ring $\mathcal{F}_I(R)$. Now we have $R'$ is a complete local ring with maximal ideal $\m'=\m R'$ and that $\mathcal{F}_I(R')\cong \mathcal{F}_I(R) \otimes_k k'$. Let $\widetilde{k}$ be the direct limit of all finite field extensions $k'$ of $k$ inside an algebraic closure of $k$ so that $[k':k]$ is relative prime to $[k_S:k]$. It is clear that $\widetilde{k}$ is an infinite field. Now since $\mathcal{F}_I(R) \otimes_k \widetilde{k}$ is a standard graded ring over the infinite field $\widetilde{k}$, there exists a homogeneous system of parameters $z_1,\dots,z_d$ of degree 1 in $\mathcal{F}_I(R)\otimes_k \widetilde{k}$. Thus we can take $k'$ sufficiently large and finite over $k$ so that $z_1,\dots,z_d$ are degree 1 elements in $\mathcal{F}_I(R) \otimes_k k'$ (and they necessarily form a system of parameters). It follows that $R'$ is a complete local domain (by the first paragraph of the proof) and that $IR'$ admits a minimal reduction generated by a system of parameters (simply choose any lift of $z_1,\dots, z_d$ in $R'$).
\end{proof}

\begin{corollary}
\label{cor.perfdHKparameterideal}
With notation as in \autoref{not.setup}, for every $\m$-primary ideal $I\subseteq R$ we have $e^{\underline{x}}_{\perfd}(I) \leq e(I)$, with equality when $I$ is generated by a system of parameters of $R$.
\end{corollary}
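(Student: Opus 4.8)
The plan is to deduce both assertions from \autoref{prop.Normalizedlengthequalsmultiplicity}, which already computes $\lambda_\infty$ of $R^{A_{\infty,0}}_{\perfd}$ modulo a parameter ideal. For the equality, if $I=(\underline{y})$ with $\underline{y}=y_1,\dots,y_d$ a system of parameters of $R$, I would simply apply \autoref{prop.Normalizedlengthequalsmultiplicity} with $n=0$ (so that $A_0\otimes_A R=R$ and the normalizing factor $1/p^{nd}$ equals $1$): this immediately yields $e^{\underline{x}}_{\perfd}(I)=\lambda_\infty(R^{A_{\infty,0}}_{\perfd}/(\underline{y})R^{A_{\infty,0}}_{\perfd})=e(\underline{y},R)=e(I)$, with nothing further to check.

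For the inequality with an arbitrary $\m$-primary ideal $I$, the idea is to compare $I$ with a minimal reduction $J\subseteq I$. Such a $J$ exists once the residue field is infinite, so I would first reduce to that case by a flat local base change: replace $R$ by $R':=\widehat{R\otimes_{W(k)}W(k')}$ for $k'$ a perfect infinite field extension of $k$ (for instance $k'=k(t)^{1/p^{\infty}}$). One checks that $R'$ is again a complete local domain, that $\m R'=\m_{R'}$, that the construction of $A$, $A_{\infty,0}$ and $R^{A_{\infty,0}}_{\perfd}$ is compatible with this base change, and that normalized length over $A$ matches normalized length over $A'=\widehat{A\otimes_{W(k)}W(k')}$; hence neither $e(-)$ nor $e^{\underline{x}}_{\perfd}(-)$ changes. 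Assuming now that $k$ is infinite, I would pick a minimal reduction $J=(y_1,\dots,y_d)\subseteq I$, which is automatically generated by a system of parameters since $I$ is $\m$-primary, and invoke the Northcott--Rees fact that a reduction has the same multiplicity (valid because a complete local domain is formally equidimensional) to get $e(J,R)=e(I)$.

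Finally, $J\subseteq I$ gives $JR^{A_{\infty,0}}_{\perfd}\subseteq IR^{A_{\infty,0}}_{\perfd}$ and hence a short exact sequence
\[
    0\longrightarrow IR^{A_{\infty,0}}_{\perfd}/JR^{A_{\infty,0}}_{\perfd}\longrightarrow R^{A_{\infty,0}}_{\perfd}/JR^{A_{\infty,0}}_{\perfd}\longrightarrow R^{A_{\infty,0}}_{\perfd}/IR^{A_{\infty,0}}_{\perfd}\longrightarrow 0
\]
in $A_{\infty,0}\hyphen\Mod_{\m_A}$ (all three modules are $\m_A$-power torsion because $\m^N\subseteq I$ for some $N$ forces $\m_A^N R^{A_{\infty,0}}_{\perfd}\subseteq IR^{A_{\infty,0}}_{\perfd}$). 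By additivity of normalized length, \autoref{prop.NormalizedLengthProperties}\,\autoref{prop.NormalizedLengthProperties.a}, one obtains $\lambda_\infty(R^{A_{\infty,0}}_{\perfd}/IR^{A_{\infty,0}}_{\perfd})\le\lambda_\infty(R^{A_{\infty,0}}_{\perfd}/JR^{A_{\infty,0}}_{\perfd})$, and the right-hand side equals $e(J)=e(I)$ by the equality case applied to the parameter ideal $J$; thus $e^{\underline{x}}_{\perfd}(I)\le e(I)$. I expect the only genuine obstacle to be the reduction to infinite residue field, \ie checking that $e^{\underline{x}}_{\perfd}$ (equivalently $\lambda_\infty$ over $A$) is unaffected by the base change $R\to R'$; if one prefers to sidestep this, one can instead note that the same short exact sequence argument gives $e^{\underline{x}}_{\perfd}(I)\le e(J)$ for \emph{every} parameter ideal $J\subseteq I$, and then use that $e(I)=\inf\{e(J):J\subseteq I\text{ a parameter ideal}\}$. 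Everything else --- the equality case, monotonicity of $\lambda_\infty$ along surjections, and the $\m_A$-power-torsion bookkeeping --- is routine.
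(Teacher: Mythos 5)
The equality case is exactly the paper's argument: apply \autoref{prop.Normalizedlengthequalsmultiplicity} with $n=0$.

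For the inequality, your route diverges from the paper's in a way that hides real work. You propose replacing $R$ by a completion of $R\otimes_{W(k)}W(k')$ for an \emph{infinite} perfect field $k'$ (such as $k(t)^{1/p^\infty}$) and then "checking" that $A$, $A_{\infty,0}$, $R^{A_{\infty,0}}_{\perfd}$, and $\lambda_\infty$ are all compatible with this base change. That list of checks is exactly where the difficulty lives: the perfectoidization and the normalized-length functor are delicate non-Noetherian constructions, and compatibility with an infinite flat base change is not established anywhere in the paper and is not routine. The paper sidesteps all of this with a \emph{finite} extension $k\to k'$: since $I$ is a fixed $\m$-primary ideal, a large enough \emph{finite} extension of $k$ already suffices for $IR'$ to admit a minimal reduction by a system of parameters (one only needs to dodge finitely many proper subspaces), and then $R\to R'=R\otimes_{W(k)}W(k')$ is finite \'etale, so the compatibility of $e^{\underline{x}}_{\perfd}$ under the base change is precisely \autoref{cor.FiniteEtaleHaveTheSameSignature}. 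Note also the paper's footnote handling the subtlety that $R'$ must remain a domain (achieved by choosing $[k':k]$ coprime to the residue degree of the normalization of $R$); your proposal does not address this for the infinite extension either.

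Your fallback, namely $e(I)=\inf\{e(J):J\subseteq I\ \text{parameter}\}$, is asserted without proof and is not an obviously standard fact over a finite residue field; indeed every argument I can think of for it goes back through some form of residue-field enlargement, so it does not actually circumvent the base change. The monotonicity step $e^{\underline{x}}_{\perfd}(I)\le e^{\underline{x}}_{\perfd}(J)$ via the short exact sequence and \autoref{prop.NormalizedLengthProperties}\autoref{prop.NormalizedLengthProperties.a} is fine, but it is also exactly what the paper's one-line chain of inequalities uses, so it gains you nothing beyond that. In short: correct equality case, but the inequality case as written leaves a genuine gap at the base-change compatibility, which the paper closes by a deliberately finite (hence finite \'etale) extension plus \autoref{cor.FiniteEtaleHaveTheSameSignature}.
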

\begin{proof}
The second conclusion follows directly from \autoref{prop.Normalizedlengthequalsmultiplicity} (applied to $n=0$ so that $A_0\otimes_AR=R$). We now prove the first statement. Note that when the residue field $k$ of $R$ is infinite, then $I$ has a minimal reduction generated by a system of parameters $\underline{y}=y_1,\dots,y_d$. In this case we have 
\[e^{\underline{x}}_{\perfd}(I, R) \leq e^{\underline{x}}_{\perfd}(\underline{y}, R) = e(\underline{y}, R) =e(I, R) \]
where we used the second conclusion for the first equality above. 

Now if the residue field $k$ of $R$ is finite. Then by \autoref{lem.finiteresidue}, there exists a finite field extension $k\to k'$ so that $R':= R\otimes_{W(k)}W(k')$ is a complete local domain and $IR'$ has a minimal reduction generated by a system of parameters $\underline{y}=y_1,\dots,y_d$. We then have \[e^{\underline{x}}_{\perfd}(IR', R') \leq e^{\underline{x}}_{\perfd}(\underline{y}, R') = e(\underline{y}, R') =e(IR', R') =e(I, R) \]
where the first equality follows from the second conclusion (applied to $R'$), and the third equality follows from faithfully flatness of $R\to R'$ and $\m R'=\m_{R'}$ (i.e., taking a filtration of $R/I^n$ by $R/\m$ and tensoring with $R'$ produce a filtration of $R'/I^nR'$ by $R'/\m_{R'}$, thus we see that $\length_R(R/I^n)=\length_{R'}(R'/I^nR')$ for all $n$ and so by the definition of Hilbert-Samuel multiplicity, $e(IR', R') =e(I, R)$). Finally, by \autoref{cor.FiniteEtaleHaveTheSameSignature} from \autoref{sec.TransformationRule}, we know that $e^{\underline{x}}_{\perfd}(IR', R')=e^{\underline{x}}_{\perfd}(I, R)$, which completes the proof.
\end{proof}

We recall that in positive characteristic, it is well-known that we have $$\frac{1}{d!}e(I)\leq e_{\HK}(I)\leq e(I)$$ for every $\m$-primary ideal $I$ (the first inequality is simply because $I^{[p^e]}\subseteq I^{p^e}$). \autoref{cor.perfdHKparameterideal} is a mixed characteristic analog of the second inequality. However, we do not know whether the first inequality holds in mixed characteristic:

\begin{question}
With notation as in \autoref{not.setup}, then do we have $$\frac{1}{d!}e(I)\leq e^{\underline{x}}_{\perfd}(I)$$
for every $\m$-primary ideal $I$? 
\end{question}

We next prove that perfectoid Hilbert-Kunz multiplicity is bounded below by $1$ and that perfectoid signature is bounded above by $1$.

\begin{proposition}
\label{prop.eFHKgreaterthan1}
With notation as in \autoref{not.setup}, we have
    $$e^{\underline{x}}_{\perfd}(R) = \lambda_\infty(R^{\Ainfty}_{\perfd}/\m_R R^{\Ainfty}_{\perfd})\ge 1.$$
\end{proposition}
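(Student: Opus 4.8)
The plan is to read $e^{\underline{x}}_{\perfd}(R)=\lambda_\infty(R^{A_{\infty,0}}_{\perfd}/\m R^{A_{\infty,0}}_{\perfd})$ as the mixed characteristic incarnation of $e_{\HK}(\m,R)=\lambda_\infty(R_{\perf}/\m R_{\perf})$ (\autoref{thm.SignatureHKviaNormalizedLength}) and to transport the classical proof that $e_{\HK}(\m,R)\ge 1$. In characteristic $p>0$ that proof goes: $\length_R(R/\m^{[p^e]})=\mu_R(F^e_*R)$, the minimal number of generators of $F^e_*R$ as an $R$--module, and since $R$ is a domain $\mu_R(F^e_*R)\ge \operatorname{rank}_R(F^e_*R)=p^{ed}$; dividing by $p^{ed}$ and letting $e\to\infty$ gives $e_{\HK}(\m,R)\ge 1$. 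So the heart of the matter here is to show that the ``normalized number of generators of $R^{A_{\infty,0}}_{\perfd}$ over $R$'', which is exactly $\lambda_\infty(R^{A_{\infty,0}}_{\perfd}/\m R^{A_{\infty,0}}_{\perfd})$, dominates the ``normalized generic rank'', which should be $1$.

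First I would record that the cheap comparisons run the wrong way: since $\m_A R\subseteq\m$, the surjection $R^{A_{\infty,0}}_{\perfd}/\m_A R^{A_{\infty,0}}_{\perfd}\twoheadrightarrow R^{A_{\infty,0}}_{\perfd}/\m R^{A_{\infty,0}}_{\perfd}$ together with \autoref{prop.Normalizedlengthequalsmultiplicity} only gives the \emph{upper} bound $e^{\underline{x}}_{\perfd}(R)\le\operatorname{rank}_A R$; and the cyclic submodule $A_{\infty,0}\cdot 1\subseteq R^{A_{\infty,0}}_{\perfd}/\m R^{A_{\infty,0}}_{\perfd}$ (nonzero because $R^{A_{\infty,0}}_{\perfd}$ maps to the balanced big Cohen--Macaulay algebra $\widehat{R^+}$, so $\m\widehat{R^+}\ne\widehat{R^+}$) has normalized length $\le 1$ but not visibly equal to $1$ — its contraction $\m R^{A_{\infty,0}}_{\perfd}\cap A_{\infty,0}$ can be strictly larger than $\m_A A_{\infty,0}$ even after passing to $v$--almost isomorphism. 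Hence a genuine rank input is unavoidable.

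For that, I would reduce to finite levels using that $R^{A_{\infty,0}}_{\perfd}$ is $(g)_{\perfd}$--almost finite projective over $A_{\infty,0}$ (\autoref{thm.BhattScholzeAlmostPurity}): one compares $R^{A_{\infty,0}}_{\perfd}/\m R^{A_{\infty,0}}_{\perfd}$ with the quotients of the finite $A_n\otimes_A R$, which sit inside $R^{A_{\infty,0}}_{\perfd}$ with cokernel killed by a fixed power of $g$ (\autoref{prop.gisogeny}), and at each level $A_n\otimes_A R$ is a finite module of generic rank $p^{nd}$ over the domain $R$, so $\mu_R(A_n\otimes_A R)\ge p^{nd}$; dividing by $p^{nd}$ and passing to the limit yields $e^{\underline{x}}_{\perfd}(R)\ge 1$. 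The main obstacle — and the place where one must genuinely work rather than quote — is exactly the one confronted in \autoref{prop.Normalizedlengthequalsmultiplicity}: the isogeny cone is killed only by an integer power $g^N$, whose normalized length need \emph{not} vanish, since only $(g)_{\perfd}$--almost zero modules have $\lambda_\infty=0$ (\autoref{prop.AlmostNormalizedLength}); so the finite-level reduction and the extraction of the rank bound have to be carried out through Koszul and long-exact-sequence arguments that track these $g^N$--torsion corrections and let them cancel in Euler characteristics, using \autoref{prop.NormalizedLengthProperties} and, where a valuative estimate is convenient, \autoref{lem.valmostzero}.
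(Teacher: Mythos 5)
The reduction you set up---reading the claim as a normalized-rank lower bound, and noting that the easy comparisons (the surjection from the $\m_A R$-colength, the cyclic $A_{\infty,0}$-submodule) run the wrong way---is the right diagnosis. The gap is in the proposed fix. The Euler-characteristic cancellation in \autoref{prop.Normalizedlengthequalsmultiplicity} uses two things special to parameter ideals: the Koszul complex of $\underline{y}$ is a bounded complex, and after a prime-avoidance modification one arranges $y_d\in(y_1,\dots,y_{d-1},g^N)$, which forces multiplication by $y_d$ to vanish on the isogeny cone $C$ and makes the long exact sequence in Koszul homology split into short exact sequences with vanishing alternating sum. Neither device is available for $\m$: there is no bounded $R$-free resolution of $k$ unless $R$ is regular, so there is no Euler characteristic to form, and the $g^N$-torsion corrections have nowhere to cancel. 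Concretely, tensoring $0\to A_n\otimes_A R\to R^{A_{\infty,0}}_{\perfd}\to C\to 0$ with $k$ over $R$ yields a kernel controlled by $\Tor_1^R(C,k)$, which is $g^N$-torsion but not $(g)_{\perfd}$-almost zero, so \autoref{prop.AlmostNormalizedLength} gives no vanishing; the passage from $\mu_R(A_n\otimes_A R)=p^{nd}$ to $\lambda_\infty(R^{A_{\infty,0}}_{\perfd}/\m R^{A_{\infty,0}}_{\perfd})\ge 1$ is therefore not justified. That the paper leaves open whether $\frac{1}{d!}e(I)\le e^{\underline{x}}_{\perfd}(I)$ is a further sign that a ``limit of colengths'' reading of $e^{\underline{x}}_{\perfd}$ is out of reach in mixed characteristic.

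The paper's argument supplies the rank input by a genuinely different route. It works with the limit closure $(\m_A R)^{\lim}$ of the parameter ideal $\m_A R$: the direct summand theorem guarantees this is $\m$-primary, and the Cuong--Nhan / Ma--Quy--Smirnov colength--multiplicity inequality gives $\length_R\big(R/(\m_A R)^{\lim}\big)\le e(\m_A R,R)=\rank_A R=r$. Filtering $R/(\m_A R)^{\lim}$ by copies of $k$ and expanding over $R^{A_{\infty,0}}_{\perfd}$ yields $\lambda_\infty(R^{A_{\infty,0}}_{\perfd}/(\m_A R)^{\lim}R^{A_{\infty,0}}_{\perfd})\le t\cdot e^{\underline{x}}_{\perfd}(R)$ with $t\le r$; the $g$-almost regularity of $p,x_2,\dots,x_d$ on $R^{A_{\infty,0}}_{\perfd}$ identifies $(\m_A R)^{\lim}R^{A_{\infty,0}}_{\perfd}$ with $\m_A R^{A_{\infty,0}}_{\perfd}$ up to $g$-almost; and \autoref{prop.Normalizedlengthequalsmultiplicity} computes the colength of the latter as $r$. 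Hence $r\le t\cdot e^{\underline{x}}_{\perfd}(R)\le r\cdot e^{\underline{x}}_{\perfd}(R)$. The direct summand theorem and the limit-closure colength bound are precisely the ingredients that replace the characteristic-$p$ rank count; your sketch does not have a substitute for them, and that is where it stalls.
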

\begin{proof} We first let
    \[ 
        (\m_A R)^{\lim} := \bigcup_{i \geq 0} (p, x_2, \dots, x_d)^{i+1} :_R (px_2 \cdots x_d)^{i}
    \] 
    be the limit closure of $(p,x_2,\dots,x_d)$ in $R$ (for example, see \cite[Definition 8]{MaQuySmirnovColengthMultiplicity}). By the direct summand theorem, we know that $(\m_AR)^{\lim}$ is an $\m_R$-primary ideal, and we may take a filtration of ideals
    \begin{equation}
    \label{eqn.FiltrationofLimClosure}
        (\m_A R)^{\lim}=I_0\subset I_1\subset \dots\subset I_t=R
    \end{equation}
    where $t=\length_R(R/(\m_A R)^{\lim})\le e(\m_A R,R)=\rank_A R=r$, where we used \cite[Theorem 9]{MaQuySmirnovColengthMultiplicity} for the inequality and \cite[Corollary 4.7.9]{BrunsHerzog} (and the fact that $A$ is regular) for the equality $e(\m_A R,R)=\rank_A R$. 
    By expanding this filtration to $R^{\Ainfty}_{\perfd}$, we get a filtration of $R^{\Ainfty}_{\perfd}$ such that $R^{\Ainfty}_{\perfd}/\m_R R^{\Ainfty}_{\perfd}$ surjects onto $ I_{j+1}R^{\Ainfty}_{\perfd}/I_{j}R^{\Ainfty}_{\perfd}$ for each $j$. Hence we have
    \[
        \lambda_\infty(R^{\Ainfty}_{\perfd}/(\m_A R)^{\lim}R^{\Ainfty}_{\perfd})= \sum \lambda_\infty(I_{j+1}R^{\Ainfty}_{\perfd}/I_{j}R^{\Ainfty}_{\perfd})\le t\cdot \lambda_\infty(R^{\Ainfty}_{\perfd}/\m_R R^{\Ainfty}_{\perfd}).
    \]
    
    By the definition of $(\m_A R)^{\lim}$ and the Noetherianness of $R$, we know that $(\m_A R)^{\lim}= (p^n,x_2^n,\dots,x_d^n):_R(px_2\cdots x_d)^{n-1}$ for all $n \gg 0$. Fix such an $n\gg0$, it follows that
    \begin{equation}
    \label{eqn:gAlmostLimClosure}
        (\m_A R)^{\lim}R^{\Ainfty}_{\perfd}\subseteq (p^n,x_2^n,\dots,x_d^n):_{R^{\Ainfty}_{\perfd}}(px_2\cdots x_d)^{n-1}.
    \end{equation}
    Now since $p, x_2,\dots,x_d$ is a regular system of parameters (in particular a regular sequence) on the regular ring $A$, we have 
    $(p^n,x_2^n,\dots,x_d^n):_A(px_2\cdots x_d)^{n-1}=\m_A$, i.e., an injection 
    $$0\to A/\m_A\xrightarrow{\cdot (px_2\cdots x_d)^{n-1}}A/(p^n,x_2^n,\dots,x_d^n).$$
    Since $R^{\Ainfty}_{\perfd}$ is $g$-almost flat over $A$ by \autoref{thm.RperfdAlmostFaithfullyFlatOverA}, we obtain that
    $$R^{\Ainfty}_{\perfd}/\m_A R^{\Ainfty}_{\perfd}\xrightarrow{\cdot (px_2\cdots x_d)^{n-1}}R^{\Ainfty}_{\perfd}/(p^n,x_2^n,\dots,x_d^n) R^{\Ainfty}_{\perfd}.$$
    is $g$-almost injective and thus $(p^n,x_2^n,\dots,x_d^n):_{R^{\Ainfty}_{\perfd}}(px_2\cdots x_d)^{n-1}$ is $g$-almost isomorphic to $\m_AR^{\Ainfty}_{\perfd}$. Putting this together with \autoref{eqn:gAlmostLimClosure} we have that $(\m_A R)^{\lim}R^{\Ainfty}_{\perfd}\subseteq \m_AR^{\Ainfty}_{\perfd}$ up to $g$-almost.
    But since we clearly have $(\m_A R)^{\lim}R^{\Ainfty}_{\perfd}\supseteq \m_AR^{\Ainfty}_{\perfd}$, it follows that $R^{\Ainfty}_{\perfd}/(\m_A R)^{\lim}R^{\Ainfty}_{\perfd}$ is $g$-almost isomorphic to $R^{\Ainfty}_{\perfd}/\m_A R^{\Ainfty}_{\perfd}$. It then follows that
    \[ 
        \lambda_\infty(R^{\Ainfty}_{\perfd}/\m_A R^{\Ainfty}_{\perfd})\le t\cdot \lambda_\infty(R^{\Ainfty}_{\perfd}/\m_R R^{\Ainfty}_{\perfd})\le r\cdot \lambda_\infty(R^{\Ainfty}_{\perfd}/\m_R R^{\Ainfty}_{\perfd}).
    \]
    By \autoref{prop.Normalizedlengthequalsmultiplicity}, we have $\lambda_\infty(R^{\Ainfty}_{\perfd}/\m_A R^{\Ainfty}_{\perfd}) = r$ and so $\lambda_\infty(R^{\Ainfty}_{\perfd}/\m_R R^{\Ainfty}_{\perfd})\ge 1$ as desired.
\end{proof}

\begin{lemma}
\label{lem.Filtration}
  With notation as in \autoref{not.setup}, let $I$ be an $\m$-primary ideal of $R$ and $I\subseteq J$, then we have 
    \[
        \begin{array}{rcl} 
            \length_R(J/I)\cdot s^{\underline{x}}_{\perfd}(R) & = & \length_R(J/I)\cdot\lambda_\infty(R^{\Ainfty}_{\perfd}/I_{\infty}) \\
            & \leq & \lambda_\infty(JR^{\Ainfty}_{\perfd} /IR^{\Ainfty}_{\perfd}) \\
            & = & e^{\underline{x}}_{\perfd}(I) - e^{\underline{x}}_{\perfd}(J).
        \end{array}
    \]
\end{lemma}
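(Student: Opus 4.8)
The plan is to read the first equality off the definition of $s^{\underline{x}}_{\perfd}(R)$ (\autoref{def.PerfdeHKandPerfdSignature}), to obtain the last equality from additivity of normalized length, and to prove the middle inequality by passing to a composition series of $J/I$. For the last equality I would note that since $I$ is $\m$-primary we have $\m_A^N \subseteq I$ for $N \gg 0$, so every term of
\[
0 \to JR^{A_{\infty,0}}_{\perfd}/IR^{A_{\infty,0}}_{\perfd} \to R^{A_{\infty,0}}_{\perfd}/IR^{A_{\infty,0}}_{\perfd} \to R^{A_{\infty,0}}_{\perfd}/JR^{A_{\infty,0}}_{\perfd} \to 0
\]
lies in $A_{\infty,0}\hyphen\Mod_{\m_A}$ and has finite normalized length by \autoref{cor.perfdHKparameterideal}; then additivity (\autoref{prop.NormalizedLengthProperties}\autoref{prop.NormalizedLengthProperties.a}) together with \autoref{def.PerfdeHKandPerfdSignature} gives $\lambda_\infty(JR^{A_{\infty,0}}_{\perfd}/IR^{A_{\infty,0}}_{\perfd}) = e^{\underline{x}}_{\perfd}(I) - e^{\underline{x}}_{\perfd}(J)$.

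For the inequality I would fix a composition series $I = L_0 \subsetneq L_1 \subsetneq \cdots \subsetneq L_\ell = J$ of $R$-modules, so that $\ell = \length_R(J/I)$ and $L_{i+1} = L_i + Rz_i$ with $z_i \in L_{i+1}\setminus L_i$. Extending to $R^{A_{\infty,0}}_{\perfd}$ produces a filtration of $JR^{A_{\infty,0}}_{\perfd}/IR^{A_{\infty,0}}_{\perfd}$ whose successive quotients are $L_{i+1}R^{A_{\infty,0}}_{\perfd}/L_iR^{A_{\infty,0}}_{\perfd} \cong R^{A_{\infty,0}}_{\perfd}/(L_iR^{A_{\infty,0}}_{\perfd} :_{R^{A_{\infty,0}}_{\perfd}} z_i)$, so by additivity it suffices to prove, for each $i$, that
\[
\lambda_\infty\big(R^{A_{\infty,0}}_{\perfd}/(L_iR^{A_{\infty,0}}_{\perfd} : z_i)\big) \geq \lambda_\infty\big(R^{A_{\infty,0}}_{\perfd}/I_\infty\big) = s^{\underline{x}}_{\perfd}(R).
\]
I would deduce this from the containment $(L_iR^{A_{\infty,0}}_{\perfd} :_{R^{A_{\infty,0}}_{\perfd}} z_i) \subseteq I_\infty$, which yields a surjection of $\m_A$-power-torsion modules $R^{A_{\infty,0}}_{\perfd}/(L_iR^{A_{\infty,0}}_{\perfd} : z_i) \twoheadrightarrow R^{A_{\infty,0}}_{\perfd}/I_\infty$, and $\lambda_\infty$ is additive and nonnegative, hence nonincreasing along surjections. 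Summing over $i$ then gives $\length_R(J/I)\cdot s^{\underline{x}}_{\perfd}(R) \leq \lambda_\infty(JR^{A_{\infty,0}}_{\perfd}/IR^{A_{\infty,0}}_{\perfd})$.

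The crux is therefore the containment $(L_iR^{A_{\infty,0}}_{\perfd} :_{R^{A_{\infty,0}}_{\perfd}} z_i) \subseteq I_\infty$, which is where the non-purity description of $I_\infty$ (\autoref{def.Iinfty}) enters: if $x$ lies in the colon ideal but $x \notin I_\infty$, then $R \xrightarrow{1 \mapsto x} R^{A_{\infty,0}}_{\perfd}$ is pure, hence stays injective after applying $- \otimes_R R/L_i$; but the base-changed map $R/L_i \to R^{A_{\infty,0}}_{\perfd}/L_iR^{A_{\infty,0}}_{\perfd}$ sends the class of $z_i$, which is nonzero because $z_i \notin L_i$, to the class of $z_i x$, which is zero because $z_i x \in L_iR^{A_{\infty,0}}_{\perfd}$ — a contradiction. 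I do not anticipate a serious obstacle; the only points requiring care are verifying that all modules involved lie in $A_{\infty,0}\hyphen\Mod_{\m_A}$ (so that $\lambda_\infty$ and its additivity apply) and that they have finite normalized length (so that the difference $e^{\underline{x}}_{\perfd}(I) - e^{\underline{x}}_{\perfd}(J)$ is meaningful), both of which follow from $I$ being $\m$-primary together with \autoref{cor.perfdHKparameterideal}.
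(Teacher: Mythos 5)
Your proof is correct and follows essentially the same route as the paper's. The paper also proves the middle inequality by choosing a composition series $I = I_0 \subset I_1 \subset \cdots \subset I_n = J$ with $I_{j+1} = (y_j, I_j)$, extending it to $R^{A_{\infty,0}}_{\perfd}$, exhibiting a surjection from each successive quotient onto $R^{A_{\infty,0}}_{\perfd}/I_\infty$, and verifying that surjection is well defined via the purity contradiction (if $zy_j \in I_jR^{A_{\infty,0}}_{\perfd}$ and $1\mapsto z$ were pure, then $y_j \in I_j$) — which is exactly your containment $(L_iR^{A_{\infty,0}}_{\perfd} : z_i) \subseteq I_\infty$, proved by the same tensor-with-$R/L_i$ argument spelled out. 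The only genuine difference is cosmetic: you phrase it as a colon-ideal containment, the paper as a well-defined surjection, and you explicitly flag the finiteness of $e^{\underline{x}}_{\perfd}(I)$ needed to rearrange additivity into a difference, which the paper leaves implicit.
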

\begin{proof}
The first equality above follows from the definition of perfectoid signature. The other equality above follows from the additivity of the normalized length $\lambda_\infty(JR^{\Ainfty}_{\perfd} /IR^{\Ainfty}_{\perfd}) = \lambda_\infty(R^{\Ainfty}_{\perfd} /IR^{\Ainfty}_{\perfd}) - \lambda_\infty(R^{\Ainfty}_{\perfd} /JR^{\Ainfty}_{\perfd})$ and the definition of perfectoid Hilbert-Kunz multiplicity. For the inequality, we consider a filtration $$I=I_0\subset I_1\subset \dots\subset I_n= J$$ where $n=\length_R(J/I)$ and $I_{j+1}/I_j\simeq R/\m_R\simeq k$. Suppose that $I_{j+1}=(y_{j},I_{j})$. By expanding the filtration to $R^{\Ainfty}_{\perfd}$, we get a filtration of $$IR^{\Ainfty}_{\perfd}=I_0R^{\Ainfty}_{\perfd}\subset I_1R^{\Ainfty}_{\perfd}\subset \cdots\subset I_nR^{\Ainfty}_{\perfd}=JR^{\Ainfty}_{\perfd}.$$ 
We claim that for each $j$ there is a well-defined surjection
    $$I_{j+1}R^{\Ainfty}_{\perfd} \big/ I_jR^{\Ainfty}_{\perfd} \xrightarrow{\psi} R^{\Ainfty}_{\perfd}/I_{\infty}$$
   where $\psi(y_j)=1$. To see this, it is enough to show that if $zy_j\in I_jR^{\Ainfty}_{\perfd}$, then $z\in I_\infty$, but this is clear since otherwise the map $R\to R^{\Ainfty}_{\perfd}$ sending $1$ to $z$ is pure and it follows that $y_j\in I_j$ which is a contradiction. Thus we have 
    \[
        \lambda_\infty(R^{\Ainfty}_{\perfd}/I_{\infty})\le \lambda_\infty(R^{\Ainfty}_{\perfd}/I_jR^{\Ainfty}_{\perfd})-\lambda_\infty(R^{\Ainfty}_{\perfd}/I_{j+1}R^{\Ainfty}_{\perfd})
    \]
    for each $j$. Summing over all $j$ and using the additivity of normalized length, we obtain that 
    \[n\cdot \lambda_\infty(R^{\Ainfty}_{\perfd}/I_{\infty})\le \lambda_\infty(JR^{\Ainfty}_{\perfd}/IR^{\Ainfty}_{\perfd}).\qedhere\]
\end{proof}

\begin{proposition}
    \label{prop.sFboundby1}
    With notation as in \autoref{not.setup}, we have 
    $$s^{\underline{x}}_{\perfd}(R) = \lambda_\infty(R^{\Ainfty}_{\perfd}/I_{\infty})\le 1.$$
\end{proposition}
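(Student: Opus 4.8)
The plan is to combine the filtration estimate of \autoref{lem.Filtration} with the identification of the perfectoid Hilbert--Kunz multiplicity of a parameter ideal with its Hilbert--Samuel multiplicity. Note first that $s^{\underline{x}}_{\perfd}(R) \geq 0$ is automatic, since normalized length takes values in $\bR_{\geq 0}\cup\{\infty\}$; thus only the bound $s^{\underline{x}}_{\perfd}(R)\leq 1$ needs proof.

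First I would fix a system of parameters $\underline{y} = y_1,\dots,y_d$ of $R$ contained in $\m$ (for concreteness one may take $\underline{y} = p, x_2, \dots, x_d$, so that $(\underline{y}) = \m_A R$), and set $I := (\underline{y})$, an $\m$-primary ideal generated by a system of parameters. I then apply \autoref{lem.Filtration} with this $I$ and with $J$ the unit ideal of $R$. The proof of that lemma goes through verbatim when $J = R$: the filtration $I = I_0 \subsetneq \cdots \subsetneq I_n = R$ may be chosen to terminate with $I_{n-1} = \m$, and the surjections onto $R^{A_{\infty,0}}_{\perfd}/I_{\infty}$ constructed there exist at every step, including the top one, because $\m R^{A_{\infty,0}}_{\perfd} \subseteq I_{\infty}$. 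Since $J R^{A_{\infty,0}}_{\perfd} = R^{A_{\infty,0}}_{\perfd}$, this yields
\[
\length_R\!\big(R/(\underline{y})\big)\cdot s^{\underline{x}}_{\perfd}(R)\;\leq\;\lambda_\infty\!\big(R^{A_{\infty,0}}_{\perfd}/(\underline{y})R^{A_{\infty,0}}_{\perfd}\big)\;=\;e^{\underline{x}}_{\perfd}\big((\underline{y})\big),
\]
the last equality being the definition of perfectoid Hilbert--Kunz multiplicity.

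Now, because $(\underline{y})$ is generated by a system of parameters, \autoref{cor.perfdHKparameterideal} (equivalently \autoref{prop.Normalizedlengthequalsmultiplicity} applied with $n = 0$) gives $e^{\underline{x}}_{\perfd}((\underline{y})) = e((\underline{y}), R)$, the Hilbert--Samuel multiplicity. The classical inequality $e((\underline{y}), R) \leq \length_R(R/(\underline{y}))$ for parameter ideals then produces $\length_R(R/(\underline{y}))\cdot s^{\underline{x}}_{\perfd}(R)\leq \length_R(R/(\underline{y}))$, and dividing by the positive integer $\length_R(R/(\underline{y}))$ gives $s^{\underline{x}}_{\perfd}(R)\leq 1$.

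With \autoref{lem.Filtration}, \autoref{cor.perfdHKparameterideal}, and \autoref{prop.Normalizedlengthequalsmultiplicity} available this is essentially bookkeeping, and I do not anticipate a genuine obstacle; the one point worth an explicit sentence is the verification that \autoref{lem.Filtration} applies with $J$ the unit ideal. One could instead run the same argument with $I = \m_A R$ and $J = R$, invoking \autoref{prop.Normalizedlengthequalsmultiplicity} directly to identify $e^{\underline{x}}_{\perfd}(\m_A R) = \rank_A R$ and using $\length_R(R/\m_A R)\geq \rank_A R$.
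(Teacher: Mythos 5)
Your proof is correct and is essentially the paper's argument. The paper also applies \autoref{lem.Filtration} with $I=\m_A R$ and $J=R$, gets $\length_R(R/\m_A R)\cdot s^{\underline{x}}_{\perfd}(R)\le \lambda_\infty(R^{A_{\infty,0}}_{\perfd}/\m_A R^{A_{\infty,0}}_{\perfd})=\rank_A R$ via \autoref{prop.Normalizedlengthequalsmultiplicity}, and closes with the inequality $\rank_A R\le\length_R(R/\m_A R)$; the paper justifies that last step by Nakayama (minimal generators $\ge$ generic rank) rather than by the Serre--Lech inequality $e(\underline{y})\le\length(R/\underline{y})$ for parameter ideals, but once you have $e(\m_A R)=\rank_A R$ these are literally the same fact. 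Your explicit check that \autoref{lem.Filtration} applies with $J$ the unit ideal is fine and indeed matches what the paper tacitly assumes.
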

\begin{proof}
    First note that by Nakayama's lemma, $n:=\length_R(R/\m_AR)\ge \rank_A R=r$. By \autoref{lem.Filtration} applied to $I=\m_AR$ and $J=R$, we have 
 \[r\cdot \lambda_\infty(R^{\Ainfty}_{\perfd}/I_{\infty})\le n\cdot \lambda_\infty(R^{\Ainfty}_{\perfd}/I_{\infty})\le \lambda_\infty(R^{\Ainfty}_{\perfd}/\m_AR^{\Ainfty}_{\perfd}).\]
Dividing by $r$ and using \autoref{prop.Normalizedlengthequalsmultiplicity} completes the proof.
\end{proof}

We next prove that the perfectoid signature can be viewed as the infimum of relative drops of the perfectoid Hilbert-Kunz multiplicity, which is analogous to the characteristic $p>0$ picture \cite{PolstraTuckerCombinedApproach}.  

\begin{proposition}
\label{prop.PerfdSignatureRelativePerfdHK}
   With notation as in \autoref{not.setup}, we have 
    $$s^{\underline{x}}_{\perfd}(R)=\inf_{I\subsetneq J} \frac{e^{\underline{x}}_{\perfd}(I) - e^{\underline{x}}_{\perfd}(J)}{\length_R(J/I)} = \inf_{I\subsetneq J} e^{\underline{x}}_{\perfd}(I) - e^{\underline{x}}_{\perfd}(J).$$
Moreover, if $R$ is Gorenstein, then for every system of parameters $\underline{y}$ of $R$ we have
$$s^{\underline{x}}_{\perfd}(R)= e^{\underline{x}}_{\perfd}(\underline{y}, R) - e^{\underline{x}}_{\perfd}((\underline{y}, u), R)$$
where $u$ is a socle representative of $R/(\underline{y})$. 
\end{proposition}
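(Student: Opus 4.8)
The plan is to establish the chain
\[
\inf_{I\subsetneq J}\bigl(e^{\underline{x}}_{\perfd}(I) - e^{\underline{x}}_{\perfd}(J)\bigr)
\;\le\; s^{\underline{x}}_{\perfd}(R)
\;\le\; \inf_{I\subsetneq J}\frac{e^{\underline{x}}_{\perfd}(I) - e^{\underline{x}}_{\perfd}(J)}{\length_R(J/I)}
\;\le\; \inf_{I\subsetneq J}\bigl(e^{\underline{x}}_{\perfd}(I) - e^{\underline{x}}_{\perfd}(J)\bigr),
\]
with the infima taken over pairs of $\m$-primary ideals $I\subsetneq J$; this forces all four quantities to agree. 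The middle inequality is immediate from \autoref{lem.Filtration}. For the rightmost one, applying \autoref{prop.NormalizedLengthProperties} (a) to $0\to JR^{A_{\infty,0}}_{\perfd}/IR^{A_{\infty,0}}_{\perfd}\to R^{A_{\infty,0}}_{\perfd}/IR^{A_{\infty,0}}_{\perfd}\to R^{A_{\infty,0}}_{\perfd}/JR^{A_{\infty,0}}_{\perfd}\to 0$ gives $e^{\underline{x}}_{\perfd}(I)-e^{\underline{x}}_{\perfd}(J)=\lambda_\infty\bigl(JR^{A_{\infty,0}}_{\perfd}/IR^{A_{\infty,0}}_{\perfd}\bigr)\ge 0$, so dividing by the integer $\length_R(J/I)\ge 1$ can only decrease this value. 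Thus everything reduces to the leftmost inequality.

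To prove $\inf_{I\subsetneq J}\bigl(e^{\underline{x}}_{\perfd}(I)-e^{\underline{x}}_{\perfd}(J)\bigr)\le s^{\underline{x}}_{\perfd}(R)$, I would invoke the approximately Gorenstein chain from \autoref{lem.CharacterizationIinftyviaAppGorenstein}: a descending chain $I_1\supset I_2\supset\cdots$ of $\m$-primary ideals cofinal with $\{\m^n\}$, with $R/I_t$ Gorenstein and $u_t$ a socle representative. For each $t$ set $J_t:=(I_t,u_t)=I_t+Ru_t$; since $\overline{u_t}$ spans the one-dimensional socle of $R/I_t$ we have $I_t\subsetneq J_t$ and $\length_R(J_t/I_t)=1$. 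The surjection $R^{A_{\infty,0}}_{\perfd}\to J_tR^{A_{\infty,0}}_{\perfd}/I_tR^{A_{\infty,0}}_{\perfd}$, $z\mapsto \overline{zu_t}$, has kernel $(I_tR^{A_{\infty,0}}_{\perfd}:_{R^{A_{\infty,0}}_{\perfd}} u_t)$, so by additivity again
\[
e^{\underline{x}}_{\perfd}(I_t) - e^{\underline{x}}_{\perfd}(J_t)
= \lambda_\infty\bigl(J_tR^{A_{\infty,0}}_{\perfd}/I_tR^{A_{\infty,0}}_{\perfd}\bigr)
= \lambda_\infty\bigl(R^{A_{\infty,0}}_{\perfd}/(I_tR^{A_{\infty,0}}_{\perfd}:u_t)\bigr).
\]
By \autoref{lem.CharacterizationIinftyviaAppGorenstein} we have $I_\infty=\bigcup_t(I_tR^{A_{\infty,0}}_{\perfd}:u_t)$, and the argument of \autoref{rmk.ApproxGorensteinChain} applied to the $R$-module $R^{A_{\infty,0}}_{\perfd}$ shows these colon submodules form an ascending chain; hence $R^{A_{\infty,0}}_{\perfd}/I_\infty=\colim_t R^{A_{\infty,0}}_{\perfd}/(I_tR^{A_{\infty,0}}_{\perfd}:u_t)$ with surjective transition maps. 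Each term is a quotient of $R^{A_{\infty,0}}_{\perfd}/I_tR^{A_{\infty,0}}_{\perfd}$, so has normalized length at most $e^{\underline{x}}_{\perfd}(I_t)\le e(I_t)<\infty$ by \autoref{cor.perfdHKparameterideal}. Therefore \autoref{prop.NormalizedLengthProperties} \autoref{prop.NormalizedLengthProperties.c} applies and yields
\[
s^{\underline{x}}_{\perfd}(R)=\lambda_\infty(R^{A_{\infty,0}}_{\perfd}/I_\infty)=\lim_t \lambda_\infty\bigl(R^{A_{\infty,0}}_{\perfd}/(I_tR^{A_{\infty,0}}_{\perfd}:u_t)\bigr)=\lim_t\bigl(e^{\underline{x}}_{\perfd}(I_t)-e^{\underline{x}}_{\perfd}(J_t)\bigr)\ge \inf_{I\subsetneq J}\bigl(e^{\underline{x}}_{\perfd}(I)-e^{\underline{x}}_{\perfd}(J)\bigr),
\]
proving the first assertion.

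For the Gorenstein statement I would use the second half of \autoref{lem.CharacterizationIinftyviaAppGorenstein}: for any system of parameters $\underline{y}$ of $R$ and socle representative $u$ of $R/(\underline{y})$, the quotient $I_\infty/\bigl((\underline{y})R^{A_{\infty,0}}_{\perfd}:u\bigr)$ is $g$-almost zero, so $R^{A_{\infty,0}}_{\perfd}/I_\infty$ and $R^{A_{\infty,0}}_{\perfd}/\bigl((\underline{y})R^{A_{\infty,0}}_{\perfd}:u\bigr)$ are $g$-almost isomorphic and hence have equal normalized length by \autoref{prop.AlmostNormalizedLength}. Running the colon computation above with $(\underline{y})$ in place of $I_t$ and $u$ in place of $u_t$ gives $\lambda_\infty\bigl(R^{A_{\infty,0}}_{\perfd}/((\underline{y})R^{A_{\infty,0}}_{\perfd}:u)\bigr)=e^{\underline{x}}_{\perfd}(\underline{y},R)-e^{\underline{x}}_{\perfd}((\underline{y},u),R)$, and combining the two identities gives $s^{\underline{x}}_{\perfd}(R)=e^{\underline{x}}_{\perfd}(\underline{y},R)-e^{\underline{x}}_{\perfd}((\underline{y},u),R)$.

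The step demanding the most care is the colimit passage: one must verify that the modules $R^{A_{\infty,0}}_{\perfd}/(I_tR^{A_{\infty,0}}_{\perfd}:u_t)$ genuinely form a filtered system with surjective transition maps whose colimit is $R^{A_{\infty,0}}_{\perfd}/I_\infty$, and that each term has finite normalized length, so that the surjective-transition version of \autoref{prop.NormalizedLengthProperties} is legitimately applicable — the finiteness is exactly where \autoref{cor.perfdHKparameterideal} enters. Once this is in place, the remaining content is the routine additivity bookkeeping together with the colon isomorphism $J_tR^{A_{\infty,0}}_{\perfd}/I_tR^{A_{\infty,0}}_{\perfd}\cong R^{A_{\infty,0}}_{\perfd}/(I_tR^{A_{\infty,0}}_{\perfd}:u_t)$.
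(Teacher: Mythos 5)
Your proof is correct and follows the paper's argument almost step for step: \autoref{lem.Filtration} gives the middle inequality, additivity of normalized length gives the rightmost, and the approximately Gorenstein colon chain together with \autoref{prop.NormalizedLengthProperties} \autoref{prop.NormalizedLengthProperties.c} (via \autoref{rmk.ApproxGorensteinChain}) gives the leftmost, with the Gorenstein case handled exactly as in the paper by \autoref{lem.CharacterizationIinftyviaAppGorenstein} and \autoref{prop.AlmostNormalizedLength}. The one thing you add is an explicit verification of the finiteness hypothesis $\lambda_\infty\bigl(R^{A_{\infty,0}}_{\perfd}/(I_tR^{A_{\infty,0}}_{\perfd}:u_t)\bigr)<\infty$ needed to invoke the surjective-transition colimit lemma, via \autoref{cor.perfdHKparameterideal}, a detail the paper leaves implicit.
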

\begin{proof}
    First note that we have the following inequalities
    $$s^{\underline{x}}_{\perfd}(R)\le \inf_{I\subsetneq J} \frac{e^{\underline{x}}_{\perfd}(I) - e^{\underline{x}}_{\perfd}(J)}{\length_R(J/I)} \le  \inf_{I\subsetneq J} e^{\underline{x}}_{\perfd}(I) - e^{\underline{x}}_{\perfd}(J).$$
    The second one is obvious, and the first one follows from \autoref{lem.Filtration}.
    Hence it suffices to show that 
    $$s^{\underline{x}}_{\perfd}(R) \ge \inf_{I\subsetneq J} e^{\underline{x}}_{\perfd}(I) - e^{\underline{x}}_{\perfd}(J).$$
    By \autoref{lem.CharacterizationIinftyviaAppGorenstein} we have
    $$I^R_{\infty}=\bigcup_t (I_tR^{\Ainfty}_{\perfd}:_{R^{\Ainfty}_{\perfd}} u_t).$$
    Now by \autoref{prop.NormalizedLengthProperties}\autoref{prop.NormalizedLengthProperties.c} (see \autoref{rmk.ApproxGorensteinChain}) we have
    $$s^{\underline{x}}_{\perfd}(R)=\inf_t \lambda_\infty(R^{\Ainfty}_{\perfd}/(I_tR^{\Ainfty}_{\perfd}:_{R^{\Ainfty}_{\perfd}} u_t)).$$
    Hence by the following short exact sequence
    \[ 
        \begin{array}{rl}
        0\to R^{\Ainfty}_{\perfd}/(I_t R^{\Ainfty}_{\perfd}:_{R^{\Ainfty}_{\perfd}} u_t) \to & R^{\Ainfty}_{\perfd}/I_t R^{\Ainfty}_{\perfd}\\
        \to & R^{\Ainfty}_{\perfd}/(I_t+(u_t))R^{\Ainfty}_{\perfd}\to 0
        \end{array}
    \]
    and by \autoref{prop.NormalizedLengthProperties}\autoref{prop.NormalizedLengthProperties.a} we have
    $$s^{\underline{x}}_{\perfd}(R)=\inf_t e^{\underline{x}}_{\perfd}(I_t) - e^{\underline{x}}_{\perfd}(I_t+(u_t)) \ge \inf_{I\subsetneq J} e^{\underline{x}}_{\perfd}(I) - e^{\underline{x}}_{\perfd}(J).$$
This completes the proof of the first statement. 

For the second statement, by \autoref{lem.CharacterizationIinftyviaAppGorenstein} we know that 
\begin{align*}
    s^{\underline{x}}_{\perfd}(R)=\lambda_\infty(R^{\Ainfty}_{\perfd}/I_{\infty}) & =\lambda_\infty\left(R^{\Ainfty}_{\perfd}/((\underline{y}){R^{\Ainfty}_{\perfd}} : u)\right) \\
    & = \lambda_\infty\left(R^{\Ainfty}_{\perfd}/(\underline{y}) R^{\Ainfty}_{\perfd}\right) - \lambda_\infty\left(R^{\Ainfty}_{\perfd}/(\underline{y}, u) R^{\Ainfty}_{\perfd}\right)\\
    &  = e^{\underline{x}}_{\perfd}(\underline{y}, R) - e^{\underline{x}}_{\perfd}((\underline{y}, u), R). \qedhere
\end{align*}
\end{proof}

\subsection{Characterization of regular local rings}
\label{subsec.CharacterizationOfRegularLocalRings}
Our main goal of this subsection is to establish the following characterization of regular local rings.
\begin{theorem}
\label{thm.CharacterizationOfRegularLocalRings}
With notation as in \autoref{not.setup}, the following are equivalent:
\begin{enumerate}
    \item $R$ is regular.
    \item $R^{\Ainfty}_{\perfd}$ is $g$-almost flat and $g$-almost faithful over $R$ (in the sense of \autoref{thm.RperfdAlmostFaithfullyFlatOverA}).
    \item $R^{\Ainfty}_{\perfd}$ is $g$-almost flat over $R$.
    \item $e^{\underline{x}}_{\perfd}(R)=1$ for some (or equivalently, all) choices of $\underline{x}$.
    \item $s^{\underline{x}}_{\perfd}(R)=1$ for some (or equivalently, all) choices of $\underline{x}$.
\end{enumerate}
\end{theorem}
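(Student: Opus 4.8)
The plan is to prove the cycle $(1) \Rightarrow (2) \Rightarrow (3) \Rightarrow (4)$ together with $(3)\Rightarrow(5)$ and the two ``rigidity'' implications $(4)\Rightarrow(1)$ and $(5)\Rightarrow(1)$. Since conditions $(1)$--$(3)$ do not refer to the choice of $\underline{x}$ at all, and $(1)\Rightarrow(2)$ holds for every admissible Noether--Cohen normalization $A\subseteq R$, this automatically yields the ``some $\Leftrightarrow$ all'' clause in $(4)$ and $(5)$: if $e^{\underline x}_{\perfd}(R)=1$ (or $s^{\underline x}_{\perfd}(R)=1$) for one $\underline x$ then $R$ is regular, and then the same holds for every $\underline x$.

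For $(1)\Rightarrow(2)$: if $R$ is regular then the module-finite inclusion $A\hookrightarrow R$ is a map of regular local rings, hence flat (Auslander--Buchsbaum / miracle flatness), so $R$ is free over $A$ of rank $r=\rank_A R$ and $R\otimes_A A_{\infty,0}$ is free of rank $r$ over $A_{\infty,0}$ and finite \'etale over $A_{\infty,0}[1/g]$. By \autoref{thm.RperfdAlmostFaithfullyFlatOverA}, $R^{A_{\infty,0}}_{\perfd}$ is $g$-almost flat and $g$-almost faithful over $A$; to promote this to the corresponding statement over $R$ I would invoke the almost purity theorem (\autoref{thm.BhattScholzeAlmostPurity} and \autoref{prop.BhattScholzeAlmostPuritypcompletefaithfullyflat}), which gives that $R^{A_{\infty,0}}_{\perfd}$ is $p$-completely $g$-almost finite \'etale over $A_{\infty,0}$, and then transfer flatness and faithfulness along the flat finite map $A\to R$ by a base-change/change-of-rings spectral sequence (carried out modulo $p^n$, where the $(g)_\perfd$-almost setup is the classical almost-mathematics setup). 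The implication $(2)\Rightarrow(3)$ is trivial.

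For $(3)\Rightarrow(4)$ and $(3)\Rightarrow(5)$: once $R^{A_{\infty,0}}_{\perfd}$ is $g$-almost flat over $R$, the assignment $M\mapsto \lambda_\infty(R^{A_{\infty,0}}_{\perfd}\otimes_R M)$ is additive on short exact sequences of finite-length $R$-modules, by \autoref{prop.NormalizedLengthProperties}(a) and the fact (\autoref{prop.AlmostNormalizedLength}) that $g$-almost zero modules have normalized length zero. Filtering $R/\m_A R$ by copies of $k$ then gives
\[
    r = \lambda_\infty(R^{A_{\infty,0}}_{\perfd}/\m_A R^{A_{\infty,0}}_{\perfd}) = \length_R(R/\m_A R)\cdot e^{\underline{x}}_{\perfd}(R),
\]
by \autoref{prop.Normalizedlengthequalsmultiplicity}; since $\length_R(R/\m_A R)\ge r$ by Nakayama and $e^{\underline{x}}_{\perfd}(R)\ge 1$ by \autoref{prop.eFHKgreaterthan1}, we get $e^{\underline x}_{\perfd}(R)=1$ (and, as a byproduct, $\length_R(R/\m_A R)=r$, so $R$ is Cohen--Macaulay). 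Almost flatness also makes colon ideals commute with $-\otimes_R R^{A_{\infty,0}}_{\perfd}$ up to $g$-almost isomorphism (the obstruction being a $g$-almost zero $\Tor_1$), so for any $\m$-primary $I\subsetneq J$ with $\length_R(J/I)=1$ one obtains $e^{\underline x}_{\perfd}(I)-e^{\underline x}_{\perfd}(J)=e^{\underline x}_{\perfd}(R)=1$, and \autoref{prop.PerfdSignatureRelativePerfdHK} then yields $s^{\underline x}_{\perfd}(R)=1$.

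The heart of the argument---and the step I expect to be the main obstacle---is the rigidity $(5)\Rightarrow(1)$ and $(4)\Rightarrow(1)$: one must show that $s^{\underline x}_{\perfd}(R)=1$ or $e^{\underline x}_{\perfd}(R)=1$ forces regularity, not merely Cohen--Macaulayness. For $(5)\Rightarrow(1)$ I would first deduce that $R$ is Cohen--Macaulay: applying \autoref{lem.Filtration} to a parameter ideal $\mathfrak q$ and $J=R$, together with \autoref{cor.perfdHKparameterideal}, gives $\length_R(R/\mathfrak q)=\length_R(R/\mathfrak q)\cdot s^{\underline x}_{\perfd}(R)\le e^{\underline x}_{\perfd}(\mathfrak q)=e(\mathfrak q)\le \length_R(R/\mathfrak q)$, hence $\length_R(R/\mathfrak q)=e(\mathfrak q)$ for every $\mathfrak q$, which is Cohen--Macaulayness. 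Knowing $R$ is Cohen--Macaulay, hence free over $A$, one reduces to the Gorenstein case and uses \autoref{lem.CharacterizationIinftyviaAppGorenstein} to identify $I_\infty$, up to $g$-almost isomorphism, with a colon ideal $\big((p,x_2,\dots,x_d)R^{A_{\infty,0}}_{\perfd}:_{R^{A_{\infty,0}}_{\perfd}} u\big)$ for a socle representative $u$; the equality $s^{\underline x}_{\perfd}(R)=1$ then translates into a statement that certain perfectoid Hilbert--Kunz multiplicities attain their maximal (colength) value, and an induction on dimension or colength adapting the characteristic $p$ arguments of Watanabe--Yoshida and Huneke--Leuschke forces $\edim(R)=\dim R$. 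The implication $(4)\Rightarrow(1)$ is the precise mixed-characteristic analog of the Watanabe--Yoshida theorem ``$e_{\HK}(R)=1\Rightarrow R$ regular'' (and recall $R$ is already a complete local domain in \autoref{not.setup}, so the reduction to the reduced/domain case in the classical proof is automatic); it is handled along the same lines. Translating these classical rigidity arguments into the normalized-length framework is where essentially all of the remaining work lies.
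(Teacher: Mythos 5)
Your $(3)\Rightarrow(4)$ and $(3)\Rightarrow(5)$ arguments are correct and clean, and your deduction of Cohen--Macaulayness from $s^{\underline x}_{\perfd}(R)=1$ via \autoref{lem.Filtration} and \autoref{cor.perfdHKparameterideal} is valid. However, the two steps you are most cavalier about are genuine gaps.

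The implication $(1)\Rightarrow(2)$ does not follow from ``transferring flatness along the flat finite map $A\to R$.'' Almost flatness of $R^{A_{\infty,0}}_{\perfd}$ over $A$ (or over $A_{\infty,0}$) does \emph{not} yield almost flatness over $R$ merely because $A\to R$ is flat: in the honest setting $k\to k[x]$ is flat and $k$ is flat over $k$, but $k$ is not flat over $k[x]$. The change-of-rings isomorphism only identifies $\Tor_i^R(R^{A_{\infty,0}}_{\perfd}, R\otimes_A N)\cong\Tor_i^A(R^{A_{\infty,0}}_{\perfd}, N)$ for $A$-modules $N$, and $k=R/\m_R$ is generally \emph{not} of the form $R\otimes_A N$. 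The paper's proof of this direction (\autoref{prop.MixedKunz}) uses regularity of $R$ much more seriously: it performs a \emph{descending} induction on the $\Tor$-degree $i$ with base case $i>d$ (where the vanishing is forced by $R$ having finite global dimension), and the inductive step compares $\Tor_i^R(R/P,-)$ to $\Tor_i^A(A/Q,-)$ with $Q=P\cap A$ via the short exact sequence $0\to R/P\to R/QR\to C\to 0$. You need this argument, or something equivalent; a base-change spectral sequence alone cannot produce it.

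The rigidity implications $(4)\Rightarrow(1)$ and $(5)\Rightarrow(1)$ are the real content of the theorem, and you essentially concede you do not have them. The missing ingredient is not a transplant of the Watanabe--Yoshida or Huneke--Leuschke multiplicity arguments (and there is no ``reduction to the Gorenstein case'' here). The key lemma the paper isolates is \autoref{lem.valmostflatimpliesregular}: if $\lambda_\infty\bigl(\Tor_i^R(R^{A_{\infty,0}}_{\perfd},k)\bigr)=0$ for all $i\neq 0$, then $R$ is regular. This is a perfectoid-flavored regularity criterion in the style of Bhatt--Iyengar--Ma, proved by Koszul-complex manipulations with the fractional powers $p^{1/p^n},x_2^{1/p^n},\dots,x_d^{1/p^n}$, and it is what closes the loop $(c)\Rightarrow(a)$ and $(d)\Rightarrow(a)$. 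Concretely, once one knows $R$ is Cohen--Macaulay, one shows by a filtration argument that $\lambda_\infty(\Tor_1^R(R^{A_{\infty,0}}_{\perfd},k))=0$, and then bootstraps to all $\Tor_i$ using the auxiliary module $N=\m_R/\m_A R$ and the $g$-almost vanishing of the Koszul homologies $\Tor_i^R(R^{A_{\infty,0}}_{\perfd},R/\m_A R)$. Without an analogue of \autoref{lem.valmostflatimpliesregular} the proposal stalls exactly at the point you flag, and I do not see how to replace it with a classical colength/multiplicity induction, because the target ring $R^{A_{\infty,0}}_{\perfd}$ is non-Noetherian and one only controls normalized length rather than honest length.
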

We divided the proof into several parts: we first show that $(a)\Rightarrow(b)\Rightarrow(c)\Rightarrow(a)$; then we show $(a)\Leftrightarrow(d)$; finally we show $(a)\Leftrightarrow(e)$. Before proving \autoref{thm.CharacterizationOfRegularLocalRings}, we need a lemma which is a variant from \cite[Theorem 5.2]{BhattIyengarMaRegularRingsPerfectoid}.

\begin{lemma}
\label{lem.valmostflatimpliesregular}
If $\lambda_\infty(\Tor_i^R(R^{\Ainfty}_{\perfd},k))=0$ for all $i\neq 0$, then $R$ is regular.
\end{lemma}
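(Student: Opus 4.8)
\emph{Proof plan.} By the Auslander--Buchsbaum--Serre theorem it suffices to prove that $\operatorname{pd}_R k<\infty$. First observe that, by \autoref{lem.valmostzero}, the hypothesis says precisely that $R^{A_{\infty,0}}_{\perfd}$ is ``$v$-almost flat over $R$'' in the sense that $\Tor_i^R(R^{A_{\infty,0}}_{\perfd},k)$ is $v$-almost zero for $i\neq 0$; moreover, since $\lambda_\infty$ is additive on short exact sequences (\autoref{prop.NormalizedLengthProperties}) and every finite length $R$-module is a finite iterated extension of copies of $k$, the hypothesis upgrades to $\lambda_\infty\big(\Tor_i^R(R^{A_{\infty,0}}_{\perfd},M)\big)=0$ for every $i>0$ and every finite length $R$-module $M$. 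We will use this repeatedly together with the fact $\lambda_\infty\big(R^{A_{\infty,0}}_{\perfd}/\m_R R^{A_{\infty,0}}_{\perfd}\big)=e^{\underline x}_{\perfd}(R)\ge 1$ from \autoref{prop.eFHKgreaterthan1}.

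The first step is to show $R$ is Cohen--Macaulay. Since $A$ is regular with regular system of parameters $\underline x=p,x_2,\dots,x_d$ and $k=A/\m_A$, we have $\Tor_q^A(R,k)=H_q(\underline x;R)$, a finite length $R$-module, and $\Tor_n^A(R^{A_{\infty,0}}_{\perfd},k)=H_n(\underline x;R^{A_{\infty,0}}_{\perfd})$. I would run the change-of-rings spectral sequence for $A\to R$ applied to the $R$-module $R^{A_{\infty,0}}_{\perfd}$,
\[
  E^2_{p,q}=\Tor_p^R\big(R^{A_{\infty,0}}_{\perfd},H_q(\underline x;R)\big)\ \Longrightarrow\ H_{p+q}\big(\underline x;R^{A_{\infty,0}}_{\perfd}\big),
\]
all of whose entries are $\m_A$-power torsion $A_{\infty,0}$-modules of finite normalized length. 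By the previous paragraph $\lambda_\infty(E^2_{p,q})=0$ for $p>0$; since the column $p=0$ only receives differentials from columns $p\ge 2$ and emits none, we get $\lambda_\infty(E^\infty_{0,q})=\lambda_\infty(E^2_{0,q})=\lambda_\infty\big(R^{A_{\infty,0}}_{\perfd}\otimes_R H_q(\underline x;R)\big)$, while $\lambda_\infty(E^\infty_{p,q})=0$ for $p\ge 1$. Additivity of $\lambda_\infty$ along the finite filtration of the abutment then yields $\lambda_\infty\big(H_n(\underline x;R^{A_{\infty,0}}_{\perfd})\big)=\lambda_\infty\big(R^{A_{\infty,0}}_{\perfd}\otimes_R H_n(\underline x;R)\big)$ for all $n$. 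For $n>0$ the left side vanishes by \autoref{cor.RperfdAlmostCMKoszulandLocal} together with \autoref{prop.AlmostNormalizedLength}; on the other hand, if $H_n(\underline x;R)\neq 0$ it is a nonzero finite length module, hence surjects onto $k$, so the right side is at least $\lambda_\infty\big(R^{A_{\infty,0}}_{\perfd}/\m_R R^{A_{\infty,0}}_{\perfd}\big)\ge 1$---a contradiction. Thus $H_n(\underline x;R)=0$ for all $n>0$, i.e.\ $\underline x$ is a regular sequence on $R$, so $R$ is Cohen--Macaulay and, being finite over the regular ring $A$ of the same dimension, is a finite free $A$-module.

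It remains to upgrade ``Cohen--Macaulay'' to ``regular'', and this is the crux, following \cite[Theorem 5.2]{BhattIyengarMaRegularRingsPerfectoid}. Since $R$ is Cohen--Macaulay, the $d$-th syzygy $\Omega:=\Omega^R_d(k)$ of $k$ is a maximal Cohen--Macaulay $R$-module, and $R$ is regular precisely when $\Omega$ is free; moreover, shifting the minimal free resolution of $k$ and using that $\Tor_i^R(R^{A_{\infty,0}}_{\perfd},F_j)=0$ for $i>0$ and each free $F_j$, one gets $\lambda_\infty\big(\Tor_i^R(R^{A_{\infty,0}}_{\perfd},\Omega)\big)=0$ for all $i\ge 1$. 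So it suffices to show that a maximal Cohen--Macaulay $R$-module $\Omega$ with $\Tor^R_{>0}(R^{A_{\infty,0}}_{\perfd},\Omega)$ of normalized length zero must be free. This is the mixed-characteristic, ``$v$-almost'' analogue of the Kunz-type argument in \loccit: one runs that argument, replacing ``$\Tor$ vanishes'' by ``$\Tor$ has normalized length zero'' (legitimate by \autoref{lem.valmostzero}, \autoref{prop.AlmostNormalizedLength} and \autoref{prop.NormalizedLengthProperties}), replacing the Frobenius of the positive-characteristic picture by the perfectoid structure of $R^{A_{\infty,0}}_{\perfd}$ (its Frobenius modulo $p$ is surjective and an isomorphism up to the $p^{1/p}$-twist, and $R^{A_{\infty,0}}_{\perfd}$ is $g$-almost faithfully flat over $A$ by \autoref{thm.RperfdAlmostFaithfullyFlatOverA}), and using the faithfulness input $\lambda_\infty(R^{A_{\infty,0}}_{\perfd}/\m_R R^{A_{\infty,0}}_{\perfd})>0$ in place of $R^{A_{\infty,0}}_{\perfd}/\m_R R^{A_{\infty,0}}_{\perfd}\neq 0$. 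I expect this importing of the Bhatt--Iyengar--Ma cotangent-complex / Frobenius-descent argument into the normalized-length framework to be the only real difficulty; everything before it is formal manipulation with the normalized-length calculus set up earlier in the paper.
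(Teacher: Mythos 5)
Your proof is not complete: the step you flag as ``the only real difficulty'' is precisely the content of the lemma, and you do not actually carry it out. Saying that one should ``run the Bhatt--Iyengar--Ma cotangent-complex / Frobenius-descent argument, replacing vanishing by normalized-length vanishing'' is a statement of intent, not an argument; it is not obvious that the replacement is ``legitimate'' at every step, and you give no indication of where the replacement could fail or why it does not. As written, your proposal establishes only that $R$ is Cohen--Macaulay, and then appeals to an unproved analogue of a hard theorem.

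It is also worth noting that your route diverges from the paper's. You first prove $R$ Cohen--Macaulay via the change-of-rings spectral sequence $\Tor_p^R\big(R^{A_{\infty,0}}_{\perfd},\Tor_q^A(R,k)\big)\Rightarrow\Tor_{p+q}^A(R^{A_{\infty,0}}_{\perfd},k)$ together with the bound $e^{\underline x}_{\perfd}(R)\ge 1$, and then reduce regularity to freeness of the $d$-th syzygy $\Omega^R_d(k)$. That first part is correct (though the paper does not need it for this lemma -- it proves Cohen--Macaulayness by a different argument elsewhere, in \autoref{prop.eFHKequals1regular}). The paper's proof of this lemma does not pass through Cohen--Macaulayness at all. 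It sets $R_n:=\Kos(p^{1/p^n},x_2^{1/p^n},\dots,x_d^{1/p^n};R^{A_{\infty,0}}_{\perfd})$, shows by induction on $d$ (peeling one Koszul variable at a time and chasing the long exact sequence of $\Tor$) that $\lambda_\infty\big(\Tor^R_t(R_n,k)\big)=0$ for all $t\ge d+1$, and then, assuming $R$ not regular (so $\Tor^R_{d+1}(k,k)\neq 0$), invokes the explicit Claim~5 from the proof of \cite[Theorem 5.2]{BhattIyengarMaRegularRingsPerfectoid}: for $n\gg 0$, $H_0(R_n)$ is a direct summand of $\Tor^R_{d+1}(R_n,k)$. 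This contradicts the vanishing because $\lambda_\infty(H_0(R_n))=r/p^{nd}>0$ by \autoref{prop.Normalizedlengthequalsmultiplicity}. So the heavy lifting is done by a black-box citation of a specific, already-proved claim of Bhatt--Iyengar--Ma -- exactly the part your plan leaves open. If you want to salvage your approach, you would need to actually reprove their Frobenius-descent argument with normalized length in place of vanishing; otherwise you should just cite their Claim~5 directly, as the paper does, which makes the Cohen--Macaulay preamble superfluous.
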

\begin{proof}

    We follow the argument in \cite[Theorem 5.2]{BhattIyengarMaRegularRingsPerfectoid} to show that $R$ is regular. We set $R_n:=\Kos(p^{1/p^n},x_2^{1/p^n},...,x_d^{1/p^n}; R^{\Ainfty}_{\perfd})$. By induction on $d$ one can show that  
\begin{equation}
\label{eqn.NormalizedLengthofTorR_n}
    \lambda_{\infty}(\Tor^R_{t}(R_n,k))=\lambda_{\infty}(H_{t}(R_n\otimes_R^Lk))=0
\end{equation}
    for all $t\geq d+1$. In fact, when $d=0$ this is saying that $\lambda_\infty(\Tor_{t}^R(R^{\Ainfty}_{\perfd},k))=0$ for $t\geq 1$ which is exactly our assumptions. Now realizing $\Kos(p^{1/p^n},x_2^{1/p^n},...,x_{i+1}^{1/p^n}; R^{\Ainfty}_{\perfd})$ as the cone of the map $$\Kos(p^{1/p^n},x_2^{1/p^n},...,x_i^{1/p^n}; R^{\Ainfty}_{\perfd})\xrightarrow{\cdot x_{i+1}^{1/p^n}} \Kos(p^{1/p^n},x_2^{1/p^n},...,x_i^{1/p^n}; R^{\Ainfty}_{\perfd})$$ 
    and chasing the long exact sequence of $\Tor$ proves \autoref{eqn.NormalizedLengthofTorR_n}.
    
    If $R$ is not regular, then $\Tor_{d+1}^R(k,k)\neq 0$. By \cite[Theorem 5.2 Claim 5]{BhattIyengarMaRegularRingsPerfectoid},\footnote{Note that, in the statement of \cite[Theorem 5.2]{BhattIyengarMaRegularRingsPerfectoid}, there is a valuative condition but that condition is not used in the proof of Claim 5.} we have $\Tor^R_{d+1}(R_n,k)\cong H_0(R_n) \otimes_k \Tor_{d+1}^R(k,k)$ for $n\gg 0$, in particular it contains $H_0(R_n)$ as a direct summand (as $\Tor_{d+1}^R(k,k)$ is a nonzero $k$-vector space). But note that $$H_0(R_n)=R^{\Ainfty}_{\perfd}/(p^{1/p^n},x_2^{1/p^n},...,x_d^{1/p^n})$$ has positive normalized length (in fact, by \autoref{prop.Normalizedlengthequalsmultiplicity}, it has normalized length $r/p^{nd}$). It follows that $\lambda_{\infty}(\Tor^R_{d+1}(R_n,k))>0$ which contradicts \autoref{eqn.NormalizedLengthofTorR_n}.
\end{proof}

Now we prove $(a)\Leftrightarrow(b)\Leftrightarrow(c)$ which is a variant of \cite[Corollary 5.7]{BhattIyengarMaRegularRingsPerfectoid} or a mixed characteristic analog of Kunz's theorem.
\begin{proposition}
\label{prop.MixedKunz}
With notation as in \autoref{not.setup}, $R$ is regular if and only if $R^{\Ainfty}_{\perfd}$ is $g$-almost flat and $g$-almost faithful over $R$, if and only if $R^{\Ainfty}_{\perfd}$ is $g$-almost flat over $R$.
\end{proposition}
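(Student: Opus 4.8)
The plan is to prove $(a)\Rightarrow(b)\Rightarrow(c)\Rightarrow(a)$, the implication $(b)\Rightarrow(c)$ being trivial.

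For $(c)\Rightarrow(a)$: if $R^{A_{\infty,0}}_{\perfd}$ is $g$-almost flat over $R$, then for each $i>0$ the $R^{A_{\infty,0}}_{\perfd}$-module $\Tor_i^R(R^{A_{\infty,0}}_{\perfd},k)$ is $(g)_\perfd$-almost zero, and being $\m_A$-power torsion it lies in $A_{\infty,0}\hyphen\Mod_{\m_A}$, so $\lambda_\infty(\Tor_i^R(R^{A_{\infty,0}}_{\perfd},k))=0$ for all $i>0$ by \autoref{prop.AlmostNormalizedLength}; now \autoref{lem.valmostflatimpliesregular} forces $R$ to be regular. (It is essential here to have the genuine $(g)_\perfd$-almost vanishing, and not merely annihilation by a fixed power of $g$, which would not make the normalized length vanish.)

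The substance is $(a)\Rightarrow(b)$. Assume $R$ is regular. Since $A\subseteq R$ is module-finite, $A$ is regular, and $R$ is Cohen--Macaulay with $\dim R=\dim A=d$, the Auslander--Buchsbaum formula gives $\mathrm{pd}_A R=\mathrm{depth}\,A-\mathrm{depth}_A R=0$, so $R$ is free over $A$; write $R\cong A^{\oplus r}$ with $r=\rank_A R>0$. Then $S:=R\otimes_A A_{\infty,0}\cong A_{\infty,0}^{\oplus r}$ is finite free over the perfectoid ring $A_{\infty,0}$, the map $R\to S$ is (faithfully) flat, and $A_{\infty,0}[1/g]\to S[1/g]$ is finite \'etale of constant rank $r$; hence by \autoref{thm.BhattScholzeAlmostPurity} and \autoref{prop.BhattScholzeAlmostPuritypcompletefaithfullyflat} the ring $R^{A_{\infty,0}}_{\perfd}=S_\perfd$ is, for every $n$, $(g)_\perfd$-almost finite \'etale of rank $r$ over $A_{\infty,0}/p^n$. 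Since $S/p^n$ is finite free of rank $r$ over $A_{\infty,0}/p^n$ and agrees with $S_\perfd/p^n$ after inverting $g$, the $(S/p^n)$-algebra $S_\perfd/p^n$ is $(g)_\perfd$-almost finite \'etale of rank \emph{one} over $S/p^n$; a rank-one almost-finite-\'etale algebra being $(g)_\perfd$-almost trivial, $S/p^n\to S_\perfd/p^n$ is a $(g)_\perfd$-almost isomorphism. Thus $S\to S_\perfd$ is $(g)_\perfd$-almost flat on $p^n$-torsion modules, and since $R\to S$ is flat, for every $p^n$-torsion $R$-module $M$ we have $\Tor_i^R(R^{A_{\infty,0}}_{\perfd},M)\cong\Tor_i^S(S_\perfd,S\otimes_R M)$, which is $(g)_\perfd$-almost zero for $i>0$; bootstrapping to finitely generated $M$ via derived $p$-completeness, exactly as in the proof of \autoref{thm.RperfdAlmostFaithfullyFlatOverA}, yields $g$-almost flatness of $R^{A_{\infty,0}}_{\perfd}$ over $R$. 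For $g$-almost faithfulness it suffices, using this flatness and the idempotency of $(g)_\perfd$, to show $R^{A_{\infty,0}}_{\perfd}/\m_R R^{A_{\infty,0}}_{\perfd}$ is not $(g)_\perfd$-almost zero; since $R^{A_{\infty,0}}_{\perfd}$ maps to $\widehat{R^+}$ and $p\in\m_R$ this reduces to $R^+/\m_R R^+$ not being $(g)_\perfd$-almost zero, and if $g^{1/p^e}\in\m_R R^+$ for all $e$ then, as $R\hookrightarrow R^+$ splits by the direct summand theorem \cite{AndreDirectsummandconjecture}, $g=(g^{1/p^e})^{p^e}\in\bigcap_e(\m_R^{p^e}R^+\cap R)=\bigcap_e\m_R^{p^e}=0$, a contradiction.

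I expect the descent in $(a)\Rightarrow(b)$ of almost-flatness from $A_{\infty,0}$ down to $R$ to be the main obstacle: this is the step that uses the regularity of $R$ (through freeness of $R$ over $A$, which makes $S=R\otimes_A A_{\infty,0}$ of constant rank $r$ over $A_{\infty,0}$, hence $S_\perfd$ of $(g)_\perfd$-almost rank one over $S$), and it is where one must work with the full $(g)_\perfd$-almost formalism rather than the weaker $g$-isogeny of \autoref{prop.gisogeny}; the remaining implications are short.
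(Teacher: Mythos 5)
The direction $(c)\Rightarrow(a)$ and the $g$-almost faithfulness argument at the end match the paper's approach and are fine. The problem lies in your proof of $(a)\Rightarrow(b)$, which takes a different route than the paper and contains a false step. You claim that since $S/p^n := (R\otimes_A A_{\infty,0})/p^n$ is finite free of rank $r$ over $A_{\infty,0}/p^n$ and agrees with $S_\perfd/p^n$ after inverting $g$, the map $S/p^n\to S_\perfd/p^n$ is $(g)_\perfd$-almost finite \'etale of rank one, hence a $(g)_\perfd$-almost isomorphism. This does not follow from the stated facts (you are not in the situation where one can cancel an \'etale extension from an \'etale composite, since $A_{\infty,0}/p^n\to S/p^n$ is only flat, not almost \'etale), and in fact it is false. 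Take the equal-characteristic analogue $A=\mathbb{F}_p[[x]]$, $R=\mathbb{F}_p[[y]]$ with $y^2=x$ and $p$ odd, $g=x$, so $R$ is regular and free over $A$. Here $S=R\otimes_A A_\perf$ has monomial exponents $\{0,1\}+2\,\mathbb{Z}_{\geq 0}[1/p^\infty]$ inside $S_\perf=\mathbb{F}_p[[y^{1/p^\infty}]]$, while $S_\perf$ has exponents $\mathbb{Z}_{\geq 0}[1/p^\infty]$. One computes that $y^{1/p}\in S_\perf\setminus S$ and that $x^{1/p^m}y^{1/p}\notin S$ for every $m$, so $\coker(S\to S_\perf)$ is not $(g^{1/p^\infty})$-almost zero; in fact $\lambda_\infty(S_\perf/S)=1/2>0$, so by \autoref{prop.AlmostNormalizedLength} the cokernel cannot be $(g)_\perfd$-almost zero. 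Thus $S\to S_\perfd$ is only a $g$-isogeny (\autoref{prop.gisogeny}), not a $(g)_\perfd$-almost isomorphism, even when $R$ is regular, and your deduction that $S\to S_\perfd$ is $p$-complete $g$-almost flat does not go through.

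The paper's proof of $(a)\Rightarrow(b)$ avoids this pitfall entirely: it runs a descending induction on $i$ (using that $R$ regular forces $\Tor_i^R(-,R^{A_{\infty,0}}_{\perfd})=0$ for $i>d$) over the family of cyclic modules $R/P$ for $P\in\Spec R$, passing through the short exact sequence $0\to R/P\to R/QR\to C\to 0$ with $Q=P\cap A$ and using that $\Tor_i^R(R/QR,R^{A_{\infty,0}}_{\perfd})\cong\Tor_i^A(A/Q,R^{A_{\infty,0}}_{\perfd})$ by flatness of $R$ over $A$ (Auslander--Buchsbaum, as you also observed), the latter being controlled by \autoref{thm.RperfdAlmostFaithfullyFlatOverA}. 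This never needs to compare $S$ with its perfectoidization at the almost-isomorphism level. I would recommend following that argument; if you prefer a route through $S$, you would at minimum need to establish $p$-complete $g$-almost flatness of $S\to S_\perfd$ directly, which cannot be done by the rank-one almost-\'etale assertion.
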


\begin{proof}
We first assume $R$ is regular and we show $g$-almost flatness. We will use descending induction on $i$ to show that $\Tor_i^R(R/P, R^{\Ainfty}_{\perfd})$ is $g$-almost zero for all $i\geq 1$ and all $P\in\Spec(R)$. This implies $\Tor_i^R(M, R^{\Ainfty}_{\perfd})$ is $g$-almost zero for all finitely generated $R$-module $M$ by taking a prime filtration, and thus $\Tor_i^R(M, R^{\Ainfty}_{\perfd})$ is annihilated by $(g)_{\perfd}$ for all $R$-modules $M$ by taking a direct limit (i.e., $R^{\Ainfty}_{\perfd}$ is $g$-almost flat over $R$). Now when $i>d$, we know $\Tor_i^R(M, R^{\Ainfty}_{\perfd})$ vanish since $R$ is regular. We next fix $P\in\Spec(R)$ and let $Q=P\cap A$, then it is easy to see that $P$ is a minimal prime of $QR$. In particular, $P$ is an associated prime of the $R$-module $R/QR$ and thus there exists an exact sequence $0\to R/P\to R/QR \to C\to 0$. This induces:
$$\cdots \to \Tor_{i+1}^R(C, R^{\Ainfty}_{\perfd})\to \Tor_i^R(R/P, R^{\Ainfty}_{\perfd})\to \Tor_i^R(R/QR, R^{\Ainfty}_{\perfd}) \to \cdots. $$
The first term is $g$-almost zero by induction while the third term is $g$-almost zero by \autoref{thm.RperfdAlmostFaithfullyFlatOverA} since $\Tor_i^R(R/QR, R^{\Ainfty}_{\perfd})\cong \Tor_i^A(A/Q, R^{\Ainfty}_{\perfd})$, as $R$ is flat over $A$. 

As for the $g$-almost faithfulness, note that by \autoref{prop.eFHKgreaterthan1}, $\lambda_\infty(R^{\Ainfty}_{\perfd}/\m_RR^{\Ainfty}_{\perfd})\neq 0$ and hence $R^{\Ainfty}_{\perfd}/\m_RR^{\Ainfty}_{\perfd}$ is not $g$-almost zero by \autoref{prop.AlmostNormalizedLength}. Now it follows from the same proof as in part (b) of \autoref{thm.RperfdAlmostFaithfullyFlatOverA} that $R^{\Ainfty}_{\perfd}$ is $g$-almost faithful over $R$.

It remains to show that if $R^{\Ainfty}_{\perfd}$ is $g$-almost flat over $R$, then $R$ is regular. The condition implies that for all $i\ge 1$, $\Tor_i^R(R^{\Ainfty}_{\perfd},k)$ is $g$-almost zero and hence $\lambda_\infty(\Tor_i^R(R^{\Ainfty}_{\perfd},k))=0$ for all $i\neq 0$ by \autoref{prop.AlmostNormalizedLength}. So the result follows from \autoref{lem.valmostflatimpliesregular}.
\end{proof}

Using \autoref{lem.valmostflatimpliesregular} and \autoref{prop.MixedKunz} we can prove $(a)\Leftrightarrow(d)$. 

\begin{proposition}
    \label{prop.eFHKequals1regular}
    With notation as in \autoref{not.setup}, we have
    $$e^{\underline{x}}_{\perfd}(R)=1\text{   if and only if   }R\text{ is regular.}$$
\end{proposition}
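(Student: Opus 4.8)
The plan is as follows. For the ``if'' direction, suppose $R$ is regular; then $\m_R$ is generated by a regular system of parameters, in particular by a system of parameters, so \autoref{cor.perfdHKparameterideal} gives $e^{\underline{x}}_{\perfd}(R) = e^{\underline{x}}_{\perfd}(\m_R) = e(\m_R, R)$. Since the Hilbert--Samuel multiplicity of a regular local ring is $1$, this shows $e^{\underline{x}}_{\perfd}(R) = 1$, and as \autoref{cor.perfdHKparameterideal} holds for every $\underline{x}$ this also settles the ``for all $\underline{x}$'' clause.

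For the ``only if'' direction, suppose $e^{\underline{x}}_{\perfd}(R) = 1$ for some $\underline{x}$, and write $B := R^{A_{\infty,0}}_{\perfd}$. First I would revisit the proof of \autoref{prop.eFHKgreaterthan1}: there one obtains $r = \rank_A R = \lambda_\infty(B/\m_A B) \leq t \cdot e^{\underline{x}}_{\perfd}(R)$ with $t := \length_R(R/(\m_A R)^{\lim}) \leq e(\m_A R, R) = r$, so the hypothesis $e^{\underline{x}}_{\perfd}(R) = 1$ forces $t = r$, i.e.\ $\length_R(R/(\m_A R)^{\lim}) = e(\m_A R, R)$; by the equality case of \cite[Theorem 9]{MaQuySmirnovColengthMultiplicity} this means $R$ is Cohen--Macaulay. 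Consequently $p, x_2, \dots, x_d$ is a regular sequence on $R$, the ring $R$ is flat over $A$, and (using that $B$ is $g$-almost flat over $A$, \autoref{thm.RperfdAlmostFaithfullyFlatOverA}) $p, x_2, \dots, x_d$ is a $g$-almost regular sequence on $B$; in particular $B \otimes_R^L (R/\m_A R) \overset{a}{\simeq} B/\m_A B$ is concentrated in degree $0$, so that $\Tor_i^R(B, M) \overset{a}{\simeq} \Tor_i^{R/\m_A R}(B/\m_A B, M)$ for every $R/\m_A R$-module $M$.

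By \autoref{lem.valmostflatimpliesregular} it now suffices to show $\lambda_\infty(\Tor_i^R(B, k)) = 0$ for all $i \geq 1$ (equivalently, by \autoref{prop.MixedKunz}, that $B$ is $g$-almost flat over $R$), and here is where the real difficulty lies. We may assume $R \neq A$, in which case $\m_R \supsetneq \m_A R$ and we can write $\m_R = (p, x_2, \dots, x_d, f_{d+1}, \dots, f_N)$ with $N > d$; factoring $\Kos(\m_R; B) = \Kos(f_{d+1}, \dots, f_N; \Kos(p, x_2, \dots, x_d; B))$ and using that $\Kos(p, x_2, \dots, x_d; B)$ is $g$-almost quasi-isomorphic to $B/\m_A B$ placed in degree $0$ (a module of finite normalized length $r$), one finds that all $H_i(\Kos(\m_R; B))$ have finite normalized length and that their alternating sum is $\big(\sum_i (-1)^i \binom{N-d}{i}\big)\, r = 0$; since $H_0(\Kos(\m_R;B)) = B/\m_R B$ this gives $e^{\underline{x}}_{\perfd}(R) = \sum_{i \geq 1} (-1)^{i-1}\lambda_\infty(H_i(\Kos(\m_R; B)))$. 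The remaining --- and, I expect, hardest --- step is to feed this identity into a comparison between $H_\bullet(\Kos(\m_R; B))$ and $\Tor_\bullet^R(B, k)$ (via the distinguished triangle $\Kos(\m_R; R) \to k \to Q \to \Kos(\m_R; R)[1]$, whose third term $Q$ has homology the finite-length Koszul homology modules $H_{\geq 1}(\Kos(\m_R; R))$), using in addition the finiteness of the projective dimension of $R/\m_A R$ over the now Cohen--Macaulay ring $R$ and the additivity of $\lambda_\infty$ along a composition series of $R/\m_A R$ (which has length exactly $r$), so as to conclude $\lambda_\infty(\Tor_i^R(B, k)) = 0$ for every $i \geq 1$ --- an argument parallel to, but more delicate than, the positive-characteristic statement $e_{\HK}(R) = 1 \Leftrightarrow R$ regular of \cite{WatanabeYoshidaHKMultAndInequality}. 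Once this vanishing is in hand, \autoref{lem.valmostflatimpliesregular} yields that $R$ is regular, completing the proof.
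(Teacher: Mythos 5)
Your ``if'' direction is correct and in fact shorter than the paper's: you invoke \autoref{cor.perfdHKparameterideal} directly (which holds because $\m_R$ is generated by a regular system of parameters and \autoref{cor.perfdHKparameterideal} is established before this proposition via \autoref{prop.Normalizedlengthequalsmultiplicity}), whereas the paper instead runs a filtration argument using \autoref{prop.MixedKunz} and \autoref{prop.Normalizedlengthequalsmultiplicity}. Both are valid; yours is cleaner here.

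Your ``only if'' direction, however, has a real gap. You correctly reproduce the paper's first step (limit closure $+$ \cite[Theorem~9]{MaQuySmirnovColengthMultiplicity} give Cohen--Macaulayness) and you correctly identify the target (show $\lambda_\infty(\Tor_i^R(B,k))=0$ for all $i\geq 1$ and apply \autoref{lem.valmostflatimpliesregular}). But the Koszul Euler-characteristic computation you sketch only gives the identity $e^{\underline{x}}_{\perfd}(R)=\sum_{i\geq 1}(-1)^{i-1}\lambda_\infty(H_i(\Kos(\m_R;B)))$, which by itself does not force the individual terms to vanish; the signs alternate, so the sum being $1$ is compatible with many nonzero values. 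You acknowledge that the remaining comparison with $\Tor_\bullet^R(B,k)$ is ``the hardest step'' but do not carry it out, and this is precisely where the content lies. The paper's route here is a different and more direct double induction that you did not find: (1) it takes a filtration $\m_A R = I_0 \subset I_1 \subset \cdots \subset I_{r-1}=\m_R$ with successive quotients $\cong k$ and $r=\rank_A R$, and observes from $\lambda_\infty(B/\m_A B)=r$, $\lambda_\infty(B/\m_R B)=1$ together with $\lambda_\infty(B/I_jB)\geq r-j$ (from \autoref{lem.Filtration}-type estimates) that \emph{all} of these must be equalities $\lambda_\infty(B/I_jB)=r-j$; chasing the long exact sequences of $\Tor$ along the filtration, starting from $\Tor_1^R(B,R/\m_A R)=H_1(\underline{x};B)$ being $g$-almost zero (\autoref{cor.RperfdAlmostCMKoszulandLocal}), forces $\lambda_\infty(\Tor_1^R(B,k))=0$; (2) it then sets $N:=\m_R/\m_A R$, a finite-length $R$-module, and uses $0\to N\to R/\m_A R\to k\to 0$ plus $g$-almost vanishing of $H_i(\underline{x};B)$ for $i>0$ to identify $\Tor_{i+1}^R(B,k)\overset{a}{\simeq}\Tor_i^R(B,N)$, which bootstraps the vanishing to all higher $i$ by induction (using the filtration trick again on $N$). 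This dimension-shifting step is the key idea missing from your proposal; once identified, the rest of your outline could be repaired, but without it the argument is incomplete.
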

\begin{proof}
    We first show the `if' part. Since $R$ is regular, it is Cohen-Macaulay. Hence we have a filtration:
    $$\m_A=I_0\subset I_1\subset \dots\subset I_r=R$$ 
    such that $I_{j+1}/I_j\simeq R/\m_R\simeq k$ and $r=\length_R(R/\m_AR)=\rank_A R$. Since $R^{\Ainfty}_{\perfd}$ is $g$-almost flat over $R$ by \autoref{prop.MixedKunz}, we have 
    $$I_{j+1}R^{\Ainfty}_{\perfd}/I_jR^{\Ainfty}_{\perfd} \overset{a}{\simeq} R^{\Ainfty}_{\perfd}/\m_RR^{\Ainfty}_{\perfd}.$$
By \autoref{prop.Normalizedlengthequalsmultiplicity}, we have $r=\lambda_\infty(R^{\Ainfty}_{\perfd}/\m_A R^{\Ainfty}_{\perfd})= r\cdot \lambda_\infty(R^{\Ainfty}_{\perfd}/\m_R R^{\Ainfty}_{\perfd})$. Thus $e^{\underline{x}}_{\perfd}(R)=\lambda_\infty(R^{\Ainfty}_{\perfd}/\m_R R^{\Ainfty}_{\perfd})=1$.

    The strategy for the `only if' part is to measure the flatness of $R\to R^{\Ainfty}_{\perfd}$. We will first show that $R$ is Cohen-Macaulay, and then use a filtration argument to deduce that $\Tor_i^R(R^{\Ainfty}_{\perfd},k)$ has zero normalized length for all $i>0$, and then apply \autoref{lem.valmostflatimpliesregular}.

    Now assume $e^{\underline{x}}_{\perfd}(R)=1$. Let $\m_A^{\lim}$ be the limit closure of $(p,x_2,...,x_d)$ and take the filtration \autoref{eqn.FiltrationofLimClosure} as in \autoref{prop.eFHKgreaterthan1}. Then the same argument as in \autoref{prop.eFHKgreaterthan1} shows that $$\lambda_\infty(R^{\Ainfty}_{\perfd}/\m_A^{\lim}R^{\Ainfty}_{\perfd})\le t\cdot  e^{\underline{x}}_{\perfd}(R)=t$$
    where $t=\length_R(R/\m_A^{\lim})\le e(\m_A,R)=\rank_AR$. Recall that $R^{\Ainfty}_{\perfd}/\m_A^{\lim}R^{\Ainfty}_{\perfd}$ is $g$-almost isomorphic to $R^{\Ainfty}_{\perfd}/\m_A R^{\Ainfty}_{\perfd}$ (see \autoref{eqn:gAlmostLimClosure}). Hence
    $$\lambda_\infty(R^{\Ainfty}_{\perfd}/\m_A^{\lim}R^{\Ainfty}_{\perfd})= \lambda_\infty(R^{\Ainfty}_{\perfd}/\m_A R^{\Ainfty}_{\perfd})=\rank_A R$$
    where the second equality follows from \autoref{prop.Normalizedlengthequalsmultiplicity}.
    It follows that $$\length_R(R/\m_A^{\lim})=t=\rank_A R.$$ Therefore by \cite[Theorem 9]{MaQuySmirnovColengthMultiplicity} (which originates from \cite[Theorem 3.1]{CuongNhanPseudoCM}), $R$ is Cohen-Macaulay. 
    
    Since $R$ is Cohen-Macaulay, we may take the filtration
    $$\m_AR=I_0\subset I_1\subset \cdots \subset I_{r-1}=\m_R$$
    where $r=\rank_A R$. Since $\lambda_\infty(R^{\Ainfty}_{\perfd}/\m_AR^{\Ainfty}_{\perfd})=r$ and $\lambda_\infty(R^{\Ainfty}_{\perfd}/\m_RR^{\Ainfty}_{\perfd})=1$, we must have 
    $\lambda_\infty(R^{\Ainfty}_{\perfd}/I_jR^{\Ainfty}_{\perfd})=r-j$ for each $j$. Consider the short exact sequence
    $$0\to k\simeq I_{j+1}/I_{j} \to R/I_j \to R/I_{j+1}\to 0$$
    By tensoring with $R^{\Ainfty}_{\perfd}$, we get the associated long exact sequence 
    \begin{align*}
        \Tor_1^R(R^{\Ainfty}_{\perfd},R/I_{j})\to \Tor_1^R(R^{\Ainfty}_{\perfd},R/I_{j+1})\to & \\
        R^{\Ainfty}_{\perfd}/\m_RR^{\Ainfty}_{\perfd} \to R^{\Ainfty}_{\perfd}/I_jR^{\Ainfty}_{\perfd} \to & R^{\Ainfty}_{\perfd} /I_{j+1}R^{\Ainfty}_{\perfd}\to 0.
    \end{align*}
    Since $R$ is Cohen-Macaulay, $\m_A R =(p, x_2,\dots,x_d)R$ is generated by a regular sequence and thus  $\Tor_1^R(R^{\Ainfty}_{\perfd},R/\m_A R)=H_1(p,x_2,\dots,x_d; R^{\Ainfty}_{\perfd})$ is $g$-almost zero 
   by \autoref{cor.RperfdAlmostCMKoszulandLocal}. Thus for $j=0$, we have $\lambda_\infty(\Tor_1^R(R^{\Ainfty}_{\perfd},R/I_{0}))=0$ by \autoref{prop.AlmostNormalizedLength}. Now assume by induction that $\lambda_\infty(\Tor_1^R(R^{\Ainfty}_{\perfd},R/I_{j}))=0$. Then by taking the normalized length of the above long exact sequence and combining with $\lambda_\infty(R^{\Ainfty}_{\perfd}/I_jR^{\Ainfty}_{\perfd})=r-j$ for each $j$, we obtain that  $\lambda_\infty(\Tor_1^R(R^{\Ainfty}_{\perfd},R/I_{j+1}))=0$. Therefore by a straightforward induction, we have that $\lambda_\infty(\Tor_1^R(R^{\Ainfty}_{\perfd},k))=0$. It follows (by taking filtration) that $\lambda_\infty(\Tor_1^R(R^{\Ainfty}_{\perfd},N))=0$ for any finite length $R$-module $N$.

    We next use induction to show that $\lambda_\infty(\Tor_i^R(R^{\Ainfty}_{\perfd},k))=0$ for all $i>0$, we have proved this for $i=1$ in the previous paragraph. Let $N=\m_R/\m_AR$. This is an $R$-module of finite length, and by tensoring the short exact sequence $0\to N\to R/\m_AR\to k\to 0$ with $R^{\Ainfty}_{\perfd}$, we obtain that
    $$ \Tor^R_{i+1}(R^{\Ainfty}_{\perfd},R/\m_AR) \to \Tor^R_{i+1}(R^{\Ainfty}_{\perfd},k) \to \Tor^R_{i}(R^{\Ainfty}_{\perfd},N) \to \Tor^R_{i}(R^{\Ainfty}_{\perfd},R/\m_AR). $$
Since $\Tor_i^R(R^{\Ainfty}_{\perfd},R/\m_A R)=H_i(p,x_2,\dots,x_d; R^{\Ainfty}_{\perfd})$ is $g$-almost zero for all $i\neq 0$ by \autoref{cor.RperfdAlmostCMKoszulandLocal}, $\Tor^R_{i+1}(R^{\Ainfty}_{\perfd},k)$ is $g$-almost isomorphic to $\Tor^R_{i}(R^{\Ainfty}_{\perfd},N)$. Thus by induction, we have $\lambda_\infty(\Tor^R_{i}(R^{\Ainfty}_{\perfd},k))=0$ for all $i>0$ (here we also repeatedly used the fact that if $\lambda_\infty(\Tor^R_{i}(R^{\Ainfty}_{\perfd},k))=0$ then $\lambda_\infty(\Tor^R_{i}(R^{\Ainfty}_{\perfd},N))=0$ for any finite length $R$-module $N$ by taking filtration). Now the result follows from \autoref{lem.valmostflatimpliesregular}.
\end{proof}

Finally, we derive $(a)\Leftrightarrow(e)$ from the results above.

\begin{corollary}
    \label{cor.PerfdSignature1IsRegular}
With notation as in \autoref{not.setup}, we have
$$ s^{\underline{x}}_{\perfd}(R)=1\text{   if and only if   }R\text{ is regular.}$$
\end{corollary}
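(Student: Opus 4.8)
\emph{Strategy.} The plan is to deduce both directions from results already in place, chiefly the characterization $e^{\underline{x}}_{\perfd}(R)=1\iff R$ regular of \autoref{prop.eFHKequals1regular}, together with \autoref{lem.Filtration}, \autoref{prop.Normalizedlengthequalsmultiplicity}, and \autoref{lem.CharacterizationIinftyviaAppGorenstein}.

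\emph{The ``only if'' direction.} Assume $s^{\underline{x}}_{\perfd}(R)=1$. Set $r:=\rank_A R=e(\m_A R,R)$ and $n:=\length_R(R/\m_A R)$, and recall $n\ge r$ (Nakayama, as in \autoref{prop.sFboundby1}). Pick a composition series of the Artinian local ring $R/\m_A R$ and lift it to a chain of ideals $\m_A R=I_0\subsetneq I_1\subsetneq\cdots\subsetneq I_{n-1}=\m_R\subsetneq I_n=R$ with $I_{j+1}/I_j\cong k$. Expanding to $R^{A_{\infty,0}}_{\perfd}$ and using additivity of normalized length (\autoref{prop.NormalizedLengthProperties}) together with \autoref{prop.Normalizedlengthequalsmultiplicity},
\[
  r=\lambda_\infty\!\left(R^{A_{\infty,0}}_{\perfd}/\m_A R^{A_{\infty,0}}_{\perfd}\right)=\sum_{j=0}^{n-1}\lambda_\infty\!\left(I_{j+1}R^{A_{\infty,0}}_{\perfd}/I_j R^{A_{\infty,0}}_{\perfd}\right).
\]
By \autoref{lem.Filtration} (applied with $I=I_j$, $J=I_{j+1}$; note $J=R$ is permitted for $j=n-1$), each summand is at least $\length_R(I_{j+1}/I_j)\cdot s^{\underline{x}}_{\perfd}(R)=1$, whence $r\ge n$. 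Therefore $n=r$ (so $R$ is in fact Cohen-Macaulay), and every summand equals exactly $1$; taking $j=n-1$ gives $e^{\underline{x}}_{\perfd}(R)=\lambda_\infty(R^{A_{\infty,0}}_{\perfd}/\m_R R^{A_{\infty,0}}_{\perfd})=1$, so $R$ is regular by \autoref{prop.eFHKequals1regular}.

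\emph{The ``if'' direction.} Assume $R$ is regular; then $R$ is Gorenstein and $e^{\underline{x}}_{\perfd}(R)=1$ by the already-proven half of \autoref{prop.eFHKequals1regular}. Let $\underline{y}=z_1,\dots,z_d$ be a regular system of parameters, so $R/(\underline{y})=k$ and the socle representative may be taken to be $u=1$. By \autoref{lem.CharacterizationIinftyviaAppGorenstein}, $I_\infty$ is $g$-almost equal to $(\underline{y})R^{A_{\infty,0}}_{\perfd}:_{R^{A_{\infty,0}}_{\perfd}}1=\m_R R^{A_{\infty,0}}_{\perfd}$, so by \autoref{prop.AlmostNormalizedLength}
\[
  s^{\underline{x}}_{\perfd}(R)=\lambda_\infty\!\left(R^{A_{\infty,0}}_{\perfd}/I_\infty\right)=\lambda_\infty\!\left(R^{A_{\infty,0}}_{\perfd}/\m_R R^{A_{\infty,0}}_{\perfd}\right)=e^{\underline{x}}_{\perfd}(R)=1.
\]
Alternatively, the ``moreover'' clause of \autoref{prop.PerfdSignatureRelativePerfdHK} for this $\underline{y}$, combined with \autoref{cor.perfdHKparameterideal}, gives the same conclusion.

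\emph{Main obstacle.} No genuinely new ingredient is required: the argument is an assembly of earlier results. The only points demanding care are checking that \autoref{lem.Filtration} legitimately applies to the top pair $(\m_R,R)$, that \autoref{lem.CharacterizationIinftyviaAppGorenstein} is invoked correctly in the degenerate case $R/(\underline{y})=k$ (socle all of $k$, $u=1$), and the bookkeeping by which the squeeze $r\ge n\ge r$ forces each composition-series contribution to $\lambda_\infty(R^{A_{\infty,0}}_{\perfd}/\m_A R^{A_{\infty,0}}_{\perfd})$ to be exactly $1$.
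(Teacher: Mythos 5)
Your proof is correct and is essentially the paper's own argument: the ``if'' direction is word‑for‑word the paper's (apply \autoref{lem.CharacterizationIinftyviaAppGorenstein} with $\underline{y}$ a regular system of parameters and $u=1$, then invoke \autoref{prop.eFHKequals1regular}), and the ``only if'' direction is the same squeeze $r\ge n\ge r$ via \autoref{lem.Filtration} and \autoref{prop.Normalizedlengthequalsmultiplicity}, merely phrased over the full composition series $\m_AR=I_0\subsetneq\cdots\subsetneq I_n=R$ rather than the paper's two‑step split $\m_AR\subseteq\m_R\subseteq R$ with \autoref{prop.eFHKgreaterthan1} supplying the top bound. The care you flagged (that \autoref{lem.Filtration}'s key inequality remains valid with $J=R$) is indeed the only subtlety, and it checks out.
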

\begin{proof}
Suppose $R$ is regular. Let $\m_R=(\underline{z})=(z_1,\dots,z_d)$ and by \autoref{lem.CharacterizationIinftyviaAppGorenstein} applied to $\underline{y}=\underline{z}$ and $u=1$, we obtain that $I^R_\infty / \m_RR^{\Ainfty}_{\perfd}$ is $g$-almost zero (one can also prove this by \autoref{prop.MixedKunz} and an almost variant of \autoref{lem.IinfinityForFlat}). Thus we have 
$$s^{\underline{x}}_{\perfd}(R)=\lambda_\infty(R^{\Ainfty}_{\perfd}/I^R_\infty)=\lambda_\infty(R^{\Ainfty}_{\perfd}/\m_RR^{\Ainfty}_{\perfd})=e^{\underline{x}}_{\perfd}(R)=1$$
where the last equality follows from \autoref{prop.eFHKequals1regular}.

Now assume $s^{\underline{x}}_{\perfd}(R)=1$. Let $n=\length(R/\m_A R)\geq \rank_AR=r$. We have 
\begin{align*}
    \lambda_\infty\left({R^{\Ainfty}_{\perfd}}/{\m_A R^{\Ainfty}_{\perfd}}\right) & = \lambda_\infty\left({R^{\Ainfty}_{\perfd}}/{\m_R R^{\Ainfty}_{\perfd}}\right) + \lambda_\infty\left({\m_RR^{\Ainfty}_{\perfd}}/{\m_A R^{\Ainfty}_{\perfd}}\right) \\
    & \geq e^{\underline{x}}_{\perfd}(R) +(n-1)\cdot s^{\underline{x}}_{\perfd}(R) \\
    & \geq 1+ (n-1) =n \geq r. 
\end{align*}
where the first inequality follows from \autoref{lem.Filtration} applied to $I=\m_AR$ and $J=\m_R$, and the second inequality follows from \autoref{prop.eFHKgreaterthan1}. Therefore by \autoref{prop.Normalizedlengthequalsmultiplicity}, we must have equality throughout the above. In particular, we must have $e^{\underline{x}}_{\perfd}(R)=1$. Thus the result follows from \autoref{prop.eFHKequals1regular}.
\end{proof}

\begin{remark}
As mentioned in \autoref{not.setup}, after fixing a compatible system of $p$-power roots of $\underline{x}$ in $R^+$, we have a canonical map $R^{\Ainfty}_{\perfd}\to\widehat{R^+}$. The image of this map is also a perfectoid ring (and it is a domain since $\widehat{R^+}$ is so, see \cite{HeitmannR+domain}). However, we caution the readers that, to define the ``correct" perfectoid Hilbert-Kunz multiplicity or perfectoid signature, it is important to work with the perfectoid ring $R^{\Ainfty}_{\perfd}$ in our setup rather than its image in $R^+$. For instance, consider $A=\bZ_p\to R=\bZ_p[p^{1/p}]$, then $\Ainfty= \widehat{\bZ_p[p^{1/p^\infty}]}$, and after fixing a compatible system of $p$-power roots $\{p^{1/p^e}\}_{e=2}^\infty$ of $p^{1/p}$ in $R^+$, we see that the image of $(\Ainfty\otimes_AR)_{\perfd}$ inside $\widehat{R^+}$ is precisely $\widehat{\bZ_p[p^{1/p^\infty}]}$, which is isomorphic to $\Ainfty$. In this case we have
$$\lambda_{\infty}(\widehat{\bZ_p[p^{1/p^\infty}]}/\m_R \cdot \widehat{\bZ_p[p^{1/p^\infty}]})=\lambda_{\infty}(\widehat{\bZ_p[p^{1/p^\infty}]}/p^{1/p} \cdot\widehat{\bZ_p[p^{1/p^\infty}]})=1/p,$$
which is different from the perfectoid Hilbert-Kunz multiplicity of $\m_R$, which by \autoref{def.PerfdeHKandPerfdSignature} and \autoref{prop.Normalizedlengthequalsmultiplicity} is equal to
$$\lambda_{\infty}(R^{\Ainfty}_{\perfd}/\m_RR^{\Ainfty}_{\perfd})=\lambda_{\infty}\left(\frac{\widehat{\bZ_p[p^{1/p^\infty}]}\otimes_{\bZ_p}\bZ_p[p^{1/p}]}{\m_R\cdot (\widehat{\bZ_p[p^{1/p^\infty}]} \otimes_{\bZ_p}\bZ_p[p^{1/p}])}\right)=1.$$
\end{remark}
\section{BCM-regularity via perfectoid signature}
\label{sec.BCM-regularrityPerfdSignature}
In this section, we prove that positivity of perfectoid signature captures BCM-regularity, see \cite{HunekeLeuschkeTwoTheoremsAboutMaximal,AberbachLeuschke} for related results in characteristic $p > 0$.
Throughout this section, we continue to use \autoref{not.setup}.

We briefly summarize the ideas of this section.  First we develop the theory of BCM-regularity in \autoref{subsec.BCMRegularRings} giving a number of equivalent characterizations, which we will use in this section and later in the paper.

In \autoref{subsec.BCMregularityPerfdsignature}, we show BCM-regularity implies positivity of the perfectoid signature.  We note that if the signature is zero, then $R^{A_\infty}_\perfd/I_\infty$ is $v$-almost zero by \autoref{lem.valmostzero}. Under the $\bQ$-Gorensten assumption, we show that the latter implies that the image of the socle representative of $E$ in $H_\m^d(R^+)$ is $v$-almost zero, which contradicts BCM-regularity by applying \autoref{cor.GabberTrick}.
For the reverse direction, we use \autoref{prop.AlmostNormalizedLength} to show that positivity of perfectoid signature implies $R\to R^{A_\infty}_{\perfd}$ sending $1$ to $g_1$ is pure for some $g_1\in (g)_\perfd$, the latter essentially gives an equivalent characterization of BCM-regularity under some hypotheses that we will establish in  \autoref{prop.PerfedPureAlongGImpliesBCMAlongG}. 

We also discuss some variants of BCM-regularity without the $\bQ$-Gorenstein assumption in \autoref{subsec.StrongBCMRegularity} (including some examples), and in \autoref{subsec.BCMrational} we introduced mixed characteristic variants of $F$-rational signature and relate it with BCM-rational singularities, compare with \cite{HochsterYaoFratSignature,SmirnovTuckerRelativeFSignature} in characteristic $p > 0$.

\subsection{BCM-regular rings}
\label{subsec.BCMRegularRings}
In this subsection we recall the definition of and collect some preliminaries on BCM-regularity. We will equate the purity of certain maps from $R$ to $R^{A_\infty}_\perfd$ and $\widehat{R^+}$ respectively in \autoref{prop.PerfedPureAlongGImpliesBCMAlongG}, and this relies on a key result \autoref{lem.ComparisonAlmostKernel} which in turn requires the full use of the almost purity theorem as in \autoref{thm.BhattScholzeAlmostPurity}. 

\begin{definition}
\label{def.weaklyBCMregularRing}
    Suppose that $(R, \m)$ is a Noetherian complete local domain with residue characteristic $p > 0$.  We say that $R$ is \emph{weakly BCM-regular} if for every perfectoid big Cohen-Macaulay $R^+$-algebra $B$, the map $R \to B$ is pure.  If $R$ is $\bQ$-Gorenstein, we say $R$ is \emph{BCM-regular} if it is weakly BCM-regular.
\end{definition}

\begin{remark}
We note that weakly BCM regular rings are the same as weakly $F$-regular rings in characteristic $p > 0$ by \cite[Theorem 11.1]{HochsterSolidClosure} (see also \cite[Theorem on page 250]{HochsterFoundations}).  We will discuss some potential notions of strong BCM-regularity in \autoref{subsec.StrongBCMRegularity} below.
\end{remark}

We next recall BCM-regularity for divisor pairs.

\begin{definition}
    \label{def.BCMRegularityOfPairs}
    Suppose $(R, \m, k)$ is a Noetherian complete normal local  domain and $\Delta \geq 0$ is a $\bQ$-divisor on $\Spec (R)$.    Define 
    \[ 
        R^+(K_R + \Delta) = \underset{h : \Spec (S) \to \Spec (R)}{\colim} S(h^*(K_S + \Delta))
    \]  
    where $R \subseteq S \subseteq R^+$ is a finite extension of $R$ to a normal domain $S$ inside a fixed absolute integral closure $R^+$.  Notice that for sufficiently large $S$ we have that $h^*(K_S + \Delta)$ has integer coefficients and if $K_R + \Delta$ is $\bQ$-Cartier, then for sufficiently large $S$ we have that $h^*(K_S + \Delta)$ is Cartier (in fact linearly equivalent to zero since $S$ is local).
    
    We say that the pair $(R, \Delta)$ is \emph{weakly BCM-regular} if the map
    \[
        E_R(k) = H^d_{\m}(R(K_R)) \to H^d_{\m}(R^+(K_R + \Delta) \otimes_{R^+} B).
    \]
    is injective for every perfectoid big Cohen-Macaulay $R^+$-algebra $B$.  If $K_R + \Delta$ is $\bQ$-Cartier, we call the pair $(R, \Delta)$ \emph{BCM-regular}.  Note that then we may factor the above map as 
    \[
        E_R(k) = H^d_{\m}(R(K_R)) \to H^d_{\m}(R^+(K_R + \Delta)) \cong H^d_{\m}(R^+) \to H^d_{\m}(B) 
    \]
    since $R^+(K_R + \Delta) \cong R^+$.
\end{definition}

In the case that $K_R + \Delta$ is $\bQ$-Cartier, it is not difficult to see that this definition coincides with the one given in \cite[Definition 6.9]{MaSchwedeSingularitiesMixedCharBCM}\footnote{Technically speaking, in \cite[Definition 6.9]{MaSchwedeSingularitiesMixedCharBCM}, we call BCM-regular ``{\it perfectoid} BCM-regular" to emphasize that we only use those big Cohen-Macaulay algebras $B$ that are perfectoid.  We suppress the ``perfectoid'' modifier in this work however as all big Cohen-Macaulay algebras we consider will be perfectoid.} (for example, see \cite[Theorem 6.12 and Proposition 6.14]{MaSchwedeSingularitiesMixedCharBCM} or \cite[Lemma 7.1]{MaSchwedeTuckerWaldronWitaszekAdjoint}).  That definition was arranged to immediately yield certain transformation rules for test ideals under finite maps.  Our formulation here, on the other hand, more closely mimics the definitions of \cite{HaraWatanabeFRegFPure}, see also \cite{TakagiInterpretationOfMultiplierIdeals}.

\begin{remark}
As we already mentioned in \autoref{subsec:BigCohenMacaulayAlgebras}, if $B$ is a perfectoid big Cohen-Macaulay $R$-algebra, then $\widehat{B}^\m$, the $\m$-adic completion of $B$ is perfectoid and balanced big Cohen-Macaulay. Therefore in \autoref{def.weaklyBCMregularRing} and \autoref{def.BCMRegularityOfPairs}, we only need to consider those $B$ that are also balanced. For this reason, throughout the rest of this section we will tacitly assume that all big Cohen-Macaulay algebras are balanced. 
\end{remark}

The first goal of this section is to interpret BCM-regularity in terms of the map $R \to R^{\Ainfty}_{\perfd}$.  We need several preliminary lemmas.

\begin{lemma}
    \label{lem.DominationViaGalois}
   With notation as in \autoref{not.setup} and fix $M$ to be an arbitrary $R$-module. Further, assume that $S$ is the integral closure of $R$ in the Galois closure of $K(R)/K(A)$.  
    For any $R^+$-algebra $B$ and for each $\sigma \in G = \Gal(K(S)/K(A))$, we write $\sigma_* B$ to be the $R$-algebra induced by the map
    \[
        R \to S \xrightarrow{\sigma} S \to B.
    \]
    We may also view this as an $R^+$-algebra by lifting $\sigma$ to $R^+$.  %Finally, suppose that $g \in A$ is an element such that $A[g^{-1}] \subseteq R[g^{-1}]$ is \'etale (and so $A[g^{-1}] \subseteq S[g^{-1}]$ is also \'etale).  
    Then, there exists a perfectoid big Cohen-Macaulay $R^+$-algebra $B$ such that for every $e > 0$ and every $\sigma\in G$, we have that 
    \[
        \eta \in 0^{B, g^{{1/p^e}}}_M := \ker\Big( M \xrightarrow{z \mapsto g^{1/p^e} \otimes z} B\otimes_RM \Big)
    \]
    if and only if  $\eta \in 0^{\sigma_* B, g^{1/p^e}}_M$, and that if $\eta \in 0^{B', g^{1/p^e}}_M$ for some perfectoid big Cohen-Macaulay $R^+$-algebra $B'$, then $\eta \in 0^{B, g^{1/p^e}}_M$.
\end{lemma}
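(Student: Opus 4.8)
The plan is to construct $B$ as a sufficiently large amalgamation of perfectoid big Cohen-Macaulay $R^+$-algebras that has been made symmetric under $G$. I would begin by isolating two routine facts. First, for any perfectoid big Cohen-Macaulay $R^+$-algebra $B'$ and any $\sigma \in G$, the twisted algebra $\sigma_* B'$ is again one: since $K(A)$ has characteristic $0$, $K(S)/K(A)$ is finite Galois, so $\sigma$ lifts to an automorphism $\widetilde{\sigma}$ of $R^+$ (the integral closure of $A$ in $\overline{K(A)}$); the ring $\sigma_* B'$ is just $B'$ with $R^+$-algebra structure precomposed by $\widetilde{\sigma}$, and as $\widetilde{\sigma}$ fixes $A$ pointwise, the regular system of parameters $p, x_2, \dots, x_d$ acts identically on $\sigma_* B'$ and on $B'$, so it remains a regular sequence with nonzero quotient; perfectoidness is a property of the underlying ring with its $p$-adic topology, hence is inherited. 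Second, any homomorphism $B' \to B''$ of $R^+$-algebras induces $0^{B', g^{1/p^e}}_M \subseteq 0^{B'', g^{1/p^e}}_M$ for every $e$, because the induced map $B' \otimes_R M \to B'' \otimes_R M$ carries $g^{1/p^e} \otimes z$ to $g^{1/p^e} \otimes z$; and $\sigma_*$ is functorial on $R^+$-algebra maps, with $\sigma_* \tau_*(-) = (\tau\sigma)_*(-)$ and in particular $\sigma_* \sigma^{-1}_* = \operatorname{id}$.

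Next I would choose a set $\mathcal{B}_0$ of perfectoid big Cohen-Macaulay $R^+$-algebras which is ``complete'' for the relations at hand: whenever $\eta \in 0^{B', g^{1/p^e}}_M$ holds for \emph{some} perfectoid big Cohen-Macaulay $R^+$-algebra $B'$, it already holds for some member of $\mathcal{B}_0$. Since the pairs $(\eta, e)$ range over the set $M \times \mathbb{Z}_{\geq 1}$, this requires only picking one witnessing algebra per realized instance, and one stays within a set by the standard cardinality bound on big Cohen-Macaulay algebras (for instance those produced by Andr\'e's construction). Enlarging $\mathcal{B}_0$ to $\mathcal{B} := \{\,\sigma_* B' : \sigma \in G,\ B' \in \mathcal{B}_0\,\}$ (still a set, and still consisting of perfectoid big Cohen-Macaulay $R^+$-algebras by the first fact), I would then invoke the weak functoriality of perfectoid big Cohen-Macaulay algebras recalled in \autoref{subsec:BigCohenMacaulayAlgebras}---amalgamating two at a time via the big Cohen-Macaulay algebra construction applied to a ($p$-completed) tensor product, and taking $p$-completed filtered colimits at the limit stages of a transfinite recursion---to obtain a perfectoid big Cohen-Macaulay $R^+$-algebra $B$ equipped with $R^+$-algebra maps $C \to B$ for every $C \in \mathcal{B}$.

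Finally I would verify the two conclusions. If $\eta \in 0^{B', g^{1/p^e}}_M$ for some perfectoid big Cohen-Macaulay $R^+$-algebra $B'$, then by completeness of $\mathcal{B}_0$ some $C \in \mathcal{B}_0 \subseteq \mathcal{B}$ has $\eta \in 0^{C, g^{1/p^e}}_M$, and the map $C \to B$ together with the second fact gives $\eta \in 0^{B, g^{1/p^e}}_M$; this is the maximality assertion, and applying it with $B' = \sigma_* B$ (legitimate by the first fact) yields $0^{\sigma_* B, g^{1/p^e}}_M \subseteq 0^{B, g^{1/p^e}}_M$. For the reverse inclusion, suppose $\eta \in 0^{B, g^{1/p^e}}_M$; then the maximality hypothesis is met (by $B$ itself), so some $C \in \mathcal{B}_0$ has $\eta \in 0^{C, g^{1/p^e}}_M$. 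Since $\sigma^{-1}_* C \in \mathcal{B}$, there is an $R^+$-algebra map $\sigma^{-1}_* C \to B$; applying $\sigma_*$ and using $\sigma_* \sigma^{-1}_* = \operatorname{id}$ produces an $R^+$-algebra map $C \to \sigma_* B$, whence $\eta \in 0^{C, g^{1/p^e}}_M \subseteq 0^{\sigma_* B, g^{1/p^e}}_M$. This would give $0^{B, g^{1/p^e}}_M = 0^{\sigma_* B, g^{1/p^e}}_M$ for all $\sigma \in G$ and all $e > 0$, completing the argument.

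The main obstacle, and the step I expect to require the most care, is the amalgamation: extracting, from an arbitrary set of perfectoid big Cohen-Macaulay $R^+$-algebras, a single perfectoid big Cohen-Macaulay $R^+$-algebra receiving compatible $R^+$-algebra maps from all of them. This rests on the weak functoriality and existence results for perfectoid big Cohen-Macaulay algebras, on performing the requisite $p$-adic (and $\m$-adic) completions so as to remain perfectoid and balanced, and on the set-theoretic bookkeeping needed to choose the witnessing algebras $\mathcal{B}_0$ without appealing to choice over a proper class. A secondary subtlety, handled by isolating the two elementary facts above, is keeping straight how $\sigma_*$ interacts with the $R^+$- and $S$-algebra structures when one ``applies $\sigma_*$'' to a homomorphism.
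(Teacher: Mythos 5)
Your proposal is essentially the paper's own argument, reorganized.  The paper does the domination in three stages (first over all $\eta$ for each fixed $e$, then over all $e$, and finally over $\{\sigma_* \widetilde B\}_{\sigma \in G}$), each time invoking the domination result \cite[Theorem 4.9]{MaSchwedeSingularitiesMixedCharBCM}, whereas you fold everything into a single witnessing set $\mathcal B_0$, symmetrize it to $\mathcal B$, and dominate once.  These are logically interchangeable; the only real difference is that you propose to re-derive the existence of a common dominating perfectoid big Cohen--Macaulay $R^+$-algebra ``by hand'' via amalgamation and transfinite recursion, where the paper simply cites \cite[Theorem 4.9]{MaSchwedeSingularitiesMixedCharBCM}, which is the cleaner move.

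One point you treat too glibly is the claim that $\sigma_*\tau_* = (\tau\sigma)_*$ and $\sigma_*\sigma^{-1}_* = \operatorname{id}$.  As $R$-algebra structures (through the canonical map $R \to S$) this is a formal identity, but as $R^+$-algebra structures it depends on a choice of lift $\widetilde{\sigma}$ of $\sigma$ to $\aut(R^+/A)$, and there is no group-theoretic reason that $\widetilde{\sigma^{-1}} = \widetilde{\sigma}^{-1}$ or that the lifts compose (for instance, an involution $\sigma$ need not admit an involutive lift).  What saves the argument is exactly the parenthetical that appears in the paper's proof: the discrepancy between any two lifts sends $g^{1/p^e}$ to $u g^{1/p^e}$ for some root of unity $u$, so the submodule $0^{\sigma_* B', g^{1/p^e}}_M$ is independent of the lift, and your back-and-forth with $\sigma_*$ and $\sigma^{-1}_*$ does give a ring map $C \to \sigma_* B$ that is $R$-linear and carries $g^{1/p^e}$ to a unit multiple of the correct element, which is all that the kernel containment requires.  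You should make this explicit rather than appeal to a clean functorial identity that does not literally hold on the $R^+$-structure.
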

\begin{proof}
The proof is similar to \cite[Proposition 5.7]{MaSchwedeSingularitiesMixedCharBCM} and it essentially follows from \cite[Theorem 4.9]{MaSchwedeSingularitiesMixedCharBCM}. We first set $$0^{\scr{B}, g^{1/p^e}}_M:= \{\eta\in M \; | \; \eta \in 0^{B', g^{1/p^e}}_M \text{ for some perfectoid big Cohen-Macaulay $R^+$-algebra $B'$}\}.$$
Note that by \cite[Theorem 4.9]{MaSchwedeSingularitiesMixedCharBCM}, $0^{\scr{B}, g^{1/p^e}}_M$ is a submodule of $M$. By the axiom of choice, for each $\eta\in 0^{\scr{B}, g^{1/p^e}}_M$, we choose $B_\eta$ such that $\eta\in 0^{B_\eta, g^{1/p^e}}_M$. Now we apply \cite[Theorem 4.9]{MaSchwedeSingularitiesMixedCharBCM} to the set $\{B_\eta\}_\eta$ to obtain a perfectoid big Cohen-Macaulay $R^+$-algebra $B_e$ that dominates all $B_\eta$. It follows by construction that $0^{\scr{B}, g^{1/p^e}}_M=0^{B_e, g^{1/p^e}}_M$. We can further apply \cite[Theorem 4.9]{MaSchwedeSingularitiesMixedCharBCM} to the set $\{B_e\}_{e\in\mathbb{N}}$ to find a $\widetilde{B}$ that dominates all $B_e$. Then for this $\widetilde{B}$, we know that for every $e>0$, if $\eta \in 0^{B', g^{1/p^e}}_M$ for some $B'$, then $\eta \in 0^{\widetilde{B}, g^{1/p^e}}_M$. 

Finally, we apply \cite[Theorem 4.9]{MaSchwedeSingularitiesMixedCharBCM} to the set $\{\sigma_*\widetilde{B}\}_{\sigma\in G}$ to find a $B$ that dominates all $\sigma_*\widetilde{B}$. Now if $\eta\in 0^{\sigma_*B, g^{{1/p^e}}}_M$, then $\eta\in 0^{\sigma_*\widetilde{B}, g^{{1/p^e}}}_M$ by construction of $\widetilde{B}$, but then $\eta\in 0^{B, g^{{1/p^e}}}_M$ by construction of $B$ (note that we are implicitly using that $\sigma(g^{1/p^e})=ug^{1/p^e}$ for some $p^e$-th root of unity $u$ since $g\in A$, and thus $0^{B, g^{1/p^e}}_M=0^{B, \sigma(g^{1/p^e})}_M$ for all $\sigma$). A similar argument shows that if $\eta\in 0^{B, g^{{1/p^e}}}_M$, then $\eta\in 0^{\sigma_*B, g^{{1/p^e}}}_M$. Therefore $B$ satisfies all the desired properties. 
\end{proof}

\begin{lemma}
\label{lem.ComparisonAlmostKernel}
With notation as in \autoref{not.setup}, let $M$ be a $p^{\infty}$-torsion $R$-module. Consider the following submodules of $M$:
\begin{align*}
    M_1 := \{\eta \in M \; | \; & (g)_{\perfd} \otimes \eta =0 \text{ in } R^{\Ainfty}_{\perfd} \otimes_R M\}.\\
    M_2 :=  \{\eta \in M  \; | \; &   (g^{1/p^\infty}) \otimes \eta =0 \text{ in } R^+ \otimes_R M\}. \\
    M_3 := \{\eta \in M \; | \; & (g^{1/p^\infty}) \otimes \eta =0 \text{ in } B \otimes_R M \text{ for some perfectoid big Cohen-Macaulay}\\ &  \text{$R^+$-algebra $B$}\}.\\
    M_4 := \{\eta \in M \; | \; & 1 \otimes \eta =0 \text{ in } B \otimes_R M \text{ for some perfectoid big Cohen-Macaulay} \\ & \text{$R^+$-algebra $B$} \} 
\end{align*}
Then we have $M_1=M_2=M_3\supseteq M_4$. Moreover, if $M$ is finitely generated, then we have $M_3=M_4$.
\end{lemma}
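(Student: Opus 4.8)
The plan is to prove the chain $M_1\subseteq M_2\subseteq M_3\subseteq M_1$ (which forces $M_1=M_2=M_3$), to record the trivial $M_4\subseteq M_3$, and then to upgrade $M_3\subseteq M_4$ under the extra hypothesis that $M$ is finitely generated. Two of the inclusions are immediate. First, $M_4\subseteq M_3$: if $1\otimes\eta=0$ in $B\otimes_RM$ then $g^{1/p^e}\otimes\eta=g^{1/p^e}(1\otimes\eta)=0$ for all $e$. Second, $M_2\subseteq M_3$: by the discussion in \autoref{subsec:BigCohenMacaulayAlgebras}, $\widehat{R^+}$ is itself a perfectoid balanced big Cohen-Macaulay $R^+$-algebra, so $M_2$ is the instance $B=\widehat{R^+}$ of the condition defining $M_3$ (note $R^+\otimes_RM=\widehat{R^+}\otimes_RM$ since $M$ is $p^n$-torsion for some $n$). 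For $M_1\subseteq M_2$, I would use $R^{A_{\infty,0}}_\perfd\to\widehat{R^+}$: by \autoref{lem.gPerfd=AllpPowerRootsofg} the image of $(g)_\perfd$ generates an ideal of $\widehat{R^+}$ whose $p$-adic closure is $(g^{1/p^\infty})^-$, so for $n$ with $p^nM=0$ we may write $g^{1/p^e}=h_n+p^nt_n$ with $h_n\in(g)_\perfd\widehat{R^+}$; pushing the relation $\overline{(g)_\perfd}(1\otimes\eta)=0$ forward to $\widehat{R^+}\otimes_RM$ kills $h_n(1\otimes\eta)$, and $p^n(t_n\otimes\eta)=t_n\otimes p^n\eta=0$, so $g^{1/p^e}(1\otimes\eta)=0$, i.e. $\eta\in M_2$.

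For the remaining containment $M_3\subseteq M_1$, the crux is the claim that for every perfectoid balanced big Cohen-Macaulay $R^+$-algebra $B$, the composite $R^{A_{\infty,0}}_\perfd\to\widehat{R^+}\to B$ is \emph{$g$-almost pure on $p^\infty$-torsion modules}, i.e. $\ker\big(N\to B\otimes_{R^{A_{\infty,0}}_\perfd}N\big)$ is $g$-almost zero for every $p^n$-torsion $R^{A_{\infty,0}}_\perfd$-module $N$. Granting this, take $\eta\in M_3$ with witness $B$; arguing exactly as for $M_1\subseteq M_2$ (now via \autoref{lem.gPerfd=AllpPowerRootsofg} for $A_{\infty,0}\to B$ and the $p^n$-torsion of $M$) one finds $\overline{(g)_\perfd}(1\otimes\eta)=0$ already in $B\otimes_RM$. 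Applying the claim to $N=R^{A_{\infty,0}}_\perfd\otimes_RM$, the submodule $\overline{(g)_\perfd}(1\otimes\eta)\subseteq R^{A_{\infty,0}}_\perfd\otimes_RM$ lies in the $g$-almost zero kernel, hence $\overline{(g)_\perfd}^{\,2}(1\otimes\eta)=0$; since $(g)_\perfd R^{A_{\infty,0}}_\perfd/p^n$ is idempotent (it is the defining ideal of a basic setup, \autoref{not.setup}) this gives $\overline{(g)_\perfd}(1\otimes\eta)=0$, i.e. $\eta\in M_1$.

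To establish the purity claim I would factor it through $\widehat{R^+}$, using that a composite of $g$-almost pure maps is $g$-almost pure (again by idempotency of $(g)_\perfd$). For $R^{A_{\infty,0}}_\perfd\to\widehat{R^+}$: since $R$ is module-finite over $A$ one has $R^+=A^+$, so $\widehat{R^+}=\widehat{A^+}$, which is a balanced big Cohen-Macaulay $A$-algebra and hence faithfully flat over the regular local ring $A$, and therefore $p$-completely faithfully flat over $A_{\infty,0}$; by almost purity (\autoref{thm.BhattScholzeAlmostPurity}, \autoref{rmk.BhattSchozeAlmostPurityAIC}, using that $\widehat{A^+}$ is absolutely integrally closed and $A[1/g]\to R[1/g]$ is finite \'etale of rank $r=\rank_AR$), $R^{A_{\infty,0}}_\perfd\otimes_{A_{\infty,0}}\widehat{A^+}$ is $g$-almost isomorphic to $(\widehat{A^+})^{\oplus r}$; base changing the faithfully flat $A_{\infty,0}\to\widehat{A^+}$ along $A_{\infty,0}\to R^{A_{\infty,0}}_\perfd$ shows $R^{A_{\infty,0}}_\perfd\to(\widehat{A^+})^{\oplus r}$ is $g$-almost faithfully flat, hence $g$-almost pure, and one then composes with the split surjection $(\widehat{A^+})^{\oplus r}\twoheadrightarrow\widehat{A^+}$. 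For the remaining factor $\widehat{R^+}\to B$, I would use weak functoriality of perfectoid big Cohen-Macaulay algebras and the domination statement \autoref{lem.DominationViaGalois} (resting on \cite[Theorem 4.9]{MaSchwedeSingularitiesMixedCharBCM}) to reduce to one sufficiently large $B$, and then invoke the comparison of big Cohen-Macaulay closure with full extended plus closure. \textbf{Controlling $\ker(\widehat{R^+}\otimes_RM\to B\otimes_RM)$ for an arbitrary perfectoid big Cohen-Macaulay algebra $B$ is the main obstacle}; the rest is routine almost-mathematics bookkeeping.

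Finally, for $M_3\subseteq M_4$ when $M$ is finitely generated I would run Gabber's construction. Given $\eta\in M_3$ with witness $B$ and $g^{1/p^e}(1\otimes\eta)=0$ in $B\otimes_RM$ for all $e$, apply \autoref{thm.GabbersTrickVVersion} and \autoref{cor.GabberTrick} with $c_j:=g^{1/p^j}$ and a valuation $v$ on $\widehat{R^+}$ positive on $\m R^+$ (see \autoref{rmk.valuationextendstoRplus}), noting $v(g^{1/p^j})=v(g)/p^j\to0$, to produce the perfectoid balanced big Cohen-Macaulay $R^+$-algebra $B'=\widehat{W^{-1}\prod_{j}B}$. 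Since $M$ is finitely presented ($R$ is Noetherian), $\big(\prod_jB\big)\otimes_RM=\prod_j\big(B\otimes_RM\big)$, so the image of $1\otimes\eta$ in $\big(\prod_jB\big)\otimes_RM$ is the diagonal tuple, which $\mathbf{c}=(g^{1/p^j})_j$ annihilates because $g^{1/p^j}(1\otimes\eta)=0$ in $B\otimes_RM$ for each $j$; as $\mathbf{c}$ is a unit in $W^{-1}\prod_jB$, the image of $1\otimes\eta$ vanishes in $B'\otimes_RM$, whence $\eta\in M_4$. For general $M$ this breaks down precisely because $\big(\prod_jB\big)\otimes_RM\to\prod_j\big(B\otimes_RM\big)$ need not be injective, which is exactly why the refinement is only asserted in the finitely generated case.
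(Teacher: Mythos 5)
You have the easy containments right: $M_4\subseteq M_3$, $M_2\subseteq M_3$, and $M_1\subseteq M_2$ follow exactly as you say from \autoref{lem.gPerfd=AllpPowerRootsofg} and the chain $R^{A_{\infty,0}}_{\perfd}\to\widehat{R^+}\to B$; and your argument for $M_3\subseteq M_4$ when $M$ is finitely generated (Gabber's trick with $c_j=g^{1/p^j}$, using that $\bigl(\prod_j B\bigr)\otimes_R M\cong \prod_j(B\otimes_R M)$ for finitely presented $M$) matches the paper's argument.

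The gap is precisely where you flag it. You reduce $M_3\subseteq M_1$ to the assertion that $R^{A_{\infty,0}}_{\perfd}\to B$ is $g$-almost pure on $p^\infty$-torsion modules, and you try to establish this by factoring through $\widehat{R^+}$, handling $R^{A_{\infty,0}}_{\perfd}\to\widehat{R^+}$ via faithful flatness of $\widehat{A^+}$ over $A_{\infty,0}$ and then leaving $\widehat{R^+}\to B$ as ``the main obstacle.'' That obstacle is not a bookkeeping matter: purity of $\widehat{R^+}\to B$ (even $g$-almost, even only on $p^n$-torsion modules) is essentially equivalent to the containment $M_3\subseteq M_2$ you are trying to prove, so the factorization is circular. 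There is no known direct argument that an arbitrary perfectoid big Cohen-Macaulay $R^+$-algebra $B$ receives a ($g$-almost) pure map from $\widehat{R^+}$; this is why the definition of $M_3$ quantifies over all $B$ rather than just $\widehat{R^+}$, and it is the whole content of the step you skip. (As a side issue, your step of composing the $g$-almost pure $R^{A_{\infty,0}}_{\perfd}\to(\widehat{A^+})^{\oplus r}$ with a split surjection onto one factor also needs justification: a pure map followed by a split surjection need not be pure, so you must argue that the diagonal nature of the $g$-almost isomorphism makes each projection retain the information, though this can be made rigorous.)

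The paper's route sidesteps $\widehat{R^+}$ entirely. Rather than trying to push purity through $\widehat{R^+}$, it first enlarges $B$ via \autoref{lem.DominationViaGalois} so that $\eta$ being killed by $g^{1/p^e}$ in $B\otimes_R M$ is equivalent to being killed in $\sigma_* B\otimes_R M$ for every Galois automorphism $\sigma$ of $S$ (the integral closure of $R$ in the Galois closure of $K(R)/K(A)$). Thus $\eta\in M_3$ forces $(g^{1/p^\infty})\otimes\eta=0$ in $\bigl(\prod_{\sigma\in G}\sigma_*B\bigr)\otimes_R M$. Now the almost Galois-cover structure from \autoref{thm.BhattScholzeAlmostPurity} identifies $S^{A_{\infty,0}}_{\perfd}\widehat{\otimes}_{A_{\infty,0}}B$ with $\prod_{\sigma\in G}\sigma_*B$ up to $g$-almost isomorphism; and since $A_{\infty,0}\to B$ is $p$-complete faithfully flat (because $B$ is balanced big Cohen-Macaulay over the regular rings $A_n$), the composite $R^{A_{\infty,0}}_{\perfd}\to S^{A_{\infty,0}}_{\perfd}\to\prod_{\sigma\in G}\sigma_*B$ is $p$-complete $g$-almost faithfully flat, hence $g$-almost pure on $p^\infty$-torsion modules. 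Applying this to $M$ gives $(g)_{\perfd}\otimes\eta=0$ in $R^{A_{\infty,0}}_{\perfd}\otimes_R M$. The crucial difference from your attempt is that the domination lemma is used to control $\eta$ across all Galois twists of a single $B$, not to make $\widehat{R^+}\to B$ pure, and the faithful flatness statement is proved by base change from $A_{\infty,0}$, which has a far more tractable structure than $\widehat{R^+}$.
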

\begin{proof}
We first note that $M_3$ and $M_4$ are indeed submodules of $M$ by \cite[Theorem 4.9]{MaSchwedeSingularitiesMixedCharBCM}. Moreover, it is clear that $M_2\subseteq M_3$ and $M_3\supseteq M_4$. Since we $R^{\Ainfty}_{\perfd} \to \widehat{R^+}$ is a map of perfectoid rings, by \autoref{lem.gPerfd=AllpPowerRootsofg}, we have $(g)_{\perfd}R^+$ and $(g^{1/p^\infty})R^+$ agree up to $p$-adic closure. But since $M$ is $p^{\infty}$-torsion, we have $R^+\otimes_R M\cong \widehat{R^+}\otimes_RM$, and as any $\eta\in M$ is annihilated by a power of $p$, the image of $(g)_{\perfd} \otimes \eta$ agrees with the image of $(g^{1/p^\infty}) \otimes \eta$ in $R^+\otimes_R M$. Therefore we have $M_1\subseteq M_2$. It remains to show $M_3\subseteq M_1$ and $M_3\subseteq M_4$ when $M$ is finitely generated. 

\noindent{\it Proof of $M_3\subseteq M_1$}: Let $S$ be the normalization of the integral closure of $R$ in the Galois closure of $K(R)/K(A)$ so that $S$ is generically Galois over $A$ with Galois group $G$. Note that we have $R[g^{-1}]\to S[g^{-1}]$ is finite \'etale. Thus by \autoref{prop.BhattScholzeAlmostPuritypcompletefaithfullyflat}, we know that $R^{\Ainfty}_{\perfd}\to S^{\Ainfty}_{\perfd}$ is $p$-completely $g$-almost faithfully flat. %and $p$-completely $g$-almost faithful. 

Suppose $\eta\in M_3$, by enlarging $B$ (via \cite[Theorem 4.9]{MaSchwedeSingularitiesMixedCharBCM}) we may assume that $B$ satisfies the conclusion of \autoref{lem.DominationViaGalois}.
    Consider the following base change:  
    \[
        \xymatrix{
            \Ainfty \ar[d] \ar[r] & B \ar[d]  \\
            S^{\Ainfty}_{\perfd} \ar[r] & S^{\Ainfty}_{\perfd} \widehat{\otimes}_{\Ainfty} B \ar@{=}[d]^{a}\\
            & \prod_{\sigma \in G} \sigma_* B 
        }
    \]
    where the first row is $p$-completely faithfully flat since $A_n$ is regular for all $n$ and $B$ is big Cohen-Macaulay. Thus the second row is $p$-completely faithfully flat by base change. The almost isomorphism above follows from 
    \[ 
        \begin{array}{rcl}
        S^{\Ainfty}_{\perfd} \widehat{\otimes}_{\Ainfty} B \cong S^{\Ainfty}_{\perfd} \widehat{\otimes}_{\Ainfty} S^{\Ainfty}_{\perfd}\widehat{\otimes}_{S^{\Ainfty}_{\perfd}} B & \overset{a}{\cong} & \left( \prod_{\sigma \in G} \sigma_*S^{\Ainfty}_{\perfd}\right) \widehat{\otimes}_{S^{\Ainfty}_{\perfd}} B 
%        & \cong & \prod_{\sigma \in G} \left(\sigma_*S^{\Ainfty}_{\perfd}\right) \widehat{\otimes}_{\sigma^*S^{\Ainfty}_{\perfd}}\sigma^*B 
\cong \prod_{\sigma \in G} \sigma_* B,
        \end{array}
    \]
where the $g$-almost isomorphism above is a consequence of the Galois case of \autoref{thm.BhattScholzeAlmostPurity} and the final isomorphism comes from fact that the left $R$-action on $\sigma_*S^{\Ainfty}_{\perfd} \widehat{\otimes}_{S^{\Ainfty}_{\perfd}} B$ identifies that module with $\sigma_*^{-1} B$.
    It follows that $S^{\Ainfty}_{\perfd} \to \prod_{\sigma \in G} \sigma_* B$ is $p$-completely $g$-almost faithfully flat. Next we note that
$$(R^{\Ainfty}_{\perfd} \otimes_RS)_{\perfd}\cong S^{\Ainfty}_{\perfd}.$$
Therefore by \autoref{prop.BhattScholzeAlmostPuritypcompletefaithfullyflat} (applied to the perfectoid ring $R^{\Ainfty}_{\perfd}$ and its finitely presented finite algebra $R^{\Ainfty}_{\perfd} \otimes_RS$, which has constant rank outside $V(g)$ since it is based changed from $R\to S$), we know that 
$R^{\Ainfty}_{\perfd}\to S^{\Ainfty}_{\perfd}$ is $p$-completely $g$-almost faithfully flat. Hence the composition $R^{\Ainfty}_{\perfd} \to \prod_{\sigma \in G} \sigma_* B$ is $p$-completely $g$-almost faithfully flat, in particular it is $p$-completely $g$-almost pure (see the last part of \autoref{prop.BhattScholzeAlmostPuritypcompletefaithfullyflat}). %(see \autoref{prop.BhattScholzeAlmostPuritypcompletefaithfullyflat}). 
    %Tracing the almost faithfully flat map $S^{\Ainfty}_{\perfd} \to \prod_{\sigma \in G} \sigma_* B$, we see that 
    %\[
    %    s \mapsto  \prod_{\sigma \in G} s \sigma_* 1_B =  \prod_{\sigma \in G} \sigma_* \sigma(s).
    %\]

By \autoref{lem.DominationViaGalois}, we know that $(g^{1/p^\infty})\otimes\eta =0 $ in  $(\prod_{\sigma \in G} \sigma_* B) \otimes M$. Since $M$ is $p^{\infty}$-torsion, by $p$-complete $g$-almost purity, we see that $$R^{\Ainfty}_{\perfd} \otimes  M \to  (\prod_{\sigma \in G} \sigma_* B) \otimes M$$ is $g$-almost injective. In particular, $(g)_{\perfd} \otimes \eta =0$ in $R^{\Ainfty}_{\perfd} \otimes_R M$ as desired.

\noindent {\it Proof of $M_3\subseteq M_4$ when $M$ is finitely generated}: The proof is very similar to the argument in \autoref{thm.GabbersTrickVVersion} and \autoref{cor.GabberTrick}. Suppose $\eta\in M_3$, i.e., $g^{1/p^\infty}\otimes \eta=0$ in $B\otimes M$ for some $B$. We set $$B':= \widehat{W^{-1}\prod B}$$ where $W$ is the multiplicative set $(g^{\epsilon_1}, g^{\epsilon_2},\dots,g^{\epsilon_n},\dots)$ such that $\epsilon_j\in\bQ_{>0}$ and that $\epsilon_j\to 0$. Then $B'$ is a perfectoid big Cohen-Macaulay $R^+$-algebra (by \autoref{thm.GabbersTrickVVersion}) and we have 
$$B'\otimes M \cong \left(W^{-1}\prod B\right) \otimes M \cong W^{-1} \left((\prod B)\otimes M\right) \cong W^{-1}\left(\prod (B\otimes M)\right)$$
where the first isomorphism follows as $M$ is $p^\infty$-torsion, the second isomorphism follows as localization commutes with tensor product, and the third isomorphism follows since $M$ is finitely generated (and hence finitely presented since $R$ is Noetherian). Since $g^{\epsilon}\otimes \eta=0$ in $B\otimes M$ for all $\epsilon\in \mathbb{Q}_{>0}$ by assumption, it is straightforward to check that 
$$\prod (1\otimes \eta)=0 \text{  in } W^{-1}\left(\prod (B\otimes M)\right).$$
Unwinding the above isomorphisms, this is precisely saying that $1\otimes \eta=0$ in $B'\otimes M$, i.e., we have $\eta\in M_4$ as we wanted.
\end{proof}

As a consequence of \autoref{lem.ComparisonAlmostKernel}, we can prove: 

\begin{proposition}
    \label{prop.PerfedPureAlongGImpliesBCMAlongG}
    With notation as in \autoref{not.setup}, the following are equivalent:
    \begin{enumerate}
        \item There exists $g_1 \in (g)_{\perfd}$ so that 
    $
        R \xrightarrow{1 \mapsto g_1} R^{\Ainfty}_{\perfd}
    $
    is pure.
        \item  There exists $e > 0$ so that 
    $
        R \xrightarrow{1 \mapsto g^{1/p^e}} R^+
    $
    is pure.
    \item  There exists $e > 0$ so that for every perfectoid big Cohen-Macaulay $R^+$-algebra $B$, the map 
    $
        R \xrightarrow{1 \mapsto g^{1/p^e}} B
    $
    is pure.  
    \end{enumerate}
In particular, when the above equivalent conditions hold, $R$ is weakly BCM-regular. 
\end{proposition}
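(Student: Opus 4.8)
The plan is to convert each of the three conditions into an equivalent statement about a fixed socle generator $\eta_0$ of the injective hull $E := E_R(k)$, and then invoke \autoref{lem.ComparisonAlmostKernel} with $M = E$. The starting point is the purity criterion recalled in \autoref{subsec:PureSplitMap} (from \cite[Lemma (2.1)(e)]{HochsterHunekeApplicationsofBigCM}): for an $R$-algebra $N$ and $z \in N$, the map $R \xrightarrow{1 \mapsto z} N$ is pure if and only if $E \to N \otimes_R E$, $\eta \mapsto z \otimes \eta$, is injective, which (since $E$ is the injective hull of the simple module $k$) holds if and only if $z \otimes \eta_0 \neq 0$ in $N \otimes_R E$. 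Because $E$ is $\m$-power torsion, hence $p^\infty$-torsion, \autoref{lem.ComparisonAlmostKernel} applies to $M = E$ and gives $E_1 = E_2 = E_3$ for the first three submodules attached to $E$ there.

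First I would record the translations. Condition (1) asserts that $R \xrightarrow{1 \mapsto g_1} R^{A_{\infty,0}}_{\perfd}$ is pure for some $g_1 \in (g)_{\perfd}$; by the criterion this says exactly that $(g)_{\perfd}$ does not annihilate $1 \otimes \eta_0$ in $R^{A_{\infty,0}}_{\perfd} \otimes_R E$, i.e.\ $\eta_0 \notin E_1$ (equivalently, in the language of \autoref{def.Iinfty}, $(g)_{\perfd} \not\subseteq I_\infty$). Condition (2) asserts that $R \xrightarrow{1 \mapsto g^{1/p^e}} R^+$ is pure for some $e$; negating and ranging over $e$, it fails precisely when $g^{1/p^e} \otimes \eta_0 = 0$ in $R^+ \otimes_R E$ for all $e$, i.e.\ when $(g^{1/p^\infty})$ annihilates $1 \otimes \eta_0$, i.e.\ $\eta_0 \in E_2$. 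So (2) holds iff $\eta_0 \notin E_2$, and combining with $E_1 = E_2$ already yields (1) $\Leftrightarrow$ (2).

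The translation of (3) is where the only real work lies, since it involves swapping quantifiers. Condition (3) fails iff for every $e$ there is a perfectoid big Cohen--Macaulay $R^+$-algebra $B_e$ with $g^{1/p^e} \otimes \eta_0 = 0$ in $B_e \otimes_R E$. Using the domination property of perfectoid big Cohen--Macaulay algebras \cite[Theorem 4.9]{MaSchwedeSingularitiesMixedCharBCM} (exactly as in the proof of \autoref{lem.DominationViaGalois}), I would produce a single $B$ dominating all the $B_e$, so that $g^{1/p^e} \otimes \eta_0 = 0$ in $B \otimes_R E$ simultaneously for all $e$; conversely, such a single $B$ witnesses the failure of (3) at every exponent. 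Hence (3) fails iff $(g^{1/p^\infty})$ annihilates $1 \otimes \eta_0$ in $B \otimes_R E$ for some perfectoid big Cohen--Macaulay $R^+$-algebra $B$, i.e.\ iff $\eta_0 \in E_3$. With $E_1 = E_2 = E_3$ from \autoref{lem.ComparisonAlmostKernel}, all three conditions become the single assertion $\eta_0 \notin E_1$, hence are equivalent.

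Finally, for the last sentence, if (3) holds with exponent $e$, then for any perfectoid big Cohen--Macaulay $R^+$-algebra $B$ the pure map $R \xrightarrow{1 \mapsto g^{1/p^e}} B$ factors as the structure map $R \to B$ followed by the $R$-linear multiplication $B \xrightarrow{\cdot g^{1/p^e}} B$, so the structure map $R \to B$ is itself pure; thus $R$ is weakly BCM-regular. The step I expect to be the main (if modest) obstacle is the bookkeeping around condition (3): getting the quantifier swap right via \cite[Theorem 4.9]{MaSchwedeSingularitiesMixedCharBCM}, and remaining careful that $E$ is not finitely generated, so that only the equalities $E_1 = E_2 = E_3$ (and not $E_3 = E_4$) are available from \autoref{lem.ComparisonAlmostKernel}.
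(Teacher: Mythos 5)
Your proposal is correct and follows essentially the same route as the paper: translate each of the three conditions into the statement that the socle generator of $E_R(k)$ avoids the corresponding submodule $E_i$ from \autoref{lem.ComparisonAlmostKernel}, then invoke $E_1=E_2=E_3$. Your explicit handling of the quantifier swap in condition (3) via the domination argument is the same mechanism the paper uses implicitly (and which underlies \autoref{lem.DominationViaGalois}), and your observation that only $E_1=E_2=E_3$ applies since $E$ is not finitely generated is accurate.
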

\begin{proof}
Let $E=E_R(k)$ be the injective hull of the residue field of $R$.  We first observe that $(a)$ holds if and only if 
\begin{equation}
\label{eqn:equationinPerfdPureAlongG}
\{ \eta\in E\; |\; (g)_{\perfd}\otimes \eta=0 \text{ in } R^{\Ainfty}_{\perfd} \otimes_R E\}=0.    
\end{equation}
For assuming $(a)$ holds, then $g_1\otimes \eta\neq 0$ for all $\eta \neq 0$ and so \autoref{eqn:equationinPerfdPureAlongG} follows. Conversely, if \autoref{eqn:equationinPerfdPureAlongG} holds then there exists $g_1\in (g)_{\perfd}$ such that $g_1\otimes u\neq 0$ where $u\in E$ is a socle representative. This implies the map $E\to R^{\Ainfty}_{\perfd} \otimes_RE$ sending $\eta\to g_1\otimes \eta$ is injective and hence $(a)$ holds. 

By the same argument, we see that $(b)$ holds if and only if 
$$\{ \eta\in E \;| \; (g^{1/p^\infty})\otimes \eta=0 \text{ in } R^+ \otimes_R E\}=0,$$
and $(c)$ holds if and only if 
$$\{ \eta\in E \; | \; (g^{1/p^\infty})\otimes \eta=0 \text{ in } B \otimes_R E \text{ for some perfectoid big Cohen-Macaulay $R^+$-algebra $B$}\}=0.$$
Thus it follows from \autoref{lem.ComparisonAlmostKernel} that $(a)\Leftrightarrow(b)\Leftrightarrow(c)$.
\end{proof}

In fact, if $R$ is $\bQ$-Gorenstein, then the equivalent conditions in \autoref{prop.PerfedPureAlongGImpliesBCMAlongG} are also equivalent to $R$ being (weakly) BCM-regular, this will be proved in a slightly stronger form and in the pair setting in \autoref{subsec.BCMregularityPerfdsignature}.

\subsection{BCM regularity and perfectoid signature}
\label{subsec.BCMregularityPerfdsignature}
In this subsection, we relate BCM-regularity and perfectoid signature. We will show that under the $\bQ$-Gorenstein assumption, $R$ is BCM-regular precisely when the perfectoid signature is positive.

\begin{proposition}[BCM-regular implies $s_{\perfd} > 0$]
    \label{prop.dH-BCMRegularImpliesPerfdPositive}
    With notation as in \autoref{not.setup}, and suppose $R$ is normal and $\Delta \geq 0$ is a $\bQ$-divisor on $R$ such that $K_R + \Delta$ is $\bQ$-Cartier.  If $(R, \Delta)$ is BCM-regular, then 
    \[
        s^{\underline{x}}_{\perfd}(R) > 0.
    \]
\end{proposition}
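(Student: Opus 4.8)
The plan is to exploit the identity $s^{\underline x}_{\perfd}(R)=\lambda_\infty\big(R^{A_{\infty,0}}_{\perfd}/I_\infty\big)$ and to show the right-hand side is nonzero. By \autoref{lem.valmostzero} it suffices to prove that $R^{A_{\infty,0}}_{\perfd}/I_\infty$ is not $v$-almost zero, so I would argue by contradiction: assuming it is $v$-almost zero, since $\overline 1$ is one of its elements there would be a sequence $c_j\in A_{\infty,0}$ with $v(c_j)\to 0$ and $c_j\in I_\infty$ for every $j$ (after a harmless approximation one may take $c_j\in A^{nc}_{\infty,0}$, using that $R^{A_{\infty,0}}_{\perfd}/I_\infty$ is killed by $\m_A$, hence is $p$-torsion).

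Consider first the case that $R$ is Gorenstein, which is the essential one. Applying \autoref{lem.CharacterizationIinftyviaAppGorenstein} with the system of parameters $\underline x=p,x_2,\dots,x_d$ and a lift $w\in R$ of a socle generator of $R/\m_A R$, we get $I_\infty\overset{a}{=}\big(\m_A R^{A_{\infty,0}}_{\perfd}:_{R^{A_{\infty,0}}_{\perfd}}w\big)$, so $c_j\in I_\infty$ forces $(g)_\perfd\cdot c_jw\subseteq \m_A R^{A_{\infty,0}}_{\perfd}$. Pushing forward along $R^{A_{\infty,0}}_{\perfd}\to\widehat{R^+}$, using $\big((g)_\perfd\widehat{R^+}\big)^{-}=(g^{1/p^\infty})^{-}$ from \autoref{lem.gPerfd=AllpPowerRootsofg} together with $p\in\m_A$, this yields $g^{1/p^e}c_j\,w\in\m_A\widehat{R^+}$ for all $e$ and $j$. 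Choosing a diagonal subsequence $d_k=g^{1/p^{e_k}}c_{j_k}$ with $v(d_k)=v(g)/p^{e_k}+v(c_{j_k})\to 0$, we find $d_k\,w\in\m_A\widehat{R^+}$ for all $k$. Now I would invoke Gabber's trick, \autoref{cor.GabberTrick}, with $v$ the valuation on $\widehat{R^+}$ of \autoref{rmk.valuationextendstoRplus}, the sequence $\{d_k\}$, the element $z=w$ and the ideal $J=\m_A R$: it produces a perfectoid balanced big Cohen-Macaulay $R^+$-algebra $B'$ (via \autoref{thm.GabbersTrickVVersion}) with $w\in\m_A B'$. On the other hand, BCM-regularity of $(R,0)$ says $R$ is weakly BCM-regular, so $R\to B'$ is pure and $R/\m_A R\hookrightarrow B'/\m_A B'$; since $w\notin\m_A R$ this forces $w\notin\m_A B'$, a contradiction. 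Hence $\lambda_\infty\big(R^{A_{\infty,0}}_{\perfd}/I_\infty\big)>0$ in the Gorenstein case.

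For general normal $R$ I would run the same argument using the approximately Gorenstein filtration $\{I_t\}$ and socle representatives $\{u_t\}$ of \autoref{lem.CharacterizationIinftyviaAppGorenstein} in place of $\m_A R$ and $w$: each $c_j\in I_\infty$ then satisfies $c_ju_{t_j}\in I_{t_j}R^{A_{\infty,0}}_{\perfd}$ for some $t_j$ (up to $g$-almost), and pushing to $\widehat{R^+}$ and applying \autoref{cor.GabberTrick} as above produces a perfectoid big Cohen-Macaulay $R^+$-algebra $B'$ in which the socle of $E_R(k)$ dies inside $H^d_{\m}\!\big(R^+(K_R+\Delta)\otimes_{R^+}B'\big)$, contradicting \autoref{def.BCMRegularityOfPairs}. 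The role of the hypothesis that $K_R+\Delta$ be $\bQ$-Cartier is precisely to identify, $g$-almost, the module $R^{A_{\infty,0}}_{\perfd}\otimes_R E_R(k)$ appearing in the definition of $I_\infty$ with the twisted local cohomology of \autoref{def.BCMRegularityOfPairs}; carrying out this identification and tracking the valuation estimates through the diagonal sequence and Gabber's trick is the step I expect to be the main technical obstacle. (Alternatively, the general normal case can be reduced to the Gorenstein one by passing to a cyclic cover that trivializes $K_R$ and applying the transformation rule \autoref{thm.TransformationRule}.)
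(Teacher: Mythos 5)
Your high-level plan matches the paper's: assume $s^{\underline x}_{\perfd}(R)=0$, use \autoref{lem.valmostzero} to produce $c_j\in A_{\infty,0}$ with $v(c_j)\to 0$ and $c_j\in I_\infty$, push into $\widehat{R^+}$, and contradict BCM-regularity via Gabber's trick (\autoref{cor.GabberTrick}).  Your Gorenstein, $\Delta=0$ argument is correct, though more circuitous than the paper's: you route through \autoref{lem.CharacterizationIinftyviaAppGorenstein}, which only gives $g$-almost containments and so forces you to manage $(g)_\perfd$ and a diagonal subsequence before Gabber's trick.  The paper avoids all of this by invoking the third characterization of $I_\infty$ in \autoref{def.Iinfty} directly: $c_i\in I_\infty$ means $c_i\otimes\eta=0$ \emph{exactly} in $R^{A_{\infty,0}}_{\perfd}\otimes_R E_R(k)$ for a socle generator $\eta$, with no $g$-almost qualifier at all.

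The real issue is the general case, which you yourself flag as "the main technical obstacle" and do not carry out, and neither of your two proposed escape routes closes it.  First, the approximately-Gorenstein version of your argument breaks down: the parameter $t_j$ in $c_j u_{t_j}\in I_{t_j}R^{A_{\infty,0}}_{\perfd}$ varies with $j$, so the sequence you feed Gabber's trick no longer annihilates a single fixed element against a single fixed ideal, and it is unclear which element of which $R/I_t$ one should track.  Second, the reduction via a canonical cyclic cover plus the transformation rule handles neither $\Delta>0$ nor the $\bQ$-Gorenstein index being divisible by $p$ without additional work (you would at minimum need \autoref{prop.GeneralTransformationRuleForSpecialSOP} rather than \autoref{thm.TransformationRule}, and you would still not see $\Delta$).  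The paper's fix is short: writing $E_R(k)=H^d_\m(R(K_R))$ (valid as $R$ is normal), pushing $c_i\otimes\eta=0$ from $R^{A_{\infty,0}}_{\perfd}\otimes_R E$ into $\widehat{R^+}\otimes_R E\cong R^+\otimes_R E$, then further into $H^d_\m\big(R^+(K_R+\Delta)\big)$ via the codimension-$2$ identification $H^d_\m(R^+(K_R)\otimes R^+(\Delta))\cong H^d_\m(R^+(K_R+\Delta))$, and finally using that $K_R+\Delta$ being $\bQ$-Cartier gives $R^+(K_R+\Delta)\cong R^+$.  Now $c_i\cdot\bar\eta=0$ in $H^d_\m(\widehat{R^+})$ for all $i$ with $v(c_i)\to 0$, and \autoref{cor.GabberTrick} produces a perfectoid big Cohen-Macaulay $R^+$-algebra $B'$ with $\bar\eta\mapsto 0$ in $H^d_\m(B')$, contradicting \autoref{def.BCMRegularityOfPairs}.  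This is exactly the missing identification you anticipated would be the obstruction, and it is where the $\bQ$-Cartier hypothesis enters.
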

\begin{proof}
    It suffices to show that if $s^{\underline{x}}_{\perfd}(R)=0$ then $(R, \Delta)$ is not BCM-regular. By \autoref{lem.valmostzero}, $R^{\Ainfty}_{\perfd}/I_{\infty}$ is $v$-almost zero, that is, there exists a sequence of elements $\{c_i\} \subseteq \Ainfty$ such that $v(c_i)\to 0$ and that the map $R\to R^{\Ainfty}_{\perfd}$ sending $1$ to $c_i$ is not pure.  
    
    Suppose $\eta \in H^d_\m(R(K_R)) = E$ is a socle representative. Then $c_i\otimes \eta =0$ inside $R^{\Ainfty}_{\perfd} \otimes_R E$. Since $R\to \widehat{R^+}$ factors through $R^{\Ainfty}_{\perfd}$, we have that $c_i\otimes \eta =0$ inside $\widehat{R^+}\otimes_R E \cong R^+ \otimes_R E$. It follows that $c_i\otimes \eta=0$ inside
    \begin{align*}
         E \otimes_R R^+(\pi^* \Delta) & \cong  H^d_{\m}(R(K_R)) \otimes_R R^+(\pi^*\Delta) \\ & \cong   H^d_{\m}\big(R(K_R) \otimes_R R^+(\pi^*\Delta)\big) \\
         & \cong  H^d_{\m}\big(R^+(\pi^*(K_R + \Delta))\big),
    \end{align*}
    where $\pi$: $\Spec(R^+)\to\Spec(R)$ is the natural map.
    To see the last isomorphism above, note that while the canonical map $\varphi$: $R(K_R) \otimes_R R^+(\pi^*\Delta) \to R^+(\pi^*(K_R + \Delta))$ need not be an isomorphism, it is an isomorphism over the regular locus of $\Spec(R)$, whose complement has codimension 2 (i.e., after localizing at any height one prime of $R$, $K_R$ is a principal divisor and it is clear that $\varphi$ becomes an isomorphism). In particular, the kernel and cokernel of $\varphi$ have dimension at most $d-2$ so their local cohomology vanish in cohomological degree $d-1$. This implies that the map $H^d_{\m} \big(R(K_R) \otimes_R R^+(\pi^*\Delta)\big) \to H^d_{\m} \big(R^+(\pi^*(K_R + \Delta))\big)$ is an isomorphism.

    Finally, since $K_R + \Delta$ is $\bQ$-Cartier, we have $R^+(\pi^*(K_R + \Delta)) \cong R^+$.  Thus the image of $\eta$ in 
    $H^d_{\m}(R^+(\pi^* (K_R + \Delta))) \cong H^d_{\m}(R^+)$ satisfies $c_i\eta=0$.    Now it follows from  \autoref{cor.GabberTrick} that there exists a perfectoid big Cohen-Macaulay $R^+$-algebra $B$ such that the image of $\eta$ is zero under the map 
    \[
        H^d_{\m}(R^+(\pi^* (K_R + \Delta))) \cong H^d_{\m}(R^+) \to H^d_\m(B).
    \]   
    Therefore, $(R, \Delta)$ is not BCM-regular.  
\end{proof}

\begin{proposition}[$s_{\perfd} > 0$ implies BCM-regular]
    \label{prop.SignaturePositiveImpliesPerturbationBCMRegular}
    With notation as in \autoref{not.setup}, and suppose $s^{\underline{x}}_{\perfd}(R) > 0$.  Then for every nonzero $g\in R^+$, there exists $e \gg 0$ such that for any perfectoid big Cohen-Macaulay $R^+$-algebra $B$, we have that $R \to B$ sending $1 \mapsto g^{1/p^e}$ is pure.
\end{proposition}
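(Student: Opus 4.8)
Fix $0\neq h\in R^+$ (the element denoted $g$ in the statement) together with a compatible system $\{h^{1/p^e}\}$ of $p$-power roots in $R^+$, and write $E=E_R(k)$ with $u\in E$ a socle generator. Recall that for an $R^+$-algebra $B$ the map $R\xrightarrow{1\mapsto z}B$ is pure precisely when $E\to B\otimes_RE$ ($\eta\mapsto\eta\otimes z$) is injective, which — the socle of $E$ being essential — happens exactly when $z\otimes u\neq 0$ in $B\otimes_RE$. The plan is to argue by contradiction. If the conclusion fails for $h$, then for every $e$ there is a perfectoid big Cohen-Macaulay $R^+$-algebra $B_e$ with $h^{1/p^e}\otimes u=0$ in $B_e\otimes_RE$; dominating the family $\{B_e\}_{e\geq 0}$ via \cite[Theorem 4.9]{MaSchwedeSingularitiesMixedCharBCM}, we fix one balanced perfectoid big Cohen-Macaulay $R^+$-algebra $B$ with $h^{1/p^e}\otimes u=0$ in $B\otimes_RE$ for all $e\geq 0$. (For $h=1$ this already says $R\to B$ is not pure; the content for general $h$ is to reduce to this case.)

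That reduction is Gabber's trick. Let $v$ be the $\bR$-valuation on $\widehat{R^+}$ obtained by extending the valuation of \autoref{def.standardvaluationonAinftyzero} as in \autoref{rmk.valuationextendstoRplus}; it is positive on $\m R^+$, and $v(h^{1/p^j})=v(h)/p^j\to 0$. Applying \autoref{thm.GabbersTrickVVersion} and \autoref{cor.GabberTrick} with $c_j:=h^{1/p^j}$, I aim to produce a perfectoid balanced big Cohen-Macaulay $R^+$-algebra $B'$ with $1\otimes u=0$ in $B'\otimes_RE$, i.e. with $R\to B'$ not pure. When $R$ is Gorenstein this is transparent: $E=H^d_{\m}(R)$, $B\otimes_RE=H^d_{\m}(B)$, the hypothesis reads $c_j\eta=0$ for all $j$ (with $\eta$ the image of $u$), and the local-cohomology form of \autoref{cor.GabberTrick} gives $\eta\mapsto 0$ in $H^d_{\m}(B')$. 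In general one runs the same computation through the approximately Gorenstein presentation $E\cong\varinjlim_tR/I_t$ of \autoref{lem.SplittingApproxGorenstein} (with $R/I_t$ Gorenstein, socle generators $u_t$), rewriting ``$h^{1/p^e}\otimes u=0$ in $B\otimes_RE$'' as ``$h^{1/p^e}u_t\in I_tB$ for $t\gg 0$'' and using \autoref{rmk.ApproxGorensteinChain}. The delicate point — and the step I expect to be the main obstacle — is that the level $t$ at which $h^{1/p^e}u_t$ first enters $I_tB$ may grow with $e$, so that no single level works for all $e$ at once; one must either engineer this uniformity (via an auxiliary application of \autoref{cor.GabberTrick}, or a careful choice of the filtration $\{I_t\}$) or first reduce to the Gorenstein case.

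Granting such a $B'$, I conclude on the perfectoid side. Applying \autoref{lem.ComparisonAlmostKernel} to the $p^\infty$-torsion module $M=E$, the vanishing $1\otimes u=0$ in $B'\otimes_RE$ says $u\in M_4$, hence $u\in M_1$ since $M_4\subseteq M_3=M_1$; that is, $(g)_{\perfd}$ annihilates $1\otimes u$ in $R^{A_{\infty,0}}_{\perfd}\otimes_RE$, where $g$ is now the fixed element of \autoref{not.setup}. By the socle description of $I_\infty$ in \autoref{def.Iinfty} this is exactly $(g)_{\perfd}R^{A_{\infty,0}}_{\perfd}\subseteq I_\infty$. Thus $R^{A_{\infty,0}}_{\perfd}/I_\infty$ is annihilated by $(g)_{\perfd}$, and it is $\m_A$-power torsion because $I_\infty\supseteq\m_RR^{A_{\infty,0}}_{\perfd}\supseteq\m_AR^{A_{\infty,0}}_{\perfd}$; so \autoref{prop.AlmostNormalizedLength}, comparing it with the zero module, forces $s^{\underline{x}}_{\perfd}(R)=\lambda_\infty(R^{A_{\infty,0}}_{\perfd}/I_\infty)=0$, contradicting $s^{\underline{x}}_{\perfd}(R)>0$. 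Modulo the Gabber's-trick step, this proves the proposition.
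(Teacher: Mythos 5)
You have correctly identified that the Gabber's-trick step is the weak point, and in fact that step cannot be closed as written. For $M = E$, which is not finitely generated, \autoref{lem.ComparisonAlmostKernel} gives only $M_1 = M_2 = M_3 \supseteq M_4$; the reverse containment $M_3 \subseteq M_4$ is exactly what your step would establish and exactly what the paper declines to prove for non-finitely-generated $M$. The obstruction is the one you flagged: writing $E = \varinjlim_t R/I_t$, the level $t$ at which $h^{1/p^e} u_t \in I_t B$ may grow without bound as $e$ does, and the identification $B'\otimes_R M \cong W^{-1}\prod(B\otimes_R M)$ used to prove $M_3 \subseteq M_4$ for finitely presented $M$ fails once the product no longer commutes with $\otimes_R M$. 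There is no single level $t$ at which \autoref{cor.GabberTrick} applies, so the proposed $B'$ cannot be shown to kill $u$.

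The repair is to drop the $M_4$ detour entirely. First replace the given $g$ by a multiple lying in $A$ with $A[1/g]\subseteq R[1/g]$ \'etale (purity for a multiple implies purity for the original; such a multiple exists by clearing denominators into a module-finite subextension $R\subseteq R_1\subseteq R^+$ and taking a norm down to $A$, then multiplying by the \'etale locus bound of \autoref{not.setup}), so that $g$ becomes the distinguished element of \autoref{not.setup}. After dominating $\{B_e\}_{e}$ you have a single $B$ with $g^{1/p^e}\otimes u = 0$ in $B\otimes_R E$ for all $e$; this is precisely $u\in M_3$. Now invoke $M_3 = M_1$ from \autoref{lem.ComparisonAlmostKernel} directly --- that containment is proved there via Galois domination and almost purity, not via Gabber's trick --- to conclude $(g)_{\perfd}\otimes u = 0$ in $R^{A_{\infty,0}}_{\perfd}\otimes_R E$, hence $(g)_{\perfd}\subseteq I_\infty$, hence $\lambda_\infty(R^{A_{\infty,0}}_{\perfd}/I_\infty)=0$ by \autoref{prop.AlmostNormalizedLength}, a contradiction. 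This is exactly what the paper's brief proof does, with the domination argument and the $M_1 = M_3$ equivalence packaged into \autoref{prop.PerfedPureAlongGImpliesBCMAlongG}; the substantive work is in \autoref{lem.ComparisonAlmostKernel}, not in any application of Gabber's trick to $E$.
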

\begin{proof}
    Without loss of generality, replacing $g$ with a multiple if necessary, we may assume that $g\in A$ and that $A[g^{-1}] \subseteq R[g^{-1}]$ is \'etale.  By \autoref{prop.PerfedPureAlongGImpliesBCMAlongG}, it suffices to show that there exists $g_1\in(g)_{\perfd}$ such that $R \to R^{\Ainfty}_{\perfd}$ sending $1$ to $g_1$ is pure.  But if this is not the case, then $(g)_{\perfd}\subseteq I_{\infty}$ and hence $s^{\underline{x}}_{\perfd}(R)=\lambda_{\infty}\big(R^{\Ainfty}_{\perfd}/ I_\infty)  = 0$ by \autoref{prop.AlmostNormalizedLength}, which is a contradiction.
\end{proof}

The following variant of what we have done so far will have an application to projective schemes later.
\begin{proposition}
    \label{prop.AlternateTheoremOnBCMRegularityAndPerfdSignaturePositive}
    With notation as in \autoref{not.setup}, suppose $R$ is normal and $\Delta \geq 0$ is a $\bQ$-divisor on $\Spec(R)$ such that $K_R + \Delta$ is $\bQ$-Cartier, and $S \supseteq R$ is a finite normal local extension such that the pullback $\pi^*(K_R + \Delta)$ to $\Spec(S)$ is Cartier,  where $\pi : \Spec(S) \to \Spec(R)$ is the induced map. Let $0\neq h\in A$ so that $A[1/h]\to S[1/h]$ is finite \'etale. Further suppose the following: 
    \begin{enumerate}
        \item there is an $(A_\infty\otimes_AS)$-algebra $T$ together with a map $T \to \widehat{R^+}$ where $T$ is perfectoid and contains a compatible system of $p$-power roots of $h$, $\{h^{1/p^e} \}$.
        \item the map $H^d_{\m}(R(K_R)) \xrightarrow{1 \mapsto h^{1/p^e}} H^d_{\m}(T \otimes_{S} S(\pi^*(K_R + \Delta)))$ is injective for $e\gg0$. 
    \end{enumerate}
    Then $(R, \Delta)$ is BCM-regular and in particular
    \[
        s_{\perfd}^{\underline{x}}(R) > 0.
    \]
\end{proposition}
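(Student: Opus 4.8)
The plan is to reduce BCM-regularity of $(R,\Delta)$ to a single non-vanishing statement for the socle of $E_R(k)$ after base change to $\widehat{R^{+}}$, using the comparison of almost-vanishing loci in \autoref{lem.ComparisonAlmostKernel}, and then to transport the hypothesis about $T$ into this statement through the given $S$-algebra map $T\to\widehat{R^{+}}$. Once $(R,\Delta)$ is shown to be BCM-regular, the inequality $s^{\underline{x}}_{\perfd}(R)>0$ is immediate from \autoref{prop.dH-BCMRegularImpliesPerfdPositive}, so all of the content lies in the BCM-regularity assertion.

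First I would set up a $\widehat{R^{+}}$-criterion. By \autoref{def.BCMRegularityOfPairs} — and since replacing a perfectoid big Cohen-Macaulay $R^{+}$-algebra by its $\m$-adic completion changes neither the balanced condition nor $H^{d}_{\m}$, as powers of $\underline{x}$ are cofinal with powers of $\m$ — BCM-regularity of $(R,\Delta)$ says exactly that the socle generator $\eta$ of $E_{R}(k)=H^{d}_{\m}(R(K_{R}))$ does not die in $H^{d}_{\m}(R^{+}(K_{R}+\Delta)\otimes_{R^{+}}B)$ for any balanced perfectoid big Cohen-Macaulay $R^{+}$-algebra $B$. Since $h^{*}(K_{R}+\Delta)$ is Cartier on the local ring $S$ it is trivial there and pulls back compatibly along every further finite cover, so $R^{+}(K_{R}+\Delta)\cong R^{+}\otimes_{S}S(h^{*}(K_{R}+\Delta))$ and likewise after $p$-completion with $\widehat{R^{+}}$; the usual bookkeeping with reflexive sheaves (restrict to the locus where $K_{R}+\Delta$ is Cartier and extend by $S_{2}$, as in the proof of \autoref{prop.dH-BCMRegularImpliesPerfdPositive}) identifies the transition map $E_{R}(k)\to H^{d}_{\m}(R^{+}(K_{R}+\Delta)\otimes_{R^{+}}B)$ with the composite of the canonical maps through $S(h^{*}(K_{R}+\Delta))$. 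Applying \autoref{lem.ComparisonAlmostKernel} to the finitely generated modules $R(K_{R}+\Delta)/\underline{x}^{t}R(K_{R}+\Delta)$, whose colimit computes $H^{d}_{\m}$ (so that the equalities $M_{1}=M_{2}=M_{3}=M_{4}$ all apply), together with \autoref{lem.gPerfd=AllpPowerRootsofg}, one obtains: $\eta$ dies in $H^{d}_{\m}(R^{+}(K_{R}+\Delta)\otimes_{R^{+}}B)$ for some perfectoid big Cohen-Macaulay $B$ if and only if $g^{1/p^{e}}\eta=0$ in $H^{d}_{\m}(\widehat{R^{+}}\otimes_{S}S(h^{*}(K_{R}+\Delta)))$ for every $e>0$. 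Hence $(R,\Delta)$ is BCM-regular if and only if $g^{1/p^{e_{0}}}\eta\neq 0$ in $H^{d}_{\m}(\widehat{R^{+}}\otimes_{S}S(h^{*}(K_{R}+\Delta)))$ for some $e_{0}$.

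Next I would feed in the hypothesis. The $S$-algebra map $T\to\widehat{R^{+}}$ induces a map $T\otimes_{S}S(h^{*}(K_{R}+\Delta))\to\widehat{R^{+}}\otimes_{S}S(h^{*}(K_{R}+\Delta))$ carrying the class $g^{1/p^{e}}\eta$ to $g^{1/p^{e}}\eta$, so it suffices to show the induced map on $H^{d}_{\m}$ does not annihilate $g^{1/p^{e}}\eta$ for $e\gg 0$. The key point — and the main obstacle — is that the kernel of this map on $H^{d}_{\m}$ is annihilated by every $p$-power root of $g$: after reducing modulo $\underline{x}^{t}$ and using that $S(h^{*}(K_{R}+\Delta))$ is invertible over $S$, this amounts to $g$-almost purity of $T\to\widehat{R^{+}}$ base changed along $S/\underline{x}^{t}S$, which I would extract from the almost-purity package (\autoref{thm.RperfdAlmostFaithfullyFlatOverA}, \autoref{prop.BhattScholzeAlmostPuritypcompletefaithfullyflat}) by factoring $T\to\widehat{R^{+}}$ through a $p$-complete $g$-almost faithfully flat perfectoid algebra and arguing as in the proof of $M_{3}\subseteq M_{1}$ in \autoref{lem.ComparisonAlmostKernel} (bringing in the Galois cover $S$ of $A$ and \autoref{cor.GabberTrick} exactly as there). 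Granting this, were $g^{1/p^{e}}\eta$ in the kernel for all large $e$, it would be $g$-almost zero, and then $g^{1/p^{e}}\cdot(g^{1/p^{e}}\eta)=g^{2/p^{e}}\eta=0$ together with $g^{1/p^{e-1}}=g^{(p-2)/p^{e}}\cdot g^{2/p^{e}}$ would force $g^{1/p^{e-1}}\eta=0$ in $H^{d}_{\m}(T\otimes_{S}S(h^{*}(K_{R}+\Delta)))$ for all large $e$, contradicting the assumed injectivity of $H^{d}_{\m}(R(K_{R}))\xrightarrow{1\mapsto g^{1/p^{e}}}H^{d}_{\m}(T\otimes_{S}S(h^{*}(K_{R}+\Delta)))$ for $e\gg 0$. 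Therefore $g^{1/p^{e_{0}}}\eta\neq 0$ in $H^{d}_{\m}(\widehat{R^{+}}\otimes_{S}S(h^{*}(K_{R}+\Delta)))$ for some $e_{0}$, so $(R,\Delta)$ is BCM-regular by the criterion above, and then $s^{\underline{x}}_{\perfd}(R)>0$ by \autoref{prop.dH-BCMRegularImpliesPerfdPositive}. The remaining work is routine: the reflexive-sheaf bookkeeping, the standard translation between vanishing of local cohomology classes and vanishing in tensor products (as in the proof of \autoref{prop.dH-BCMRegularImpliesPerfdPositive} and \autoref{prop.PerfedPureAlongGImpliesBCMAlongG}), and the elementary manipulations with $p$-power roots of $g$.
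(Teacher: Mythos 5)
Your reduction to the $\widehat{R^+}$-criterion---that $(R,\Delta)$ is BCM-regular if and only if $g^{1/p^{e_0}}\eta\neq 0$ in $H^d_\m(\widehat{R^+}\otimes_S S(h^*(K_R+\Delta)))$ for some $e_0$---is correct; it is the equality $M_2=M_4$ of \autoref{lem.ComparisonAlmostKernel} applied with $S$ in place of $R$. The gap is the step you yourself label the ``main obstacle'': the claim that the kernel of $H^d_\m(T\otimes_S S(h^*(K_R+\Delta)))\to H^d_\m(\widehat{R^+}\otimes_S S(h^*(K_R+\Delta)))$ is killed by all $p$-power roots of $g$ is false for an arbitrary perfectoid $S$-algebra $T$ equipped with an $S$-algebra map to $\widehat{R^+}$, and the almost-purity results cannot supply it. Those results control maps whose source is tethered to $A_{\infty,0}$, such as $S^{A_{\infty,0}}_\perfd\to T$ or $R^{A_{\infty,0}}_\perfd\to B$; they say nothing about the map $T\to\widehat{R^+}$ itself. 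For a concrete failure, take $T$ to be the $p$-complete polynomial extension of $\widehat{R^+}$ by a free variable $\varepsilon$ with a compatible system of $p$-power roots $\{\varepsilon^{1/p^e}\}$ adjoined, and take the $S$-algebra map $T\to\widehat{R^+}$ to send $\varepsilon\mapsto 0$; then $T$ satisfies your standing hypotheses, yet the kernel of the induced map on $H^d_\m$ is $\varepsilon$-divisible and is certainly not $(g^{1/p^\infty})$-torsion.

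The repair is to never look at $T\to\widehat{R^+}$ on local cohomology. If $g^{1/p^e}\eta=0$ in $H^d_\m(\widehat{R^+}\otimes_S S(h^*(K_R+\Delta)))$ for all $e$, then the class $\eta$ lies in $M_2$, so the equality $M_2=M_1$ of \autoref{lem.ComparisonAlmostKernel} (again with $S$ in place of $R$) gives $(g)_\perfd\cdot\eta=0$ in $H^d_\m(S^{A_{\infty,0}}_\perfd)$. Now push \emph{forward} along the universal-property map $S^{A_{\infty,0}}_\perfd\to T$ and use \autoref{lem.gPerfd=AllpPowerRootsofg} to conclude $(g^{1/p^\infty})\cdot\eta=0$ in $H^d_\m(T\otimes_S S(h^*(K_R+\Delta)))$, contradicting the assumed injectivity for $e\gg 0$. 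The flow is $\widehat{R^+}\Rightarrow S^{A_{\infty,0}}_\perfd\Rightarrow T$, never $T\Rightarrow\widehat{R^+}$. The paper argues even more directly: assuming $(R,\Delta)$ is not BCM-regular gives $1\otimes u'=0$ in $H^d_\m(B)$ for some $B$, so $M_4\subseteq M_1$ of \autoref{lem.ComparisonAlmostKernel} yields $(g)_\perfd\cdot u'=0$ in $H^d_\m(S^{A_{\infty,0}}_\perfd)$, which then pushes into $H^d_\m(T\otimes_S S(h^*(K_R+\Delta)))$ and contradicts the hypothesis, bypassing your $\widehat{R^+}$ reformulation entirely.
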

\begin{proof}
Let $B$ be a perfectoid big Cohen-Macaulay $R^+$-algebra. Thus $B$ is a $T$-algebra and we have natural maps:
$$H^d_{\m}(R(K_R)) \to  H^d_{\m}(S(\pi^*(K_R + \Delta)))\to  H^d_{\m}(T \otimes_{S} S(\pi^*(K_R + \Delta))) \to  H^d_{\m}(B \otimes_{S} S(\pi^*(K_R + \Delta))).$$
Let $u$ be a socle representative of $H^d_{\m}(R(K_R))$ and let $u'$ be its image in 
$$H^d_{\m}(S(\pi^*(K_R + \Delta)))\cong H_\m^d(S).$$

If $(R,\Delta)$ is not BCM-regular, then for some perfectoid big Cohen-Macaulay $R^+$-algebra $B$, we have $1\otimes u'=0$ in $H^d_{\m}(B \otimes_{S}S(\pi^*(K_R + \Delta)))\cong H_\m^d(B)$ (technically, its image is zero, a slight abuse of notation we suppress). By \autoref{lem.ComparisonAlmostKernel}, we know that $(h)_\perfd \otimes u'=0$ in $H^d_{\m}(S^{\Ainfty}_{\perfd} \otimes_{S}S(\pi^*(K_R + \Delta))) \cong H_\m^d(S^{\Ainfty}_{\perfd})$, which implies that $(h)_\perfd\otimes u'=0$ in 
$$H^d_{\m}(T \otimes_{S} S(\pi^*(K_R + \Delta))) \cong H_\m^d(T)$$ since $S^{\Ainfty}_{\perfd}$ maps to $T$ by the universal property of perfectoidization (as $T$ is an algebra over $A_\infty\otimes_AS$). But since $T$ contains a compatible system of $p$-power roots of $h$ by assumption, we know that $((h)_{\perfd}T)^-=(h^{1/p^\infty})^-$ by \autoref{lem.gPerfd=AllpPowerRootsofg}. It follows that $(h^{1/p^\infty})\otimes u'=0$ in $H^d_{\m}(T \otimes_{S} S(h^*(K_R + \Delta)))$. But then the hypothesis that 
$$H^d_{\m}(R(K_R)) \xrightarrow{1 \mapsto h^{1/p^e}} H^d_{\m}(T \otimes_{S} S(\pi^*(K_R + \Delta)))$$ injects for $e \gg 0$ implies that $u=0$ which is a contradiction.
\end{proof}
%}

\begin{remark}
    In characteristic $p > 0$ our results \autoref{prop.dH-BCMRegularImpliesPerfdPositive} and \autoref{prop.SignaturePositiveImpliesPerturbationBCMRegular} give an alternative proof of the fact that the $F$-signature is positive, $s(R) > 0$, if and only if $R$ is strongly $F$-regular, a result of \cite{AberbachLeuschke} \cf \cite{HunekeLeuschkeTwoTheoremsAboutMaximal,PolstraTuckerCombinedApproach,PolstraATheoremAboutMCM}.  Indeed, if $R$ is strongly $F$-regular, then by \cite{SchwedeSmithLogFanoVsGloballyFRegular}, there exists a $\Delta \geq 0$ such that $(R, \Delta)$ is log $\bQ$-Gorenstein strongly $F$-regular and hence BCM-regular (see \cite{MaSchwedeTuckerWaldronWitaszekAdjoint}).  Hence  the conditions of \autoref{prop.dH-BCMRegularImpliesPerfdPositive} are satisfied.  Conversely, if $s(R) > 0$ then by \autoref{prop.SignaturePositiveImpliesPerturbationBCMRegular} we have proven that $R$ is strongly $F$-regular (note that $R_{\perf} \subseteq R^+$, thus splitting in the latter implies splitting in the former).  Of course, our approaches are inspired by the characteristic $p > 0$ theory.
%We remark that in the characteristic $p>0$ case (see e.g.\cite[Theorem 5.1]{PolstraTuckerCombinedApproach} for a full statement), we can use \autoref{lem.valmostzero} and \cite{polstratheoremmaximalCM} to prove that strongly $F$-regular implies positive $F$-signature.
\end{remark}

We also prove a related result which connects perfectoid big Cohen-Macaulay algebras and perfectoid Hilbert-Kunz multiplicities. This should be viewed as an analog of the characteristic $p>0$ fact that for two $\m$-primary ideals $I\subseteq J$, $e_{\HK}(I)=e_{\HK}(J)$ if and only if $I$ and $J$ have the same tight closure \cite[Theorem 8.1.7]{HochsterHunekeTC1}. 

\begin{proposition}
\label{prop.HKVsBCMClosureVsepfClosure}
With notation as in \autoref{not.setup}, then for every pair of $\m$-primary ideals $I\subseteq J$, the following are equivalent: 
\begin{enumerate}
    \item $e^{\underline{x}}_{\perfd}(I) = e^{\underline{x}}_{\perfd}(J)$.
    \item There exists a perfectoid big Cohen-Macaulay $R^+$-algebra $B$ such that $IB=JB$.
    \item $I^{epf}=J^{epf}$ where $(-)^{epf}$ denotes (full) extended plus closure.
\end{enumerate}
In particular, $R$ is weakly BCM-regular if and only if every $\m$-primary ideal $I$ is extended plus closed (i.e., $I^{epf}=I$).
\end{proposition}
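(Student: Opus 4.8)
The plan is to funnel all three conditions through the single $A_{\infty,0}$-module $M:=JR^{A_{\infty,0}}_{\perfd}/IR^{A_{\infty,0}}_{\perfd}$. Additivity of normalized length (\autoref{prop.NormalizedLengthProperties}\autoref{prop.NormalizedLengthProperties.a}) applied to $0\to M\to R^{A_{\infty,0}}_{\perfd}/IR^{A_{\infty,0}}_{\perfd}\to R^{A_{\infty,0}}_{\perfd}/JR^{A_{\infty,0}}_{\perfd}\to 0$ gives $\lambda_\infty(M)=e^{\underline{x}}_{\perfd}(I)-e^{\underline{x}}_{\perfd}(J)$, so (a) is equivalent to $\lambda_\infty(M)=0$. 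Since $I$ is $\m$-primary, $\m^N\subseteq I$ and hence $p^NM=0$, so $M$ is $\m_A$-power torsion and \autoref{lem.ComparisonAlmostKernel}, \autoref{prop.AlmostNormalizedLength} and \autoref{lem.valmostzero} all apply; writing $J=I+(y_1,\dots,y_k)$, the module $M$ is generated over $R^{A_{\infty,0}}_{\perfd}$ by the images of the $y_j$. Finally, because $p^N\in I$ for $\m$-primary $I$, unwinding the definition of the full extended plus closure shows that $x\in I^{epf}$ if and only if there is some $0\neq c$ with $c^{1/p^e}x\in I\widehat{R^+}$ for every $e>0$ (the $p$-adic refinement in the general definition is absorbed since $p^N\in I$); I record this reduction once.

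For $(b)\Rightarrow(a)$: the equality $IB=JB$ says exactly $(J/I)\otimes_RB=0$, i.e.\ every element of $N:=J/I$ lies in ``$M_4$'' of \autoref{lem.ComparisonAlmostKernel} for this $B$; as $N$ is finitely generated, $M_4=M_1$, so $(g)_{\perfd}$ annihilates $R^{A_{\infty,0}}_{\perfd}\otimes_R(J/I)$, which surjects onto $M$; thus $M$ is $g$-almost zero and $\lambda_\infty(M)=0$ by \autoref{prop.AlmostNormalizedLength}. For $(a)\Rightarrow(b)$ and $(a)\Rightarrow(c)$: if $\lambda_\infty(M)=0$ then $M$ is $v$-almost zero by \autoref{lem.valmostzero}, so for each $j$ there is a sequence $\{c_{j,e}\}_e\subseteq A_{\infty,0}$ with $v(c_{j,e})\to 0$ and $c_{j,e}y_j\in IR^{A_{\infty,0}}_{\perfd}$; pushing along $R^{A_{\infty,0}}_{\perfd}\to\widehat{R^+}$ gives $c_{j,e}y_j\in I\widehat{R^+}$. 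Now $\widehat{R^+}$ is a perfectoid big Cohen-Macaulay $R^+$-algebra, and $v$ extends to $\widehat{R^+}$ positively on $\m R^+$ (\autoref{rmk.valuationextendstoRplus}), so feeding each sequence into \autoref{cor.GabberTrick} and iterating over $j=1,\dots,k$ (each application preserving the conclusions already obtained, since the new base algebra receives $\widehat{R^+}$) produces a single perfectoid big Cohen-Macaulay $R^+$-algebra $B$ with $y_j\in IB$ for all $j$, i.e.\ $JB=IB$; this is (b). Given such a $B$, \autoref{lem.ComparisonAlmostKernel} applied to $N=R/I$ ($M_4=M_2=M_1$) yields $g^{1/p^e}y_j\in IR^+=I\widehat{R^+}$ for all $e$, so each $y_j\in I^{epf}$ with the single witness $c=g$; hence $J\subseteq I^{epf}$, and since $(-)^{epf}$ is a closure operation this upgrades to $I^{epf}=J^{epf}$, which is (c). The implication $(c)\Rightarrow(a)$ runs the same steps in reverse: $J\subseteq J^{epf}=I^{epf}$ gives, for each $j$, some $0\neq c_j$ with $c_j^{1/p^e}y_j\in I\widehat{R^+}$ and $v(c_j^{1/p^e})=v(c_j)/p^e\to 0$, so \autoref{cor.GabberTrick} produces a perfectoid big Cohen-Macaulay $B_j$ with $y_j\in IB_j$, and then \autoref{lem.ComparisonAlmostKernel} upgrades this to $(g)_{\perfd}y_j\subseteq IR^{A_{\infty,0}}_{\perfd}$; as the $y_j$ generate $M$ this makes $M$ $g$-almost zero, whence $\lambda_\infty(M)=0$ by \autoref{prop.AlmostNormalizedLength}.

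For the ``in particular'': a complete local domain is reduced, hence approximately Gorenstein, so fix an approximating Gorenstein chain $\{I_t\}$ with socle representatives $u_t\in R/I_t$. Since $R$ is complete, purity of $R\to B$ is the same as splitting, and by \autoref{lem.SplittingApproxGorenstein} (with $M=B$ and $x=1$) the map $R\to B$ is pure if and only if $u_t\notin I_tB$ for every $t$. If every $\m$-primary ideal of $R$ is extended plus closed, then $u_t\notin I_t=I_t^{epf}$ for each $t$, so by the already-proven equivalence applied to $I_t\subsetneq I_t+(u_t)$ we get $u_t\notin I_tB$ for all $B$; hence $R$ is weakly BCM-regular. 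Conversely, if $R$ is weakly BCM-regular and $x\in I^{epf}$ for some $\m$-primary $I$, then (b) applied to $I\subseteq I+(x)$ provides a perfectoid big Cohen-Macaulay $B$ with $x\in(I+(x))B=IB$, and splitting of $R\to B$ forces $x\in IB\cap R=I$; thus $I^{epf}=I$.

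I expect the main obstacle to be the bookkeeping around the full extended plus closure rather than any new idea: one must verify that for $\m$-primary ideals it collapses to the condition ``$\exists\,0\neq c$ with $c^{1/p^e}x\in I\widehat{R^+}$ for all $e$'', that the witness $c$ may be normalized to the distinguished element $g$ of \autoref{not.setup} via \autoref{lem.ComparisonAlmostKernel}, and that $(-)^{epf}$ is genuinely idempotent so that $J\subseteq I^{epf}$ yields $J^{epf}=I^{epf}$. With these in hand, the rest is a mechanical interplay between additivity of normalized length, the almost-vanishing comparison \autoref{lem.ComparisonAlmostKernel}, and Gabber's trick \autoref{cor.GabberTrick}.
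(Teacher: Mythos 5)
Your overall strategy matches the paper's: translate condition (a) into the vanishing of $\lambda_\infty$ of $M = JR^{A_{\infty,0}}_{\perfd}/IR^{A_{\infty,0}}_{\perfd}$, then route the other two conditions through \autoref{lem.ComparisonAlmostKernel}, \autoref{prop.AlmostNormalizedLength}, \autoref{lem.valmostzero}, and \autoref{cor.GabberTrick}. The only cosmetic deviation is that the paper reduces to $J = I + (u)$ at the start (which is harmless, using the domination results), whereas you carry along all the generators $y_1,\dots,y_k$; this is fine and arguably more transparent, though it does force you to iterate Gabber's trick.

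However, there is a genuine error in your $(b)\Rightarrow(a)$. You claim that ``the equality $IB=JB$ says exactly $(J/I)\otimes_RB=0$,'' and then apply \autoref{lem.ComparisonAlmostKernel} with $M = J/I$. This equivalence is false: $IB=JB$ says $J\subseteq IB$, i.e.\ the composite $J/I \to B\otimes_R(R/I) = B/IB$ is the zero map, which is strictly weaker than $(J/I)\otimes_R B = 0$ (for a toy counterexample, $R=k[x]$, $I=(x^2)$, $J=(x)$, $B=R/(x)$: here $IB=JB=0$ but $(J/I)\otimes_R B\cong k\neq 0$). The fix is the one the paper uses, and which you yourself adopt in the $(b)\Rightarrow(c)$ and $(c)\Rightarrow(a)$ steps: apply \autoref{lem.ComparisonAlmostKernel} with $M=R/I$ and $\eta$ the class of $y_j$. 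Then $y_j\in IB$ says precisely $\eta\in M_4$ for $M=R/I$, and since $R/I$ is finitely generated $M_4=M_1$, giving $(g)_\perfd y_j\subseteq IR^{A_{\infty,0}}_\perfd$; as the $y_j$ together with $I$ generate $J$, this forces $M$ to be $g$-almost zero and \autoref{prop.AlmostNormalizedLength} finishes. So the gap is local and easily patched, but as written the implication does not go through.

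One more small remark: you are honest that your argument leans on the full extended plus closure being a genuine closure operation (idempotent, and collapsing to the ``$c^{1/p^e}x\in I\widehat{R^+}$'' form for $\m$-primary $I$). The paper also needs these properties (it appeals to \cite[Lemma~3.1]{HeitmannMaExtendedPlusClosure} for the latter and implicitly to idempotence when passing from $u\in I^{epf}$ to $I^{epf}=J^{epf}$), so you are not assuming more than the paper does, but it is worth making the citation explicit rather than listing it as an expected obstacle.
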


\begin{proof}
Without loss of generality, we may assume that $J=I+(u)$. 

\noindent$(a)\Rightarrow(b)$: If $e^{\underline{x}}_{\perfd}(I) = e^{\underline{x}}_{\perfd}(J)$, then $\lambda_{\infty}\left({(I+u)R^{\Ainfty}_{\perfd}}/{IR^{\Ainfty}_{\perfd}}\right)=0.$ By \autoref{lem.valmostzero}, we know that there exists a sequence $\{c_i\}\subseteq \Ainfty$ such that $v(c_i)\to 0$ and such that $c_iu\in IR^{\Ainfty}_{\perfd}$ for all $i$. Since $R^{\Ainfty}_{\perfd}$ maps to $\widehat{R^+}$, we have $c_iu\in I\widehat{R^+}$. Now by \autoref{thm.GabbersTrickVVersion}, there exists a perfectoid big Cohen-Macaulay $R^+$-algebra $B$ such that $u\in IB$. 

\noindent$(c)\Rightarrow(b)$: By the definition of extended plus closure, if $u\in I^{epf}$, then there exists $0\neq c\in R$ such that $c^{1/n}u\in I\widehat{R^+}$ for all $n$. Since $I^{epf}$ is a finitely generated ideal, we may assume there exists $0\neq c\in R$ so that $c^{1/n}I^{epf}\subseteq I\widehat{R^+}$ for all $n$. Thus again by \autoref{thm.GabbersTrickVVersion}, there exists a perfectoid big Cohen-Macaulay $R^+$-algebra $B$ such that $I^{epf}\in IB$.

\noindent$(b)\Rightarrow(a),(c)$: Suppose $u\in IB$ for some $B$, by \autoref{lem.ComparisonAlmostKernel} applied to $M=R/I$, we have $(g)_{\perfd}u\in IR^{\Ainfty}_{\perfd}$. By \autoref{prop.AlmostNormalizedLength} we have that $\lambda_{\infty}\left({(I+u)R^{\Ainfty}_{\perfd}}/{IR^{\Ainfty}_{\perfd}}\right)=0$ and thus $e^{\underline{x}}_{\perfd}(I) = e^{\underline{x}}_{\perfd}(J)$. Since $R^{\Ainfty}_{\perfd}$ maps to $\widehat{R^+}$, we also have $(g)_{\perfd}u\in I\widehat{R^+}$, and by \autoref{lem.gPerfd=AllpPowerRootsofg}, it follows that $g^{1/p^e}u\in I\widehat{R^+}$ for all $e>0$, which implies $u\in I^{epf}$ by \cite[Lemma 3.1]{HeitmannMaExtendedPlusClosure}.

Now we prove the last statement. Suppose $R$ is weakly BCM-regular. Now if $I^{epf}=J^{epf}$, then $IB=JB$ for some perfectoid big Cohen-Macaulay algebra $B$ via $(b)\Leftrightarrow(c)$, and thus $I=IB\cap R=JB\cap R=J$ since $R\to B$ is pure. Conversely, suppose every $\m$-primary ideal is extended plus closed. If $R$ is not weakly BCM-regular, then there exists $B$ such that $R\to B$ is not pure, i.e., $E_R(k)\to B\otimes_RE_R(k)$ is not injective. But as $R$ is approximately Gorenstein, we know that $E_R(k)=\varinjlim_t R/I_t$ (see \autoref{subsec:PureSplitMap}). Thus there exists $I_t$ such that $R/I_t\to B\otimes_R R/I_t$ is not injective for some $t$. It follows that $J:=I_tB\cap R\supsetneq I_t$. But since $I_tB=JB$, it follows from $(b)\Leftrightarrow(c)$ that $I_t^{epf}=J^{epf}$ and thus $I_t=J$ which is a contradiction. 
\end{proof}

\subsection{Strong BCM-regularity}
    \label{subsec.StrongBCMRegularity}
    So far we have seen weak BCM-regularity and in the (log) $\bQ$-Gorenstein case, BCM-regularity.  In analogy with the equal characteristic $p>0$ setting, there are a number of potential definitions of strong BCM-regularity that one might consider.  %Before that however, we recall the following notion of BCM-regularity for pairs from \cite{MaSchwedePerfectoidTestideal}.

    \begin{definition}
        \label{def.StrongBCM-Regularity}
        Suppose $(R, \m)$ is a Noetherian complete local domain of residual characteristic $p > 0$.  
        \begin{enumerate}
            \item We say that $(R, \m)$ is \emph{perturbation-weakly-BCM-regular} if for every $0 \neq g \in R^+$ there exists $e \gg 0$ such that $R \xrightarrow{1 \mapsto g^{1/p^e}} B$ is pure for every perfectoid big Cohen-Macaulay $R^+$-algebra $B$.\label{def.StrongBCM-Regularity.a}
            \item We say that $(R, \m)$ is \emph{signature-BCM-regular} if $ s^{\underline{x}}_{\perfd}(R) > 0$.\label{def.StrongBCM-Regularity.b}
            \item We say that $(R, \m)$ is \emph{$v$-BCM-regular} if for some extension of $v$ to $\widehat{R^+}$ extended as a $\bQ$-valuation from the $\m$-adic order on $A$, there exists $\epsilon$ such that for all $c\in R^+$ such that $v(c)<\epsilon$, the map $R \xrightarrow{1 \mapsto c} B$ is pure for every perfectoid big Cohen-Macaulay $R^+$-algebras $B$. \label{def.StrongBCM-Regularity.c}  %and every sequence of elements $c_i \in R^+$ with $\lim v(c_i) = 0$, we have that there exists $i_0$ such that for every perfectoid big Cohen-Macaulay algebras $B$, the map $R \xrightarrow{1 \mapsto c_i} R^+ \to B$ is pure.             
            \item We say that $(R, \m)$ is \emph{de Fernex-Hacon (dFH)-BCM-regular} (or \emph{BCM-regular type}) if there exists an effective $\bQ$-divisor $\Delta$ on $\Spec (R)$ such that $K_R + \Delta$ is Cartier and $(R, \Delta)$ is BCM-regular.\label{def.StrongBCM-Regularity.d}
        \end{enumerate}
    \end{definition}

    The arguments earlier in this section, have established that 
    \begin{equation}
        \text{\autoref{def.StrongBCM-Regularity.d}} \Rightarrow \text{\autoref{def.StrongBCM-Regularity.c}} \Rightarrow \text{\autoref{def.StrongBCM-Regularity.b}} \Rightarrow \text{\autoref{def.StrongBCM-Regularity.a}} \Rightarrow \text{(weakly BCM-regular)} \Rightarrow \text{(splinter)}.
    \end{equation}
More precisely, $\text{\autoref{def.StrongBCM-Regularity.d}} \Rightarrow \text{\autoref{def.StrongBCM-Regularity.c}}$ is contained in the proof of \autoref{prop.dH-BCMRegularImpliesPerfdPositive}; $\text{\autoref{def.StrongBCM-Regularity.c}} \Rightarrow \text{\autoref{def.StrongBCM-Regularity.b}}$ follows from \autoref{lem.valmostzero}; $\text{\autoref{def.StrongBCM-Regularity.b}} \Rightarrow \text{\autoref{def.StrongBCM-Regularity.a}}$ is exactly the content of \autoref{prop.SignaturePositiveImpliesPerturbationBCMRegular}; and, 
finally, $\text{\autoref{def.StrongBCM-Regularity.a}} \Rightarrow \text{(weakly BCM-regular)} \Rightarrow \text{(splinter)}$ are obvious from the definitions. 
    Furthermore, if $R$ is $\bQ$-Gorenstein then we may take $\Delta = 0$ to see that weakly BCM-regular implies (dFH)-BCM-regular, in particular we have  $\autoref{def.StrongBCM-Regularity.a} - \autoref{def.StrongBCM-Regularity.d}$ all coincide in this case. 
    
    While these potential different notions of strong BCM-regularity will \emph{not} be important for us going forward, we do hope that they all coincide.  Note in characteristic $p > 0$, weakly BCM-regular coincides with weakly $F$-regular, and at least in the $F$-finite case, \autoref{def.StrongBCM-Regularity.a} implies strong $F$-regularity which then implies \autoref{def.StrongBCM-Regularity.d} by \cite{SchwedeSmithLogFanoVsGloballyFRegular}. Hence $\autoref{def.StrongBCM-Regularity.a} - \autoref{def.StrongBCM-Regularity.d}$ all coincide in characteristic $p>0$ (also see \cite{HochsterHunekeTightClosureAndElementsOfSmallOrder} for the implication \autoref{def.StrongBCM-Regularity.a} $\Rightarrow$ \autoref{def.StrongBCM-Regularity.c}). The remaining converses, namely whether weakly BCM-regular implies $\autoref{def.StrongBCM-Regularity.a} - \autoref{def.StrongBCM-Regularity.d}$, and whether splinter implies weakly BCM-regular, become the famous ``weak implies strong'' and ``splinter implies weak'' conjectures in characteristic $p>0$.  Those two conjectures are open even for local domains essentially of finite type over an algebraically closed field.

Below we list some examples of BCM-regular singularities in mixed characteristic, see \cite{MaSchwedeSingularitiesMixedCharBCM, MaSchwedeTuckerWaldronWitaszekAdjoint,BMPSTWW-MMP} for more details.

\begin{example}
\begin{enumerate}
    \item If $R=\mathbb{Z}_p\llbracket x_2,\dots,x_d\rrbracket/(p^n+x_2^n+\cdots + x_d^n)$ where $n<d$. Then for $p\gg 0$, $R$ is BCM-regular \cite[Example 7.8]{MaSchwedeTuckerWaldronWitaszekAdjoint}.
    \item Suppose $(R,\Delta)$ is a two-dimensional klt pair of residue characteristic $p>5$, and $\Delta\geq 0$ has standard coefficients. Then $(R,\Delta)$ is BCM-regular by \cite[Theorem 7.11]{MaSchwedeTuckerWaldronWitaszekAdjoint}, see also \cite[Corollary 7.16]{BMPSTWW-MMP}.
    \item If $R$ is a direct summand of a complete regular local ring $S$ then $R$ is always perturbation-weakly-BCM-regular by $(b)\Leftrightarrow(c)$ in \autoref{prop.PerfedPureAlongGImpliesBCMAlongG}. In particular, if $R$ is also $\bQ$-Gorenstein, then $R$ is BCM-regular. 
\end{enumerate}
\end{example}

Next, we present an example of a class of rings $(R,\frak{m})$ which are (dFH)-BCM-regular. Suppose that $\cM\subseteq \bN^k$ be a finitely generated monoid and $I_\cM\subset W(k)\llbracket\cM\rrbracket$ be the ideal generated by all non-unit monomials of $\cM$. Here, $I_\cM$ is the maximal ideal of $W(k)\llbracket\cM\rrbracket$. Take $p-f\in W(k)\llbracket \cM \rrbracket$ such that the image of $p-f$ in $W(k)\llbracket\cM\rrbracket/(I_\cM+(p^2))$ is $p$. We will prove that if $R$ is of the form $ W(k)\llbracket \cM \rrbracket$ or $W(k)\llbracket \cM \rrbracket/(p-f)$, then $R$ admits an effective divisor $\Delta$ such that $K_R + \Delta$ is $\bQ$-Cartier and $(R,\Delta)$ is BCM-regular. For example, complete log-regular local rings with perfect residue field of characteristic $p>0$ satisfies this property, see \cite[Chapter 3, Theorem 1.11.2]{OgusLogarithmicAlgebraicGeometry}, \cite[
Theorem 2.2]{ShimomotoEtAlPerfectoidTowersAndTheirTilts}. This is slightly stronger than the fact that such an $R$ is a splinter, which is proven in \cite{GabberRameroAlmostRingsAndPerfectoidSpaces}, \cf \cite{ShimomotoEtAlPerfectoidTowersAndTheirTilts}. To begin with, we summarize notations on toric rings, see also \cite{CoxLittleSchenckToricVarieties,FultonToric,HochsterRingsInvariantsofTori,MustataToricNotes}.

\begin{comment}
For more detailed descriptions about log rings, see \cite{OgusLogarithmicAlgebraicGeometry} or \cite{ShimomotoEtAlPerfectoidTowersAndTheirTilts}.
\end{comment}

\begin{comment}
Let $N\cong \Z^d$ be the lattice point. Let $v_1, v_2, \dots v_d \in N$ and set $\sigma = \left\{ r_1 v_1 + r_2 v_2 + \dots r_dv_d \mid r_i\geq 0\right\}$ be a cone. Let $M=N^\vee=\Hom(M,\Z)\cong \Z^k$. If we denote $\langle \cdot ,\cdot \rangle : M\times N \to \Z$ the paring, $\sigma^\vee=\left\{m\in M \mid \langle m, v\rangle \geq 0 \text{ for all } v\in\sigma \right\}$ is a dual cone in $M$, and $\sigma^\vee \cap M$ forms a multiplicative semi-group of monomials.
\end{comment}

\begin{definition}
Let $\cM$ be a monoid. We denote $\cM^*$ to be the set of elements $x$ in $\cM$ such that there exists $y\in \cM$ with $x+y=0$. Furthermore, we write $\cM^{\mathrm{gp}}:=\left\{v - w \mid v,w\in \cM\right\}$.
\end{definition}

\begin{definition}
We say a monoid $\cM$ is

\begin{itemize}

    \item \emph{strongly convex} if $\cM^* = 0$.

    \item \emph{integral} if the cancellation law holds.
    \item \emph{saturated} if $\cM$ is integral and for $ v \in \cM^{\mathrm{gp}}$ there is some $n\in \N$ such that $nv\in \cM$, then $v\in \cM$.

%    \item \emph{fine} if $\cM$ is finitely generated and the cancellation law holds.

     \item \emph{normal} if $\cM$ is finitely generated and saturated. %finitely generated and for $u, v$, and $w\in \cM$, if $u+nv = nw $ for some $n\in\bN$, there exists $u_1\in \cM$ such that $u = nu_1$.
    
\end{itemize}

A strongly convex monoid $\cM$, with an embedding $\cM \subseteq \bN^k$ (that is, with a choice of variables if working multiplicatively), is

\begin{itemize}
   
    \item \emph{full} if $\cM$ is finitely generated and for $v$ and $w\in \cM$ and $u \in \bN^k$, if $u+v = w $, then $u \in \cM$.
\end{itemize}

\end{definition}

\begin{comment}
Let $M$ be a multiplicative group generated by $x_1,\dots,x_d$. Let $\cM\subseteq M$ be a multiplicative semi-group of monomials. We say $\cM$ is 
\begin{itemize}
\item \emph{full} if for any monomials $m_1, m_2$, and $m_3$, if $m_1, m_2\in \cM$ and $m_1m_3 = m_2$, then $m_3\in \cM$. 
\item \emph{normal} if $\cM$ is finitely generated.
\item \emph{positive} if $\cM$ is a sub-monoid of the monoid generated by $x_1,\dots, x_d$.
\item \emph{strongly convex} if
\end{itemize}

\end{comment}

\begin{comment}

\begin{definition}
Let $\cM$ be a monoid. A \emph{log ring} is a triple $(R, \cM, \alpha)$ where $\alpha: \cM\to R$ to a multiplicative monoid of the ring $R$. We say the log ring is \emph{local} if $R$ is local and $\alpha^{-1}(R^\times) = \cM^*$.
\end{definition}
\end{comment}

Suppose that $\cM$ is a finitely generated, integral monoid and $\cM^{\mathrm{gp}}$ is torsion-free, then $W(k)[\cM]$, equipped with the multiplicative monoid, is an affine semigroup ring, and hence it is a domain (see \cite[Section I.3]{OgusLogarithmicAlgebraicGeometry}, \cite[Chapter 1]{MustataToricNotes}, also \cite[Proposition 1.1.14]{CoxLittleSchenckToricVarieties}). A strongly convex monoid $\cM$ is normal if and only if the corresponding ring $W(k)[\cM]$ is normal \cite[Theorem 1.3.5]{CoxLittleSchenckToricVarieties}. Suppose that $\cM$ is a monoid of monomials in variables $x_1, \dots, x_n$ (in other words, with an embedding $\cM \subseteq \bN^k$), then $\cM$ is normal if and only if $\cM$ is isomorphic to a full monoid of monomials with possibly a different choice of variables, see \cite[Proposition 1]{HochsterRingsInvariantsofTori}. Let $\sigma$ be a rational polyhedral cone with $\sigma^\vee \cap \N^k = \cM$. 
%If $\cM$ is strongly convex, the corresponding cone $\sigma$ is also strongly convex, meaning the origin is a face of the cone (equivalently, $\sigma \cap (-\sigma) = \left\{0\right\}$ \cite[Proposition 1.2.12]{CoxLittleSchenckToricVarieties}).
The first lattice point $v_1,\dots, v_t$ of edges of $\sigma$ define divisors on $R=W(k)[\cM]$. Suppose that $D_i$ is the prime divisor corresponding to  $v_i^\bot \cap \sigma^\vee$. For $m\in \cM$, if $D = \Div_R (m)$ is a $\bQ$-Cartier divisor, then $D=\sum \langle m, v_i \rangle D_i$ as the sum of divisors defined by the first lattice point of edges of $\sigma$. In general, one can choose the canonical divisor $K_R = -\sum_{i=1}^t D_i$ under these circumstances \cite[Lemma 2.5]{RobinsonToricBCM}.

    \begin{proposition}[\cf \cite{BlickleMultiplierIdealsAndModulesOnToric,GabberRameroAlmostRingsAndPerfectoidSpaces,RobinsonToricBCM,ShimomotoEtAlPerfectoidTowersAndTheirTilts}]
        Let $\cM$ be a strongly convex normal monoid and $k$ is a perfect field of characteristic $p > 0$. Let $I_\cM$ be the ideal generated by non-unit monomials of $\cM$.  Suppose $R' = W(k)\llbracket \cM \rrbracket$ and $R = W(k)\llbracket \cM \rrbracket/(p-f)$ where the image of $p-f$ in $W(k)\llbracket\cM\rrbracket/(I_{\cM}+(p^2))$ is $p$.  Then there exists $\Delta \geq 0$ on $\Spec R$ such that $K_R + \Delta$ is $\bQ$-Cartier and $(R, \Delta)$ is BCM-regular.  In other words, $R$ is (dFH)-BCM-regular.
    \end{proposition}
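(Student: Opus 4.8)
The plan is to apply \autoref{prop.AlternateTheoremOnBCMRegularityAndPerfdSignaturePositive}: build the boundary $\Delta$ out of the (log-)toric boundary of $R$, take for the perfectoid cover the perfect closure of the monoid $\cM$, and verify the needed injectivity of top local cohomology via an explicit monomial splitting. Throughout I use that $R$ is normal and Cohen--Macaulay and carries a log-regular structure with log-monoid $\cM$ (\cf\cite{OgusLogarithmicAlgebraicGeometry,ShimomotoEtAlPerfectoidTowersAndTheirTilts}); the case $R=W(k)\llbracket\cM\rrbracket$ is handled by the same argument with the deformation step omitted, so I focus on $R=W(k)\llbracket\cM\rrbracket/(p-f)$. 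First I would fix the cone $\sigma$ (strongly convex, since $\cM^*=0$) with $\sigma^\vee\cap\cM^{\mathrm{gp}}=\cM$, its primitive ray generators $v_1,\dots,v_k$, and the corresponding prime divisors $D_1,\dots,D_k\subseteq\Spec R$ (the height-one primes minimal over $I_\cM R$), for which $K_R+\sum_i D_i\sim 0$. Choosing $m_0$ in the nonempty interior of $-\sigma^\vee$, scaled so that $-1<\langle m_0,v_i\rangle<0$ for every $i$, and setting $\Delta:=\sum_i\bigl(1+\langle m_0,v_i\rangle\bigr)D_i$, we get $\Delta\geq 0$ with all coefficients in $[0,1)$, with $K_R+\Delta\sim_{\bQ}\Div(\chi^{m_0})\sim_{\bQ}0$ (hence $\bQ$-Cartier), and with $(R,\Delta)$ klt. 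Fixing $N$ with $Nm_0\in\cM^{\mathrm{gp}}$, I would take $h\colon\Spec S\to\Spec R$ to be the finite normal local extension attached to the saturated monoid $\tfrac1N\cM^{\mathrm{gp}}\cap\sigma^\vee$, over which $h^*(K_R+\Delta)\sim 0$ is Cartier.

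Next I would choose the Noether--Cohen normalization $A=W(k)\llbracket x_2,\dots,x_d\rrbracket\subseteq R$ so that $x_2,\dots,x_d$ are monomials of $\cM$, set $\underline{x}=p,x_2,\dots,x_d$, and let $T$ be the $p$-adic completion of $W(k)\bigl[\tfrac1{p^\infty}\cM\bigr]\bigl[p^{1/p^\infty}\bigr]$ (where $\tfrac1{p^\infty}\cM=\bigcup_e\tfrac1{p^e}\cM$) modulo the $p$-adic closure of $(p-f)$. By the discussion preceding \autoref{lem.gPerfd=AllpPowerRootsofg}, $T$ is perfectoid; it is an $S$-algebra, and by the universal property of perfectoidization there are maps $R^{A_{\infty,0}}_{\perfd}\to T\to\widehat{R^+}$. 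The combinatorial heart of the matter is that for each $e$ the inclusion of monoids $\cM\hookrightarrow\tfrac1{p^e}\cM$ decomposes $W(k)\bigl[\tfrac1{p^e}\cM\bigr]$ as a $W(k)[\cM]$-module indexed by the $\cM$-orbits in $\tfrac1{p^e}\cM$, the orbit of $0$ being $W(k)[\cM]$ itself (\cf\cite{HochsterRingsInvariantsofTori}); this produces $W(k)[\cM]$-module retractions $\rho_e$ of $W(k)[\cM]\hookrightarrow W(k)\bigl[\tfrac1{p^e}\cM\bigr]$, compatible in $e$, and I would check that they survive $W(k)$-adic completion and the quotient by $p-f$ (on each $I_\cM$-adic graded piece they are the ordinary toric retractions).

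Finally, taking $g$ to be a sufficiently large power of $x_2\cdots x_d$ (or any monomial of $\cM$ cutting out the union of the singular locus of $R$ and $\Supp\Delta$), equipped with its evident compatible system of $p$-power roots in $T$, I would verify that $H^d_\m(R(K_R))\xrightarrow{\,1\mapsto g^{1/p^e}\,}H^d_\m\bigl(T\otimes_S S(h^*(K_R+\Delta))\bigr)\cong H^d_\m(T)$ is injective for $e\gg 0$: in the monomial description of $H^d_\m(R(K_R))$ a socle generator is a fixed ``negative'' monomial class, the twist by $\Delta$ amounts to permitting the fractional monomial $\chi^{m_0/N}$, and applying $\rho_e$ after correcting by $g^{1/p^e}$ keeps the image of that class nonzero. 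Then \autoref{prop.AlternateTheoremOnBCMRegularityAndPerfdSignaturePositive} yields that $(R,\Delta)$ is BCM-regular (and $s^{\underline{x}}_{\perfd}(R)>0$), so $R$ is (dFH)-BCM-regular. I expect the main obstacle to be precisely the deformation by $p-f$: all of the above is immediate for $R'=W(k)\llbracket\cM\rrbracket$, and the hypothesis that $p-f$ reduces to $p$ modulo $I_\cM+(p^2)$ is exactly what should let one transfer the monomial retractions and the socle computation to $R=R'/(p-f)$ --- concretely by running the purity check against an approximately-Gorenstein filtration (\autoref{lem.SplittingApproxGorenstein}, \autoref{lem.CharacterizationIinftyviaAppGorenstein}) and absorbing the higher-order term $f$ via a formal change of coordinates, or by comparison with $\gr_{I_\cM R}R$. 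A secondary technical point will be confirming that $R^{A_{\infty,0}}_{\perfd}$ is exactly this $p$-completed perfect-monoid algebra modulo $(p-f)^-$ and nothing larger, which follows from the universal property of perfectoidization together with the fact that the proposed $T$ is already perfectoid.
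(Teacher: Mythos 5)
Your plan takes a genuinely different route from the paper. You propose to build the boundary $\Delta$ from a linear form $m_0$ in the interior of $-\sigma^\vee$, take for $T$ a perfectoid ring built from the fractional-monoid algebra, and verify the hypotheses of \autoref{prop.AlternateTheoremOnBCMRegularityAndPerfdSignaturePositive} directly via explicit monomial retractions. The paper instead works on the ambient ring $R'=W(k)\llbracket\cM\rrbracket$, takes $\Delta_{R'}$ there, passes to $R=R'/(p-f)$ via the adjunction formula and a computation of the different, and then splits $R$ into a power series ring $S = W(k)\llbracket x_1,\dots,x_d\rrbracket/(p-f)$ via the Hochster-style monomial embedding $m\mapsto x_1^{v_1(m)}\cdots x_d^{v_d(m)}$. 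The BCM-regularity of $(S,\Delta_S)$ for $S$ regular with $\Delta_S$ SNC of coefficients in $[0,1)$ is taken off the shelf from \cite[Theorem 4.1]{MaSchwedeTuckerWaldronWitaszekAdjoint}, and then the weak functoriality of big Cohen--Macaulay algebras together with a commuting diagram of top local cohomologies finishes. The paper's argument thus handles the ramified deformation $p-f$ once and for all by packaging it into the hypersurface $D=V(p-f)\subset\Spec R'$ and never looking at it again; your argument has to confront $p-f$ directly inside the monomial computation, which is exactly where the difficulty sits.

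That said, there are several genuine gaps in what you wrote.

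First, the claim that $T$ is perfectoid by ``the discussion preceding \autoref{lem.gPerfd=AllpPowerRootsofg}'' is not justified: that discussion concerns modding a perfectoid ring by the closure of an ideal generated by elements that admit compatible systems of $p$-power roots, but $p-f$ has no such system in your fractional-monoid algebra. You would have to define $T$ as the perfectoidization of $W(k)\bigl[\tfrac1{p^\infty}\cM\bigr]\bigl[p^{1/p^\infty}\bigr]^{\wedge_p}/(p-f)$ (which is perfectoid by \cite[Theorem 7.4]{BhattScholzepPrismaticCohomology}), and then you would have to argue separately that its underlying ring is the one you describe — that is, that the perfectoidization kills nothing beyond $(p-f)^-$. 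This is not automatic.

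Second, and most seriously, you yourself flag ``the deformation by $p-f$'' as the main obstacle and offer only two vague directions to close it — a formal change of coordinates and a comparison with $\gr_{I_\cM R}R$. Neither is fleshed out, and neither obviously works: $f\in I_\cM+(p^2)$ does not mean $p-f$ is a unit multiple of $p$, so there is in general no change of coordinates reducing to the unramified case. The monomial retraction $\rho_e$ on $W(k)\bigl[\tfrac1{p^e}\cM\bigr]$ does descend modulo $(p-f)$ (since it is $W(k)[\cM]$-linear and $p-f$ lives in the subring), but you have yet to verify that it interacts correctly with adjoining $p^{1/p^\infty}$, with the $p$-adic completion, with the twist by $S(h^*(K_R+\Delta))$, and with the multiplication by $g^{1/p^e}$ — each of these is a separate check, and for the divisor twist in particular there is no off-the-shelf ``orbit decomposition'' argument. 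Until these are filled in, the proposed proof does not establish the injectivity of $H^d_\m(R(K_R))\to H^d_\m\bigl(T\otimes_S S(h^*(K_R+\Delta))\bigr)$ in the ramified case, which is exactly the content the proposition adds beyond the unramified result of \cite{RobinsonToricBCM}.

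If you want to push your approach through, a cleaner route to the retraction step is to argue over $R'$ first (where the retractions are literally the Hochster direct summand maps), then restrict to $D=V(p-f)$ using that the retractions are $R'$-linear — but at that point you have essentially reinvented the paper's adjunction-then-split strategy.
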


    By \cite{RobinsonToricBCM} there exists $\Delta_{R'}$ on $\Spec R'$ such that $(R', \Delta_{R'})$ is BCM-regular.  We take a slightly different approach for the ramified case which can also be modified to recover Robinson's result.

    \begin{proof}
        If $f = 0$ and $R$ has equal characteristic $p > 0$ then this and more is done in \cite{BlickleMultiplierIdealsAndModulesOnToric}.  
                
        %Suppose that $R$ has mixed characteristic. Then, we have either $R \cong W(k)\llbracket \cM \rrbracket$ or $R \cong W(k)\llbracket \cM \rrbracket/(p-f)$ for some $f \in \m$ by setting $\theta - p = -f$ in \autoref{thm.CharacterizationofLogRegular}. 
%        Note that we can furthermore assume that $f$ is not divisible by $p$. Indeed, suppose that $f$ is divisible by $p$, say $f = p\tilde{f}$. Then, $\tilde{f}$ has no constant term, and so $1-\tilde{f}$ is a unit. Therefore, the problem reduces to the positive characteristic case since $\theta = p - p\tilde{f} = p(1-\tilde{f})$.

        Our strategy is similar to that of \cite{ShimomotoEtAlPerfectoidTowersAndTheirTilts}.  %Write $R' = W(k)\llbracket \cM \rrbracket$ and $R := R'/(p-f)$. 
        Let $D=\Div_{\Spec R'}(p-f) = \Spec R$. Let $D_i'$ be the divisor defined by the extremal rays of the cone of $W(k)\llbracket \cM\rrbracket$. 
         Choose a canonical divisor as $K_{R'} = -D-\sum_{i=1}^t D_i'$. Consider an effective Cartier divisor $H'=\sum b_i D_i'$ such that $\Supp(H') = \Supp(K_{R}+D)$. Fix $1\gg\epsilon>0$ a small rational number and define $\Delta_{R'} = -K_{R'}-D -\epsilon H' = \sum (1-\epsilon b_i) D_i'$. Then, by construction, $K_{R'}+\Delta_{R'} +D = -\sum \epsilon b_i D_i'$ is $\bQ$-Cartier and we also have that $\Delta_{R'}$ and $D$ have no common components.

        Now, by the adjunction formula for adjoint $\bQ$-divisors, see  \cite{KollarKovacsSingularitiesBook} or \cite[Subsection 2.1]{MaSchwedeTuckerWaldronWitaszekAdjoint}, we may write $(K_{R'}+\Delta_{R'}+D)|_{D}\sim_\bQ K_{R} + \Delta_{R}$ where $\Delta_{R} := \mathrm{Diff}_{R}(D+\Delta_{R'})$ is effective since $R$ is normal.        
        Observe that $K_{R}+ \Delta_R$ is $\bQ$-linearly equivalent to the restriction of $-\sum\epsilon b_i D_i$ to $R$, thereby it is $\bQ$-Cartier. 
        We compute the different $\Delta_R$ explicitly.  Notice that for each prime divisor $D_i'$ as above with associated prime ideal $I_{D_i'} \subseteq R'$, we have that the domain $R_i' := R'/I_{D_i'}$ is also normal, since every face of a saturated semigroup is saturated, \cf \cite{FultonToric,MustataToricNotes}, and hence $R_i := R_i'/(p-f)$ is a normal integral domain.  Then $I_{D_i'}$ is principal along codimension 1 points of $\Spec R$, so we obtain prime divisors $D_i := D_i' |_{D}$.  It then follows from the adjunction sequence that $K_{R} = -\sum D_i$ and also that $\Delta_R = \mathrm{Diff}_{R}(D+\Delta_{R'}) = (1 - \epsilon b_i) D_i$. 

    We claim that $(R,\Delta_{R})$ is BCM-regular. By, \cite[Lemma 2.10]{ShimomotoEtAlPerfectoidTowersAndTheirTilts} \cf \cite{HochsterRingsInvariantsofTori}, we know there exists $S' = W(k)\llbracket x_1, \dots, x_t \rrbracket$ such that $R' \to S'$ splits (note $R' \to S'$ is \emph{not} finite in general as $S'$ may have larger dimension).  We briefly need to recall the construction of the map $R' \to S'$.  Let $v_i$ be the valuation associated with each monomial prime divisor $D_i$ on $\Spec R$.  Consider a map $W(k)[\cM]\to W(k)[x_1,\dots, x_t]$ defined by sending $m\in\cM$ to $x_1^{v_1(m)}x_2^{v_2(m)}\dots x_t^{v_t(m)}$ which splits as $W(k)[\cM]$-modules by \cite{HochsterRingsInvariantsofTori} (in that reference, a splitting is called a ``Reynolds operator'') or by  \cite[Lemma 2.10]{ShimomotoEtAlPerfectoidTowersAndTheirTilts}. 
    %Let $\mathcal{Q}$ be the monoid generated by $x_1, \dots, x_t$. Then, thanks to \cite[Proposition 2.8]{ShimomotoEtAlPerfectoidTowersAndTheirTilts}, the inclusion can be identified as the canonical map $W(k)[\mathcal{Q}_0]\to \bigoplus_{g\in \mathcal{Q}/\cM} W(k)[\mathcal{Q}_g]$ where $\mathcal{Q}_g$ is the inverse image of $g$ under $\mathcal{Q}\to \mathcal{Q}/\cM$. 
    Now, the proof of \cite[Lemma 2.28]{ShimomotoEtAlPerfectoidTowersAndTheirTilts} and \cite[Remark 2.9]{ShimomotoEtAlPerfectoidTowersAndTheirTilts} shows the splitting extends to the completions. Finally we define $R \to S = S'/(p-f)$ to be the map induced by modding out by $p-f$ (which is also split, \cite[Proof of Theorem 2.29]{ShimomotoEtAlPerfectoidTowersAndTheirTilts}).  We write $E_i' = \Div_{\Spec S'}(x_i)$ and $E_i'|_{\Spec S} = E_i = \Div_{\Spec S}(x_i)$ and observe that these are the pullbacks of $D_i'$ and $D_i$ respectively (in the sense that $I_{D_i'} S'$ reflexifies to $I_{E_i'} = S'(-E_i')$, and likewise $I_{D_i} S$ reflexifies to $I_{E_i}$) by our choice of $R' \to S'$.
    
    Next, consider $B$ a perfectoid big Cohen-Macaulay $R^+$-algebra. Since $R$ and $S$ have the same residue field, by \cite[Theorem A.5]{MaSchwedeTuckerWaldronWitaszekAdjoint}, there exists a perfectoid big Cohen-Macaulay $S^+$-algebra $C$ such that the following diagram commutes:
    \[
        \xymatrix{
        R \ar[r]\ar[d] & R^+ \ar[r]\ar[d] & B\ar[d]\\
        S \ar[r] & S^+ \ar[r] & C.
        }
    \]         
    Let $E = \Div_{\Spec S'}(p-f) = \Spec S$, consider $\Delta_S' =\sum (1-\epsilon b_i) E_i'$, and write $\Delta_{S} = \mathrm{Diff}_{S}(E+\Delta_{S'}) = \sum-\epsilon b_iE_i'|_{\Spec S} - K_{S} = \sum(1 - \epsilon b_i)E_i$ as the corresponding divisors on $\Spec S$.  By construction of the map $R \to S$ and our observations above, we notice that $R^+(\Delta_R) \subseteq S^+(\Delta_S)$.  Combining this with the previous diagram, we obtain a commutative diagram
    \[
        \xymatrix{
            R \ar[r]\ar[d] & R^+(\Delta_{R})  \ar[r]\ar[d]& R^+(\Delta_{R})\otimes_{R^+} B\ar[d]\\
            S \ar[r] & S^+(\Delta_{S}) \ar[r] & S^+(\Delta_{S})\otimes_{{S}^+}C.
        }
    \]      
    Now, tensor the above diagram with $ R(K_{R})$ and 
    %the bottom row with $S(K_{S})$ (noting that $R(K_{R}) \subseteq S(K_{S})$ again by our particular choice of $R \to S$). 
    take local cohomology to obtain
    \[
    \xymatrix{
    H^d_\fram (R(K_{R})) \ar[r]\ar[d] & H^d_\fram (R^+(K_R+\Delta_{R})) \ar[r]\ar[d]&H^d_\fram(R^+(K_{R}+\Delta_{R}) \otimes_{R^+}B)\ar[d] \\
    H^d_\fram (R(K_R)\otimes_R S) \ar[r] &H^d_\fram (R(K_R)\otimes_R S^+(\Delta_{S}) )\ar[r]&H^d_\fram(R(K_R)\otimes_R S^+(\Delta_{S}) \otimes_{S^+}C).
    }
    \]
    Here, recall that the top local cohomology does not detect the difference between $R^+(K_R+\Delta)$ and $R(K_R)\otimes R^+(\Delta_R)$ (see the proof of \autoref{prop.dH-BCMRegularImpliesPerfdPositive}). Since $S$ is regular, $\Delta_{S}$ has SNC support, and $\lfloor\Delta_{S}\rfloor = 0$, we see that $S\to S^+(\Delta_{S}) \otimes_{S^+}C$ splits by \cite[Theorem 4.1]{MaSchwedeTuckerWaldronWitaszekAdjoint} and so the bottom row of the above diagram is injective. 
    %Furthermore, $H^d_\fram(S) \stackrel{\sim}{\to}H^d_\fram(S(K_S))$ and $H^d_\fram(R) \to H^d_\fram(S)$ splits. 
    Therefore, the composed map from the top left corner to bottom right corner:
    \[
        H^d_{\fram}(R(K_{R})) \to H^d_\fram(R(K_R)\otimes_R S) \to  H^d_\fram(R(K_R)\otimes_R S^+(\Delta_{S}) \otimes_{S^+}C)
    \]
    is injective and hence the top row 
    \[
        H^d_{\fram}(R(K_{R})) \to H^d_\fram(R^+(K_{R}+\Delta_{R}) \otimes_{R^+}B)
    \]
    injects as well.  This is what we wanted to show.
    \end{proof}

    \subsection{BCM-rational rings and perfectoid rational and relative rational signatures}
    \label{subsec.BCMrational}
In this subsection we introduce mixed characteristic variants of $F$-rational signature of Hochster-Yao \cite{HochsterYaoFratSignature} and relative $F$-rational signature of Smirnov-Tucker \cite{SmirnovTuckerRelativeFSignature}. Our main result is to relate these notions with BCM-rational singularities as defined in Ma-Schwede \cite{MaSchwedeSingularitiesMixedCharBCM}. This is in complete analogy with the characteristic $p>0$ picture and is the rational version of \autoref{prop.dH-BCMRegularImpliesPerfdPositive} and \autoref{prop.SignaturePositiveImpliesPerturbationBCMRegular}. 

\begin{definition}
  Suppose that $(R, \m)$ is a Noetherian complete local domain with residue characteristic $p > 0$ and dimension $d$.  We say that $R$ is \emph{BCM-rational}\footnote{Again, technically speaking, this is called ``perfectoid BCM-rational" in \cite{MaSchwedeSingularitiesMixedCharBCM}.  We are only considering perfectoid big Cohen-Macaulay $R^+$-algebras and so we ignore this distinction.} if it is Cohen-Macaulay and if for every perfectoid big Cohen-Macaulay $R^+$-algebra $B$, the induced map $H_\m^d(R)\to H_\m^d(B)$ is injective.
\end{definition}

Now we introduce perfectoid rational and relative rational signatures.
 
\begin{definition}
    We use notation as in \autoref{not.setup}. Then we
    define the \emph{perfectoid rational signature} to be
    $$s^{\underline{x}}_{\ratperfd}(R)=\inf_{\substack{\underline{y} \\ z \not\in \underline{y}} } e^{\underline{x}}_{\perfd}(\underline{y}) - e^{\underline{x}}_{\perfd}((\underline{y},z))$$
    where the infimum varies over all systems of parameters $\underline{y}$ and all elements $z \not\in (\underline{y})$.
    Similarly, we
    define the \emph{perfectoid relative rational signature} to be
    $$s^{\underline{x}}_{\relperfd}(R)=\inf_{\substack{\underline{y} \\ \underline{y}\subsetneq J}} \frac{e^{\underline{x}}_{\perfd}(\underline{y}) - e^{\underline{x}}_{\perfd}(J)}{\length_R(J/(\underline{y}))}$$
    where the infimum varies over all systems of parameters $\underline{y}$ and all strictly larger ideals $(\underline{y}) \subsetneq J$.
\end{definition}

\begin{proposition}
    \label{prop.notcmratsigzero}
    With notation as in \autoref{not.setup}, if $(R,\m)$ is not Cohen-Macaulay, then for any system of parameters $\underline{y}$ there exists some $u \in R \setminus (\underline{y})$ with  $e^{\underline{x}}_{\perfd}(\underline{y},u) = e^{\underline{x}}_{\perfd}(\underline{y})$. In particular, $s^{\underline{x}}_{\ratperfd}(R) = s^{\underline{x}}_{\relperfd}(R) = 0$.
\end{proposition}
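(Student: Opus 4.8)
The plan is to leverage that, by \autoref{cor.RperfdAlmostCMKoszulandLocal} and \autoref{rmk.AlmostInjectivityLocalCohomology}, the ring $R^{A_{\infty,0}}_{\perfd}$ is ``$g$-almost Cohen-Macaulay'' along every parameter ideal of $R$, even though $R$ itself is not Cohen-Macaulay, so that the failure of Cohen-Macaulayness of $R$ becomes a purely colength phenomenon that normalized length cannot detect. Fix a system of parameters $\underline{y}=y_1,\dots,y_d$ and form its limit closure
\[
(\underline{y})^{\lim} := \bigcup_{i\ge 0}\big((y_1^{i+1},\dots,y_d^{i+1}):_R (y_1\cdots y_d)^i\big),
\]
exactly as in the proof of \autoref{prop.eFHKgreaterthan1}. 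Since $R$ is not Cohen-Macaulay, \cite[Theorem 9]{MaQuySmirnovColengthMultiplicity} (originating in \cite[Theorem 3.1]{CuongNhanPseudoCM}, and already used for \autoref{prop.eFHKequals1regular}) gives $\length_R(R/(\underline{y})^{\lim}) < e(\underline{y},R) \le \length_R(R/(\underline{y}))$; in particular $(\underline{y}) \subsetneq (\underline{y})^{\lim}$, so I may choose $u \in (\underline{y})^{\lim}\setminus(\underline{y})$, and this is the element the statement asks for.

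Next I would verify that $u$ is ``$g$-almost inside'' $(\underline{y})R^{A_{\infty,0}}_{\perfd}$, i.e. that $(g)_{\perfd}\cdot u \subseteq (\underline{y})R^{A_{\infty,0}}_{\perfd}$. Indeed, by the choice of $u$ there is $i$ with $u(y_1\cdots y_d)^i \in (y_1^{i+1},\dots,y_d^{i+1})R$, so the image of $u$ in $R^{A_{\infty,0}}_{\perfd}/(\underline{y})R^{A_{\infty,0}}_{\perfd}$ lies in the kernel of multiplication by $(y_1\cdots y_d)^i$ into $R^{A_{\infty,0}}_{\perfd}/(y_1^{i+1},\dots,y_d^{i+1})R^{A_{\infty,0}}_{\perfd}$, and that multiplication map is $g$-almost injective by \autoref{rmk.AlmostInjectivityLocalCohomology} (this is where the $g$-almost Cohen-Macaulayness, via \autoref{cor.RperfdAlmostCMKoszulandLocal} applied to the system of parameters $\underline{y}$ of $R=A_0\otimes_A R$, enters). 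Thus the cyclic $R^{A_{\infty,0}}_{\perfd}$-module $(\underline{y},u)R^{A_{\infty,0}}_{\perfd}/(\underline{y})R^{A_{\infty,0}}_{\perfd}$ is annihilated by $(g)_{\perfd}$, hence $g$-almost zero, hence of normalized length $0$ by \autoref{prop.AlmostNormalizedLength} (all modules here are $\m_A$-power torsion since $(\underline{y})$ is $\m_R$-primary and $\m_A\subseteq\m_R$). Feeding this into the short exact sequence
\[
0 \to (\underline{y},u)R^{A_{\infty,0}}_{\perfd}/(\underline{y})R^{A_{\infty,0}}_{\perfd} \to R^{A_{\infty,0}}_{\perfd}/(\underline{y})R^{A_{\infty,0}}_{\perfd} \to R^{A_{\infty,0}}_{\perfd}/(\underline{y},u)R^{A_{\infty,0}}_{\perfd} \to 0
\]
and applying additivity of normalized length (\autoref{prop.NormalizedLengthProperties}\autoref{prop.NormalizedLengthProperties.a}) yields $e^{\underline{x}}_{\perfd}(\underline{y}) = e^{\underline{x}}_{\perfd}(\underline{y},u)$ with $u\notin(\underline{y})$, which is the first assertion.

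Finally, for the ``in particular'' clause, note that for every system of parameters $\underline{y}$ and every ideal $(\underline{y})\subsetneq J$ one has $e^{\underline{x}}_{\perfd}(\underline{y}) - e^{\underline{x}}_{\perfd}(J) \ge \length_R(J/(\underline{y}))\cdot s^{\underline{x}}_{\perfd}(R) \ge 0$ by \autoref{lem.Filtration}, so the infima defining $s^{\underline{x}}_{\ratperfd}(R)$ and $s^{\underline{x}}_{\relperfd}(R)$ are $\ge 0$; meanwhile the pair $\underline{y}$, $J=(\underline{y},u)$ produced above (for which $1\le\length_R(J/(\underline{y}))<\infty$) realizes the value $0$, so both invariants vanish. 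I do not expect a genuine obstacle: the argument is just the combination of the limit-closure criterion for non-Cohen-Macaulayness with the $g$-almost Cohen-Macaulayness of $R^{A_{\infty,0}}_{\perfd}$ and additivity of $\lambda_\infty$; the only point needing care is the passage from ``annihilated by $(g)_{\perfd}$'' to ``normalized length $0$'', which is precisely \autoref{prop.AlmostNormalizedLength}.
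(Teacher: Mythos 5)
Your proof is correct, and it takes a genuinely different route from the paper for the key step. The paper, after choosing $u \in (\underline{y})^{\lim} \setminus (\underline{y})$, observes that $u \in (\underline{y})B$ for any perfectoid balanced big Cohen-Macaulay $R^+$-algebra $B$ (trivially, since $\underline{y}$ is a regular sequence on $B$), and then cites the already-established equivalence \autoref{prop.HKVsBCMClosureVsepfClosure}(b)$\Rightarrow$(a) to conclude $e^{\underline{x}}_{\perfd}(\underline{y}) = e^{\underline{x}}_{\perfd}(\underline{y},u)$. You instead bypass big Cohen-Macaulay algebras entirely: you use the $g$-almost injectivity of the map $R^{A_{\infty,0}}_{\perfd}/(\underline{y}) \xrightarrow{\cdot(y_1\cdots y_d)^i} R^{A_{\infty,0}}_{\perfd}/(y_1^{i+1},\dots,y_d^{i+1})$ from \autoref{rmk.AlmostInjectivityLocalCohomology} to show directly that $(g)_{\perfd}\cdot u \subseteq (\underline{y})R^{A_{\infty,0}}_{\perfd}$, and then finish with \autoref{prop.AlmostNormalizedLength} and additivity. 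This is essentially unravelling the proof of \autoref{prop.HKVsBCMClosureVsepfClosure} in the special case at hand. Both arguments hinge on the same mechanism (limit closure becomes $g$-almost trivial in $R^{A_{\infty,0}}_{\perfd}$); the paper's version is more modular and emphasizes the BCM-closure picture, while yours is more self-contained and makes explicit that no big Cohen-Macaulay algebra is needed. Your handling of the ``in particular'' clause, including the nonnegativity from \autoref{lem.Filtration}, is also fine. One tiny remark: the paper cites \cite[Corollary 2.4]{CuongHoaLoanOnCertainLengthFunctions} for $(\underline{y})^{\lim} \neq (\underline{y})$ directly, whereas you deduce it from the strict colength inequality of \cite[Theorem 9]{MaQuySmirnovColengthMultiplicity}; both are valid.
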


\begin{proof}
    If $(R,\m)$ is not Cohen-Macaulay, then $(\underline{y})^{\lim} \neq (\underline{y})$ by \cite[Corollary 2.4]{CuongHoaLoanOnCertainLengthFunctions} (see also \cite[Theorem 9]{MaQuySmirnovColengthMultiplicity}). Thus, there is some $u \in R \setminus (\underline{y})$ and $t \in \mathbb{Z}_{>0}$ with $u (y_1 \cdots y_d)^{t-1} \in (y_1^t, \ldots, y_d^t)$. If $B$ is a perfectoid balanced big Cohen-Macaulay $R^+$-algebra, it follows that $u \in (\underline{y})B$ and so by \autoref{prop.HKVsBCMClosureVsepfClosure} we have $e^{\underline{x}}_{\perfd}(\underline{y}) = e^{\underline{x}}_{\perfd}(\underline{y},u)$ as desired. 
    It follows immediately that $s^{\underline{x}}_{\ratperfd}(R) = 0$, and taking $(\underline{y}) \subsetneq J := (\underline{y},u)$ in the definition we see $s^{\underline{x}}_{\relperfd}(R) = 0$ as well.
\end{proof}

\begin{lemma}
    \label{lem.reducetosocleideals}
    With notation as in \autoref{not.setup}, suppose $\underline{y}$ is a system of parameters, $J\subseteq R$ is an ideal with $\underline{y} \subsetneq J$, and $a \in \m$. Then there exists an ideal $I$ with $(\underline{y}) \subseteq I \subseteq J$ and $a I \subseteq (\underline{y})$ so that
    \begin{equation*}
        \frac{e^{\underline{x}}_{\perfd}(\underline{y}) - e^{\underline{x}}_{\perfd}(J)}{\length_R(J/(\underline{y}))} \geq \frac{e^{\underline{x}}_{\perfd}(\underline{y}) - e^{\underline{x}}_{\perfd}(I)}{\length_R(I/(\underline{y}))}.
    \end{equation*}
\end{lemma}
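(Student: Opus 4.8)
The plan is to prove this by induction on $n := \length_R(J/(\underline{y}))$, mimicking the reduction step for relative $F$-rational signature in \cite{SmirnovTuckerRelativeFSignature}. At each stage I would introduce the two ideals
\[
    I_1 := J \cap \big((\underline{y}) :_R a\big), \qquad J_1 := aJ + (\underline{y}),
\]
both squeezed between $(\underline{y})$ and $J$. By construction $aI_1 \subseteq (\underline{y})$, so $I_1$ is itself a candidate for the desired $I$; and multiplication by $a$ on $J/(\underline{y})$ has kernel $I_1/(\underline{y})$ and image $J_1/(\underline{y})$, so $\length_R(I_1/(\underline{y})) + \length_R(J_1/(\underline{y})) = n$. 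Since $a \in \m$ and $J/(\underline{y}) \neq 0$, Nakayama's lemma forces $J_1 \subsetneq J$, and (using that $a$ is nilpotent modulo the $\m$-primary ideal $(\underline{y})$) one checks that $(\underline{y}) \subsetneq I_1$ and $(\underline{y}) \subsetneq J_1$ as soon as $aJ \not\subseteq (\underline{y})$ --- so all the ratios below are well defined. When $aJ \subseteq (\underline{y})$ (in particular whenever $n = 1$, since then $\m J \subseteq (\underline{y})$) we simply take $I = J$.

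Next I would translate to normalized length. For any ideal $(\underline{y}) \subseteq K$, additivity of $\lambda_\infty$ (\autoref{prop.NormalizedLengthProperties}, applied to $0 \to KR^{A_{\infty,0}}_{\perfd}/(\underline{y})R^{A_{\infty,0}}_{\perfd} \to R^{A_{\infty,0}}_{\perfd}/(\underline{y})R^{A_{\infty,0}}_{\perfd} \to R^{A_{\infty,0}}_{\perfd}/KR^{A_{\infty,0}}_{\perfd} \to 0$, whose terms are $\m_A$-power torsion since $(\underline{y})$ is $\m$-primary) gives $e^{\underline{x}}_{\perfd}(\underline{y}) - e^{\underline{x}}_{\perfd}(K) = \lambda_\infty\big(KR^{A_{\infty,0}}_{\perfd}/(\underline{y})R^{A_{\infty,0}}_{\perfd}\big)$. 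Multiplication by $a$ induces a surjection $JR^{A_{\infty,0}}_{\perfd}/(\underline{y})R^{A_{\infty,0}}_{\perfd} \twoheadrightarrow J_1R^{A_{\infty,0}}_{\perfd}/(\underline{y})R^{A_{\infty,0}}_{\perfd}$ whose kernel contains $I_1R^{A_{\infty,0}}_{\perfd}/(\underline{y})R^{A_{\infty,0}}_{\perfd}$ (because $aI_1 \subseteq (\underline{y})$); additivity once more yields the key inequality
\[
    e^{\underline{x}}_{\perfd}(\underline{y}) - e^{\underline{x}}_{\perfd}(J_1) \;\le\; \big(e^{\underline{x}}_{\perfd}(\underline{y}) - e^{\underline{x}}_{\perfd}(J)\big) - \big(e^{\underline{x}}_{\perfd}(\underline{y}) - e^{\underline{x}}_{\perfd}(I_1)\big).
\]

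Writing $\sigma := e^{\underline{x}}_{\perfd}(\underline{y}) - e^{\underline{x}}_{\perfd}(J)$, $P := e^{\underline{x}}_{\perfd}(\underline{y}) - e^{\underline{x}}_{\perfd}(I_1)$, $m := \length_R(I_1/(\underline{y}))$, so $\length_R(J_1/(\underline{y})) = n - m$ with $1 \le m \le n - 1$, the induction step becomes a short case analysis. If $P/m \le \sigma/n$, then $I = I_1$ works. If instead $P/m > \sigma/n$, a one-line computation gives $(\sigma - P)/(n - m) < \sigma/n$, which together with the displayed inequality shows that $J_1$ has ratio $\le \sigma/n$; since $\length_R(J_1/(\underline{y})) = n - m < n$, the inductive hypothesis applied to $(\underline{y}) \subsetneq J_1$ produces an ideal $I$ with $(\underline{y}) \subsetneq I \subseteq J_1 \subseteq J$, $aI \subseteq (\underline{y})$, and ratio $\le \sigma/n$, which closes the induction.

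I expect the main obstacle to be not the normalized-length bookkeeping --- which is entirely formal, using only additivity of $\lambda_\infty$ and its monotonicity under passing to submodules and quotients --- but rather making sure the auxiliary ideals $I_1, J_1$ are genuinely strictly between $(\underline{y})$ and $J$ in all cases, so that every ratio is meaningful and the length strictly decreases in the inductive call. This hinges on $a$ being nilpotent modulo $(\underline{y})$ and on Nakayama's lemma, and it is the part I would write out with the most care.
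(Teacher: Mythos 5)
Your proposal is correct and follows essentially the same route as the paper's proof: you use the same two auxiliary ideals (your $I_1 = J \cap ((\underline{y}):_R a)$ is exactly the paper's $(\underline{y}):_J a$ and your $J_1 = aJ + (\underline{y})$ is the paper's $(\underline{y},aJ)$), the same pair of multiplication-by-$a$ short exact sequences over $R$ and over $R^{A_{\infty,0}}_{\perfd}$, the same containment of kernels to get $e^{\underline{x}}_{\perfd}(\underline{y}) - e^{\underline{x}}_{\perfd}(J) \geq P + Q$, and the same dichotomy (take $I_1$ if its ratio is small enough, else recurse into $J_1$). The only genuine difference is that you induct on $n = \length_R(J/(\underline{y}))$ rather than on the least $m$ with $a^m J \subseteq (\underline{y})$, and you unfold the elementary mediant inequality $\frac{a+c}{b+d} \geq \min(\frac{a}{b},\frac{c}{d})$ into an explicit two-case arithmetic computation rather than citing it; both are cosmetic. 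Your extra attention to the positivity of $\length_R(I_1/(\underline{y}))$ and the strictness $J_1 \subsetneq J$ is fine, though with the paper's choice of induction variable it is automatic: once one is in the case $m > 1$, one has $a^{m-1}J \not\subseteq (\underline{y})$, which immediately gives both $(\underline{y}) \subsetneq I_1$ and $(\underline{y}) \subsetneq J_1$, and the recursive call decreases $m$ regardless of lengths.
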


\begin{proof}
    This essentially follows from the method of proof in \cite[Proposition 3.5]{SmirnovTuckerRelativeFSignature}, and we spell out the argument for completeness. There is an integer $m$ such that $a^{m}J \subseteq (\underline{y})$. If $m = 1$, we may take $I = J$, and otherwise proceed via induction on $m$.

    There are short exact sequences
    \begin{equation*}
        0 \to \frac{((\underline{y}):_J a)}{(\underline{y})} \to \frac{J}{(\underline{y})} \xrightarrow{\cdot a} \frac{(\underline{y},aJ)}{(\underline{y})} \to 0 \; \text{ and}
    \end{equation*}
    \begin{equation*}
        0 \to \frac{((\underline{y})R^{\Ainfty}_{\perfd}:_{JR^{\Ainfty}_{\perfd}} a)}{(\underline{y})R^{\Ainfty}_{\perfd}} \to \frac{JR^{\Ainfty}_{\perfd}}{(\underline{y})R^{\Ainfty}_{\perfd}} \xrightarrow{\cdot a} \frac{(\underline{y},aJ)R^{\Ainfty}_{\perfd}}{(\underline{y})R^{\Ainfty}_{\perfd}} \to 0.
    \end{equation*}
    and using that $((\underline{y}):_J a)R^{\Ainfty}_{\perfd} \subseteq ((\underline{y})R^{\Ainfty}_{\perfd}:_{JR^{\Ainfty}_{\perfd}} a)$, it follows by \autoref{prop.NormalizedLengthProperties}\autoref{prop.NormalizedLengthProperties.a} that we have 
    \begin{equation*}
        e^{\underline{x}}_{\perfd}(\underline{y}) - e^{\underline{x}}_{\perfd}(J) 
        \geq e^{\underline{x}}_{\perfd}(\underline{y}) - e^{\underline{x}}_{\perfd}((\underline{y}):_J a) + e^{\underline{x}}_{\perfd}(\underline{y}) - e^{\underline{x}}_{\perfd}(\underline{y},aJ).
    \end{equation*}
    From the elementary inequality $\frac{a+c}{b+d} \geq \min(\frac{a}{b},\frac{c}{d})$, it follows that
    \begin{equation*}
        \frac{e^{\underline{x}}_{\perfd}(\underline{y}) - e^{\underline{x}}_{\perfd}(J)}{\length_R(J/(\underline{y}))} 
        \geq \min\left( \frac{e^{\underline{x}}_{\perfd}(\underline{y}) - e^{\underline{x}}_{\perfd}((\underline{y}):_J a)}{\length_R(((\underline{y}):_J a)/(\underline{y}))}, \frac{e^{\underline{x}}_{\perfd}(\underline{y}) - e^{\underline{x}}_{\perfd}(\underline{y},aJ)}{\length_R((\underline{y},aJ)/(\underline{y}))} \right).
    \end{equation*}
    If the first quantity achieves the minimum, we have 
    \begin{equation*}
        \frac{e^{\underline{x}}_{\perfd}(\underline{y}) - e^{\underline{x}}_{\perfd}(J)}{\length_R(J/(\underline{y}))} 
        \geq \frac{e^{\underline{x}}_{\perfd}(\underline{y}) - e^{\underline{x}}_{\perfd}((\underline{y}):_J a)}{\length_R(((\underline{y}):_J a)/(\underline{y}))}
    \end{equation*}
    so we see $I = ((\underline{y}):_J a)$ satisfies the desired properties. Otherwise, if the second quantity gives the minimum, we have 
    \begin{equation*}
        \frac{e^{\underline{x}}_{\perfd}(\underline{y}) - e^{\underline{x}}_{\perfd}(J)}{\length_R(J/(\underline{y}))} \geq \frac{e^{\underline{x}}_{\perfd}(\underline{y}) - e^{\underline{x}}_{\perfd}(\underline{y},aJ)}{\length_R((\underline{y},aJ)/(\underline{y}))}.
    \end{equation*}
    Since $a^{m-1}(\underline{y},aJ) \subseteq (\underline{y})$, the induction hypothesis gives an ideal $(\underline{y})\subsetneq I \subseteq (\underline{y},aJ) \subseteq J$ with $aI \subseteq (\underline{y})$ and 
    \begin{equation*}
        \frac{e^{\underline{x}}_{\perfd}(\underline{y}) - e^{\underline{x}}_{\perfd}(J)}{\length_R(J/(\underline{y}))} \geq \frac{e^{\underline{x}}_{\perfd}(\underline{y}) - e^{\underline{x}}_{\perfd}(\underline{y},aJ)}{\length_R((\underline{y},aJ)/(\underline{y}))} \geq \frac{e^{\underline{x}}_{\perfd}(\underline{y}) - e^{\underline{x}}_{\perfd}(I)}{\length_R(I/(\underline{y}))}
    \end{equation*}
    completing the proof.
\end{proof}

The next result shows that, in computing the perfectoid rational or relative rational signature, it suffices to consider socle elements or ideals, respectively. 

\begin{proposition}
    \label{prop.socleenoughforratsig}
    With notation as in \autoref{not.setup}, we have
    \begin{equation*}
        \begin{array}{crclc}
        \;\;\;\;\;& s^{\underline{x}}_{\ratperfd}(R) & = & {\displaystyle\inf_{\substack{\underline{y} \\ z \in ((\underline{y}):_R \m) \setminus (\underline{y})} } e^{\underline{x}}_{\perfd}(\underline{y}) - e^{\underline{x}}_{\perfd}((\underline{y},z))} & \text{ and}\\ & \\
        \;\;\;\;\;& s^{\underline{x}}_{\relperfd}(R) & =& {\displaystyle\inf_{\substack{\underline{y} \\ \underline{y}\subsetneq J \subseteq ((\underline{y}):_R \m)}} \frac{e^{\underline{x}}_{\perfd}(\underline{y}) - e^{\underline{x}}_{\perfd}(J)}{\length_R(J/(\underline{y}))}}.
        \end{array}
    \end{equation*}
\end{proposition}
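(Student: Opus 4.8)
The plan is to prove each of the two equalities by the same two moves. In both cases the restricted infimum is taken over a \emph{subfamily} of the pairs appearing in the unrestricted infimum, so it is automatically $\ge$ the unrestricted one; the content is to show, for each admissible pair $(\underline{y},z)$ (resp. $(\underline{y},J)$), that there is a socle pair with the \emph{same or smaller} value, which forces the reverse inequality. Throughout I would fix the system of parameters $\underline{y}$ and only modify the second input, then take infima over $\underline{y}$ at the end.

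For $s^{\underline{x}}_{\ratperfd}(R)$ I would first record the monotonicity of $e^{\underline{x}}_{\perfd}(\underline{y})-e^{\underline{x}}_{\perfd}(-)$: if $(\underline{y})\subseteq I\subseteq J$ are $\m$-primary, then the surjection $R^{A_{\infty,0}}_{\perfd}/IR^{A_{\infty,0}}_{\perfd}\twoheadrightarrow R^{A_{\infty,0}}_{\perfd}/JR^{A_{\infty,0}}_{\perfd}$ together with additivity of $\lambda_\infty$ (\autoref{prop.NormalizedLengthProperties}\autoref{prop.NormalizedLengthProperties.a}) gives $e^{\underline{x}}_{\perfd}(I)=e^{\underline{x}}_{\perfd}(J)+\lambda_\infty(JR^{A_{\infty,0}}_{\perfd}/IR^{A_{\infty,0}}_{\perfd})\ge e^{\underline{x}}_{\perfd}(J)$, hence $e^{\underline{x}}_{\perfd}(\underline{y})-e^{\underline{x}}_{\perfd}(I)\le e^{\underline{x}}_{\perfd}(\underline{y})-e^{\underline{x}}_{\perfd}(J)$ (all terms finite, the ideals being $\m$-primary). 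Now given $z\in R\setminus(\underline{y})$, the Artinian local ring $R/(\underline{y})$ has the nonzero submodule $(\underline{y},z)/(\underline{y})$, whose socle is nonzero; choose $z'\in\bigl((\underline{y}):_R\m\bigr)\setminus(\underline{y})$ with $z'\in(\underline{y},z)$. Then $(\underline{y})\subsetneq(\underline{y},z')\subseteq(\underline{y},z)$, so monotonicity gives $e^{\underline{x}}_{\perfd}(\underline{y})-e^{\underline{x}}_{\perfd}((\underline{y},z'))\le e^{\underline{x}}_{\perfd}(\underline{y})-e^{\underline{x}}_{\perfd}((\underline{y},z))$, which is exactly what is needed; taking infima yields the first equality.

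For $s^{\underline{x}}_{\relperfd}(R)$ the key input is \autoref{lem.reducetosocleideals}, which I would iterate over a finite generating set $a_1,\dots,a_n$ of $\m$ (finite since $R$ is Noetherian). Starting from an admissible $(\underline{y},J)$, apply the lemma with $a=a_1$ to get $(\underline{y})\subsetneq I_1\subseteq J$ with $a_1I_1\subseteq(\underline{y})$ and
\[
  \frac{e^{\underline{x}}_{\perfd}(\underline{y})-e^{\underline{x}}_{\perfd}(J)}{\length_R(J/(\underline{y}))}\ \ge\ \frac{e^{\underline{x}}_{\perfd}(\underline{y})-e^{\underline{x}}_{\perfd}(I_1)}{\length_R(I_1/(\underline{y}))};
\]
apply it again to $(\underline{y},I_1,a_2)$ to get $(\underline{y})\subsetneq I_2\subseteq I_1$ with $a_2I_2\subseteq(\underline{y})$, noting that $a_1I_2\subseteq a_1I_1\subseteq(\underline{y})$ still holds and the ratio has not increased; after $n$ steps one obtains $(\underline{y})\subsetneq I_n\subseteq J$ with $a_iI_n\subseteq(\underline{y})$ for all $i$, i.e. $\m I_n\subseteq(\underline{y})$, equivalently $I_n\subseteq\bigl((\underline{y}):_R\m\bigr)$, and with ratio $\le$ that of $(\underline{y},J)$. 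Taking infima over all $(\underline{y},J)$ then gives the second equality. A minor point to verify along the way is that each $I_k$ is strictly larger than $(\underline{y})$ so that the displayed ratios make sense, but this is already built into the conclusion of \autoref{lem.reducetosocleideals}.

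I do not expect a genuine obstacle here: both equalities are bookkeeping on top of \autoref{lem.reducetosocleideals} and the additivity/monotonicity of normalized length. The only thing requiring care is promoting the single-element statement of \autoref{lem.reducetosocleideals} (which produces $I$ with $aI\subseteq(\underline{y})$ for one $a\in\m$) to the full socle condition $\m I\subseteq(\underline{y})$ via the finite iteration above, keeping track that shrinking the ideal preserves the earlier containments $a_jI\subseteq(\underline{y})$ and never collapses $I$ down to $(\underline{y})$.
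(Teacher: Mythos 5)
Your argument is correct and matches the paper's proof essentially step for step: for the $\ratperfd$ part you pick a socle representative inside $(\underline{y},z)/(\underline{y})$ using that $R/(\underline{y})$ is an essential extension of its socle, and for the $\relperfd$ part you iterate \autoref{lem.reducetosocleideals} over a finite generating set of $\m$, keeping the containments and the nondegeneracy $I_k\supsetneq(\underline{y})$ at each step. The paper phrases the second half more tersely ("repeatedly applying \autoref{lem.reducetosocleideals} as necessary for a set of generators of $\m$"), but the content is the same.
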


\begin{proof}
    Suppose $\underline{y}$ is a system of parameters. If $z \not\in (\underline{y})$, as $R/(\underline{y})$ is an essential extension of its socle, we may choose $u \in \left( (\underline{y},z) \cap ((\underline{y}):_R \m) \right) \setminus (\underline{y})$. Since $(\underline{y},u) \subseteq (\underline{y},z)$, we have that 
    \begin{equation*}
        e^{\underline{x}}_{\perfd}(\underline{y}) - e^{\underline{x}}_{\perfd}((\underline{y},z)) \geq e^{\underline{x}}_{\perfd}(\underline{y}) - e^{\underline{x}}_{\perfd}((\underline{y},u))
    \end{equation*}
    and the first equality follows. Now suppose we have an ideal $(\underline{y}) \subsetneq J \subseteq R$. Repeatedly applying \autoref{lem.reducetosocleideals} as necessary for a set of generators of $\m$, we see that there is an ideal $I$ with $(\underline{y}) \subseteq I \subseteq J$ and $\m I \subseteq (\underline{y})$ so that
    \begin{equation*}
        \frac{e^{\underline{x}}_{\perfd}(\underline{y}) - e^{\underline{x}}_{\perfd}(J)}{\length_R(J/(\underline{y}))} \geq \frac{e^{\underline{x}}_{\perfd}(\underline{y}) - e^{\underline{x}}_{\perfd}(I)}{\length_R(I/(\underline{y}))}.
    \end{equation*}
    giving the second equality.
\end{proof}

\begin{corollary} With notation as in \autoref{not.setup}, we have
    \label{cor.sratinequalities}
    \begin{enumerate}
        \item \label{cor.gensratinequalities}
        $s^{\underline{x}}_{\ratperfd}(R) \geq s^{\underline{x}}_{\relperfd}(R) \geq s^{\underline{x}}_{\perfd}(R)$, with equality throughout if $R$ is Gorenstein.
        \item \label{cor.cmsrelsratinequality}
        If $R$ is Cohen-Macaulay with type $t$, then $ s^{\underline{x}}_{\relperfd}(R) \geq (1/t)\cdot s^{\underline{x}}_{\ratperfd}(R)$.
    \end{enumerate}
\end{corollary}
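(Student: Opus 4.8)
The proof will be pure bookkeeping with infima, resting on the relative Hilbert--Kunz description of $s^{\underline{x}}_{\perfd}$ in \autoref{prop.PerfdSignatureRelativePerfdHK}, on \autoref{lem.Filtration}, and on the reduction to socle ideals in \autoref{prop.socleenoughforratsig}. For \autoref{cor.gensratinequalities} I will establish the chain $s^{\underline{x}}_{\ratperfd}(R) \geq s^{\underline{x}}_{\relperfd}(R) \geq s^{\underline{x}}_{\perfd}(R)$ from left to right. For the left inequality, given a system of parameters $\underline{y}$ and $z \notin (\underline{y})$, the ideal $J := (\underline{y},z)$ satisfies $(\underline{y}) \subsetneq J$ with $\ell := \length_R(J/(\underline{y})) \geq 1$, hence
\[
    e^{\underline{x}}_{\perfd}(\underline{y}) - e^{\underline{x}}_{\perfd}((\underline{y},z)) \;\geq\; \frac{e^{\underline{x}}_{\perfd}(\underline{y}) - e^{\underline{x}}_{\perfd}(J)}{\ell} \;\geq\; s^{\underline{x}}_{\relperfd}(R),
\]
and passing to the infimum over all such $\underline{y},z$ gives $s^{\underline{x}}_{\ratperfd}(R) \geq s^{\underline{x}}_{\relperfd}(R)$. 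For the right inequality, the pairs $\big((\underline{y}),J\big)$ appearing in the infimum for $s^{\underline{x}}_{\relperfd}(R)$ form a subfamily of all pairs $I \subsetneq J$ of $\m$-primary ideals, so the formula for $s^{\underline{x}}_{\perfd}(R)$ in \autoref{prop.PerfdSignatureRelativePerfdHK} immediately yields $s^{\underline{x}}_{\relperfd}(R) \geq s^{\underline{x}}_{\perfd}(R)$; alternatively this is a direct restatement of \autoref{lem.Filtration}.

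For the equality clause, suppose $R$ is Gorenstein. Fixing a system of parameters $\underline{y}$ with socle representative $u$ of $R/(\underline{y})$, the Gorenstein case of \autoref{prop.PerfdSignatureRelativePerfdHK} gives $s^{\underline{x}}_{\perfd}(R) = e^{\underline{x}}_{\perfd}(\underline{y},R) - e^{\underline{x}}_{\perfd}((\underline{y},u),R)$. Since $R$ is Gorenstein, $u \in ((\underline{y}):_R\m) \setminus (\underline{y})$, so by \autoref{prop.socleenoughforratsig} the right-hand side is one of the quantities over which $s^{\underline{x}}_{\ratperfd}(R)$ takes an infimum; thus $s^{\underline{x}}_{\ratperfd}(R) \leq s^{\underline{x}}_{\perfd}(R)$, which collapses the chain to equalities.

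For \autoref{cor.cmsrelsratinequality}, I will combine a lower bound on numerators with an upper bound on denominators. By \autoref{prop.socleenoughforratsig}, $s^{\underline{x}}_{\relperfd}(R)$ is the infimum of $\big(e^{\underline{x}}_{\perfd}(\underline{y}) - e^{\underline{x}}_{\perfd}(J)\big)/\length_R(J/(\underline{y}))$ over systems of parameters $\underline{y}$ and ideals $(\underline{y}) \subsetneq J \subseteq ((\underline{y}):_R\m)$. Given such a pair, choose any $z \in J \setminus (\underline{y})$; since $(\underline{y},z) \subseteq J$, the surjection $R^{A_{\infty,0}}_{\perfd}/(\underline{y},z)R^{A_{\infty,0}}_{\perfd} \twoheadrightarrow R^{A_{\infty,0}}_{\perfd}/JR^{A_{\infty,0}}_{\perfd}$ together with \autoref{prop.NormalizedLengthProperties} gives $e^{\underline{x}}_{\perfd}((\underline{y},z)) \geq e^{\underline{x}}_{\perfd}(J)$, so
\[
    e^{\underline{x}}_{\perfd}(\underline{y}) - e^{\underline{x}}_{\perfd}(J) \;\geq\; e^{\underline{x}}_{\perfd}(\underline{y}) - e^{\underline{x}}_{\perfd}((\underline{y},z)) \;\geq\; s^{\underline{x}}_{\ratperfd}(R).
\]
On the other hand, since $R$ is Cohen--Macaulay of type $t$, the $k$-vector space $((\underline{y}):_R\m)/(\underline{y})$ has length $t$ for every system of parameters $\underline{y}$, whence $\length_R(J/(\underline{y})) \leq t$. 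As the quantities in these infima are all nonnegative, dividing and taking the infimum gives $s^{\underline{x}}_{\relperfd}(R) \geq (1/t) \cdot s^{\underline{x}}_{\ratperfd}(R)$.

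No genuine difficulty is expected: the substantive inputs (existence and additivity of $\lambda_\infty$, finiteness of $e^{\underline{x}}_{\perfd}$ on $\m$-primary ideals, and the expression of $s^{\underline{x}}_{\perfd}$ as an infimum of relative Hilbert--Kunz drops) are already available. The only points requiring a little care are invoking \autoref{prop.socleenoughforratsig} to restrict the infimum for $s^{\underline{x}}_{\relperfd}(R)$ to socle ideals --- which is precisely what makes the bound $\length_R(J/(\underline{y})) \leq t$ usable --- and recalling the standard fact that the Cohen--Macaulay type is computed by the socle of $R/(\underline{y})$ for any system of parameters.
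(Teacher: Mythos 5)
Your proof is correct and follows essentially the same route as the paper: part (a) from \autoref{prop.PerfdSignatureRelativePerfdHK} (with the Gorenstein equalities via the identity $s^{\underline{x}}_{\perfd}(R) = e^{\underline{x}}_{\perfd}(\underline{y}) - e^{\underline{x}}_{\perfd}((\underline{y},u))$ there), and part (b) from \autoref{prop.socleenoughforratsig} together with the socle bound $\length_R(J/(\underline{y})) \leq t$. The only cosmetic difference is that for the first inequality in (a) the paper passes immediately to socle elements $z$ (where $\length_R((\underline{y},z)/(\underline{y})) = 1$) via \autoref{prop.socleenoughforratsig}, whereas you observe that $\ell \geq 1$ suffices for arbitrary $z$; both are equally short and correct.
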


\begin{proof}
    The first inequality in \autoref{cor.gensratinequalities} follows immediately from the characterization of $s^{\underline{x}}_{\ratperfd}(R)$ in \autoref{prop.socleenoughforratsig} and the  definition of $s^{\underline{x}}_{\relperfd}(R)$. The second inequality in \autoref{cor.gensratinequalities} and the equalities when $R$ is Gorenstein are direct corollaries of \autoref{prop.PerfdSignatureRelativePerfdHK}.
    
    For \autoref{cor.cmsrelsratinequality}, assuming $R$ is Cohen-Macaulay and $\underline{y}$ is a system of parameters, we know that the socle $((\underline{y}):_R \m) / (\underline{y})$ of $R/(\underline{y})$ is a $k$-vector space of dimension $t$. Thus, supposing we have $\underline{y}\subsetneq J \subseteq ((\underline{y}):_R \m)$ and taking some $z \in J \setminus (\underline{y})$, we see 
    \begin{equation*}
        \frac{e^{\underline{x}}_{\perfd}(\underline{y}) - e^{\underline{x}}_{\perfd}(J)}{\length_R(J/(\underline{y}))} \geq \frac{e^{\underline{x}}_{\perfd}(\underline{y}) - e^{\underline{x}}_{\perfd}(\underline{y},z)}{\length_R(J/(\underline{y}))} \geq \frac{s^{\underline{x}}_{\ratperfd}(R)}{\length_R(J/(\underline{y}))} \geq \frac{s^{\underline{x}}_{\ratperfd}(R)}{t}
    \end{equation*}
    and the result now follows from the characterization of $s^{\underline{x}}_{\relperfd}(R)$ in \autoref{prop.socleenoughforratsig}.
\end{proof}

We now characterize regularity via perfectoid relative rational signature.

\begin{proposition}
    With notation as in \autoref{not.setup}, $s^{\underline{x}}_{\relperfd}(R) \leq 1$ with equality if and only if $R$ is regular.
\end{proposition}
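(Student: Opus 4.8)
The plan is to test the infimum defining $s^{\underline{x}}_{\relperfd}(R)$ against the single pair of $\m$-primary ideals $(\underline{y}) = \m_A R \subseteq J = \m_R$, where $\underline{y} = (p, x_2, \dots, x_d)$; it turns out this one pair already forces $s^{\underline{x}}_{\relperfd}(R) \leq 1$, with strict inequality unless $R$ is regular.

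First I would dispose of the regular case: if $R$ is regular then it is Gorenstein, so \autoref{cor.sratinequalities}\autoref{cor.gensratinequalities} gives $s^{\underline{x}}_{\relperfd}(R) = s^{\underline{x}}_{\perfd}(R)$, and this equals $1$ by \autoref{cor.PerfdSignature1IsRegular}; in particular the upper bound holds with equality here. Now assume $R$ is not regular. Since $A$ is regular, $R \neq A$, hence $\m_A R \subsetneq \m_R$ (if they were equal then $R/\m_A R = k$ would have length one over $A$, forcing $R$ to be cyclic and hence equal to $A$), so $(\underline{y}) = \m_A R \subsetneq \m_R$ is a legitimate competitor in the infimum. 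Set $n = \length_R(R/\m_A R)$ and $r = \rank_A R$; then $n \geq 2$, Nakayama's lemma gives $n \geq r$ (exactly as in the proof of \autoref{prop.sFboundby1}), and $\length_R(\m_R/\m_A R) = n - 1 > 0$. By \autoref{prop.Normalizedlengthequalsmultiplicity} we have $e^{\underline{x}}_{\perfd}(\underline{y}) = \lambda_\infty(R^{A_{\infty,0}}_{\perfd}/\m_A R^{A_{\infty,0}}_{\perfd}) = r$, while \autoref{prop.eFHKgreaterthan1} together with \autoref{prop.eFHKequals1regular} gives $e^{\underline{x}}_{\perfd}(R) > 1$ (it is $\geq 1$ and $\neq 1$ since $R$ is not regular). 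Plugging this pair into the definition,
\[
    s^{\underline{x}}_{\relperfd}(R) \;\leq\; \frac{e^{\underline{x}}_{\perfd}(\underline{y}) - e^{\underline{x}}_{\perfd}(\m_R)}{\length_R(\m_R/\m_A R)} \;=\; \frac{r - e^{\underline{x}}_{\perfd}(R)}{n-1} \;<\; \frac{r-1}{n-1} \;\leq\; 1,
\]
so $s^{\underline{x}}_{\relperfd}(R) < 1$. Combining the two cases, $s^{\underline{x}}_{\relperfd}(R) \leq 1$ always, with equality precisely when $R$ is regular.

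I do not expect a real obstacle: the whole argument is bookkeeping once one recognizes that $(\m_A R, \m_R)$ is the right test pair and that the numerical inputs needed—$e^{\underline{x}}_{\perfd}(\m_A R) = \rank_A R$, $e^{\underline{x}}_{\perfd}(R) \geq 1$ with equality iff $R$ is regular, and $\length_R(R/\m_A R) \geq \rank_A R$—are exactly what was already established in the proofs of \autoref{prop.sFboundby1} and of the characterization of regularity. The only steps warranting a moment of care are verifying $\m_A R \subsetneq \m_R$ whenever $R \neq A$, and routing the degenerate case $R = A$ (which is regular) through \autoref{cor.sratinequalities} rather than through an ill-defined pair.
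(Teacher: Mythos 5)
Your proof is correct, and it takes a genuinely different route from the paper's. The paper splits into the Cohen--Macaulay and non-Cohen--Macaulay cases: for non-CM it quotes \autoref{prop.notcmratsigzero} to get $s^{\underline{x}}_{\relperfd}(R)=0$, and for CM it tests the infimum against an \emph{arbitrary} system of parameters $\underline{y}$ with $(\underline{y})\neq\m$, using $e^{\underline{x}}_{\perfd}(\underline{y})=e(\underline{y})=\length_R(R/(\underline{y}))$ (which requires CM for the second equality) to rewrite the quotient as $\frac{\ell-e^{\underline{x}}_{\perfd}(\m)}{\ell-1}$. You instead split on regular vs.\ non-regular and test against the single pair $(\m_AR,\m_R)$, where the numerator uses $e^{\underline{x}}_{\perfd}(\m_AR)=\rank_AR$ from \autoref{prop.Normalizedlengthequalsmultiplicity} (valid with no CM hypothesis) and the denominator is handled by the Nakayama inequality $\length_R(R/\m_AR)\geq\rank_AR$ from the proof of \autoref{prop.sFboundby1}. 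The net effect is that your version sidesteps the Cohen--Macaulay dichotomy altogether: you never need \autoref{prop.notcmratsigzero} or the reduction of $e^{\underline{x}}_{\perfd}$ to $\length_R(R/(\underline{y}))$. The paper's version is slightly more symmetric in flavor (any parameter ideal witnesses the bound in the CM case), while yours is more economical in its prerequisites.

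Two small points worth tightening if you write this up. First, the implication ``$R\neq A\Rightarrow\m_AR\subsetneq\m_R$'' is correct but should be stated as a Nakayama argument with the generator chosen to be $1$: if $\m_AR=\m_R$ then $\dim_k(R/\m_AR)=1$, so $1\notin\m_AR$ serves as a cyclic generator of $R$ over $A$, giving $R=A\cdot 1=A$. Second, the chain $\frac{r-e^{\underline{x}}_{\perfd}(R)}{n-1}<\frac{r-1}{n-1}\leq 1$ silently relies on $n\geq 2$ (so the denominator is positive); this does follow from $\m_AR\subsetneq\m_R\subsetneq R$, but it is worth saying so explicitly, since it is the step that ultimately prevents the bound from degenerating.
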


\begin{proof}
    If $R$ is not Cohen-Macaulay, the inequality follows from \autoref{prop.notcmratsigzero}. Otherwise suppose $R$ is Cohen-Macaulay and $s^{\underline{x}}_{\relperfd}(R) \geq 1$.  For a system of parameters $\underline{y}$ with $(\underline{y}) \neq \m$, it follows that we must have
    \begin{equation*}
        1 \leq s^{\underline{x}}_{\relperfd}(R) \leq \frac{e^{\underline{x}}_{\perfd}(\underline{y}) - e^{\underline{x}}_{\perfd}(\m)}{\length_R(\m/(\underline{y}))} = \frac{\length_R\left(R/(\underline{y})\right) - e^{\underline{x}}_{\perfd}(\m)}{\length_R\left(R/(\underline{y})\right) - 1}
    \end{equation*}
    where we have used that $e^{\underline{x}}_{\perfd}(\underline{y}) = e(\underline{y})$ by \autoref{cor.perfdHKparameterideal} and that $e(\underline{y}) = \length_R\left(R/(\underline{y})\right)$ since $R$ is Cohen-Macaulay (see \cite[Corollary 4.7.11]{BrunsHerzog}). Thus, it follows that $e^{\underline{x}}_{\perfd}(\m) \leq 1$ by  \autoref{prop.eFHKgreaterthan1}, and thus $e^{\underline{x}}_{\perfd}(\m) = 1$ and $s^{\underline{x}}_{\relperfd}(R) = 1$. This happens if and only if $R$ is regular by \autoref{thm.CharacterizationOfRegularLocalRings}.
\end{proof}

We next relate the perfectoid (relative) rational signature with normalized lengths of images of certain maps to local cohomology. 

\begin{lemma}
    \label{lem.relativehkoverparamvialocalcohom}
    With notation as in \autoref{not.setup}, if $\underline{y}$ is a system of parameters and $J\subseteq R$ is an ideal with $\underline{y} \subsetneq J$, then
    \begin{equation*}
        e^{\underline{x}}_{\perfd}(\underline{y}) - e^{\underline{x}}_{\perfd}(J) = \lambda_\infty\left( \im(R^{\Ainfty}_{\perfd} \otimes_R J/(\underline{y}) \xrightarrow{\alpha} H^d_\m(R^{\Ainfty}_{\perfd})) \right).
    \end{equation*}
    where the map $\alpha$ is given by the composition
    \begin{equation*}
        R^{\Ainfty}_{\perfd} \otimes_R J/(\underline{y}) \xrightarrow{\beta}  R^{\Ainfty}_{\perfd} \otimes_R R/(\underline{y}) \xrightarrow{\gamma} \varinjlim_t  R^{\Ainfty}_{\perfd} \otimes_R R/(\underline{y}^t) = H^d_\m(R^{\Ainfty}_{\perfd}).
    \end{equation*}
\end{lemma}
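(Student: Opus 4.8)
Write $P := R^{A_{\infty,0}}_{\perfd}$ for brevity. The plan is to reduce the statement to additivity of normalized length (\autoref{prop.NormalizedLengthProperties}\autoref{prop.NormalizedLengthProperties.a}) together with the almost-injectivity of the canonical map $P/(\underline{y})P \to H^d_{\m}(P)$ recorded in \autoref{rmk.AlmostInjectivityLocalCohomology}.

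First I would note that since $\underline{y}$ is a system of parameters and $(\underline{y}) \subsetneq J$, the ideal $J$ is $\m$-primary and $J/(\underline{y})$ has finite length over $R$; hence $P \otimes_R J/(\underline{y})$, $P/(\underline{y})P$, and $P/JP$ all lie in $A_{\infty,0}\hyphen\Mod_{\m_A}$, so normalized length and its additivity apply to them. Applying \autoref{prop.NormalizedLengthProperties}\autoref{prop.NormalizedLengthProperties.a} to the short exact sequence
\[
    0 \to JP/(\underline{y})P \to P/(\underline{y})P \to P/JP \to 0
\]
and using \autoref{def.PerfdeHKandPerfdSignature}, we get $e^{\underline{x}}_{\perfd}(\underline{y}) - e^{\underline{x}}_{\perfd}(J) = \lambda_\infty\bigl(JP/(\underline{y})P\bigr)$. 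Next I would identify $JP/(\underline{y})P$ with $\im(\beta)$, where $\beta \colon P \otimes_R J/(\underline{y}) \to P \otimes_R R/(\underline{y}) = P/(\underline{y})P$ is the map in the statement: this is immediate since $\beta$ is induced by the inclusion $J/(\underline{y}) \hookrightarrow R/(\underline{y})$, and the image of a $P$-linear map is the $P$-submodule generated by the images of generators, which here is exactly $(JP+(\underline{y})P)/(\underline{y})P$.

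It then remains to compare $\im(\beta)$ with $\im(\alpha) = \im(\gamma\circ\beta)$, where $\gamma \colon P/(\underline{y})P \to H^d_{\m}(P)$ is the canonical map. By \autoref{rmk.AlmostInjectivityLocalCohomology}, $\gamma$ is $g$-almost injective, so $\ker(\gamma)$ is $g$-almost zero; intersecting with the $\m_A$-power torsion submodule $\im(\beta) \subseteq P/(\underline{y})P$, the module $\ker(\gamma)\cap\im(\beta)$ is $g$-almost zero and $\m_A$-power torsion, hence has normalized length $0$ by \autoref{prop.AlmostNormalizedLength}. Feeding the short exact sequence
\[
    0 \to \ker(\gamma)\cap\im(\beta) \to \im(\beta) \to \im(\gamma\circ\beta) \to 0
\]
into \autoref{prop.NormalizedLengthProperties}\autoref{prop.NormalizedLengthProperties.a} gives $\lambda_\infty(\im(\beta)) = \lambda_\infty(\im(\alpha))$, which combined with the displayed identity above is exactly the claim.

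I do not anticipate a genuine obstacle here: the whole argument is a two-step diagram chase. The only points requiring care are bookkeeping — verifying that all modules appearing are $\m_A$-power torsion (so that $\lambda_\infty$ and its additivity are available and finite), and confirming that the almost-vanishing input is invoked for the correct map, namely $\gamma$ of \autoref{rmk.AlmostInjectivityLocalCohomology} rather than some variant.
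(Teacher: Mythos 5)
Your proof is correct and follows essentially the same route as the paper: identify $\im(\beta) = JR^{A_{\infty,0}}_{\perfd}/(\underline{y})R^{A_{\infty,0}}_{\perfd}$, use additivity of $\lambda_\infty$ to equate $e^{\underline{x}}_{\perfd}(\underline{y}) - e^{\underline{x}}_{\perfd}(J)$ with $\lambda_\infty(\im\beta)$, and then pass to $\im(\alpha)$ via the $g$-almost injectivity of $\gamma$ from \autoref{rmk.AlmostInjectivityLocalCohomology} together with \autoref{prop.AlmostNormalizedLength}. The paper states the last step more compactly as ``$\im\alpha$ and $\im\beta$ are $g$-almost isomorphic,'' while you spell out the short exact sequence $0 \to \ker(\gamma)\cap\im(\beta) \to \im(\beta) \to \im(\alpha) \to 0$, but these are the same argument.
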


\begin{proof}
    Note that $\im \beta = JR^{\Ainfty}_{\perfd} / (\underline{y})R^{\Ainfty}_{\perfd}$, and so $e^{\underline{x}}_{\perfd}(\underline{y}) - e^{\underline{x}}_{\perfd}(J) = \lambda_\infty(\im \beta)$ by definition. Since $\gamma$ is $g$-almost injective by \autoref{rmk.AlmostInjectivityLocalCohomology}, we have that $\im \alpha$ and $\im \beta$ are $g$-almost isomorphic. Thus, the assertion now follows as $ \lambda_\infty(\im \beta) = \lambda_\infty(\im \alpha) $ by \autoref{prop.AlmostNormalizedLength}.
\end{proof}

\begin{proposition}
    \label{prop.onesopandlocalcohominterpforratsig}
    With notation as in \autoref{not.setup}, we have
    \begin{enumerate}
        \item 
        \label{prop.onesopforratsig}
        For any fixed system of parameters $\underline{y} \subseteq R$ we have that 
    \begin{equation*}
        \begin{array}{crclc}
            \;\;\;\;\;&s^{\underline{x}}_{\ratperfd}(R) & =& {\displaystyle \inf_{ z \in (\underline{y}:_R \m) \setminus (\underline{y})}  e^{\underline{x}}_{\perfd}(\underline{y}) - e^{\underline{x}}_{\perfd}((\underline{y},z))} & \text{ and} \\
            \\
            \;\;\;\;\;&s^{\underline{x}}_{\relperfd}(R) & = &{\displaystyle\inf_{\underline{y}\subsetneq J \subseteq ((\underline{y}:_R \m))} \frac{e^{\underline{x}}_{\perfd}(\underline{y}) - e^{\underline{x}}_{\perfd}(J)}{\length_R(J/(\underline{y}))}}.
        \end{array}
    \end{equation*}
    \item 
    \label{prop.localcohominterpretforratsig}
    Moreover, if $(R,\m,k)$ is Cohen-Macaulay, then 
    \begin{equation*}
        \begin{array}{rcl}
        s^{\underline{x}}_{\ratperfd}(R) & = &{\displaystyle \inf \left\{ \lambda_\infty \left( \im (R^{\Ainfty}_{\perfd} \otimes_R \psi ) \right) \mid 0 \to k \xrightarrow{\psi} H^d_{\m} (R) \mbox{ is exact} \right\} \; \text{ and}}\\\\
        s^{\underline{x}}_{\relperfd}(R)  & = & \inf \left\{ \frac{1}{i} \lambda_\infty \left( \im (R^{\Ainfty}_{\perfd} \otimes_R \psi ) \right) \mid i \in \Z_{>0}, \, 0 \to k^{\oplus i} \xrightarrow{\psi} H^d_{\m} (R) \mbox{ is exact} \right\}.
        \end{array}
    \end{equation*}
\end{enumerate}
\end{proposition}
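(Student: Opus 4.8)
The plan is to pass to socle elements and ideals using \autoref{prop.socleenoughforratsig}, reinterpret the resulting differences of perfectoid Hilbert-Kunz multiplicities via \autoref{lem.relativehkoverparamvialocalcohom}, and then observe that the relevant invariant of $H^d_\m(R)$ is insensitive to the chosen system of parameters. To begin, for part \autoref{prop.onesopforratsig} when $R$ is not Cohen-Macaulay, \autoref{prop.notcmratsigzero} gives $s^{\underline{x}}_{\ratperfd}(R)=s^{\underline{x}}_{\relperfd}(R)=0$ and, for our fixed $\underline{y}$, produces $u\notin(\underline{y})$ with $e^{\underline{x}}_{\perfd}(\underline{y},u)=e^{\underline{x}}_{\perfd}(\underline{y})$. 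Choosing, as in the proof of \autoref{prop.socleenoughforratsig}, a socle element $z\in\bigl((\underline{y},u)\cap((\underline{y}):_R\m)\bigr)\setminus(\underline{y})$ and using that $e^{\underline{x}}_{\perfd}$ is monotone under reverse inclusion of ideals (additivity of $\lambda_\infty$, \autoref{prop.NormalizedLengthProperties}\autoref{prop.NormalizedLengthProperties.a}), one gets $e^{\underline{x}}_{\perfd}(\underline{y},z)=e^{\underline{x}}_{\perfd}(\underline{y})$; taking $J=(\underline{y},z)$ then shows both fixed-$\underline{y}$ infima in \autoref{prop.onesopforratsig} vanish. So from now on $R$ may be assumed Cohen-Macaulay, which also covers all of part \autoref{prop.localcohominterpretforratsig}.

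Assume then that $R$ is Cohen-Macaulay and fix a system of parameters $\underline{y}$, necessarily a regular sequence. The transition maps of $H^d_\m(R)=\varinjlim_t R/(\underline{y}^t)$ are injective, so $R/(\underline{y})\to H^d_\m(R)$ is injective; applying $\Hom_R(k,-)$ (which commutes with this colimit) and using that $\dim_k\Hom_R(k,R/(\underline{y}^t))$ equals the type of $R$ for every $t$ (local duality), this map restricts to an isomorphism $((\underline{y}):_R\m)/(\underline{y})\xrightarrow{\ \sim\ }\Hom_R(k,H^d_\m(R))$ onto the socle of $H^d_\m(R)$. Consequently an ideal $J$ with $\underline{y}\subsetneq J\subseteq((\underline{y}):_R\m)$ and $\length_R(J/(\underline{y}))=i$ amounts to a dimension-$i$ $k$-subspace of this socle, i.e.\ (up to choice of generators) an injection $\psi\colon k^{\oplus i}\hookrightarrow H^d_\m(R)$, and every such $\psi$ automatically lands in the socle. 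Identifying $H^d_\m(R^{A_{\infty,0}}_{\perfd})\cong R^{A_{\infty,0}}_{\perfd}\otimes_R H^d_\m(R)$ (tensor commutes with filtered colimits), the map $\alpha$ of \autoref{lem.relativehkoverparamvialocalcohom} is $R^{A_{\infty,0}}_{\perfd}\otimes_R\psi$, so that lemma becomes
\[
    e^{\underline{x}}_{\perfd}(\underline{y})-e^{\underline{x}}_{\perfd}(J)=\lambda_\infty\bigl(\im(R^{A_{\infty,0}}_{\perfd}\otimes_R\psi)\bigr),
\]
and the right-hand side depends only on the submodule $\psi(k^{\oplus i})\subseteq H^d_\m(R)$.

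To finish, I combine the last display with \autoref{prop.socleenoughforratsig}, which writes $s^{\underline{x}}_{\ratperfd}(R)$ (resp.\ $s^{\underline{x}}_{\relperfd}(R)$) as an infimum over all systems of parameters $\underline{y}'$ together with socle elements $z'$ of $R/(\underline{y}')$ (resp.\ ideals $(\underline{y}')\subsetneq J'\subseteq((\underline{y}'):_R\m)$). By the previous paragraph each contributing term equals $\lambda_\infty(\im(R^{A_{\infty,0}}_{\perfd}\otimes_R\psi'))$ (resp.\ $\frac{1}{i}\lambda_\infty(\im(R^{A_{\infty,0}}_{\perfd}\otimes_R\psi'))$ with $i=\length_R(J'/(\underline{y}'))$) for the injection $\psi'$ into $H^d_\m(R)$ associated to $(\underline{y}',z')$ (resp.\ $(\underline{y}',J')$) by the socle identification applied to $\underline{y}'$, and the target socle $\Hom_R(k,H^d_\m(R))$, hence the set of possible $\psi'$, does not depend on $\underline{y}'$. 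Therefore the infimum is unchanged when $\underline{y}'$ is held fixed at our chosen $\underline{y}$, giving part \autoref{prop.onesopforratsig}; rewriting the same infimum directly in terms of the injections $\psi$ gives part \autoref{prop.localcohominterpretforratsig}. I expect the main obstacle to be the socle isomorphism $((\underline{y}):_R\m)/(\underline{y})\xrightarrow{\ \sim\ }\Hom_R(k,H^d_\m(R))$ together with the compatibility $\alpha=R^{A_{\infty,0}}_{\perfd}\otimes_R\psi$ under the colimit identification; the remainder is formal bookkeeping.
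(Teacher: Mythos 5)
Your proposal is correct and follows essentially the same route as the paper: handle the non-Cohen-Macaulay case by \autoref{prop.notcmratsigzero}, then in the Cohen-Macaulay case identify socle elements/ideals of $R/(\underline{y})$ with injections $k^{\oplus i}\hookrightarrow H^d_\m(R)$ and apply \autoref{lem.relativehkoverparamvialocalcohom} to reinterpret the multiplicity drops, noting the result is independent of $\underline{y}$. The only difference is that you spell out a few steps (the fixed-$\underline{y}$ reduction in the non-CM case, the socle isomorphism via $\Hom_R(k,-)$ commuting with the colimit, and the identification $\alpha=R^{A_{\infty,0}}_{\perfd}\otimes_R\psi$) that the paper treats more briefly.
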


\begin{proof}
    If $R$ is not Cohen-Macaulay, \autoref{prop.onesopforratsig} follows immediately from \autoref{prop.notcmratsigzero}. Thus, hereafter we assume $R$ is Cohen-Macaulay and will proceed to show \autoref{prop.onesopforratsig} and \autoref{prop.localcohominterpretforratsig} simultaneously.

    For any system of parameters $\underline{y}$,  we have an injection $R / (\underline{y}) \hookrightarrow H^d_\m(R)$ given by viewing $H^d_\m(R) = \varinjlim_t R/(\underline{y}^t)$ and using that $R$ is Cohen-Macaulay. For any $z \in ((\underline{y}):_R \m) \setminus (\underline{y})$, this gives an injection $\psi \colon k \simeq (\underline{y},z)/(\underline{y}) \hookrightarrow H^d_\m(R)$ so that $\lambda_\infty\left( \im(R^{\Ainfty}_{\perfd} \otimes_R \psi) \right) = e^{\underline{x}}_{\perfd}(\underline{y}) - e^{\underline{x}}_{\perfd}((\underline{y},z))$ by \autoref{lem.relativehkoverparamvialocalcohom}. Moreover, as any injection $k \hookrightarrow H^d_\m(R)$ is given by specifying an element of the socle of $H^d_\m(R)$ which is identified with the socle $((\underline{y}):_R \m)/(\underline{y})$ of $R/(\underline{y})$ under the given inclusion $R / (\underline{y}) \hookrightarrow H^d_\m(R)$, all injections $\psi \colon k \hookrightarrow H^d_\m(R)$ arise from a $z \in ((\underline{y}):_R \m) \setminus (\underline{y})$ in this manner. Thus it follows that
    \begin{equation*}
        \inf_{ z \in (\underline{y}:_R \m) \setminus (\underline{y})}  e^{\underline{x}}_{\perfd}(\underline{y}) - e^{\underline{x}}_{\perfd}((\underline{y},z)) = \inf \left\{ \lambda_\infty \left( \im (R^{\Ainfty}_{\perfd} \otimes_R \psi ) \right) \mid 0 \to k \xrightarrow{\psi} H^d_{\m} (R) \mbox{ is exact} \right\}
    \end{equation*}
    for any fixed system of parameters $\underline{y}$. As the expression on the right hand side above is independent of the choice of $\underline{y}$, the characterizations of $s^{\underline{x}}_{\ratperfd}(R)$ in \autoref{prop.onesopforratsig} and \autoref{prop.localcohominterpretforratsig} now follow from \autoref{prop.socleenoughforratsig}.

    The stated characterizations of $s^{\underline{x}}_{\relperfd}(R)$ follow in a similar fashion. For any system of parameters $\underline{y}$ and any $\underline{y}\subsetneq J \subseteq ((\underline{y}:_R \m))$, we have an induced injection $\psi \colon k^{\oplus i} \simeq J/(\underline{y}) \hookrightarrow H^d_\m(R)$ where $i = \length_R(J/(\underline{y}))$ so that $\lambda_\infty\left( \im(R^{\Ainfty}_{\perfd} \otimes_R \psi) \right) = e^{\underline{x}}_{\perfd}(\underline{y}) - e^{\underline{x}}_{\perfd}(J)$ by \autoref{lem.relativehkoverparamvialocalcohom}. As any injection $k^{\oplus i} \hookrightarrow H^d_\m(R)$ is given by specifying an $i$-dimensional $k$-vector subspace of the socle of $H^d_\m(R)$ which is identified with the socle $((\underline{y}):_R \m)/(\underline{y})$ of $R/(\underline{y})$ under the given inclusion $R / (\underline{y}) \hookrightarrow H^d_\m(R)$, all injections $\psi \colon k^{\oplus i} \hookrightarrow H^d_\m(R)$ arise from a $(\underline{y})\subsetneq J \subseteq ((\underline{y}):_R \m)$ in this manner. Thus it follows that 
    %\begin{equation*}
        \begin{align*}
             \inf_{\underline{y}\subsetneq J \subseteq ((\underline{y}:_R \m))} & \frac{e^{\underline{x}}_{\perfd}(\underline{y}) - e^{\underline{x}}_{\perfd}(J)}{\length_R(J/(\underline{y}))} \\ &= \inf \left\{ \frac{1}{i} \lambda_\infty \left( \im (R^{\Ainfty}_{\perfd} \otimes_R \psi ) \right) \mid i \in \Z_{>0}, \, 0 \to k^{\oplus i} \xrightarrow{\psi} H^d_{\m} (R) \mbox{ is exact} \right\}
        \end{align*}
    %\end{equation*}
    for any fixed system of parameters $\underline{y}$. Again, as the expression on the second line above is independent of the choice of $\underline{y}$, the characterizations of $s^{\underline{x}}_{\relperfd}(R)$ in \autoref{prop.onesopforratsig} and \autoref{prop.localcohominterpretforratsig} now follow from \autoref{prop.socleenoughforratsig}.
\end{proof}

% \begin{corollary} For any fixed system of parameters $\underline{y}$, 
%     \begin{equation*}
%         s^{\underline{x}}_{\ratperfd}(R)=\inf_{ z \in (\underline{y}:_R \m) \setminus (\underline{y})}  e^{\underline{x}}_{\perfd}(\underline{y}) - e^{\underline{x}}_{\perfd}((\underline{y},z))
%     \end{equation*}
%     $$s^{\underline{x}}_{\relperfd}(R)=\inf_{\underline{y}\subsetneq J \subseteq ((\underline{y}:_R \m))} \frac{e^{\underline{x}}_{\perfd}(\underline{y}) - e^{\underline{x}}_{\perfd}(J)}{\length_R(J/(\underline{y}))}.$$
% \end{corollary}

% \todo{}

\begin{theorem}
    \label{thm.bcmratandeltsofsmallorder}
    Suppose $v$ is an $\bR$-valuation on $\widehat{R^+}$ that is positive on $\m R^+$. If $R$ is BCM rational, then there exists an $\epsilon > 0$ satisfying the following condition: for all $c \in \widehat{R^+}$ with $v(c) < \epsilon$, the composition
    \begin{equation*}
        H^d_\m(R) \to H^d_\m(R^+) \xrightarrow{\cdot c} H^d_\m(R^+)
    \end{equation*}
    is injective, or equivalently, 
    multiplication by $c$ is injective on the image of $H^d_\m(R)$ in $H^d_\m(R^+)$. 
\end{theorem}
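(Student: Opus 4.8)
The plan is to prove the contrapositive: assuming the stated condition fails for every $\epsilon>0$, I will construct a perfectoid balanced big Cohen--Macaulay $R^+$-algebra $B'$ and exhibit a nonzero class in $H^d_\m(R)$ mapping to $0$ in $H^d_\m(B')$, contradicting BCM-rationality. First recall that BCM-rationality forces $R$ to be Cohen--Macaulay and $H^d_\m(R)\to H^d_\m(B)$ to be injective for every perfectoid big Cohen--Macaulay $R^+$-algebra $B$; taking $B=\widehat{R^+}$ (perfectoid by \autoref{example:PerfectoidRings}, and balanced big Cohen--Macaulay over $R^+$ as in \autoref{subsec:BigCohenMacaulayAlgebras}) and factoring through $H^d_\m(R^+)$, the composite $H^d_\m(R)\to H^d_\m(\widehat{R^+})$ is injective. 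Thus the image of $H^d_\m(R)$ in $H^d_\m(R^+)$ may be identified with $H^d_\m(R)$, and I will work inside $H^d_\m(\widehat{R^+})$, where multiplication by $c\in\widehat{R^+}$ is defined.

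Next I would reduce to the socle. Fix a system of parameters $\underline{y}$ of $R$; since $R$ is Cohen--Macaulay, $R/(\underline{y})\hookrightarrow H^d_\m(R)$, and $V:=\Ann_{H^d_\m(R)}\m=\big((\underline{y}):_R\m\big)/(\underline{y})$ is a finite-dimensional $k$-vector space over which $H^d_\m(R)$ is an essential extension. I claim that for $c\in\widehat{R^+}$, multiplication by $c$ is injective on (the image of) $H^d_\m(R)$ if and only if it is injective on (the image of) $V$: given $0\neq\eta$ in that image, its preimage $\widetilde\eta\in H^d_\m(R)$ is nonzero, so $R\widetilde\eta\cap V\neq 0$, say $0\neq r\widetilde\eta\in V$; if $c\eta=0$ then $c(r\eta)=0$ with $r\eta$ a nonzero element of the image of $V$, a contradiction. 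Hence it suffices to produce $\epsilon>0$ such that $v(c)<\epsilon$ forces multiplication by $c$ to be injective on $V\subseteq H^d_\m(R)\hookrightarrow H^d_\m(\widehat{R^+})$.

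Now suppose no such $\epsilon$ exists. Then there are $c_j\in\widehat{R^+}$ with $v(c_j)\to 0$ and $0\neq w_j\in V$ with $c_jw_j=0$ in $H^d_\m(\widehat{R^+})$. Writing $w_j=[r_j/(y_1\cdots y_d)]$ with $r_j\in\big((\underline{y}):_R\m\big)\setminus(\underline{y})$, and using that $\underline{y}$ is a regular sequence on the big Cohen--Macaulay algebra $\widehat{R^+}$, this says precisely $c_jr_j\in(\underline{y})\widehat{R^+}$. Put $B:=\widehat{R^+}$, let $W$ be the multiplicative set generated by $\mathbf{c}:=(c_0,c_1,\dots)$ in $\prod_{j\geq 0}B$, $B'':=W^{-1}\prod_{j\geq 0}B$ and $B':=\widehat{B''}$; by \autoref{thm.GabbersTrickVVersion} $B'$ is a perfectoid balanced big Cohen--Macaulay $R^+$-algebra, and by \autoref{cor.GabberTrick} any class $\eta\in H^d_\m(B)$ with $c_j\eta=0$ for all $j$ maps to $0$ in $H^d_\m(B')$.

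The main obstacle is that \autoref{cor.GabberTrick} demands a single class killed by the entire sequence $\{c_j\}$, whereas the witnesses $w_j$ vary with $j$. I expect to overcome this by exploiting that the $w_j$ all lie in the single finite-dimensional space $V$: when $R$ is Gorenstein (or more generally of type one) $V$ is one-dimensional and one takes $w=w_0$, killed by every $c_j$; in general one must extract --- after passing to a subsequence along which $v(c_j)$ still tends to $0$ --- a single nonzero $w\in V$ lying in $\ker\big(V\xrightarrow{\,\cdot c_j\,}H^d_\m(\widehat{R^+})\big)$ for infinitely many $j$, and I anticipate this finiteness step to be the genuinely delicate point. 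Granting such a $w$, \autoref{cor.GabberTrick} applied to the class of $w$ in $H^d_\m(B)$ shows that $w$, a nonzero element of $H^d_\m(R)$, maps to $0$ in $H^d_\m(B')$, contradicting the injectivity of $H^d_\m(R)\to H^d_\m(B')$ guaranteed by BCM-rationality. The resulting contradiction produces the required $\epsilon$.
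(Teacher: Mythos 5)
Your strategy is the right one and you have correctly isolated the crux: Gabber's trick (\autoref{cor.GabberTrick}) needs a \emph{single} nonzero socle element killed by the entire sequence, while the naive contradiction only furnishes a different witness $w_j$ for each $c_j$. You flag this as ``the genuinely delicate point'' but do not resolve it, and the repair you sketch --- pass to a subsequence and find $w\in V$ lying in $K_j:=\ker\big(V\xrightarrow{\cdot c_j}H^d_\m(\widehat{R^+})\big)$ for infinitely many $j$ --- does not work in general. If, say, $\dim_k V = 2$ and the $K_j$ are pairwise distinct lines (which is possible since $k$ is infinite, e.g.\ $\overline{\bF_p}$), then any fixed nonzero $w$ lies in at most one $K_j$, so no subsequence of the $c_j$'s with a common socle annihilator need exist. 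Finite-dimensionality of $V$ by itself does not give the pigeonhole you want.

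The paper's fix is to trade each $c_j$ for a bounded \emph{product}: set $V_j := \sum_{i\geq j} K_i$, a non-increasing chain of subspaces of $V$ which stabilizes at some nonzero $V_N$ by finite-dimensionality. Because $\dim_k V_j \leq t$ (the CM type), one can choose $t$ indices $i_j(1),\dots,i_j(t)\geq j$ with $V_j = K_{i_j(1)}+\cdots+K_{i_j(t)}$, and then $d_j := c_{i_j(1)}\cdots c_{i_j(t)}$ annihilates all of $V_j$ (if $c$ kills $K$ and $c'$ kills $K'$, then $cc'$ kills $K+K'$). Since every factor has index $\geq j$, $v(d_j)\to 0$, and any $0\neq\eta\in V_N\subseteq V_j$ is killed by every $d_j$. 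Now \autoref{cor.GabberTrick} applies verbatim to $\eta$ and the sequence $\{d_j\}$. You should incorporate this product construction; without it the argument is incomplete, and the pigeonhole route you suggest genuinely fails.
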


\begin{proof}
    Note first that $H^d_\m(R) \to H^d_\m(R^+)$ injects as $R$ is BCM-rational.  We proceed by way of contradiction. If the statement fails, then there exists a sequence $\{c_j\}_{j\geq 0}$ of elements of $\widehat{R^+}$ such that $v(c_j)\to 0$ as $j\to\infty$ where the kernel of multiplication by $c_j$ on $H^d_\m(R^+)$ has non-trivial intersection with the image of $H^d_\m(R)$. As $H^d_\m(R)$ is an essential extension of its socle $V := \Ann_{H^d_\m(R)}\m$, we have that
    \begin{equation*}
        K_j := V \cap \ker\left( H^d_\m(R) \to H^d_\m(R^+) \xrightarrow{\cdot c_j} H^d_\m(R^+) \right)
    \end{equation*}
    is nonzero for all $j$. Each $K_j$ is a subspace of the finite dimensional $k$-vector space $V$, where $\dim_k(V) = t$ is the Cohen-Macaulay type of $R$. Let $V_j = \sum_{i \geq j} k\cdot K_i$, the $k$-span of the subspaces $K_j, K_{j+1}, K_{j+2}, \ldots$ inside of $V$. As $\dim_k(V_j) \leq t$, each $V_j$ can in fact be realized as the $k$-span of no more than $t$ of these subspaces. Let $i_j(1), \dots, i_j(t) \in \mathbb{Z}_{\geq j}$ be such that $V_j = k \cdot K_{i_j(1)} + \cdots + k \cdot K_{i_j(t)}$, and set $d_j = c_{i_j(1)} c_{i_j(2)}\cdots c_{i_j(t)}$. The new sequence $\{d_j\}_{j\geq 0}$ of elements of $\widehat{R^+}$ still satisfies $v(d_j)\to 0$ as $j\to\infty$ and the kernel of multiplication by $d_j$ on $H^d_\m(R^+)$ contains $V_j$. The descending chain of subspaces $V_0 \supseteq V_1 \supseteq V_2 \supseteq \cdots$ necessarily stabilizes, so for some $N \gg 0$ we have $V_N \subseteq V_j$ for all $j$. Choosing $0 \neq \eta \in V_N$, we have that the image of $\eta$ in $H^d_\m(R^+)$ is annihilated by $d_j$ for all $j$. Using \autoref{cor.GabberTrick}, we see that there exists a perfectoid balanced big Cohen-Macaulay $R^+$-algebra $B$ so that the image of $\eta$ in $H^d_\m(B)$ is zero. In particular, $H^d_\m(R) \to H^d_\m(B)$ is not injective, contradicting that $R$ is BCM-rational.
\end{proof}

Finally, we show that perfectoid rational signature and perfectoid relative rational signature characterize BCM-rational singularities. 

\begin{theorem}
    \label{thm.sratpositiveandbcmregular}
    With notation as in \autoref{not.setup}, the following are equivalent:
    \begin{enumerate}
        \item $R$ is BCM-rational; \label{thm.part.bcmrat}
        \item $s^{\underline{x}}_{\ratperfd}(R) > 0$; \label{thm.part.sratpositive}
        \item $s^{\underline{x}}_{\relperfd}(R) > 0$. \label{thm.part.srelpositive}
    \end{enumerate}
\end{theorem}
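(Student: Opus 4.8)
The plan is to first observe that if $R$ is not Cohen--Macaulay then $R$ is not BCM-rational and, by \autoref{prop.notcmratsigzero}, both $s^{\underline{x}}_{\ratperfd}(R)$ and $s^{\underline{x}}_{\relperfd}(R)$ vanish, so all three conditions fail together; hence we may assume $R$ is Cohen--Macaulay from now on. In that case \autoref{cor.sratinequalities} gives $s^{\underline{x}}_{\ratperfd}(R) \geq s^{\underline{x}}_{\relperfd}(R) \geq (1/t)\, s^{\underline{x}}_{\ratperfd}(R)$, where $t$ is the Cohen--Macaulay type of $R$, so $s^{\underline{x}}_{\ratperfd}(R) > 0$ if and only if $s^{\underline{x}}_{\relperfd}(R) > 0$. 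It thus remains to show that $R$ is BCM-rational if and only if $s^{\underline{x}}_{\ratperfd}(R) > 0$. The key tool is the local cohomology description from \autoref{prop.onesopandlocalcohominterpforratsig}: since $R$ is Cohen--Macaulay, $s^{\underline{x}}_{\ratperfd}(R) = \inf \left\{ \lambda_\infty \left( \im (R^{A_{\infty,0}}_{\perfd} \otimes_R \psi ) \right) \mid 0 \to k \xrightarrow{\psi} H^d_{\m} (R) \mbox{ is exact} \right\}$, where $\im (R^{A_{\infty,0}}_{\perfd} \otimes_R \psi )$ is the cyclic $R^{A_{\infty,0}}_{\perfd}$-submodule of $H^d_\m(R^{A_{\infty,0}}_{\perfd}) \cong R^{A_{\infty,0}}_{\perfd} \otimes_R H^d_\m(R)$ generated by the image of the socle element $\psi(1)$.

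For the implication ``$s^{\underline{x}}_{\ratperfd}(R) > 0 \Rightarrow R$ is BCM-rational'' I would argue contrapositively. Suppose $R$ is not BCM-rational, so there is a perfectoid big Cohen--Macaulay $R^+$-algebra $B$ with $H^d_\m(R) \to H^d_\m(B)$ not injective. As $H^d_\m(R)$ is Artinian it is an essential extension of its socle, so the kernel (an $R$-submodule of $H^d_\m(R)$) contains a nonzero socle element $\eta = \psi(1)$ for some injection $\psi \colon k \hookrightarrow H^d_\m(R)$. Because $B$ is big Cohen--Macaulay over $R$, $H^d_\m(B) \cong B \otimes_R H^d_\m(R)$, so $1 \otimes \eta = 0$ in $B \otimes_R H^d_\m(R)$. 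Applying \autoref{lem.ComparisonAlmostKernel} to the $p^\infty$-torsion module $M = H^d_\m(R)$ (the inclusion $M_4 \subseteq M_3 = M_1$) shows that $(g)_\perfd$ annihilates the image of $\eta$ in $H^d_\m(R^{A_{\infty,0}}_{\perfd})$, hence annihilates the whole cyclic module $\im (R^{A_{\infty,0}}_{\perfd} \otimes_R \psi )$. By \autoref{prop.AlmostNormalizedLength} this module has normalized length zero, and the displayed formula then forces $s^{\underline{x}}_{\ratperfd}(R) = 0$, a contradiction.

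The harder direction is ``$R$ is BCM-rational $\Rightarrow s^{\underline{x}}_{\ratperfd}(R) > 0$,'' where I would invoke \autoref{thm.bcmratandeltsofsmallorder}. Since $\widehat{R^+}$ is a perfectoid big Cohen--Macaulay $R^+$-algebra, BCM-rationality gives an injection $H^d_\m(R) \hookrightarrow H^d_\m(\widehat{R^+})$; consequently $H^d_\m(R) \hookrightarrow H^d_\m(R^+)$ and the natural map $H^d_\m(R^+) \to H^d_\m(\widehat{R^+})$ restricts to an isomorphism on the image of $H^d_\m(R)$. Using $R^+ = A^+$, fix the $\bR$-valuation $v$ on $\widehat{R^+} = \widehat{A^+}$ extending the $\m_A$-adic order on $A$ (\autoref{rmk.valuationextendstoRplus}), whose restriction to $A_{\infty,0}$ is the monomial-degree valuation of \autoref{def.standardvaluationonAinftyzero}. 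By \autoref{thm.bcmratandeltsofsmallorder} there is $\epsilon > 0$ so that multiplication by any $c \in \widehat{R^+}$ with $v(c) < \epsilon$ is injective on the image of $H^d_\m(R)$ in $H^d_\m(R^+)$, and hence, transferring through the above identifications, on the image of $H^d_\m(R)$ in $H^d_\m(\widehat{R^+})$. I claim $\lambda_\infty(\im (R^{A_{\infty,0}}_{\perfd} \otimes_R \psi )) \geq \epsilon$ for every injection $\psi \colon k \hookrightarrow H^d_\m(R)$, which by the displayed formula yields $s^{\underline{x}}_{\ratperfd}(R) \geq \epsilon > 0$. To see the claim, set $\eta = \psi(1)$ and suppose instead $\lambda_\infty(\im (R^{A_{\infty,0}}_{\perfd} \otimes_R \psi )) < \epsilon$; this module equals $R^{A_{\infty,0}}_{\perfd}/J$ where $J$ is the annihilator of the image of $\eta$ in $H^d_\m(R^{A_{\infty,0}}_{\perfd})$, and since $\eta$ is a socle element it is killed by $\m_R \supseteq \m_A$, so $J \supseteq \m_A R^{A_{\infty,0}}_{\perfd}$ and $J' := J \cap A_{\infty,0} \supseteq \m_A A_{\infty,0}$. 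Then $A_{\infty,0}/J' \hookrightarrow R^{A_{\infty,0}}_{\perfd}/J$, so $\lambda_\infty(A_{\infty,0}/J') \leq \lambda_\infty(R^{A_{\infty,0}}_{\perfd}/J) < \epsilon$ by additivity of normalized length (\autoref{prop.NormalizedLengthProperties}), and the monomial-counting estimate in the proof of \autoref{lem.valmostzero} produces $b \in J'$ with $v(b) < \epsilon$. Pushing $b$ forward to $\widehat{R^+}$, it annihilates the image of $\eta$ in $H^d_\m(\widehat{R^+})$, which lies in the image of $H^d_\m(R)$; by the choice of $\epsilon$ this image is zero, hence $\eta = 0$, contradicting injectivity of $\psi$. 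The main obstacle is precisely this last argument: matching the monomial-degree valuation on $A_{\infty,0}$ with the $\bR$-valuation on $\widehat{R^+} = \widehat{A^+}$, correctly transferring \autoref{thm.bcmratandeltsofsmallorder} (stated over $R^+$) to $\widehat{R^+}$ and $R^{A_{\infty,0}}_{\perfd}$, and converting the bound ``$\lambda_\infty < \epsilon$'' into an honest annihilator of valuation $< \epsilon$.
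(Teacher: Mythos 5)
Your proposal is correct and proves the theorem, but your argument for $(b)\Rightarrow(a)$ takes a genuinely different (and slightly shorter) path than the paper's. The paper argues directly: positivity of $s^{\underline{x}}_{\ratperfd}(R)$ forces $e^{\underline{x}}_{\perfd}(\underline{y}) - e^{\underline{x}}_{\perfd}((\underline{y},z)) > 0$ for every socle element $z$ of $R/(\underline{y})$, and then \autoref{prop.HKVsBCMClosureVsepfClosure} (the characterization of BCM closure via perfectoid Hilbert--Kunz multiplicities) gives $z \notin (\underline{y})B$, from which $R/(\underline{y}) \hookrightarrow B/(\underline{y})B$ and then $H^d_\m(R) \hookrightarrow H^d_\m(B)$ follow by essentiality. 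You instead run the contrapositive at the level of local cohomology: given $0 \neq \eta$ in the socle of $H^d_\m(R)$ dying in $H^d_\m(B) = B \otimes_R H^d_\m(R)$, you apply \autoref{lem.ComparisonAlmostKernel} directly with $M = H^d_\m(R)$ (using only the inclusion $M_4 \subseteq M_1$, which is available since $M$ need not be finitely generated) to conclude $(g)_\perfd \cdot (1\otimes\eta) = 0$, then \autoref{prop.AlmostNormalizedLength} kills the normalized length of the cyclic submodule, so the infimum in \autoref{prop.onesopandlocalcohominterpforratsig} vanishes. Since \autoref{prop.HKVsBCMClosureVsepfClosure} itself is proved via \autoref{lem.ComparisonAlmostKernel}, your route essentially inlines the relevant part of that argument and so uses a more primitive ingredient; what the paper's route buys is the explicit bridge through the $e_{\perfd}$-characterization of BCM closure, keeping the parallel with the characteristic $p > 0$ story on $F$-rational signature visible. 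Your other two directions match the paper: the reduction to the Cohen--Macaulay case and $(b)\Leftrightarrow(c)$ via \autoref{cor.sratinequalities} are identical, and for $(a)\Rightarrow(b)$ you use \autoref{thm.bcmratandeltsofsmallorder} exactly as the paper does, except you phrase it contrapositively (producing a small-valuation annihilator from $\lambda_\infty < \epsilon$ via the monomial-counting estimate underlying \autoref{lem.valmostzero}) where the paper runs the direct inequality chain ending in the explicit bound $1/p^{e_0}$.
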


\begin{proof}
    As BCM-rational rings are Cohen-Macaulay and in light of \autoref{prop.notcmratsigzero}, without loss of generality we may assume $R$ is Cohen-Macaulay. The equivalence of \autoref{thm.part.sratpositive} and \autoref{thm.part.srelpositive} is then immediate from \autoref{cor.sratinequalities}\autoref{cor.cmsrelsratinequality}.

    To see that \autoref{thm.part.sratpositive} implies \autoref{thm.part.bcmrat}, suppose now that $s^{\underline{x}}_{\ratperfd}(R) > 0$. Let $B$ be any perfectoid balanced big Cohen-Macaulay $R^+$-algebra. Let $\underline{y}$ be a system of parameters. We first show that the induced map $R / (\underline{y}) \to B / (\underline{y}) B$ is injective. It suffices to show injectivity on the socle, so consider $z \in ((\underline{y}):_R \m) \setminus (\underline{y})$. We have $e^{\underline{x}}_{\perfd}(\underline{y}) - e^{\underline{x}}_{\perfd}((\underline{y},z)) \geq   s^{\underline{x}}_{\ratperfd}(R) > 0$, so by \autoref{prop.HKVsBCMClosureVsepfClosure} it follows that $z \not\in (\underline{y})B$ and so $R / (\underline{y}) \to B / (\underline{y}) B$ is injective. Since $B$ is big Cohen-Macaulay, the map $B / (\underline{y})B \to H^d_\m(B) = \varinjlim_t B/(\underline{y}^t)B$ is injective as well. This gives that $R/(\underline{y}) \to H^d_\m(B)$ is injective. Since $R/(\underline{y}) \hookrightarrow H^d_\m(R)$ is an essential extension as $R$ is Cohen-Macaulay, we see that $H^d_\m(R) \to H^d_\m(B)$ is also necessarily injective. Thus, it follows $R$ is BCM-rational.

    Finally, to see that \autoref{thm.part.bcmrat} implies \autoref{thm.part.sratpositive}, assume $R$ is BCM-rational. Let $v$ be an $\mathbb{R}$-valuation on $\widehat{R^+}$ that restricts to the $\mathbb{Z}[1/p]$-valued extension of the $\m_A$-adic valuation on $\Ainfty$ (see \autoref{def.standardvaluationonAinftyzero} and \autoref{rmk.valuationextendstoRplus}). For any $c \in R^{\Ainfty}_{\perfd}$, we write $v(c)$ to mean the order of the image of $c$ in $\widehat{R^+}$ with respect to $v$.    
    Let $0 \to k \xrightharpoondown[1 \mapsto \eta]{\psi} H^d_\m(R)$ be an exact sequence, so that $\im(R^{\Ainfty}_{\perfd} \otimes_R \psi)$ is generated by $1 \otimes \eta$ in $R^{\Ainfty}_{\perfd} \otimes_R H^d_\m(R) = H^d_\m(R^{\Ainfty}_{\perfd})$. Let $\epsilon > 0$ be as in \autoref{thm.bcmratandeltsofsmallorder}; since we have factorizations $R \to R^{\Ainfty}_{\perfd} \to \widehat{R^+}$ and $H^d_\m(R) \to H^d_\m(R^{\Ainfty}_{\perfd}) \to H^d_\m(R^+)$, it follows that multiplication by any element $c \in R^{\Ainfty}_{\perfd}$ on the image of $H^d_\m(R)$ in $H^d_\m(R^{\Ainfty}_{\perfd})$ is injective provided $v(c) < \epsilon$. In particular, we have
    \begin{equation*}
        \Ann_{R^{\Ainfty}_{\perfd}} \left( 1 \otimes \eta \right) \subseteq \{ c \in R^{\Ainfty}_{\perfd} \mid v(c) \geq \epsilon \}.
    \end{equation*}
    Thus, if $\frac{1}{p^{e_0}} < \epsilon$, it follows that
    \begin{align*}
        \lambda_\infty\left( \im(R^{\Ainfty}_{\perfd} \otimes_R \psi) \right) & =  \lambda_\infty \left( R^{\Ainfty}_{\perfd} / \Ann_{R^{\Ainfty}_{\perfd}} \left( 1 \otimes \eta \right) \right) \\
        & \geq \lambda_\infty\left( R^{\Ainfty}_{\perfd} /  \{ c \in R^{\Ainfty}_{\perfd} \mid v(c) \geq \epsilon \}  \right) \\ 
        & \geq \lambda_\infty\left( \Ainfty / \left\{ c \in \Ainfty \mid v(c) \geq \epsilon \right\} \right) \\
        & \geq \lambda_\infty\left( \Ainfty / \left\{ c \in \Ainfty \mid v(c) \geq 1/p^{e_0} \right\} \right) = 1/p^{e_0}
    \end{align*} 
    where the final equality is an easy computation. Taking the infimum over all such $\psi$ and using \autoref{prop.onesopandlocalcohominterpforratsig}\autoref{prop.localcohominterpretforratsig}, this gives $s^{\underline{x}}_{\ratperfd}(R) \geq 1/p^{e_0} > 0$ as desired.
\end{proof}

\section{Transformation rules under finite maps}
\label{sec.TransformationRule}

In this section, we establish the transformation rule for the perfectoid signature under finite quasi-étale split maps, as stated in \autoref{thm.TransformationRule}. The key intuition comes from almost purity (see \autoref{thm.BhattScholzeAlmostPurity}), which suggests that after certain perfectoidizations, finite quasi-étale split maps become almost finite projective with a well-defined generic rank. However, dealing with almost mathematics in mixed characteristic introduces subtle challenges. To overcome these, we will work modulo $(p^{1/p^\infty})$ and handle the characteristic $p>0$ case, and then use almost finite projectivity to carefully lift things back to mixed characteristic. 
%use the reduction modulo $p$ method, allowing us to work in characteristic $p$, where we have full control. 
As an application, we compute the perfectoid signature and perfectoid Hilbert-Kunz signature for quotient singularities in dimension 2, as shown in \autoref{cor.DuValSingularities}.

We continue to use \autoref{not.setup}. So $R$ is a complete Noetherian local domain of mixed characteristic $(0,p)$ with perfect residue field $k=R/\m$ and $A=W(k)[[x_2,\dots,x_d]] \subseteq R$ is module-finite. Recall that $f: Y\to X\coloneqq\Spec(R)$ is quasi-\'etale if it is quasi-finite and there is some codimension $\ge 2$ subset $Z\subset Y$ such that $Y-Z\to X$ is finite \'etale. In this section, we will prove that perfectoid signature satisfies the transformation rule with respect to finite quasi-\'etale maps in the following sense:
\begin{theorem}
\label{thm.TransformationRule}
With notation as in \autoref{not.setup}, further assume $R$ is normal and that $f:R\to S$ is a finite quasi-\'etale split extension of complete normal local domains.  Suppose also that the generic rank of the extension is $r = [K(S) : K(R)]$ and that the residue field extension degree is $t = [S/\m_S : R/\m_R]$.  Then 
    \[
        t \cdot s^{\underline{x}}_{\perfd}(S)=r\cdot s^{\underline{x}}_{\perfd}(R).
    \]
\end{theorem}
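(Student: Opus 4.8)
Since $A\subseteq R\subseteq S$ with both inclusions module-finite, setting $A_S=W(k_S)\llbracket x_2,\dots,x_d\rrbracket$ (where $k_S=S/\m_S$) one has $A_S=A\widehat{\otimes}_{W(k)}W(k_S)$ and hence $S^{A_{\infty,0}}_{\perfd}=(S\otimes_A A_{\infty,0})_\perfd=(S\otimes_R R^{A_{\infty,0}}_{\perfd})_\perfd=S^{(A_S)_{\infty,0}}_{\perfd}$; moreover for a finite-length module $\length_A=[k_S:k]\cdot\length_{A_S}=t\cdot\length_{A_S}$, so on $(A_S)_{\infty,0}$-modules $\lambda_\infty^{A_{\infty,0}}=t\cdot\lambda_\infty^{(A_S)_{\infty,0}}$. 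Writing $\lambda_\infty=\lambda_\infty^{A_{\infty,0}}$ throughout, the theorem is therefore equivalent to
\[
  \lambda_\infty\!\left(S^{A_{\infty,0}}_{\perfd}/I^S_\infty\right)=r\cdot\lambda_\infty\!\left(R^{A_{\infty,0}}_{\perfd}/I^R_\infty\right).
\]
First I would enlarge $g\in A$ so that both $A[1/g]\to R[1/g]$ and $R[1/g]\to S[1/g]$ are finite \'etale, which is possible because $R\to S$ is \'etale away from a closed subset of codimension $\geq 2$. Then applying \autoref{thm.BhattScholzeAlmostPurity} and \autoref{prop.BhattScholzeAlmostPuritypcompletefaithfullyflat} to the finite $R^{A_{\infty,0}}_{\perfd}$-algebra $S\otimes_R R^{A_{\infty,0}}_{\perfd}$, the map $R^{A_{\infty,0}}_{\perfd}\to S^{A_{\infty,0}}_{\perfd}$ is $(g)_\perfd$-almost finite \'etale of constant rank $r$ modulo each $p^n$; in particular it is $(g)_\perfd$-almost faithfully flat, and the trace pairing is $(g)_\perfd$-almost perfect, giving a $(g)_\perfd$-almost identification $\Hom_{R^{A_{\infty,0}}_{\perfd}}(S^{A_{\infty,0}}_{\perfd},R^{A_{\infty,0}}_{\perfd})\cong S^{A_{\infty,0}}_{\perfd}$ carrying $1$ to the trace $\Tr$.

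\textbf{Step 1: the ideal comparison $I^S_\infty\overset{a}{=}I^R_\infty S^{A_{\infty,0}}_{\perfd}$.} This is the perfectoid analogue of the characteristic $p>0$ relation between $I^S_e$ and $I^R_e$ (\cf \cite{VonKorffToricVarieties,CarvajalRojasSchwedeTuckerFundamentalGroup}). As $R\subseteq S$ is quasi-\'etale and $S$ is normal, $\Hom_R(S,R)$ is a reflexive rank-one $S$-module that is trivial in codimension one, hence $\cong S$ with $1\mapsto\Tr_{S/R}$. The splitting hypothesis provides $s\in S$ with $\Tr_{S/R}(s)=1$, so $z\mapsto\Tr(sz)$ is a ($(g)_\perfd$-almost) retraction $\tilde{\rho}$ of $R^{A_{\infty,0}}_{\perfd}\hookrightarrow S^{A_{\infty,0}}_{\perfd}$. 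One also has $\Tr_{S/R}(\m_S)\subseteq\m_R$ (for $m\in\m_S$ the reduced minimal polynomial of $m$ over $K(R)$ is forced to be a power of $t$ since $S$ is local, so all lower coefficients lie in $\m_R$). Combining the adjunction $\Hom_R(S^{A_{\infty,0}}_{\perfd},R)\cong\Hom_S(S^{A_{\infty,0}}_{\perfd},S)$ (via $\Tr$) with $\tilde{\rho}$ and with $\Tr_{S/R}(\m_S)\subseteq\m_R$, I would verify that for $x\in R^{A_{\infty,0}}_{\perfd}$ one has $x\in I^R_\infty$ if and only if $x\in I^S_\infty$; the implication ``$x\in I^R_\infty\Rightarrow x\in I^S_\infty$'' uses $s$ with $\Tr(s)=1$, which is precisely what makes the argument survive when $p\mid r$ (so that cyclic covers of index divisible by $p$ are covered, as promised in the introduction). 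An almost dual basis argument, again using $\Tr_{S/R}(\m_S)\subseteq\m_R$ and the $(g)_\perfd$-almost projectivity of $S^{A_{\infty,0}}_{\perfd}$ over $R^{A_{\infty,0}}_{\perfd}$, then promotes this to $I^S_\infty\overset{a}{=}I^R_\infty S^{A_{\infty,0}}_{\perfd}$. Hence $S^{A_{\infty,0}}_{\perfd}/I^S_\infty\overset{a}{\cong}\left(R^{A_{\infty,0}}_{\perfd}/I^R_\infty\right)\otimes_{R^{A_{\infty,0}}_{\perfd}}S^{A_{\infty,0}}_{\perfd}$, and by \autoref{prop.AlmostNormalizedLength} the two sides have the same normalized length.

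\textbf{Step 2: multiplicativity of normalized length.} It remains to show $\lambda_\infty\!\left(M\otimes_{R^{A_{\infty,0}}_{\perfd}}S^{A_{\infty,0}}_{\perfd}\right)=r\cdot\lambda_\infty(M)$ for $M=R^{A_{\infty,0}}_{\perfd}/I^R_\infty$. Both sides are additive on short exact sequences of $\m_A$-power-torsion $R^{A_{\infty,0}}_{\perfd}$-modules of finite normalized length, the left-hand one because $S^{A_{\infty,0}}_{\perfd}$ is $(g)_\perfd$-almost flat over $R^{A_{\infty,0}}_{\perfd}$ (so the relevant $\Tor_1$ has normalized length $0$ by \autoref{prop.AlmostNormalizedLength}) together with \autoref{prop.NormalizedLengthProperties}\autoref{prop.NormalizedLengthProperties.a}. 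On the ``standard'' module $R^{A_{\infty,0}}_{\perfd}/\m_A R^{A_{\infty,0}}_{\perfd}$ the claim holds because the left side is $\lambda_\infty(S^{A_{\infty,0}}_{\perfd}/\m_A S^{A_{\infty,0}}_{\perfd})=\rank_A S=r\cdot\rank_A R=r\cdot\lambda_\infty(R^{A_{\infty,0}}_{\perfd}/\m_A R^{A_{\infty,0}}_{\perfd})$ by \autoref{prop.Normalizedlengthequalsmultiplicity}. To pass from the standard module to $M=R^{A_{\infty,0}}_{\perfd}/I^R_\infty$, I would invoke the approximately-Gorenstein presentation $I^R_\infty=\bigcup_t\left(I_tR^{A_{\infty,0}}_{\perfd}:u_t\right)$ of \autoref{lem.CharacterizationIinftyviaAppGorenstein} (an ascending union by \autoref{rmk.ApproxGorensteinChain}) and \autoref{prop.NormalizedLengthProperties}\autoref{prop.NormalizedLengthProperties.c}, as in the proof of \autoref{prop.PerfdSignatureRelativePerfdHK}, to reduce to the identity $\lambda_\infty(S^{A_{\infty,0}}_{\perfd}/J S^{A_{\infty,0}}_{\perfd})=r\cdot\lambda_\infty(R^{A_{\infty,0}}_{\perfd}/J R^{A_{\infty,0}}_{\perfd})$ for an arbitrary $\m_R$-primary ideal $J\subseteq R$; the latter is proved by a filtration of $R/J$ and a finite-level computation patterned on \autoref{prop.Normalizedlengthequalsrank}, anchored by the parameter-ideal case $\lambda_\infty(S^{A_{\infty,0}}_{\perfd}/(\underline{y})S^{A_{\infty,0}}_{\perfd})=e(\underline{y},S)=r\cdot e(\underline{y},R)$ coming from \autoref{prop.Normalizedlengthequalsmultiplicity} and \autoref{cor.perfdHKparameterideal}. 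Putting Steps 1 and 2 together,
\[
  t\cdot s^{\underline{x}}_{\perfd}(S)=\lambda_\infty\!\left(S^{A_{\infty,0}}_{\perfd}/I^S_\infty\right)=\lambda_\infty\!\left(\big(R^{A_{\infty,0}}_{\perfd}/I^R_\infty\big)\otimes_{R^{A_{\infty,0}}_{\perfd}}S^{A_{\infty,0}}_{\perfd}\right)=r\cdot\lambda_\infty\!\left(R^{A_{\infty,0}}_{\perfd}/I^R_\infty\right)=r\cdot s^{\underline{x}}_{\perfd}(R).
\]

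\textbf{The main obstacle.} The subtle point is Step 2: the category of $\m_A$-power-torsion $R^{A_{\infty,0}}_{\perfd}$-modules has no composition series, so reducing the general multiplicativity statement to the single standard module $R^{A_{\infty,0}}_{\perfd}/\m_A R^{A_{\infty,0}}_{\perfd}$ must be engineered through the approximately-Gorenstein filtration and a careful finite-level analysis; in particular one must keep scrupulous track of the difference between a $g$-isogeny (cone killed by $g^N$) and a genuine $(g)_\perfd$-almost isomorphism, since only the latter is invisible to $\lambda_\infty$. A secondary, more routine difficulty is that the trace, its compatibility with $\Tr_{S/R}$, and the retraction $\tilde{\rho}$ in Step 1 are all only defined up to $(g)_\perfd$-almost; this is harmless for the final normalized-length identity but has to be carried along.
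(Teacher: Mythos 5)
Your overall architecture is sound and matches the paper at several points: the reduction of the residue field degree $t$ via \autoref{rem.TransformationHandlesResidueFieldExtension}, and your Step~1 (the almost identification $I^S_\infty \overset{a}{=} I^R_\infty S^{A_{\infty,0}}_{\perfd}$ via the trace, surjectivity, and $\Tr(\m_S)\subseteq\m_R$) is exactly \autoref{lem.I_inftyCompare}. You also correctly identify Step~2 as the crux.

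However, Step~2 as sketched has a genuine gap, and it is not merely the bookkeeping difficulty you flag at the end. You propose to reduce $\lambda_\infty\bigl(S^{A_{\infty,0}}_{\perfd}/JS^{A_{\infty,0}}_{\perfd}\bigr)=r\cdot\lambda_\infty\bigl(R^{A_{\infty,0}}_{\perfd}/JR^{A_{\infty,0}}_{\perfd}\bigr)$ for $\m_R$-primary $J\subseteq R$ to the standard module by ``a filtration of $R/J$ and a finite-level computation patterned on \autoref{prop.Normalizedlengthequalsrank}.'' But filtering $R/J$ by copies of $k$ and then applying $-\otimes_R R^{A_{\infty,0}}_{\perfd}$ (resp.\ $-\otimes_R S^{A_{\infty,0}}_{\perfd}$) introduces $\Tor^R_1$ correction terms that are \emph{not} almost zero in general: by \autoref{prop.MixedKunz}, $R\to R^{A_{\infty,0}}_{\perfd}$ is $g$-almost flat if and only if $R$ is regular. \autoref{prop.Normalizedlengthequalsrank} succeeds precisely because it filters over the regular ring $A_n$, for which $g$-almost flatness of $R^{A_{\infty,0}}_{\perfd}$ is \autoref{thm.RperfdAlmostFaithfullyFlatOverA}; the ideal $J$ there lives in $A_{\infty,0}$, not in $R$. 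Your Step~2 needs the statement for arbitrary $\m_R$-primary ideals (since the approximately-Gorenstein ideals $I_t$ and $I_t+(u_t)$ live in $R$, not in $A$), and for those the filtration mechanism breaks down; you have reduced to a lemma that is essentially as hard as the theorem itself.

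What the paper does instead is an explicit quantitative estimate, which your outline omits: using \autoref{lem.Almostfreeincharp}/\autoref{cor.AlmostfreeMixedChar} to show $(R/p)_\perf\to(S/p)_\perf$ is \emph{uniformly} $\bar g$-almost finite free of rank $r$ (the two-way compositions of $\phi_\epsilon,\psi_\epsilon$ are both multiplication by $\epsilon$, with the number $r$ of summands not depending on $\epsilon$), lifting $\phi_\epsilon,\psi_\epsilon$ from $R^{A_{\infty,0}}_{\perfd}/(p^{1/p^\infty})$ back to $R^{A_{\infty,0}}_{\perfd}/p$ via almost projectivity (\autoref{lem.Almostprojectivelifts}), and then bounding $\lambda_\infty(C_\epsilon/I^R_\infty C_\epsilon)$ where $C_\epsilon=\coker(\widetilde\psi_\epsilon)$ directly, using the containment $\m_A R^{A_{\infty,0}}_{\perfd}\subseteq I^R_\infty$ and a carefully chosen parameter sequence $p,g,y_3,\dots,y_d$ of $A$ together with \autoref{lem.SOPisHilbertSamuel}. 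The inequality $\epsilon^3\cdot (S^{A_{\infty,0}}_{\perfd}/p)\subseteq \im(\widetilde\psi_\epsilon)+p^{1/p^m}(S^{A_{\infty,0}}_{\perfd}/p)$ is the precise mechanism that controls the cokernel without any filtration over $R$; nothing in your sketch supplies an analogue of it. In short: Step~1 is right, the shape of the argument is right, but the proposed method for Step~2 fails at the filtration and needs to be replaced by the uniform-almost-freeness estimate.
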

Compare with \cite{HunekeLeuschkeTwoTheoremsAboutMaximal,VonKorffToricVarieties,CarvajalRojasSchwedeTuckerFundamentalGroup,CarvajalRojasFiniteTorsors} in characteristic $p > 0$, with \cite{XuZhuangUniquenessOfTheMinimizerOfTheNormalizedVolume} in characteristic zero, and also with \cite{BrennerJeffriesNunezBetancourtQuantifyingSingularities,SmirnovJeffriesTransformationRuleForNaturalMult} in equal characteristic more generally.

\begin{remark}
    \label{rem.TransformationHandlesResidueFieldExtension}
    Unless $k = R/\m_R \to S/\m_S = k'$ is an isomorphism, we do not compute $s^{\underline{x}}_{\perfd}(S)$ as the normalized length of $S_{\perfd}^{\Ainfty}/I_{\infty}^S$ respect to $\Ainfty$.  Instead, we first form $A' = W(k')\llbracket x_2, \dots, x_d\rrbracket \supseteq A$, a subring of $S$, and compute the normalized length of $S^{\Ainfty'}_{\perfd}/I_{\infty}^S$ respect to $A'_{\infty}$.  This difference can be handled as follows.

    First of all, we claim that $S^{\Ainfty}_{\perfd} \cong S^{\Ainfty'}_{\perfd}$. To see this, note that since perfectoidization does not depend on the choice of base perfectoid ring by \cite[Proposition 8.5]{BhattScholzepPrismaticCohomology}, we have  %To see this, first notice that the perfectoidization of a $p$-complete $\Ainfty'$-algebra as an $\Ainfty$-algebra is the same as its perfectoidization as an $\Ainfty'$-algebra by the appropriate universal property and so $(-)_{\perfd}$ is unambiguous.  Next we observe
    \begin{equation}
    \label{eq.RperfddoesNotDependonBase}
        S^{\Ainfty}_{\perfd} = (S {\otimes}_A \Ainfty)_{\perfd} = ( (S {\otimes}_{A'} A') {\otimes}_A \Ainfty)_{\perfd} = (S {\otimes}_{A'} \Ainfty')_{\perfd} = S^{\Ainfty'}_{\perfd},
    \end{equation}
    %Here all terms inside the $(-)+\perfd$ are (classically) $p$-complete: they are derived $p$-complete and $p$-torsion free. where all the completions are derived $p$-completion (which agrees with the usual $p$-completion as all the terms are $p$-torsion free), and 
    where the third equality follows from the fact that $A \to A'$ is finite flat and so $A'\otimes_AA_\infty= A'_{\infty}$ (note also that, all terms inside $(-)_\perfd$ above are classically $p$-complete since they are derived $p$-complete and $p$-torsion free). In view of this, notice also that the construction of the ideal $I_{\infty}$ is independent of our choice of $A$ or $A'$.
    
    Finally, by utilizing \autoref{prop.NormalizedLengthProperties} \autoref{prop.NormalizedLengthProperties.b} and \autoref{prop.NormalizedLengthProperties.c}, we observe that  
    \begin{equation}
        \label{eq.remResidueFieldExtensions}
        \lambda_{\infty}^{\Ainfty}(S^{\Ainfty}_{\perfd} / I_{\infty}) = t \cdot \lambda_{\infty}^{\Ainfty'}(S^{\Ainfty'}_{\perfd} / I_{\infty}) = t \cdot s_{\perfd}^{\underline{x}}(S).
    \end{equation}
    The point is that $M = S^{\Ainfty}_{\perfd} / I_{\infty}$ can be written first as a union of finitely generated $\m_{A'}$-torsion $\Ainfty'$-modules, and hence finitely generated $\Ainfty$-modules, and so it suffices to prove that $\lambda_{\infty}^{\Ainfty}(N) = t \cdot \lambda_{\infty}^{\Ainfty'}(N)$ for $N$ finitely generated.  But now $N$ can be written as a colimit (with surjective transition maps) of finitely presented  $\Ainfty'$-modules which are also finitely presented as $\Ainfty$-modules, all of which are $\m_A$-torsion and so have finite length.  Finally, the statement on scaling of length is clear for finite length modules and so \autoref{eq.remResidueFieldExtensions} holds.
\end{remark}

A very simple special case of this transformation rule is worth highlighting.  If $R \subseteq S$ is a finite \'etale extension of complete local domains, then $S$ is a free $R$-module whose generic rank is equal to its rank which is equal to the residue field extension degree. So \autoref{thm.TransformationRule} says $R$ and $S$ have the same perfectoid signature. In this case we also have a corresponding result on perfectoid Hilbert-Kunz multiplicity. We give a direct proof below. 

\begin{corollary}
    \label{cor.FiniteEtaleHaveTheSameSignature}
    With notation as in \autoref{not.setup}, suppose additionally that $R \to S$ is a finite \'etale extension of complete local domains.  Then for any $\m$-primary ideal $I$ of $R$ we have $e^{\underline{x}}_{\perfd}(I, R)= e^{\underline{x}}_{\perfd}(IS, S)$, and we also have $s_{\perfd}^{\underline{x}}(R) = s_{\perfd}^{\underline{x}}(S)$. 
    
    In particular, if $R'=R\otimes_{W(k)}W(k')$ where $k\to k'$ is a finite extension of perfect fields such that $R'$ is a complete local domain.\footnote{We made this domain assumption on $R'$ because we only defined perfectoid Hilbert-Kunz multiplicity for complete local domains with perfect residue fields. But in fact, as long as $R$ is a complete reduced ring with perfect residue field such that $p$ is part of a system of parameters, then one can define $R^{\Ainfty}_\perfd$, $e^{\underline{x}}_{\perfd}$ and $s^{\underline{x}}_{\perfd}$ essentially in the same way as in \autoref{not.setup} and \autoref{def.PerfdeHKandPerfdSignature}, and most of the results in \autoref{section:mixedchardef} remain true. We plan to explore the more general definition (as well as removing the perfect residue field assumption) in a future work.} Then for every $\m$-primary ideal $I$ of $R$ we have $e^{\underline{x}}_{\perfd}(I, R)= e^{\underline{x}}_{\perfd}(IR', R').$
\end{corollary}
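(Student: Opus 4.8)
The plan is to reduce everything to the base-change identity $S^{A_{\infty,0}}_{\perfd} \cong R^{A_{\infty,0}}_{\perfd}\otimes_R S$. First I would record the structure of a finite \'etale local extension: since $R$ is local, $S$ is $R$-free of some rank $r$, which is necessarily $[K(S):K(R)]$; moreover $S/\m_R S$ is a finite reduced local $k$-algebra, hence a field, so $\m_R S = \m_S$, $S/\m_S = k'$, and $r = \dim_k(S/\m_R S) = [k':k] = t$. Thus $r = t$ in the \'etale case, which is what makes both sides of the asserted equalities agree. For the base-change identity, I would observe that $S\otimes_A A_{\infty,0} = (R\otimes_A A_{\infty,0})\otimes_R S$ is finite \'etale over $R\otimes_A A_{\infty,0}$, so $R^{A_{\infty,0}}_{\perfd}\otimes_R S$ is finite \'etale over the perfectoid ring $R^{A_{\infty,0}}_{\perfd}$, hence is itself perfectoid (a special case of \autoref{thm.BhattScholzeAlmostPurity}, taking the unit ideal) and in particular finite free of rank $r$ over $R^{A_{\infty,0}}_{\perfd}$. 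A short verification of the universal property of perfectoidization --- a map $S\otimes_A A_{\infty,0}\to P$ to a perfectoid ring restricts to $R\otimes_A A_{\infty,0}\to P$, which factors uniquely through $R^{A_{\infty,0}}_{\perfd}$, and $R^{A_{\infty,0}}_{\perfd}$ together with $S$ generate $R^{A_{\infty,0}}_{\perfd}\otimes_R S$ over $A_{\infty,0}$ --- then identifies $R^{A_{\infty,0}}_{\perfd}\otimes_R S$ with $(S\otimes_A A_{\infty,0})_{\perfd} = S^{A_{\infty,0}}_{\perfd}$ (alternatively, invoke compatibility of perfectoidization with finite \'etale base change).

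With that in hand, the Hilbert--Kunz statement is formal. For an $\m_R$-primary ideal $I$ we get
\[
    S^{A_{\infty,0}}_{\perfd}/IS\cdot S^{A_{\infty,0}}_{\perfd} \;\cong\; \big(R^{A_{\infty,0}}_{\perfd}/IR^{A_{\infty,0}}_{\perfd}\big)\otimes_R S \;\cong\; \big(R^{A_{\infty,0}}_{\perfd}/IR^{A_{\infty,0}}_{\perfd}\big)^{\oplus r},
\]
an $\m_A$-power-torsion $A_{\infty,0}$-module. By additivity of normalized length (\autoref{prop.NormalizedLengthProperties}\autoref{prop.NormalizedLengthProperties.a}) its normalized length over $A_{\infty,0}$ is $r\cdot e^{\underline{x}}_{\perfd}(I,R)$; on the other hand, exactly the rescaling computation of \autoref{rem.TransformationHandlesResidueFieldExtension} (normalized length over $A_{\infty,0}$ versus over $A'_{\infty,0}$, where $A' = W(k')\llbracket x_2,\dots,x_d\rrbracket$ is the Noether--Cohen normalization of $S$) shows the same module has normalized length $t\cdot e^{\underline{x}}_{\perfd}(IS,S)$. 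Since $r = t$, we conclude $e^{\underline{x}}_{\perfd}(IS,S) = e^{\underline{x}}_{\perfd}(I,R)$.

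For the perfectoid signature I would run the same argument after identifying $I_{\infty}^S$. Writing $R\eta\cong k$ for the socle of $E_R(k)$, \autoref{def.Iinfty} identifies $R^{A_{\infty,0}}_{\perfd}/I_{\infty}^R$ with the image of $R^{A_{\infty,0}}_{\perfd}\otimes_R k \to R^{A_{\infty,0}}_{\perfd}\otimes_R E_R(k)$, and analogously $S^{A_{\infty,0}}_{\perfd}/I_{\infty}^S$ with the image of $S^{A_{\infty,0}}_{\perfd}\otimes_S k' \to S^{A_{\infty,0}}_{\perfd}\otimes_S E_S(k')$. Because $R\to S$ is finite \'etale, trace duality together with adjunction shows $S\otimes_R E_R(k)$ is an $\m_S$-torsion injective $S$-module with one-dimensional socle $S\otimes_R k = k'$, hence $S\otimes_R E_R(k) \cong E_S(k')$ as $S$-modules carrying $k'\hookrightarrow E_S(k')$ to $S\otimes_R(k\hookrightarrow E_R(k))$. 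Feeding this into the base-change identity $S^{A_{\infty,0}}_{\perfd} = R^{A_{\infty,0}}_{\perfd}\otimes_R S$ and pulling the image out of the tensor by flatness of $S$ over $R$ yields $S^{A_{\infty,0}}_{\perfd}/I_{\infty}^S \cong (R^{A_{\infty,0}}_{\perfd}/I_{\infty}^R)\otimes_R S \cong (R^{A_{\infty,0}}_{\perfd}/I_{\infty}^R)^{\oplus r}$; taking normalized length and rescaling as before gives $t\cdot s^{\underline{x}}_{\perfd}(S) = r\cdot s^{\underline{x}}_{\perfd}(R)$, so $s^{\underline{x}}_{\perfd}(S) = s^{\underline{x}}_{\perfd}(R)$. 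Finally, for the ``in particular'': $W(k)\to W(k')$ is finite \'etale since $k\to k'$ is a (separable, the fields being perfect) extension, so $R\to R' = R\otimes_{W(k)}W(k')$ is finite \'etale, and $R'\otimes_R k = k'$ is a field; under the standing assumption that $R'$ is a complete local domain, this is exactly the situation above, giving $e^{\underline{x}}_{\perfd}(IR',R') = e^{\underline{x}}_{\perfd}(I,R)$.

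The step I expect to be the main obstacle is the base-change identity $S^{A_{\infty,0}}_{\perfd}\cong R^{A_{\infty,0}}_{\perfd}\otimes_R S$, namely the compatibility of perfectoidization with finite \'etale base change along the non-perfectoid ring $R\otimes_A A_{\infty,0}$, together with the \'etale self-duality identification $S\otimes_R E_R(k)\cong E_S(k')$ used in the signature case; once these are secured, the remainder is bookkeeping with additivity of normalized length and the residue-field rescaling of \autoref{rem.TransformationHandlesResidueFieldExtension}.
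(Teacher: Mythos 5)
Your proof is correct and, for the Hilbert--Kunz half, follows the paper's argument essentially verbatim: use $S^{A_{\infty,0}}_{\perfd}\cong R^{A_{\infty,0}}_{\perfd}\otimes_R S$ (free of rank $r=\rank_R S$), apply additivity of $\lambda_\infty$, and rescale between $A_{\infty,0}$- and $A'_{\infty,0}$-normalized length via \autoref{rem.TransformationHandlesResidueFieldExtension}, using $r=t$. The ``in particular'' clause is likewise the same observation that $R\to R'$ is finite \'etale.

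Where you diverge from the paper is in the signature half, specifically in identifying $I^S_\infty = I^R_\infty\, S^{A_{\infty,0}}_{\perfd}$. The paper handles this by invoking the argument of \autoref{lem.I_inftyCompare} (which works through the approximately Gorenstein filtration and $g$-almost flatness, and becomes an honest isomorphism in the finite \'etale case), and ultimately just remarks that the equality $s^{\underline{x}}_{\perfd}(R)=s^{\underline{x}}_{\perfd}(S)$ is a special case of the transformation rule \autoref{thm.TransformationRule}. You instead prove it directly from the description of $I_\infty$ in \autoref{def.Iinfty} via the injective hull: the finite \'etale extension gives $S\otimes_R E_R(k)\cong E_S(k')$ compatibly with socles (via trace duality and the adjunction $S\otimes_R E_R(k)\cong\Hom_R(S,E_R(k))$), and flat base change along $S^{A_{\infty,0}}_{\perfd}\cong R^{A_{\infty,0}}_{\perfd}\otimes_R S$ then gives $S^{A_{\infty,0}}_{\perfd}/I^S_\infty\cong(R^{A_{\infty,0}}_{\perfd}/I^R_\infty)\otimes_R S$. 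This is a cleaner, self-contained route that avoids the general machinery of \autoref{lem.I_inftyCompare} precisely because the extension is honestly \'etale rather than merely \'etale off $V(g)$; the paper's route is set up to cover that $g$-almost situation, which is what Theorem~\ref{thm.TransformationRule} actually needs. Both are valid, and you correctly flag the base-change identity and the $E_S(k')$ identification as the load-bearing steps.
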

\begin{proof}
Let $A':=W(k')[[x_2,\dots,x_d]]$. By \autoref{eq.RperfddoesNotDependonBase} we know that $S^{\Ainfty'}_{\perfd}=S^{\Ainfty}_{\perfd}$. Moreover, since $R\to S$ is finite \'etale, we know that $S^{\Ainfty}_{\perfd}\cong R^{\Ainfty}_{\perfd} \otimes_RS$ by the almost purity theorem (see \autoref{thm.BhattScholzeAlmostPurity}). In particular, $S^{\Ainfty}_{\perfd}$ is finite free of rank $\rank_RS$ over $R^{\Ainfty}_{\perfd}$. It follows that 
$$\rank_R(S) \cdot \lambda_\infty(S^{A'_\infty}_{\perfd}/IS^{A'_\infty}_{\perfd}) = \lambda_\infty(S^{\Ainfty}_{\perfd}/IS^{\Ainfty}_{\perfd})= \rank_R(S) \cdot \lambda_\infty(R^{\Ainfty}_{\perfd}/IR^{\Ainfty}_{\perfd}).$$
where the first equality follows from the same reasoning as in \autoref{eq.remResidueFieldExtensions}. But this is precisely saying that $e^{\underline{x}}_{\perfd}(I, R)= e^{\underline{x}}_{\perfd}(IS, S)$. 

The conclusion that $s_{\perfd}^{\underline{x}}(R) = s_{\perfd}^{\underline{x}}(S)$ follows by noting that $I_\infty^RS=I_\infty^S$, this follows from the same argument as in \autoref{lem.I_inftyCompare} below by using that $R^{\Ainfty}_{\perfd}\to S^{\Ainfty}_{\perfd}$ is (honest) finite \'etale and we omit the details since as we observed above, the perfectoid signature statement is a special case of our more general result \autoref{thm.TransformationRule}.

Finally, the last conclusion follows because $R\to R'$ is finite \'etale by construction (note that $k$ and $k'$ are perfect fields).
\end{proof}

As a consequence, we obtain the following result.

\begin{corollary}
\label{cor.DegreeTwoHypersurface}
   With notation as in \autoref{not.setup}, suppose $R$ is $S_2$ and $e(R)=2$.  Then 
    \[
        e^{\underline{x}}_{\perfd}(R) + s^{\underline{x}}_{\perfd}(R) = 2.
    \]    
\end{corollary}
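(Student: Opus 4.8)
The plan is to use the structure theory of multiplicity-two local rings to reduce to the Gorenstein case, and then to read the identity off from the Gorenstein formula in \autoref{prop.PerfdSignatureRelativePerfdHK} together with \autoref{cor.perfdHKparameterideal}.

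\emph{Structural reduction.} First I would record the classical fact that a complete local domain $(R,\m)$ which is $S_2$ and has $e(R)=2$ is a (formal) hypersurface --- and in particular Gorenstein and Cohen--Macaulay. Indeed, the generalized Abhyankar inequality $e(R)\ge\edim(R)-\dim R+1$, valid because $R$ is $S_2$ and equidimensional, forces $\edim(R)\le d+1$; since $R$ is not regular (otherwise $e(R)=1$) we get $\edim(R)=d+1$, and the structure of codimension-one Cohen--Macaulay local rings then gives $R\cong B/(f)$ for a complete regular local ring $B$ of dimension $d+1$ (in mixed characteristic $B$ may be ramified over its coefficient ring). Passing to associated graded rings, $\gr_\m(R)\cong k[X_0,\dots,X_d]/(q)$ where $q$ is the leading form of $f$, a nonzero form whose degree equals $\ord_{\m_B}(f)=e(R)=2$; in particular $\gr_\m(R)$ is Cohen--Macaulay of multiplicity $2$.

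\emph{A minimal reduction of $\m$ over an arbitrary residue field.} Since $q$ is a nonzero quadric in $d+1\ge 2$ variables, $\bP^d_k$ contains a $k$-rational point off the hypersurface $\{q=0\}$ --- this holds even when $k$ is finite, as a degree-two hypersurface has strictly fewer $k$-points than $\bP^d$. Choosing $d$ independent linear forms $\ell_1,\dots,\ell_d$ vanishing at that point, the corresponding line through the origin meets $\{q=0\}$ only there, so $\ell_1,\dots,\ell_d$ is a homogeneous system of parameters of the Cohen--Macaulay ring $\gr_\m(R)$. Lifting to $y_1,\dots,y_d\in\m$, the ideal $(\underline y)$ is a reduction of $\m$ generated by a system of parameters, so $\length_R(R/(\underline y))=e((\underline y),R)=e(\m,R)=2$, using that $R$ is Cohen--Macaulay and that a reduction of $\m$ has the same multiplicity as $\m$. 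Hence $\length_R(\m/(\underline y))=1$, and therefore $\m=(\underline y)+(u)$ where $u$ is a socle representative of $R/(\underline y)$. (Alternatively, one may first enlarge $k$ via \autoref{cor.FiniteEtaleHaveTheSameSignature} to reduce to the case of an infinite residue field, where minimal reductions of $\m$ exist automatically and the quadric argument can be skipped.)

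\emph{Conclusion.} Now apply the Gorenstein case of \autoref{prop.PerfdSignatureRelativePerfdHK} to this $\underline y$:
\[
  s^{\underline{x}}_{\perfd}(R)=e^{\underline{x}}_{\perfd}(\underline{y},R)-e^{\underline{x}}_{\perfd}((\underline{y},u),R).
\]
Since $(\underline{y},u)=\m$, the second term is $e^{\underline{x}}_{\perfd}(\m,R)=e^{\underline{x}}_{\perfd}(R)$, and by \autoref{cor.perfdHKparameterideal} the first term equals $e((\underline{y}),R)=2$ because $(\underline{y})$ is generated by a system of parameters. Hence $e^{\underline{x}}_{\perfd}(R)+s^{\underline{x}}_{\perfd}(R)=2$. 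The only genuinely non-formal point is the structural reduction of the first paragraph, where the $S_2$ hypothesis is used essentially and one must cite the correct form of the generalized Abhyankar inequality; everything after that is a direct application of results already established, the only mild novelty being that a degree-two hypersurface always admits a minimal reduction of its maximal ideal, even over a finite residue field.
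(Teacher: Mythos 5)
Your overall strategy is the same as the paper's: reduce to the hypersurface (hence Gorenstein, Cohen--Macaulay) case and then read the identity off from \autoref{prop.PerfdSignatureRelativePerfdHK} together with \autoref{cor.perfdHKparameterideal}. The conclusion paragraph is correct, and your parenthetical alternative of enlarging $k$ via \autoref{cor.FiniteEtaleHaveTheSameSignature} is exactly what the paper does.

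The gap is in the structural reduction, and you half-acknowledge it. You appeal to a ``generalized Abhyankar inequality $e(R)\ge\edim(R)-\dim R+1$, valid because $R$ is $S_2$ and equidimensional,'' but this is not an elementary consequence of those hypotheses. Abhyankar's inequality is classical for Cohen--Macaulay local rings; for non-CM unmixed rings its validity is essentially equivalent (in the multiplicity-two case) to the statement you are trying to prove, namely that $R$ is a hypersurface. The paper does not treat this as formal: it cites an unpublished result of Ikeda \emph{together with the direct summand theorem} of Andr\'{e} to first conclude that $R$ is Cohen--Macaulay, and only then derives $\edim(R)=\dim(R)+1$ from the CM chain of (in)equalities $2=e(\underline z)=\length(R/(\underline z))=1+\length(\m/(\underline z))\ge 1+\edim(R)-\dim(R)\ge 2$. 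Your version reverses this order and would need the same deep input (existence of big Cohen--Macaulay algebras / the direct summand theorem) to justify the inequality in the $S_2$, non-CM case; asserting it as ``valid because $R$ is $S_2$ and equidimensional'' misattributes the content. Once that citation is repaired, the rest of your argument --- in particular the observation that $\edim(R)=d+1$ forces $R=B/(f)$ because the height-one prime $I$ in a regular local ring is principal, and the careful construction of a minimal reduction over a possibly finite residue field via a $k$-rational point off the leading quadric --- is correct, if more elaborate than needed.
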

\begin{proof}
We first reduce to the case that $\m$ has a minimal reduction generated by $d$ elements (i.e., a system of parameters). We only need this step when the residue field of $R$ is finite since otherwise this is automatic (see \cite[Proposition 8.3.7 and Corollary 8.3.9]{HunekeSwansonIntegralClosure}). If the residue field $k$ of $R$ is finite, then by \autoref{lem.finiteresidue}, there exists a finite field extension $k\to k'$ so that $R':= R\otimes_{W(k)}W(k')$ is a complete local domain with maximal ideal $\m'=\m R'$ having a minimal reduction generated by a system of parameters. Since $R'$ is a finite \'{e}tale extension of $R$, it is clear that $R'$ still satisfies $S_2$ and that $e(R')=e(R)=2$ (in fact, $e(IR', R')=e(I, R)$ for any $\m$-primary ideal, see the last paragraph of the proof of \autoref{cor.perfdHKparameterideal}). Moreover, by \autoref{cor.FiniteEtaleHaveTheSameSignature} both the perfectoid Hilbert-Kunz multiplicity and the perfectoid signature of $R$ and $R'$ agree. Therefore we may replace $R$ by $R'$ to assume $\m$ has a minimal reduction generated by $d$ elements.

By the monomial conjecture (which is a theorem thanks to \cite[Page 72 (2)]{AndreDirectsummandconjecture}) and \cite{ikedaunpublished}, our hypotheses imply that $R$ is Cohen-Macaulay. Let $\underline{z}=z_1,\dots, z_d$ be a minimal reduction of $\m$. It follows that $$2=e(R)=e(\underline{z}, R) =\length(R/(\underline{z)}) =1+\length(\m/(\underline{z}))\geq 1+\edim(R)-\dim(R) \geq 2$$
where the third equality follows from \cite[Corollary 4.7.11]{BrunsHerzog} as $\underline{z}$ is a system of parameters and $R$ is Cohen-Macaulay. 
Thus we have $\edim(R)=\dim(R)+1$ and that $\m=(\underline{z}, u)$ for some $u$, which must be a socle representative of $R/(\underline{z})$. Note that this implies $R$ is a hypersurface: write $R=S/I$ such that $S$ is a regular local ring with $\edim(R)=\edim(S)$, then $I$ is a height one ideal of $S$ that is unmixed, so $I$ must be principal. Therefore we have  
$$s^{\underline{x}}_{\perfd}(R)= e^{\underline{x}}_{\perfd}((\underline{z}), R) - e^{\underline{x}}_{\perfd}(\m, R)=e(\underline{z}, R)-e^{\underline{x}}_{\perfd}(R) =2-e^{\underline{x}}_{\perfd}(R)$$
where the first equality follows from \autoref{prop.PerfdSignatureRelativePerfdHK} and the second equality follows from \autoref{prop.Normalizedlengthequalsmultiplicity}.
\end{proof}

%The infinite residue field hypothesis was only used in the choice of a minimal reduction of $\m$. Such issues can be bypassed via \autoref{cor.FiniteEtaleHaveTheSameSignature} and arguments surrounding it, but we will only need this lemma in the case when the residue field is algebraically closed.

\subsection{Almost projective and almost finite free modules}
In this subsection we collect some definitions and basic results from almost mathematics. We setup almost mathematics with respect to $(V,\m_V)$ in the sense of \cite[2.1.1 and 2.5.14]{GabberRameroAlmostringtheory}. That is, $V$ is a ring (commutative and containing $1$), $\m_V$ is an ideal so that $\m_V=\m_V^2$ and $\m_V\otimes_V\m_V$ is a flat $V$-module. A particular example is that $V$ is a perfect ring of characteristic $p>0$ and $\m_V=(f^{1/p^\infty})$ for some $f\in V$. Another example (that actually contains the previous example as a special case) is that $(V,\m_V) = (S/p^n, (g)_\perfd S/p^n)$ where $S$ is a perfectoid ring and $g\in S$, see \cite[paragraph between Remark 10.7 and Proposition 10.8]{BhattScholzepPrismaticCohomology}.

Assume that ${V'}$ is a $V$-algebra. Let $P$ be a ${V'}$-module. Recall that $P$ is \emph{almost projective}\footnote{Rigorously speaking, one should say $P^a$ is almost projective as in \cite{GabberRameroAlmostringtheory}, we ignore this distinction though since we work with ``almostifications'' of the honest modules we are interested.} if the functor $\text{alHom}_{V'}(P^a,-)$ is exact, or equivalently $\Ext_{V'}^{i}(P,N)$ is almost zero for any $i>1$ and any ${V'}$-module $N$. We also recall the definition of finite rank in the sense of \cite[Definition 4.3.9]{GabberRameroAlmostringtheory} here. We say an almost finitely generated projective module $P$ is of \emph{finite rank} if there exisits an integer $i\ge 0$ such that $\Lambda_{V'}^iP$ is almost zero; we say $P$ is of \emph{constant rank $r\in \mathbb{N}$}, if $\Lambda_{V'}^{r+1}P$ is almost zero and $\Lambda_{V'}^rP$ is an almost invertible $V'$-module, i.e., $(\Lambda_{V'}^rP) \otimes_{V'}\Hom(\Lambda_{V'}^rP,V')$ is almost isomorphic to $V'$.

Maps from almost projective modules lift with respect to surjective maps up to multiplication by elements in $\m_V$ in the following sense: %Even though almost projective modules are not projective objects in the almost category, they lift with respect to surjective maps up to multiplication by elements in $\m$ in the following sense:
\begin{lemma}
\label{lem.Almostprojectivelifts}
    Suppose that $P,M,N$ are ${V'}$-modules where $P$ is almost projective. Let $\phi:M\to N$ be a surjective ${V'}$-linear map. Then for any $V'$-linear $\psi:P\to N$ and $\epsilon\in \m_V$, there exists $V'$-linear map $\psi_\epsilon: P\to M$ such that $\phi\circ\psi_\epsilon = \epsilon\cdot\psi$.
\end{lemma}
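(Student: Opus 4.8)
The plan is to reduce the statement to the almost vanishing of $\Ext^1_{V'}(P,-)$, which is part of the definition of almost projectivity. First I would set $K:=\ker(\phi)$ and record the short exact sequence of $V'$-modules
\[
0\to K\to M\xrightarrow{\ \phi\ } N\to 0.
\]
Applying $\Hom_{V'}(P,-)$ produces the exact sequence
\[
\Hom_{V'}(P,M)\xrightarrow{\ \phi_*\ }\Hom_{V'}(P,N)\xrightarrow{\ \partial\ }\Ext^1_{V'}(P,K),
\]
so that a map $\psi\colon P\to N$ lifts through $\phi$ precisely when $\partial(\psi)=0$ in $\Ext^1_{V'}(P,K)$.

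Next I would invoke that $P$ is almost projective: the exactness of $\operatorname{alHom}_{V'}(P^a,-)$ forces $\Ext^1_{V'}(P,K)$ to be almost zero, i.e.\ annihilated by $\m_V$ (this is the standard translation, via the long exact sequence together with $\operatorname{alHom}_{V'}(P^a,-)\cong\Hom_{V'}(P,-)^a$; see \cite[\S 2.4]{GabberRameroAlmostringtheory}). Consequently $\partial(\epsilon\cdot\psi)=\epsilon\cdot\partial(\psi)=0$, hence $\epsilon\cdot\psi$ lies in the image of $\phi_*$, and any $\psi_\epsilon\in\Hom_{V'}(P,M)$ with $\phi_*(\psi_\epsilon)=\epsilon\cdot\psi$ satisfies $\phi\circ\psi_\epsilon=\epsilon\cdot\psi$. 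As a more self-contained variant one can instead form the pullback $M':=M\times_N P=\{(m,x)\in M\times P:\phi(m)=\psi(x)\}$; the projection $\pi\colon M'\to P$ is surjective since $\phi$ is, an $\epsilon$-section $\sigma\colon P\to M'$ of $\pi$ (again supplied by almost projectivity of $P$ applied to $\pi$) gives $\psi_\epsilon:=\mathrm{pr}_M\circ\sigma$, and $\phi\circ\psi_\epsilon=\psi\circ\pi\circ\sigma=\epsilon\cdot\psi$.

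The only point requiring care --- and the ``main obstacle'', though a mild one --- is the bookkeeping that translates the definition of almost projectivity (exactness of $\operatorname{alHom}_{V'}(P^a,-)$, equivalently the almost vanishing of the higher $\Ext$) into the almost vanishing of $\Ext^1_{V'}(P,K)$ for the particular module $K=\ker(\phi)$. There is no geometric or homological difficulty beyond citing the appropriate lemma of Gabber--Ramero; the rest is a formal diagram chase.
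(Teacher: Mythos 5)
Your first argument is exactly the paper's proof: apply $\Hom_{V'}(P,-)$ to $0\to\ker(\phi)\to M\to N\to 0$, note that $\epsilon$ annihilates $\Ext^1_{V'}(P,\ker(\phi))$ by almost projectivity, and conclude that $\epsilon\cdot\psi$ lifts to $\Hom_{V'}(P,M)$. The pullback variant you sketch is a valid reformulation but is not really independent of this, since producing the $\epsilon$-section of $\pi\colon M'\to P$ is precisely the $\psi=\mathrm{id}_P$ case of the lemma and still rests on the same $\Ext^1$ observation; you may also want to note that the paper's parenthetical ``$\Ext^i_{V'}(P,N)$ almost zero for $i>1$'' in \autoref{sec.TransformationRule} is evidently a typo for $i\geq 1$, which is what both you and the paper's proof actually use.
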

\begin{proof}
    Consider the long exact sequence obtain by applying $\Hom_{V'}(P,-)$ to the short exact sequence $0\to \ker(\phi)\to M\to N\to 0$:
    $$\cdots \xrightarrow{}\Hom_{V'}(P,M)\to \Hom_{V'}(P,N)\to \Ext^1_{V'}(P,\ker(\phi))\to \cdots.$$
    Since the image of $\epsilon \cdot \psi$ in $\Ext^1_{V'}(P,\ker(\phi))$ is $0$ (as $\epsilon$ annihilates $\Ext^1_{V'}(P,\ker(\phi))$), there is some $\psi_\epsilon\in \Hom(P,M)$ that maps to $\epsilon \cdot \psi$. It is easy to see that $\psi_\epsilon$ satisfies the conclusion of the lemma.
\end{proof}

We also record the fact that in the almost finitely generated case, almost projective modules behaves as almost direct summands of free modules.

\begin{lemma}[{\cite[Lemma 2.4.15]{GabberRameroAlmostringtheory}}]
    \label{lem.AlmostFGAlmostProjectiveVsAlmostDirectSummandOfFree}
    Let $P$ be an almost finitely generated ${V'}$-module, then $P$ is almost projective if and only if for any $\epsilon \in \m_V$, there exists some $n(\epsilon)\in \mathbb{N}$ and ${V'}$-linear maps $\phi_\epsilon: P\to  {V'}^{\oplus n(\epsilon)}$ and $\psi_\epsilon:{V'}^{\oplus n(\epsilon)}\to  P$ such that  $\psi_\epsilon\circ\phi_\epsilon=\epsilon\cdot \mathbf{1}_P$. 
\end{lemma}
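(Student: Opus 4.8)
The plan is to prove the two implications separately, obtaining the nontrivial direction as an application of the lifting statement \autoref{lem.Almostprojectivelifts}.

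For the ``if'' direction, assume that for every $\epsilon \in \m_V$ there are $V'$-linear maps $\phi_\epsilon \colon P \to (V')^{\oplus n(\epsilon)}$ and $\psi_\epsilon \colon (V')^{\oplus n(\epsilon)} \to P$ with $\psi_\epsilon \circ \phi_\epsilon = \epsilon \cdot \mathbf{1}_P$. By the characterization recalled above, it suffices to show that $\Ext^i_{V'}(P, N)$ is almost zero for every $i \geq 1$ and every $V'$-module $N$. Applying the contravariant functor $\Ext^i_{V'}(-, N)$ to the factorization of $\epsilon \cdot \mathbf{1}_P$ exhibits multiplication by $\epsilon$ on $\Ext^i_{V'}(P, N)$ as a composite through $\Ext^i_{V'}((V')^{\oplus n(\epsilon)}, N) \cong \Ext^i_{V'}(V', N)^{\oplus n(\epsilon)}$, which vanishes because $V'$ is free over itself and $i \geq 1$. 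Hence $\epsilon$ annihilates $\Ext^i_{V'}(P, N)$, and since $\epsilon \in \m_V$ is arbitrary this group is almost zero. Note this implication does not use that $P$ is almost finitely generated.

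For the ``only if'' direction, fix $\epsilon \in \m_V$ and write $\epsilon = \sum_{j} a_j b_j$ as a finite sum with $a_j, b_j \in \m_V$, which is possible since $\m_V = \m_V^2$ in the basic setup of almost mathematics. Since the sought maps can be assembled blockwise (take $\phi_\epsilon$ to have the various $\phi^{(j)}$ as components and $\psi_\epsilon$ to restrict to $\psi^{(j)}$ on the $j$-th block), it suffices to produce, for a single product $ab$ with $a, b \in \m_V$, maps $\phi \colon P \to (V')^{\oplus n}$ and $\psi \colon (V')^{\oplus n} \to P$ with $\psi \circ \phi = ab \cdot \mathbf{1}_P$. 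Since $P$ is almost finitely generated, there is a map $\psi \colon (V')^{\oplus n} \to P$ whose cokernel is annihilated by $a$; write $Q = \im(\psi)$, let $\iota \colon Q \hookrightarrow P$ be the inclusion, and factor $\psi = \iota \circ \psi'$ with $\psi' \colon (V')^{\oplus n} \twoheadrightarrow Q$ surjective. Then $aP \subseteq Q$, so multiplication by $a$ defines a map $\mu \colon P \to Q$ with $\iota \circ \mu = a \cdot \mathbf{1}_P$. Applying \autoref{lem.Almostprojectivelifts} to the surjection $\psi'$, the map $\mu$, and the element $b$ (using that $P$ is almost projective) yields $\phi \colon P \to (V')^{\oplus n}$ with $\psi' \circ \phi = b \cdot \mu$, whence $\psi \circ \phi = \iota \circ \psi' \circ \phi = b \cdot (\iota \circ \mu) = ab \cdot \mathbf{1}_P$, as desired.

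The only points requiring a little care are writing $\epsilon$ in terms of products of elements of $\m_V$ (which relies on the idempotence of $\m_V$ and is exactly what makes the ``up to a factor of $\epsilon$'' bookkeeping close up) and the fact that $\psi$ is only \emph{almost} surjective, so that the lifting lemma must be invoked along the genuine surjection $\psi'$ onto $Q = \im(\psi)$ rather than onto $P$ itself. Neither is a serious obstacle; this is essentially the argument of \cite[Lemma 2.4.15]{GabberRameroAlmostringtheory}, reproduced here for completeness.
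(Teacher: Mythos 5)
The paper states this lemma as a citation to Gabber--Ramero and gives no proof of its own, so there is no internal argument to compare against. Your reconstruction is correct and matches the standard proof. In the ``if'' direction you correctly observe that applying $\Ext^i_{V'}(-,N)$ to the factorization $\psi_\epsilon\circ\phi_\epsilon = \epsilon\cdot\mathbf{1}_P$ exhibits $\epsilon\cdot\mathbf{1}_{\Ext^i(P,N)}$ as factoring through $\Ext^i_{V'}\big((V')^{\oplus n(\epsilon)},N\big)=0$, and you rightly note that almost finite generation is not needed here. In the ``only if'' direction, the essential moves are all in place: using $\m_V=\m_V^2$ to reduce to an element of the form $ab$, using almost finite generation to produce $\psi:(V')^{\oplus n}\to P$ whose cokernel is killed by $a$, noting that $aP\subseteq Q:=\im(\psi)$ so that multiplication by $a$ gives $\mu:P\to Q$ with $\iota\circ\mu = a\cdot\mathbf{1}_P$, and then invoking \autoref{lem.Almostprojectivelifts} along the honest surjection $\psi':(V')^{\oplus n}\twoheadrightarrow Q$ (rather than along $\psi$, which is only almost surjective onto $P$) to pick up the extra factor $b$. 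The assembly of the blockwise maps $\phi^{(j)},\psi^{(j)}$ into $\phi_\epsilon,\psi_\epsilon$ is likewise sound. One cosmetic remark: the definition of almost projective recalled in \autoref{sec.TransformationRule} reads ``$\Ext^i_{V'}(P,N)$ is almost zero for any $i>1$,'' which is a typo for $i\geq 1$; you quietly use the correct version, as you should.
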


Following \autoref{lem.AlmostFGAlmostProjectiveVsAlmostDirectSummandOfFree}, we now introduce almost finite free modules. 

\begin{definition}
    We say that an almost finitely generated module $M$ is \emph{almost finite free} if it is almost projective and, with notation as in \autoref{lem.AlmostFGAlmostProjectiveVsAlmostDirectSummandOfFree}, we have that  $\phi_\epsilon\circ\psi_\epsilon=\epsilon\cdot \mathbf{1}_{{V'}^{\oplus n(\epsilon)}}$. Moreover, we say that $M$ is \emph{uniformly almost finite free of rank $r$} if we can take $n(\epsilon)=r$ for any $\epsilon\in \m_V$.
\end{definition}

Our observation (which we believe should be known to experts) here is that, under mild assumptions, the almost purity theorem in characteristic $p>0$ guarantees uniformly almost finite freeness. Recall that for any perfect ring $S$ and any $f\in S$, one can set up almost mathematics by taking $(V,\m_V)=(S,(f^{1/p^\infty}))$. 

\begin{lemma}
\label{lem.Almostfreeincharp}
    Let $\varphi$: $S\to T$ be an integral map of perfect rings of characteristic $p>0$. If there exists some nonzero element $f\in S$ such that $T[1/f]$ is finite \'etale and of constant rank $r$, then $T$ is uniformly $f$-almost projective of constant rank $r$ over $S$.
    
    Moreover, if $T[1/f]$ is finite free over $S[1/f]$ of rank $r$, then $T$ is uniformly $f$-almost free of rank $r$ over $S$.
\end{lemma}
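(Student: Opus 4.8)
The plan is to deduce both statements from the almost purity theorem in characteristic $p>0$, i.e.\ from the equal characteristic case of \autoref{thm.BhattScholzeAlmostPurity} together with the fact that in characteristic $p$ the perfectoidization is just the perfection. First I would reduce to the case that $T$ is a finitely presented finite $S$-algebra: writing $T=\varinjlim_i T_i$ as a filtered colimit of finitely presented finite $S$-subalgebras, the hypothesis that $T[1/f]$ is finite over $S[1/f]$ forces $T_i[1/f]\xrightarrow{\ \sim\ }T[1/f]$ for $i\gg 0$, so after discarding small $i$ each $T_i[1/f]$ is finite \'etale of constant rank $r$ over $S[1/f]$; replacing $T_i$ by its perfection $T_i^{\perf}$ and using that $T$ is perfect (so $T=\varinjlim_i T_i^{\perf}$), it suffices to prove the statement for each $T_i^{\perf}$ and then observe that the conclusions are stable under this colimit, the transition maps becoming isomorphisms after inverting $f$.

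\textbf{The finitely presented finite case.} Here $p^n=0$ in $S$ for all $n\geq 1$, so \autoref{thm.BhattScholzeAlmostPurity} with $J=(f)$ (note $(f)_{\perfd}=(f^{1/p^\infty})$ by \autoref{lem.gPerfd=AllpPowerRootsofg}, since $S$ is perfect) applies to $S\to T^{\perf}=T_{\perfd}$ and shows $T^{\perf}$ is $f$-almost finite projective (and $f$-almost unramified) over $S$. That it has constant rank $r$ can be read off after inverting $f$: there $\Lambda^{r+1}_{S}T^{\perf}$ becomes $\Lambda^{r+1}_{S[1/f]}(T[1/f])=0$ and $\Lambda^{r}_S T^{\perf}$ becomes the invertible module $\Lambda^{r}_{S[1/f]}(T[1/f])$, using that exterior powers of $f$-almost finite projective modules are again $f$-almost finite projective; the uniform bound on the number of generators comes from unwinding \autoref{lem.AlmostFGAlmostProjectiveVsAlmostDirectSummandOfFree}. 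One can also avoid checking the rank by hand by base changing to an absolutely integrally closed cover of $S$ and invoking \autoref{rmk.BhattSchozeAlmostPurityAIC}, where $T^{\perf}$ becomes $f$-almost isomorphic to a free module of rank $r$ on the nose. Alternatively, and more cleanly, I would reduce the first statement to the ``moreover'' clause below by covering $\Spec S[1/f]$ by finitely many $D(h_i)$ with $h_i\in S$ over which $T[1/f]$ is free of rank $r$, applying the free case to each $S[1/h_i]\to T[1/h_i]$, and gluing.

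\textbf{The free case (``moreover'').} This admits a clean, self-contained proof. Choose an $S[1/f]$-basis $t_1,\dots,t_r$ of $T[1/f]$ and let $s_1,\dots,s_r$ be the trace-dual basis, which exists because $T[1/f]$ is finite \'etale over $S[1/f]$, so the trace pairing $\langle x,y\rangle=\Tr_{T[1/f]/S[1/f]}(xy)$ is perfect. After multiplying the $t_i$ by a power of $f$ we may assume $t_i\in T$, and we fix $M\geq 1$ with $f^M s_i\in T$ for all $i$; enlarging $M$, assume $M=p^k$. Then $\psi\colon S^{\oplus r}\to T$, $e_i\mapsto t_i$, and $\phi\colon T\to S^{\oplus r}$, $x\mapsto (f^{M}\Tr(s_i x))_i$, are $S$-linear with $\psi\circ\phi=f^{M}\cdot\mathbf 1_T$ and $\phi\circ\psi=f^{M}\cdot\mathbf 1_{S^{\oplus r}}$ (the latter since $\Tr(s_i t_j)=\delta_{ij}$). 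Now I would exploit that $S$ and $T$ are perfect: for an $S$-linear map $\alpha$ between perfect $S$-modules, each equipped with its (bijective, $F_S$-semilinear) coordinatewise or ring-theoretic Frobenius $F$, the assignment $\alpha\mapsto\alpha^{\flat}:=F^{-1}\circ\alpha\circ F$ is again $S$-linear, is compatible with composition, and sends $f^{p^j}\cdot\mathbf 1$ to $f^{p^{j-1}}\cdot\mathbf 1$. Applying $(-)^{\flat}$ to the pair $(\psi,\phi)$ a total of $k+m$ times yields, for every $m\geq 0$, $S$-linear maps $\psi_m\colon S^{\oplus r}\to T$ and $\phi_m\colon T\to S^{\oplus r}$ with $\psi_m\phi_m=f^{1/p^m}\cdot\mathbf 1_T$ and $\phi_m\psi_m=f^{1/p^m}\cdot\mathbf 1_{S^{\oplus r}}$; by \autoref{lem.AlmostFGAlmostProjectiveVsAlmostDirectSummandOfFree} this exhibits $T$ as uniformly $f$-almost free of rank $r$ over $S$.

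\textbf{Main obstacle.} The free case is clean; the difficulty is the bookkeeping in the general case. Concretely, one must track that the rank remains exactly $r$ and that the number of generators is uniform in $\epsilon$, and, on the Zariski-gluing route, control the $h_i$-power denominators appearing when clearing denominators in the local trivializations. Here the Frobenius-untwisting trick is again useful, since it trades an $h_i$-power denominator for an arbitrarily small $p$-power root of $h_i$, which is harmless almost-mathematically. A further small point, in the generality stated (with $S$ not assumed normal), is that to know $\Tr$ of an element of $T$ lands in $S$ one either passes to an absolutely integrally closed cover and uses \autoref{rmk.BhattSchozeAlmostPurityAIC}, or simply observes that in all of our applications $S$ is a normal domain, where this is automatic.
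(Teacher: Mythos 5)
Your ``moreover'' (free) case is essentially the paper's own argument: you choose a trace-dual basis, clear denominators to get a pair $(\psi,\phi)$ with both compositions equal to $f^{p^k}$, and then use Frobenius bijectivity to untwist down to $f^{1/p^m}$ for all $m$. The paper phrases the untwisting on the level of the idempotent $e=\sum e_i\otimes e_i^*$ rather than on the pair $(\psi,\phi)$, but these are the same computation. This part of your proposal is correct (modulo the point you flag about $\Tr$ landing in $S$, which the paper handles by the preliminary reduction to $S$ and $T$ being $f$-torsion free and integrally closed in $S[1/f]$ and $T[1/f]$; one then checks that the trace of a multiplication operator that preserves $T$ has characteristic polynomial with coefficients integral over $S$, hence lands in $S$).

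For the first statement your route diverges from the paper's, and here there are real gaps. The paper does \emph{not} invoke the Bhatt--Scholze almost purity theorem at all: it directly writes out the diagonal idempotent $e=\sum_{i=1}^k a_i\otimes b_i\in(T\otimes_S T)[1/f]$, uses Frobenius bijectivity to produce $\epsilon e=\sum c_i\otimes d_i\in T\otimes_S T$ for every $\epsilon\in(f^{1/p^\infty})$, and reads off the maps $T\to S^{\oplus k}\to T$ with composition $\epsilon\cdot\mathbf{1}_T$. This gives a \emph{uniform} $k$ (the number of terms in the idempotent) for all $\epsilon$, which is exactly what ``uniformly almost projective'' requires. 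Constant rank $r$ is then deduced from Gabber--Ramero's rank decomposition $S\simeq\prod_i S_i$ by inverting $f$ and seeing that only the rank-$r$ piece survives. Your proposal instead cites \autoref{thm.BhattScholzeAlmostPurity} for $f$-almost finite projectivity; but that theorem, via \autoref{lem.AlmostFGAlmostProjectiveVsAlmostDirectSummandOfFree}, only yields an $n(\epsilon)$ depending on $\epsilon$, not a uniform $n$. Your rank argument is also incomplete: knowing that $\Lambda^{r+1}_S T^{\perf}$ vanishes after inverting $f$ does not by itself show it is $(f^{1/p^\infty})$-almost zero (being $f^\infty$-torsion and being killed by $f^{1/p^e}$ for all $e$ are quite different conditions), so you still need the Gabber--Ramero rank decomposition or something equivalent. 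Finally, the Zariski-gluing alternative does not repair these problems: the opens $D(h_i)$ only cover $\Spec S[1/f]$, not $\Spec S$, and ``$T[1/h_i]$ is uniformly $f$-almost free over $S[1/h_i]$'' is a statement in the almost category of $S[1/h_i]$-modules; there is no local-to-global mechanism that promotes these to a statement about $T$ over $S$, and the Frobenius untwisting of $h_i$-denominators only makes things small in the $h_i$-almost sense, not in the $f$-almost sense. I would also note that your reduction to the finitely presented case via a colimit $T=\varinjlim_i T_i^{\perf}$ does not automatically preserve uniform almost projectivity: the forward maps $\phi_i^\epsilon\colon T_i^{\perf}\to S^{\oplus n}$ do not extend to $T$ without further argument. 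The clean fix is to drop the appeal to almost purity entirely and run the idempotent construction directly on $T$, as the paper does.
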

\begin{proof}
We essentially need to run the proof of the almost purity theorem in characteristic $p>0$ (see \cite[Proposition 5.23]{ScholzePerfectoidspaces}).  First note that the $f$-power torsion modules $0 :_S (f^{\infty}) \subset S$ and $0 :_T (f^{\infty})  \subset T$ are $f$-almost zero. Furthermore, $S$ is $f$-almost isomorphic to its integral closure in $S[1/f]$, and similarly for $T$. Hence we may assume that $S$ and $T$ are $f$-torsion free and integrally closed in $S[1/f]$ and $T[1/f]$ respectively. 

Let $e=\sum_{i=1}^k a_i\otimes b_i \in (T\otimes_S T)[1/f]$ be the diagonal idempotent. It follows that we can explicitly express $T[1/f]$ as a finite projective $S[1/f]$-module via maps:
    $$T[1/f]\xrightarrow{\phi}(S[1/f])^{\oplus k}\xrightarrow{\psi}T[1/f]$$
    where $\phi(g)=(\Tr(ga_1), \dots, \Tr(ga_k))$ and $\psi((g_1, \dots, g_k))=\sum g_ib_i$. One can verify that the composition is the identity map.

We next note that there exists some $N,M\gg 0$ such that $f^Na_i\in T$ and $f^Mb_i\in T$ for all $i$.
    Thus $f^{M+N}e=\sum_{i=1}^k (f^Ma_i)\otimes (f^N b_i) \in T\otimes_S T$. As Frobenius is bijective, we have $f^{(M+N)/p^n}e=\sum_{i=1}^k (f^{M/p^n}a_i^{1/p^n})\otimes (f^{N/p^n} b_i^{1/p^n}) \in T\otimes_ST$ for all $n$. Hence for any $\epsilon \in (f^{1/p^\infty})$, we can write $\epsilon e=\sum_{i=1}^k c_i\otimes d_i$ for $c_i,d_i\in T$. Since $S$ is integrally closed in $S[1/f]$, we know that $\Tr(T)\subseteq S$. This can be found in \cite[Proof of Claim 3.5.33]{GabberRameroAlmostringtheory} and we briefly sketch it here: It is enough to show $\Tr(T)\subseteq S$ upon replacing $S$ by a faithfully flat extension. Since $T[1/f]$ is finite \'etale over $S[1/f]$ of constant rank $r$, we may replace $S$ by a faithfully flat extension (in fact a finite product of localizations of $S$, so $S$ is still integrally closed in $S[1/f]$) so that $T[1/f]$ is finite free of rank $r$ over $S[1/f]$. Now since $T$ is integral over $S$, the trace of each element of $T$, viewed as an element in $S[1/f]$, is integral over $S$ (see \cite[Chapter 5, Proposition 17]{Bourbaki1998}) and hence belongs to $S$ by integral closedness of $S$ in $S[1/f]$. Therefore, it follows that for any $\epsilon \in (f^{1/p^\infty})$, we can define
    $$T\xrightarrow{\phi}S^{\oplus k}\xrightarrow{\psi}T$$
    by $\phi(g)=(\Tr(gc_1), \dots, \Tr(gc_k))$ and $\psi((g_1, \dots, g_k))=\sum g_id_i$. The composition is multiplication by $\epsilon$, as this is true after inverting $f$ and $S$ and $T$ are $f$-torsion free.

    We next verify that $T$ is of constant rank $r$ over $S$ in the almost category. By the above argument we know $\Lambda_S^{k+1}T$ is almost zero, hence by \cite[Proposition 4.3.27]{GabberRameroAlmostringtheory} we have $S\simeq \prod_{i=0}^{k}S_i$ where $T\otimes_S S_i$ is an $S_i$-module of constant rank $i$ in the almost category. Let $T_i=T\otimes_S S_i$. Then we have $\Lambda_{S_i}^{i+1}T_i$ is almost zero and $\Lambda_{S_i}^{i}T_i$ is an almost invertible $S_i$-module. After inverting $f$, the above all become honest zeros and honest isomorphisms. However, since $T[1/f]$ is of constant rank $r$ over $S[1/f]$, we have $\Lambda_{S[1/f]}^{r}T[1/f]$ is an invertible $S[1/f]$-module and $\Lambda_{S[1/f]}^{r+1}T[1/f]\simeq 0$. Hence after base change to $S_i$ we see that $S_i$ is almost zero when $i\neq r$, and thus $T\overset{a}{\simeq} T_r$ has constant rank $r$ in the almost category.
    
    Moreover, assume $T[1/f]$ is finite free over $S[1/f]$ of rank $r$. Let $\{e_i\}_{i=1}^r$ be a free basis of $T[1/f]$ over $S[1/f]$. Since $T[1/f]$ is finite \'etale and finite free over $S[1/f]$, the trace map $\Tr$ induces a perfect pairing: 
    $${\bf t}:T[1/f]\otimes T[1/f]\to S[1/f]$$ 
    $${\bf t}(t_1\otimes t_2)=\Tr(t_1t_2).$$ 
    Hence if one identifies $T[1/f]$ with $T[1/f]^*\coloneqq \Hom(T[1/f],S[1/f])$ by the perfect pairing ${\bf t}$, then $e:=\sum_{i=1}^re_i\otimes e_i^*$ is the canonical idempotent for $T[1/f]\otimes T[1/f]$. Then one can verify that, for each $\epsilon\in (f^{1/p^\infty})$, not only $\psi\circ \phi$ but also $\phi\circ \psi$ is an isomorphism after inverting $f$. Now the same argument as in the third paragraph in the proof shows that both $\psi\circ \phi$ and $\phi\circ \psi$ are multiplication by $\epsilon$ on the integral level.
\end{proof}

Using the tilting correspondence, we next prove a weaker variant of \autoref{lem.Almostfreeincharp} in mixed characteristic. In our application we will only use the next corollary when $S$ is a perfectoid valuation ring (in which case the result is certainly well-known), but we state the result in a general setup that we can prove.

We recall that if $S$ is a perfectoid ring, then there exists $\varpi\in S$ such that $\varpi^p=up$ where $u$ is a unit in $S$ by definition (see \autoref{subsec:PefectoidAlgebras}). in fact, we can further assume that $\varpi$ admits a compatible system of $p$-power roots $\{\varpi^{1/p^e}\}$ in $S$ (see \cite[Remark 3.8 and Lemma 3.9]{BhattMorrowScholzeIHES}). Then $(V,\m_V)=(S, (\varpi^{1/p^\infty}))(=(S, (p)_\perfd))$ forms an almost mathematics setup. 

\begin{corollary}
\label{cor.AlmostfreeMixedChar}
Let $S\to T$ be a map of $p$-torsion free perfectoid rings such that $T[1/p]$ is finite \'etale over $S[1/p]$ of constant rank $r$. Then there exists some nonzero $s\in S^\flat$ such that $T/p$ is uniformly $ps^\sharp$-almost finite free of rank $r$ over $S/p$.  

Moreover, if $S$ is a perfectoid valuation ring, then $T/p$ is uniformly $p$-almost finite free of rank $r$ over $S/p$.  
\end{corollary}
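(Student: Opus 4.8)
The plan is to deduce this from the positive characteristic statement \autoref{lem.Almostfreeincharp} via the tilting correspondence. Following the discussion preceding the corollary, fix $\pi\in S$ with $\pi^p=up$ for a unit $u$ together with a compatible system $\{\pi^{1/p^e}\}$ of $p$-power roots, and let $\pi^\flat\in S^\flat$ be the associated element, so that $(\pi^\flat)^\sharp=\pi$. Since $S$ and $T$ are $p$-torsion free and $p$-adically complete, $\pi$, and hence $\pi^\flat$, is a nonzerodivisor on both rings and on both tilts, and Fontaine's description of the tilt gives canonical identifications $S/p\cong S^\flat/(\pi^\flat)^p$ and $T/p\cong T^\flat/(\pi^\flat)^p$ compatible with the map $S\to T$ and carrying $\pi^\flat$ to the class of $\pi$ (see \cite{BhattMorrowScholzeIHES}). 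These identifications are what let us pass between the mixed and equal characteristic pictures.

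First I would record the finite \'etale data downstairs on the tilted side: by the tilting correspondence for finite \'etale algebras (\cite{ScholzePerfectoidspaces}, see also \cite{GabberRameroAlmostringtheory}), $T^\flat[1/\pi^\flat]$ is finite \'etale over $S^\flat[1/\pi^\flat]$ of constant rank $r$. If $S$ is a perfectoid valuation ring then $S^\flat$ is again one, so $S^\flat[1/\pi^\flat]$ is a field and $T^\flat[1/\pi^\flat]$ is automatically finite free of rank $r$ over it; here I set $f:=\pi^\flat$. In general I would choose a nonzero $s\in S^\flat$ so that the finite projective $S^\flat[1/\pi^\flat]$-module $T^\flat[1/\pi^\flat]$, which has constant rank $r$, becomes free of rank $r$ after inverting the image of $s$, and then set $f:=\pi^\flat s$; in either case $T^\flat[1/f]$ is finite free of rank $r$ over $S^\flat[1/f]$.

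To apply \autoref{lem.Almostfreeincharp} I need an integral map of perfect rings, so I would replace $T^\flat$ by the integral closure $\widetilde T$ of $S^\flat$ in $T^\flat[1/\pi^\flat]$. This is harmless in the almost category: $\widetilde T$ and $T^\flat$ are $\pi^\flat$-torsion free with $\widetilde T[1/\pi^\flat]=T^\flat[1/\pi^\flat]$, and $T^\flat$ is $\pi^\flat$-almost isomorphic to $\widetilde T$ as an $S^\flat$-algebra -- this is the characteristic $p$ almost purity theorem, together with the fact that the perfectoid ring $T$ is $p$-almost isomorphic to the normalization of $S$ in $T[1/p]$. The ``moreover'' part of \autoref{lem.Almostfreeincharp}, applied with this $f$, then gives that $T^\flat$ is uniformly $f$-almost finite free of rank $r$ over $S^\flat$: for each $\epsilon\in(f^{1/p^\infty})=\m_{S^\flat}$ there are $S^\flat$-linear maps $\phi_\epsilon\colon T^\flat\to(S^\flat)^{\oplus r}$ and $\psi_\epsilon\colon(S^\flat)^{\oplus r}\to T^\flat$ with $\psi_\epsilon\circ\phi_\epsilon=\epsilon\cdot\mathbf 1$ and $\phi_\epsilon\circ\psi_\epsilon=\epsilon\cdot\mathbf 1$.

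Finally I would untilt by applying $-\otimes_{S^\flat}S^\flat/(\pi^\flat)^p=-\otimes_{S^\flat}S/p$ to this system of maps. Using the first paragraph's identifications, this produces for each such $\epsilon$ maps $T/p\to(S/p)^{\oplus r}$ and $(S/p)^{\oplus r}\to T/p$ whose two composites are both multiplication by the image $\bar\epsilon$ of $\epsilon$ in $S/p$. As $\epsilon$ ranges over the generators $f^{1/p^e}=(\pi^\flat)^{1/p^e}s^{1/p^e}$ (respectively $(\pi^\flat)^{1/p^e}$) of $\m_{S^\flat}$, the elements $\bar\epsilon=\pi^{1/p^e}(s^\sharp)^{1/p^e}$ (respectively $\pi^{1/p^e}$) generate, using $\pi^p=up$, exactly the maximal ideal $(ps^\sharp)_\perfd/p$ (respectively $(p)_\perfd/p=(\pi^{1/p^\infty})$) of the almost setup on $S/p$ with respect to $ps^\sharp$ (respectively $p$). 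Hence $T/p$ is uniformly $ps^\sharp$-almost (respectively $p$-almost) finite free of rank $r$ over $S/p$; almost finite generation of $T/p$ over $S/p$ follows in the same way, and almost projectivity is automatic from almost freeness. The main obstacle is making precise the two folklore inputs: that a single nonzero $s$ trivializes the finite projective module $T^\flat[1/\pi^\flat]$, and the $\pi^\flat$-almost identification of $T^\flat$ with its normalization $\widetilde T$; both rest on the almost purity theorem and the structure theory of perfectoid rings, and this is where the genuine content lies.
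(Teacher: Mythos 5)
Your proposal is correct and follows essentially the same route as the paper's own proof: tilt to characteristic $p$, observe that $S^\flat[1/\pi^\flat]\to T^\flat[1/\pi^\flat]$ is finite \'etale of constant rank $r$, trivialize by inverting one further element $s$ so as to invoke the ``moreover'' part of \autoref{lem.Almostfreeincharp}, and then untilt via $S/p\cong S^\flat/(\pi^\flat)^p$ and $T/p\cong T^\flat/(\pi^\flat)^p$.  Your handling of the valuation-ring case (observe $S^\flat[1/\pi^\flat]$ is a field, so take $f=\pi^\flat$) is just a slight rephrasing of the paper's observation that $s$ then divides a power of $\pi^\flat$.  The one genuine difference is that you explicitly replace $T^\flat$ by the integral closure $\widetilde T$ of $S^\flat$ in $T^\flat[1/\pi^\flat]$ before invoking \autoref{lem.Almostfreeincharp}, so that the lemma's ``integral map of perfect rings'' hypothesis holds on the nose; the paper applies the lemma directly to $S^\flat\to T^\flat$, which is only $\pi^\flat$-almost finitely generated and hence only ``almost'' integral, so your extra step is a small but real tightening.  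One small expository slip: after applying the lemma you state the conclusion directly for $T^\flat$, whereas it is first obtained for $\widetilde T$ and then transferred across the $\pi^\flat$-almost isomorphism $T^\flat\overset{a}{\simeq}\widetilde T$ — but since you flagged that transfer earlier, this is a matter of phrasing rather than substance.
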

\begin{proof}
By the almost purity theorem \cite[Theorem 7.9]{ScholzePerfectoidspaces}, we know that $S\to T$ is $\varpi$-almost finite \'etale. By the tilting equivalence \cite[Theorem 5.25]{ScholzePerfectoidspaces}, we know that $S^\flat\to T^\flat$ is $\varpi^\flat$-almost finite \'etale of rank $r$, where $\varpi^\flat:=(\varpi, \varpi^{1/p},\dots)\in S^\flat$. It follows that $S^\flat[1/\varpi^\flat] \to T^\flat[1/\varpi^\flat]$ is finite \'etale (in particular finitely presented and finite) of constant rank $r$. Thus there exists $s\in S^\flat$ such that $S^\flat[1/s\varpi^\flat] \to T^\flat[1/s\varpi^\flat]$ is finite \'etale and finite free of rank $r$. By \autoref{lem.Almostfreeincharp}, $T^\flat$ is uniformly $s\varpi^\flat$-almost finite free of rank $r$. Thus after modulo $(\varpi^\flat)^p$, we obtain that $T/p\cong T/\varpi^p\cong T^\flat/(\varpi^\flat)^p$ is uniformly $(s\varpi^\flat)^\sharp$-almost finite free of rank $r$ over $S/p\cong S/\varpi^p\cong S^\flat/(\varpi^\flat)^p$.

For the last statement, note that if $S$ is a perfectoid valuation ring, then so is $S^\flat$ and thus $s$ divides $(\varpi^\flat)^n$ for some $n$. It follows that we can take $s=(\varpi^\flat)^{n}$ and hence $(s\varpi^\flat)^\sharp$-almost is the same as $\varpi$-almost (which is the same as $p$-almost). 
\end{proof}

\subsection{Transformation rule: the quasi-\'etale case}
In this subsection, we prove the transformation rule \autoref{thm.TransformationRule}. Our strategy is roughly as follows: %since normalized length does not detect `$p$-almost difference', 
we work modulo $(p^{1/p^\infty})$, in which case \autoref{lem.Almostfreeincharp} applies, and then we use \autoref{thm.BhattScholzeAlmostPurity} to lift back to mixed characteristic and perform estimates on normalized length. We start with a few lemmas.

\begin{lemma}
\label{lem.SOPisHilbertSamuel}
Let $A=W(k)[[x_2,\dots,x_d]]$. Suppose $p,g,y_3,\dots,y_d$ is a regular sequence on $A$ and $\epsilon_n\in \Ainfty$ is such that $\bar{\epsilon}_n=\bar{g}^{1/p^n}$ in $\Ainfty/(p^{1/p^\infty})$, then we have
\[
    \lambda_\infty(\Ainfty/(p,\epsilon_n,x_3,\dots,x_d)) = \frac{1}{p^n}\cdot \length_A\left(A/(p, g, y_3,\dots,y_d)\right).
\]
\end{lemma}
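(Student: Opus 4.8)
The plan is to descend the normalized length to a finite level, apply \autoref{prop.Normalizedlengthequalsmultiplicity}, and finish with an explicit colength computation. First I would check that $\lambda_\infty(A_{\infty,0}/(p,\epsilon_n,x_3,\dots,x_d))$ does not depend on the choice of lift $\epsilon_n$ of $\bar g^{1/p^n}$. Since $\bar g = \sum_\alpha c_\alpha\underline x^\alpha$ gives $\bar g^{1/p^n} = \sum_\alpha c_\alpha^{1/p^n}\underline x^{\alpha/p^n}$, which already lies in the image of $A_n/(p^{1/p^n})\hookrightarrow A_{\infty,0}/(p^{1/p^\infty})$, there is a lift $\epsilon_n^{\mathrm{good}}\in A_n$; any other lift differs from it by some $\delta\in (p^{1/p^\infty})A_{\infty,0}$, hence by an element of $p^{1/p^{m_1}}A_{\infty,0}$ for some finite $m_1$. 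Working in the ring $M'':=A_{\infty,0}/(p,x_3,\dots,x_d)$, where $p=0$, the image $\bar\delta$ is nilpotent with $\bar\delta^{p^{m_1}}=0$, and since $\binom{p^{m_1}}{j}=0$ in $M''$ for $0<j<p^{m_1}$ we get $\bar\epsilon_n^{\,p^{m_1}}=(\bar\epsilon_n^{\mathrm{good}})^{p^{m_1}}$ in $M''$. As $p,\epsilon_n^{\mathrm{good}},x_3,\dots,x_d$ is a system of parameters of the Cohen--Macaulay ring $A_n$ (by Step 2), $\bar\epsilon_n^{\mathrm{good}}$ is a nonzerodivisor on $M''$; hence so is $(\bar\epsilon_n^{\mathrm{good}})^{p^{m_1}}=\bar\epsilon_n^{\,p^{m_1}}$, and therefore $\bar\epsilon_n$ as well. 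Filtering $M''/(\bar\epsilon_n^{\,p^{m_1}})=M''/(\bar\epsilon_n^{\mathrm{good}})^{p^{m_1}}$ by powers of $\bar\epsilon_n$ (resp.\ $\bar\epsilon_n^{\mathrm{good}}$), all graded pieces being isomorphic to $M''/(\bar\epsilon_n)$ (resp.\ $M''/(\bar\epsilon_n^{\mathrm{good}})$) because these are nonzerodivisors, and using additivity (\autoref{prop.NormalizedLengthProperties}\autoref{prop.NormalizedLengthProperties.a}) gives $p^{m_1}\lambda_\infty(M''/(\bar\epsilon_n))=p^{m_1}\lambda_\infty(M''/(\bar\epsilon_n^{\mathrm{good}}))$; cancelling $p^{m_1}$ (all quantities are finite since the modules are cyclic and $\m_A$-power torsion, hence quotients of some $A_{\infty,0}/\m_A^N$) yields the claim. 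So we may assume $\epsilon_n\in A_m$ for any fixed $m\geq n$.

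\textbf{Step 2 (reduction to a finite level).} Write $z_i=x_i^{1/p^m}$ and $\pi=p^{1/p^m}$, so that $A_m$ is the regular local ring $V_m[[z_2,\dots,z_d]]$ of dimension $d$, where $V_m=W(k)[\pi]$ is a complete DVR with uniformizer $\pi$ and $\pi^{p^m}=p$. Restricting the relation $\bar\epsilon_n=\bar g^{1/p^n}$ to $A_m/(p^{1/p^m})$ shows $\epsilon_n^{p^n}\equiv g \pmod{\pi A_m}$, so modulo $\sqrt{(p,\epsilon_n,x_3,\dots,x_d)}$ one has $g\equiv \epsilon_n^{p^n}$ and $\epsilon_n\in\m_{A_m}$; since $p,g,x_3,\dots,x_d$ is a regular sequence on $A$ we get $\sqrt{(p,\epsilon_n,x_3,\dots,x_d)A_m}=\m_{A_m}$, so $p,\epsilon_n,x_3,\dots,x_d$ is a system of parameters of $A_m$. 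Applying \autoref{prop.Normalizedlengthequalsmultiplicity} with $R=A$ (so that $R^{A_{\infty,0}}_{\perfd}=A_{\infty,0}$) at level $m$, and using that multiplicity equals colength for a system of parameters in the Cohen--Macaulay ring $A_m$, gives
\[
\lambda_\infty\bigl(A_{\infty,0}/(p,\epsilon_n,x_3,\dots,x_d)\bigr)=\tfrac{1}{p^{md}}\,\length_{A_m}\!\bigl(A_m/(p,\epsilon_n,x_3,\dots,x_d)\bigr).
\]

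\textbf{Step 3 (the colength computation).} It remains to show $\length_{A_m}(A_m/(p,\epsilon_n,x_3,\dots,x_d))=p^{md-n}\cdot\length_A(A/(p,g,x_3,\dots,x_d))$. Since $\pi,x_3,\dots,x_d$ is a regular sequence on $A_m$ and $\pi^{p^m}=p$, the $\pi$-adic filtration on $A_m/(p,x_3,\dots,x_d)$ has its $p^m$ graded pieces each isomorphic to $A_m/(\pi,x_3,\dots,x_d)\cong k[[z_2,\dots,z_d]]/(z_3^{p^m},\dots,z_d^{p^m})$, on which $\epsilon_n$ acts through its reduction $h:=\bar g^{1/p^n}\bmod\pi=\sum_\alpha c_\alpha^{1/p^n}\underline z^{\,p^{m-n}\alpha}$ (using $x_i=z_i^{p^m}$). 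With respect to the $k[[z_2]]$-basis of monomials in $z_3,\dots,z_d$ of exponents $<p^m$, ordered by decreasing degree in $z_3,\dots,z_d$, multiplication by $h$ is upper triangular with every diagonal entry equal to $h_0:=\sum_{\alpha_2}c_{(\alpha_2,0,\dots,0)}^{1/p^n}z_2^{\,p^{m-n}\alpha_2}\in k[[z_2]]$ (the off-diagonal contributions raise the $z_3,\dots,z_d$-degree). Hence the colength of $h$ on such a piece is $p^{m(d-2)}\cdot v_{z_2}(h_0)=p^{m(d-2)}\cdot p^{m-n}\cdot v$, where $v:=v_{x_2}(\bar g(x_2,0,\dots,0))=\length_A(A/(p,g,x_3,\dots,x_d))$, which is finite and positive because $A$ is Cohen--Macaulay and $p,x_3,\dots,x_d,g$ is a permutation of a regular sequence. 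Multiplying by the $p^m$ graded pieces gives $\length_{A_m}(A_m/(p,\epsilon_n,x_3,\dots,x_d))=p^m\cdot p^{m(d-2)}\cdot p^{m-n}\cdot v=p^{md-n}v$, and dividing by $p^{md}$ yields $\lambda_\infty(A_{\infty,0}/(p,\epsilon_n,x_3,\dots,x_d))=v/p^n$, as required.

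\textbf{Expected main obstacle.} The heart of the argument is Step 3: one must translate the perfectoid quantity $\bar g^{1/p^n}$ (which mixes $p^n$-th roots of the coefficients of $g$ and of the variables) into an honest computation over $A_m=V_m[[z_2,\dots,z_d]]$, keep track of the exponents $p^{m-n}\alpha$ under the reduction modulo $z_3^{p^m},\dots,z_d^{p^m}$, and recognize that the only contribution to the determinant comes from the ``$z_2$-only'' part $\bar g(x_2,0,\dots,0)$ of $g$. The factor $p^m$ coming from the $\pi$-adic filtration of the base DVR $V_m$ is exactly the genuinely mixed-characteristic input (absent in the equal characteristic analogue, where it would instead multiply the length the other way). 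A secondary technical point, which is nonetheless necessary because the statement allows an arbitrary lift, is the independence of the lift established in Step 1.
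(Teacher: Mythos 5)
Your proof is correct for the special case $y_3,\dots,y_d = x_3,\dots,x_d$, but it does not establish the statement in the generality that the paper actually uses, and it takes a considerably more laborious route than the paper's proof.

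The statement as printed has $x_3,\dots,x_d$ on the left-hand side and $y_3,\dots,y_d$ on the right-hand side; this is a typo, and both should read $y_3,\dots,y_d$. That the general version is what is needed can be seen from the application in the proof of \autoref{thm.TransformationRule}, where one writes ``Since $p,g$ is a regular sequence, we can pick $y_3,\dots,y_d\in A$ such that $p,g,y_3,\dots,y_d$ form a system of parameters of $A$'' and then invokes the lemma with these general $y_i$. Your Step~3 depends in an essential way on the identification $A_m/(\pi,x_3,\dots,x_d)\cong k[[z_2,\dots,z_d]]/(z_3^{p^m},\dots,z_d^{p^m})$ and on the monomial basis over $k[[z_2]]$: the upper-triangularity of multiplication by $h$ and the identification of the diagonal entry with $h_0 = \bar g^{1/p^n}(z_2^{p^m},0,\dots,0)$ both exploit the fact that the ideal is generated by pure powers of the variables. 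Replacing $x_3,\dots,x_d$ by arbitrary parameters $y_3,\dots,y_d$ destroys the monomial structure (for instance take $d=3$, $y_3=x_2$, $g=x_3$) and the determinant argument no longer applies, so this is a genuine gap rather than a cosmetic one.

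The paper's route avoids this entirely and is shorter. It chooses $m\gg n$ so that $\bar\epsilon_n=\bar g^{1/p^n}$ already in $A_{\infty,0}/p^{1/p^m}$, observes that $A_{\infty,0}/(p^{1/p^m},\epsilon_n,\underline y)$ is the base change of the finite-length $A_m$-module $A_m/(p^{1/p^m},\bar g^{1/p^n},\underline y)$ and hence its $\lambda_\infty$ is $1/p^{md}$ times its length (this is just the definition of $\lambda_\infty^{**}$, no need for \autoref{prop.Normalizedlengthequalsmultiplicity}), and then applies the filtration-by-powers-of-a-regular-element argument twice: once for $p^{1/p^m}$ and once for $\bar g^{1/p^n}$. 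The second filtration, namely $\length(A_m/(\pi,g,\underline y)) = p^n\,\length(A_m/(\pi,\bar g^{1/p^n},\underline y))$ using $g=(\bar g^{1/p^n})^{p^n}$, is the clean length-theoretic replacement for your determinant computation, and it works for arbitrary $y_i$ because it only uses that $\bar g^{1/p^n},y_3,\dots,y_d$ is a regular sequence on $A_m/\pi$. Working modulo $p^{1/p^m}$ throughout also lets the paper bypass your entire Step~1: one never needs to produce or compare specific lifts $\epsilon_n$ inside $A_m$, only to record the congruence modulo $p^{1/p^m}$. I would recommend you adopt the $\bar g^{1/p^n}$-power filtration in place of Step~3 and then notice that Step~1 becomes unnecessary.
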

In the statement above, we write $\bar{g}^{1/p^n}$ to help us remember that $g$ does not necessarily have a $p^n$th root in $A_{\infty}$, but only in the quotient.

\begin{proof}
Since $\bar{\epsilon}_n=\bar{g}^{1/p^n}$ in $\Ainfty/(p^{1/p^\infty})$, we can choose $m\gg n$ such that $\bar{\epsilon}_n=\bar{g}^{1/p^n}$ holds in $\Ainfty/p^{1/p^m}$. It is easy to see that $p^{1/p^m},g,y_3,\dots,y_d$ and hence $p^{1/p^m},\epsilon_n,y_3,\dots,y_d$ is a regular sequence on $A_m$ and $\Ainfty$. Thus we have
    \begin{align*}
    \lambda_\infty(\Ainfty/(p,\epsilon_n,y_3,\dots,y_d))&=p^m\cdot\lambda_\infty(\Ainfty/(p^{1/p^m},\epsilon_n,y_3,\dots,y_d))\\
 %   &=p^m\cdot\lambda_\infty(\Ainfty/(p^{1/p^m},\bar{g}^{1/p^n},x_3,\dots,x_d))\\
    &=p^m\cdot(1/p^{md})\cdot \length_{A_m/(p^{1/p^m})}(A_m/(p^{1/p^m},\bar{g}^{1/p^n},y_3,\dots,y_d))\\
  &=p^m\cdot(1/p^{md})\cdot(1/p^{n})\cdot \length_{A_m/(p^{1/p^m})}(A_m/(p^{1/p^m},g,y_3,\dots,y_d)) \\
    &=p^m\cdot(1/p^{n})\cdot \lambda_\infty(\Ainfty/(p^{1/p^m},g,y_3,\dots,y_d))\\
     &=(1/p^{n})\cdot \lambda_\infty(\Ainfty/(p,g,y_3,\dots,y_d))\\
      &=(1/p^{n})\cdot \length_A\left(A/(p, g, y_3,\dots,y_d)\right).
    \end{align*}
Here the first, third, and fifth equality follows from the regular sequence property, while the second, fourth, and sixth equality follows from the definition of normalized length. 
\end{proof}

\begin{lemma}
    \label{lem.I_inftyCompare}
    With notation as in \autoref{not.setup}, further assume $R$ is normal and that $f:R\to S$ is a finite extension of complete normal local domains. Suppose there exists some $\Phi\in \Hom_R(S,R)$ such that 
    \begin{enumerate}
        \item $\Phi$ is a free generator of $\Hom_R(S,R)$ as an $S$-module;
        \item $\Phi$ is surjective;
        \item $\Phi(\m_S)\subset \m_R$.
    \end{enumerate}
If $R[1/h]\to S[1/h]$ is \'{e}tale for some $0\neq h\in R$, then there is a canonical map 
    \[ 
        S^{\Ainfty}_{\perfd}/I^R_{\infty} S^{\Ainfty}_{\perfd}  \to S^{\Ainfty}_{\perfd}/I^S_\infty
    \] 
    which is an $h$-almost isomorphism. 

In particular, the above holds in the situation of \autoref{thm.TransformationRule} by taking $\Phi$ to be the trace map $\Tr: S \to R$ and taking $h=g$ (where $g$ is as in \autoref{not.setup}).
\end{lemma}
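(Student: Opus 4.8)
The plan is to re-express both non-splitting ideals through the ``$\otimes E$'' description of \autoref{def.Iinfty} and compare them inside $S^{A_{\infty,0}}_{\perfd}$. First I would record the geometric input: choosing $g$ in \autoref{not.setup} so that in addition $A[1/g]\to S[1/g]$ is finite \'etale (possible since $R\to S$ is a generically separable finite extension of normal domains), \autoref{thm.BhattScholzeAlmostPurity} and \autoref{prop.BhattScholzeAlmostPuritypcompletefaithfullyflat} show that the canonical map $\iota\colon R^{A_{\infty,0}}_{\perfd}\to S^{A_{\infty,0}}_{\perfd}$ is $p$-complete $g$-almost finite \'etale; in particular $S^{A_{\infty,0}}_{\perfd}/p^n$ is $g$-almost finite projective over $R^{A_{\infty,0}}_{\perfd}/p^n$ for each $n$, and $\iota$ is $g$-almost flat and $g$-almost pure. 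Write $E_R=E_R(k)$ with socle generator $\eta_R$; since $S$ is module-finite over $R$ there is an isomorphism $E_S\cong\Hom_R(S,E_R)$ of $S$-modules, where $E_S$ is the injective hull of $S/\m_S$, and I would set $\eta_S:=\eta_R\circ\Phi\in\Hom_R(S,E_R)$. Hypotheses (2) and (3) give that the induced $\overline{\Phi}\colon S/\m_S\to R/\m_R$ is nonzero (surjective), whence $\eta_S$ is annihilated by $\m_S$ and nonzero, so it generates the $S$-socle of $\Hom_R(S,E_R)$; thus \autoref{def.Iinfty} reads $I^R_\infty=\ker\bigl(R^{A_{\infty,0}}_{\perfd}\xrightarrow{w\mapsto w\otimes\eta_R}R^{A_{\infty,0}}_{\perfd}\otimes_R E_R\bigr)$ and $I^S_\infty=\ker\bigl(S^{A_{\infty,0}}_{\perfd}\xrightarrow{z\mapsto z\otimes\eta_S}S^{A_{\infty,0}}_{\perfd}\otimes_S\Hom_R(S,E_R)\bigr)$.

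Next I would build the comparison map. The $R$-linear dual $\Phi^\vee\colon E_R\to\Hom_R(S,E_R)$, $\xi\mapsto\xi\circ\Phi$, induces an $S^{A_{\infty,0}}_{\perfd}$-linear map $\widetilde{\Phi}^\vee\colon S^{A_{\infty,0}}_{\perfd}\otimes_R E_R\to S^{A_{\infty,0}}_{\perfd}\otimes_S\Hom_R(S,E_R)$ (apply $\mathrm{id}\otimes_R\Phi^\vee$ followed by the natural surjection from $\otimes_R$ to $\otimes_S$), and it satisfies $\widetilde{\Phi}^\vee(z\otimes\eta_R)=z\otimes\eta_S$. Since $x\otimes\eta_R=0$ for $x\in I^R_\infty$ forces $\iota(x)\otimes\eta_R=0$ in $S^{A_{\infty,0}}_{\perfd}\otimes_R E_R$, one obtains the honest chain $I^R_\infty S^{A_{\infty,0}}_{\perfd}\subseteq\ker(z\mapsto z\otimes\eta_R)\subseteq\ker(z\mapsto z\otimes\eta_S)=I^S_\infty$, which produces the desired canonical surjection $S^{A_{\infty,0}}_{\perfd}/I^R_\infty S^{A_{\infty,0}}_{\perfd}\twoheadrightarrow S^{A_{\infty,0}}_{\perfd}/I^S_\infty$. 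To see its kernel is $g$-almost zero it then suffices to prove: (i) $\ker(z\mapsto z\otimes\eta_R)$ is $g$-almost contained in $I^R_\infty S^{A_{\infty,0}}_{\perfd}$, and (ii) $\widetilde{\Phi}^\vee$ is a $g$-almost isomorphism (so the two middle kernels above $g$-almost coincide). For (i) I would tensor the short exact sequence $0\to R^{A_{\infty,0}}_{\perfd}/I^R_\infty\to R^{A_{\infty,0}}_{\perfd}\otimes_R E_R\to Q\to 0$ of $p^\infty$-torsion modules with $S^{A_{\infty,0}}_{\perfd}$ over $R^{A_{\infty,0}}_{\perfd}$; this is $g$-almost exact by \autoref{prop.BhattScholzeAlmostPuritypcompletefaithfullyflat}, so that $S^{A_{\infty,0}}_{\perfd}/I^R_\infty S^{A_{\infty,0}}_{\perfd}\to S^{A_{\infty,0}}_{\perfd}\otimes_R E_R$ is $g$-almost injective, which is exactly (i).

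The hard part is (ii). Here I would use hypothesis (1): $\Hom_R(S,R)$ is $S$-free of rank one on $\Phi$, so by flat base change along $A\to A_{\infty,0}$ it stays free of rank one on $\Phi\otimes\mathrm{id}$ over $S\otimes_A A_{\infty,0}$, and then, using that $S\otimes_A A_{\infty,0}\to S^{A_{\infty,0}}_{\perfd}$ and $R\otimes_A A_{\infty,0}\to R^{A_{\infty,0}}_{\perfd}$ are $(g)_\perfd$-isogenies (\autoref{prop.gisogeny}) together with the $g$-almost finite projectivity from the first paragraph, one obtains that $\Hom_{R^{A_{\infty,0}}_{\perfd}}(S^{A_{\infty,0}}_{\perfd},R^{A_{\infty,0}}_{\perfd})$ is, modulo each $p^n$ and up to $g$-almost, free of rank one over $S^{A_{\infty,0}}_{\perfd}$ on the map $\Phi_\infty$ induced by $\Phi$. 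In other words $S^{A_{\infty,0}}_{\perfd}$ is a $g$-almost ``self-dual'' finite projective $R^{A_{\infty,0}}_{\perfd}$-module, and consequently for every $p^\infty$-torsion $R^{A_{\infty,0}}_{\perfd}$-module $N$ the natural map $\Hom_{R^{A_{\infty,0}}_{\perfd}}(S^{A_{\infty,0}}_{\perfd},R^{A_{\infty,0}}_{\perfd})\otimes_{R^{A_{\infty,0}}_{\perfd}}N\to\Hom_{R^{A_{\infty,0}}_{\perfd}}(S^{A_{\infty,0}}_{\perfd},N)$ is a $g$-almost isomorphism; specializing to $N=R^{A_{\infty,0}}_{\perfd}\otimes_R E_R$ and tracking the identification $\eta_S=\Phi^\vee(\eta_R)$ identifies $\widetilde{\Phi}^\vee$ with precisely such a $g$-almost isomorphism. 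I expect the genuine difficulty to be confined to this step, namely transporting ``$\Hom$ commutes with base change'' and the self-duality carefully through the almost category, $p$-completion, and the non-Noetherian perfectoid rings (using the almost-module facts \autoref{lem.Almostprojectivelifts} and \autoref{lem.AlmostFGAlmostProjectiveVsAlmostDirectSummandOfFree}); the socle computation and the chain of kernels are then formal. Finally, for the application in \autoref{thm.TransformationRule} one takes $\Phi=\Tr\colon S\to R$: since $S$ is normal and $R\to S$ is \'etale in codimension one, the reflexive module $\Hom_R(S,R)$ is generated by $\Tr$, giving (1); the assumed splitting $S\to R$ equals $s_0\Tr$ for some $s_0\in S$ and sends $1$ to $1$, so $\Tr(s_0)=1$ and $\Tr$ is surjective, giving (2); and $\Tr(\m_S)\subseteq\m_R$ because the trace of an element of $\m_S$ lies in the contraction to $R$ of the maximal ideal of an integral closure, giving (3).
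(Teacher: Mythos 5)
Your overall architecture runs in parallel to the paper's, but you have replaced the cleanest step with a delicate one that you have not closed. Let me first match up the two arguments and then explain where the gap sits.

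Write $B = S^{A_{\infty,0}}_{\perfd}$ and $B' = R^{A_{\infty,0}}_{\perfd}$, and set $J^{S/R}_\infty := \{x\in B : R\xrightarrow{1\mapsto x}B \text{ is not pure}\} = \ker(z\mapsto z\otimes\eta_R)$. Your step (i) shows $J^{S/R}_\infty \subseteq_{g\text{-a.}} I^R_\infty B$, which combined with the obvious exact containment $I^R_\infty B \subseteq J^{S/R}_\infty$ gives $I^R_\infty B \overset{a}{\cong} J^{S/R}_\infty$; this is precisely the paper's first half, except that the paper reaches the same conclusion via the $I_t/u_t$ description (\autoref{lem.CharacterizationIinftyviaAppGorenstein}) of $I^R_\infty$ and tensoring the resulting filtration with $B$, while you tensor the injective-hull description. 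Either way this part is fine. Your step (ii) is then responsible for showing $I^S_\infty \subseteq_{g\text{-a.}} J^{S/R}_\infty$ (the exact containment $J^{S/R}_\infty \subseteq I^S_\infty$ you already have from the splitting $R\to S$). This is exactly where the hypotheses on $\Phi$ must do their work, and this is where the proposal has a genuine gap.

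You propose to prove (ii) by asserting $\widetilde{\Phi}^\vee: B\otimes_R E_R \to B\otimes_S \Hom_R(S,E_R)$ is a $g$-almost isomorphism and identifying it with the standard map $\Hom_{B'}(B,B')\otimes_{B'}N \to \Hom_{B'}(B,N)$ for $N = B'\otimes_R E_R$. That identification is not established and I do not believe it is routine. The target $B\otimes_S\Hom_R(S,E_R)$ involves the $S$-module structure of $\Hom_R(S,E_R)$ and a tensor over $S$, whereas $\Hom_{B'}(B,N)$ is computed over $B'$; I see no natural map between them, let alone one that matches $\widetilde{\Phi}^\vee$. Likewise, the passage from ``$\Hom_R(S,R)\cong S$ is preserved under almost perfectoidization'' to ``$\Hom$ commutes with $\otimes$ for $p^\infty$-torsion modules'' is folklore only when $B$ is $g$-almost finite projective of known shape, and you would still have to unwind the Matlis-dual identification $E_S\cong\Hom_R(S,E_R)$ inside the almost category. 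In short, (ii) is not a technicality to be deferred; it is the crux.

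The paper proves the crux \emph{exactly} (not merely $g$-almost) and without any almost-duality: it shows $J^{S/R}_\infty = I^S_\infty$ on the nose. The useful direction is $I^S_\infty \subseteq J^{S/R}_\infty$, proven by contrapositive. If $x\notin J^{S/R}_\infty$, then $R\to B$, $1\mapsto x$, is pure, hence (as $R$ is complete) splits by some $R$-linear $\psi\colon B\to R$ with $\psi(x)=1$. The adjunction $\Hom_R(B,R)\cong\Hom_S(B,\Hom_R(S,R))$ together with hypothesis (1), $\Hom_R(S,R)=S\Phi$, produces an $S$-linear $\theta\colon B\to S$ with $\psi=\Phi\circ\theta$. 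Then $\Phi(\theta(x))=1$, and hypothesis (3) ($\Phi(\m_S)\subseteq\m_R$) forces $\theta(x)\notin\m_S$, i.e.\ $\theta(x)$ is a unit of $S$; so $\theta(x)^{-1}\theta$ is an $S$-linear splitting of $S\to B$, $1\mapsto x$, whence $x\notin I^S_\infty$. Hypothesis (2) enters only to give the reverse exact containment (it guarantees $R\to S$ splits). This replaces your (ii) entirely, and I recommend you substitute it: it uses nothing beyond the hypotheses and the completeness of the rings, whereas your route would require carefully developing an almost version of the isomorphism $\Hom\otimes\cong\Hom$ and then matching it to $\widetilde{\Phi}^\vee$, neither of which is in the paper's toolkit. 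Your identification of the socle generator $\eta_S=\eta_R\circ\Phi$ and the construction of the canonical surjection are correct and can stay.
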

\begin{proof}
By \autoref{lem.CharacterizationIinftyviaAppGorenstein} we have
\[ 
    I^R_{\infty}=\bigcup_t (I_tR^{\Ainfty}_{\perfd}:_{R^{\Ainfty}_{\perfd}} u_t).
\]
Since $(S^{\Ainfty}_{\perfd} \otimes_RS)_{\perfd} = S^{\Ainfty}_{\perfd}$, by  \autoref{prop.BhattScholzeAlmostPuritypcompletefaithfullyflat} applied to the perfectoid ring $R^{\Ainfty}_{\perfd}$, we see that  $S^{\Ainfty}_{\perfd}$ is $p$-completely $h$-almost flat over $R^{\Ainfty}_{\perfd}$. Tensoring the short exact sequence 
   \[ 
        \begin{array}{rl}
        0\to R^{\Ainfty}_{\perfd}/(I_t R^{\Ainfty}_{\perfd}:_{R^{\Ainfty}_{\perfd}} u_t) \xrightarrow{\cdot u_t} & R^{\Ainfty}_{\perfd}/I_t R^{\Ainfty}_{\perfd}\\
        \to & R^{\Ainfty}_{\perfd}/(I_t+(u_t))R^{\Ainfty}_{\perfd}\to 0
        \end{array}
    \]
with $S^{\Ainfty}_{\perfd}$, we then have that 
   \[ 
       \ker\left(S^{\Ainfty}_{\perfd}/(I_t R^{\Ainfty}_{\perfd}:_{R^{\Ainfty}_{\perfd}} u_t)S^{\Ainfty}_{\perfd} \xrightarrow{\cdot u_t} S^{\Ainfty}_{\perfd}/I_t S^{\Ainfty}_{\perfd}\right)
    \]
is $h$-almost zero. It follows that $(I_t R^{\Ainfty}_{\perfd}:_{R^{\Ainfty}_{\perfd}} u_t)S^{\Ainfty}_{\perfd} \overset{a}{\simeq} (I_tS^{\Ainfty}_{\perfd}:_{S^{\Ainfty}_{\perfd}} u_t)$. Thus, after taking union over all $t$ we have
    \begin{align*}
I^R_{\infty} S^{\Ainfty}_{\perfd} &\overset{a}{\simeq} \bigcup_t (I_tS^{\Ainfty}_{\perfd}:_{S^{\Ainfty}_{\perfd}} u_t)\\
&=\{x\in S^{\Ainfty}_{\perfd}\mid R\to S^{\Ainfty}_{\perfd}\text{ such that }1\mapsto x \text{ is not pure}\}\eqqcolon J^{S/R}_\infty
\end{align*}
where the equality follows from \autoref{lem.SplittingApproxGorenstein}. 

Now we claim that $J^{S/R}_\infty=I^S_\infty$. Since $R\to S$ splits by (a) and (b), we obviously have $J^{S/R}_\infty \subseteq I^S_\infty$. For the converse inclusion, note that any map $S^{\Ainfty}_{\perfd} \to R$, which sends $x \mapsto 1$, factors through $\Phi \in \Hom_R(S,R) \cong S$, see \cite[Appendix F.17(a)]{KunzKahlerDifferentials}. Then by (b) and (c), we see that $S \to S^{\Ainfty}_{\perfd}$ sending $1\mapsto x$ splits, giving $J^{S/R}_\infty \supseteq I^S_\infty$ as desired.

The above discussion shows that $I^R_{\infty} S^{\Ainfty}_{\perfd} \overset{a}{\simeq} I^S_\infty$. To induce the map in the statement of the lemma, it suffices to show that $R^{\Ainfty}_{\perfd} \to S^{\Ainfty}_{\perfd}$ sends $I^R_{\infty}$ into $I^S_{\infty}=J^{S/R}_\infty$. But this is clear: if $z \in I^R_{\infty}$ then $R \xrightarrow{1 \mapsto z} R^{\Ainfty}_{\perfd}$ does not split, it then follows that $R \xrightarrow{1 \mapsto z} S^{\Ainfty}_{\perfd}$ does not split as well, i.e., $z \in J^{S/R}_{\infty}$ as desired.

Finally, in the situation of \autoref{thm.TransformationRule}, we have $R\to S$ is \'etale in codimension one. In particular, $A[1/g]\to R[1/g]\to S[1/g]$ is \'etale in codimension one and thus \'etale by Zariski-Nagata's purity theorem \cite[\href{https://stacks.math.columbia.edu/tag/0BMB}{Tag 0BMB}]{stacks-project}. Since $A[1/g]\to R[1/g]$ is \'etale, we have that $R[1/g]\to S[1/g]$ is \'etale so we can take $h=g$. 
Furthermore, $\Hom_R(S,R)\simeq \Tr\cdot S$ which gives (a) by taking $\Phi = \Tr$ (see \autoref{lem.TraceForNormalDomains}).  Part (b) follows from the fact that a splitting, which is necessarily surjective, must be a pre-multiple of $\Phi = \Tr$, and this forces $\Phi$ to be surjective.  Finally, (c) follows again by \autoref{lem.TraceForNormalDomains}.
%is given by \cite[Lemma 2.10]{CarvajalRojasSchwedeTuckerFundamentalGroup} or \cite[Lemma 9]{Speyer.FrobeniusSplitSubvarsPullBackInAlmostAll}.
\end{proof}

\begin{proof}[Proof of \autoref{thm.TransformationRule}]
In view of \autoref{eq.remResidueFieldExtensions}, it suffices for us to prove that $\lambda_\infty(S^{\Ainfty}_{\perfd}/I^S_\infty) =  r\cdot \lambda_\infty(R^{\Ainfty}_{\perfd}/I^R_{\infty})$.

As $R$ is normal, its singular locus has codimension $\ge 2$, hence we can find $h\in A$ such that $p,h$ is a regular sequence on $A$ and $R[1/h]$ is regular.\footnote{Note that we are not assuming $A[1/h]\to R[1/h]$ is finite \'etale, i.e., $h$ might be different from $g$ in \autoref{not.setup}.} 
Now, since $R \subseteq S$ is quasi-\'etale, we see that $R[1/h]\to S[1/h]$ is finite \'etale by Zariski-Nagata's purity theorem \cite[\href{https://stacks.math.columbia.edu/tag/0BMB}{Tag 0BMB}]{stacks-project}.  Moreover, by replacing $h$ by a multiple if necessary, we may assume further that $R[1/h]\to S[1/h]$ is finite free while ensuring that $p,h$ remains a regular sequence. To see this, let $\mathfrak{p}=pA$, then $R_\mathfrak{p}\to S_\mathfrak{p}$ is a finite \'etale extension of PIDs hence is free. Therefore, one may pick $h'\in A-\mathfrak{p}$ such that $R[1/h']\to S[1/h']$ is free and replace $h$ by $hh'$.

By \cite[Proposition 8.13]{BhattScholzepPrismaticCohomology}, we have $R^{\Ainfty}_{\perfd}/(p^{1/p^\infty})\simeq (R/p)_\perf$, $S^{\Ainfty}_{\perfd}/(p^{1/p^\infty})\simeq (S/p)_\perf$, and $\Ainfty/(p^{1/p^\infty})\simeq (A/p)_\perf$. Also note that the image of $(h)_\perfd$ in $(A/p)_\perf$ is $(\bar{h}^{1/p^\infty})$ where $\bar{h}$ denotes the image of $h$ in $A/p$ (see \autoref{lem.gPerfd=AllpPowerRootsofg}). Since $R^{\Ainfty}_{\perfd}/p\to S^{\Ainfty}_{\perfd}/p$ is $h$-almost finite \'etale by \autoref{thm.BhattScholzeAlmostPurity}, so is its base change to $R^{\Ainfty}_{\perfd}/(p^{1/p^\infty})$, i.e. $(R/p)_\perf\to (S/p)_\perf$ is $\bar{h}$-almost finite \'etale. Now, by \autoref{thm.BhattScholzeAlmostPurity} and our choice of $h$, we have  
$$S^{\Ainfty}_{\perfd}[1/h]\cong (R^{\Ainfty}_{\perfd} \otimes_R S)[1/h]\cong R^{\Ainfty}_{\perfd}[1/h] \otimes_{R[1/h]} S[1/h]$$ 
is finite free of rank $r$ over $R^{\Ainfty}_{\perfd}[1/h]$, and this is clearly preserved modulo the ideal $(p^{1/p^\infty})$. Thus by \autoref{lem.Almostfreeincharp}, $(R/p)_\perf\to (S/p)_\perf$ is uniformly $\bar{h}$-almost free of rank $r$. This means that for any $\epsilon \in (h)_\perfd $ such that $\epsilon=\bar{h}^{1/p^n}\mod (p^{1/p^\infty})$, we have 
\[ 
    (S/p)_\perf\xrightarrow{\phi_\epsilon}(R/p)_\perf^r\xrightarrow{\psi_\epsilon}(S/p)_\perf
\]
where the two-way compositions are both multiplication by $\bar{h}^{1/p^n}$. Now since $S^{\Ainfty}_{\perfd}/p$ is $h$-almost finite projective over $R^{\Ainfty}_{\perfd}/p$ by \autoref{thm.BhattScholzeAlmostPurity}, it follows from \autoref{lem.Almostprojectivelifts} that the map $\epsilon\cdot \phi_\epsilon \circ can$: $S^{\Ainfty}_{\perfd}/p \to (R/p)_\perf^r$ lifts along $(R^{\Ainfty}_{\perfd}/p)^r\twoheadrightarrow (R/p)_\perf^r$ to $\widetilde{\phi}_\epsilon$: $S^{\Ainfty}_{\perfd}/p \to (R^{\Ainfty}_{\perfd}/p)^r$. Similarly, since $(R^{\Ainfty}_{\perfd}/p)^r$ is projective over $R^{\Ainfty}_{\perfd}/p$, $\psi_\epsilon\circ can$ lifts along $S^{\Ainfty}_{\perfd}/p\twoheadrightarrow (S/p)_\perf$ to $\widetilde{\psi}_\epsilon$. In summary, we have the following diagram:
    \[
    \xymatrix{
    S^{\Ainfty}_{\perfd}/p \ar[r]^-{\widetilde{\phi}_\epsilon}  \ar[d]^-{can}& (R^{\Ainfty}_{\perfd}/p)^r \ar[r]^-{\widetilde{\psi}_\epsilon} \ar[d]^-{can}& S^{\Ainfty}_{\perfd}/p \ar[d]^-{can} \\
    (S/p)_\perf \ar[r]^-{\epsilon \cdot\phi_\epsilon}  & (R/p)_\perf^r \ar[r]^-{\psi_\epsilon}  & (S/p)_\perf 
    }.
    \]

We first show that $\lambda_\infty(S^{\Ainfty}_{\perfd}/I^S_\infty)\le r\cdot\lambda_\infty(R^{\Ainfty}_{\perfd}/I^R_{\infty})$. Consider the module $$C_\epsilon :=\coker(\widetilde{\psi}_\epsilon)=(S^{\Ainfty}_{\perfd}/p)/\im(\widetilde{\psi}_\epsilon).$$ As $S^{\Ainfty}_{\perfd}/p$ is almost finitely generated over $R^{\Ainfty}_{\perfd}/p$, so is $C_\epsilon$. Hence there exists a finitely generated $R^{\Ainfty}_{\perfd}/p$-submodule $D_\epsilon$ of $C_\epsilon$ such that $\epsilon\cdot C_\epsilon\subseteq D_\epsilon$. By the construction of $\widetilde{\psi}_\epsilon$, $\epsilon^2\cdot(C_\epsilon/(p^{1/p^\infty})C_\epsilon)=0$, and thus $\epsilon^2\cdot C_\epsilon\subseteq (p^{1/p^\infty})C_\epsilon$. As $D_\epsilon$ is finitely generated, there exists $m\gg n$ such that $\epsilon^2\cdot D_\epsilon\subseteq p^{1/p^m}C_\epsilon$. It follows that $\epsilon^3\cdot C_\epsilon\subseteq \epsilon^2\cdot D_\epsilon\subseteq p^{1/p^m}C_\epsilon$, in other words, 
\begin{equation}
\label{eqn.EqnInProofTransRule}\tag{\ref*{thm.TransformationRule}.1}
    \epsilon^3 \cdot (S^{\Ainfty}_{\perfd}/p)\subseteq \im(\widetilde{\psi}_\epsilon)+p^{1/p^m}(S^{\Ainfty}_{\perfd}/p) 
\end{equation}
for some $m\gg n$. Note that by \autoref{lem.I_inftyCompare}, we have
$$\lambda_\infty(S^{\Ainfty}_{\perfd}/I^S_\infty) = \lambda_\infty(S^{\Ainfty}_{\perfd}/I^R_\infty S^{\Ainfty}_{\perfd}) \le r\cdot \lambda_\infty(R^{\Ainfty}_{\perfd}/I^R_{\infty})+\lambda_\infty(C_\epsilon/I^R_{\infty} C_\epsilon).$$
Therefore it suffices to show that for any $\delta>0$, there exists some $\epsilon\in (h)_\perfd$ such that $\lambda_\infty(C_\epsilon/I^R_{\infty} C_\epsilon)< \delta$. Since $p, h$ is a regular sequence, we can pick $y_3,\dots,y_d\in A$ such that $p,h,y_3,\dots,y_d$ form a system of parameters of $A$. Now for any fixed $\delta>0$, by \autoref{lem.SOPisHilbertSamuel}, there exists some $\epsilon\in (h)_\perfd$ such that $$\lambda_\infty(\Ainfty/(p,\epsilon^3,y_3,\dots,y_d))<\delta/(\rank_A S).$$ 
Then we have 
    \begin{align*}
    \lambda_\infty(C_\epsilon/I^R_{\infty} C_\epsilon)&\le \lambda_\infty\left((S^{\Ainfty}_{\perfd}/p)/(\m_A(S^{\Ainfty}_{\perfd}/p)+\im(\widetilde{\psi}_\epsilon))\right)\\
    &\le p^m\cdot  \lambda_\infty\left((S^{\Ainfty}_{\perfd}/p)/((p^{1/p^m},y_3,\dots,y_d)(S^{\Ainfty}_{\perfd}/p)+\im(\widetilde{\psi}_\epsilon))\right)\\
    &\le p^m \cdot  \lambda_\infty\left((S^{\Ainfty}_{\perfd}/p)/(p^{1/p^m},\epsilon^3,y_3,\dots,y_d)(S^{\Ainfty}_{\perfd}/p)\right)\\
     &= p^m \cdot  \lambda_\infty\left(S^{\Ainfty}_{\perfd}/(p^{1/p^m},\epsilon^3,y_3,\dots,y_d)S^{\Ainfty}_{\perfd}\right)\\
    &=p^m \cdot (\rank_A S) \cdot \lambda_\infty(\Ainfty/(p^{1/p^m},\epsilon^3,y_3,\dots,y_d))\\
    &=(\rank_A S) \cdot \lambda_\infty(\Ainfty/(p,\epsilon^3,y_3,\dots,y_d))\\
    &<\delta
    \end{align*}
where the inequality in the third line follows from \autoref{eqn.EqnInProofTransRule}, the equality on the fifth line follows from \autoref{prop.Normalizedlengthequalsrank}, and the equality on the sixth line follows from the fact that $p, \epsilon,y_3,\dots,y_d$ is a regular sequence on $A_{\infty}$.

Finally, for the other direction $r\cdot\lambda_\infty(S^{\Ainfty}_{\perfd}/I^R_{\infty})\le \lambda_\infty(S^{\Ainfty}_{\perfd}/I^S_\infty)$, note that $\coker(\widetilde{\phi}_\epsilon)$ is a finitely generated $R^{\Ainfty}_{\perfd}/p$-module. Hence we have $\epsilon^2\cdot \coker(\widetilde{\phi}_\epsilon)\subseteq p^{1/p^m}\coker(\widetilde{\phi}_\epsilon)$, i.e., $$\epsilon^2 \cdot (R^{\Ainfty}_{\perfd}/p)^r\subseteq \im(\widetilde{\phi}_\epsilon)+p^{1/p^m}(R^{\Ainfty}_{\perfd}/p)^r $$ for some $m\gg n$. The rest of the argument is essentially the same and we omit the details.
\end{proof}

\subsection{Transformation rule: a more general case} In this subsection, we prove a technical generalization of the transformation rule in the sense of \cite[Theorem 4.8]{CarvajalRojasFiniteTorsors}, which will later give us the application on divisor class groups of BCM-regular singularities.

\begin{proposition}
    \label{prop.GeneralTransformationRuleForSpecialSOP}
    %Assume the situation of \autoref{lem.I_inftyCompare} and 
    Let $(R,\m) \subseteq (S,\mathfrak{n})$ be a finite extension of Noetherian complete normal local domains of mixed characteristic $(0, p)$ with perfect residue fields that satisfies conditions $(a)-(c)$ of \autoref{lem.I_inftyCompare}. Let $(A,\m_A)\to (R,\m)$ be a finite extension such that $A$ is a complete unramified regular local ring and $A/\m_A\cong R/\m=k$. Let $0\neq g\in A$ be such that $A[1/g] \subseteq R[1/g]$ and $R[1/g] \subseteq S[1/g]$ are finite \'etale and finite free. Then there exists a regular system of parameters $\underline{x}:= p, x_2,\dots, x_d \in \m_A \setminus \m_A^2$
 ($\underline{x}$ depends on $g$, which itself depends on $R$ and $S$) such that, if we identify $A=W(k)[[x_2,\dots,x_d]]$ and form $\Ainfty$, $R^{\Ainfty}_{\perfd}$, $S^{\Ainfty}_{\perfd}$ as in \autoref{not.setup}, then we have 
    \[ 
        t \cdot s^{\underline{x}}_{\perfd}(S)=r\cdot s^{\underline{x}}_{\perfd}(R)
    \]
    where $r=[K(S): K(R)]$ is the generic rank of the extension and $t = [S/\m_S : R/\m_R]$ is the degree of the residue field extension.
    In fact, we may choose any $x_2, \dots, x_d \in \m_A \setminus \m_A^2$ such that both $p, x_2, \dots, x_d$ and $g, x_2, \dots, x_d$ are systems of parameters of $A$.
\end{proposition}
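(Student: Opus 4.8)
\emph{Reduction.} By \autoref{rem.TransformationHandlesResidueFieldExtension} (specifically \autoref{eq.remResidueFieldExtensions}), for any identification $A=W(k)[[x_2,\dots,x_d]]$ coming from a regular system of parameters $\underline{x}$ we have $\lambda^{A_{\infty,0}}_\infty(S^{A_{\infty,0}}_{\perfd}/I^S_\infty)=t\cdot s^{\underline{x}}_{\perfd}(S)$, while $\lambda_\infty(R^{A_{\infty,0}}_{\perfd}/I^R_\infty)=s^{\underline{x}}_{\perfd}(R)$. So the plan is to exhibit $\underline{x}$ (subject only to the two stated system-of-parameters conditions) for which
\[
    \lambda_\infty\bigl(S^{A_{\infty,0}}_{\perfd}/I^S_\infty\bigr)=r\cdot\lambda_\infty\bigl(R^{A_{\infty,0}}_{\perfd}/I^R_\infty\bigr),
\]
noting that both ideals $I^R_\infty,I^S_\infty$ and all normalized lengths in sight are intrinsic, so $g$ functions purely as an auxiliary tool.

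\emph{Choosing $g$ and $\underline{x}$.} Since $R$ and $S$ are normal and finite over $A$, the branch loci of $A\subseteq R$ and of $A\subseteq S$ are divisors contained in $V(g)$; replacing $g$ by a suitable associate we may assume $p\nmid g$, so that $p,g$ is a regular sequence on $A$, while keeping $A[1/g]\subseteq R[1/g]$ and $A[1/g]\subseteq S[1/g]$ finite \'etale. (This is precisely where the normality hypotheses are used.) Localizing at the height-one prime $pA$, the ring $R_{pA}$ is a semilocal principal ideal domain over which $S_{pA}$ is finite projective, hence finite free; after multiplying $g$ by a further element of $A\setminus pA$ we may therefore also assume $R[1/g]\subseteq S[1/g]$ is finite \emph{free} of rank $r$ (it is automatically finite \'etale, being a morphism of finite \'etale $A[1/g]$-algebras). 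A prime-avoidance argument over the finitely many minimal primes of $(p)$ and of $(g)$ then produces $x_2,\dots,x_d\in\m_A\setminus\m_A^2$ with $p,x_2,\dots,x_d$ a regular system of parameters of $A$ and $g,x_2,\dots,x_d$ a system of parameters (for $k$ finite one builds the $x_i$ one at a time to stay outside the relevant subspaces of $\m_A/\m_A^2$). By Cohen's structure theorem $A=W(k)[[x_2,\dots,x_d]]$, so we may form $A_{\infty,0}$, $R^{A_{\infty,0}}_{\perfd}$, $S^{A_{\infty,0}}_{\perfd}$ and set up almost mathematics with respect to $(g)_\perfd$ exactly as in \autoref{not.setup}; and since $p,g$ is a regular sequence we may fix $y_3,\dots,y_d\in A$ with $p,g,y_3,\dots,y_d$ a system of parameters of $A$.

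\emph{Running the argument.} With these choices the proof of \autoref{thm.TransformationRule} applies essentially verbatim, only two points needing comment. First, \loccit used Zariski--Nagata purity to reduce to a finite \'etale (indeed finite free) map $R[1/g]\subseteq S[1/g]$; here this is part of our setup. Consequently \autoref{thm.BhattScholzeAlmostPurity} still gives that $R^{A_{\infty,0}}_{\perfd}/p\to S^{A_{\infty,0}}_{\perfd}/p$ is $g$-almost finite \'etale, and -- passing modulo $p^{1/p^\infty}$, where $R^{A_{\infty,0}}_{\perfd}/(p^{1/p^\infty})\simeq(R/p)_\perf$ and $S^{A_{\infty,0}}_{\perfd}/(p^{1/p^\infty})\simeq(S/p)_\perf$ are perfect -- \autoref{lem.Almostfreeincharp} shows $(R/p)_\perf\to(S/p)_\perf$ is uniformly $\bar g$-almost free of rank $r$; we then lift the resulting almost splitting maps modulo $p$ using almost projectivity (\autoref{lem.Almostprojectivelifts}). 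Second, \loccit applied \autoref{lem.I_inftyCompare} through the trace map; we instead apply it through the $\Phi$ furnished by hypotheses $(a)$--$(c)$, obtaining the $g$-almost isomorphism $I^R_\infty S^{A_{\infty,0}}_{\perfd}\overset{a}{\simeq}I^S_\infty$ together with the canonical map $S^{A_{\infty,0}}_{\perfd}/I^R_\infty S^{A_{\infty,0}}_{\perfd}\to S^{A_{\infty,0}}_{\perfd}/I^S_\infty$. The remaining normalized-length estimate is then identical: using \autoref{lem.SOPisHilbertSamuel} with the regular sequence $p,g,y_3,\dots,y_d$, \autoref{prop.Normalizedlengthequalsrank} (applied to $S$ over $A$), and \autoref{prop.AlmostNormalizedLength} to discard $g$-almost zero modules, one obtains both $\lambda_\infty(S^{A_{\infty,0}}_{\perfd}/I^S_\infty)\le r\cdot\lambda_\infty(R^{A_{\infty,0}}_{\perfd}/I^R_\infty)$ and the reverse inequality.

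\emph{Main obstacle.} The only genuinely new issue is arranging a single element $g$ that simultaneously controls the almost mathematics (via $(g)_\perfd$), makes $R[1/g]\subseteq S[1/g]$ finite free so that the characteristic-$p$ almost purity of \autoref{lem.Almostfreeincharp} yields \emph{uniform} almost freeness rather than merely uniform almost projectivity, and satisfies $p\nmid g$ so that \autoref{lem.SOPisHilbertSamuel} is available -- and then choosing $\underline{x}$ compatibly with this $g$, which is why $\underline{x}$ must be allowed to depend on $g$. Verifying that $p\nmid g$ can be achieved is exactly where the normality of $R$ and $S$ (equivalently, purity of the branch locus) enters. Everything downstream of that is a routine transcription of the proof of \autoref{thm.TransformationRule}.
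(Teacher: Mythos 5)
Your proposal breaks down at the central claim that, after ``replacing $g$ by a suitable associate,'' one may assume $p \nmid g$. Associates of $g$ in $A$ (unit multiples) generate the same ideal, so the condition $p \mid g$ is unchanged. More fundamentally, \emph{no} choice of $g$ prime to $p$ can exist when $A \subseteq R$, $A \subseteq S$, or $R \subseteq S$ is ramified over $pA$ --- and this is precisely the non-quasi-\'etale situation that this proposition (as opposed to \autoref{thm.TransformationRule}) is designed to cover. In the key application \autoref{thm.DivisorClassGroupApplication} the element $g$ is \emph{chosen divisible by $p$} and the $R \subseteq S_i$ are cyclic covers of possibly $p$-power index, which are ramified over $V(p)$; so the branch locus of $A \subseteq S_i$ contains $V(p)$ and any $g$ making $A[1/g] \subseteq S_i[1/g]$ \'etale must satisfy $p \mid g$. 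Normality (purity of the branch locus) only makes the branch locus divisorial --- it does not exclude $V(p)$ from it.

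Once you cannot arrange $p \nmid g$, the plan to transcribe the proof of \autoref{thm.TransformationRule} by reducing modulo $(p^{1/p^\infty})$ and invoking \autoref{lem.SOPisHilbertSamuel} --- which requires $p,g$ to be a regular sequence --- collapses. The paper's mechanism is genuinely different: instead of killing $(p^{1/p^\infty})$, one kills $(x_2,\dots,x_d)_\perfd$. The reason for insisting that $g, x_2, \dots, x_d$ be a system of parameters is exactly that $g \equiv u\cdot p^c \pmod{(x_2,\dots,x_d)}$ for a unit $u$ and some $c>0$, so that after modding out by $(x_2,\dots,x_d)_\perfd$ the $(g)_\perfd$-almost structure becomes $p$-almost. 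One then uses the tilting equivalence together with \autoref{cor.AlmostfreeMixedChar} for the resulting $p$-almost finite \'etale maps of perfectoid \emph{valuation} rings, lifts via almost projectivity as in \autoref{lem.Almostprojectivelifts}, and estimates normalized lengths of quotients by $(\epsilon^3, x_2^{1/p^m},\dots, x_d^{1/p^m})$ rather than by $(p^{1/p^m},\epsilon^3,y_3,\dots,y_d)$. Your use of \autoref{lem.I_inftyCompare} via $\Phi$, the residue-field reduction via \autoref{eq.remResidueFieldExtensions}, and the prime-avoidance choice of $x_2,\dots,x_d$ are all sound; but identifying ``$p\nmid g$ achievable by normality'' as the main obstacle and treating the rest as a routine transcription is exactly backwards --- the inability to arrange $p \nmid g$ is what forces the new reduction modulo $(x_2,\dots,x_d)_\perfd$.
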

\begin{proof}
We first note that there exists $x_2,\dots,x_d  \in \m_A \setminus \m_{A}^2$ such that both $p, x_2, \dots, x_d$ and $g, x_2, \dots, x_d$ are systems of parameters of $A$.
Since $pg$ is a nonzero divisor, we may choose $x_2, \dots, x_d \in \m_A \setminus \m_A^2$ such that $pg, x_2, \dots, x_d$ form a system of parameters. Then $p, x_2, \dots, x_d$ and $g, x_2, \dots, x_d$ are both system of parameters.

%We first note that there exists $x_2,\dots,x_d  \in \m_A \setminus \m_{A}^2$ such that both $p, x_2, \dots, x_d$ and $g, x_2, \dots, x_d$ are systems of parameters of $A$. This follows by standard prime avoidance: first pick $x_2 \in \m_A$ that is not contained in $\m_A^2$ as well as any minimal prime of $(p)$ and $(g)$, next pick $x_3\in \m_A$ that is not contained in $\m_A^2$ as well as any minimal prime of $(p, x_2)$ and $(g, x_2)$, and we can keep going to find $x_2,\dots,x_d$. Since $A/(x_2,\dots,x_d)$ is a DVR with uniformizer $p$ by construction, we have $g=up^c$ in $A/(x_2,\dots,x_d)$ for some unit $u\in A$. Thus replacing $g$ by $u^{-1}g$ if necessary, we may assume $u=1$ and thus $g=p^c$ in $A/(x_2,\dots,x_d)$.

We next note that our assumption implies that $A[1/g]\to R[1/g]\to S[1/g]$ are finite \'etale maps. By \autoref{thm.BhattScholzeAlmostPurity}, we have that 
$\Ainfty/p \to R^{\Ainfty}_{\perfd}/p \to S^{\Ainfty}_{\perfd}/p$ are $g$-almost finite \'etale maps. Thus, modulo $(x_2.\dots, x_d)_\perfd=(x_2^{1/p^\infty},\dots, x_d^{1/p^\infty})^-\subseteq \Ainfty$, we have
$$ \Ainfty/(p, (x_2,\dots, x_d)_\perfd) \to R^{\Ainfty}_\perfd/(p,(x_2,\dots,x_d)_\perfd) \to S^{\Ainfty}_\perfd/(p,(x_2,\dots,x_d)_\perfd)$$
are $p$-almost finite \'etale maps since $g=p^cu$ for some unit $u\in A$ modulo $(x_2,\dots,x_d)$. Note that here we are implicitly using \autoref{lem.gPerfd=AllpPowerRootsofg}, i.e., {modulo $p$,} $(x_2,\dots,x_d)_\perfd\subseteq R^{\Ainfty}_\perfd$ is the perfectoidization of $(x_2,\dots,x_d)$ inside $R^{\Ainfty}_\perfd$, and similarly for $(x_2,\dots,x_d)_\perfd\subseteq S^{\Ainfty}_\perfd$.
%\todo{{\bf Karl:} The previous sentence is really up to $p$-closure, but we are working mod $p$ so that's fine, right? Yes!}
Now by the tilting equivalence (see \cite[Theorem 5.25]{ScholzePerfectoidspaces}), we know that 
\[ 
      \Ainfty/(x_2,\dots, x_d)_\perfd \to R^{\Ainfty}_\perfd/(x_2,\dots,x_d)_\perfd \to S^{\Ainfty}_\perfd/(x_2,\dots,x_d)_\perfd.
\] 
are $p$-almost finite \'etale maps. Moreover, we have that $S^{\Ainfty}_\perfd/(x_2,\dots,x_d)_\perfd[1/p]$ is finite free over $R^{\Ainfty}_\perfd/(x_2,\dots,x_d)_\perfd[1/p]$ (and both of them are finite \'etale over the perfectoid field $\Ainfty/(x_2,\dots, x_d)_\perfd[1/p]$). Therefore,
\[
(R^{\Ainfty}_\perfd/(x_2,\dots,x_d)_\perfd)[1/p] \to (S^{\Ainfty}_\perfd/(x_2,\dots,x_d)_\perfd)[1/p].
\]
is a finite free and finite \'etale extension, and each of them is a finite product of perfectoid fields. By the tilting equivalence again, we know that 
\[ 
  (R^{\Ainfty}_\perfd/(x_2,\dots,x_d)_\perfd)^\flat[1/p^\flat] \to (S^{\Ainfty}_\perfd/(x_2,\dots,x_d)_\perfd)^\flat[1/p^\flat].
\] 
is also a finite \'etale and finite free extension (and each of which is a finite product of perfect fields). By \autoref{lem.Almostfreeincharp}, $(S^{\Ainfty}_\perfd/(x_2,\dots,x_d)_\perfd)^\flat$ is uniformly $p^\flat$-almost finite free of rank $r$ over $(R^{\Ainfty}_\perfd/(x_2,\dots,x_d)_\perfd)^\flat$, and thus $S^{\Ainfty}_\perfd/(p, (x_2,\dots,x_d)_\perfd)$ is uniformly $p$-almost finite free of rank $r$ over $R^{\Ainfty}_\perfd/(p, (x_2,\dots,x_d)_\perfd)$. 
In particular, for any $\epsilon \in (g)_\perfd \subseteq \Ainfty$ such that $\epsilon \equiv \bar{g}^{1/p^n}\equiv  p^{c/p^n}\mod {(x_2,\dots,x_d)_\perfd}$, we have 
    \[    
        S^{\Ainfty}_\perfd/(p, (x_2,\dots,x_d)_\perfd)\xrightarrow{\phi_\epsilon}\left(R^{\Ainfty}_\perfd/(p, (x_2,\dots,x_d)_\perfd)\right)^r\xrightarrow{\psi_\epsilon}S^{\Ainfty}_\perfd(p, (x_2,\dots,x_d)_\perfd)
    \]
    where the two-way compositions are both multiplication by $p^{c/p^n}$. Now, similar to the argument as in the proof of  \autoref{thm.TransformationRule}, since $S^{\Ainfty}_\perfd/p$ is $g$-almost finite projective over $R^{\Ainfty}_\perfd/p$ by \autoref{thm.BhattScholzeAlmostPurity}, we can apply \autoref{lem.Almostprojectivelifts} to obtain the following commutative diagram:
    \[
        {\small\xymatrix{
            S^{\Ainfty}_{\perfd}/p \ar[r]^-{\widetilde{\phi}_\epsilon}  \ar[d]^-{can}& (R^{\Ainfty}_{\perfd}/p)^r \ar[r]^-{\widetilde{\psi}_\epsilon} \ar[d]^-{can}& S^{\Ainfty}_{\perfd}/p \ar[d]^-{can} \\
            S^{\Ainfty}_\perfd/(p,(x_2,\dots,x_d)_\perfd) \ar[r]^-{\epsilon \cdot\phi_\epsilon}  & (R^{\Ainfty}_\perfd/(p,(x_2,\dots,x_d)_\perfd))^r \ar[r]^-{\psi_\epsilon}  & S^{\Ainfty}_\perfd/(p,(x_2,\dots,x_d)_\perfd)
        }}
    \]
        
    The rest of the argument is similar to the proof of \autoref{thm.TransformationRule}. In view of \autoref{eq.remResidueFieldExtensions}, it suffices to prove that $\lambda_\infty(S^{\Ainfty}_{\perfd}/I^S_\infty) =  r\cdot \lambda_\infty(R^{\Ainfty}_{\perfd}/I^R_{\infty})$. Below we will only show $$\lambda_\infty(S^{\Ainfty}_{\perfd}/I^S_\infty)\le r\cdot\lambda_\infty(R^{\Ainfty}_{\perfd}/I^R_{\infty}),$$ as the other direction is similar (and slightly easier). 
    
    We set $C_\epsilon=\coker(\widetilde{\psi}_\epsilon)$. Since $(p,(x_2,\dots,x_d)_\perfd)=(p,x_2^{1/p^\infty},\dots,x_d^{1/p^\infty})$ by \autoref{lem.gPerfd=AllpPowerRootsofg}, by essentially the same argument as in the paragraph above \autoref{eqn.EqnInProofTransRule} in the proof of \autoref{thm.TransformationRule}, we can find $m\gg n$ such that $\epsilon^3\cdot C_\epsilon\subseteq (x_2^{1/p^m},\dots,x_d^{1/p^m})C_\epsilon$, i.e., 
    \begin{equation}
    \label{eqn.EqnInProofofGenTransRule}
        \epsilon^3 \cdot (S^{\Ainfty}_{\perfd}/p)\subseteq \im(\widetilde{\psi}_\epsilon)+(x_2^{1/p^m},\dots,x_d^{1/p^m})(S^{\Ainfty}_{\perfd}/p).
    \end{equation} 
    Increasing $m$ if necessary, we may as well assume that $\epsilon =p^{c/p^n}$ in $\Ainfty/(x_2^{1/p^m},\dots,x_d^{1/p^m})$. 
    By \autoref{lem.I_inftyCompare} (the conditions are satisfied by our assumptions), we have
$$\lambda_\infty(S^{\Ainfty}_{\perfd}/I^S_\infty) = \lambda_\infty(S^{\Ainfty}_{\perfd}/I^R_\infty S^{\Ainfty}_{\perfd}) \le r\cdot \lambda_\infty(R^{\Ainfty}_{\perfd}/I^R_{\infty})+\lambda_\infty(C_\epsilon/I^R_{\infty} C_\epsilon).$$
Therefore it suffices to show that for any $\delta>0$, there exists some $\epsilon\in (g)_\perfd$ such that $\lambda_\infty(C_\epsilon/I^R_{\infty} C_\epsilon)< \delta$. Now we estimate the normalized length:
        \begin{align*}
        \lambda_\infty(C_\epsilon/I^R_{\infty} C_\epsilon)&\le \lambda_\infty\left((S^{\Ainfty}_{\perfd}/p)/(\m_A(S^{\Ainfty}_{\perfd}/p)+\im(\widetilde{\psi}_\epsilon))\right)\\
        &\le p^{m(d-1)}\cdot  \lambda_\infty\left((S^{\Ainfty}_{\perfd}/p)/((x_2^{1/p^m},\dots,x_d^{1/p^m})(S^{\Ainfty}_{\perfd}/p)+\im(\widetilde{\psi}_\epsilon))\right)\\
        &\le p^{m(d-1)} \cdot  \lambda_\infty\left((S^{\Ainfty}_{\perfd}/p)/(\epsilon^3,x_2^{1/p^m},\dots,x_d^{1/p^m})(S^{\Ainfty}_{\perfd}/p)\right)\\
        &\le p^{m(d-1)} \cdot  \lambda_\infty(S^{\Ainfty}_{\perfd}/(\epsilon^3,x_2^{1/p^m},\dots,x_d^{1/p^m})S^{\Ainfty}_{\perfd})\\
        &=p^{m(d-1)} \cdot (\rank_A S) \cdot \lambda_\infty(\Ainfty/(\epsilon^3,x_2^{1/p^m},\dots,x_d^{1/p^m}))\\
        &=p^{m(d-1)} \cdot (\rank_A S) \cdot \lambda_\infty(\Ainfty/(p^{3c/p^n},x_2^{1/p^m},\dots,x_d^{1/p^m}))\\
        &=(3c/p^n)\cdot (\rank_A S) \cdot \lambda_\infty(\Ainfty/(p,x_2,\dots,x_d))\\
        &=(3c/p^n)\cdot (\rank_A S),
        \end{align*}
    where the inequality on the third line follows from \autoref{eqn.EqnInProofofGenTransRule}, the equality on the fifth line follows from \autoref{prop.Normalizedlengthequalsrank}, the equality in the sixth line follows from our choice that $\epsilon =p^{c/p^n}$ in $\Ainfty/(x_2^{1/p^m},\dots,x_d^{1/p^m})$, the equality in the seventh line follows from the fact that $p, x_2,\dots,x_d$ is a regular sequence on $\Ainfty$.
    
    Finally, letting $n\to\infty$, we obtain that $\lambda_\infty(C_\epsilon/I^R_{\infty} C_\epsilon)< \delta$ which completes the proof.
\end{proof}

The transformation rule actually lets us compute the perfectoid signature of singularities with finite regular covers.  That is, if $(R,\m, k) \subseteq (S, \mathfrak{n}, k)$ is quasi-\'etale and $S$ is regular, then 
\[
    s^{\underline{x}}_{\perfd}(R) = {1 \over [K(S): K(R)]}
\]
regardless of what $\underline{x}$ is.  Even more, if $R \subseteq S$ is not quasi-\'etale, then as long as $\underline{x}$ is chosen sufficiently carefully, one can still determine the perfectoid signature.
  This occurs for Du Val singularities when $p > 5$, as we now verify.  Compare with \cite[Example 18]{HunekeLeuschkeTwoTheoremsAboutMaximal}.

\begin{corollary}
    \label{cor.DuValSingularities}
    Suppose that $(R, \m)$ is a two-dimensional Noetherian complete Gorenstein rational singularity of residual characteristic $p > 0$ and with algebraically closed residue field.  Then for every appropriate choice of system of parameters $\underline{x}=p, x_2,\dots, x_d\subseteq R$ in the sense of \autoref{prop.GeneralTransformationRuleForSpecialSOP}, we have the following computations of $s^{\underline{x}}_{\perfd}(R)$ and $e^{\underline{x}}_{\perfd}(R)$.
    \begin{center}
        {\renewcommand{\arraystretch}{1.3}
        \begin{tabular}{c|c|c|c}
            {\bf type} & {\bf residual characteristic} & {$s^{\underline{x}}_{\perfd}(R)$} & {$e^{\underline{x}}_{\perfd}(R)$} \\
            \hline
            $A_n$ & $p > 2$ & ${1 \over n+1}$ & $2-{1 \over n+1}$\\
            $D_n$ & $p > 3$ & ${1 \over 2(2n - 4)} = {1 \over 4(n-2) }$ & $2-{1 \over 4(n-2) }$ \\
            $E_6$ & $p > 5$ & ${1 \over 24}$ & $2- {1 \over 24}$ \\
            $E_7$ & $p > 5$ & ${1 \over 48}$ & $2- {1 \over 48}$ \\
            $E_8$ & $p > 5$ & ${1 \over 120}$ &  $2- {1 \over 120}$  \
        \end{tabular}}
    \end{center}    
    Furthermore, in the case that $R$ is an $A_n$ singularity where $n +1$ is not divisible by $p$, or a $D_n$ singularity where $n-2$ is not divisible by $p \neq 2$, or in the case of $E_6$, $E_7$ or $E_8$, then no special system of parameters $\underline{x}$ needs be chosen and $s^{\underline{x}}_{\perfd}(R)$ and $e^{\underline{x}}_{\perfd}(R)$ are independent of the choice of $\underline{x}$.
\end{corollary}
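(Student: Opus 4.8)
The plan is to exhibit, for each Du Val type, a finite split local extension $R\subseteq S$ with $S$ regular, and to read off $s^{\underline{x}}_{\perfd}(R)$ and $e^{\underline{x}}_{\perfd}(R)$ from the transformation rule together with \autoref{cor.DegreeTwoHypersurface}. First I would invoke the classical structure theory: in each of the indicated characteristic ranges, the complete Du Val singularity $R$ is a hypersurface given by its usual $ADE$ equation---hence $S_2$, in fact Gorenstein, with $e(R)=2$---and, after completion, $R\cong k[[u,v]]^G$ for the corresponding finite subgroup scheme $G\subseteq \mathrm{SL}_2$, which is cyclic of order $n+1$ for $A_n$, binary dihedral of order $2(2n-4)=4(n-2)$ for $D_n$, and binary tetrahedral, octahedral, icosahedral of orders $24$, $48$, $120$ for $E_6$, $E_7$, $E_8$. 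Set $S:=k[[u,v]]$ and $r:=[K(S):K(R)]=|G|$; since $k$ is algebraically closed, the residue-field degree $t$ appearing in \autoref{thm.TransformationRule} and \autoref{prop.GeneralTransformationRuleForSpecialSOP} is $1$. The extension $R\subseteq S$ is split: when $p\nmid r$ one uses the Reynolds operator $\tfrac1{|G|}\sum_{g}g$, and in the remaining cases the natural grading on $S$ coming from the cyclic ($p$-power) part of $G$ furnishes an $R$-linear projection $S\to R$.

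Next I would split into two cases according to whether $R\subseteq S$ is quasi-\'etale. For $E_6,E_7,E_8$ with $p>5$ the orders $24,48,120$ have only $2,3,5$ as prime factors, so $p\nmid r$; likewise $p\nmid r$ for $A_n$ when $p\nmid n+1$ and for $D_n$ when $p\nmid n-2$ (using $p>3$). In all these cases $G$ acts freely and tamely on $\Spec S\setminus\{\mathfrak n\}$, so $R\subseteq S$ is a split quasi-\'etale finite extension of complete normal local domains, and \autoref{thm.TransformationRule} applies: combined with \autoref{cor.PerfdSignature1IsRegular} (which gives $s^{\underline{x}}_{\perfd}(S)=1$ since $S$ is regular) it yields $s^{\underline{x}}_{\perfd}(R)=\tfrac1r\, s^{\underline{x}}_{\perfd}(S)=\tfrac1r$, and because \autoref{thm.TransformationRule} holds for \emph{every} $\underline{x}$ this value is independent of $\underline{x}$---this is the ``furthermore'' clause. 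For the remaining cases---$A_n$ with $p\mid n+1$ and $D_n$ with $p\mid n-2$---the cover involves a non-\'etale finite group scheme and $R\subseteq S$ need no longer be quasi-\'etale, so I would instead invoke \autoref{prop.GeneralTransformationRuleForSpecialSOP}, after fixing $A$ and $g\in A$ as in that statement (possible since $R$ and $S$ are regular in codimension one). This requires verifying conditions $(a)$--$(c)$ of \autoref{lem.I_inftyCompare} for a suitable $\Phi\in\Hom_S(S,R)$: condition $(a)$ holds because $\Hom_R(S,R)$ is a reflexive $S$-module of rank one, hence free since $S$ is regular and therefore factorial, so a generator $\Phi$ exists; condition $(b)$ follows from splitness of $R\subseteq S$; and condition $(c)$, that $\Phi(\mathfrak n)\subseteq\mathfrak m$ after a suitable normalization of $\Phi$, is the analog of \cite[Lemma 2.10]{CarvajalRojasSchwedeTuckerFundamentalGroup} and can be checked using the $\bZ/N$-grading on $S$, \cf \cite{CarvajalRojasFiniteTorsors}. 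Granting this, \autoref{prop.GeneralTransformationRuleForSpecialSOP} produces the admissible systems of parameters $\underline{x}$ (those for which both $p,x_2,\dots,x_d$ and $g,x_2,\dots,x_d$ are systems of parameters of $A$) and the equality $s^{\underline{x}}_{\perfd}(R)=\tfrac1r$.

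Finally, since $R$ is $S_2$ with $e(R)=2$, \autoref{cor.DegreeTwoHypersurface} gives $e^{\underline{x}}_{\perfd}(R)+s^{\underline{x}}_{\perfd}(R)=2$, so $e^{\underline{x}}_{\perfd}(R)=2-\tfrac1r$; substituting $r=n+1$, $4(n-2)$, $24$, $48$, $120$ produces the table, and the cases in which $s^{\underline{x}}_{\perfd}$ is $\underline{x}$-independent are exactly those in which $e^{\underline{x}}_{\perfd}$ is, completing the proof. I expect the main obstacle to be the wildly ramified cases: both pinning down the regular-cover structure of $A_n$ (for $p\mid n+1$) and $D_n$ (for $p\mid n-2$), and verifying condition $(c)$ of \autoref{lem.I_inftyCompare}---equivalently, that one can choose a splitting of $R\hookrightarrow S$ which generates $\Hom_R(S,R)$ over $S$ and carries $\mathfrak n$ into $\mathfrak m$---require the positive-characteristic theory of rational double points and a careful analysis of the trace/fundamental-class map. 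The tame cases, the regularity criterion, and the multiplicity-two hypersurface identity are all immediate from the results already established, so the work is entirely concentrated in these two families at small primes.
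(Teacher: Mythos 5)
Your overall strategy (exhibit a finite local extension to a regular ring, compute $s^{\underline{x}}_{\perfd}$ via the transformation rule, then feed the answer into the multiplicity-two identity $e^{\underline{x}}_{\perfd}(R)+s^{\underline{x}}_{\perfd}(R)=2$) is exactly the one the paper uses. But there is a genuine gap in the input you feed into this machine: the corollary is a statement in \emph{mixed characteristic}, and the identification $R\cong k[[u,v]]^{G}$ for $G\subseteq\mathrm{SL}_{2}$ a finite (group scheme) subgroup simply does not hold. A mixed characteristic ring cannot be a quotient of a power series ring over the residue field; the regular cover $S$ is not $k[[u,v]]$ but a mixed characteristic regular local ring (e.g.\ a quotient of $W(k)\llbracket u,v\rrbracket$), and the cover $R\subseteq S$ is not produced by linearly reductive invariant theory. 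Consequently everything you derive from that identification---the Reynolds operator splitting in the tame case, the $\bZ/N$-grading used both to produce a splitting and to check condition~(c) of \autoref{lem.I_inftyCompare} in the wild $A_{n}$ and $D_{n}$ cases, and the very existence of a finite regular cover of degree $|G|$---is unjustified as written.

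What the paper actually uses is the mixed-characteristic analysis from the reference on covers of rational double points (the paper you cite only tangentially via a different result, \cite{CarvajalRojasSchwedeTuckerFundamentalGroup}; the relevant source is the one on RDP covers in mixed characteristic). That source establishes, in each $ADE$ case and in the stated characteristic range, (i) that a finite local extension $R\subseteq S$ with $S$ regular of the claimed generic degree exists, (ii) that there is a trace-like generator $\Phi\in\Hom_{R}(S,R)$ satisfying the hypotheses (a)--(c) of \autoref{lem.I_inftyCompare}, in particular $\Phi(\mathfrak n)\subseteq\mathfrak m$, and (iii) precisely which of these covers are quasi-\'etale. Item (iii) is what drives the ``furthermore'' clause, and the paper's factorization for $D_{n}$---a quasi-\'etale cyclic index-$2$ cover to $A_{2n-5}$, followed by a cyclic index-$(2n-4)$ cover---is part of that structure theory, not something read off from a binary dihedral quotient. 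Your verification of condition (c) for the wild cases is exactly the hard technical content; pointing at a $\bZ/N$-grading on $S$ presupposes the equal-characteristic structure you do not have. If you replace ``classical structure theory of $k[[u,v]]^{G}$'' by the mixed-characteristic cover structure from that reference and take conditions (a)--(c) for $\Phi$ (including $\Phi(\mathfrak n)\subseteq\mathfrak m$) as established there, the rest of your argument---the appeal to \autoref{thm.TransformationRule} and \autoref{prop.GeneralTransformationRuleForSpecialSOP}, the use of $s^{\underline{x}}_{\perfd}(S)=1$ from \autoref{thm.CharacterizationOfRegularLocalRings}, and the final application of \autoref{cor.DegreeTwoHypersurface}---goes through and coincides with the paper's proof.
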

\begin{proof}
First of all, two-dimensional Gorenstein rational singularities have multiplicity $2$ (see \cite[Lemma 4.1]{CarvajalRojasMaPolstraSchwedeTuckerCoversOfRDP}) and thus the computation of $e^{\underline{x}}_{\perfd}(R)$ follows from the computation of $s^{\underline{x}}_{\perfd}(R)$ and \autoref{cor.DegreeTwoHypersurface}.
    
To compute $s^{\underline{x}}_{\perfd}(R)$, we note that it follows from \cite[Section 4]{CarvajalRojasMaPolstraSchwedeTuckerCoversOfRDP} that these singularities have finite regular covers whose index is the reciprocal of the identified perfectoid-signature.  Furthermore, there is a trace-like map in each case sending the maximal ideal to the maximal ideal. Therefore the result follows from \autoref{thm.TransformationRule}, \autoref{prop.GeneralTransformationRuleForSpecialSOP}, and \autoref{thm.CharacterizationOfRegularLocalRings}.

For the second statement, we need to identify those $n > 0$ for which these regular covers are quasi-\'etale (note a cyclic index-$p$ cover is not quasi-\'etale).  The $A_n$ singularities have regular cyclic covers of index $n+1$ which gives the conclusion.
    For a $D_n$ singularity where $n-2$ is not divisible by $p \neq 2$, then by \cite[Proposition 4.4]{CarvajalRojasMaPolstraSchwedeTuckerCoversOfRDP}, it has a cyclic, index 2, quasi-\'etale, cover which is $A_{2n-5}$.  That further has a finite cyclic cover of index $2n - 5 + 1 = 2n - 4$.  Since $p \neq 2$, this is quasi-\'etale exactly when $p$ does not divide $n-2$. Finally, the regular covers of the $E_6, E_7, E_8$ correspond to degree $24, 48, 120$-extensions respectively, which are quasi-\'etale as long as $p > 5$, see  \cite[Propositions 4.7, 4.9, 4.11]{CarvajalRojasMaPolstraSchwedeTuckerCoversOfRDP}. 
\end{proof}

\section{Applications}
In this section we provide several applications by combining the results of the previous sections.  The transformation rule for perfectoid signature plays a particularly central role. 
 %(\autoref{thm.TransformationRule}).
\begin{itemize}
    \item{}  In \autoref{thm.FundamentalGroupApplication} we prove an analog of \cite[Theorem 1]{XuEtaleFundamentalGroup} and \cite[Corollary 1.4]{XuZhuangUniquenessOfTheMinimizerOfTheNormalizedVolume} from characteristic 0 and of \cite[Theorem 5.1]{CarvajalRojasSchwedeTuckerFundamentalGroup} from characteristic $p > 0$.  
 Specifically, we provide an explicit bound on the size of the local \'etale fundamental group of a BCM-regular pair.
    \item{} By taking a cone, we obtain a version of this result for large open subsets of certain projective schemes, see  \autoref{thm.FinitenessFunGroupsForBigOpenInLogFano}.  
    \item{} In \autoref{thm.GrebKebekusPeternellVersion} we obtain a version of \cite[Theorem 1.1]{GrebKebekusPeternellEtaleFundamentalGroupsKLT} and \cite[Theorem 1]{StibitzEtaleCoversAndLocalFunGroups}.  We prove that towers of quasi-\'etale generically Galois covers of a BCM-regular scheme are eventually \'etale.
    \item  In \autoref{thm.DivisorClassGroupApplication}, we prove finiteness and bound the size of the torsion part of the divisor class group for BCM-regular singularities, see \cite{PolstraATheoremAboutMCM,MartinTheNumberOfTorsionDivisors,CarvajalRojasFiniteTorsors} for the positive characteristic counterparts.
\end{itemize} 

Before starting these proofs, we need a lemma.

\begin{lemma}[{\cf \cite[Lemma 5.1]{CarvajalRojasMaPolstraSchwedeTuckerCoversOfRDP}, \cite{CarvajalRojasFiniteTorsors}}]
    \label{lem.QuasiEtaleStuffForWeakBCMRegular}
    Suppose $(R, \m) \subseteq (S, \bn)$ is a finite extension of normal complete domains with $R \subseteq S$ \'etale in codimension 1 (or more generally, if $\Hom_R(S, R) \cong S$ as $S$-modules and if an $S$-module generator $T$ of $\Hom_R(S, R)$ satisfies $T(\bn) \subseteq \m$).  If $R$ is weakly BCM-regular, then $R \to S$ is split as a map of $R$-modules and $S$ is also weakly BCM-regular. 
\end{lemma}
\begin{proof}
    As in \autoref{lem.I_inftyCompare}, if $R \to S$ is quasi-\'etale then if $T : S \to R$ is the trace map, we have that $\Hom_R(S, R) = S \cdot T$ and $T(\bn) \subseteq \m$, see \autoref{lem.TraceForNormalDomains}.  Hence we have reduced to the general case.  

    Now, as $R$ is weakly BCM-regular, we have that $R \to S \to B$ splits for any perfectoid big Cohen-Macaulay $R^+ = S^+$-algebra $B$, and so $R \to S$ splits as well.    By Hom-tensor adjunction, $\Hom_R(B, R) \cong \Hom_S(B, \Hom_R(S, R)) \cong \Hom_S(B, S)$ and it follows that any map $\phi : B \to R$ factors as 
    \[
        \phi : B \xrightarrow{\psi} S \xrightarrow{T} R.
    \]
    If $S$ is not weakly BCM-regular, then there exists a perfectoid big Cohen-Macaulay $S^+$-algebra $B$ so that $\psi(B) \subseteq \bn$ for all $\psi\in\Hom_S(B, S)$ and hence $\phi(B) = T(\psi(B)) \subseteq \m$ for all $\phi\in \Hom_R(B, R)$, contradicting the fact that $R$ is weakly BCM-regular.
\end{proof}

\begin{theorem}
    \label{thm.FundamentalGroupApplication}
Let $X=\Spec(R)$ where $(R,\m)$ is a Noetherian complete normal local domain of mixed characteristic $(0,p)$ with algebraically closed residue field. Further suppose that there exists $\Delta \geq 0$ such that $K_R + \Delta$ is $\bQ$-Cartier and such that $(R, \Delta)$ is BCM-regular. %Further suppose $s_{\perfd}^{\underline{x}}(R) > 0$, which holds for instance if there exists $\Delta \geq 0$ such that $K_R + \Delta$ is $\bQ$-Cartier and such that $(R, \Delta)$ is BCM-regular.  
Then $\pi_1^{\text{\'et}}(X-Z,\bar{q})$ is finite for any closed subscheme $Z$ %containing the closed point and
of codimension at least 2. Moreover, the order of $\pi_1^{\text{\'et}}(X-Z,\bar{q})$ is at most $1/s^{\underline{x}}_\perfd(R)$.  Finally, $p$ does not divide $|\pi_1^{\text{\'et}}(X-Z,\bar{q})|$.
\end{theorem}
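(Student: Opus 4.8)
The plan is to reduce the statement to the transformation rule \autoref{thm.TransformationRule}, via the standard dictionary between connected finite \'etale covers of the punctured spectrum and finite quasi-\'etale extensions of $R$. Since $R$ is a normal domain, $X = \Spec R$ is irreducible, so $U := X - Z$ is connected and $\pi_1^{\text{\'et}}(U,\bar x)$ is well-defined; thus it suffices to produce a uniform bound $N$ on the degrees of connected finite \'etale covers $V \to U$, because a profinite group whose open normal subgroups all have index at most $N$ is finite of order at most $N$ (pick an open normal subgroup $H_0$ of maximal index; for any open normal $H$ one checks $[G : H_0 \cap H] \geq [G:H_0]$, forcing $H_0 \subseteq H$, so $H_0$ lies in every open normal subgroup and is therefore trivial). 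We already know $s^{\underline{x}}_{\perfd}(R) > 0$ by \autoref{prop.dH-BCMRegularImpliesPerfdPositive} (as $(R,\Delta)$ is BCM-regular), so $1/s^{\underline{x}}_{\perfd}(R)$ is a finite candidate for $N$.

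First I would fix a connected finite \'etale cover $V \to U$ of degree $r$, corresponding to a finite field extension $K(R) \subseteq L := K(V)$ of degree $r$, and let $S$ be the integral closure of $R$ in $L$. Because $R$ is a complete, hence excellent, normal local domain, $S$ is a module-finite normal $R$-algebra, and since $R$ is henselian and $S$ is a domain, $S$ is local. As $V$ is normal and connected with function field $L$, it coincides with the normalization of $U$ in $L$, which is the base change of $\Spec S \to X$ along $U \hookrightarrow X$; hence $\Spec S \to X$ is \'etale over $U$, and since $X - U = Z$ has codimension $\geq 2$, the extension $R \subseteq S$ is finite quasi-\'etale with $\deg(V/U) = [K(S) : K(R)] = r$. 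Moreover, $R$ has algebraically closed residue field $k$ and $S$ is finite over $R$, so the residue field of $S$ is a finite extension of $k$, hence equal to $k$; in particular the residue field degree $t$ equals $1$.

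Next I would observe that $R \to S$ splits: since $s^{\underline{x}}_{\perfd}(R) > 0$, the map $R \to R^+$ is pure (this follows from \autoref{prop.SignaturePositiveImpliesPerturbationBCMRegular} applied with $g = 1$, since $\widehat{R^+}$ is a perfectoid big Cohen-Macaulay $R^+$-algebra; see also the footnote to \autoref{thm.TransformationRule}), and since $S \subseteq R^+$, the composite $R \to S \to R^+$ being pure forces $R \to S$ to be pure, hence split as $R$ is complete. Now $R \subseteq S$ satisfies every hypothesis of \autoref{thm.TransformationRule}, so $s^{\underline{x}}_{\perfd}(S) = [K(S):K(R)] \cdot s^{\underline{x}}_{\perfd}(R) = r \cdot s^{\underline{x}}_{\perfd}(R)$ (using $t = 1$). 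Combining this with $s^{\underline{x}}_{\perfd}(S) \leq 1$ from \autoref{prop.sFboundby1} gives $r \leq 1/s^{\underline{x}}_{\perfd}(R)$. As $V \to U$ was an arbitrary connected finite \'etale cover, this is the desired uniform bound, and therefore $|\pi_1^{\text{\'et}}(U,\bar x)| \leq 1/s^{\underline{x}}_{\perfd}(R) < \infty$.

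The hard part will not be conceptual but bookkeeping: carefully checking that every connected finite \'etale cover of $U$ really does extend to a finite \emph{quasi-\'etale} extension by a \emph{local} normal complete domain with \emph{trivial} residue field extension, so that the clean form of \autoref{thm.TransformationRule} applies verbatim and no appeal to the more delicate \autoref{prop.GeneralTransformationRuleForSpecialSOP} is needed. Everything else is a direct assembly of Theorems A, B, and C with the elementary fact about profinite groups recorded above.
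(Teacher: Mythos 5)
Your proof is correct and takes essentially the same approach as the paper: both reduce the finiteness of $\pi_1^{\text{\'et}}(U,\bar{x})$ to the combination of (i) the identification of connected finite \'etale covers of $U$ with finite quasi-\'etale local normal extensions $R\subseteq S$, (ii) positivity of $s^{\underline{x}}_\perfd(R)$ via \autoref{prop.dH-BCMRegularImpliesPerfdPositive}, (iii) the bound $s^{\underline{x}}_\perfd(S)\leq 1$ from \autoref{prop.sFboundby1}, and (iv) the transformation rule \autoref{thm.TransformationRule}. The one presentational difference is that you close the argument by directly bounding the index of every open normal subgroup and invoking the elementary profinite-group fact, whereas the paper instead builds a tower of generically Galois quasi-\'etale covers, shows it stabilizes, and identifies $\pi_1^{\text{\'et}}$ with the Galois group of the maximal element — the two steps are interchangeable and both arguments are sound.
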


\begin{remark}
We remark here that since $X$ is normal, $X-Z$ is connected. Hence $\pi_1^{\text{\'et}}(X-Z,\bar{q})$ is independent of the choice of $\bar{q}$ up to an isomorphism. So for convenience, we choose $\bar{q}$ to be $\Spec({\text{Frac}(R)}^{\text{sep}})\to X-Z$. Also since $X$ is normal, we may take $Z=X_{\text{sing}}$, the singular locus of $X$ (another interesting case is $Z=\{\m\}$).
\end{remark}

\begin{proof}
    To compute the \'etale fundamental group $\pi_1^{\text{\'et}}(X-Z,\bar{q})$ we first observe that we can study certain finite covers of $X$.  Let $\mathcal{C}$ be the category of finite morphisms of normal schemes over $X$ which are \'etale except possibly over $Z$. By Zariski's Main Theorem, we have the equivalence of categories $\mathcal{C}\simeq {(X-Z)}_{\text{\'et}}$ via $(Y\to X)\mapsto (Y\times_X(X-Z)\to X-Z)$. The fully faithfulness is clear. For any finite \'etale cover $U$ of $X-Z$, we may apply the Zariski's Main Theorem to factor the quasi-finite map $U\to X$ into $U\hookrightarrow Y\to X$ where the first map is an open immersion and the second map is finite. This gives us the essential surjection. Note that $\pi_1^{\text{\'et}}(X-Z,\bar{q})$ pro-represents the functor $\Hom_{X-Z}(\bar{q},-)$ on ${(X-Z)}_{\text{\'et}}$. Hence we may simply consider the functor $\Hom_{X}(\bar{q},-)$ on $\mathcal{C}$ by the equivalence of the categories.

Now we let 
$$\cdots\to X_i=\Spec(S_i) \to X_{i-1}=\Spec(S_{i-1})\to \cdots \to X_{0}=X$$
be a tower of finite local morphisms of normal schemes such that each one is quasi-\'etale. 
By our choice of $\bar{q}:\Spec({\text{Frac}(R)}^{\text{sep}})\to X-Z$, we assume further that $X_i\to X$ is generically Galois. Hence to prove the finiteness of $\pi_1^{\text{\'et}}(X-Z,\bar{q})$  it suffices to show the tower stabilizes for sufficiently large $i$. Since $(R, \Delta)$ is BCM-regular, we have $s^{\underline{x}}_\perfd(R)>0$ by \autoref{prop.dH-BCMRegularImpliesPerfdPositive}.  

As $R$ is weakly BCM-regular, by \autoref{lem.QuasiEtaleStuffForWeakBCMRegular} we see that $R \to S_1$ splits and $S_1$ is weakly BCM-regular.  Repeating the argument we see that each $S_i \to S_{i+1}$ splits.  %Furthermore, by \autoref{lem.residuefieldsep} $R \to S_1$ and each $S_i \to S_{i+1}$ has separable, hence trivial, residue field extension.

However since perfectoid signature is bounded above by 1 by \autoref{prop.sFboundby1} and have transformation rule for split quasi-\'etale maps by \autoref{thm.TransformationRule}, we must have $\rank_R(S_i)=\rank_R(S_{i-1})$ for large enough $i$, in other words, $\rank_{S_{i-1}}(S_i)=1$ for large enough $i$. %Then by \cite[Lemma 2.13]{CarvajalRojasSchwedeTuckerFundamentalGroup}, the result

Note that any maximal element $Y=\Spec(S)\to X$ in the tower above represents the functor $\Hom_{X}(\bar{q},-)$. In fact, if this is not the case, then there is some other $(Y'\to X)\in \mathcal{C}$ such that $Y=\Spec(S)\to X$ does not dominate. Then by going back and forth of the equivalence $\mathcal{C}\simeq {(X-Z)}_{\text{\'et}}$, we can find some $(Y''\to X)\in \mathcal{C}$ dominating them both, giving $Y=Y''$ since the tower stabilizes. Therefore, we have
$$\pi_1^{\text{\'et}}(X-Z,\bar{q})\simeq \Aut_X(Y)\simeq \Gal(\text{Frac}(S)/\text{Frac}(R))$$ 
The order of the latter is less than or equal to $1/s^{\underline{x}}_\perfd(R)$ by the transformation rule.

For the final statement, suppose $(R, \m, k) \subseteq (S, \bn, k)$ is a finite quasi-\'etale extension of normal domains where $p \; | \; [K(S) : K(R)]$.  Now, $\Tr(\bn) \subseteq \m$ (see \autoref{lem.TraceForNormalDomains}) and thus we have an induced map $\overline{\Tr} : R/\bn = k \to k = R/\m$.  Furthermore any element $\overline{y}$ of $k = S/\bn$ is the residue class of some $y \in R$.  So we see $\Tr(y) = [K(S) : K(R)] \cdot y \in \m$ since $p \; | \; [K(S) : K(R)]$ and $p \in \m$. It follows that $\overline{\Tr}$ is the zero map and hence $\Tr$ is not surjective (its image lies in $\m$).  This completes the proof.
\end{proof}

We can translate this to a geometric setting by taking cones.  Recall from \cite{BMPSTWW-MMP}, \cf \cite{TakamatsuYoshikawaMMP}, that a pair $(Y, \Delta)$, where $Y$ projective over a complete local ring $(R, \m)$ and $\Delta$ is an effective $\bQ$-divisor, is called \emph{globally $\bigplus$-regular} if the map $\cO_Y \to \cO_Z(f^* \Delta)$ splits for every finite surjective map $f : Z \to Y$ from an integral normal scheme $Z$ such that $f^* \Delta$ has integer coefficients.

\begin{theorem}
    \label{thm.FinitenessFunGroupsForBigOpenInLogFano}
    Suppose that $Y$ is a normal integral scheme projective over $R = H^0(Y, \cO_Y)$ with equidimensional fibers, where $(R, \m, k)$ is a Noetherian complete local domain such that $k$ is algebraically closed.  
    Further suppose $\Delta \geq 0$ is a $\bQ$-divisor such that $-(K_Y + \Delta)$ is ample.  Suppose that for every Cartier divisor $D > 0$ on $Y$ there exists an $\epsilon > 0$ such that $(Y, \Delta + \epsilon D)$ is globally $\bigplus$-regular.  Then for any open set $U \subseteq Y$ whose complement $Y \setminus U$ has codimension $\geq 2$ (for instance the regular locus of $Y$), we have that $\pi_1^{\text{\'et}}(U)$ is finite.    
\end{theorem}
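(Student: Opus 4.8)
The plan is to deduce this from \autoref{thm.FundamentalGroupApplication} by passing to an affine cone over $Y$. After replacing $R$ by $R\,\widehat{\otimes}_{W(k)}W(\bar k)$ and $Y$, $\Delta$ by their base changes along the resulting flat map (which preserves normality, flatness over global sections, $\bQ$-Cartierness of $-(K_Y+\Delta)$, and global $\bigplus$-regularity), we may assume $k$ is algebraically closed. Since $-(K_Y+\Delta)$ is ample and $\bQ$-Cartier, choose $N>0$ with $H:=-N(K_Y+\Delta)$ an ample Cartier divisor, and replace $H$ by a large multiple so that it is very ample and $H^0(Y,\mathcal{O}_Y(jH))\neq 0$ for all $j\ge 1$. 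Form the section ring $S:=\bigoplus_{j\ge 0}H^0(Y,\mathcal{O}_Y(jH))$; since $jH-(K_Y+\Delta)$ is ample for every $j\ge 0$, the Kawamata--Viehweg-type vanishing for globally $\bigplus$-regular pairs from \cite{BMPSTWW-MMP} gives $H^{>0}(Y,\mathcal{O}_Y(jH))=0$ for all $j\ge 0$, so $S$ is a flat finitely generated normal $R$-algebra. Let $\widehat{S}$ be the completion of $S$ at its homogeneous maximal ideal $\fram_S:=\fram_R\oplus S_+$; this is a complete Noetherian normal local domain of mixed characteristic $(0,p)$ with residue field $k$, and $\Spec\widehat S$ contains the $\mathbb{G}_m$-torsor $V:=\Spec S\setminus V(\fram_S)\to Y$, the complement of the zero section of $\mathcal{O}_Y(-H)$. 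Because $K_Y+\Delta$ is $\bQ$-linearly proportional to the polarization $H$, the cone $\Delta_{\widehat S}$ over $\Delta$ satisfies that $K_{\widehat S}+\Delta_{\widehat S}$ is $\bQ$-Cartier.

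The key step is to equip $\widehat S$ with a $\bQ$-Cartier BCM-regular pair structure. Fix $j\gg 0$, a general section $t\in S_j$, and let $D\in|jH|$ be the corresponding effective Cartier divisor on $Y$, so that the principal (hence Cartier) divisor $V(t)$ on $\Spec\widehat S$ is the cone over $D$. By hypothesis there is $\epsilon>0$ with $(Y,\Delta+\tfrac{\epsilon}{j}D)$ globally $\bigplus$-regular. One then wants that $(\widehat S,\Delta_{\widehat S}+\epsilon V(t))$ is BCM-regular, with $K_{\widehat S}+\Delta_{\widehat S}+\epsilon V(t)$ still $\bQ$-Cartier. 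This is where the globally $\bigplus$-regular hypothesis and the necessity of a perturbation enter; I would establish it through \autoref{prop.AlternateTheoremOnBCMRegularityAndPerfdSignaturePositive}: choose a general finite Noether projection $A=R[x_0,\dots,x_d]\hookrightarrow S$ corresponding to general elements of $S_1$, let $g$ cut out its non-\'etale locus, and translate the splittings of $\mathcal{O}_Y\to\mathcal{O}_{Z'}(h^*(\Delta+\tfrac{\epsilon}{j}D))$ for large finite covers $h\colon Z'\to Y$ into the injectivity on $H^d_{\fram_S}(\widehat S(K_{\widehat S}))$ required by that proposition; this is precisely the cone-versus-Fano comparison of \cite{BMPSTWW-MMP} recast in the present setting. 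Granting this, \autoref{prop.dH-BCMRegularImpliesPerfdPositive} yields $s^{\underline{x}}_{\perfd}(\widehat S)>0$, and \autoref{thm.FundamentalGroupApplication} then bounds $|\pi_1^{\text{\'et}}(\Spec\widehat S\setminus W)|\le 1/s^{\underline{x}}_{\perfd}(\widehat S)$ for every closed $W$ of codimension $\ge 2$.

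It remains to transfer the bound back to $Y$. By Zariski--Nagata purity, since $Y$ is normal and $Y\setminus U$ has codimension $\ge 2$, the finite \'etale covers of $U$ are exactly the restrictions of finite quasi-\'etale covers $\widetilde Y\to Y$, and a connected such cover has degree $[K(\widetilde Y):K(Y)]$. Pulling the $\mathbb{G}_m$-torsor $V\to Y$ back to $\widetilde Y$ and taking the associated section ring produces a finite extension $\Spec\widetilde S\to\Spec\widehat S$ of complete normal local domains of the same degree; it is quasi-\'etale because the branch locus of $\widetilde Y\to Y$ has codimension $\ge 2$ and coning adds only the vertices, which have codimension $\dim Y+1\ge 2$. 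Hence $[K(\widetilde Y):K(Y)]\le 1/s^{\underline{x}}_{\perfd}(\widehat S)$ for every connected quasi-\'etale cover of $Y$, so the finite \'etale covers of $U$ are bounded in degree and $\pi_1^{\text{\'et}}(U)$ is finite. The main obstacle, as indicated, is the middle step: importing the correspondence between global $\bigplus$-regularity of the Fano-type pair $(Y,\Delta+\tfrac{\epsilon}{j}D)$ and BCM-regularity of its section cone, which is exactly what forces the use of the perturbation hypothesis rather than $(Y,\Delta)$ itself.
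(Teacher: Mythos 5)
Your strategy---pass to the section cone $\widehat S$, show it is BCM-regular via \autoref{prop.AlternateTheoremOnBCMRegularityAndPerfdSignaturePositive} using the perturbed global $\bigplus$-regularity hypothesis, bound quasi-\'etale covers of $\widehat S$ by $1/s_{\perfd}^{\underline{x}}(\widehat S)$, and transfer the bound to $Y$---is exactly the route the paper takes. Two points, however, deviate from the paper in ways that matter.

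First, you set up the Noether projection as $A = R[x_0,\dots,x_d] \hookrightarrow S$, with $R$ the base ring. This cannot serve as the $A$ of \autoref{not.setup}: the entire perfectoid-signature apparatus (the definition of $A_{\infty,0}$, the normalized length, \autoref{prop.AlternateTheoremOnBCMRegularityAndPerfdSignaturePositive}) requires the base $A$ to be a complete \emph{unramified regular} local ring, i.e.\ of the form $W(k)\llbracket x_2,\dots,x_d\rrbracket$. If $R$ is singular, the completion of $R[x_0,\dots,x_d]$ at its homogeneous maximal ideal is not regular. The paper handles this by first observing that global $\bigplus$-regularity forces $R$ to be a splinter, hence Cohen--Macaulay, so that a Noether--Cohen normalization $A' = W(k)\llbracket y_1,\dots,y_s\rrbracket \subseteq R$ is \emph{flat}, and $Y$ remains projective and flat over $A'$. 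One then invokes \cite{GabberLiuLorenziniHypersurfacesInProjectiveSchemes} to produce a finite surjection $\pi\colon Y \to \bP^d_{A'}$ with $\pi^*\cO(1) \cong \cO_Y(-n(K_Y+\Delta))$; this gives the graded finite extension $A'[x_0,\dots,x_d] \subseteq S'$ over a regular base, which is what \autoref{not.setup} actually needs. Your base change to $W(\bar k)$ is harmless and in fact sidesteps the finite-residue-field subtleties in the projection step, but it does not repair the choice of $A$.

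Second, the divisor you perturb by is off target. You fix $j\gg 0$ and a general $t\in S_j$ giving $D\in |jH|$, and then separately let $g$ cut out the non-\'etale locus of the projection. But \autoref{prop.AlternateTheoremOnBCMRegularityAndPerfdSignaturePositive} needs the perturbation to be along the specific $g$ whose vanishing contains the non-\'etale locus of $A\hookrightarrow S$ (equivalently of $\pi$), because that is the $g$ governing the almost mathematics and the $g^{1/p^e}$ appearing in the required injectivity. One should therefore apply the hypothesis of the theorem directly with $D = \Div_Y(g)$ (this is precisely the point of \autoref{rem.TransformationHandlesResidueFieldExtension}'s neighbor, Remark 6.4, that only the ramification divisor of $\pi$ is actually needed), rather than with a general member of $|jH|$. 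With that correction, the translation of splittings for finite covers $h\colon Z'\to Y$ into the injectivity of $H^{\dim\widehat S}_{\m_A}\big(\widehat S(K_{\widehat S})\big) \to H^{\dim\widehat S}_{\m_A}\big(T\otimes_{S_1}S_1(h^*(K_{\widehat S}+\Delta_{\widehat S}))\big)$ along multiplication by $g^{1/p^e}$ (with $1/p^e < \epsilon$, $T = \widehat{S'^{+,\mathrm{GR}}}$) is exactly the Claim inside the paper's proof, so you have correctly identified where the perturbation hypothesis enters and why it is needed. Aside from these two points, the transfer back to $Y$ via Zariski--Nagata purity and the cone comparison of quasi-\'etale covers is the same as the paper's.
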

%\todo{{\bf Karl:} Add something about divisor class groups?}
\begin{proof}
    We first observe that $Y$ is actually flat over any Noether-Cohen normalization $A' = W(k)\llbracket y_1, \dots, y_s\rrbracket \subseteq R$.  Indeed, since $Y$ is globally $\bigplus$-regular, we know that $Y$ is Cohen-Macaulay by \cite[Proposition 6.10]{BMPSTWW-MMP}. Now since $Y\to \Spec(A')$ is projective with equidimensional fibers and $A'$ is regular, we have $Y \to \Spec (A')$ is flat by \cite[\href{https://stacks.math.columbia.edu/tag/00R4}{Tag 00R4}]{stacks-project}.  

    By \cite[Theorem 8.1]{GabberLiuLorenziniHypersurfacesInProjectiveSchemes}, using that $Y \to \Spec (A')$ is projective flat and using that $-K_Y - \Delta$ is ample, there exists some $n \gg 0$ and a surjective finite morphism $\pi : Y \to \bP^{d}_{A'}$ so that $L := \pi^*(\cO_{\bP^{d}_{A'}}(1)) = \cO_Y(-n(K_Y - \Delta))$ and in particular $-n(K_Y - \Delta)$ is Cartier (the choice of $n \gg 0$ appears in their proof).  Let $S' = \bigoplus_{i \geq 0} H^0(Y, L^i)$ denote the section ring with respect to this ample line bundle.  It follows that we have a finite graded extension of graded rings over $A'$:
    \[
        A'' := A'[x_0, \dots, x_d] \subseteq S'.
    \]
    We may choose $g \in  A'[x_0, \dots, x_d]$ homogeneous so that $A''[g^{-1}] \subseteq S'[g^{-1}]$ is \'etale (in other words, $Y \setminus \Div_Y(g) \to \bP^{d}_{A'}\setminus \Div_{\bP^{d}_{A'}}(g)$ is \'etale).

    Consider the (graded) $\bQ$-divisor $\Delta_{S'} \geq 0$ on $S'$ corresponding to $\Delta$.  Since we are taking the section ring with respect to a multiple of $-K_Y - \Delta$, we see that $K_{S'} + \Delta_{S'}$ is $\bQ$-Cartier on $\Spec {S'}$.  
    We complete $S'$ at the homogeneous maximal ideal to obtain $S = \widehat{S'}$ and the corresponding $\bQ$-divisor $\Delta_{S}$.  We still have that $K_{S} + \Delta_{S}$ is $\bQ$-Cartier.  
    Let $(A, \m_A)$ denote the completion of $A''$ at the homogeneous maximal ideal, and notice that $A \to S$ is finite and that $A[g^{-1}] \subseteq {S}[g^{-1}]$ is \'etale.  % and hence $\pi$ is as well outside of $V(g) \subseteq Y$.  

    %Fix a finite projection $\pi : Y \to \bP^d_{A'}$ with $\pi^* \cO_{\bP^d_{A'}}(1) = \cO_Y(H)$ (\ie, a generic projection) and a nonzero element $g \in H^0(\cO_{\bP^d_{A'}}(1))$ such that $\pi$ is \'etale outside of $G = V(g)$, the vanishing locus of $g$.

    %Finally, we may take some homogeneous $g \in A$ such that $A[g^{-1}] \to S[g^{-1}]$ is \'etale.  
   % Set
%    \[ 
 %       \Ainfty := A[p^{1/p^{\infty}}, y_1^{1/p^{\infty}}, \dots, y_s^{1/p^{\infty}}, x_0^{1/p^{\infty}}, \dots, x_d^{1/p^{\infty}}] {}^{\wedge_p}.
  %  \]
   % and set $\m_A = (p, y_1, \dots, y_s, x_0, \dots, x_d)$, the extension of the homogeneous maximal ideal.
    %We are now essentially in the setting we have been working in.

    \begin{claim}
        \label{clm.BCM-RegularityOfCone}
        $(S, \Delta_{S})$ is BCM-regular and in particular $s_{\perfd}^{p, y_1, \dots, x_d}(S) > 0$.
    \end{claim}
%    \todo{{\bf Karl:}  This proof claim need to be modified in view of 5.2.3 changes.}
    \begin{proof}
        Let $S_1' \supseteq S'$ denote a $\bZ[1/m]$-graded normal extension ring of $S'$ such that $h'^*(K_{S'} + \Delta_{S'})$ is Cartier where $h' : \Spec S_1' \to \Spec S'$ is the induced map (this can be induced from a finite cover of $Y$).  Let $S_1$ denote the completion of $S_1'$ and write $h : \Spec S_1 \to \Spec S$ and pick $g_1 \in A''$ homogeneous and divisible by $g$, so that $A''[g_1^{-1}] \to S_1'[g_1^{-1}]$ is \'etale and hence so is $A[g_1^{-1}] \to S_1[g_1^{-1}]$.
        
        We observe that we have a factorization $S \to S_1 \to T := \widehat{S'^{+,\mathrm{GR}}}$ where $S'^{+,\mathrm{GR}}$ is the graded absolute integral closure of $S'$ ($S'$ is a $\bQ$-graded ring) and where $\widehat{(-)}$ denotes $p$-adic completion.  Notice that $T = \widehat{S'^{+,\mathrm{GR}}}$ is perfectoid and contains a compatible system of $p$-power roots of $g_1$.  Our hypothesis that $(Y, \Delta + \epsilon \Div_Y(g_1))$ is globally $\bigplus$-regular implies that $S' \to S_2'(\mu^* (\Delta_{S'} + \epsilon \Div_{S'}(g_1)))$ splits for every finite $\bZ[1/l]$-graded extension $S_2' \supseteq S'$ so that $\mu^* (\Delta_{S'} + \epsilon \Div_{S'}(g_1))$ has integer coefficients.  Hence 
        \[
            H^{s+1+d+1}_{\m_A}\big(S(K_{S})\big) \xrightarrow{1 \mapsto g_1^{1/p^e}} H^{s+1+d+1}_{\m_A}\big(T \otimes_{S_1} {S_1}(h^*(K_{{S}} + \Delta_{{S}}))\big)
        \]
        injects if we choose $e > 0$ so that $1/p^e < \epsilon$.  
        %\todo{{\bf Karl:}  Probalby I should say something slightly more here.} 
        Therefore we can apply \autoref{prop.AlternateTheoremOnBCMRegularityAndPerfdSignaturePositive} and our claim follows.
        %By \autoref{prop.PerfedPureAlongGImpliesBCMAlongG}, it suffices to show that there exists $g_1 \in (g)_{\perfd} \subseteq \Ainfty$ such that the map 
        %\[
         %   S \xrightarrow{1 \xrightarrow g_1} R^{\Ainfty}_{\perfd} 
        %\]
        %is pure.  
        %\todo{{\bf Karl:} The idea is that $R^{\Ainfty}_{\perfd}$ maps to $\widehat{R^{+, GR}}$.  But we optimally would want the version with $\Delta$s too, so we need to take finite covers of $R$ first, that's really annoying.}
    \end{proof}
    
    The proof is now nearly complete.  We claim that any quasi-\'etale cover of $Y$ induces quasi-\'etale cover of $S$ of the same degree.  Indeed suppose that $Z \to Y$ is a finite surjective map of degree $m$ between normal integral schemes.  Set $S_Z'$ to be the section ring of $Z$ with respect to the pullback of $L$.  This gives us a finite map $S' \to S_Z'$ that is quasi-\'etale of degree $m$ outside of the irrelevant ideal.  However, as it is harmless to assume that $Y \to \Spec (R)$ has positive degree fibers (as otherwise $Y = \Spec (R)$), and $Y \to \Spec (R)$ has equidimensional fibers, we see that the irrelevant ideal defines a codimension $\geq 2$ subset of $\Spec (S)$.  Thus $S' \to S_Z'$ is quasi-\'etale.  Taking completions proves the claim.
    
    In particular, since we have a maximum size of a quasi-\'etale cover of $S$ (the reciprocal perfectoid signature), we also have one for $Y$.   The result follows as it does in the local case.
\end{proof}

\begin{remark}
    In \autoref{thm.FinitenessFunGroupsForBigOpenInLogFano} we did not actually need the hypothesis that $(Y, \Delta + \epsilon D)$ is globally $\bigplus$-regular for \emph{every} Cartier divisor $D$.  Set $Y_1 = \Proj S_1$ with notation as in the proof of \autoref{clm.BCM-RegularityOfCone}.  Consider the finite surjective map $\pi : Y_1 \to Y \to \bP^d_{A'}$ (where $A' = W(k)\llbracket y_1, \dots, y_s\rrbracket$).  It is enough to take $D$ to be the pullback to $Y$ of an effective divisor $H$ on $\bP^d_{A'}$ so that the induced $Y_1 \setminus \pi^* H \to \bP^d_{A'} \setminus H$ is \'etale and so that $(Y, \Delta + \epsilon D)$ is globally $\bigplus$-regular for some $\epsilon > 0$.
\end{remark}

We can also recover the following analog of the main result of a paper of Greb-Kebekus-Peternell in characteristic zero \cite{GrebKebekusPeternellEtaleFundamentalGroupsKLT}.  We use the result of Stibitz \cite{StibitzEtaleCoversAndLocalFunGroups} to let us reduce the question to verifying finiteness of local \'etale fundamental groups (also compare with \cite{BhattGabberOlssonFinitenessOfFundamental} which also recovered the result of Greb-Kebekus-Peternell in characteristic zero via reduction to positive characteristic).  Finally, also see \cite{BhattCarvajalRojasGrafSchwedeTuckerFunGroups} for a an analogous result in characteristic $p > 0$.

There is one slight issue with directly applying Stibitz's result: the geometric points $x$ of a scheme $X$ need not have perfect or algebraically closed residue field (and hence neither do their completions).  Hence we need to base change the completions of the stalks at geometric points so that they have algebraically closed residue field.  

\begin{theorem}[\cf {\cite[Theorem 1]{StibitzEtaleCoversAndLocalFunGroups}, \cite[Theorem 1.1]{GrebKebekusPeternellEtaleFundamentalGroupsKLT}}]
    \label{thm.GrebKebekusPeternellVersion}
    Suppose $X$ is a normal integral Noetherian scheme surjective and of finite type over a DVR $V$ of mixed characteristic $(0, p)$ and let $Z \subseteq X$ denote the singular locus.  
    
    Consider  a geometric point $x \in X$ of mixed characteristic and we set $R_x' = \widehat{\cO_{X,x}}$ the $\m_{x}$-adic completion of $\cO_{X,x}$, with maximal ideal $\m' \subset R_x'$ and residue field $k' = R_x'/\m'$ with perfection $k = k'_{\perf}$.  Further let $C(k') \subseteq R_{x}'$ be a coefficient (Cohen) ring of the residue field $k'$ (which is separably closed since $x \in X$ is a geometric point), choose a map $C(k') \to C(k)$, and set $R_x = R_x' \widehat{\otimes}_{C({k'})} C(k)$ where the completion over the tensor product is $\m'$-adic.  
    Note the complete local ring $(R_x, \m, k)$ has perfect and hence algebraically closed residue field $k$.

    Suppose now that:
    \begin{enumerate}
        \item $X \times_V V[1/p]$ is of klt-type (that is there exists a boundary $\bQ$-divisor $\Delta$ that makes it klt), and 
        \item For each geometric point of mixed characteristic, there exists an associated $R_x$ as above which is of BCM-regular type (see \autoref{def.StrongBCM-Regularity}\autoref{def.StrongBCM-Regularity.d}, this means there exists a boundary $\bQ$-divisor $\Delta$ on $\Spec (R_x)$ making the pair BCM-regular).
    \end{enumerate}
    Then for every tower of quasi-\'etale generically Galois finite covers:
    \[
        X \leftarrow X_1 \leftarrow X_2 \leftarrow \dots
    \]
    the maps $X_{i+1} \to X_i$ are \'etale for $i \gg 0$.
\end{theorem}
\begin{proof}
    We show that $X$ satisfies \cite[Theorem 1(i)]{StibitzEtaleCoversAndLocalFunGroups} and adopt its notation.  In particular, it is enough to show that if $x \in X$ is a geometric point, then $\pi_1^{\text{\'et}}(X_x \setminus Z_x)$ is finite.  However, $X \times_V V[1/p]$ is finite type over the field $V[1/p]$ of characteristic zero.  Hence, the result follows from \cite[Corollary 1]{StibitzEtaleCoversAndLocalFunGroups} if $\cO_{X,x}$ has equal characteristic zero.

    Now, for $x \in X$ a geometric point of mixed characteristic, we first recall that $\pi_1^{\text{\'et}}(X_x \setminus Z_x) = \pi_1^{\text{\'et}}(\widehat{X_x} \setminus \widehat{Z_x})$ from \cite[Corollarie, page 26]{ElkikSolutionsDEquationsCoefficientsHenselian} where here $\widehat{X_x} := \Spec R'_x$ and where $\widehat{Z_x}$ is the singular locus (or equivalently the pullback of $Z$ to $\widehat{X_x}$).    
    
    We claim that $R_x'$ is normal.  We do this argument in several steps.  First we show that $R_x' \to R_x$ is faithfully flat.  Note that $C(k') \to C(k)$ is flat as $C(k)$ is torsion free over the DVR $C(k')$.  Thus $R_x' = R_x' \otimes_{C(k')} C(k') \to R_x' \otimes C(k)$ is flat by base change.  Hence $(R_x' \otimes C(k)) / \m'^n(R_x' \otimes C(k))$ is also flat over $R_x' / \m'^n$.  Now we can take inverse limits and apply \cite[\href{https://stacks.math.columbia.edu/tag/0912}{Tag 0912}]{stacks-project} and deduce that $R_x' \to R_x$ is flat as well.  It is thus faithfully flat as $R_x' \to R_x$ map is local.  But then $R_x'$ is normal by faithfully flat descent \cite[\href{https://stacks.math.columbia.edu/tag/033G}{Tag 033G}]{stacks-project}.
%     pure (in fact faithfully flat by ) and $R_x$ is normal.  

    We next claim that if $(R'_x, \m', k') \subseteq (S', \fran', l')$ is a finite local quasi-\'etale extension with $S'$ normal and integral, then the base change is also quasi-\'etale and $S$ is a normal domain:
    \[
        R_x \subseteq S := S' \widehat\otimes_{R'_x} R_x = S' \otimes_{R'_x} R_x
    \]
    where the identification of the tensor product and $\m'$-adically-complete tensor product follows since $R'_x \subseteq S'$ is finite.
    Once we observe that $R_x \subseteq S$ is a finite local and quasi-\'etale extension of normal domains, we are done since we already know that finite local domain extensions of $R_x$ can be bounded in generic rank by $1 \over s_{\perfd}^{\underline{x}}(R_x)$ and hence the generic rank of $R_x' \subseteq S'$ has the same bound.  
    
    Now we observe that $S$ is local. 
    %Since it is a finite extension of the local ring $R_x$, it suffices to work modulo $\m \subseteq R_x$. 
    Indeed since certainly $\frn' S = (\sqrt{\m'S'}) S$ is contained in each maximal ideal of $S$ (which must lie over $\m = \m' R$), it suffices to show that the maximal ideal $\frak{n}'\subseteq S'$ extends to a maximal ideal of $S$.
    
    \begin{claim}
    $S/\frak{n}'S$ is a field.
    \end{claim}
    \begin{proof}[Proof of Claim]
    We first observe that as $R_x$ is of BCM-regular type, in particular weakly BCM-regular, we have that $R'_x$ is also weakly BCM-regular since $R'_x\to R_x$ is faithfully flat: any perfectoid big Cohen-Macaulay $(R_x)^+$-algebra is also a perfectoid big Cohen-Macaulay $(R'_x)^+$-algebra and thus by faithful flatness, $R'_x$ is pure inside all (sufficiently large) perfectoid big Cohen-Macaulay $(R'_x)^+$-algebras.
    Now by \autoref{lem.QuasiEtaleStuffForWeakBCMRegular}, we know that $(R'_x, \frak{m}')\to (S', \frak{n}')$ is a split extension. Thus, $R'_x/\frak{m}' \to S'/\frak{n}'$ is separable, and hence they both equal to $k'$, see \autoref{lem.residuefieldsep} below. Now, we compute $S/\frak{n}'S =S'/\frak{n}'\otimes_{R_x'/\frak{m}'}R_x/\frak{m}'R_x = k'\otimes_{k'} k = k$, and this proves the claim.
    \end{proof}

    Our next goal is to show that $S$ is normal.
    %which since we have already showed it is local, will prove it is an integral domain and so complete the proof.
    We do this by showing that $S$ is $(\mathrm{R}_1)$ and $(\mathrm{S}_2)$. To see that $S$ is $(\mathrm{R}_1)$. We observe the map $R_x\to S$ is finite quasi-\'etale since it is obtained from a finite quasi-\'etale map via a faithfully flat base change ($R'_x \to S'$ base changed by $\otimes_{R'_x} R_x$). By assumption, $R_x$ is normal, and in particular, it is $(\mathrm{R}_1)$. Therefore, $S$ is also $(\mathrm{R}_1)$. Next we show that $S$ is $(\mathrm{S}_2)$. By \cite[\href{https://stacks.math.columbia.edu/tag/0339}{Tag 0339}]{stacks-project}, it suffices to show that all fiber rings $S\otimes_{S'} k(\mathfrak{p})$ are $(\mathrm{S}_2)$ where $\mathfrak{p}\subset S'$ is a prime ideal. In fact, it suffices to check all closed fibers are geometrically Cohen-Macaulay, see \cite[Theroem 4.1]{AvramovFoxbyGrothendiecklLocalization}. However, the closed fiber of this extension is $S/\frak{n}'S\otimes_{S'/\frak{n}'} k'=k$, and it is geometrically Cohen-Macaulay since $\dim k = 0$. 
    
    Finally observe that $R_x \subseteq S$ is quasi-\'etale by base change, and thus the result follows.
\end{proof}

In the above proof, we also used the following lemma which we believe is well known but for which we do not know a reference, compare also with \cite[Lemma 2.15]{CarvajalRojasSchwedeTuckerFundamentalGroup}.

\begin{lemma}\label{lem.residuefieldsep}
    Suppose $(R, \m) \subseteq (S, \frak{n})$ is a finite split quasi-\'etale extension of Noetherian normal local domains.  Then the residue field extension $R/\m \subseteq S/\frak{n}$ is separable.
\end{lemma}
\begin{proof}
    Let $\Tr : S \to R$ denote the trace map. Since $R \subseteq S$ is quasi-\'etale, $\Tr$ is an $S$-module generator of $\Hom_R(S, R)$.  Since $R \subseteq S$ is split, there exists some surjective map in $\Hom_R(S, R)$ and hence $\Tr$ must be surjective.  Now, because $\Tr(\frak{n}) \subseteq \m$ (see \autoref{lem.TraceForNormalDomains}), we obtain an induced map $\overline{\Tr} : S/\frak{n} \to R/\m$ which must also be surjective (and in particular nonzero).
    
    By \cite[Proposition 3.1]{AuslanderRimRamificationIndexAndMultiplicity}, $\overline{\Tr}$ is a multiple of the field-trace $S/\frak{n} \to R/\m$.  In particular, the field-trace $S/\frak{n} \to R/\m$ is nonzero and so the extension $R/\m \subseteq S/\frak{n}$ is separable as desired.
\end{proof}

\begin{remark}
    There are two ways to potentially improve the statement of \autoref{thm.GrebKebekusPeternellVersion}.  Presumably, in view of Stibitz, one may actually take $V$ to be an excellent local of dimension $\leq 2$.  The problem then becomes that the characteristic zero scheme $X \times_V V[1/p]$ is not of finite type over a field.  Presumably the various ways to study the local fundamental group generalize to this setting in view of \cite{takumi}, but verifying this would take us rather far afield.

    Additionally, it would be natural to expect that if $R_x'$ is of BCM-regular type, then so is $R_x$ (since analogous results hold in positive characteristic for strong $F$-regularity).  Again, we do not try to prove this here.
\end{remark}

We also have the following application to divisor class groups.  Compare with \cite{PolstraATheoremAboutMCM,MartinTheNumberOfTorsionDivisors} and \cite[Corollary 5.1]{CarvajalRojasFiniteTorsors} in characteristic $p > 0$, and see \cite{Ishiro.LocalLogRegularRingsVsToric} as well as \cite{ShimomotoEtAlPerfectoidTowersAndTheirTilts} for the log regular case in mixed characteristic.  Our approach in particular is inspired by the argument found in the paper of Carvajal-Rojas, which itself is inspired by the arguments of Martin and Polstra. But we first need a lemma on a version of countable prime avoidance.

\begin{lemma}
\label{lem. countably prime avoidance}
Let $(R,\m)$ be a Noetherian complete local domain and let $\{P_n\}_{n\in\mathbb{N}}$ be countably many primes, none of which is $\m$. Then we have $\m\notin \m^2 \cup (\cup_{j=1}^\infty P_j)$. 
\end{lemma}
\begin{proof}
Without loss of generality, we may assume that there is no containment among the $P_j$'s. We will construct an element $x\in \m$ that is not in the union. First of all, we pick $x_1\in \m -\m^2$. We then pick $x_2,x_3,\dots$ inductively as follows. If $x_1\notin P_1$, then we set $x_2=0$, otherwise, we choose $x_2 \in \m^2 -P_1$. Suppose $x_1,\dots,x_n$ has been chosen, if $x_1+\cdots +x_n\notin P_n$, then we set $x_{n+1}=0$, otherwise, we choose $x_{n+1}\in (\m^{n+1}\cap P_1\cap\cdots \cap P_{n-1}) - P_n$ (this is possible because none of the ideals $\m^{n+1}, P_1,\dots, P_{n-1}$ is contained in $P_n$ and $P_n$ is a prime). Now we consider the element 
$$x:=x_1+x_2+ x_3 + \cdots. $$
Note that $x\in \m$ since $x_n\in \m^n$ for all $n$ and $R$ is complete, and clearly, $x\notin \m^2$ since $x_1\notin \m^2$. It remains to check that $x\notin P_j$ for all $j$. But by construction, $x_{j+2}, x_{j+3}, \dots \in P_j$. So it is enough to show that $x_1+x_2+\cdots +x_{j+1}\notin P_j$. If $x_1+x_2+\cdots+x_j\notin P_j$, then $x_{j+1}=0$ by construction so $x_1+x_2+\cdots +x_{j+1}\notin P_j$. Otherwise, $x_1+x_2+\cdots+x_j\in P_j$ and we have chosen $x_{j+1}\notin P_j$. Thus $x_1+x_2+\cdots +x_{j+1}\notin P_j$ which completes the proof.
\end{proof}

We can now prove our result on finiteness of divisor class groups.

\begin{theorem}
    \label{thm.DivisorClassGroupApplication}
    Let $X=\Spec(R)$ where $(R,\m)$ is a Noetherian complete normal local domain of mixed characteristic $(0,p)$ with algebraically closed residue field. Further suppose that there exists $\Delta \geq 0$ such that $K_R + \Delta$ is $\bQ$-Cartier and such that $(R, \Delta)$ is BCM-regular. Then for some choices of $\underline{x}$, the torsion part of the divisor class group $\mathrm{Cl}(R)_{\mathrm{tors}}$ of $R$ satisfies 
    \[
        \big|\mathrm{Cl}(R)_{\mathrm{tors}}\big| \leq {1 \over s_{\perfd}^{\underline{x}}(R)},
    \]
    in particular it is finite.
\end{theorem}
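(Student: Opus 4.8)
The plan is to mimic the characteristic $p > 0$ strategy for bounding the torsion of the divisor class group via $F$-signature (following Polstra, Martin, and Carvajal-Rojas), replacing $F$-signature with perfectoid signature and using the transformation rule \autoref{thm.TransformationRule} together with \autoref{prop.GeneralTransformationRuleForSpecialSOP}. The key observation is that a torsion class $[D] \in \mathrm{Cl}(R)$ of order $n$ gives rise, via the associated cyclic cover, to a finite split quasi-\'etale extension $R \subseteq S$ of degree $n$, where $S = \bigoplus_{i=0}^{n-1} R(iD)$ is the $n$-cyclic cover. Here quasi-\'etaleness holds because $R(iD)$ is reflexive and locally free in codimension one (it is invertible there), and splitness holds because $s_{\perfd}^{\underline{x}}(R) > 0$ forces $R \to R^+$ to be pure by \autoref{prop.SignaturePositiveImpliesPerturbationBCMRegular} (or rather its consequence that $R$ is weakly BCM-regular, hence a splinter).

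First I would record that $s_{\perfd}^{\underline{x}}(R) > 0$ by \autoref{prop.dH-BCMRegularImpliesPerfdPositive}, since $(R,\Delta)$ is BCM-regular. Next, given any finite subgroup $G \leq \mathrm{Cl}(R)_{\mathrm{tors}}$, I would form the associated ``total'' cyclic cover $R \subseteq S_G$ whose generic rank equals $|G|$; this is a standard construction (the spectrum of the $\mathrm{Cl}$-graded reflexive algebra $\bigoplus_{[D] \in G} R(D)$), it is a normal complete local domain since $R$ is complete local and $G$ is finite, and $R \to S_G$ is \'etale in codimension one. Since $R$ is a splinter, the inclusion $R \subseteq S_G$ splits. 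Now I would want to apply the transformation rule: if we are in the quasi-\'etale setting of \autoref{thm.TransformationRule}, then $t \cdot s_{\perfd}^{\underline{x}}(S_G) = r \cdot s_{\perfd}^{\underline{x}}(R)$ with $r = |G|$ and $t$ the residue field degree. Since $t \geq 1$ and $s_{\perfd}^{\underline{x}}(S_G) \leq 1$ by \autoref{prop.sFboundby1}, we get $|G| = r \geq t \cdot s_{\perfd}^{\underline{x}}(S_G)/s_{\perfd}^{\underline{x}}(R) $... more precisely $|G| \cdot s_{\perfd}^{\underline{x}}(R) = t \cdot s_{\perfd}^{\underline{x}}(S_G) \leq t \leq $ (residue field degree), but since the residue field of $R$ is perfect and $R \to S_G$ is quasi-\'etale we actually have more control; in any case the cleanest route is $|G| \cdot s_{\perfd}^{\underline{x}}(R) \leq t \cdot s_{\perfd}^{\underline{x}}(S_G) \leq t$ and then one argues $t$ can be absorbed, or one uses \autoref{cor.FiniteEtaleHaveTheSameSignature} to reduce to the case $t = 1$ by a preliminary finite \'etale base change $R \to R \otimes_{W(k)} W(k')$. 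Actually the simplest conclusion: from $|G| \cdot s_{\perfd}^{\underline{x}}(R) = t \cdot s_{\perfd}^{\underline{x}}(S_G)$ and $s_{\perfd}^{\underline{x}}(S_G) \leq 1$, $t \leq$ (some bound independent of $G$), we need to be careful, so instead I will use the refined version.

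The technical subtlety, and the main obstacle, is the choice of the regular system of parameters $\underline{x}$. The transformation rule \autoref{thm.TransformationRule} as stated for quasi-\'etale maps does not require special care with $\underline{x}$ \emph{provided} $R \to S_G$ is genuinely quasi-\'etale (one uses the trace map), so in fact I can apply \autoref{thm.TransformationRule} directly to get $|G| \cdot s_{\perfd}^{\underline{x}}(R) = t \cdot s_{\perfd}^{\underline{x}}(S_G) \leq t$. To handle the residue field extension degree $t$: since the cyclic cover can be chosen so that $S_G$ has the \emph{same} residue field as $R$ when the residue field is algebraically closed, or more generally one first replaces $R$ by $R \otimes_{W(k)} W(\bar k)$ — but the statement requires only a perfect residue field, so instead I would note that $t \cdot s_{\perfd}^{\underline{x}}(S_G) \leq t \cdot 1 = t$ gives $|G| \leq t / s_{\perfd}^{\underline{x}}(R)$ which is not quite what we want. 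The correct fix, which I would carry out in detail, is: the cyclic cover $S_G$ associated to a torsion class group is \'etale in codimension one, so by purity of the branch locus it is \'etale over the regular locus, and in particular \emph{unramified}, which forces $t = 1$ locally after the cover is taken to be connected and local — precisely, one shows the cyclic cover $\mathrm{Spec}\, S_G \to \mathrm{Spec}\, R$ restricted to the closed point induces a trivial (degree one) residue field extension because the ramification is concentrated away from codimension one and $S_G$ is local with the grading placing the identity component in degree zero. Granting $t = 1$, we conclude $|G| = s_{\perfd}^{\underline{x}}(S_G)/s_{\perfd}^{\underline{x}}(R) \leq 1/s_{\perfd}^{\underline{x}}(R)$. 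Since every finite subgroup $G$ of $\mathrm{Cl}(R)_{\mathrm{tors}}$ satisfies this uniform bound, $\mathrm{Cl}(R)_{\mathrm{tors}}$ is itself finite of order at most $1/s_{\perfd}^{\underline{x}}(R)$, which is the desired conclusion.
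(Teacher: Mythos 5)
Your approach correctly identifies the structure of the argument (cyclic covers attached to torsion classes plus the transformation rule, following Carvajal-Rojas/Polstra/Martin), but there is a genuine gap at the central step: you assert that the cyclic cover $S_G = \bigoplus R(iD)$ is \emph{quasi-\'etale} over $R$ because the sheaves $R(iD)$ are invertible in codimension one, and you then invoke \autoref{thm.TransformationRule}. This is false in mixed characteristic when $p$ divides the torsion order $m$. At a height-one prime $\mathfrak{p}$ lying over $(p)$, the cover localizes to a Kummer extension $R_\mathfrak{p}[T]/(T^m - a)$ with $a$ a unit; the Jacobian criterion needs $mT^{m-1}$ to be a unit, which fails when $p\mid m$, so the cover is generically ramified over $V(p)$. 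Hence the hypothesis of \autoref{thm.TransformationRule} is not satisfied, and the equality $|G|\cdot s^{\underline{x}}_{\perfd}(R) = t\cdot s^{\underline{x}}_{\perfd}(S_G)$ does not follow from it. This is not a minor edge case: any $p$-torsion class at all breaks the argument.

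The paper avoids this by noticing that the cyclic cover \emph{is} quasi-\'etale after inverting $p$ (it becomes a characteristic-zero Kummer cover), and so, picking $g\in A$ divisible by $p$ with $A[g^{-1}]\subseteq R[g^{-1}]$ \'etale, one gets $A[g^{-1}]\subseteq S[g^{-1}]$ \'etale by Zariski--Nagata purity. One then applies the more flexible \autoref{prop.GeneralTransformationRuleForSpecialSOP}, which requires only the existence of a trace-like map $\Phi$ as in conditions (a)--(c) of \autoref{lem.I_inftyCompare} (here $\Phi$ is projection onto degree zero, which manifestly generates $\Hom_R(S,R)$, is surjective, and sends $\mathfrak{n}$ into $\m$) together with a careful choice of $\underline{x}$ depending on $g$, rather than genuine quasi-\'etaleness. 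The paper also proceeds by a tower of cyclic extensions of \emph{optimal index}, establishing at each step that BCM-regularity of the pair ascends (via the argument from the Carvajal-Rojas--Ma--Polstra--Schwede--Tucker reference) so that the transformation rule remains applicable, rather than forming a single ``total'' cover $S_G$; the latter would also require justification that $S_G$ is a normal complete local \emph{domain}, which the tower construction finesses. If you replace your quasi-\'etaleness claim and use of \autoref{thm.TransformationRule} by the characteristic-zero-\'etale-in-codimension-one observation and \autoref{prop.GeneralTransformationRuleForSpecialSOP}, and build a tower of cyclic covers rather than $S_G$, your argument closes.
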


\begin{proof}
    Suppose for the moment that $|\mathrm{Cl}(R)_{\mathrm{tors}}|$ is infinite.  Pick $D_1$ a Weil divisor of Cartier-index $m_1 > 0$.  Let $S_1 = \oplus_{i = 0}^{m-1} R(-iD)$ using an isomorphism $R(-mD) \cong R$ to provide the ring structure.  Note $(S_1, \frn_1)$ is also local where $\frn_1 = \m \oplus R(-D) \oplus \dots \oplus R(-(m-1)D)$, see for instance \cite[Lemma A.4]{MaPolstra.F-singularitiesCommutativeAlgebra}, and let $f_1 : \Spec (S_1) \to \Spec (R)$ be the induced map.  We call such extensions \emph{cyclic covers of optimal index} for the remainder of the proof.  Note they are quasi-\'etale after inverting $p$ since $R(-mD) \cong R$.  By \cite[Corollary 2.6]{TomariWatanabeNormalZrGradedRings}, the kernel of the map
    \[
        {\mathrm{Cl}}(R) \xrightarrow{f_1^*} {\mathrm {Cl}}(S_1)
    \] 
    is a cyclic group generated by the class of $D_1$.  By our hypothesis, there exists a $\bQ$-divisor $D_2$ on $R$ such that $f_1^* D_2$ is not Cartier.  
    We repeat the construction to obtain $S_2 \supseteq S_1$, a cyclic cover of optimal index $m_2$ for $f_1^* D_2 \subseteq \Spec S_1$.  Let $f_2 : \Spec (S_2) \to \Spec (R)$ denote the composition.  We see that the kernel of 
    \[
        \mathrm{Cl}(R) \xrightarrow{f_2^*} \mathrm{Cl}(S_2)
    \]
    is also finite (as it is a composition of maps with finite kernels) and so the process continues and we have a chain of local rings $(S_i, \frn_i)$
    \[
        R \subseteq S_1 \subseteq S_2 \subseteq \dots 
    \]
    where each $S_{i+1}$ is a cyclic cover of optimal index over $S_i$.

    \begin{claim}
        Pick a Noether-Cohen normalization $W(k)\llbracket z_2, \dots, z_d \rrbracket = A \subseteq R$. For each $i > 0$ there exists $0 \neq g_i \in A$ such that $R(D_i)[1/g_i] \cong R[1/g_i]$. Furthermore, we can choose $x_2,\dots,x_d \in \m-\m^2$ such that $pg_i, x_2, \dots,x_d$ is a system of parameters of $A$ for every $i$. 
    \end{claim}
    \begin{proof}
    After modifying $D_i$ by a principal divisor we may assume that $R(D_i)\subseteq R$ is a pure height one ideal. Let $W_i$ be the complement of the minimal primes of $R(D_i)$. We know that $W_i^{-1}R$ is a semi-local Dedekind domain and hence a PID so that $W_i^{-1}R(D_i) \cong W_i^{-1} R$. Thus there exists $0\neq g_i\in W_i$ such that $R(D_i)[1/g_i] \cong R[1/g_i]$. For the second assertion, note that by applying \autoref{lem. countably prime avoidance}, we can choose $x_2\in \m-\m^2$ that avoids all minimal primes of each $pg_i$ (which are countably many), and then we can choose $x_3\in \m-\m^2$ that avoids all minimal primes of each $(pg_i, x_2)$ (which are countably many), etc. Thus, we can find $x_2,\dots,x_d$ such that $pg_i, x_2, \dots,x_d$ is a system of parameters of $A$ for every $i$. 
    \end{proof}

    For the $\underline{x} = p, x_2, \dots, x_d$ from the claim, we see that we can apply \autoref{prop.GeneralTransformationRuleForSpecialSOP} for each finite extension $R \subseteq S_1 \subseteq S_2 \subseteq \dots$ (using different $g_i$ for different extensions).  Hence 
    \[
        (\prod_{i = 1}^n m_i) s^{\underline{x}}_{\perfd}(R) = s^{\underline{x}}_{\perfd}(S_n).
    \]
    as all these rings have the same residue field.  As $m_i \geq 2$, this contradicts the fact that $s^{\underline{x}}_{\perfd}(R) > 0$ and $s^{\underline{x}}_{\perfd}(S_n) \leq 1$.  Thus the torsion part of the class group is finite.  

    With this in hand, we can fix $R \subseteq S$ with induced $f : \Spec (S) \to \Spec (R)$ (a finite composition of optimal index cyclic covers $R = S_0 \subseteq S_1 \subseteq \dots \subseteq S_l = S$) so that $\mathrm{Cl}(R)_{\mathrm{tors}} = \ker f^*$.  Each extension $S_i \subseteq S_{i+1}$ has a projection onto the degree-0 piece $\pi : S_{i+1} \to S_i$ which generates $\Hom_{S_i}(S_{i+1}, S_{i})$ as an $S_i$-module and sends $\frn_{i+1}$ into $\frn_i$ using our explicit description of $\frn_{i+1}$ (just as we did with $(S_1, \frn_1)$ at the start of the proof, see \cite[Lemma A.4]{MaPolstra.F-singularitiesCommutativeAlgebra}).  Composing these yields a map $S \to R$ satisfying the conditions  (a)-(c) of \autoref{lem.I_inftyCompare}.  Picking $g$ as in the statement of \autoref{prop.GeneralTransformationRuleForSpecialSOP}, we can obtain $x_2, \dots, x_d$ so that 
    \[
        s_{\perfd}^{\underline{x}}(S) = [K(S) : K(R)] \cdot s_{\perfd}^{\underline{x}}(R)
    \]
    But $[K(S) : K(R)] = |\mathrm{Cl}(R)_{\mathrm{tors}}|$ by repeated application of \cite[Corollary 2.6]{TomariWatanabeNormalZrGradedRings}.  The result follows since again $s_{\perfd}^{\underline{x}}(S) \leq 1$.
\end{proof}

One could also expect stronger results in terms of the local Nori fundamental group scheme as studied in \cite{EsnaultViehwegSurfaceSingularitiesDominated}.  See \cite{CarvajalRojasFiniteTorsors} for what is currently known in this direction in characteristic $p > 0$.
\section{Further directions and questions}
\label{sec.FurtherQuestions}

In this section we discuss some open questions, and give some definitions of variants that we plan to study in future work.

\fakesubsection{The choice of $\underline{x}$ and $A \subseteq R$}

Perhaps the most important open question is showing that the invariants we have defined are independent of the choices we have made.

\begin{conjecture}
    \label{conj.Independence}
    The perfectoid signature $s^{\underline{x}}_{\perfd}(R)$ and Hilbert-Kunz multiplicity $e^{\underline{x}}_{\perfd}(J; R)$ are independent of the choice of the Noether-Cohen normalization $A \subseteq R$ and more specifically the system of parameters $\underline{x}$.
\end{conjecture}

There is some evidence for this, notably in characteristic $p > 0$ where it is not difficult to see that our definitions coincide with $F$-signature and Hilbert-Kunz multiplicity, \autoref{section:positivecharacterstic}.  That perfectoid signature is independent of the choice of $\underline{x}$ also follows for finite quotient singularities where $p$ does not divide the index by our transformation rule \autoref{sec.TransformationRule}, see for instance \autoref{cor.DuValSingularities}. On the other hand, perfectoid Hilbert-Kunz multiplicity $e_{\perfd}^{\underline{x}} (J; R)$ does not depend on $\underline{x}$ or $A$ when $J$ is generated by a system of parameters of $R$, see \autoref{cor.perfdHKparameterideal}.   For some additional related discussion on showing normalized length is independent of related choices in greater generality see \cite[Remark 14.5.69(ii)]{GabberRameroAlmostRingsAndPerfectoidSpaces}.

Of course, there are other ways one might avoid this issue, for instance taking the supremum over all such choices of $\underline{x} = \{p, x_2, \dots, x_d\}$,
\[
    s^{\sup}_{\perfd}(R) := \sup_{\{\underline{x}\} \subseteq R} s_{\perfd}^{\underline{x}}(R). 
\]
and likewise an infimum for perfectoid Hilbert-Kunz multiplicity.
Indeed, with this choice it is immediate for $\bQ$-Gorenstein $R$, that $s^{\sup}_{\perfd}(R) > 0$ if and only if $R$ is BCM-regular.  One also obtains transformation rules $s^{\sup}_{\perfd}(R) \cdot [K(S) : K(R)] \leq s^{\sup}_{\perfd}(S)$ which goes the correct way for the applications to local fundamental groups.

\fakesubsection{More general residue fields}

    If $(R, \m)$ is a complete local domain with possibly non-perfect residue field, we believe one can define an appropriate normalized length with respect to a sequence of parameters of $A \subseteq R$.  The essential idea is to use a larger Noether-Cohen regular subring $A' \supseteq A$ with perfect residue field (or in the $F$-finite residue field case, one can use a sequence of $A_e$).  We plan to explore this in a future work.  Of course, based on the characteristic $p > 0$ picture, one should also be able to compute perfectoid signature simply by first base changing to a perfect residue field.
    %We plan to explore this definition and apply it to these objects in an upcoming work.

%\todo{{\bf Karl:}  Maybe discuss before we state this? \\
%Linquan: What about beyond the complete setting? -- for excellent (normal) local $R$ at least?}

\fakesubsection{Localization, faithfully flat extension, and semi-continuity}

We begin with a simple question, granting for the moment \autoref{conj.Independence}.

\begin{conjecture}[Localization]
    Suppose $(R, \m)$ is a Noetherian complete local domain with characteristic $p > 0$ residue field and $Q \in \Spec R$ is a prime containing $p$.  Then 
    \[
        s_{\perfd}(R) \leq s_{\perfd}(\widehat{R_Q}) \;\;\text{ and } \;\; e_{\perfd}(R) \geq e_{\perfd}(\widehat{R_Q}).
    \]
\end{conjecture}

We also expect the following behavior under faithfully flat extensions.

\begin{conjecture}[Faithfully flat extensions]
    Suppose $(R, \m)\to (S,\mathfrak{n})$ is a flat local extension of Noetherian complete local domains with characteristic $p > 0$ residue field.  Then 
    \[
        s_{\perfd}(R) \geq s_{\perfd}(S) \;\;\text{ and } \;\; e_{\perfd}(R) \leq e_{\perfd}(S).
    \]
\end{conjecture}

Of course, we expect more precise behavior on $\Spec$.

\begin{conjecture}[Semi-continuity]
    Suppose that $T$ is locally equidimensional excellent ring with $p$ in its Jacobson radical.  Let $Z = V(p) \subseteq \Spec (T)$ be locus where $p$ vanishes.  Then the functions 
    \[
        Q \in Z \mapsto e_{\perfd}(\widehat{T_Q}) \;\;\text{ and }\;\; Q \in Z \mapsto s_{\perfd}(\widehat{T_Q})
    \]
    are upper and lower semi-continuous respectively.
\end{conjecture}

The corresponding results are true in equal characteristic $p > 0$ by \cite{HanesNotesOnHKFunction,SmirnovUpperSemiContinuityOfHK,PolstraUniformBoundsInFFiniteRingsAndLowerSemicontinuity,Lyu.UniformboundsExcellentFpAlgebras}, \cf \cite{PolstraTuckerCombinedApproach,DeStefaniPolstraYaoGlobalizing}.

One can also ask how these functions behave when inverting $p$ (and so localizing to characteristic zero). It would be interesting if there are connections with the normalized volume in characteristic zero.  For some discussion on the comparison between normalized volume and $F$-signature, see \cite{LiLiuXuAGuidedTourNormalizedVolume}.  %Perhaps there are relations with the normalized volume?

\fakesubsection{Perfectoid signature of pairs}

In this paper we did not define perfectoid signature of pairs.  We provide below one potential definition of perfectoid signature of pairs $(R, \Delta)$ that we plan to explore in forthcoming work.  We state the definition in the Gorenstein case for simplicity here.

\begin{conjdef}
    Suppose $(R, \m)$ is a Noetherian complete normal  local domain of residue characteristic ring $p > 0$.  Suppose $\Delta = {1 \over n} \Div(f) \geq 0$ is a $\bQ$-Cartier $\bQ$-divisor.  Fix $S$ to be a finite domain extension of $R$ such that $f^{1/n} \in S$.  We set
    \[
        J_{\infty}^{S,\Delta} = \big\{x \in S^{\Ainfty}_{\perfd} \;|\; R \xrightarrow{1 \mapsto xf^{1/n}} S^{\Ainfty}_{\perfd} \text{ is not split}\big\}
    \]
    and then define:
    \[
        s_{\perfd}^{\underline{x}}(R, \Delta) = {1 \over [K(S) : K(R)]}\lambda_{\infty}\big(S^{\Ainfty}_{\perfd} / J_{\infty}^{S,\Delta} \big).
    \]
    Via the method of the proof of \autoref{lem.I_inftyCompare} we believe that this can be seen to be independent of the choice of $S$.
\end{conjdef}

We expect many of the results of this paper will generalize to this setting, compare with \cite{BlickleSchwedeTuckerFSigPairs1}.  This definition can readily be adapted to other sorts of pairs such as $(R, \fra^t)$ or $(R, [f_i]^t)$ as well, as defined for instance in  \cite{MaSchwedePerfectoidTestideal,RobinsonToricBCM,SatoTakagiArithmeticDeformations,MurayamaSymbolicTestIdeal}.

%\todo{Transformation rule for pairs for non-quasi-\'etale maps where the ramification divisor is part of the Delta, compare with Xu-Zhuang, Carvajal-Rojas-Schwede-Tucker?  Do we want to state this?}

\bibliographystyle{skalpha}
\bibliography{main,preprints}

\end{document}